\newlength\xvec@height%
\newlength\xvec@depth%
\newlength\xvec@width%
\newcommand{\xvec}[1]{%
  \ifmmode%
    \settoheight{\xvec@height}{$#1$}%
    \settodepth{\xvec@depth}{$#1$}%
    \settowidth{\xvec@width}{$#1$}%
  \else%
    \settoheight{\xvec@height}{#1}%
    \settodepth{\xvec@depth}{#1}%
    \settowidth{\xvec@width}{#1}%
  \fi%
  \def\xvec@arg{#1}%
  \raisebox{.2ex}{\raisebox{\xvec@height}{\rlap{%
    \kern.05em%  (Because left edge of drawing is at .05em)
    \begin{tikzpicture}[scale=1]
    \pgfsetroundcap
    \draw (.05em,0)--(\xvec@width-.05em,0);
    \draw (\xvec@width-.05em,0)--(\xvec@width-.15em, .095em);
    \draw (\xvec@width-.05em,0)--(\xvec@width-.15em,-.095em);
    \end{tikzpicture}%
  }}}%
  #1%
}
\newlength\subvec@height%
\newlength\subvec@depth%
\newlength\subvec@width%
\newcommand{\subvec}[1]{%
  \ifmmode%
    \settoheight{\subvec@height}{$_{#1}$}%
    \settodepth{\subvec@depth}{$_{#1}$}%
    \settowidth{\subvec@width}{$_{#1}$}%
  \def\xvec@arg{#1}%
  \raisebox{.4ex}{\raisebox{\subvec@height}{\rlap{%
    \kern.05em%  (Because left edge of drawing is at .05em)
    \begin{tikzpicture}[scale=1]
    \pgfsetroundcap
    \draw (.05em,0)--(\subvec@width-.05em,0);
    \draw (\subvec@width-.05em,0)--(\subvec@width-.15em, .095em);
    \draw (\subvec@width-.05em,0)--(\subvec@width-.15em,-.095em);
    \end{tikzpicture}%
  }}}%
  #1%
}
\renewcommand{\vec}[1]{\xvec{#1}}
\DeclareMathSymbol{\widetildesym}{\mathord}{largesymbols}{"65}
\newtheorem{thm}{Theorem}[section]
\numberwithin{thm}{section}
\newtheorem*{thm*}{Theorem}
\newtheorem{rmk}[thm]{Remark}
\newtheorem{notation}[thm]{Notation}
\newtheorem{fact}[thm]{Fact}
\newtheorem{conj}[thm]{Conjecture}
\newtheorem{defn}[thm]{Definition}
\newtheorem{lem}[thm]{Lemma}
\newtheorem{prop}[thm]{Proposition}
\newtheorem{cor}[thm]{Corollary}
\newtheorem{exa}[thm]{Example}
\begin{document}

% \title[short text for running head]{full title}
\title[McShane Identities for higher Teichm\"uller theory and the GS potential]{McShane identities for higher Teichm\"uller theory and the Goncharov--Shen potential}

%    Only \author and \address are required; other information is
%    optional.  Remove any unused author tags.

%    author one information
% \author[short version for running head]{name for top of paper}
\author{Yi Huang}
\address{Yau Mathematical Sciences Center, Tsinghua University}
\email{yihuangmath@mail.tsinghua.edu.cn}
\author{Zhe Sun}
\address{Mathematics Research Unit, University of Luxembourg}
\email{sunzhe1985@gmail.com}
\keywords{Mcshane's identity, Fock--Goncharov $\mathcal{A}$ moduli space, Goncharov--Shen potential.}

\subjclass[2010]{Primary 57M50, Secondary 32G15}
%    \subjclass is required.
\date{\today}

%    Abstract is required.
\begin{abstract} 
We derive generalizations of McShane's identity for higher ranked surface group representations by studying a family of mapping class group invariant functions introduced by Goncharov and Shen which generalize the notion of horocycle lengths. In particular, we obtain McShane-type identities for finite-area cusped convex real projective surfaces by generalizing the Birman--Series geodesic scarcity theorem. More generally, we establish McShane-type identities for positive surface group representations with loxodromic boundary monodromy, as well as McShane-type inequalities for general rank positive representations with unipotent boundary monodromy. Our identities are systematically expressed in terms of projective invariants, and we study these invariants: we establish boundedness and Fuchsian rigidity results for triple and cross ratios. We apply our identities to derive the simple spectral discreteness of unipotent-bordered positive representations, collar lemmas, and generalizations of the Thurston metric.
\end{abstract}

\maketitle
\parindent = 0cm
%\normalsize
%\small

\section{Introduction}
\label{intro}
The aim of this paper is to generalize McShane identities for higher Teichm\"uller theory, a goal previously considered by Labourie and McShane in \cite{LM09}. The McShane identities we obtain are expressed in terms of geometric quantities such as simple root lengths, triple ratios and edge functions, and naturally generalize those employed by Mirzakhani in her computation of Weil--Petersson volumes of moduli spaces of bordered hyperbolic surfaces of fixed boundary lengths \cite{mirz_simp} and her proof \cite{mirz_witten} of the Witten--Kontsevich theorem. We establish geometric applications for our identities, yielding properties of simple root lengths and triple ratios along the way.\medskip

Let $S=S_{g,m}$ denote a  genus $g$ oriented surface with $m\geq 1$ boundary components and negative Euler characteristic. In the classical hyperbolic setting, horocycle lengths define regular functions on Penner's decorated Teichm\"uller space of horocycle-decorated hyperbolic metrics on $S_{g,m}$ \cite{pennercoords}, and the decomposition of horocycle lengths leads to the classical McShane identities \cite{mcshane_allcusps}. The natural analog of this picture in higher Teichm\"uller theory is that of Goncharov and Shen's family of mapping class group invariant regular functions \cite{GS15} on the Fock--Goncharov $\mathcal{A}$ moduli space $\mathcal{A}_{\operatorname{SL}_n, S}$ \cite{FG06}. The \emph{Goncharov--Shen potential}  (Definition~\ref{defn:FGA}) is the starting point for our family of McShane identities for positive surface group representations into $\operatorname{PGL}_n(\mathbb{R})$.

\subsection{The classical McShane identity}
In his doctoral dissertation, McShane \cite{mcshane_thesis} established the following stunning result:

\begin{thm*}[{McShane identity \cite{mcshane_thesis}}]
Given an \emph{arbitrary} $1$-cusped hyperbolic torus $\Sigma_{1,1}$, let $\mathcal{C}_{1,1}$ denote the collection of \emph{unoriented} simple closed geodesics $\bar{\gamma}$ on $\Sigma_{1,1}$ up to homotopy and let $\ell(\bar{\gamma})$ denote their respective hyperbolic lengths. 
\begin{align}
\label{equation:cmc}
\sum_{ \bar{\gamma}\in \mathcal{C}_{1,1}}
\frac{2}{1+e^{\ell(\bar{\gamma})}}
= 1.
\end{align}
\end{thm*}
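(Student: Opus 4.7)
The plan is to partition the horocycle around the cusp into ``gaps'' indexed by $\mathcal{C}_{1,1}$ and to show that the gap associated to $\bar{\gamma}$ occupies the fraction $\frac{2}{1+e^{\ell(\bar{\gamma})}}$ of the total horocycle length. I will fix an embedded horocyclic neighborhood of the cusp in $\Sigma_{1,1}$ with boundary horocycle $H$ of hyperbolic length $L$, and for each $p\in H$ launch the inward-pointing unit-speed geodesic ray $r_p$ perpendicular to $H$. The goal is then to sum gap lengths over all gaps and divide by $L$.

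First I would classify points of $H$ by the long-time behaviour of $r_p$. Set $S := \{p\in H : r_p \text{ is simple}\}$. The essential analytic ingredient is a Birman--Series-type theorem: $S$ is closed, nowhere dense, and of Lebesgue measure zero in $H$. Consequently $H\setminus S$ is a full-measure open subset of $H$ and decomposes canonically as a countable disjoint union of open intervals, the \emph{gaps}. Any simple geodesic ray originating in the cusp neighbourhood of $\Sigma_{1,1}$ has one of three asymptotic types: (a) it returns asymptotically into the cusp; (b) it spirals around a simple closed geodesic; or (c) it accumulates on a minimal geodesic lamination. Tracking the first self-intersection time $\tau(p)$ of $r_p$ as $p$ approaches a gap boundary shows that each gap is bounded by two rays of type (b) spiralling in opposite senses around a common simple closed geodesic $\bar{\gamma}\in\mathcal{C}_{1,1}$.

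Next I would establish a bijection $\mathcal{C}_{1,1}\leftrightarrow\{\text{gaps}\}$. Every $\bar{\gamma}\in\mathcal{C}_{1,1}$ is non-separating, so cutting $\Sigma_{1,1}$ along $\bar{\gamma}$ yields a pair of pants $P_{\bar{\gamma}}$ with one cusp and two boundary components of common length $\ell(\bar{\gamma})$. The hyperbolic geometry of $P_{\bar{\gamma}}$ furnishes exactly two simple geodesic rays emanating from the cusp of $\Sigma_{1,1}$ and spiralling in opposite senses around $\bar{\gamma}$; these rays cross $H$ at the two endpoints of a single gap $I_{\bar{\gamma}}$, and distinct $\bar{\gamma}$ clearly yield distinct gaps.

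Finally I would compute $|I_{\bar{\gamma}}|$ in the upper half-plane model, placing the cusp at $\infty$ so that $H$ lifts to $\{\mathrm{Im}\,z = L\}$ and the cusp stabiliser is generated by $z\mapsto z+L$. The two bounding spiral rays lift to geodesic half-circles in $\mathbb{H}^2$ whose endpoints on $\mathbb{R}$ are fixed points of conjugates of the $\bar{\gamma}$-holonomy. A short $2\times 2$ matrix calculation, using that the commutator of a standard generating pair for $\pi_1(\Sigma_{1,1})$ equals the cusp parabolic $z\mapsto z+L$, produces the horocycle intercepts explicitly and yields $|I_{\bar{\gamma}}| = \frac{2L}{1+e^{\ell(\bar{\gamma})}}$. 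Summing over $\mathcal{C}_{1,1}$ and dividing by $L$ gives \eqref{equation:cmc}. The main obstacle is the Birman--Series measure-zero input, which is the deepest analytic step and relies on delicate estimates showing simple geodesics are sparse; the topological identification of gaps with elements of $\mathcal{C}_{1,1}$ and the hyperbolic-trigonometric gap-length computation are more mechanical, but care is required to exclude the possibility of a gap bounded by a cusp-returning simple ray, which the low complexity of $\Sigma_{1,1}$ makes straightforward.
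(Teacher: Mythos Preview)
Your three-step strategy --- partition $H$ by simplicity of the orthogonal rays, invoke Birman--Series for the measure-zero complement, then compute gap lengths via an upper-half-plane matrix calculation --- is exactly McShane's classical argument; the paper cites rather than reproves this theorem but summarises the same scheme in \S\ref{sec:proofideasummary} as the template for its higher-rank generalisations. Strategically you are aligned.

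There is, however, a genuine error in your indexing step. You assert that $P_{\bar{\gamma}}$ furnishes exactly two spiralling rays and hence a bijection $\mathcal{C}_{1,1}\leftrightarrow\{\text{gaps}\}$, each gap bounded by two type-(b) rays. In fact there are \emph{four} simple rays from the cusp spiralling to $\bar{\gamma}$ --- two from each side of $\bar{\gamma}$ in the pair of pants $P_{\bar{\gamma}}=\Sigma_{1,1}\setminus\bar{\gamma}$ --- and each $\bar{\gamma}$ contributes \emph{four} gap intervals (the $4{:}1$ correspondence with embedded pairs of pants that the paper records in Step~2 of \S\ref{sec:proofideasummary}). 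Each of these four gaps is bounded on one side by a spiralling ray and on the other by an endpoint of the unique simple ideal arc $\delta_{\bar{\gamma}}$ disjoint from $\bar{\gamma}$, i.e.\ by a cusp-returning ray of your type (a). The interval between two same-side spiralling rays --- what your description actually isolates --- consists of rays that \emph{cross} $\bar{\gamma}$, and the gaps it contains are indexed by \emph{other} simple curves reached after crossing. Your $\tau(p)\to\infty$ heuristic does not distinguish types (a), (b), (c), so it cannot force gap boundaries to be type (b); and your closing remark that low complexity lets you exclude cusp-returning boundaries is exactly backwards. The total length of the four gaps for $\bar{\gamma}$ is indeed $\tfrac{2L}{1+e^{\ell(\bar{\gamma})}}$, so the identity follows once the indexing is repaired, but as written your matrix computation is locating the wrong horocyclic segment.
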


The above theorem has led to an ever-growing list of identities for rich families of hyperbolic geometric objects including the following direct generalizations of McShane's identity \cite{akiyoshi2004refinement,MR2258748, bowditch1997variation, markofftriples, huangclosed, huang2018mcshane,  lee2013variation, mcshane_allcusps, mirz_simp, MR2399656, tan_zhang_cone, tan2008mcshane}, as well as the closely related Basmajian identity \cite{basmajian1993orthogonal}, the Bridgeman-Kahn identity \cite{bridgeman2010hyperbolic,orthospectra} and the Luo-Tan dilogarithm identity \cite{luo_tan}. There has also been progress in establishing similar identities for higher Teichm\"uller theory \cite{FP16, He19,LM09,VY17} and super Teichm\"uller theory \cite{HPZ19}.

\subsection{McShane identity for bordered hyperbolic surfaces}

Let us highlight the McShane identity for bordered hyperbolic surfaces due independently to Mirzakhani \cite{mirz_simp} and Tan--Wong--Zhang \cite{tan_zhang_cone}. For simplicity, we state this identity only for genus $g$ hyperbolic surfaces $\Sigma_{g,1}$ with a single geodesic border $\bar{\alpha}$: 

\begin{thm*}[McShane identity for bordered hyperbolic surfaces {\cite{mirz_simp}, \cite{tan_zhang_cone}}]
Let $\mathcal{P}_{\bar{\alpha}}$ denote the collection of homotopy classes of embedded pairs of pants which contain $\bar{\alpha}$ as an unoriented boundary component. Then,

\begin{align}
\sum_{(\bar{\beta},\bar{\gamma})\in \mathcal{P}_{\bar{\alpha}}}
2\log \left( \frac{e^{\frac{\ell(\bar{\alpha})}{2}} + e^{\frac{\ell(\bar{\beta})+\ell(\bar{\gamma})}{2}}}{e^{\frac{-\ell(\bar{\alpha})}{2}} + e^{\frac{\ell(\bar{\beta})+\ell(\bar{\gamma})}{2}}} \right) = \ell(\bar{\alpha})\label{equation:mirD}.
\end{align}
\end{thm*}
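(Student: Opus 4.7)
The plan is to follow Mirzakhani's pair-of-pants decomposition strategy from \cite{mirz_simp}: decompose the boundary $\bar{\alpha}$, up to a Lebesgue-measure-zero exceptional set, into a disjoint union of arcs indexed by $\mathcal{P}_{\bar{\alpha}}$, then evaluate the length of each such arc via hyperbolic trigonometry inside the associated pair of pants.

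Concretely, I would parametrize $\bar{\alpha}$ by arc length and, for each $x \in \bar{\alpha}$, shoot the unit-speed geodesic $\gamma_x$ perpendicular to $\bar{\alpha}$ entering $\Sigma_{g,1}$. Classifying $\gamma_x$ by its long-time behaviour, one of three things happens: either (i) $\gamma_x$ remains simple as a complete geodesic ray, or (ii) $\gamma_x$ returns transversally to $\bar{\alpha}$, or (iii) $\gamma_x$ self-intersects before returning to $\bar{\alpha}$. In cases (ii) and (iii), truncating $\gamma_x$ at its first incidence point yields an embedded arc whose regular neighborhood together with $\bar{\alpha}$ cuts out a uniquely determined embedded pair of pants $P(x) \in \mathcal{P}_{\bar{\alpha}}$ with other boundary components $\bar{\beta}, \bar{\gamma}$.

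I would next invoke a Birman--Series type scarcity theorem to show that the subset of $x \in \bar{\alpha}$ falling into case (i) has Lebesgue measure zero. Writing $I_P := \{x \in \bar{\alpha} \mid P(x) = P\}$, this yields $\ell(\bar{\alpha}) = \sum_{P \in \mathcal{P}_{\bar{\alpha}}} |I_P|$, and a direct computation in a right-angled hexagonal decomposition of a pair of pants with boundary lengths $\ell(\bar{\alpha}), \ell(\bar{\beta}), \ell(\bar{\gamma})$ identifies $|I_P|$ with the logarithmic expression on the left-hand side of \eqref{equation:mirD}. The main obstacle is the scarcity step itself; all remaining ingredients are combinatorial bookkeeping for pairs of pants and explicit hyperbolic trigonometry. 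It is precisely this Birman--Series phenomenon that the present paper seeks to generalise to the higher-rank setting, so the reliance on it here foreshadows the technical heart of what follows.
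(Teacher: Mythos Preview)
Your outline is correct and is precisely Mirzakhani's argument from \cite{mirz_simp} (with the hyperbolic-trigonometric gap computation also carried out in \cite[\S7]{tan_zhang_cone}). Note, however, that the paper does not supply its own proof of this statement: it is quoted in the introduction as a known result, and \S\ref{sec:proofideasummary} only sketches the analogous cusp-based version (horocycle rather than orthogonal geodesics from a boundary) as a template for the higher-rank generalizations. So there is nothing to compare against beyond observing that your proposal matches the cited sources and the three-step scheme the paper summarizes.
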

Equation~\eqref{equation:mirD} is the basis for arguably the most celebrated application of McShane identities: Mirzakhani's integration scheme and volume recursion formula for the Weil-Petersson volume of moduli spaces of hyperbolic surfaces of fixed boundary length \cite{mirz_simp}.

\subsection{Labourie--McShane's identity for Hitchin representations}

The \emph{Hitchin component} $\mathrm{Hit}_n(S_{g,0})$ \cite{Hit92} is a contractible component of the representation variety
\[
\mathrm{Hom}(\pi_1(S_{g,0}),\operatorname{PGL}_n(\mathbb{R}))/\operatorname{PGL}_n(\mathbb{R}), 
\]
characterized as the deformation space of the $n$-Fuchsian representations of $\pi_1(S_{g,0})$ --- compositions of any Fuchsian representation with an irreducible representation from $\operatorname{PSL}_2(\mathbb{R})$ to $\operatorname{PGL}_n(\mathbb{R})$. Representations in the Hitchin component are referred to as \emph{Hitchin representations} and are the central object in higher Teichm\"uller theory\footnote{We highly recommend Wienhard's beautiful overview \cite{W18} of higher Teichm\"uller theory.}.\medskip

In \cite{LM09}, Labourie and McShane generalized the notion of a Hitchin component $\mathrm{Hit}_n(S_{g,m})$ for bordered surfaces, and established a very general family of McShane-type identities for these Hitchin representations of bordered surfaces via \emph{ordered cross ratios} \cite{Lab07}. In the $S_{g,1}$ setting, their identity takes the following form:\medskip

\begin{thm*}[{Labourie--McShane identity \cite{LM09}}]
Consider a Hitchin representation $\rho:\pi_1(S_{g,1})\to\operatorname{PGL}_n(\mathbb{R})$ and let $\alpha$ denote the boundary component of $S_{g,1}$ oriented so that $S_{g,1}$ is on the left of $\alpha$. Given any ordered cross ratio $\mathbb{B}^\rho$ defined with respect to $\rho$,
\begin{align*}
\sum_{(\beta,\gamma)\in\subvec{\mathcal{P}}_\alpha}
 \log \mathbb{B}^{\rho}(\alpha^-,\alpha^+,\gamma^+,\beta^+)=\ell^{\mathbb{B}^\rho}(\alpha),\quad\text{where}
\end{align*}
\begin{itemize}
\item
Labourie--McShane define $\xvec{\mathcal{P}}_\alpha$ as the collection of homotopy classes of \emph{embeddings} of a fixed pair of pants into $S_{g,1}$ whose image is marked by simple homotopy classes $\alpha,\beta,\gamma$ satisfying $\alpha\beta^{-1}\gamma=1$, for $\alpha$ a homotopy representative of the oriented boundary. We interpret $\xvec{\mathcal{P}}_\alpha$ as the set of boundary-parallel pairs of pants on $S_{g,1}$ which contain $\alpha$ (Definition~\ref{definition:Pp}). 
\item
The summands are logarithms of the $\mathbb{B}^{\rho}$ ordered cross ratio of quadruples of ideal points arising as attracting and/or repelling fixed points of $\alpha,\beta$ and $\gamma$.  
\item
The quantity $\ell^{\mathbb{B}^\rho}(\alpha)$ is a length-type quantity defined via cross ratios.
\end{itemize}
\end{thm*}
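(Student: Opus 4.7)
The plan is to adapt McShane's original geodesic-tracking argument to Labourie's cross-ratio framework. First I would invoke the H\"older-regular Frenet limit curve $\xi^\rho : \partial_\infty \pi_1(S_{g,1}) \to \operatorname{Flag}(\mathbb{R}^n)$ associated to the Hitchin representation $\rho$, so that the boundary loop $\alpha$ has well-defined endpoints $\alpha^\pm \in \partial_\infty \pi_1$ whose images under $\xi^\rho$ are the flags on which $\mathbb{B}^\rho$ is evaluated. I would interpret $\ell^{\mathbb{B}^\rho}(\alpha)$ as the total $d\mu_\alpha^{\mathbb{B}^\rho}$-mass of one fundamental domain for the $\langle \alpha \rangle$-action on $\partial_\infty \pi_1 \setminus \{\alpha^\pm\}$, where $d\mu_\alpha^{\mathbb{B}^\rho}$ is the infinitesimal cross-ratio density coming from differentiating $\log \mathbb{B}^\rho(\alpha^-,\alpha^+,\cdot,\cdot)$.

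Next I would partition this fundamental domain by the classical McShane trichotomy on points $x \in \partial_\infty\pi_1 \setminus \{\alpha^\pm\}$: the complete geodesic in the universal cover joining any point on the axis of $\alpha$ to $x$ descends to $S_{g,1}$ as either (a) a simple embedded ray lying in a unique boundary-parallel pair of pants $P(\beta,\gamma) \in \vec{\mathcal{P}}_\alpha$; (b) a simple ray spiralling onto some simple closed geodesic; or (c) a non-simple ray. For type (a), the locus of such $x$ forms an open interval in $\partial_\infty\pi_1$ whose endpoints lie on the attracting/repelling fixed points of appropriate conjugates of $\beta$ and $\gamma$. A direct projective-geometric computation inside the universal cover of $P(\beta,\gamma)$ --- using only the $\pi_1 P$-equivariance of $\xi^\rho$ and the multiplicativity of the ordered cross ratio --- identifies the $d\mu_\alpha^{\mathbb{B}^\rho}$-mass of this interval as $\log \mathbb{B}^\rho(\alpha^-,\alpha^+,\gamma^+,\beta^+)$.

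The main obstacle is showing that the type (b) and type (c) loci are $d\mu_\alpha^{\mathbb{B}^\rho}$-null, so that the sum over $\vec{\mathcal{P}}_\alpha$ accounts for the full mass. Type (b) is automatic since spirals land in a countable set. Type (c) is the substantive step: one needs a Birman--Series-type scarcity statement saying that the subset of $\partial_\infty \pi_1$ consisting of endpoints of simple complete geodesics on $S_{g,1}$ has Lebesgue measure zero, and then one needs to transfer this nullity to $d\mu_\alpha^{\mathbb{B}^\rho}$. The transfer step is the delicate point: it requires the H\"older regularity of $\xi^\rho$ (to push the scarcity statement through) together with absolute continuity of $d\mu_\alpha^{\mathbb{B}^\rho}$ with respect to a natural Lebesgue-type reference measure on $\partial_\infty \pi_1$, a property that must be verified from the Frenet hypotheses defining the ordered cross ratio.

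Granting the scarcity step, the identity follows formally: the fundamental domain decomposes, up to a $d\mu_\alpha^{\mathbb{B}^\rho}$-null set, into the disjoint union of the type-(a) intervals indexed by $(\beta,\gamma) \in \vec{\mathcal{P}}_\alpha$, and integrating $d\mu_\alpha^{\mathbb{B}^\rho}$ term by term yields
\[
\ell^{\mathbb{B}^\rho}(\alpha) \;=\; \sum_{(\beta,\gamma)\in \vec{\mathcal{P}}_\alpha} \log \mathbb{B}^\rho(\alpha^-,\alpha^+,\gamma^+,\beta^+),
\]
as claimed.
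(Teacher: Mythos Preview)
Your broad strategy matches the Labourie--McShane argument as the paper describes it (Remark~\ref{remark:holderanosov} and the proof of Theorem~\ref{thm:boundary}): partition a fundamental domain for the $\langle\alpha\rangle$-action on $\partial_\infty\pi_1\setminus\{\alpha^\pm\}$ according to the behaviour of the associated geodesic ray, and use the H\"older regularity of $\xi^\rho$ (coming from the Anosov property of Hitchin representations) to transfer the classical hyperbolic Birman--Series theorem to the cross-ratio measure.

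However, your trichotomy is inverted, and this is not merely cosmetic. The open gap intervals correspond to those $x$ for which the ray eventually \emph{self-intersects}: its initial arc, up to the first self-intersection, lies in a unique pair of pants $(\beta,\gamma)$, and this is what indexes the summand (see Step~2 in \S\ref{sec:proofideasummary}). Conversely, the set one must show to be null consists of those $x$ for which the ray is \emph{simple}: this is a Cantor set of rays spiralling onto geodesic laminations (not merely onto simple closed curves, so your type~(b) is not countable) together with countably many simple bi-infinite arcs. Your type~(c) as written is internally inconsistent: you label it ``non-simple'' and then propose to control it via a scarcity statement about \emph{simple} complete geodesics. With the roles of~(a) and~(c) exchanged, the remainder of your outline is correct: Birman--Series gives that the simple locus has zero Lebesgue measure on $\partial_\infty\pi_1$ with respect to an auxiliary hyperbolic metric, and the H\"older continuity of $\xi^\rho$ is exactly what transports this nullity to $d\mu_\alpha^{\mathbb{B}^\rho}$.
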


It is perhaps more accurate to view Labourie and McShane's formula as \emph{very} powerful machinery for producing McShane-type identities. The summands $\log \mathbb{B}^{\rho}(\alpha^-,\alpha^+,\gamma^+,\beta^+)$ --- often referred to as \emph{gap functions} --- are generally complex expressions of standard moduli of $\rho$ restricted to the underlying pair of pants. As an example, some summands for the rank $n$ weak cross ratio \cite[Section 10]{LM09} for $n=3$ Hitchin representations require fifteen lines to state \cite[Equation~(55)]{LM09}, let alone for $n\geq4$ and above.\medskip

We offer up identities which structurally resemble Equations~\eqref{equation:cmc} and \eqref{equation:mirD}.

\subsection{Positive representations}

Two of the main directions in which Hitchin representations have been generalized are \emph{positive representations} (Definition \ref{definition:posrep}) and \emph{Anosov representations}. The former, due to Fock and Goncharov \cite{FG06}, is based upon an algebraic property called positivity, whereas the latter hails from Labourie's \cite{Lab06} dynamical approach to higher Teichm\"uller theory via Anosov flows.\medskip

Positive and Anosov representations share key traits which make their theoretical development interesting and tractable. For example, both approaches yield discrete and faithful representations. As another example, consider the loxodromic property:

\begin{defn}[Loxodromic matrices]
An element in $\operatorname{PGL}_n(\mathbb{R})$ is \emph{loxodromic} if and only if it has a lift into $\operatorname{SL}_n(\mathbb{R})$ such that it is conjugate to a diagonal matrix with eigenvalues 
\[
\lambda_1>\cdots>\lambda_n>0.
\]
\end{defn}

Fock and Goncharov show that:
\begin{thm}[{\cite[Theorem 9.3]{FG06}}]
\label{theorem:loxo}
Given any positive representation $\rho:\pi_1(S_{g,m})\to\operatorname{PGL}_n(\mathbb{R})$, for any non-trivial $\gamma \in \pi_1(S_{g,m})$, 
\begin{itemize}
\item
if $\gamma$ is non-peripheral then $\rho(\gamma)$ is conjugate to a totally positive matrix and thus loxodromic;
\item 
if  $\gamma$ is peripheral then $\rho(\gamma)$ is conjugate to a totally positive upper triangular matrix.
\end{itemize}
\end{thm}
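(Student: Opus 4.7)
The plan is to leverage the combinatorial structure of positive representations given by Fock--Goncharov coordinates on an ideal triangulation of $S_{g,m}$, and then invoke Lusztig's theory of total positivity to upgrade positivity of coordinates to total positivity of the monodromy matrices.

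First, I would fix an ideal triangulation $\mathcal{T}$ of $S_{g,m}$ and choose a lift to the universal cover $\widetilde{S}$. By definition of a positive representation, each ideal vertex of $\widetilde{\mathcal{T}}$ is decorated with an affine flag, and the Fock--Goncharov $\mathcal{A}$- (or $\mathcal{X}$-)coordinates attached to the triangles and edges are all strictly positive real numbers. Given a non-trivial $\gamma \in \pi_1(S_{g,m})$, I would represent $\gamma$ as a sequence of edge-flips / elementary moves taking a base triangle $t$ of $\widetilde{\mathcal{T}}$ to the triangle $\gamma\cdot t$. Each such elementary move contributes a factor of the form $E_i(a)=\exp(aE_i)$, $F_i(a)=\exp(aF_i)$, or a positive diagonal matrix $H(\vec{b})$ in the standard Chevalley generators of $\operatorname{SL}_n$, with all parameters $a,\vec{b}>0$ because the underlying triangulation coordinates are positive. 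Thus $\rho(\gamma)$ is conjugate to a product of strictly positive elementary Jacobi matrices and diagonal matrices.

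Second, I would appeal to Lusztig's characterization of totally positive matrices in $\operatorname{SL}_n(\mathbb{R})$: a matrix is totally positive if and only if it can be written as a product of $E_i(a)$'s, $F_i(a)$'s with $a>0$ and positive diagonals whose word, when the diagonals are removed, contains every $s_i = s_{i,i+1}$ the requisite number of times so as to realize the longest element $w_0$ of the symmetric group (equivalently, a reduced word for $w_0^2$ in the Weyl group braid sense). The non-peripheral assumption on $\gamma$ ensures, via the triangulation combinatorics of a surface with $\chi(S)<0$, that the path of triangles from $t$ to $\gamma\cdot t$ must wind transversally across sufficiently many edges in both ``upward'' and ``downward'' directions so that both generators $s_i$ and their inverses in each simple-root direction appear --- this is where negative Euler characteristic and non-peripherality combine to produce a word long and generic enough to cover a reduced expression of $w_0$. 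Consequently $\rho(\gamma)$ is totally positive, and by the classical Perron--Frobenius theorem for totally positive matrices its eigenvalues are real, positive, and pairwise distinct; in particular $\rho(\gamma)$ is loxodromic.

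For the peripheral case, $\gamma$ is conjugate to a simple loop around a puncture, which in the triangulation picture fans around a single ideal vertex $v$ of $\widetilde{\mathcal{T}}$. Choosing coordinates so that the affine flag at $v$ is the standard upper-triangular flag, every elementary factor contributed by $\gamma$ preserves this flag and hence $\rho(\gamma)$ lies in the upper-triangular Borel subgroup. The same positivity of the coordinate factors then gives that $\rho(\gamma)$ is conjugate to a totally positive upper-triangular matrix in Lusztig's sense.

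The main obstacle is the second step: carefully translating the geometric condition ``$\gamma$ is non-peripheral on a surface with $\chi<0$'' into the combinatorial statement that the word obtained from the triangulation flips realizes (a power of) the longest Weyl element in reduced form, so that Lusztig's total positivity criterion applies. Everything else is either a routine unpacking of definitions or standard total positivity theory.
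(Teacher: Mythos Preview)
The paper does not supply its own proof of this theorem; it is quoted directly from Fock--Goncharov \cite[Theorem~9.3]{FG06}. So there is nothing in the present paper to compare against beyond the citation.

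That said, your outline is essentially the Fock--Goncharov strategy: factor the monodromy of $\gamma$ along an ideal triangulation via the snake calculus, observe that positivity of the $\mathcal{X}$-coordinates makes every elementary factor a Lusztig-positive generator, and then read off total positivity (non-peripheral case) or total positivity within the Borel (peripheral case). Your peripheral argument is correct as stated: a loop around a puncture $p$ fixes the flag $\xi_\rho(p)$, so in a basis adapted to that flag $\rho(\gamma)$ is upper-triangular, and positivity of the coordinates makes it totally positive upper-triangular.

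The one place where your intuition diverges from the actual mechanism is the non-peripheral case. You suggest that non-peripherality is needed so that the word in Chevalley generators is \emph{long enough} to cover a reduced expression for $w_0$. That is not quite how it works. In the snake calculus, crossing a \emph{single} edge of the triangulation already contributes a factor that passes through the big Bruhat cell: the change of snake basis across an edge involves an anti-diagonal matrix (encoding $w_0$) sandwiched between totally positive unipotents built from the edge and triangle coordinates. Thus the product lands in $U^-_{>0}\,T_{>0}\,U^+_{>0}$ as soon as the path of triangles is not confined to the star of a single ideal vertex. Non-peripherality is exactly what prevents that confinement; it is a qualitative condition, not a quantitative ``enough crossings'' one. (Relatedly, your phrasing of Lusztig's criterion as ``a reduced word for $w_0^2$'' is nonstandard; the correct statement is that $g$ is totally positive iff it admits a Gaussian decomposition $g = u_- \, t \, u_+$ with $u_\pm \in U^\pm_{>0}$ and $t \in T_{>0}$, each $u_\pm$ being a product of one-parameter subgroups indexed by a reduced word for $w_0$.) Once you fix this, your sketch becomes the proof in \cite{FG06}.
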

Labourie \cite{Lab06} also shows that non-trivial non-peripheral homotopy classes for Anosov representations are loxodromic.\medskip

One powerful geometric consequence of this loxodromic property is that it enables us to define the notion of \emph{$i$-th lengths} for curves on $S_{g,m}$.
\begin{defn}[$i$-th length]
\label{definition:ilength}
Given a positive representation $\rho:\pi_1(S_{g,m})\to\operatorname{PGL}_n(\mathbb{R})$, for any non-trivial $\gamma\in \pi_1(S_{g,m})$, we denote the eigenvalues of $\rho(\gamma)$ by 
\[
\lambda_1(\rho(\gamma))\geq \cdots \geq \lambda_n(\rho(\gamma))>0.
\]
For $i=1,\ldots,n-1$ we define the \emph{$i$-th length} (also called \emph{simple root length}) of $\gamma$ with respect to $\rho$ as 
\[
\ell_i(\gamma):=\log\left(\frac{\lambda_i(\rho(\gamma))}{\lambda_{i+1}(\rho(\gamma))}\right).
\]
\end{defn}
Note that whilst loxodromic elements always produce positive $i$-th lengths, it is possible for peripheral $\gamma$ to admit $i$-th length $\ell_i(\gamma)=0$ (e.g.: when the boundary is unipotent).\medskip

We focus on positive representations, and denote the \emph{$\operatorname{PGL}_n(\mathbb{R})$-positive representation variety} by $\mathrm{Pos}_n(S)$  (Definition~\ref{defn:prepvar}). For closed surfaces $S_{g,0}$, the positive representation variety $\mathrm{Pos}_n(S_{g,0})$ \emph{is} the Hitchin component $\mathrm{Hit}_n(S_{g,0})$ \cite[Theorem 1.15]{FG06}. More generally, for bordered surfaces $S_{g,m}$ it includes Hitchin representations (Remark~\ref{remark:hitpos}), and hence the $n$-Fuchsian representations.

\subsection{McShane identities for convex real projective $1$-cusped tori}

The theory of strictly convex $\mathbb{RP}^2$ surfaces, which generalizes the Beltrami-Klein approach to hyperbolic surfaces, is an important geometric manifestation of non-Fuchsian higher Teichm\"uller theory. To clarify: positive representations $\rho:\pi_1(S)\to\operatorname{PGL}_3(\mathbb{R})$ of closed surfaces and surfaces with unipotent boundary monodromy are holonomy representations of strictly convex $\mathbb{RP}^2$ surfaces $\Sigma$. Namely, $\Sigma$ may be expressed as $\Omega/\rho(\pi_1(S))$ where $\Omega$ is a strictly convex domain in $\mathbb{RP}^2$ preserved by the properly discontinuously action of $\rho(\pi_1(S))$ \cite{G1990convex,choi1993convex,Mar10}. \medskip

Ideal triangles are fundamental building pieces for hyperbolic and convex real projective surfaces. It is well-known that all hyperbolic ideal triangles are isometric. In contrast, oriented convex real projective ideal triangles are geometrically richer and are classified by their \emph{triple ratios} $T\in\mathbb{R}_{>0}$ \cite{FG06}. 
\begin{figure}[h!]
\includegraphics[scale=1.2]{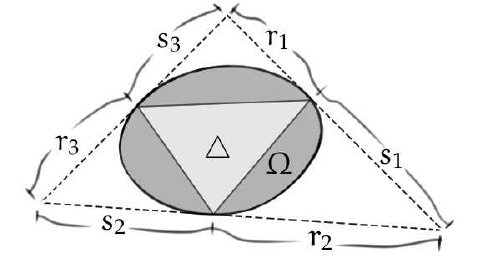}
\caption{The triple ratio of $\Delta$.}
\label{fig:triprat}
\end{figure}
In Figure~\ref{fig:triprat}, the triple ratio of the anticlockwise-oriented ideal triangle $\triangle$ inside the convex domain $\Omega\subset\mathbb{RP}^2$ is defined as $T=\frac{s_1s_2s_3}{r_1r_2r_3}$, where the $s_j$ and $r_k$ denote Euclidean segment lengths which are possible to be infinite. We denote the logarithm of the triple ratio by 
$\tau(\Delta):=\log(T(\Delta))\in\mathbb{R}$,
 and refer to this quantity as the \emph{triangle invariant} \cite{bonahon2014parameterizing,bonahon2017hitchin}.\medskip

We establish McShane identities for all (finite-type) cusped strictly convex $\mathbb{RP}^2$ surfaces (Theorems \ref{theorem:equsl3}, \ref{thm:equsl3pp}). For $1$-cusped tori, our result takes the form:

\begin{thm}[McShane identity for convex real projective $1$-cusped tori, Theorem~\ref{theorem:inequsl3s11} and Proposition~\ref{prop:dualidentity}]
\label{thm:firstn3S11}
Given a strictly convex $\mathbb{RP}^2$ $1$-cusped torus $\Sigma$, let ${\xvec{\mathcal{C}}_{1,1}}$ be the set of oriented simple closed geodesics on $\Sigma$ up to homotopy. Then 
\begin{align}
\label{equation:mc3c}
\sum_{\gamma\in{\subvec{\mathcal{C}}_{1,1}}}
\frac{1}{1+e^{\ell_1(\gamma)+\tau(\gamma)}}
= 
\sum_{\gamma\in{\subvec{\mathcal{C}}_{1,1}}}
\frac{1}{1+e^{\ell_2(\gamma)-\tau(\gamma)}}
=
1,
\end{align}
where $\tau(\gamma)$ is the triangle invariant for either of the two oriented ideal triangles on $\Sigma$ with one side being the unique ideal geodesic disjoint from $\gamma$ and the other two sides spiraling parallel to $\gamma$ (see Figure~\ref{fig:torussummand}).
\end{thm}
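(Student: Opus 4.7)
The plan is to adapt the classical McShane strategy to the convex real projective setting: decompose a horocyclic arc at the cusp into a countable union of gap intervals, each gap contributing a summand indexed by an oriented simple closed geodesic. The novel ingredients are that the horocycle's total mass is replaced by the Goncharov--Shen potential, and the gap widths are computed in terms of simple root lengths $\ell_i$ and the triangle invariant $\tau$.

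First, I would work in the universal cover, realizing $\Sigma$ as $\Omega/\rho(\pi_1(S_{1,1}))$ for a strictly convex domain $\Omega\subset\mathbb{RP}^2$. Fix a lift $\xi_0\in\partial\Omega$ of the cusp; the unipotent peripheral monodromy preserves a canonical horocyclic arc $h$ based at $\xi_0$, and the Goncharov--Shen potential equips $h$ with a natural finite measure that I normalize to total mass $1$. This arc plays the role of the classical horocycle.

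Second, I would invoke the generalized Birman--Series scarcity theorem established earlier in the paper for cusped convex real projective surfaces to conclude that the subset of $h$ from which cusp-emanating geodesic rays project to \emph{simple} rays in $\Sigma$ has measure zero. The complement is therefore a countable disjoint union of open gap intervals. By the standard cut-and-paste argument, each gap is in bijection with an embedded boundary-parallel pair of pants in $\Sigma$; on the one-cusped torus every such pair of pants is obtained by cutting along a single simple closed curve, so gaps are indexed by oriented simple closed geodesics (two orientations giving two gaps per unoriented curve, which accounts for the sum being over $\vec{\mathcal{C}}_{1,1}$).

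Third, I would compute the mass of the gap associated to an oriented simple closed geodesic $\gamma$. The pair of pants $\Sigma\setminus\gamma$ decomposes into two ideal triangles meeting along the unique ideal geodesic disjoint from $\gamma$, with two further edges spiraling onto $\gamma$. In Fock--Goncharov $\mathcal{A}$-coordinates adapted to this ideal triangulation, the restriction of the Goncharov--Shen potential to the gap admits a closed-form expression in the edge function encoding $\ell_1(\gamma)$ and the triangle invariant $\tau(\gamma)$ of the relevant ideal triangle; a direct simplification identifies the gap mass with $\frac{1}{1+e^{\ell_1(\gamma)+\tau(\gamma)}}$. Summing over $\vec{\mathcal{C}}_{1,1}$ yields the first identity. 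The second identity follows by applying the same argument to the contragredient representation $\rho^*$, which swaps $\ell_1\leftrightarrow\ell_2$ and sends $\tau\mapsto -\tau$ while preserving the underlying set of oriented simple closed curves on $\Sigma$.

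The main obstacle is the explicit gap computation in step three: one must carry out the cluster-type change of coordinates that expresses the attracting and repelling flags of $\rho(\gamma)$ in terms of the $\mathcal{A}$-coordinates of the two bordering ideal triangles, verify the algebraic collapse of the resulting expression to $\frac{1}{1+e^{\ell_1+\tau}}$, and check that the gap masses reassemble to the normalized total mass $1$ of the Goncharov--Shen potential. This bookkeeping, together with the convergence of the sum (guaranteed by the scarcity theorem), constitutes the technical core of the proof; the payoff is that each summand retains the structural simplicity of Mirzakhani's classical gap functions.
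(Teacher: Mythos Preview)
Your outline matches the paper's three-step strategy (Goncharov--Shen potential measure on the horocycle, Birman--Series sparsity to kill the measure of simple directions, gap computation indexed by $\xvec{\mathcal{C}}_{1,1}$). Two points deserve sharpening.

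First, the gap computation cannot be carried out by a single cluster change of coordinates on a fixed ideal triangulation, because the triangle $(\tilde{p},\gamma\tilde{p},\gamma^+)$ defining $\tau(\gamma)$ has a vertex at the attracting fixed point $\gamma^+\in\partial\Omega$, which is \emph{not} a cusp lift and so does not lie in any ideal triangulation of $S_{1,1}$. The paper instead runs a limiting argument: take the $\mathbb{Z}$-family of triangulations $\{\mathcal{T}^k\}$ obtained by iterated Dehn twists along $\gamma$, express the Goncharov--Shen measure of the relevant horocyclic segment in the $\mathcal{A}$-coordinates of $\mathcal{T}^k$, and pass to the limit $k\to\pm\infty$ using the eigenvalue asymptotics $b_{k+1}/b_k\to\lambda_1(\rho(\gamma))$ of Lemma~\ref{proposition:lim}. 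The quantity $\tfrac{a_1}{a_2}\lambda_1(\rho(\gamma))$ and its identification with $e^{\ell_1(\gamma)+\tau(\gamma)}$ both emerge from this limit, and the two half-pants ratios combine as $B_1(\mu)+B_1(\mu')=1$ to give the clean summand. This limiting procedure is the actual content of what you correctly flag as ``the main obstacle''.

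Second, your contragredient argument for the $\ell_2$-identity is a valid alternative to the paper's route. The paper instead proves directly (Lemma~\ref{lem:12termcompare}) that $\ell_1(\gamma^{-1})=\ell_2(\gamma)$ and $\tau(\gamma^{-1})=-\tau(\gamma)$ via the same Dehn-twist asymptotics, so reindexing $\gamma\mapsto\gamma^{-1}$ in the first sum yields the second. Your dual-representation argument is more conceptual but needs the (true) facts that the contragredient of a unipotent-bordered positive representation remains positive and unipotent-bordered and that duality inverts triple ratios; the paper's route trades this global statement for a local $\mathcal{A}$-coordinate computation.
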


\begin{figure}[h!]
\includegraphics[scale=1]{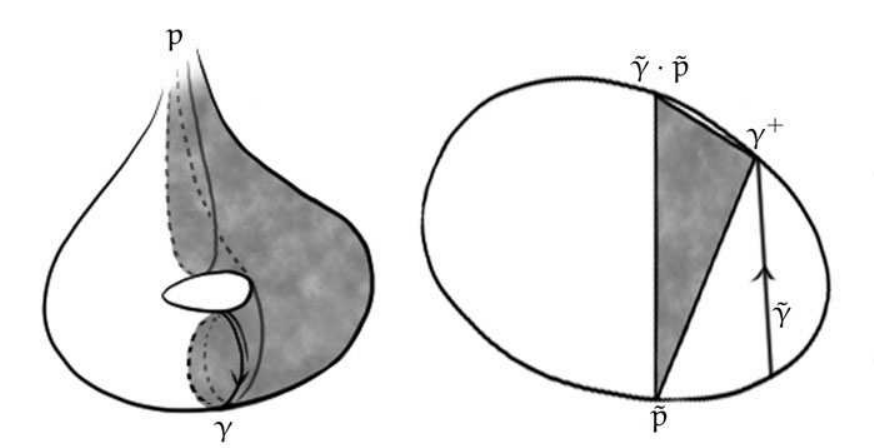}
\caption{Cutting the shaded pair of half-pants on the left figure along the spiraling geodesic depicted produces an ideal triangle $\triangle_\gamma$, and we use it to define $\tau(\gamma)=\tau(\triangle_\gamma)$. The right figure depicts a single lift $(\tilde{p},\gamma\cdot\tilde{p},\gamma^+)$ of $\triangle_\gamma$ to the universal cover of $\Sigma$.}
\label{fig:torussummand}
\end{figure}

We make two remarks before moving on to more general identities:
\begin{itemize}
\item
For $3$-Fuchsian representations, Equation~\eqref{equation:mc3c} recovers the classical McShane identity (Remark~\ref{remark:c11}).
\item
There are in fact two possible ideal triangles $\triangle_{\gamma}$ and $\triangle_{\gamma}'$ with one side being the unique ideal geodesic disjoint from $\gamma$ and the other two sides spiraling parallel to $\gamma$, and provided that one marks them to agree with the lift $(\tilde{p},\gamma\cdot\tilde{p},\gamma^+)$ (Figure~\ref{fig:torussummand}), their triple ratios agree and $\tau(\gamma)$ is well-defined (Remark~\ref{remark:sl3s11td}). Moreover, we show in Lemma~\ref{lem:12termcompare} that $\tau(\gamma^{-1})=-\tau(\gamma)$ and $\ell_1(\gamma^{-1})=\ell_2(\gamma)$, which leads to the ``two'' McShane identities above (see Proposition~\ref{prop:dualidentity}). We study these \emph{a priori} unexpected symmetries for $S_{1,1}$, $n=3$ positive representations in \S\ref{sec:surprisesymmetry}.
\end{itemize}

\subsection{McShane identities for general convex real projective surfaces}

We begin by introducing the requisite summation index.

\begin{defn}[Boundary-parallel pairs of pants]
\label{definition:Pp}
Assume that $S_{g,m}$ is endowed with an auxiliary hyperbolic metric and let $p$ be a distinguished cusp of $S_{g,m}$. An (embedded) \emph{boundary-parallel pairs of pants} containing $p$ is a pair $(\beta,\gamma)$ of (disjoint) oriented closed geodesics so that $p,\beta,\gamma$ bound a pair of pants on $S_{g,m}$, and $\beta,\gamma$ are positioned and oriented as per Figure~\ref{fig:pidealquad}). We denote the collection of all boundary-parallel pairs of pants on $S$ containing $p$ up to homotopy by $\xvec{\mathcal{P}}_p$. We similarly define $\xvec{\mathcal{P}}_\alpha$ for bordered hyperbolic surfaces by supplanting the role of the cusp $p$ by a distinguished oriented boundary geodesic $\alpha$.
\end{defn}

\begin{figure}[h!]
\includegraphics[scale=0.25]{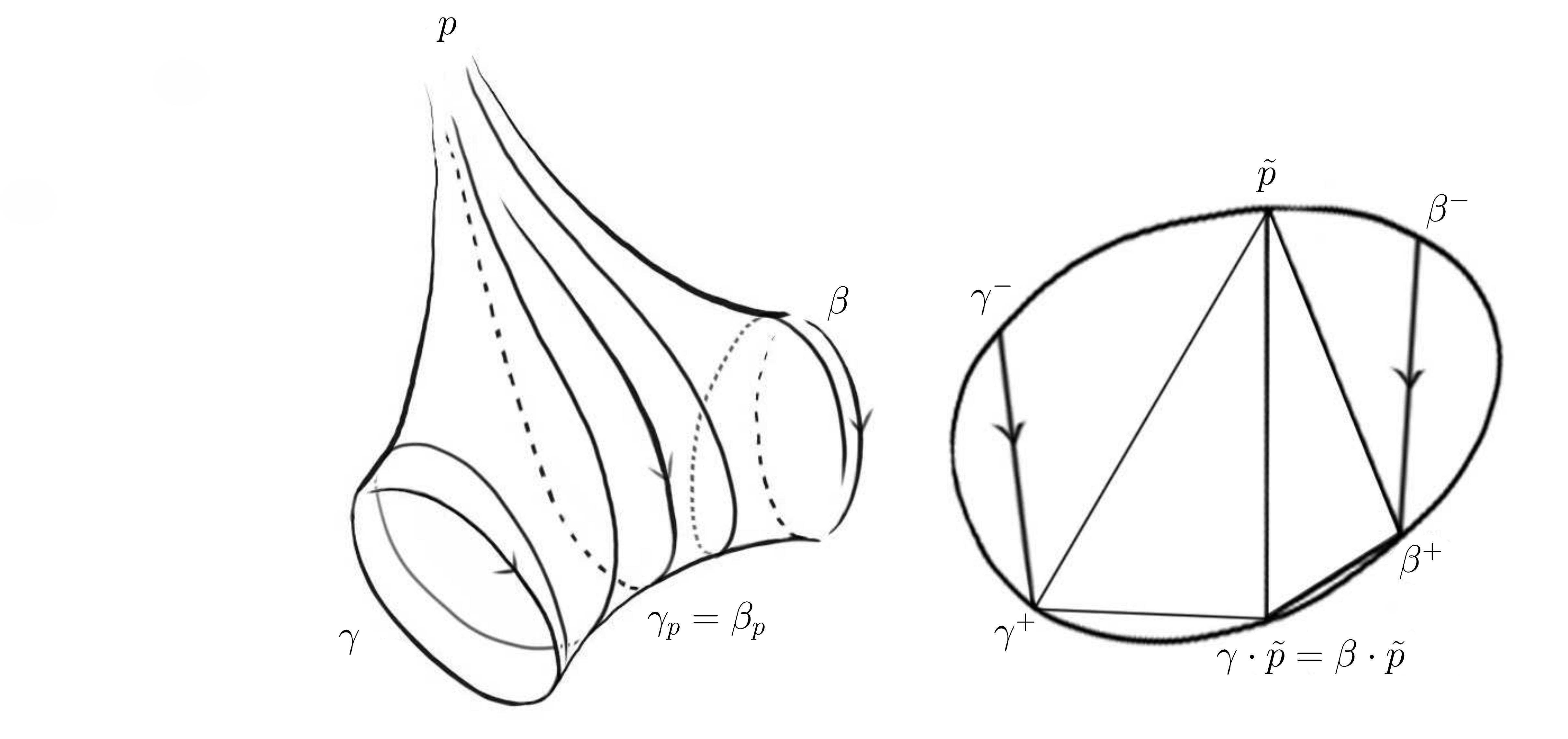}
\caption{Cutting along the spiraling geodesics on the boundary-parallel pair of pants $(\beta,\gamma)$ (left figure) results in an ideal quadrilateral whose lift is the marked quadrilateral $(\tilde{p},\beta^+,\beta\cdot\tilde{p}=\gamma\cdot\tilde{p},\gamma^+)$ (right figure).}
\label{fig:pidealquad}
\end{figure}

Fock and Goncharov \cite[Section 9]{FG06} parameterize $\mathrm{Pos}_n(S_{g,m})$ by two types of projective invariants: the \emph{triple ratios} (Definition \ref{definition:tripleratio}) and \emph{edge functions} (Definition \ref{definition:edgefunction}), the latter of which generalize Thurston's shearing coordinates \cite{Thu98}. We have already seen the importance of the former in defining triangle invariants, and we now introduce the latter in the guise of edge invariants.

\begin{defn}[Edge invariants]
\label{definition:edgeinvariant}
For any boundary parallel pair of pants $(\beta,\gamma)\in\xvec{\mathcal{P}}_p$, let $\gamma_p=\beta_p$ denote the unique boundary-parallel oriented simple bi-infinite geodesic on $(\beta,\gamma)$, with both ends going up $p$, which separates $(\beta,\gamma)$ into two boundary-parallel pairs of half-pants $(\beta,\beta_p),(\gamma,\gamma_p)\in\xvec{\mathcal{H}}_p$. On $(\beta,\beta_p)$, there is a unique simple bi-infinite geodesic which emanates from $p$ and spirals towards $\beta$ (in the same direction as $\beta$) and likewise on $(\gamma,\gamma_p)$ there is also such a geodesic spiraling towards $\gamma$. Cutting $(\beta,\gamma)$ along these two spiral geodesics results in an ideal quadrilateral $\square_{\beta,\gamma}$ (see Figure~\ref{fig:pidealquad}). Let $(\,\gamma^+,\gamma\tilde{p},\beta^+,\tilde{p})$ be anticlockwise oriented ideal quadrilateral lift of $\square_{\beta,\gamma}$. We define \emph{edge invariants}
\[
d_1(\beta,\gamma):=\log \left(D_1(\tilde{p},\gamma \tilde{p}, \beta^+,\gamma^+)\right)\text{ and }e_1(\beta,\gamma):=\log \left(D_2(\tilde{p},\gamma \tilde{p}, \beta^+,\gamma^+)\right),
\]
where the $D_i$ are \emph{edge functions} (Definition \ref{definition:edgefunction}) of $(\tilde{p},\gamma \tilde{p}, \beta^+,\gamma^+)$.
\end{defn}

\begin{thm}[McShane identity for general cusped convex real projective surfaces, {Theorem~\ref{thm:equsl3pp}}]
Let $\rho:\pi_1(S_{g,m})\rightarrow\operatorname{PGL}_3(\mathbb{R})$ be a positive representation with unipotent boundary monodromy and let $p$ be a distinguished cusp on $S_{g,m}$. Then, 
\begin{align}
\sum_{(\beta,\gamma)\in\subvec{\mathcal{P}}_p}
\left(1+\tfrac{\cosh \frac{e_1(\beta,\gamma)}{2}}
{\cosh \frac{d_1(\beta,\gamma)}{2}}
\cdot 
e^{\frac{1}{2}\left(\ell_1(\beta)+\tau(\beta,\beta_p)+\ell_1(\gamma)+\tau(\gamma,\gamma_p)\right)}
\right)^{-1}
= 1.\label{eq:generalcuspedidentity}
\end{align}
where $d_1(\beta,\gamma):=\log \left(D_1(x,\gamma x, \beta^+,\gamma^+)\right)$ and $e_1(\beta,\gamma):=\log \left(D_2(x,\gamma x, \beta^+,\gamma^+)\right)$ are edge invariants (Definition~\ref{definition:edgeinvariant}), and $\tau(\gamma,\gamma_p)$ and $\tau(\beta,\beta_p)$ are triangle invariants (Definition~\ref{definition:Tgammap}).
\end{thm}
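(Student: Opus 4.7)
The plan is to adapt Mirzakhani's horocyclic decomposition approach to the rank $3$ positive setting, using the Goncharov--Shen potential at the distinguished cusp $p$ as the replacement for horocycle length, and identifying each summand as a \emph{gap function} produced by a boundary-parallel pair of pants in $\xvec{\mathcal{P}}_p$.

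First, I would interpret the Goncharov--Shen potential at $p$ as a natural invariant measure $\mu_p$ on the space of geodesic rays emanating from a chosen lift $\tilde p$, normalized to have total mass $1$. Under the standing assumption of unipotent boundary monodromy, this mass is finite and plays the role of the classical horocycle length, so the identity amounts to the mass decomposition $1 = \mu_p(S^1_\infty)$. Next, I would stratify rays from $\tilde p$ according to the behaviour of their projection to $\Sigma = \Omega/\rho(\pi_1(S_{g,m}))$, and apply the generalized Birman--Series scarcity theorem promised in the abstract to conclude that rays projecting to simple geodesics form a $\mu_p$-null set. The complementary full-measure open set then decomposes into countably many gap arcs, each indexed by an embedded boundary-parallel pair of pants $(\beta,\gamma) \in \xvec{\mathcal{P}}_p$: the gap arc for $(\beta,\gamma)$ is bounded by the two geodesics from $\tilde p$ spiraling toward $\beta$ and toward $\gamma$, visible as consecutive vertices of the ideal quadrilateral $(\tilde p, \gamma\tilde p, \beta^+, \gamma^+)$ of Definition~\ref{definition:edgeinvariant}.

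The bulk of the work is then to compute $\mu_p$ of each gap in terms of projective invariants. Lifting the pair of pants together with its two adjacent ideal triangles, the entire configuration is parameterized by Fock--Goncharov $\mathcal{A}$-coordinates, and by the standard gluing rules these reduce to the six invariants $\ell_1(\beta)$, $\ell_1(\gamma)$, $\tau(\beta,\beta_p)$, $\tau(\gamma,\gamma_p)$, $d_1(\beta,\gamma)$, $e_1(\beta,\gamma)$. Writing $\rho(\beta)$ and $\rho(\gamma)$ as products of Fock--Goncharov snake matrices in these coordinates, and directly evaluating the $\mu_p$-length of the gap arc as a ratio of Goncharov--Shen potentials, should yield the stated closed form
\[
\left(1+ \frac{\cosh(e_1(\beta,\gamma)/2)}{\cosh(d_1(\beta,\gamma)/2)}\, e^{\frac{1}{2}(\ell_1(\beta)+\tau(\beta,\beta_p)+\ell_1(\gamma)+\tau(\gamma,\gamma_p))}\right)^{-1}.
\]

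The main obstacle is this last explicit computation; in particular, extracting the $\cosh/\cosh$ prefactor is delicate, as both the triangle invariants and the edge invariants enter in a structurally asymmetric way. As a sanity check and starting point, I would first verify the formula in the torus degeneration where $\beta=\gamma$, recovering the summand of Theorem~\ref{thm:firstn3S11}, and then extend to the general quadrilateral case by pasting along the separating ideal edge $\beta_p=\gamma_p$ using the Fock--Goncharov coordinate change governed precisely by $d_1$ and $e_1$. Convergence of the series then follows automatically, since the gaps are pairwise disjoint open subsets of a measure space of total mass $1$.
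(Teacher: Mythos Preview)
Your overall strategy matches the paper's: interpret the normalized Goncharov--Shen potential $P_1^p$ as a probability measure on the space of geodesics from the cusp, invoke the convex-projective Birman--Series theorem to kill the measure of the simple locus, and index the remaining gap intervals by $\xvec{\mathcal{P}}_p$. Step~1 and Step~2 of your outline are essentially what the paper does in \S\ref{sec:step1}--\S\ref{sec:halfpantsseries}.

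The gap is in Step~3, the explicit computation of the summand. Your description --- write $\rho(\beta),\rho(\gamma)$ as products of snake matrices and ``directly evaluate'' the potential ratio, then ``paste along $\beta_p=\gamma_p$ via the Fock--Goncharov coordinate change governed by $d_1,e_1$'' --- does not quite work as stated. The boundary of the gap interval for $(\beta,\gamma)$ is determined by the attracting fixed points $\beta^+,\gamma^+\in\partial\Omega$, and these are \emph{not} lifts of cusps; they lie outside the Farey set $\tilde{m}_p$. The $\mathcal{A}$-coordinates, and hence the Goncharov--Shen characters $P_1(\tilde p;\,\cdot\,,\,\cdot\,)$ expressed through them, are only available for flags at Farey points, so there is nothing to ``directly evaluate''. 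The paper resolves this by a limiting procedure: one builds a bi-infinite family of ideal triangulations $\{\mathcal{T}^k\}$ related by simultaneous Dehn twists along $\gamma$ and $\beta^{-1}$, computes the relevant potential ratios in the $\mathcal{A}$-coordinates of $\mathcal{T}^k$, and then lets $k\to+\infty$ so that the Farey vertices $\gamma^k\cdot y_0$ and $\beta^k\cdot v_0$ converge to $\gamma^+$ and $\beta^+$. The asymptotics of $\mathcal{A}$-coordinates under Dehn twists (Lemma~\ref{proposition:lim}) are what produce both the $e^{\ell_1(\cdot)+\tau(\cdot)}$ factors and, after a short manipulation using $D_1\cdot D_2=\lambda_1(\rho(\beta))/\lambda_1(\rho(\gamma))$, the $\cosh(e_1/2)/\cosh(d_1/2)$ prefactor. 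There is no flip or pasting argument along $\beta_p=\gamma_p$ in the proof; the edge invariants $d_1,e_1$ emerge instead as limits of certain $\mathcal{A}$-coordinate ratios across the diagonal of the ideal quadrilateral $(\tilde p,\beta^+,\gamma\tilde p,\gamma^+)$.
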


In the $(g,m)=(1,1)$ case, the quantity $d_1(\beta,\gamma)=-e_1(\beta,\gamma)$ and Equation~\eqref{eq:generalcuspedidentity} simplifies to Equation~\eqref{equation:mc3c}. See Remark \ref{remark:sl3s11td} for concrete details.

\subsection{McShane identities of loxodromic bordered positive representations}

We again state the McShane identity only in the special case of positive representations with one (loxodromic) boundary to simplify notation.

\begin{thm}[{McShane identities for loxodromic bordered positive representations, Theorem~\ref{theorem:boundaryith}}]
\label{theorem:b}
Let $\rho:\pi_1(S_{g,1})\to\operatorname{PGL}_n(\mathbb{R})$ be a positive representation with loxodromic boundary monodromy, and let $\alpha$ be a distinguished oriented boundary component of $S_{g,1}$ such that $S_{g,1}$ is on the left of $\alpha$. For each $i=1,\cdots,n-1$, we have:
\begin{align}
\sum_{(\beta,\gamma)\in \overrightarrow{\mathcal{P}}_\alpha}
\log \left( \frac{e^{\frac{\ell_i(\alpha)}{2}} + e^{\frac{2\phi_i(\beta,\gamma)+\kappa_i(\beta,\beta_{\alpha^-})+\ell_i(\beta)+\kappa_i(\gamma,\gamma_{\alpha^-})+\ell_i(\gamma)}{2}}}{e^{\frac{-\ell_i(\alpha)}{2}} + e^{\frac{2\phi_i(\beta,\gamma)+\kappa_i(\beta,\beta_{\alpha^-})+\ell_i(\beta)+\kappa_i(\gamma,\gamma_{\alpha^-})+\ell_i(\gamma)}{2}}} \right)=\ell_i(\alpha)\label{equation:idhs},
\end{align}
where
\begin{itemize}
\item
$\kappa_i(\delta,\delta_{\alpha^-}),\text{ for }\delta=\beta\text{ or }\gamma$, is the logarithm of a rational function of triple ratios associated to an ideal triangle embedded in $(\beta,\gamma)$, and 
\item
$\phi_i(\beta,\gamma)$ is an analytic function of triple ratios and edge functions associated with the boundary parallel pair of pants $(\beta,\gamma)$. 
\end{itemize}
\end{thm}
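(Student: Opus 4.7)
The plan is to generalize the Tan--Wong--Zhang/Mirzakhani decomposition of boundary length into gap contributions, replacing hyperbolic length by the Goncharov--Shen potential and expressing each gap contribution in terms of Fock--Goncharov coordinates on the underlying pair of pants.

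First, I would realize $\ell_i(\alpha)$ as the total mass of a canonical gap measure on the attracting flag of $\rho(\alpha)$. The loxodromic assumption guarantees that $\rho(\alpha)$ has distinct attracting and repelling complete flags $\alpha^\pm$; after suitable normalization, the $i$-th length is the integral of the Goncharov--Shen potential density associated with $\alpha^\pm$ over a fundamental domain for the action of $\rho(\alpha)$. Simple geodesic leaves of $S_{g,1}$ whose spiraling endpoints accumulate at $\alpha^\pm$ then partition this fundamental domain into countably many open gaps, and standard pants-topology identifies each gap with a unique embedded boundary-parallel pair of pants $(\beta,\gamma)\in\vec{\mathcal{P}}_\alpha$.

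The analytic core of the proof is a Birman--Series style scarcity result: the endpoints of non-simple leaves form a set of measure zero with respect to the gap measure. I would adapt the generalized geodesic scarcity theorem (which the paper establishes for the cusped case, as advertised in the abstract) to the loxodromic setting, thereby reducing the identity to a termwise sum indexed by $\vec{\mathcal{P}}_\alpha$.

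Finally, I would compute each gap contribution in closed form. The pair of pants $(\beta,\gamma)$, cut along its three boundary-spiraling geodesics, decomposes into two ideal triangles glued along three shared edges. Parameterizing the restriction of $\rho$ by Fock--Goncharov triple ratios and edge functions on this hexagon, and tracking how $\rho(\alpha)$ translates between the attracting flags of $\rho(\beta)$ and $\rho(\gamma)$, should yield the summand of Equation~\eqref{equation:idhs}: the term $\phi_i(\beta,\gamma)$ records the edge-function correction across the shared edges, while $\kappa_i(\beta,\beta_{\alpha^-})$ and $\kappa_i(\gamma,\gamma_{\alpha^-})$ record triple-ratio corrections for passing between the two ideal triangles. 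The main obstacle is precisely this last step: in general rank, eigenvectors of totally positive $n\times n$ matrices have no tractable closed form, so one must exploit the combinatorial positivity of the Fock--Goncharov coordinates --- rather than direct diagonalization --- to extract the $i$-th eigenvalue ratios symbolically. A secondary difficulty is ensuring the scarcity result is quantitatively strong enough to justify termwise convergence of the potentially conditionally convergent sum.
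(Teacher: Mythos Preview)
Your proposal has a genuine gap in the measure-zero step. You plan to adapt the paper's generalized Birman--Series theorem to the loxodromic setting, but that theorem (Theorem~\ref{thm:birmanseries}) is proved only for finite-area convex real projective surfaces, i.e.\ for $n=3$ with unipotent boundary, where there is an honest geometric model on which one can talk about geodesics and Hilbert area. For general $n$ and loxodromic boundary there is no such model, so ``adapting'' the scarcity theorem is not a matter of minor modification---it would require an entirely new argument, and the paper neither proves nor needs one. What the paper actually does (Theorem~\ref{thm:boundary}) is follow Labourie--McShane \cite[Theorem~4.1.2.1]{LM09} almost line by line: the classical hyperbolic Birman--Series theorem is combined with the H\"older regularity of the limit curve $\xi_\rho$, which in the loxodromic case follows from the Anosov property (Remark~\ref{remark:holderanosov}). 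The key observation enabling this is that the $i$-th potential ratio $B_i$ satisfies all the axioms of an \emph{ordered ratio} (Definition~\ref{definition:4ratio})---a weakening of Labourie's ordered cross ratio dropping one cocycle identity---and this one cocycle is the only one Labourie--McShane's proof uses.

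Your gap computation is also organized differently. Rather than a hexagon decomposition and symbolic extraction of eigenvalue ratios from positivity, the paper works entirely with ratios of $i$-th characters $P_i$. Proposition~\ref{prop:potentialspectral} shows that translating by $\rho(\alpha)$ rescales $P_i$ by $e^{\ell_i(\alpha)}$ via a direct determinant manipulation, and Proposition~\ref{prop:ratioP} expresses $P_i(x;\delta^+,\delta^{-1}x)/P_i(x;\delta x,\delta^+)$ as $K_i(\delta,\delta_x)\cdot e^{\ell_i(\delta)}$ where $K_i$ is an explicit rational function of triple ratios---this is where $\kappa_i$ comes from, with no diagonalization needed. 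The summand formula then follows from Lemma~\ref{lemma:cosh12} and an elementary rearrangement (Equation~\eqref{equation:mcloxo1}). Your concern about intractable eigenvectors is therefore a non-issue once one works with potential ratios rather than with the matrices directly.
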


\begin{rmk}
We highlight the fact that both our summands as well as those from the McShane identity for bordered hyperbolic surfaces, see Equation~\eqref{equation:mirD}, take the following general form
\begin{align}
\mathcal{D}(x, y, z) =
\log \left( \frac{e^{\frac{x}{2}} + e^{\frac{y+z}{2}}}{e^{\frac{-x}{2}} + e^{\frac{y+z}{2}}} \right)\label{equation:mirid}.
\end{align}
For the bordered hyperbolic surface identity, $(x,y,z)$ equals $(\ell(\bar{\alpha}),\ell(\bar{\beta}),\ell(\bar{\gamma}))$, whereas for our identity, we take 
\[
(x,y,z)=(\ell_i(\alpha), \phi_i(\beta,\gamma)+\kappa_i(\beta,\beta_{\alpha^-})+\ell_i(\beta), \phi_i(\beta,\gamma)+\kappa_i(\gamma,\gamma_{\alpha^-})+\ell_i(\gamma)).
\]
Moreover, for $n$-Fuchsian representations, 
\begin{align*}
\kappa_i(\beta,\beta_{\alpha^-})&=\kappa_i(\gamma,\gamma_{\alpha^-})=\phi_i(\beta,\gamma)=0\text{,}\\
\ell_i(\beta)&=\ell_j(\beta), \ell_i(\gamma)=\ell_j(\gamma)\text{ for all }i,j=1,\cdots,n-1,
\end{align*}
and hence each of our family of $(n-1)$ McShane identities reduces to Equation~\eqref{equation:mirD}. 
\end{rmk}

\begin{rmk}
We direct those curious about the precise relationship between Theorem~\ref{theorem:b} and  the Labourie--McShane identity for the rank $n$ weak cross ratio to Corollary~\ref{corollary:LM-HS}.
\end{rmk}

\subsection{Birman--Series theorem and McShane-type inequalities for $n\geq4$ unipotent bordered representations}

The Birman--Series theorem \cite{birman_series} asserts the sparsity of complete simple geodesics on hyperbolic surfaces, and is a crucial ingredient (albeit sometimes only implicitly appearing) in almost all proofs of McShane identities. For example, the classical McShane identity has a probabilistic interpretation: the summand $2({1+e^{\ell(\bar{\gamma})}})^{-1}$ is precisely the probability that a geodesic uniformly randomly launched from the cusp of a $1$-cusped torus self-intersects before intersecting $\bar{\gamma}$. In order for the classical McShane identity to hold true, it is necessary that the event of a geodesic launched from the cusp never self-intersecting has probability $0$. This is ensured by the Birman--Series theorem.\medskip

Labourie and McShane also depend on the classical Birman--Series theorem in establishing identities for rank $n$ weak cross ratios associated to Hitchin representations \cite[Theorem~4.1.2.1]{LM09}. For representations with loxodromic boundary monodromy, they combine the Birman--Series theorem and the Anosov property (see Remark \ref{remark:holderanosov}) in order to prove the identity is indeed an equality. As Hitchin representations with loxodromic boundary monodromy deform to positive representations with unipotent boundary monodromy, however, the Anosov property is lost. In this setting, they establish their identity under a regularity hypothesis \cite[Definition~4.2.1]{LM09}. Loxodromic bordered positive representations are Hitchin (Remark~\ref{remark:hitpos}) and we are able to employ the same trick as Labourie and McShane to establish Theorem \ref{theorem:b} (or rather, Theorem~\ref{theorem:boundaryith}). However, it is generally unknown if unipotent bordered positive representations satisfy their regularity hypothesis and in order to prove our McShane identities for cusped convex real projective surfaces, we generalize the Birman--Series theorem:

\begin{thm}[Birman--Series theorem for convex real projective surfaces, Theorem~\ref{thm:birmanseries}] Given a strictly convex $\mathbb{RP}^2$ surface $\Sigma$, the \emph{Birman--Series set} defined as
\begin{align*}
\mathcal{BS}(\Sigma):=
\left\{
x\in\Sigma\mid
x\text{ lies on a complete simple geodesic on }\Sigma
\right\}
\end{align*}
is nowhere dense, closed and has $0$ Hilbert area.
\end{thm}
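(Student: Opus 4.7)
The plan is to adapt the original Birman--Series strategy to the Hilbert--Finsler setting. The core idea is the same as in the hyperbolic case: complete simple geodesics on $\Sigma$ are so combinatorially constrained by a pants decomposition that, for every length budget $N$, the set of points lying on a simple geodesic of combinatorial length at least $N$ can be covered by a family of Finsler tubes whose total Hilbert area decays exponentially in $N$.

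First I would fix an auxiliary geodesic pants decomposition of $\Sigma$, using uniqueness of geodesic representatives of free homotopy classes on strictly convex projective surfaces (Benoist) and Marquis's description of unipotent cusps via horospherical neighborhoods in the projective cusp model. This cuts $\Sigma$ into finitely many pairs of pants $P_1,\ldots,P_k$, and any complete simple geodesic $\gamma$ decomposes as a concatenation $\gamma = a_1 a_2 \cdots$ of simple arcs with each $a_j$ properly embedded in some $P_{k(j)}$ and consecutive arcs matching on the shared boundary curve. Each $a_j$ lies in one of finitely many isotopy classes (rel $\partial P_{k(j)}$) of simple arcs in a pair of pants, and the global disjointness of the $a_j$'s forces the matching on each boundary to be of Dehn-twist type; this gives a bound on the number of distinct combinatorial patterns of length $N$ that is only polynomial in $N$.

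Second, for each such combinatorial pattern I would prove the key contraction estimate: the set of points lying on a complete simple geodesic realising that pattern is contained in a Finsler tube of width $\lesssim e^{-cN}$, where $c>0$ depends only on a uniform lower bound for the first simple root lengths of the pants-decomposition curves. In the hyperbolic case this is transverse contraction of the geodesic flow through a collar neighborhood; here the analogous statement should follow from Benoist's hyperbolicity of the Hilbert metric on the convex domain $\Omega$ covering $\Sigma$, together with a collar lemma for strictly convex $\mathbb{RP}^2$ surfaces (itself one of the applications flagged in the introduction). In the cusp regions the Finsler norm degenerates and Benoist's Anosov property no longer applies directly; I would handle this separately by truncating at a sufficiently deep horosphere in the unipotent cusp model and showing that no complete simple geodesic descends beyond a uniform horocyclic depth, so that the contribution of cusp tails to the area is controlled.

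Summing over the polynomially many patterns of combinatorial length $N$ yields a cover of $\{x : x \text{ lies on a complete simple geodesic of length }\geq N\}$ by Finsler tubes of total area $O(\mathrm{poly}(N)\, e^{-cN})$, which tends to $0$. Consequently $\mathcal{BS}(\Sigma)$, being the intersection of these covers over $N$, has zero Hilbert area; since $\Sigma$ is a strictly convex projective surface, this forces empty interior and hence nowhere density. Closedness is a standard Arzel\`a--Ascoli argument: a sequence of complete simple geodesics through a convergent sequence of points subconverges locally uniformly to a complete geodesic, and the simplicity is preserved in the limit. The principal obstacle will be the quantitative contraction step, since the Hilbert metric is only Finsler, so one must trade the explicit hyperbolic collar estimate for a combination of Benoist's Anosov property on the thick part and a separate polynomial-depth estimate in the projective cusp model.
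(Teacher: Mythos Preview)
Your broad strategy---polynomial combinatorial growth versus exponential geometric shrinking---matches the paper's, but the contraction step as you describe it has two real problems.

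First, you invoke the collar lemma for strictly convex $\mathbb{RP}^2$ surfaces as an input. In this paper the collar lemma (Theorem~\ref{thm:collar}) is proved \emph{as an application} of the McShane identity, and the McShane identity for cusped convex projective surfaces in turn relies on the Birman--Series theorem to show that the Cantor set $\mathfrak{C}\cup\mathfrak{A}$ has Goncharov--Shen measure zero. Citing the collar lemma here is circular.

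Second, you appeal to ``Benoist's Anosov property'' for the transverse contraction. But positive representations with unipotent boundary monodromy are \emph{not} Anosov---the paper stresses this point repeatedly---and it is precisely the cusped case that is at issue. The paper's actual contraction input is different and more concrete: it first shows (Proposition~\ref{thm:convprojregularity}) that $\partial\Omega$ is $\beta$-convex for some $\beta\in[2,\infty)$, via Blaschke-metric curvature bounds of Benoist--Hulin combined with Benoist's characterization of Gromov-hyperbolic Hilbert geometries, and from $\beta$-convexity it derives an ``exponentially shrinking balls'' lemma (Lemma~\ref{thm:expshrinkball}, communicated by Benoist): the \emph{Euclidean} diameter of a Hilbert ball $B(u,R)$ is at most $c\,e^{-d(u,O)/c}$. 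This is a Euclidean estimate in the convex-domain model, and the covering argument is carried out with Euclidean rectangles of exponentially small width in a fixed fundamental domain, not with intrinsic Finsler tubes. The passage back to Hilbert area is then just the observation that the Busemann measure is in the same measure class as Lebesgue measure.

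The triangulation-versus-pants-decomposition difference is harmless; the paper itself notes the flexibility of the combinatorial encoding. Your cusp truncation is correct in spirit and matches the paper's Proposition~\ref{prop:compact}. But you need to replace the collar-lemma/Anosov route for contraction with the $\beta$-convexity route.
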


\begin{rmk}
We owe Benoist an enormous debt of gratitude for helping us to prove the above result. Particularly in explaining to us the proof for the exponentially shrinking ball property (Lemma~\ref{thm:expshrinkball}).\end{rmk}

Unfortunately, we are currently unable to formulate (let alone prove) a natural Birman--Series-type theorem for  $n\geq4$ unipotent bordered positive representations. Thus, instead of a McShane identity, we obtain the following McShane-type inequality:

\begin{thm}[Inequalities for unipotent bordered positive representations, Theorem \ref{theorem:puncturecase}]
\label{theorem:pc}
Given a positive representation $\rho\in \mathrm{Pos}_n(S_{g,m})$ with unipotent boundary monodromy, for the cusp $p$ and $i=1,\cdots,n-1$, we have 
\begin{align*}
&\sum_{(\beta,\gamma)\in \subvec{\mathcal{P}}_p}
 \frac{1}{1 + e^{\phi_i(\beta,\gamma)+\frac{1}{2}\left(\kappa_i(\gamma,\gamma_{p})+\ell_i(\gamma)+\kappa_i(\beta,\beta_{p})+\ell_i(\beta)\right)}}
\leq 1.
\end{align*}
\end{thm}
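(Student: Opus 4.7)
The plan is to deduce Theorem~\ref{theorem:pc} from the loxodromic-bordered identity of Theorem~\ref{theorem:boundaryith} by an approximation argument, with the inequality arising naturally from Fatou's lemma applied to counting measure. The essential analytic input is that the Mirzakhani-type summand
\[
\mathcal{D}(x, W) := \log\!\left(\frac{e^{x/2} + e^{W/2}}{e^{-x/2} + e^{W/2}}\right)
\]
vanishes to first order at $x=0$ with $\mathcal{D}(x,W) = x\cdot (1+e^{W/2})^{-1} + O(x^2)$, so dividing Theorem~\ref{theorem:boundaryith} through by the $i$-th length of the distinguished boundary and letting that length tend to $0$ formally produces the unipotent inequality; this exactly parallels the limiting passage from Mirzakhani's bordered identity to McShane's classical cusped identity.

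First I would construct a one-parameter family $\rho_t \in \mathrm{Pos}_n(S_{g,m})$ for $t \in (0,\varepsilon)$ converging to $\rho$ and having loxodromic monodromy around the distinguished cusp $p$, with the remaining peripheral monodromies held fixed (unipotent). Such deformations exist because positivity is an open condition in the relative character variety and the boundary holonomy at $p$ may be freely perturbed within the positive semigroup. Applying (the multi-boundary version of) Theorem~\ref{theorem:boundaryith} at $\rho_t$, with $p$ in the role of the distinguished oriented boundary, yields
\[
\sum_{(\beta,\gamma) \in \subvec{\mathcal{P}}_p} \mathcal{D}\bigl(\ell_i^{(t)}(p),\, W_i^{(t)}(\beta,\gamma)\bigr) \;=\; \ell_i^{(t)}(p),
\]
where $W_i^{(t)}(\beta,\gamma) := 2\phi_i(\beta,\gamma) + \kappa_i(\beta,\beta_p) + \ell_i(\beta) + \kappa_i(\gamma,\gamma_p) + \ell_i(\gamma)$ evaluated at $\rho_t$. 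Dividing by the positive quantity $\ell_i^{(t)}(p)$ gives the normalized identity $\sum \mathcal{D}(\ell_i^{(t)}(p),W_i^{(t)})/\ell_i^{(t)}(p) = 1$.

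Next I would take $t \to 0$. Continuity of the projective invariants as algebraic functions of the Fock--Goncharov coordinates ensures that for each fixed homotopy class $(\beta,\gamma)$ the quantity $W_i^{(t)}(\beta,\gamma)$ converges to its $\rho$-value, and the Taylor expansion of $\mathcal{D}$ then gives pointwise convergence
\[
\frac{\mathcal{D}\bigl(\ell_i^{(t)}(p),W_i^{(t)}\bigr)}{\ell_i^{(t)}(p)} \;\longrightarrow\; \bigl(1 + e^{W_i(\beta,\gamma)/2}\bigr)^{-1}.
\]
Since every summand is non-negative, Fatou's lemma for counting measure applies and yields
\[
\sum_{(\beta,\gamma)\in\subvec{\mathcal{P}}_p} \bigl(1 + e^{W_i(\beta,\gamma)/2}\bigr)^{-1} \;\leq\; \liminf_{t\to 0}\, \sum_{(\beta,\gamma)} \frac{\mathcal{D}\bigl(\ell_i^{(t)}(p),W_i^{(t)}\bigr)}{\ell_i^{(t)}(p)} \;=\; 1,
\]
which is the asserted inequality once one notes $W_i/2 = \phi_i + \tfrac{1}{2}(\kappa_i(\beta,\beta_p) + \ell_i(\beta) + \kappa_i(\gamma,\gamma_p) + \ell_i(\gamma))$.

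The main obstacle is guaranteeing that the deformation-and-limit machinery is legitimate: one must verify that Theorem~\ref{theorem:boundaryith} remains valid for mixed boundary behavior where only the distinguished boundary is loxodromic while the remaining boundaries stay unipotent (the Labourie--McShane/Anosov reasoning behind the loxodromic case should still apply locally near $p$, but this requires checking), and one must confirm real-analytic dependence of each fixed-homotopy-class summand on $t$ to produce the pointwise limit. Both points should follow from standard properties of positive representations and the Fock--Goncharov parametrization, but they are the only nontrivial ingredients beyond Theorem~\ref{theorem:boundaryith}. The appearance of an inequality rather than an equality is the unavoidable shadow of possible loss of mass in the limit --- precisely what Fatou's lemma records --- and without a Birman--Series-type theorem for $n \geq 4$ positive representations there is no additional mechanism available to recover equality.
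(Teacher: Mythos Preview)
Your proposal is essentially the paper's argument: deform to loxodromic-bordered representations, apply Theorem~\ref{theorem:boundaryith}, divide through by $\ell_i$ of the distinguished boundary, and pass to the limit termwise; your Taylor expansion of $\mathcal{D}$ is exactly the content of the paper's Lemma~\ref{lem:lim1}, and your invocation of Fatou makes explicit the positivity step that the paper leaves implicit.

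The one place the paper proceeds differently is precisely the obstacle you flag. Rather than deforming only the distinguished boundary and relying on a mixed loxodromic/unipotent version of Theorem~\ref{theorem:boundaryith} (which is problematic because the H\"older/Anosov input behind Theorem~\ref{thm:boundary} genuinely uses that \emph{all} peripheral monodromies are loxodromic), the paper's path $l$ in Definition~\ref{definition:hypara} makes every boundary loxodromic for $s<1$, so Theorem~\ref{theorem:boundaryith} applies without modification. The cost is that the loxodromic identity then carries the additional $\xvec{\mathcal{P}}^\partial_\alpha$ sum, and a separate computation (Lemma~\ref{lem:Sa}) shows those peripheral-cuff terms tend to zero as the non-distinguished boundaries also degenerate to cusps. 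This neatly circumvents the issue you identified at the price of one extra, elementary limit.
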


We conjecture that the above (non-strict) inequality should indeed by an equality, and outline a possible strategy of proof based upon establishing and obtaining enough control on the polynomial growth rate of $i$-th lengths, see Theorem~\ref{theorem:equuni} for details.

\subsection{Triple ratio boundedness and rigidity}

Of the three types of invariants we use to express our identities, $i$-th lengths directly generalize hyperbolic lengths, edge invariants generalize Thurston's shearing coordinates and so in some sense triangle invariants, or rather, triple ratios are perhaps the most mysterious. We undertake to dispel a little of this mystery by demonstrating that triple ratios satisfy a boundness property. 
Let us begin with an observation in the $n=3$ case, where triangle invariants and p-areas on convex real projective surfaces are related as follows:

\begin{thm}[{\cite[Proposition~0.3]{adeboye2015area}}]
Let $m$ be the Lebesgue measure with respect to the standard inner product on $\mathbb{R}^2$. Set 
\begin{equation*}
K = \rm{Sup}_{a,b \in \mathbb{R}^2, ||a||, ||b||\leq 1} m(\{p \cdot a+q \cdot b\;|\; 0 \leq p,q\leq 1\})).
\end{equation*}
The {\em p-area} is $\omega_{||\cdot||}= K^{-1} m$.

Given an embedded ideal triangle $\triangle\subset\Sigma$ on a finite p-area convex $\mathbb{RP}^2$ surface $\Sigma$, the p-area $\mathit{Parea}(\triangle)$ of $\triangle$ satisfies: 
\[
\mathit{Parea}(\triangle)\geq\tfrac{1}{8}(\pi^2+\tau(\triangle)^2).
\]
\end{thm}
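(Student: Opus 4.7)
The plan is to use projective invariance to reduce the problem to a one-parameter model, apply monotonicity of Hilbert geometry to pass to a concrete triangle domain, and then carry out an explicit computation in that model.

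First, by a projective transformation, I would send the three ideal vertices of $\triangle$ to the standard basis points $e_1, e_2, e_3 \in \mathbb{RP}^2$, so that $\triangle$ becomes the open standard simplex. The triangle invariant $\tau(\triangle)$ is then encoded by the three tangent lines $L_1, L_2, L_3$ to $\partial\Omega$ at $e_1, e_2, e_3$. After exhausting the residual diagonal projective symmetries that fix the three vertices, this tangent data is captured by a single real parameter corresponding to $\tau$.

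Second, I would use the monotonicity of Hilbert geometry: if $\Omega \subset \Omega'$, then $d_{\Omega'} \leq d_\Omega$ pointwise, so the Hilbert unit ball at each interior point grows, the normalizing constant $K$ in the definition of p-area grows, and the p-area density $K^{-1} m$ shrinks. Enlarging the domain therefore decreases the p-area of any fixed subset. Let $\Omega_\tau$ be the triangle cut out by the three tangent lines $L_1, L_2, L_3$ on the side containing $\triangle$. Any strictly convex $\Omega$ containing $\triangle$ and realizing the prescribed tangent data at each $e_i$ must satisfy $\Omega \subseteq \Omega_\tau$, whence
\begin{equation*}
\mathit{Parea}_\Omega(\triangle) \;\geq\; \mathit{Parea}_{\Omega_\tau}(\triangle),
\end{equation*}
so it suffices to prove the bound in the triangle model $\Omega_\tau$.

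Third, the Hilbert geometry of a triangle is isometric to a normed affine plane whose unit ball is a regular hexagon, so in a well-chosen affine chart the Hilbert area form is a constant multiple of Lebesgue measure and the inscribed ideal triangle $\triangle$ has vertex positions on $\partial\Omega_\tau$ given by explicit functions of $\tau$. This turns $\mathit{Parea}_{\Omega_\tau}(\triangle)$ into a concrete one-parameter integral. I would expect the result to split additively into a ``round'' piece (contributing $\pi^2/8$ at $\tau=0$, where the configuration is $\mathbb{Z}/3$-symmetric and reduces to a hyperbolic-like computation in the Klein disk) and a ``shearing'' piece quadratic in $\tau$ (contributing $\tau^2/8$).

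The main obstacle I anticipate is the last step: even after reducing to the triangle model, obtaining a clean closed form for the p-area in terms of $\tau$ requires identifying logarithmic coordinates along each tangent line and carefully tracking the $K$-normalization in order to recover the precise constant $\tfrac18$. If a clean formula is not available, I would instead verify the inequality by comparing second derivatives in $\tau$ against $\tfrac18(\pi^2+\tau^2)$ and checking the two limiting regimes as a sanity check: the symmetric case $\tau=0$ (matching the hyperbolic bound $\pi^2/8$) and the degenerate limit $\tau\to\infty$, where $\triangle$ becomes long and thin toward a corner of $\Omega_\tau$ and a direct collar estimate along the elongated axis should give the $\tau^2/8$ asymptotic.
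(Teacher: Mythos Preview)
The paper does not prove this statement; it is quoted from Adeboye--Cooper \cite{adeboye2015area} and used only as a black box to motivate the boundedness of triple ratios (Theorem~\ref{thm:bounded}). So there is no in-paper proof to compare against.

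That said, your proposed strategy is the correct one and is essentially the approach of the cited reference: normalize projectively, use the monotonicity of the Hilbert Finsler norm under domain enlargement (so that $\Omega\subset\Omega_\tau$ forces the p-area density, and hence $\mathit{Parea}(\triangle)$, to decrease) to replace $\Omega$ by the circumscribed tangent triangle $\Omega_\tau$, and then compute explicitly using the fact that the Hilbert geometry of a simplex is isometric to a normed plane with hexagonal unit ball. In logarithmic simplex coordinates the p-area density becomes constant, the sides of $\triangle$ become explicit curves, and the resulting one-variable integral evaluates to exactly $\tfrac{1}{8}(\pi^2+\tau(\triangle)^2)$; the inequality is therefore sharp, with equality attained in the degenerate tangent-triangle limit. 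One small correction to your sketch: at $\tau=0$ the extremal model is still the hexagonal normed plane, not the Klein disk, so the $\pi^2/8$ constant comes from an honest triangle-model integral rather than from a hyperbolic computation.
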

For any $\operatorname{PGL}_3(\mathbb{R})$-positive representation $\rho$ with unipotent boundary monodromy, by \cite{Mar10}, the strictly convex real projective surface with holonomy representation $\rho$ has finite \emph{Hilbert area} with respect to the Hilbert metric. The p-area and the Hilbert area are uniformly comparable because of the Benz\'ecri compactness theorem \cite{B60}, and thus the p-area for $\rho$ is also finite. An immediate consequence of this result is that the triangle invariant $\tau(\triangle)=\log(T(\triangle))$ of any embedded ideal triangle on $\Sigma$ is necessarily bounded between
\[
\pm\sqrt{8\mathit{Parea}(\triangle)-\pi^2}.
\]

\begin{rmk}
One immediate corollary of this observation is that the collection of triangle invariants which arise in any given McShane identity, such as $\{\tau(\gamma)\}_{\gamma\in \subvec{\mathcal{C}}_{1,1}}$ in Equation~\eqref{equation:mc3c}, is bounded and hence the convergence properties of Equation~\eqref{equation:mc3c} are governed purely by the growth rates of the $i$-th lengths. We shall see that this phenomenon extends to arbitrary $n$.
\end{rmk}

Before proceeding, we clarify that, in the context of positive representation theory, triple ratios are actually functions defined on a finite ramified cover of $\mathrm{Pos}_n(S_{g,m})$ called the \emph{Fock--Goncharov $\mathcal{X}$-moduli space $\mathcal{X}_m(S_{g,m})$} (Definition~\ref{defn:positive}). The covering is bijective over unipotent-bordered positive  representations, and we implicitly made use of this when stating our McShane identities in terms of triple ratios. For loxodromic bordered positive representations, there is a \emph{canonical lift} (Definition~\ref{definition:ratioperiod}) of the positive representation variety to $\mathcal{X}_n(S_{g,m})$, thus also enabling us to consider triple ratios of loxodromic-bordered positive representations. As a further clarification: triple ratios $T_{i,j,k}:\mathcal{X}_n(S_{g,m})\to\mathbb{R}_{>0}$ are indexed by triples of positive integers $i,j,k$ summing to $n$  (Definition \ref{definition:tripleratio}). 

\begin{thm}[Triple ratio boundedness, Theorem~\ref{thm:bounded}]
\label{thm:triplebounded}
Given any unipotent or loxodromic-bordered positive representation $\rho:\pi_1(S)\rightarrow\operatorname{PGL}_n(\mathbb{R})$, the set of triple ratios taken over 
\begin{itemize}
\item
all lifts $(\rho,\xi)$ of $\rho$ in $\mathcal{X}_m(S_{g,m})$,
\item
all triple ratio indices $i,j,k$ summing to $n$, and
\item
all \emph{embedded} ideal triangles on $S$
\end{itemize}
is bounded within a compact interval $[T^{\xi_\rho}_{\min},T^{\xi_\rho}_{\max}]\subset\mathbb{R}_{>0}$.
\end{thm}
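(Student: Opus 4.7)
The plan is to realize the triple ratios as continuous functions on a configuration space of triples of flags, and then to show that the subset of configurations arising from \emph{embedded} ideal triangles on $S$ is relatively compact in this space, so that the $T_{i,j,k}$ attain a positive minimum and a finite maximum.

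First, I would fix a lift $(\rho,\xi)\in\mathcal{X}_n(S_{g,m})$ of $\rho$ together with its positive boundary map $\xi:\partial_\infty\tilde{S}\to\mathrm{Flag}_n(\mathbb{R})$. For any embedded ideal triangle $\triangle\subset S$ with vertices at cusps or boundary-parallel ideal points, I pick any lift $\tilde\triangle\subset\tilde{S}$ and record the triple $(F_1,F_2,F_3)=(\xi(v_1),\xi(v_2),\xi(v_3))$ at its three ideal vertices. Positivity of $\xi$ puts $(F_1,F_2,F_3)$ in the open locus $\mathrm{Flag}^{(3)}_n\subset\mathrm{Flag}_n^3$ of positive triples, on which each $T_{i,j,k}$ is continuous and strictly positive. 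Replacing $\tilde\triangle$ by a $\pi_1(S)$-translate conjugates the triple by an element of $\rho(\pi_1(S))\subset\operatorname{PGL}_n(\mathbb{R})$, so the $T_{i,j,k}(\triangle)$ are intrinsic invariants of $\triangle$.

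Next, let $\mathcal{E}(\rho)\subset\mathrm{Flag}^{(3)}_n/\operatorname{PGL}_n(\mathbb{R})$ be the set of orbits of such triples, as $\triangle$ ranges over embedded ideal triangles on $S$. To show that $\mathcal{E}(\rho)$ is relatively compact, I would normalize one vertex of $\tilde\triangle$ into a fixed finite transversal $\{v^{(1)},\ldots,v^{(k)}\}\subset\partial_\infty\tilde{S}$ to the $\pi_1(S)$-orbits of ideal vertices of $S$. The remaining two vertices then lie in $\partial_\infty\tilde{S}\setminus\{v^{(i)}\}$ modulo the cyclic stabilizer of $v^{(i)}$, which is parabolic in the unipotent case and loxodromic in the loxodromic case. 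The embeddedness of $\triangle$ should force the remaining two vertices to stay inside a compact fundamental domain for this cyclic action and uniformly away from $v^{(i)}$ and from each other. In the unipotent strictly convex $\mathbb{RP}^2$ setting this confinement is supplied by the Birman--Series theorem for convex real projective surfaces (Theorem~\ref{thm:birmanseries}), which restricts the edges of embedded ideal triangles to a nowhere-dense closed set of zero Hilbert area; in the loxodromic/Hitchin setting it comes from the uniform Anosov transversality estimates governing the endpoints of simple geodesics. Passing to limits, any accumulation point of $\mathcal{E}(\rho)$ remains in the generic locus, so $\overline{\mathcal{E}(\rho)}$ is compact inside $\mathrm{Flag}^{(3)}_n/\operatorname{PGL}_n(\mathbb{R})$.

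Once precompactness is in hand, each continuous map $T_{i,j,k}$ attains positive, finite extrema on $\overline{\mathcal{E}(\rho)}$, and taking the joint extremes over the finitely many indices $(i,j,k)$ with $i+j+k=n$ produces the required $0<T^{\xi_\rho}_{\min}\leq T^{\xi_\rho}_{\max}$. The main obstacle will be the compactness step, namely ruling out that a sequence of embedded ideal triangles can have its ideal vertices collide at infinity in a way that sends some $T_{i,j,k}$ to $0$ or $\infty$. The $n=3$ case illustrates the mechanism to imitate: the p-area bound $\mathrm{Parea}(\triangle)\geq\tfrac{1}{8}(\pi^2+\tau(\triangle)^2)$ prevents any embedded triangle from consuming more area than $\Sigma$, and the real work for general $n$ is to repackage this geometric intuition into a uniform flag-level transversality statement extracted from positivity together with the Birman--Series and Anosov inputs.
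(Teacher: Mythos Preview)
Your overall strategy---view the $T_{i,j,k}$ as continuous positive functions on a configuration space and show that embedded ideal triangles give a precompact subset---is exactly the paper's shape. But your compactness step has real gaps, and the paper handles it quite differently.

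First, the ``finite transversal'' manoeuvre is not available as you describe it. Ideal triangles in $\mathit{Tri}(S)$ have vertices at \emph{arbitrary} points of $\partial_\infty\pi_1(S)$, not just at lifts of the $m$ cusps; the $\pi_1(S)$-orbits on $\partial_\infty\pi_1(S)$ are uncountable, so there is no finite transversal of vertices, and a generic boundary point has trivial stabilizer rather than a cyclic one. Second, the inputs you invoke do not deliver compactness. The Birman--Series theorem (Theorem~\ref{thm:birmanseries}) controls the \emph{measure} and Hausdorff dimension of the union of simple geodesics on $\Sigma$; it does not by itself prevent a sequence of embedded ideal triangles from having two vertices coalesce in $\partial_\infty\pi_1(S)$. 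Likewise ``Anosov transversality estimates'' is a slogan here rather than an argument: you would still need to explain why degenerate limits of embedded triangles cannot occur, and the Anosov property concerns H\"older regularity of the limit map, not separation of ideal endpoints of simple geodesics.

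The paper's argument is more elementary and purely topological. It works on the quotient space $\mathit{Tri}(S)=\mathit{Tri}(\tilde S)/\pi_1(S)$ directly and defines the subset $\mathit{Tri}_k(S)$ of \emph{$k$-intersecting} ideal triangles (those whose three sides have pairwise and self-intersection at most $k$; embedded means $k=0$). For the loxodromic-bordered case, it doubles $(S,h_\rho)$ along its geodesic boundary to a closed surface $dS$, observes that $\mathit{Tri}(dS)\cong T^1(dS)$ is compact, and checks that ``lies in $\iota(S)$'' and ``$k$-intersecting'' are both closed conditions---so $\mathit{Tri}_k(S)$ is closed in a compact space. For the unipotent (cusped) case, it compares $\partial_\infty\pi_1(S,h_\rho)$ to the boundary for an auxiliary geodesic-bordered metric $h_0$ via the Floyd/Gromov boundary, obtaining a continuous $\pi_1$-equivariant surjection $u:\partial_\infty\pi_1(S,h_0)\to\partial_\infty\pi_1(S,h_\rho)$ that is $2{:}1$ only over parabolic fixed points; this induces a continuous surjection $u_*:\mathit{Tri}_k(S,h_0)\to\mathit{Tri}_k(S,h_\rho)$ (well-defined precisely because on a $k$-intersecting triangle no two vertices can be the two endpoints of a boundary lift, as otherwise two sides would spiral toward the same boundary in opposite directions and intersect infinitely often). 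Compactness then transfers from the already-established loxodromic case. No Birman--Series, no Anosov input, no finite transversal---just closedness inside a compact ambient space.
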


\begin{rmk}
The closed surface case for Theorem~\ref{thm:triplebounded} is due independently to Fran\c{c}ois Labourie and Tengren Zhang via private communication. Our proof for the above result is essentially topological and holds also for the positive representations with quasihyperbolic boundary monodromy.
\end{rmk}

\begin{rmk}
The $\kappa_i(\beta,\beta_{\alpha^-})$ terms in Equation~\eqref{equation:idhs} are logarithms of positive rational functions of triple ratios (Equation~\eqref{equation:kappa}). Thus, Theorem~\ref{thm:triplebounded} ensures that the spectrum $\{\kappa_i(\beta,\beta_{\alpha^-}),\kappa_i(\gamma,\gamma_{\alpha^-})\}_{(\beta,\gamma)\in \subvec{\mathcal{P}}_\alpha}$ of $\kappa_i$-terms is bounded in $\mathbb{R}$. This informs us that the convergence properties of the McShane identity series are governed by the $i$-th lengths and the edge functions. 
\end{rmk}

When a given positive representation $\rho$ is $n$-Fuchsian, by Lemma \ref{lemma:nfuchrig}, there exists a lift $(\rho,\xi)\in\mathcal{X}_n(S_{g,m})$ such that all of its triple ratios are equal to $1$. We show that this is in fact a characterizing condition for $n$-Fuchsian representations:

\begin{thm}[Fuchsian rigidity, Theorem~\ref{thm:highrankrigidity}]
A positive representation $\rho\in\mathrm{Pos}_n(S_{g,m})$ with unipotent boundary monodromy (including $S$ being a closed surface) is $n$-Fuchsian if and only if all of its triple ratios are equal to $1$.
\end{thm}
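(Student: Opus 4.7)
\emph{Proof proposal.} The forward implication is Lemma~\ref{lemma:nfuchrig}, so I focus on the converse: if $\rho$ admits a lift $(\rho,\xi)\in\mathcal{X}_n(S_{g,m})$ all of whose triple ratios equal $1$, then $\rho$ is $n$-Fuchsian. My plan is to show that the image of the flag curve $\xi$ lies on a single Veronese curve in $\mathrm{Flag}(\mathbb{R}^n)$, for then $\rho$-equivariance together with the fact that the stabilizer of a Veronese curve in $\operatorname{PGL}_n(\mathbb{R})$ is $\iota(\operatorname{PGL}_2(\mathbb{R}))$ (with $\iota:\operatorname{PSL}_2(\mathbb{R})\hookrightarrow\operatorname{PGL}_n(\mathbb{R})$ the irreducible embedding) forces $\rho=\iota\circ\rho_0$ for a necessarily Fuchsian $\rho_0$, i.e., $\rho$ is $n$-Fuchsian.

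Proceeding triangle-by-triangle, Fock--Goncharov's parametrization identifies the configuration space of generic flag triples modulo $\operatorname{PGL}_n(\mathbb{R})$ with $\mathbb{R}_{>0}^{\binom{n-1}{2}}$ via the triple ratios, so the fiber over $(1,\ldots,1)$ is a single orbit; by Lemma~\ref{lemma:nfuchrig} it is precisely the orbit of a Veronese triple. Thus each ideal triangle $T$ of an ideal triangulation $\mathcal{T}$ of $S_{g,m}$ (obtained, e.g., by ideally triangulating a pants decomposition) determines a Veronese curve $\mathcal{V}_T\subset\mathrm{Flag}(\mathbb{R}^n)$ passing through $\xi$ at the vertices of $T$. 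The heart of the argument is to glue these: for a quadrilateral of flags built from two adjacent triangles with all triple ratios equal to $1$, the $\operatorname{PGL}_n(\mathbb{R})$-moduli is parametrized by the $n-1$ edge functions on the shared diagonal, and the four flags lie on a common Veronese curve precisely when these $n-1$ edge functions coincide. I aim to establish this ``Fuchsian collapse'' of edge functions by expanding $\rho(\gamma_p)$ around each cusp $p$ as the standard Fock--Goncharov snake-matrix product over the star of $p$ in $\mathcal{T}$; with all triple ratios set to $1$, the triangle factors reduce to $\iota$-images of parabolic shears, and imposing that the resulting product be regular unipotent yields precisely the system of equations forcing the $n-1$ edge functions at each edge incident to $p$ to be equal. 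Choosing $\mathcal{T}$ so that every edge has at least one endpoint at a cusp propagates the collapse to every edge of $\mathcal{T}$, hence $\mathcal{V}_T=\mathcal{V}_{T'}$ for every adjacent pair; for closed $S_{g,0}$ I would obtain the analogue by pinching a simple closed curve to a cusp and applying continuity of Fock--Goncharov coordinates under the degeneration.

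Connectedness of the dual graph of $\mathcal{T}$ then implies all $\mathcal{V}_T$ coincide with a single Veronese curve $\mathcal{V}$, so $\xi(\partial\pi_1(S))\subset\mathcal{V}$ and $\rho(\pi_1(S))$ preserves $\mathcal{V}$, completing the proof via the factorization through $\iota$ indicated above. The main obstacle lies in the cusp-monodromy computation of Step~2: one must carefully expand the snake-matrix formula for $\rho(\gamma_p)$ in Fock--Goncharov coordinates, exploit the dramatic simplifications afforded by triple ratios equal to $1$, and verify that the resulting unipotence constraint admits only the diagonal solution where all $n-1$ edge functions at each edge incident to $p$ collapse to a common value. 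A secondary delicate point is the closed-surface case, where the pinching argument requires sufficient control over the Fock--Goncharov coordinates under the degeneration to ensure that the collapse relation passes to the limit without also picking up spurious tangential deformations within the Hitchin component.
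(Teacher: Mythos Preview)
Your Step~2 is not merely difficult---it is false. A dimension count already disposes of it for $S_{1,1}$ with $n=3$: the space $\mathcal{X}_3(S_{1,1})$ has eight coordinates (two triple ratios, six edge functions); setting the two triple ratios on a fixed triangulation $\mathcal{T}$ equal to $1$ leaves a six-dimensional slice, and the unipotent boundary condition imposes only $n-1=2$ further constraints, so the locus $\{\text{triple ratios on }\mathcal{T}=1\}\cap\{\text{unipotent}\}$ has dimension at least four. The $3$-Fuchsian locus in $\mathcal{X}_3(S_{1,1})$, however, is only three-dimensional (cut out by the two triple-ratio equations together with the three edge-collapse equations $D_1=D_2$), and its unipotent slice is two-dimensional. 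Hence there exist unipotent-bordered positive representations with both triple ratios on $\mathcal{T}$ equal to $1$ that are \emph{not} $3$-Fuchsian, and no snake-matrix identity can force the collapse you need. The root cause is that you are only using the triple ratios on \emph{one} triangulation, whereas the theorem's hypothesis is the \emph{strong} condition $T_{i,j,k}(\xi_\rho(x),\xi_\rho(y),\xi_\rho(z))=1$ for every ideal triple $(x,y,z)\in\partial_\infty\pi_1(S)$.

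The paper exploits this hypothesis in an essentially analytic rather than combinatorial way. Since the boundary monodromy is unipotent (or $S$ is closed), $\partial_\infty\pi_1(S)\cong\mathbb{RP}^1$ and $\xi_\rho$ is the osculating curve of a Frenet curve $\xi_\rho^1:\mathbb{RP}^1\to\mathbb{RP}^{n-1}$. Fixing any subarc $X:[0,1]\to\mathbb{RP}^{n-1}$ and projecting to the three-dimensional quotient associated with the splitting $\xi_\rho^{(n-k-2)}(X(0))\oplus\xi_\rho^{(k-1)}(X(1))$, the projected triple ratio equals $T_{n-k-1,k,1}(X(0),X(1),X(t))$, which by hypothesis is $1$ for \emph{all} $t$; a short ODE then forces the projected arc to lie on a conic (Lemma~\ref{lem:conicsegment}). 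Iterating over $k$ gives algebraic relations among the homogeneous coordinates of $X(t)$, so $\xi_\rho^1$ is an algebraic curve; hyperconvexity pins its degree to exactly $n-1$, and the classical fact that any irreducible nondegenerate degree-$(n-1)$ curve in $\mathbb{RP}^{n-1}$ is a rational normal (Veronese) curve completes the proof. This argument treats closed and cusped surfaces uniformly, so the pinching limit you propose for $S_{g,0}$ is unnecessary (and would in any case inherit the gap in Step~2).
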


The following corollary is somewhat unrelated to the theme of our paper. We state it due to independent interest: ellipsoid characterization is a classical area of research with over a century's worth of history (see \cite{guo2013characterizations} for a nice survey). For any $k$-dimensional strictly convex open domain $\Omega\subset\mathbb{R}^k$ with $C^1$ boundary, one can define an alternative generalization of the notion of triple ratios. Specifically, any oriented \emph{ideal triangle} $\triangle$  (i.e.: a Euclidean triangle with all vertices on $\partial\Omega$) lies on the intersection of $\Omega$ and a unique $2$-dimensional affine plane $H\subset\mathbb{R}^k$. One may then define the triple ratio for $\triangle$ as the triple ratio of $\triangle$ in the strictly convex planar domain $\Omega\cap H$. 

\begin{cor}[Ellipsoid characterization]
A $k$-dimensional $C^1$ open strictly convex domain in $\mathbb{R}^k$ is a $k$-dimensional ellipsoid if and only if the triple ratios for all of its ideal triangles are equal to $1$.
\end{cor}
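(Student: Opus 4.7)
\emph{Proof proposal.}

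The plan is to establish both directions, reducing higher dimensions to $k=2$ via a classical ellipsoid characterization and then handling the planar case via the projective geometry of flag configurations.

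\emph{For the ``only if'' direction,} I would observe that an ellipsoid $\Omega \subset \mathbb{R}^k$ has every $2$-dimensional affine slice $\Omega \cap H$ either empty or a planar ellipse, and any ideal triangle of $\Omega$ lies on such a slice. Since planar ellipses in $\mathbb{RP}^2$ are projectively equivalent to the Beltrami--Klein disc and the triple ratio is a projective invariant of flag configurations (being expressible as a ratio of cross-ratios), the problem reduces to verifying that ideal triangles in the Beltrami--Klein disc have triple ratio $1$ --- a direct computation, and equivalently a consequence of Lemma \ref{lemma:nfuchrig} applied to the Fuchsian holonomy of the hyperbolic plane.

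\emph{For the ``if'' direction,} I would reduce to $k=2$ first. When $k \geq 3$, every $2$-dimensional affine section $\Omega \cap H$ is a strictly convex $C^1$ planar domain all of whose triple ratios equal $1$; granting the $k=2$ case, each such section is an ellipse, and the classical Blaschke-type ellipsoid characterization (a strictly convex body in $\mathbb{R}^k$, $k \geq 3$, all of whose planar sections are ellipses must be an ellipsoid) then forces $\Omega$ to be an ellipsoid. For the $k=2$ case, I would fix three boundary flags $F_i = (p_i, \ell_i)$, choose lifts $v_i \in \mathbb{R}^3$ and $\phi_i \in (\mathbb{R}^3)^*$ with $\phi_i(v_i) = 0$, and use the Fock--Goncharov triple ratio formula
\[
T(F_1, F_2, F_3) = \frac{\phi_1(v_2)\, \phi_2(v_3)\, \phi_3(v_1)}{\phi_1(v_3)\, \phi_2(v_1)\, \phi_3(v_2)}
\]
to identify $T = 1$ with the compatibility condition for a symmetric bilinear form on $\mathbb{R}^3$ vanishing on each $v_i$ with the prescribed polarization; equivalently, $T=1$ is exactly the condition that a unique conic $C \subset \mathbb{RP}^2$ passes through each $p_i$ and is tangent to $\ell_i$ there. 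For any fourth boundary flag $F_4 = (p_4, \ell_4)$, the $4$-dimensional moduli space of four flags modulo $\mathrm{PGL}_3(\mathbb{R})$ is parametrized by two triple ratios and two edge functions on a triangulation; imposing all four triple ratios to equal $1$ would isolate a single $\mathrm{PGL}_3$-orbit, realized by four flags inscribed on $C$ (by the ``only if'' direction applied to $C$ itself). Concluding $p_4 \in C$ and ranging over $\partial\Omega$ yields $\partial\Omega \subseteq C$, and since $\partial\Omega$ is a closed Jordan curve, $\partial\Omega = C$ and $\Omega$ is an ellipse.

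\emph{The main obstacle} will be the four-flag rigidity step in the $k=2$ case: confirming that all four triple ratios equalling $1$ really cuts out exactly one $\mathrm{PGL}_3(\mathbb{R})$-orbit rather than a positive-dimensional locus. I anticipate handling this via a direct rank computation for the triple-ratio functions in Fock--Goncharov coordinates on the four-flag moduli, or via a configuration-level reinterpretation of Theorem \ref{thm:highrankrigidity} on a four-punctured sphere. A secondary concern is to confirm that the $C^1$ strict convexity hypothesis suffices for the classical ellipsoid characterization invoked in the dimension reduction, and to treat degenerate boundary-flag configurations carefully.
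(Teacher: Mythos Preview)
Your ``only if'' direction and the $k \geq 3$ reduction to planar sections match the paper's implicit approach. For the planar case $k=2$, however, you take a genuinely different route from the paper. The paper deduces this case from Lemma~\ref{lem:conicsegment} (equivalently, the $n=3$ instance of Proposition~\ref{prop:triplefrenet}): fix two boundary flags $\xi(0),\xi(1)$ and let the third vertex $\xi(s)$ run along $\partial\Omega$; the condition $T(\xi(0),\xi(1),\xi(s))=1$ becomes a first-order ODE in $s$ whose only solutions are arcs of ellipses. Your approach instead freezes three flags and varies a fourth, reducing to the cluster-algebraic claim that four flags in $\mathbb{RP}^2$ with all four triple ratios equal to $1$ lie on a common conic. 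The paper's route is analytic and exploits the $C^1$ hypothesis to differentiate; yours is discrete and rests on the same mutation computation that underlies Proposition~\ref{prop:triple34}.

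One correction to your outline: imposing all four triple ratios equal to $1$ does \emph{not} isolate a single $\mathrm{PGL}_3(\mathbb{R})$-orbit. In the coordinates $(T_1,T_2,D_1,D_2)$ on the four-flag moduli (two triple ratios and two edge functions along the diagonal), the computation in the proof of Proposition~\ref{prop:triple34} shows that, given $T_1=T_2=1$, setting the two post-flip triple ratios to $1$ collapses to the single equation $D_1=D_2$, leaving a one-parameter family. This family is exactly the conic locus (parametrized by the cross-ratio of four points on a conic), so your conclusion $p_4 \in C$ still follows: the four-flag configuration lies on \emph{some} conic, and since three flags with $T=1$ determine a unique tangent conic, that conic must be $C$. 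Your strategy is therefore sound once this dimension count is adjusted.
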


\subsection{Applications of the McShane identity}

We have already alluded to Mirzakhani \cite{mirz_simp}'s spectacular application of McShane identities to derive a recursive algorithm for computing the volumes of moduli spaces of Riemann surfaces. In \cite{Sun20b}, the second author builds upon Mirzakhani's ideas and employs Theorem~\ref{theorem:b} and \cite[Corollary 8.18]{SZ17} to study the volumes of certain bounded subspaces of the mapping class group quotient of fixed boundary monodromy subslices of $\mathrm{Pos}_n(S_{g,m})$.\medskip

We are aware of the following applications for the McShane-type identities in the literature:
\begin{itemize}
\item
various authors \cite{akiyoshi2004refinement,MR2258748,markofftriples,bowditch1997variation,huang2018mcshane,lee2013variation} use them to study the geometry of the convex core or the cuspidal tori for various hyperbolic $3$-manifolds;
\item
Miyachi uses them to bound the Teichm\"uller distance between two marked surfaces \cite{miyachi2005limit}.
\end{itemize}

We illustrate several novel applications of the McShane identity.\medskip

To begin with, a refinement (Theorem~\ref{theorem:puncturecase}) of Theorem~\ref{theorem:pc}, combined with Theorem~\ref{thm:triplebounded} and Lemma~\ref{thm:halfpantslbound}, yields the following:

\begin{thm}[Simple $\ell_i$-spectrum discreteness]
For $m\geq 1$, let $\rho:\pi_1(S_{g,m})\rightarrow\operatorname{PGL}_n(\mathbb{R})$ be a positive representation with unipotent boundary monodromy. For any $i=1,\cdots,n-1$, the simple $\ell_i$-spectrum for $\rho$ is discrete. As a consequence, let $\ell=\sum_{i=1}^{n-1} \ell_i$, then the simple $\ell$-spectrum for $\rho$ is discrete.
\end{thm}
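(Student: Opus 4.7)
The plan is a proof by contradiction using Theorem~\ref{theorem:puncturecase} and the bounds provided by Theorem~\ref{thm:triplebounded} and Lemma~\ref{thm:halfpantslbound}. The discreteness of the simple $\ell$-spectrum is an immediate consequence of the individual $\ell_i$-spectrum discreteness: on a non-peripheral simple closed curve $\gamma$ every $\ell_j(\gamma)$ is non-negative (indeed positive) by Theorem~\ref{theorem:loxo}, so $\ell(\gamma)\leq M$ forces each $\ell_j(\gamma)\leq M$; a bounded interval of $\ell$-values is therefore hit by only finitely many simple closed curves, being contained in the intersection of the $(n-1)$ finite sets produced by the first assertion.

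For the main assertion, fix $i$ and suppose for contradiction that the simple $\ell_i$-spectrum accumulates at some $L\in \mathbb{R}_{\geq 0}$. Then there exists a sequence $\{\gamma_k\}$ of pairwise distinct simple closed curves on $S_{g,m}$ with $\ell_i(\gamma_k)\leq M$ for some $M<\infty$. Since $m\geq 1$, I would complete each $\gamma_k$ to a boundary-parallel pair of pants $(\beta_k,\gamma_k)\in \vec{\mathcal{P}}_{p_k}$ at some cusp $p_k$ of $S_{g,m}$ via standard topological surgery, then pigeonhole over the finite cusp set to extract a subsequence on which $p_k=p$ is a single fixed cusp. Applying Theorem~\ref{theorem:puncturecase} at $p$, each $(\beta_k,\gamma_k)$ contributes a summand $(1+e^{E_i(\beta_k,\gamma_k)})^{-1}$ to a sum bounded above by $1$, where
\[
E_i(\beta,\gamma)=\phi_i(\beta,\gamma)+\tfrac{1}{2}\bigl(\kappa_i(\gamma,\gamma_p)+\ell_i(\gamma)+\kappa_i(\beta,\beta_p)+\ell_i(\beta)\bigr).
\]

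The crux is to bound $E_i(\beta_k,\gamma_k)$ from above, uniformly in $k$. Theorem~\ref{thm:triplebounded} places all triple ratios of $\rho$ (and its canonical lift) inside a compact subinterval of $\mathbb{R}_{>0}$, which uniformly bounds the $\kappa_i$-terms (logarithms of rational functions of triple ratios) and the triple-ratio-dependent factors of $\phi_i$. To handle the remaining edge-function and $\ell_i(\beta_k)$ contributions, I would invoke Lemma~\ref{thm:halfpantslbound}, which sharpens Theorem~\ref{theorem:puncturecase} at the half-pants level into a bound each of whose summands is controlled purely by $\ell_i(\gamma_k)$ together with the $\rho$-dependent constants from Theorem~\ref{thm:triplebounded}. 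Under the hypothesis $\ell_i(\gamma_k)\leq M$, this yields $E_i(\beta_k,\gamma_k)\leq C$ for some constant $C=C(\rho,i,M)$, so each summand is bounded below by $(1+e^{C})^{-1}>0$; the resulting infinite sum of uniformly bounded-below terms exceeds $1$, contradicting Theorem~\ref{theorem:puncturecase}.

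The main obstacle I foresee is precisely this last bounding step, because no a priori bound of the form $\ell_i(\beta_k)\leq C'$ is available: the ``partner" curve $\beta_k$ can have arbitrarily large $i$-th length even when $\gamma_k$ does not, so an honest pair-of-pants summand can degenerate to zero on its own and carry no information. The half-pants refinement encoded in Lemma~\ref{thm:halfpantslbound} is what rescues the argument, decoupling the contribution attached to each $\gamma_k$ from its unknown partner $\beta_k$ and replacing it by a quantity depending only on $\rho$-invariants along $\gamma_k$. Verifying this decoupling in the higher-rank setting — and checking that every simple closed curve with bounded $\ell_i$ genuinely gives rise to a non-vanishing half-pants summand — is the technical heart of the proof.
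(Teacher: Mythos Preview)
Your overall strategy matches the paper's, and you have correctly identified the three ingredients: the McShane-type inequality (Theorem~\ref{theorem:puncturecase}), triple ratio boundedness (Theorem~\ref{thm:triplebounded}), and Lemma~\ref{thm:halfpantslbound}. Two points of execution deserve clarification.

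First, you begin with the pair-of-pants inequality \eqref{equation:unipps} and then discover that $\ell_i(\beta_k)$ and $\phi_i(\beta_k,\gamma_k)$ are uncontrolled. The paper avoids this detour entirely: Theorem~\ref{theorem:puncturecase} already contains the half-pants inequality \eqref{equation:unihps},
\[
\sum_{(\gamma,\gamma_p)\in \subvec{\mathcal{H}}_p}\frac{B_i(\gamma,\gamma_p)}{1+e^{\kappa_i(\gamma,\gamma_p)+\ell_i(\gamma)}}\leq 1,
\]
whose summands involve only $\ell_i(\gamma)$, the triple-ratio quantity $\kappa_i(\gamma,\gamma_p)$, and the half-pants ratio $B_i(\gamma,\gamma_p)$. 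The ``decoupling'' from any partner curve is thus already built into Theorem~\ref{theorem:puncturecase}; it is not something Lemma~\ref{thm:halfpantslbound} provides.

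Second, the actual content of Lemma~\ref{thm:halfpantslbound} is different from what you describe: for a \emph{fixed} cusp $p$ it supplies a constant $b^\rho>0$ such that every $\gamma\in\xvec{\mathcal{C}}_{g,m}$ admits some half-pants $(\gamma,\gamma_p)\in\xvec{\mathcal{H}}_p(\gamma)$ with $B_i(\gamma,\gamma_p)\geq b^\rho$. No pigeonholing over cusps is needed. Combining this with $\kappa_i(\gamma,\gamma_p)\leq\kappa_{\max}$ from Theorem~\ref{thm:triplebounded} gives, for each $\gamma$, a summand of \eqref{equation:unihps} bounded below by $b^\rho/(1+e^{\kappa_{\max}+\ell_i(\gamma)})$. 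If infinitely many distinct $\gamma$ had $\ell_i(\gamma)\leq M$, these infinitely many lower bounds would already violate $\sum\leq 1$. This is exactly the paper's argument in Theorem~\ref{thm:discretespectra}.
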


\begin{rmk}For a positive representations with (only) loxodromic boundary monodromy, the above result can be obtained via the Anosov property \cite{Lab06}. However, positive representations with unipotent boundary monodromy are not Anosov. In particular, our proof uses positivity in a fundamental way.
\end{rmk}

When $n=3$, we strengthen the above result in two different directions (Appendix~\ref{sec:lengthspecproj}), we show that:
\begin{itemize}
\item
both the simple $i$-length and the $\ell$-length spectra of every unipotent bordered positive representation $\rho\in\mathrm{Pos}^u_3(S_{g,m})$ grow polynomially of order $6g-6+2m$, and
\item
both the $i$-length and the $\ell$-length spectra of every $\rho\in\mathrm{Pos}^u_3(S_{g,m})$ is discrete.
\end{itemize}
Kim utilizes different techniques in \cite{kim2019} to generalize the above discreteness of the $\ell$-spectrum for all $n$.

\begin{thm}[{\cite{lee2017collar}, Collar lemma, Theorem~\ref{thm:collar}}]
Given any positive representation $\rho\in\mathrm{Pos}_3(S)$, the Hilbert lengths of any two intersecting simple closed curves $\beta,\gamma$ satisfy the following inequality:
\begin{align*}
(e^{\frac{1}{2}\ell(\beta)}-1)(e^{\frac{1}{2}\ell(\gamma)}-1)>4.
\end{align*}
\end{thm}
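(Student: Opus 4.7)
The strategy is to combine the $\operatorname{PGL}_3(\mathbb{R})$-McShane identity for $1$-cusped tori (Theorem~\ref{theorem:inequsl3s11}, Equation~\eqref{equation:mc3c}) with a convexity estimate. We first describe the argument in the model case $S=\Sigma_{1,1}$ with $\rho$ unipotent-bordered, where $\beta$ and $\gamma$ are any two simple closed curves, necessarily intersecting exactly once. The four oriented curves $\beta,\beta^{-1},\gamma,\gamma^{-1}$ yield four distinct elements of $\vec{\mathcal{C}}_{1,1}$; setting
\[
A=e^{\ell_1(\beta)+\tau(\beta)},\quad B=e^{\ell_2(\beta)-\tau(\beta)},\quad C=e^{\ell_1(\gamma)+\tau(\gamma)},\quad D=e^{\ell_2(\gamma)-\tau(\gamma)},
\]
Lemma~\ref{lem:12termcompare} identifies $B$ (respectively $D$) as the McShane summand exponent for $\beta^{-1}$ (respectively $\gamma^{-1}$). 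Since the sum~\eqref{equation:mc3c} contains infinitely many further strictly positive terms,
\[
\frac{1}{1+A}+\frac{1}{1+B}+\frac{1}{1+C}+\frac{1}{1+D}<1.
\]
The key observation is that the triple ratios cancel in the products $AB=e^{\ell_1(\beta)+\ell_2(\beta)}=e^{\ell(\beta)}$ and $CD=e^{\ell(\gamma)}$, so the upper bound is purely in terms of Hilbert lengths.

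The next step is to extract the collar bound by an elementary convexity argument. Writing
\[
\frac{1}{1+A}+\frac{1}{1+B}=\frac{2+A+B}{1+A+B+AB},
\]
the right-hand side is increasing in $A+B$ for fixed $AB=p>1$, so it attains its minimum $\frac{2}{1+\sqrt{p}}$ at $A=B=\sqrt{p}$. Applying this to both pairs,
\[
\frac{2}{1+e^{\ell(\beta)/2}}+\frac{2}{1+e^{\ell(\gamma)/2}}<1.
\]
Setting $u=e^{\ell(\beta)/2}$ and $v=e^{\ell(\gamma)/2}$, clearing denominators rewrites this as $2(1+v)+2(1+u)<(1+u)(1+v)$, which rearranges to $(u-1)(v-1)>4$, as desired.

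The main obstacle is extending this argument to general $S$ and arbitrary intersecting $\beta,\gamma$. Two difficulties arise: (i) if $\beta,\gamma$ are contained in an embedded one-holed torus $T\subsetneq S$, then $\rho|_{\pi_1(\partial T)}$ is generically loxodromic and Theorem~\ref{theorem:inequsl3s11} does not apply directly; (ii) if $|\beta\cap\gamma|\geq 2$, no embedded one-holed torus contains both. For (i), we would invoke the loxodromic-bordered McShane identity Theorem~\ref{theorem:b} with $(g,m)=(1,1)$, expand the $\mathcal{D}$-summands asymptotically, and use Theorem~\ref{thm:triplebounded} to uniformly control the $\kappa_i$ and $\phi_i$ contributions. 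For (ii), we would pass to the cover $\tilde S\to S$ associated to $\langle\beta,\gamma\rangle\subset\pi_1(S)$, on which the chosen lifts of $\beta,\gamma$ meet in a single point, reducing to case (i). Preserving the sharp constant $4$ through these reductions is the delicate aspect.
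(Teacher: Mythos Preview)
Your treatment of the unipotent $S_{1,1}$ base case is correct and matches the paper's Lemma~\ref{lemma;collarsimple} and the first half of Proposition~\ref{thm:collarminint}: the same four McShane summands, the same triple-ratio cancellation $AB=e^{\ell(\beta)}$, and an equivalent AM--GM/convexity step.

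The two reductions you sketch, however, each have a real problem.

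For (i), ``expand the $\mathcal{D}$-summands asymptotically and bound $\kappa_i,\phi_i$'' is not how the paper gets the sharp constant. The paper works with the \emph{half-pants} loxodromic identity (Theorem~\ref{thm:equgeo}) and proves a pointwise convexity comparison (Equation~\eqref{eq:s11holeineq}): for each half-pants $\mu_i^{\delta}$ one has
\[
\frac{2\,r_1(\mu_i^{\delta})}{1+e^{\ell(\delta)/2}}
\ \le\
\text{(the two gap terms for $\delta$ and $\delta^{-1}$ on $\mu_i^{\delta}$)}.
\]
Summing over the four half-pants and using $r_1(\mu_1^\delta)+r_1(\mu_2^\delta)=1$ gives exactly $\tfrac{2}{1+e^{\ell(\beta)/2}}+\tfrac{2}{1+e^{\ell(\gamma)/2}}<1$. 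No asymptotics and no appeal to Theorem~\ref{thm:triplebounded} are needed; your proposed route would at best give a constant depending on $\rho$.

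For (ii), passing to the cover of $\langle\beta,\gamma\rangle$ does \emph{not} reduce the intersection number to one in general. If $i(\beta,\gamma)=2$ with algebraic intersection $0$, the regular neighbourhood is a four-holed sphere, and on the $\langle\beta,\gamma\rangle$-cover the lifts still meet twice (there is no torus in sight). The paper handles this case by a specific $2:1$ cover $\Sigma_{1,4}\to\Sigma_{0,4}$ (Figure~\ref{fig:doublecover}) on which $\beta,\gamma$ lift to once-intersecting curves. For $i(\beta,\gamma)\ge3$ the paper does not use covers at all: it runs a topological surgery argument, classifying subarcs of $\gamma$ between consecutive $\beta$-intersections as type-A or type-B (Figure~\ref{fig:abarcs}) and using pigeonhole to produce a strictly shorter simple curve $\gamma'$ with $i(\beta,\gamma')\le2$, then applies the previous cases together with the monotonicity of the left-hand side in $\ell(\gamma)$. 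Your covering idea would need to be replaced by this surgery to close the argument.
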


\begin{rmk}
The above collar lemma is due to Lee and Zhang \cite[Equation~(3-2)]{lee2017collar}. Naturally, the above result translates into a collar lemma for convex real projective surfaces with cusps (unipotent boundary) and/or closed geodesic boundaries (loxodromic boundary).
\end{rmk}

\subsection{Applications to Thurston-type metrics}

The remaining applications are all related to asymmetric ratio metrics on various character varieties. These results require the full strength of the McShane-type identity and not just an inequality. We begin with our results for the Fuchsian representations:

\begin{thm}[Fuchsian non-domination]
Given two marked hyperbolic surfaces $\Sigma_1,\Sigma_2\in\mathit{Teich}_{g,m}(L_1,\ldots,L_m)$ with fixed boundary lengths $L_1,\ldots,L_m\geq 0$. Then the simple closed geodesic spectrum for $\Sigma_1$ dominates the simple closed geodesic spectrum $\Sigma_2$ if and only if $\Sigma_1=\Sigma_2$.
\end{thm}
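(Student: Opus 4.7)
The plan is to apply the McShane identity for bordered hyperbolic surfaces, equation~(\ref{equation:mirD}), at every boundary component with $L_i>0$, and the classical McShane identity~(\ref{equation:cmc}) (and its generalization to arbitrary cusped surfaces) at every cusp $L_i=0$. The key analytic observation is that the summand
\[
\mathcal{D}(x,y,z)=\log\!\left(\frac{e^{x/2}+e^{(y+z)/2}}{e^{-x/2}+e^{(y+z)/2}}\right)
\]
is \emph{strictly decreasing} in $y+z$ whenever $x>0$; this follows from a single differentiation. The analogous strict monotonicity holds for the classical cusp summand $\frac{2}{1+e^{\ell}}$.

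The fixed boundary length hypothesis gives $\ell_{\Sigma_1}(\alpha_i)=\ell_{\Sigma_2}(\alpha_i)=L_i$ for every $i$, so the right-hand sides of the McShane identity for $\Sigma_1$ and $\Sigma_2$ at $\alpha_i$ agree. The domination hypothesis further forces $\ell_{\Sigma_1}(\beta)+\ell_{\Sigma_1}(\gamma)\geq \ell_{\Sigma_2}(\beta)+\ell_{\Sigma_2}(\gamma)$ for every $(\beta,\gamma)\in\mathcal{P}_{\alpha_i}$, and hence, by strict monotonicity, each summand for $\Sigma_1$ is no larger than the corresponding summand for $\Sigma_2$. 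Since the total sums coincide, termwise equality must hold; combined with the domination hypothesis on each individual curve this yields $\ell_{\Sigma_1}(\beta)=\ell_{\Sigma_2}(\beta)$ and $\ell_{\Sigma_1}(\gamma)=\ell_{\Sigma_2}(\gamma)$ for every boundary-parallel pair of pants. Ranging over all boundary components and cusps, we obtain length equality on $\bigcup_i \mathcal{P}_{\alpha_i}$.

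To conclude $\Sigma_1=\Sigma_2$, I would propagate these length equalities to all of Fenchel--Nielsen data by an iterative cutting argument: after an equality $\ell_{\Sigma_1}(\beta)=\ell_{\Sigma_2}(\beta)$ is in hand for an interior curve $\beta$, cut both surfaces along $\beta$ to obtain subsurfaces with matching boundary lengths and with domination inherited on all simple closed curves disjoint from $\beta$; then re-apply the McShane identity to the new boundary $\beta$ of each piece. Iterating along a pants decomposition adjacent to $\alpha_1$ determines every pants curve length, and twist parameters are then recovered from lengths of transverse simple closed curves which, for a suitably chosen initial boundary, also lie in $\bigcup_i \mathcal{P}_{\alpha_i}$ (or in one of its iterated avatars after further cutting).

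The main obstacle is precisely this last step: selecting enough curves within $\bigcup_i \mathcal{P}_{\alpha_i}$ and its iterates to pin down all $6g-6+2m$ Fenchel--Nielsen coordinates on $\mathit{Teich}_{g,m}(L_1,\ldots,L_m)$, including the twists. The McShane--monotonicity extraction of length equalities is completely mechanical; the delicate point is organizing the iterated cuts and transverse-curve comparisons into a complete rigidity argument, possibly via appeal to a simple length spectrum rigidity theorem in the spirit of Schmutz--Schaller to bridge any residual gap.
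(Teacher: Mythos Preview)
Your core approach---apply the McShane identity at a boundary, use strict monotonicity of the summands in the interior cuff lengths, deduce termwise equality, then upgrade to individual length equalities via the domination hypothesis---is exactly what the paper does. The difference lies only in the final step, where you work much harder than necessary.

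The paper dispenses with your iterative cutting scheme by the following topological observation: on a surface with $m\geq 1$ boundary components, \emph{every} non-peripheral simple closed curve $\beta$ already occurs as a cuff of some pair of pants in $\mathcal{P}_{\alpha}$ for a single fixed boundary $\alpha$. Indeed, join $\alpha$ to $\beta$ by a simple arc disjoint from both on its interior and take the regular neighborhood of $\alpha\cup(\text{arc})\cup\beta$; this is an embedded pair of pants with cuffs $\alpha$, $\beta$, and some third curve $\gamma$. Hence the termwise equalities from a \emph{single} application of the identity already give $\ell^{\Sigma_1}(\beta)=\ell^{\Sigma_2}(\beta)$ for all simple closed $\beta$, and one finishes by marked simple length spectrum rigidity. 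No cutting, no recursion, no twist bookkeeping.

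One small omission in your write-up: for $m\geq 2$ the full Mirzakhani/Tan--Wong--Zhang identity at $\alpha$ carries a second sum over $\mathcal{P}^{\partial}_{\alpha}$ (pairs of pants with a second peripheral cuff $\gamma$), whose summands have the form $\log\frac{\cosh(\ell(\beta)/2)+\cosh((\ell(\alpha)+\ell(\gamma))/2)}{\cosh(\ell(\beta)/2)+\cosh((\ell(\alpha)-\ell(\gamma))/2)}$ rather than $\mathcal{D}$. Since $\ell(\alpha)$ and $\ell(\gamma)$ are both fixed boundary lengths, these summands are strictly decreasing in $\ell(\beta)$ alone, and your monotonicity argument applies verbatim; but you should mention them.
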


Non-domination fails when the boundary length is allowed to vary \cite{papadopoulos2010shortening}, meaning that naive length ratio-based generalizations of Thurston's length ratio metric do not satisfy positivity (compare with \cite[Theorem~3.1]{Thu98}). Liu--Papadopoulos--Su--Th{\'e}ret resolve this by introducing the arc metric. We do so by fixing boundary lengths:

\begin{cor}[Length ratio metric for fixed bordered hyperbolic surfaces]
The non-negative real function $d_{\mathit{Th}}: \mathit{Teich}_{g,m}(L_1,\ldots,L_m)\times \mathit{Teich}_{g,m}(L_1,\ldots,L_m)\rightarrow \mathbb{R}_{\geq0}$ defined by 
\begin{align*}
d_{\mathit{Th}}(\Sigma_1,\Sigma_2)
:=
\log
\sup_{\bar{\gamma}\in\mathcal{C}_{g,m}}
\frac{\ell^{\Sigma_1}(\bar{\gamma})}{\ell^{\Sigma_2}(\bar{\gamma})},
\end{align*}
is a mapping class group invariant asymmetric metric on the Teichm\"uller space $\mathit{Teich}_{g,m}$ $(L_1,\ldots,L_m)$ of genus $g$ surfaces with $m$ boundaries of fixed lengths $L_1,\ldots, L_m$.
\end{cor}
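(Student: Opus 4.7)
The function $d_{\mathit{Th}}$ is built from a supremum of length ratios over the mapping class group invariant set $\mathcal{C}_{g,m}$ of simple closed geodesics; since hyperbolic length is itself a function of the marked hyperbolic structure, invariance of $d_{\mathit{Th}}$ under any mapping class $\phi$ is immediate from the reindexing $\bar\gamma\leftrightarrow\phi^{-1}\cdot\bar\gamma$. The plan is therefore to verify the four metric axioms: finiteness of the supremum, non-negativity, identity of indiscernibles, and the triangle inequality.

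\textbf{Finiteness.} For any two marked surfaces $\Sigma_1,\Sigma_2\in\mathit{Teich}_{g,m}(L_1,\ldots,L_m)$, the identity homeomorphism can be isotoped to a $K$-bilipschitz map for some $K=K(\Sigma_1,\Sigma_2)<\infty$; this is standard in the closed case and extends to the fixed-boundary-length setting by standard pants-decomposition / Fenchel--Nielsen compactness arguments, since the two surfaces share the same boundary lengths and so collars at the boundary can be matched. Bilipschitzness then yields $\ell^{\Sigma_1}(\bar\gamma)/\ell^{\Sigma_2}(\bar\gamma)\leq K$ for all $\bar\gamma\in\mathcal{C}_{g,m}$, so the supremum defining $d_{\mathit{Th}}(\Sigma_1,\Sigma_2)$ is finite.

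\textbf{Non-negativity and non-degeneracy.} Trivially $d_{\mathit{Th}}(\Sigma,\Sigma)=\log 1=0$. If $d_{\mathit{Th}}(\Sigma_1,\Sigma_2)\leq 0$ then $\ell^{\Sigma_1}(\bar\gamma)\leq \ell^{\Sigma_2}(\bar\gamma)$ for every simple closed geodesic $\bar\gamma$, i.e.\ the simple closed geodesic spectrum of $\Sigma_2$ dominates that of $\Sigma_1$. The Fuchsian non-domination theorem then forces $\Sigma_1=\Sigma_2$, and so $d_{\mathit{Th}}(\Sigma_1,\Sigma_2)=0$. In particular $d_{\mathit{Th}}\geq 0$ always, and vanishes only on the diagonal.

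\textbf{Triangle inequality.} For any three marked surfaces and any simple closed geodesic $\bar\gamma$,
\begin{align*}
\frac{\ell^{\Sigma_1}(\bar\gamma)}{\ell^{\Sigma_3}(\bar\gamma)}
=
\frac{\ell^{\Sigma_1}(\bar\gamma)}{\ell^{\Sigma_2}(\bar\gamma)}
\cdot
\frac{\ell^{\Sigma_2}(\bar\gamma)}{\ell^{\Sigma_3}(\bar\gamma)}
\leq
\sup_{\bar\gamma'}\frac{\ell^{\Sigma_1}(\bar\gamma')}{\ell^{\Sigma_2}(\bar\gamma')}
\cdot
\sup_{\bar\gamma''}\frac{\ell^{\Sigma_2}(\bar\gamma'')}{\ell^{\Sigma_3}(\bar\gamma'')}.
\end{align*}
Taking the supremum over $\bar\gamma$ on the left and then applying $\log$ yields $d_{\mathit{Th}}(\Sigma_1,\Sigma_3)\leq d_{\mathit{Th}}(\Sigma_1,\Sigma_2)+d_{\mathit{Th}}(\Sigma_2,\Sigma_3)$.

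The only substantive input is the Fuchsian non-domination theorem, which rules out the scenario $0<\sup<1$ and secures positivity; everything else is formal manipulation of supremums. The potential obstacle is therefore not in the corollary itself but would lie in a careful justification of the bilipschitz bound underlying finiteness, which one can handle by noting that $\mathit{Teich}_{g,m}(L_1,\ldots,L_m)$ carries a well-defined Fenchel--Nielsen parameterization relative to any pants decomposition respecting the boundary, so that the comparison constant $K$ is continuous in $(\Sigma_1,\Sigma_2)$.
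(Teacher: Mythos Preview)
Your proof is correct and matches the paper's approach: the paper states this corollary as an immediate consequence of the Fuchsian non-domination theorem (Theorem~\ref{thm:nondomborder}), leaving the formal verification of the remaining metric axioms implicit, and you have filled in exactly those details with the non-domination result as the one substantive input.
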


Let $\mathrm{Pos}_3^u(S_{g,m})$ be the $\operatorname{PGL}_3(\mathbb{R})$-positive representation variety with unipotent boundary monodromy, which corresponds to the moduli space of strictly convex cusped $\mathbb{RP}^2$ structures on $S_{g,m}$. We propose the following candidate for a metric on the space $\mathrm{Pos}_3^u(S_{1,1})$:
\begin{align*}
d_{\mathit{Gap}}(\rho_1,\rho_2)
:=
\log
\sup_{\gamma\in\subvec{\mathcal{C}}_{1,1}}
\frac
{\log(1+e^{\ell_1^{\rho_1}(\gamma)+\tau^{\rho_1}(\gamma)})}
{\log(1+e^{\ell_1^{\rho_2}(\gamma)+\tau^{\rho_2}(\gamma)})}.
\end{align*}

\begin{thm}[Gap metric for $\mathrm{Pos}_3^u(S_{1,1})$]
The non-negative function $d_{\mathit{Gap}}$ defines a mapping class group invariant asymmetric metric on $\mathrm{Pos}_3^u(S_{1,1})$. Moreover, the restriction of the metric $d_{\mathit{Gap}}$ to the Fuchsian locus of $\mathrm{Pos}_3^u(S_{1,1})$ is equal to the Thurston metric.
\end{thm}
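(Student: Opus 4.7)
The plan is to verify the four axioms of an asymmetric mapping-class-group-invariant metric for $d_{\mathit{Gap}}$ and then identify it with the Thurston metric on the Fuchsian locus. MCG invariance and the triangle inequality are formal: $\subvec{\mathcal{C}}_{1,1}$ is MCG-stable and both $\ell_1$ and $\tau$ are MCG-equivariant, while the triangle inequality follows from the submultiplicativity $\sup_\gamma(f_1/f_3)\le\sup_\gamma(f_1/f_2)\cdot\sup_\gamma(f_2/f_3)$ after taking logarithms. Finiteness of the sup requires some genuine input: boundedness of $\tau$ from Theorem~\ref{thm:triplebounded} reduces the question to a uniform comparison of the simple $\ell_1$-spectra of $\rho_1$ and $\rho_2$, which I would extract from a Thurston-type Lipschitz comparison for convex projective surfaces or from the polynomial $\ell_i$-spectrum growth promised in the applications section.

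The crux is the vanishing axiom. Suppose $d_{\mathit{Gap}}(\rho_1,\rho_2)\le 0$. Then for every $\gamma\in\subvec{\mathcal{C}}_{1,1}$,
\[
\ell_1^{\rho_1}(\gamma)+\tau^{\rho_1}(\gamma)\;\le\;\ell_1^{\rho_2}(\gamma)+\tau^{\rho_2}(\gamma),
\]
so the corresponding summand of Equation~\eqref{equation:mc3c} at $\gamma$ is weakly larger for $\rho_1$ than for $\rho_2$. Since both McShane sums equal $1$, the inequality must be term-by-term equality, simultaneously proving non-negativity of $d_{\mathit{Gap}}$ and yielding
\[
\ell_1^{\rho_1}(\gamma)+\tau^{\rho_1}(\gamma)=\ell_1^{\rho_2}(\gamma)+\tau^{\rho_2}(\gamma) \quad \text{for all } \gamma\in\subvec{\mathcal{C}}_{1,1}.
\]
Applying this equality to $\gamma^{-1}$ and using the symmetries $\ell_1(\gamma^{-1})=\ell_2(\gamma)$ and $\tau(\gamma^{-1})=-\tau(\gamma)$ from Lemma~\ref{lem:12termcompare} gives $\ell_2^{\rho_1}(\gamma)-\tau^{\rho_1}(\gamma)=\ell_2^{\rho_2}(\gamma)-\tau^{\rho_2}(\gamma)$. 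Adding, we obtain $\ell^{\rho_1}(\gamma)=\ell^{\rho_2}(\gamma)$ for every simple closed $\gamma$, where $\ell=\ell_1+\ell_2$ is the Hilbert length. Simple Hilbert length spectrum rigidity on $\mathrm{Pos}_3^u(S_{1,1})$ then gives $\rho_1=\rho_2$.

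For the Fuchsian restriction, $3$-Fuchsian representations satisfy $\tau\equiv 0$ (by Theorem~\ref{thm:highrankrigidity}) and $\ell_1=\ell_2=\ell^h$, the underlying hyperbolic length. Hence
\[
d_{\mathit{Gap}}(\rho_1,\rho_2)=\log\sup_{\gamma}\frac{\log(1+e^{\ell^{h_1}(\gamma)})}{\log(1+e^{\ell^{h_2}(\gamma)})}.
\]
Set $K=\exp d_{\mathit{Th}}(h_1,h_2)\ge 1$. The pointwise bound $\ell^{h_1}(\gamma)\le K\ell^{h_2}(\gamma)$ combined with the elementary inequality $(1+e^b)^K\ge 1+e^{Kb}$ valid for $K\ge 1$ and $b\ge 0$ (check $b=0$ and compare derivatives) yields $\log(1+e^{\ell^{h_1}(\gamma)})\le K\log(1+e^{\ell^{h_2}(\gamma)})$ and hence $d_{\mathit{Gap}}\le d_{\mathit{Th}}$. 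For the reverse inequality, choose a sequence of simple closed curves $\gamma_n$ with $\ell^{h_2}(\gamma_n)\to\infty$ and $\ell^{h_1}(\gamma_n)/\ell^{h_2}(\gamma_n)\to K$, obtained by approximating the Thurston measured lamination realizing the sup by weighted simple closed curves on $S_{1,1}$. Along this sequence $\log(1+e^x)\sim x$, so the gap ratio tends to $K$, proving $d_{\mathit{Gap}}\ge d_{\mathit{Th}}$. The main obstacle is the rigidity step in paragraph two: it reduces the non-degeneracy of $d_{\mathit{Gap}}$ to the non-trivial simple Hilbert length spectrum rigidity for $\mathrm{Pos}_3^u(S_{1,1})$, which must be verified either by direct computation on the low-dimensional moduli space or by appeal to Kim's more general length-spectrum rigidity.
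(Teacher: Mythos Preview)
Your overall architecture matches the paper's: MCG invariance and the triangle inequality are treated as formal, non-negativity and the vanishing axiom are extracted from the McShane identity \eqref{equation:mc3c} by a termwise domination argument, and the Fuchsian comparison is done via a two-sided estimate. The Fuchsian part is essentially the same as the paper's Proposition~\ref{thm:thurston11}: your inequality $(1+e^b)^K\ge 1+e^{Kb}$ is equivalent to the paper's monotonicity of $\log(1+x)/\log x$, and your ``approximate the maximizing lamination by long simple curves'' is exactly what the paper does with the Dehn twist sequence $\beta\gamma^k$.

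The genuine divergence is in the rigidity step, and it is precisely where you flag your own obstacle. From the termwise equality $\ell_1^{\rho_1}(\gamma)+\tau^{\rho_1}(\gamma)=\ell_1^{\rho_2}(\gamma)+\tau^{\rho_2}(\gamma)$ you pass, via the $\gamma\mapsto\gamma^{-1}$ symmetry, to equality of the simple \emph{Hilbert} length spectra, and then need simple Hilbert length rigidity for $\mathrm{Pos}_3^u(S_{1,1})$. The paper avoids this external input by a Dehn twist limit: since $\ell_1(\gamma)=\lim_{k\to\infty}\tfrac{1}{k}\ell_1(\beta\gamma^k)$ (an elementary eigenvalue estimate) and $\tau$ is uniformly bounded (Theorem~\ref{thm:bounded}), one gets
\[
\frac{\ell_1^{\rho_2}(\gamma)}{\ell_1^{\rho_1}(\gamma)}
=\lim_{k\to\infty}
\frac{\ell_1^{\rho_2}(\beta\gamma^k)+\tau^{\rho_2}(\beta\gamma^k)}
{\ell_1^{\rho_1}(\beta\gamma^k)+\tau^{\rho_1}(\beta\gamma^k)}=1,
\]
so the full simple $\ell_1$-spectrum (equivalently the simple $\lambda_1$-spectrum) agrees, and one can invoke the simple $\lambda_1$-rigidity of \cite{bridgeman2017simple}. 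This buys strictly more than your route: equality of $\ell=\ell_1+\ell_2$ alone does not control $\lambda_2$, so the rigidity statement you need is a priori different from (and not obviously implied by) the one the paper cites. Your $\gamma\mapsto\gamma^{-1}$ trick is elegant, but the Dehn twist argument is both more elementary and lands on a rigidity theorem that is actually available; it also does double duty, since the same limit is what the paper uses to prove $d_{\mathit{Gap}}\ge d_{\mathit{Th}}$ on the Fuchsian locus.
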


We also generalize the notion of a gap metric to include $\mathrm{Pos}_3^u(S_{g,m})$ (Definitions~\ref{defn:pantsgapmetric} and \ref{defn:totalgapmetric}). The resulting asymmetric metric is mapping class group invariant. When restricted to the Fuchsian locus, the novel metric is at least as large as the Thurston metric, but it remains to be seen whether these two metrics are equal.

\subsection{Section overview and reading guide}

This paper consists of the following:\medskip

\noindent
\textsc{\S\ref{sec:fgmodcor}: Preliminary.} We introduce positive representations (Definition \ref{definition:posrep}), triple ratios (Definitions \ref{definition:tripleratio} and edge functions (Definition~\ref{definition:edgefunction}), and Fock and Goncharov's theory of positive higher Teichm\"uller spaces (\ref{defn:positive}).\medskip

\noindent
\textsc{\S\ref{sec:xcoords}: Properties of projective invariants.} We show that the set of triple ratios associated to any given positive representation is bounded (Theorem~\ref{thm:bounded}). We then show that triple ratios all being equal to $1$ or edge functions along the same edge being all the same are characterizing properties for $n$-Fuchsian representations (Proposition~\ref{prop:triple34} for $n=3,4$, Proposition~\ref{prop:edge3} for $n=3$ and Theorem~\ref{thm:highrankrigidity} for general $n$ with unipotent boundary monodromy).\medskip

\noindent
\textsc{\S\ref{sec:GS}: Goncharov--Shen potentials.} We introduce and familiarize ourselves with Goncharov--Shen potentials.\medskip

\noindent
\textsc{\S\ref{sec:splitting}: Identities for $\operatorname{PGL}_3(\mathbb{R})$-representations with unipotent boundary.} We describe the strategy for proving McShane-type identities, before rigorously establishing the McShane identity for all $\operatorname{PGL}_3(\mathbb{R})$-positive representation with unipotent boundary monodromy (Theorem~\ref{thm:equsl3pp}). A key ingredient of the proof --- the Birman--Series geodesic sparsity theorem for cusped convex real projective surfaces is delayed to \S\ref{sec:sparsity}. We focus on the $1$-cusped torus case (\ref{theorem:inequsl3s11}), highlighting certain surprising symmetries (\ref{sec:surprisesymmetry}). We also introduce a finer McShane-type identity (Theorem~\ref{theorem:equsl3}) summing over half-pants rather than pants.\medskip

\noindent
\textsc{\S\ref{sec:sparsity}: Geodesic sparsity for convex real projective surfaces.} We prove the Birman--Series geodesic sparsity theorem for convex real projective surfaces (Theorem~\ref{thm:birmanseries}).\medskip

\noindent
\textsc{\S\ref{sec:highermcshane}: McShane identities for higher Teichm\"uller space.} 
We show that ratios of Goncharov--Shen potentials are projective invariants (Proposition~\ref{prop:BC}). We dub these objects $i$-th potential ratios and relate them to rank $n$ weak cross ratios (Corollary~\ref{corollary:LM-HS}) and simple root lengths (Corollary~\ref{cor:iratiocycle}).\medskip

We adapt (Theorem~\ref{thm:boundary}) Labourie and McShane's ideas from \cite{LM09} to establish a family of McShane identities for loxodromic-bordered positive representations of arbitrary rank (Theorem~\ref{theorem:boundaryith}), deriving regular expressions for their summands in terms of $i$-th lengths, triple ratios and edge functions using $i$-th potential ratios. We then obtain McShane-type inequality for unipotent-bordered positive representations of arbitrary rank (Theorem \ref{theorem:puncturecase}) by deforming the loxodromic bordered identities. We pose a conjectural condition which would promote these inequalities to identities (Theorem~\ref{theorem:equuni}).\medskip

\noindent
\textsc{\S\ref{section:app}: Applications.} We employ our McShane identities to show the discreteness of simple $i$-th length spectrum (Theorem~\ref{thm:discretespectra}), to demonstrate the collar lemma (Theorem~\ref{thm:collar}) for $\operatorname{PGL}_3(\mathbb{R})$-positive representations and to generalize the Thurston metric (Theorem~\ref{thm:genthurstonmetric} and Definition~\ref{defn:pantsgapmetric}) for cusped strictly convex real projective surfaces.\medskip

\begin{rmk}
Readers mainly interested in convex real projective surfaces ($\mathrm{Pos}_3(S)$) may wish to focus on \S\ref{sec:splitting}, \S\ref{sec:sparsity} and the McShane identity applications in \S\ref{section:app}. On the other hand, those with background in and predominantly interested in (arbitrary rank) Fock--Goncharov higher Teichm\"uller theory may be primarily interested in \S\ref{sec:xcoords}, \S\ref{sec:GS}, \S\ref{sec:splitting} and \S\ref{sec:highermcshane}, with secondary interests in our McShane identity applications in \S\ref{section:app}.
\end{rmk}

\clearpage

\section{Preliminary}
\label{sec:fgmodcor}
The results we derive in this article center on \emph{positive representations} --- focal objects in Fock and Goncharov's approach to the higher Teichm\"uller theory \cite{FG06}. We review the definition of positive representations, projective invariants associated to them, as well as their relationship to Fock and Goncharov's $\mathcal{X}$ and $\mathcal{A}$ moduli spaces.

\subsection{Positive representations}

The notion of positive surface group representations are motivated by totally positive matrices and positive configurations of flags. We first present these concepts.\medskip

Let $S=S_{g,m}$ be a topological surface of genus $g$ with $m$ holes, with negative Euler characteristic $\chi(S_{g,m})=2-2g-m<0$. Moreover, consider the vector space $\mathbb{R}^n$ endowed with the standard Euclidean volume form $\Delta$.

\begin{defn}[Flags and decorated flags]
\label{defn:flag}
A \emph{flag} $F$ in $\mathbb{R}^n$ is a maximal filtration of vector subspaces of $\mathbb{R}^n$:
\[
\{0\}=F^{(0)}\subset F^{(1)}\subset \cdots \subset F^{(n-1)} \subset F^{(n)}=\mathbb{R}^n,\quad \dim F^{(i)}=i.
\]
A \emph{basis} for a flag $F$ is a basis $(f_1,\ldots,f_n)$ for $\mathbb{R}^n$ such that, for any $i=1,\ldots,n$, the first $i$ basis vectors form a basis for $F_i$.\medskip

A \emph{decorated flag} $(F,\varphi)$ is a pair consisting of a flag $F$ and a collection $\varphi$ of $(n-1)$ non-zero vectors
\[
\varphi=\left\{
\check{f}_i \in F^{(i)}/F^{(i-1)}
\right\}_{i=1,\ldots,n-1}.
\] 
A \emph{basis} for a decorated flag $(F,\varphi)$ is a basis $(f_1,\ldots,f_n)$ for the vector space $\mathbb{R}^n$ such that
\[
f_i+F^{(i-1)}=\check{f}_i\in F^{(i)}/F^{(i-1)}\quad \text{for}\quad i=1,\ldots, n-1.
\]

We refer to the set $\mathcal{B}$ of flags in $\mathbb{R}^n$ as the \emph{flag variety} and the set $\mathcal{A}$ of decorated flags in $\mathbb{R}^n$ as the \emph{principal affine space}. We note the obvious ``forgetful" projection map
\begin{equation}
\label{equation:projpi}
\pi: \mathcal{A} \rightarrow \mathcal{B}\text{, }(F,\varphi)\mapsto F.
\end{equation}
\end{defn}

\begin{notation}
\label{notation:flag}
Given a basis $(f_1,\ldots,f_n)$ for a flag or a decorated flag $F$, for $i=1,\cdots,n$, we use $f^i$ to denote:
\[
f^i:=f_1 \wedge f_2 \wedge \cdots \wedge f_{i-1} \wedge f_i.
\]
We set $f^0=1$ by convention. Moreover, without loss of generality, we only consider bases such that $f_n$ satisfies $\Delta(f^n)=1$.
\end{notation}

\begin{defn}[Generic position]
\label{defn:genpos}
For an integer $d\geq 2$, We say that a $d$-tuple of flags $(F_1,\cdots,F_d)$ is in \emph{generic position} if, for any collection of non-negative integers $n_1,\cdots,n_d\in\mathbb{Z}_{\geq0}$ satisfying $n_1+\cdots+n_d\leq n$, the sum $\sum_{i=1}^d F_i^{(n_i)}$ of vector spaces is a direct sum. Likewise, a $d$-tuple of decorated flags is in generic position if the underlying $d$-tuple of flags is in generic position.
\end{defn}

In \cite{Lu94}, Luztig expanded upon the theory of \emph{totally positive matrices} originally developed by Gantmacher--Krein \cite{GK41} and Schoenberg \cite{Sch33} to include arbitrary semi-simple real Lie groups. For our purposes, it means the following:

\begin{defn}[{Totally positive matrices (see, e.g., \cite[\S1.5]{FG06})}]
A real matrix is \emph{totally positive} if and only if all of its matrix minors are positive. A real upper triangular matrix is totally positive if and only if all of its minors, apart from those which are necessarily $0$ are positive. 
\end{defn}

Positive $d$-tuples of flags are defined in \cite[Definition 1.4]{FG06} for a very general context. Again, we restrict to the $\operatorname{PGL}_n(\mathbb{R})$ case (e.g.: \cite[Definition 2.14]{sun2017flows}).
\begin{defn}
\label{definition:posconf}
For $d\geq 3$, a generic $d$-tuple of flags $(F_1,\cdots,F_d)$ is \emph{positive} if for some fixed basis $B=\{f_i\}_{i=1}^n$ of $\mathbb{R}^n$ such that $f_i \in F_1^{(i)}\cap F_2^{(n-i+1)}$ for $i=1,\cdots,n$,
\begin{enumerate}
\item there are projective transformations $u_1,\cdots,u_{d-2}$ in $\operatorname{PGL}_n(\mathbb{R})$ that are totally positive upper triangular unipotent matrices with respect to the basis $B$, and
\item there exists a $g \in \operatorname{PGL}_n(\mathbb{R})$ which fixes $F_1$ and $F_2$
\end{enumerate}
 such that 
\[
g(F_1,F_2,F_3,\cdots,F_d)=(F_1,F_2, u_1 \cdot F_2, \cdots, u_1 \cdots u_{d-2} \cdot F_2).
\]
\end{defn}

Note that if $(F_1,\cdots,F_d)$ is positive, then for any collection of indices $1\leq i_1<\cdots<i_l\leq d$, the $l$-tuple of flags $(F_{i_1},\cdots,F_{i_l})$ is positive.

\begin{defn}[Auxiliary metric]
\label{definition:aux}
Let $S=S_{g,m}$ be a surface of genus $g$ with $m$ holes with negative Euler characteristic. For any discrete faithful representation $\rho:\pi_1(S)\rightarrow\operatorname{PGL}_n(\mathbb{R})$, we say that a complete hyperbolic metric $h_\rho$ on $S$ is an \emph{auxiliary metric} for $\rho$ if it satisfies the following conditions:
\begin{enumerate}
\item if the monodromy $\rho(\alpha)$ of a boundary component $\alpha$ of $S$ is unipotent, then choose $h_\rho$ such that the boundary $\alpha$ is a cusp;
\item if the monodromy $\rho(\alpha)$ of a boundary component $\alpha$ of $S$ is non-unipotent, then choose $h_\rho$ such that the boundary $\alpha$ is a closed geodesic (of strictly positive length).
\end{enumerate}
\end{defn}

\begin{defn}[Boundary at infinity]
\label{definition:bin}
Consider a surface $S$ with negative Euler characteristic and let $h_\rho$ denote an auxiliary metric for a discrete faithful representation $\rho:\pi_1(S)\to\operatorname{PGL}_n(\mathbb{R})$. Further let $(\tilde{S},\tilde{h}_\rho)$ denote the universal cover of $(S,h_\rho)$. We define the \emph{boundary at infinity} $
\partial_\infty \pi_1(S,h_\rho)=\partial_\infty \pi_1(S)$ 
for $\rho$ as the intersection of $\mathbb{RP}^1=\partial \mathbb{H}^2$ with the set of metric completion points of $(\tilde{S},\tilde{h}_\rho)$. 
\end{defn}
To clarify: when every boundary component of $(S,h_\rho)$ is cuspidal (including the scenario when $S$ is closed), then the boundary at infinity $\partial_\infty \pi_1(S)$ is homeomorphic to a circle. Conversely, when $(S,h_\rho)$ has non-cuspidal boundary components, the boundary at infinity $\partial_\infty \pi_1(S)$ is homeomorphic to a Cantor set (regarded as a subset of a circle).  

\begin{defn}[Positive maps]
\label{dfn:pmaps}
Consider a subset $\mathbb{X}\subseteq \mathbb{RP}^1$, we say that a map $\xi:\mathbb{X}\to\mathcal{B}$ from $\mathbb{X}$ to the flag variety $\mathcal{B}$ is a \emph{positive map} if and only if: for any collection of distinct points $x_1,\ldots,x_d\in\mathbb{X}$ cyclically anti-clockwise ordered around $\mathbb{RP}^1$, the $d$-tuple of flags $(\xi(x_1),\cdots,\xi(x_d))$ is a positive $d$-tuple of flags.
\end{defn}

\begin{defn}[Positive representations]
\label{definition:posrep}
We say that a representation $\rho:\pi_1(S_{g,m})\rightarrow\operatorname{PGL}_n(\mathbb{R})$ is a \emph{positive representation} if and only if there exists a $\rho$-equivariant positive map 
\[
\xi_\rho:\partial_\infty \pi_1(S_{g,m})\rightarrow \mathcal{B}.
\]
In situations where we wish to emphasize that a positive representation $\rho$ maps into $\operatorname{PGL}_n(\mathbb{R})$, we refer to $\rho$ as a \emph{$\operatorname{PGL}_n(\mathbb{R})$-positive representation}.
\end{defn}
By \cite[Theorem 1.14]{FG06}, the above $\rho$-equivariant positive map $\xi_\rho$ is continuous.

\subsection{Representation varieties}

We create specialized notation for three types of positive representation varieties. Strictly speaking, the elements constituting these representation varieties are \emph{$\operatorname{PGL}_n(\mathbb{R})$-conjugacy classes} of $\operatorname{PGL}_n(\mathbb{R})$-positive representations. However, it is standard nomenclature to simply refer to these spaces as representation varieties.

\begin{defn}[Positive representation varieties]
\label{defn:prepvar}
We adopt the following notation:
\begin{itemize}
\item
\emph{$\operatorname{PGL}_n(\mathbb{R})$-positive representation variety}:
$\operatorname{Pos}_n(S_{g,m})$ denotes the space of $\operatorname{PGL}_n(\mathbb{R})$-conjugacy classes of $\operatorname{PGL}_n(\mathbb{R})$-positive representations $\rho:\pi_1(S_{g,m})\to\operatorname{PGL}_n(\mathbb{R})$.
\item
\emph{loxodromic-bordered $\operatorname{PGL}_n(\mathbb{R})$-positive representation variety:}
$\operatorname{Pos}_n^h(S_{g,m})$ denotes the subspace of $\operatorname{Pos}_n(S_{g,m})$ consisting of (conjugacy classes of) $\operatorname{PGL}_n(\mathbb{R})$-positive representations with loxodromic boundary monodromy for all $m$ boundary components.
\item
\emph{ unipotent-bordered $\operatorname{PGL}_n(\mathbb{R})$-positive representation variety:}
$\operatorname{Pos}_n^u(S_{g,m})$ denotes the subspace of $\operatorname{Pos}_n(S_{g,m})$ consisting of $\operatorname{PGL}_n(\mathbb{R})$-positive representations with unipotent boundary monodromy for all $m$ boundary components.
\end{itemize}
\end{defn}

Before venturing further, we first elucidate the relationship between positive representations and Hitchin representations \cite{Hit92}. To begin with, for the closed surface $S_{g,0}$, Fock and Goncharov \cite[Theorem 1.15]{FG06} show that the Hitchin representation variety $\operatorname{Hit}_n(S_{g,0})$ is equal to the positive representation variety $\operatorname{Pos}_n(S_{g,0})$. When the underlying surface has boundaries, however, the situation is as follows:

\begin{rmk}[Hitchin versus positive for $S_{g,m\geq1}$]
\label{remark:hitpos}
In \cite[\S9]{LM09}, Labourie and McShane generalize the notion of {\em Hitchin representations} to the negative Euler characteristic bordered surface $S_{g,m}$ context by defining them as representations which 
have loxodromic boundary and are deformed along a path of loxodromic-bordered representations from a $n$-Fuchsian representation. Thanks to \cite[Theorem~9.1]{LM09}, we know that
\begin{align*}
\operatorname{Hit}_n(S_{g,m})
&:=\left\{
\rho\mid
\rho:\pi_1(S_{g,m})\to\operatorname{PGL}\text{ is a Hitchin representation }
\right\}/\operatorname{PGL}_n(\mathbb{R})\\
&\subseteq
\operatorname{Pos}_n^h(S_{g,m}).
\end{align*}
Conversely, any positive representation $\rho \in \operatorname{Pos}_n^h(S_{g,m})$ may be extended to a positive representation $d\rho$ for the doubled surface $S_{2g-1+m,0}$ via a canonical doubling construction described in \cite[Definition~9.2.2.3]{LM09} (or more generally, via constructions described in \cite[\S7]{FG06} based on gluing conditions defined in \cite[Definition~7.2]{FG06}). Since positive representations for closed surfaces are Hitchin \cite[Theorem 1.15]{FG06}, the representation $d\rho$ definitionally deforms to a $n$-Fuchsian representation via a path $\{d\rho_t\}$. The restriction of $d\rho_t$ to $\pi_1(S_{g,m})\leq\pi_1(S_{2g-1+m,0})$ all have loxodromic boundary and deform from $\rho$ to a $n$-Fuchsian representation of $\pi_1(S_{g,m})$ along a path of loxodromic-bordered representations. Therefore,
\[
\operatorname{Hit}_n(S_{g,m})= \operatorname{Pos}_n^h(S_{g,m})\text{, for }m\geq1.
\]

\end{rmk}

\subsection{Configuration space and projective invariants}

\begin{defn}[Configuration space]
We denote the space of generic $d$-tuple of flags up to diagonal projective transformations by $\operatorname{Conf}_d$, and refer to elements of $\operatorname{Conf}_d$ as \emph{configurations}. We further denote the subspace of $\operatorname{Conf}_d$ consisting of positive $d$-tuple of flags up to diagonal projective transformations by $\operatorname{Conf}_d^+$. The elements of $\operatorname{Conf}_d^+$ are \emph{positive configurations}.
\end{defn}

In the classical (i.e.: $n=2$) setting, the (pure) mapping class group is trivial and the positive configuration space is equal to both the moduli space and the Teichm\"uller space of hyperbolic ideal $d$-gons. The positive configuration space serves as a building block for Teichm\"uller spaces of surfaces of greater topological complexity, a similar picture persists for general $n$. This in turn means that projective invariants of $d$-tuples of flags, which define functions on $\operatorname{Conf}_d$, are candidates for local and/or global coordinates for higher Teichm\"uller spaces. We focus on two types of projective invariants: \emph{triple ratios} and \emph{edge functions}.\medskip

\begin{defn}[Triple ratio]
\label{definition:tripleratio}
Consider a triple of flags $(F,G,H)$ in generic position, with bases 
\[
(f_1,\cdots,f_n),\;\;(g_1,\cdots,g_n),\;\;(h_1,\cdots,h_n).
\] 
Then for any triple of positive integers $(i,j,k)$ with $i+j+k=n$, the \emph{triple ratio} $T_{i,j,k}(F,G,H)$ is defined as:
\begin{align*}
T_{i,j,k}(F,G,H):= 
\frac
{\Delta\left(f^{i+1} \wedge g^{j} \wedge h^{k-1}\right) \Delta\left(f^{i-1} \wedge g^{j+1} \wedge h^{k}\right)  \Delta\left(f^{i} \wedge g^{j-1} \wedge h^{k+1}\right)} 
{\Delta\left(f^{i+1} \wedge g^{j-1} \wedge h^{k}\right) \Delta\left(f^{i} \wedge g^{j+1} \wedge h^{k-1}\right) \Delta\left(f^{i-1} \wedge g^{j} \wedge h^{k+1}\right)}.
\end{align*}
Properties of determinants ensure the following cyclic symmetry: 
\[
T_{i,j,k}(F,G,H)=T_{j,k,i}(G,H,F)=T_{k,i,j}(H,F,G).
\]
\end{defn}

\begin{rmk}
For $n=3$, the triple $(i,j,k)$ is necessarily equal to $(1,1,1)$. We will often omit the indices $(1,1,1)$ and simply write $T(F,G,H)$.
\end{rmk}

We give an interpretation for the triple ratio, noting that it serves also as a geometrically flavored definition:

\begin{rmk}[{\cite{FG07}, Geometric definition for the triple ratio}]
\label{remark:triple}
Consider three flags
\[
A=(a,L_1),B=(b,L_2),C=(c,L_3)
\] 
 in $\mathbb{RP}^2$ in generic position. Let $u=L_2 \cap L_3$, $v=L_1 \cap L_3$, $w=L_1 \cap L_2$ (see Figure~\ref{fig:ceva}), and let $|\cdot|$ denote the Euclidean distance. We stated in the introduction that the triple ratio of $(A,B,C)$ is given by
\begin{align}
T(A,B,C) :=
\frac
{|wb|\cdot |uc|\cdot |va|}
{|bu|\cdot |cv| \cdot |aw|},
\end{align}
where the Euclidean distance of the segments may be infinite.

\begin{figure*}[h!]
\includegraphics[scale=0.2]{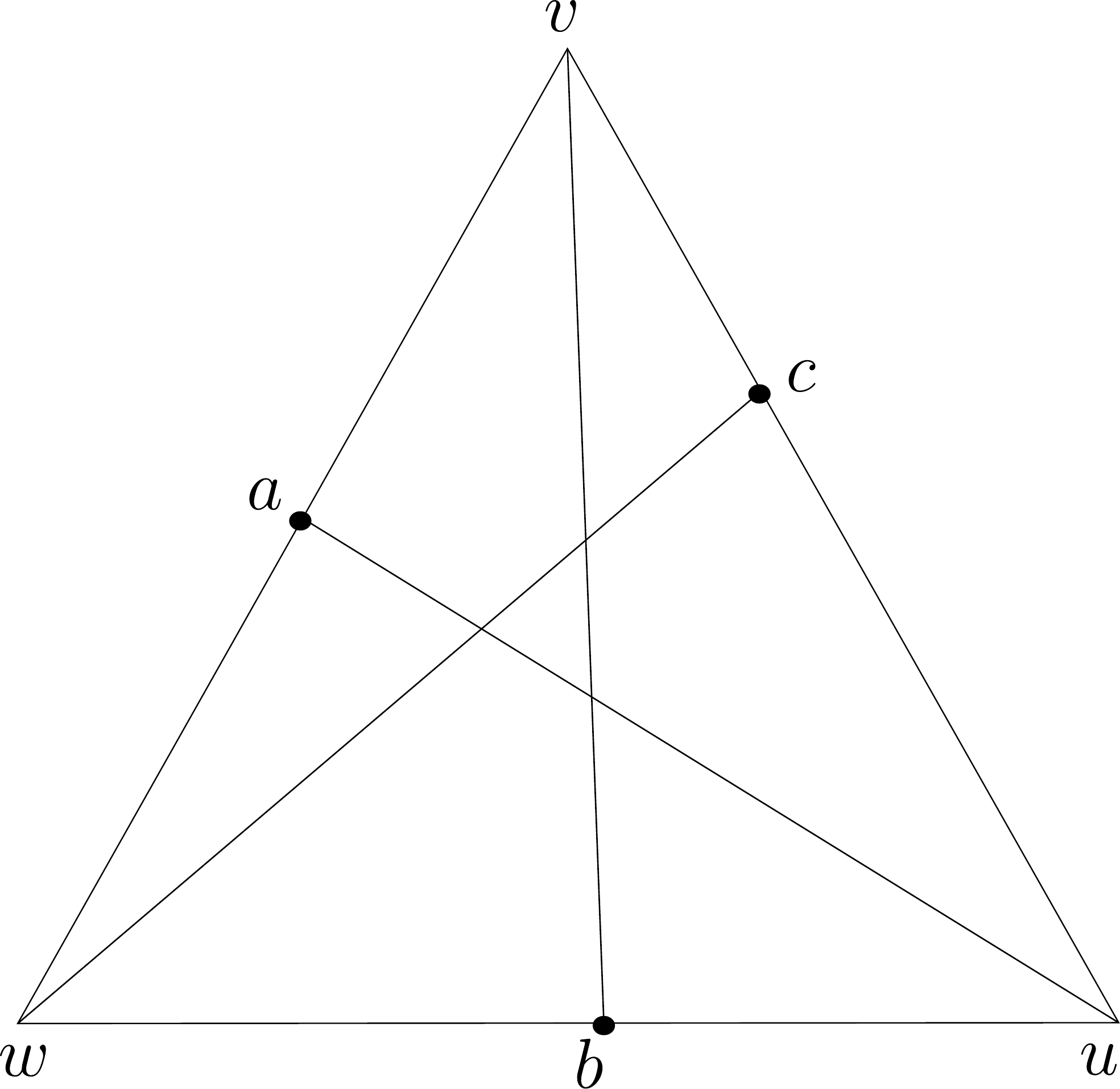}
\caption{Triple ratio.}
\label{fig:ceva}
\end{figure*}

To interpret triple ratios for flags $A,B,C\in\mathcal{B}$ in higher rank contexts, we project $\mathbb{R}^n$ down to the following $3$-dimensional vector space
\[
\mathbb{R}^n/\left(A^{(i-1)}\oplus  B^{(j-1)} \oplus C^{(k-1)}\right),
\]
and note that $A,B,C$ project to flags in the quotient vector space. The triple ratio $T_{i,j,k}(A,B,C)$ is then equal to the triple ratio of the respective projected flags for $A,B,C$.
\end{rmk}

\begin{rmk}
For $n=3$, Ceva's theorem asserts that $T(A,B,C)=1$ if and only if the lines $\overline{au}$, $\overline{bv}$, $\overline{cw}$ intersect at one point. 
\end{rmk}

\begin{defn}[Edge function]
\label{definition:edgefunction}
Let $(X,Y,Z,W)$ be quadruple of flags in generic position, and choose bases
\[
(x_1,\cdots,x_n),\;\;(y_1,\cdots,y_n),\;\;(z_1,\cdots,z_n),\;\;(w_1,\cdots,w_n).
\]
For $i=1,\cdots,n-1$, the \emph{edge function} is defined as
\begin{eqnarray*}
&&D_i(X,Y,Z,W):=- \frac{\Delta\left(x^{n-i} \wedge y^{i-1}\wedge z^{1} \right)}{\Delta\left(x^{n-i-1}  \wedge y^{i}\wedge z^1\right)}\cdot \frac{ \Delta\left(x^{n-i-1} \wedge y^{i} \wedge w^1\right)}{\Delta\left(x^{n-i}  \wedge y^{i-1} \wedge w^{1}\right)}.
\end{eqnarray*}
Properties of determinants ensure the following symmetry: 
\[
D_i(X,Y,Z,W)=D_{n-i}(Y,X,W,Z).
\]
\end{defn}

We once again emphasize that both triple ratios and edge functions are projective invariants. Moreover, we emphasize that they are well-defined, which is to say that their values do not depend on the chosen bases for the input flags.\medskip

\begin{defn}[Marked ideal triangle]
A \emph{marked ideal triangle} on a hyperbolic surface $S$ is a pair $(\triangle,\mathit{imm})$, where $\triangle$ is an oriented ideal triangle with vertices labeled anticlockwise by $1,2,3$ and $\mathit{imm}:\triangle\to S$ is an isometric immersion of $\triangle$ into $S$. When $S$ is a subset of the hyperbolic plane $\mathbb{H}^2$ (e.g.: a hyperbolic ideal $d$-gon, or perhaps the entire hyperbolic plane), the data of the immersion $\mathit{imm}$ is equivalent to giving an ordered $3$-tuple $(v_1,v_2,v_3)$ of ideal points $v_i\in\partial\mathbb{H}^2$. We say that a marked ideal triangle is a \emph{marked oriented ideal triangle} if $\mathit{imm}$ is orientation-preserving.
\end{defn}

\begin{notation}
We henceforth adopt the following notation conventions:
\begin{itemize}
\item
$\overline{xy}$ denotes the unoriented edge between $x$ and $y$;
\item
$\overline{xyz}$ denotes an unoriented triangle;
\item
$\xvec{xyz}$ denotes an oriented triangle; 
\item
$(x,y)$ denotes the oriented edge from $x$ to $y$;
\item
$(x,y,z)$ denotes a marked triangle; 
\item
$\tilde{X}$ denotes the union of all lifts of an object $X$ in $S$ to the universal cover $\tilde{S}$ of $S$.
\end{itemize}
We continue to use this notation throughout the paper except when explicitly stated otherwise, especially when carrying out computations.
\end{notation}

Consider a hyperbolic ideal $d$-gon and label its $d$ cusps, which may be regarded as vertices, by $\{v_1,\cdots,v_d\}$. To each such vertex $v$, we assign a flag $F(v)\in\mathcal{B}$. Let $\mathcal{T}$ be an ideal triangulation of the $d$-gon, and arbitrarily fix one marked (anticlockwise) oriented ideal triangle $\triangle$ representative for each (unmarked) ideal triangle in the triangulation $\mathcal{T}$ and denote this collection of marked oriented ideal triangle representatives by $\Theta$. We represent each marked oriented ideal triangle $\triangle\in\Theta$ by its ideal vertices $(x,y,z)$ so that $x,y,z\in\{v_1,\cdots,v_d\}$ arise anticlockwise along the boundaries of the $d$-gon. We associate to each triangle $\triangle\in \Theta$ the $3$-tuple $\mathbf{F}(\triangle)$ of flags associated to its vertices:
\[
\mathbf{F}(\triangle):=\mathbf{F}(x,y,z):=(F(x),F(y),F(z)).
\]
Similarly, fix one oriented edge $\xvec{e}$ representative for each (unoriented) interior edge in $\mathcal{T}$ and denote the collection of oriented interior edges by $\Xi$. We represent each oriented edge $\xvec{e}\in\Xi$ by its ideal vertices $(x,z)$ for $x,z\in\{v_1,\cdots,v_d\}$. The edge $e\in E$ underlying $\xvec{e}$ is shared by two triangles in $\mathcal{T}$ and hence arises as the diagonal of an anticlockwise oriented ideal quadrilateral $(x,t,z,y)$ with $x,t,z,y\in\{v_1,\ldots,v_d\}$. We assign the following $4$-tuple $\mathbf{F}(\vec{e})$ of flags:
\[
\mathbf{F}(\vec{e}):=(F(x),F(y),F(z),F(t)).
\]
\begin{thm}[{\cite[Theorem 9.1]{FG06}}]
\label{thm:posconf}
For the integers $d\geq 3$ and $n\geq 2$, the map
\begin{align*}
\rm{Conf}_d^+
&\to
\mathbb{R}_{>0}^{(n-1)(n-2)(d-2)/2} \times  \mathbb{R}_{>0}^{(n-1)(d-3)}\\
 (F(v_1),\cdots, F(v_d)) 
 &\mapsto 
 \left(\left(T_{i,j,k}\left(\mathbf{F}(\triangle)\right)  \right)_{i+j+k=n;\; \triangle \in \Theta}, \left(D_l\left(\mathbf{F}(\xvec{e})\right)  \right)_{l=1,\ldots,n-1;\; \xvec{e} \in \Xi} \right)
\end{align*}
is a real analytic diffeomorphism.
\end{thm}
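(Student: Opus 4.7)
I would follow Fock and Goncharov's strategy: reconstruct a positive configuration from its projective invariants by inductively gluing triangles along the dual tree of the triangulation $\mathcal{T}$. The coordinate map is manifestly real analytic because each $T_{i,j,k}$ and $D_l$ is rational in any flag-basis coordinates with denominators that are nonzero on generic configurations, it is $\operatorname{PGL}_n(\mathbb{R})$-invariant and so descends to $\operatorname{Conf}_d^+$, and on positive tuples the values are positive by the standard sign analysis of the minor determinants (for the $D_l$, the sign in Definition~\ref{definition:edgefunction} has been chosen for exactly this reason). A dimension count gives
\[
d\cdot\dim\mathcal{B} - (n^2-1) = \tfrac{(n-1)[n(d-2)-2]}{2} = \tfrac{(n-1)(n-2)(d-2)}{2} + (n-1)(d-3),
\]
so both source and target have the same dimension, and it suffices to construct an analytic inverse landing in $\operatorname{Conf}_d^+$.

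\textbf{Base case ($d=3$).} Here there are no interior edges and the target reduces to $\mathbb{R}_{>0}^{(n-1)(n-2)/2}$. After acting by $\operatorname{PGL}_n(\mathbb{R})$, I would normalize a generic triple $(F_1,F_2,F_3)$ so that $F_1$ is the standard flag and $F_2$ is the antistandard flag; the remaining stabilizer is the $(n-1)$-dimensional diagonal maximal torus, and the space of flags generic to $(F_1,F_2)$ modulo this torus has dimension exactly $(n-1)(n-2)/2$. I would then show that the triple ratios $T_{i,j,k}(F_1,F_2,F_3)$ parameterize this orbit space bijectively, and that positivity of all $T_{i,j,k}$ corresponds precisely to writing $F_3 = u\cdot F_2$ for a totally positive upper triangular unipotent $u$, as demanded by Definition~\ref{definition:posconf}. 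This last equivalence uses Lusztig's parameterization of the totally positive part of the unipotent radical together with the determinantal formulas defining the $T_{i,j,k}$.

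\textbf{Inductive step.} Suppose we have reconstructed the first $k$ flags corresponding to the first $k-2$ triangles of $\mathcal{T}$. To add a new flag $F_{k+1}$ across an interior edge with endpoints $F_i,F_j$, note that the $\operatorname{PGL}_n(\mathbb{R})$-stabilizer of $(F_i,F_j)$ is a maximal torus $T$ of dimension $n-1$. Prescribing the $n-1$ edge functions along the new interior edge (using the adjacent already-fixed third flag $F_h$ of the previous triangle) determines the $T$-orbit of $F_{k+1}$, and prescribing the $(n-1)(n-2)/2$ triple ratios of the new triangle $(F_i,F_j,F_{k+1})$ pins $F_{k+1}$ down uniquely within its $T$-orbit. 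The parameter count $(n-1) + (n-1)(n-2)/2 = n(n-1)/2 = \dim\mathcal{B}$ matches exactly, and both determinations are given by explicit rational expressions in the prescribed positive parameters, so the inverse is real analytic.

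\textbf{Main obstacle.} The most delicate point is verifying that the reconstructed $d$-tuple is not merely generic but positive, i.e.\ that every cyclically ordered sub-tuple satisfies Definition~\ref{definition:posconf}. The key input is Lusztig's theorem that products of totally positive upper triangular unipotent matrices remain totally positive, combined with an inductive check that each gluing step in the dual tree corresponds to composition with a totally positive unipotent factor whose Lusztig coordinates are expressible as positive rational combinations of the prescribed triple ratios and edge functions. Once this compatibility is in place, the inductive construction lands in $\operatorname{Conf}_d^+$, both directions of the map are real analytic, and the theorem follows.
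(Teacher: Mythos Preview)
The paper does not supply its own proof of this statement: it is quoted verbatim as \cite[Theorem~9.1]{FG06} and used as a black box. So there is no in-paper argument to compare against.

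That said, your outline is a faithful sketch of the Fock--Goncharov proof. The dimension count is correct, the reduction to the $d=3$ case followed by an inductive gluing along the dual tree of $\mathcal{T}$ is exactly their method, and you have correctly identified the delicate point: verifying that the reconstructed tuple lands in $\operatorname{Conf}_d^+$ rather than merely in $\operatorname{Conf}_d$. Your appeal to Lusztig's parameterization of the totally positive unipotent cone and closure of total positivity under multiplication is the right mechanism. One refinement worth flagging: in the inductive step, the claim that the $(n-1)$ edge functions together with the $(n-1)(n-2)/2$ triple ratios determine $F_{k+1}$ \emph{uniquely} (not just up to a finite ambiguity) requires more than a dimension match; in \cite{FG06} this is handled via the explicit ``snake'' construction of a basis adapted to the pair $(F_i,F_j)$, from which $F_{k+1}$ is written down by rational formulas in the prescribed coordinates. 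Your sketch would benefit from pointing to that construction rather than relying on a parameter count alone.
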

The above proposition gives an algebraic characterization of $\rm{Conf}_d^+$: a $d$-tuple of flags is positive if and only if there exists an ideal triangulation $\mathcal{T}$ of a $d$-gon such that:
\begin{itemize}
\item
for every marked ideal triangle $\triangle\in\Theta$ and every triple of positive integers $(i,j,k)$ summing to $n$, the quantity $T_{i,j,k}\left(\mathbf{F}(\triangle)\right)$ is (strictly) positive and 
\item
for every oriented interior edge $\xvec{e}\in\Xi$ and every integer $l=1,\cdots,n-1$, the quantity $D_l\left(\mathbf{F}(\xvec{e})\right)$ is positive.
\end{itemize}

\subsection{Fock--Goncharov moduli spaces $\mathcal{X}_{\operatorname{PGL}_n,S_{g,m}}$ and $\mathcal{A}_{\operatorname{SL}_n,S_{g,m}}$}

We have already mentioned that Fock and Goncharov's version of higher Teichm\"uller theory \cite{FG06} is deep and applies to a \emph{very} broad context. We do not utilize the full force of their machinery, and concern ourselves with higher Teichm\"uller spaces of the form $\mathcal{X}_{\operatorname{PGL}_n,S_{g,m}}$ and $\mathcal{A}_{\operatorname{SL}_n,S_{g,m}}$, where $m\geq1$. The latter space $\mathcal{A}_{\operatorname{SL}_n,S_{g,m}}$ is concerned only with the representations with unipotent boundary monodromy, and it will be convenient to regard the boundaries of $S_{g,m}$ as either punctures or cusps. The former space $\mathcal{X}_{\operatorname{PGL}_n,S_{g,m}}$ is generically concerned with positive representations with loxodromic boundary monodromy (although uipotent is also permitted), and we generally regard the boundaries of $S_{g,m}$ as holes. We shall regard the boundaries of $S_{g,m}$ flexibly throughout this paper.

\medskip

\textbf{A reductionist approach: flags and decorated flags}

The flags and decorated flags are keys to Thurston's enhanced Teichm\"uller theory and Penner's decorated Teichm\"uller theory \cite{pennercoords} respectively --- the respective classical archetypes for Fock--Goncharov's $\mathcal{X}_{\operatorname{PGL}_n,S_{g,m}}$ and $\mathcal{A}_{\operatorname{SL}_n,S_{g,m}}$ moduli space theory. 
Let $m_p$ denote the set of punctures of the topological surface $S_{g,m}$. The central idea is that each element in $\mathcal{X}_{\operatorname{PGL}_n,S_{g,m}}$ or $\mathcal{A}_{\operatorname{SL}_n,S_{g,m}}$ moduli space may be described in terms of:
\begin{enumerate}
\item surface group representation $\rho$;
\item the flags (or decorated flags) invariant with respect to the holonomy of $\rho$ at each puncture in $m_p$.
\end{enumerate}
Crucially, Fock and Goncharov realized that all of these necessary data may be stored in terms of $\rho$ invariant flags (or decorated flags) assigned to all the lifts $\tilde{m}_p$ in the universal cover by deck transformations.\medskip

Fix a collection of $m$ based homotopy classes $\alpha_1,\ldots,\alpha_m\in\pi_1(S_{g,m},*)$, respectively winding the punctures $p_1,\ldots,p_m\in m_p$, oriented so that when $S_{g,m}$ is endowed with a geodesic-bordered hyperbolic metric, the surface lies on the left of the respective geodesic representatives of $\alpha_i$.
% (see Figure \ref{Figure:ori}).

%\begin{figure}[h!]
%\includegraphics[scale=0.25]{ori.pdf}
%\caption{Orientations of $\alpha_1,\cdots,\alpha_m$.}
%\label{Figure:ori}
%\end{figure}

\begin{notation}
\label{notation:opensur}
In latter sections of this paper, we perform computations involving objects determined by ideal points (e.g.: ideal triangles), and we shall find it convenient to canonically identify $p_1,\ldots,p_m$ with the subset in $\partial_\infty\pi_1(S_{g,m})$ (Definition~\ref{definition:bin}) consisting of the respective fixed points of $\alpha_1,\ldots,\alpha_m$. If the monodromy matrix $\rho(\alpha_i)$ is
\begin{itemize}
\item
 unipotent, then $\alpha_i$ has precisely one fixed point on $\partial_\infty\pi_1(S_{g,m})$; 
 \item
 loxodromic, then it has precisely one attracting fixed point $\alpha_i^+$ and precisely one repelling fixed point $\alpha_i^-$. 
 \end{itemize}
 We choose $\alpha_i^-$ between the two different fixed points to relate to $p_i$. Such a choice corresponds to a choice of the spiraling direction of the ideal edge of $\mathcal{T}$ going into $\alpha_i$ when the puncture $p_i$ deforms into the hole $\alpha_i$. 
 %For $\rho \in \mathrm{Pos}_n(S_{g,m})$, by Theorem \ref{theorem:loxo}, the monodromy matrix of a non-peripheral $\gamma \in \pi_1(S_{g,m})$ is loxodromic, thus $\gamma$ has two different fixed points $\gamma^+$ and $\gamma^-$ on $\partial_\infty\pi_1(S_{g,m})$. This allows us to adopt notation such as $(p_i,\gamma p_i,\gamma^+)$ which is more convenient for explicit computation.
\end{notation}

\begin{defn}[{\cite[Definition 2.1]{FG06}, $\mathcal{X}$-moduli space $\mathcal{X}_{\operatorname{PGL}_n,S_{g,m}}$}]
\label{defn:FGX}
A \emph{framed $\operatorname{PGL}_n$-local system} on $S_{g,m}$ is a pair $(\rho,\xi)$ consisting of
\begin{itemize}
\item
a (surface group) representation $\rho\in\mathrm{Hom}(\pi_1(S_{g,m}),\operatorname{PGL}_n)$, and
\item
a map $\xi :m_p \rightarrow \mathcal{B}$, such that $\rho(\alpha_i)$ fixes the flag $\xi(p_i)\in\mathcal{B}$ for each $i=1,\ldots, m$.
\end{itemize}
Two framed $\operatorname{PGL}_n$-local systems $(\rho_1,\xi_1),(\rho_2,\xi_2)$ are equivalent if and only if there exists some $g\in\operatorname{PGL}_n$ such that $\rho_2=g\rho_1 g^{-1}$ and $\xi_2=g\xi_1$. We denote the moduli space (that is the space of equivalence classes) of all framed $\operatorname{PGL}_n$-local systems on $S_{g,m}$ by $\mathcal{X}_{\operatorname{PGL}_n,S_{g,m}}$.
\end{defn}

\begin{rmk}
Although the elements of the $\mathcal{X}$-moduli space $\mathcal{X}_{\operatorname{PGL}_n,S_{g,m}}$ are equivalence classes, we choose to conflate notation and denote them by $(\rho,\xi)$. We also adopt this convention later for elements of the $\mathcal{A}$-moduli space.
\end{rmk}

\begin{defn}[Farey set]\label{defn:fareyset}
Endow $S_{g,m}$ with an auxiliary complete hyperbolic metric as in Definition \ref{definition:bin}.
Let us assume for the moment that the surface $S=S_{g,m}$ is cusped, and let $\tilde{m}_p$ denote the set consisting of all the lifts $\tilde{m}_p$ of $m_p$ in the boundary at infinity for the universal cover $\tilde{S}$. We refer to $\tilde{m}_p$ as the \emph{Farey set}. 
\end{defn}

\begin{defn}[Equivariant map for framed local systems]
\label{definition:rhoequivariant}
The data contained in $(\rho,\xi)\in\mathcal{X}_{\operatorname{PGL}_n,S_{g,m}}$ is equivalent to that contained in the {\em $\rho$-equivariant map} $\xi_{\rho}:\tilde{m}_p\rightarrow\mathcal{B}$ induced by deck transformations ($\rho$-action) applied to $\xi$. When $\rho \in \mathrm{Pos}_n(S_{g,m})$, we identify $m_p$ with a finite subset of $\partial_\infty \pi_1(S)$ by Notation \ref{notation:opensur}, then by \cite[Theorem 1.14]{FG06}, the map $\xi_{\rho}$ extends uniquely to a $\rho$-equivariant map, still denoted by $\xi_{\rho}$, from $\partial_\infty \pi_1(S)$ to $\mathcal{B}$. 
\end{defn}

The analogous definition for the $\mathcal{A}_{\operatorname{SL}_n,S_{g,m}}$ moduli space is slightly more involved (only) when
$n$ is even. Let $T^1 S$ denote the unit tangent bundle over $S=S_{g,m}$ and fix an arbitrary point $\hat{x}\in T^1_xS\subset T^1 S$ over $x\in S$. Consider the short exact sequence for the unit tangent bundle fibration: 
\begin{align*}
1\rightarrow
\pi_1(T^1_xS)=\mathbb{Z}=\langle\sigma_S \rangle\rightarrow
\pi_1(T^1S,\hat{x})\rightarrow
\pi_1(S,x),
\end{align*}
where $\sigma_S$ is either of the two generators for $\pi_1(T_x^1 S)$, and define the quotient group $\bar{\pi}_1(S)=\bar{\pi}_1(S,x):=\pi_1(T^1S,\hat{x})/\langle\sigma_S^2\rangle$, and observe that $\bar{\pi}_1(S)$ is the central extension of $\pi_1(S,x)$ by $\mathbb{Z}/2\mathbb{Z}$. We fix the lifts $\hat{\alpha}_1,\ldots,\hat{\alpha}_m\in\bar{\pi}_1(S,x)$ respectively covering $\alpha_1,\ldots,\alpha_m$.
\begin{defn}[{\cite[Definition 2.4, page 38]{FG06}, $\mathcal{A}$-moduli space $\mathcal{A}_{\operatorname{SL}_n,S_{g,m}}$}]
\label{defn:FGA}
A \emph{decorated twisted $\operatorname{SL}_n$-local system on $S=S_{g,m}$} is a pair $(\bar{\rho},\bar{\xi})$ consisting of
\begin{itemize}
\item
a (twisted surface group) representation $\bar{\rho}\in\mathrm{Hom}(\bar{\pi}_1(S_{g,m}),\operatorname{SL}_n)$ with unipotent boundary monodromy, such that $\bar{\rho}(\bar{\sigma}_S)=(-1)^{n-1}\mathrm{Id}_{n\times n}$, and 
\item
a map $\bar{\xi}:m_p \rightarrow \mathcal{A}$, such that each $\bar{\rho}(\hat{\alpha}_i)$ fixes the decorated flag $\bar{\xi}(p_i)\in\mathcal{A}$.
\end{itemize}
Two decorated twisted $\operatorname{SL}_n$-local systems $(\bar{\rho}_1,\bar{\xi}_1),(\bar{\rho}_2,\bar{\xi}_2)$ are equivalent if and only if there exists some $g\in\operatorname{SL}_n$ such that $\bar{\rho}_2=g\bar{\rho}_1 g^{-1}$ and $\bar{\xi}_2=g\bar{\xi}_1$. We denote the moduli space of all decorated twisted $\operatorname{SL}_n$-local systems on $S_{g,m}$ by $\mathcal{A}_{\operatorname{SL}_n,S_{g,m}}$.
\end{defn}

\begin{rmk}[From $\mathcal{A}_{\operatorname{SL}_n,S_{g,m}}$ to $\mathcal{X}_{\operatorname{PGL}_n,S_{g,m}}$]
\label{remark:AX}
Consider the following natural projection maps:
\begin{itemize}
\item
$\mathbf{pr}:\operatorname{SL}_n\rightarrow \operatorname{PGL}_n$ killing the center, and
\item
$\pi:\mathcal{A}\rightarrow \mathcal{B}$ which forgets flag decorations.
\end{itemize}
Given a twisted surface group representation $\bar{\rho}:\bar{\pi}_1(S_{g,m})\to\operatorname{SL}_n$, the representation $\mathbf{pr}\circ\bar{\rho}$ necessarily kills off $\bar{\rho}(\bar{\sigma}_S)$ as it's in the center. Therefore, $\mathbf{pr}\circ\bar{\rho}$ induces a representation $\rho:\pi_1(S_{g,m})\to\operatorname{PGL}_n$. We intentionally conflate notation, and denote $\mathbf{pr}\circ\bar{\rho}$ simply as $\rho$, and refer to it as the \emph{underlying representation} for $\bar{\rho}$ or $(\bar{\rho},\bar{\xi})$. This in turn induces a projection map from $\mathcal{A}_{\operatorname{SL}_n,S_{g,m}}$ to $\mathcal{X}_{\operatorname{PGL}_n,S_{g,m}}$ given by 
\[
(\bar{\rho},\bar{\xi})\rightarrow (\mathbf{pr}(\bar{\rho}),\pi(\bar{\xi}) ):=(\rho,\xi) ,
\]
whose image consists of all framed $\operatorname{PGL}_n$-local systems with (only) unipotent boundary monodromy. 
\end{rmk}

\begin{defn}[Equivariant map for decorated twisted local systems]
\label{definition:Arep1}
Let $\mathbb{S}^1$ be the double cover of $\partial \mathbb{H}^2\cong \mathbb{RP}^1$. 
The data contained in a pair $(\bar{\rho},\bar{\xi})\in \mathcal{A}_{\operatorname{SL}_n,S_{g,m}}$ is equivalent to that contained in the {\em $\bar{\rho}$-equivariant map} $\bar{\xi}_{\bar{\rho}}$ from the double cover $\bar{\tilde{m}}_p$ of the Farey set in $\mathbb{S}^1$ to the principle affine space $\mathcal{A}$ induced by the $\bar{\rho}$ action applied to $\bar{\xi}$.
\end{defn}

\begin{defn}[Positivity for $\bar{\rho}$-equivariant maps]
\label{definition:Arep2}
Let $s$ be the antipodal involution on $\mathbb{S}^1$, then $\bar{\xi}_{\bar{\rho}}(sx)=\bar{\rho}(\bar{\sigma}_S)\bar{\xi}_{\bar{\rho}}(x)$ for any $x$. By \cite[Definition 8.5, page 121-122]{FG06}, the notion of \emph{positivity} can also be defined for maps from $s$-invariant subsets $\overline{\mathbb{X}}$ of $\mathbb{S}^1$ to $\mathcal{A}$, where the role of cyclically anticlockwise ordered $d$-tuples of points $x_1,\ldots,x_d\in\mathbb{X}=\overline{\mathbb{X}}/s\subset\mathbb{RP}^1$ (Definition~\ref{dfn:pmaps}) is replaced by coherent lifts of such $d$-tuples. To clarify: we reinterpret any such cyclically anticlockwise ordered $d$-tuple $x_1,\ldots,x_d$ of points as an embedded path in $\mathbb{RP}^1$ traversing from $x_1$ to $x_d$, a \emph{coherent lift} of such a $d$-tuple is a cyclically anticlockwise ordered $d$-tuple of points in $\overline{\mathbb{X}}$ corresponding to a lift of the path for $x_1,\ldots,x_d$. Crucially, any $\bar{\rho}$-equivariant map $\bar{\xi}_{\bar{\rho}}$ is positive if and only if the corresponding $\rho$-equivariant map $\xi_{\rho}$ is positive.
\end{defn}

\subsection{Fock--Goncharov $\mathcal{A}$-coordinates}
\label{subsection:FGcoor}

We now introduce coordinates for $\mathcal{X}_{\operatorname{PGL}_n,S_{g,m}}$ and $\mathcal{A}_{\operatorname{SL}_n,S_{g,m}}$ moduli spaces. Going forward, we only consider $\xi_\rho$ ($\bar{\xi}_{\bar{\rho}}$ resp.) which satisfy the following generic position condition: any pairwise distinct triple $(x,y,z)$ is mapped to a triple $(\xi_\rho(x), \xi_\rho(y), \xi_\rho(z))$ of flags ($(\bar{\xi}_{\bar{\rho}}(x), \bar{\xi}_{\bar{\rho}}(y), \bar{\xi}_{\bar{\rho}}(z))$ of decorated flags resp.) in generic position (Definition~\ref{defn:genpos}).

\begin{defn}[Ideal triangulation]
Let $m_p$ denote the set of punctures of $S_{g,m}$. An \emph{ideal triangulation} $\mathcal{T}$ of $S_{g,m}$ is a maximal collection of (unoriented) essential arcs which join the elements of $m_p$, such that these arcs are: 
\begin{itemize}
\item
pairwise disjoint on the interior of $S_{g,m}$ and 
\item
non-homotopic with respect to homotopies of $S_{g,m}$. 
\end{itemize}
We regard ideal triangulations up to homotopy. Moreover, we identify an ideal triangulation $\mathcal{T}$ with the graph $(V_{\mathcal{T}},E_{\mathcal{T}})$, where $V_\mathcal{T}=m_p$ is the set of vertices of $\mathcal{T}$ and $E_\mathcal{T}$ is the set of (unoriented) edges of $\mathcal{T}$.
\end{defn}

\begin{defn}[$n$-Triangulation]
\label{definition:ntri}
Given an ideal triangulation $\mathcal{T}= (V_{\mathcal{T}}, E_{\mathcal{T}})$ of $S_{g,m}$, we define the \emph{$n$-triangulation} $\mathcal{T}_n$ of $\mathcal{T}$ to be the triangulation of $S_{g,m}$ obtained by subdividing each triangle of $\mathcal{T}$ into $n^2$ triangles (as per Figure~\ref{fig:ntriang}). We also identify $\mathcal{T}_n$ with the graph $(V_{\mathcal{T}_n}, E_{\mathcal{T}_n})$, just as we did for ideal triangulations.
\end{defn}

\begin{notation}[Vertex notation]
We define the following vertex sets.
\begin{align*}
 \mathcal{I}_n:=
\left\{
\begin{array}{c|c}
V\in V_{\mathcal{T}_n} \setminus V_{\mathcal{T}}
&
V\text{ lies on an edge } e\in E_{\mathcal{T}} 
\end{array}
\right\}\text{ and }
\mathcal{J}_n:= V_{\mathcal{T}_n} \setminus \left( V_{\mathcal{T}} \cup \mathcal{I}_n\right).
\end{align*}
We also adopt the following vertex labeling conventions:

\begin{itemize}
\item
we denote a vertex $V\in\mathcal{I}_n$ on an oriented ideal edge $(x,y)$ by $v_{i,n-i}^{x,y}=v_{n-i,i}^{y,x}$, where $i\geq1$ is the least number of $E_{\mathcal{T}_n}$ edges from $V$ to $y$ (see Figure~\ref{fig:ntriang}).
\item
we denote a vertex $V\in\mathcal{I}_n \cup \mathcal{J}_n$ on a triangle $(x,y,z)$ by $v_{i,j,k}^{x,y,z}$, where $i\geq0$, $j\geq0$ and $k=n-i-j\geq0$ respectively denote: the least number of $E_{\mathcal{T}_n}$ edges from $V$ to $\overline{yz}$, from $V$ to $\overline{xz}$ and from $V$ to $\overline{xy}$ (see Figure~\ref{fig:ntriang}). 
\end{itemize}
\end{notation}

\begin{figure*}[h!]
\includegraphics[scale=0.75]{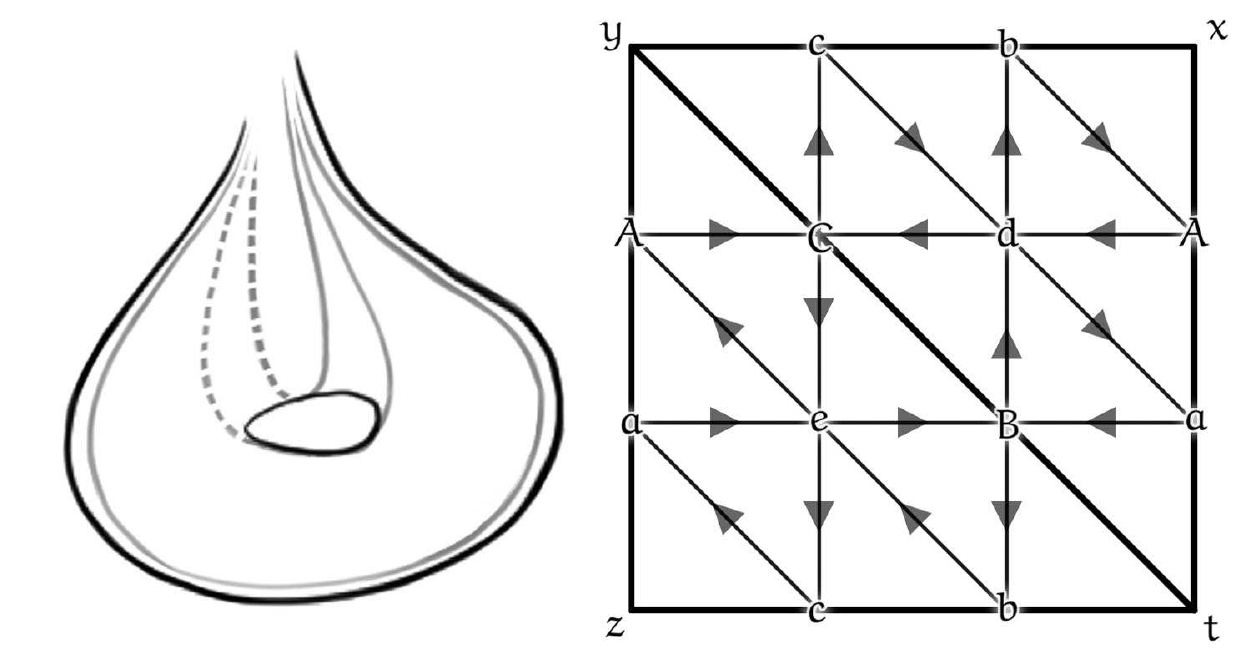}
\caption{Left: an ideal triangulation for $S_{1,1}$. Right: a (lift of a) $3$-triangulation $\mathcal{T}_3$ for $S_{1,1}$, with opposite edges identified. Edges endowed with arrows constitute edges of the quiver $\Gamma_{\mathcal{T}_3}$. For example: the edge vertices $v_{1,2}^{x,y}$ and $v_{2,1}^{z,t}$ identify to the same vertex $c$ when projected to $S_{1,1}$. The interior vertices $v_{1,1,1}^{x,y,t}$ and $v_{1,1,1}^{y,z,t}$ respectively correspond to the vertices $d$ and $e$.}
\label{fig:ntriang}
\end{figure*}

\begin{defn}[Quiver]
\label{definition:quiver}
Consider the largest subgraph of $\mathcal{T}_n$ with vertex set $\mathcal{I}_n\cup \mathcal{J}_n$. By placing orientations on this graph as per Figure~\ref{fig:ntriang}, we obtain a \emph{quiver} $\Gamma_{\mathcal{T}_n}$.
\end{defn}

Quivers are combinatorially useful both in defining Fock--Goncharov coordinates, as well as in describing their coordinate transformations. We now describe Fock--Goncharov $\mathcal{A}$-coordinates.

\begin{defn}[{\cite[\S9]{FG06}} $\mathcal{A}$-coordinates]
\label{defn:FGAcoordinate}
Fix an ideal triangulation $\mathcal{T}$ of $S_{g,m}$ and its $n$-triangulation $\mathcal{T}_n$. Let $\tilde{\mathcal{T}}$ be all the lifts of $\mathcal{T}$ in the universal cover $\tilde{S}_{g,m}$. Given a vertex $V=v_{i,j,k} \in \mathcal{I}_n \cup \mathcal{J}_n$, let $(f,g,h)$ be a marked ideal triangle in $\tilde{\mathcal{T}}$ containing a lift of $V$. For $(\bar{\rho},\bar{\xi})\in\mathcal{A}_{\operatorname{SL}_n,S_{g,m}}$, choose bases 
\begin{align*}
(f_1,...,f_{n}),\; (g_1,...,g_{n}),\; (h_1,...,h_{n})
\end{align*}
for the respective decorated flags $\bar{\xi}_{\bar{\rho}}(\bar{f})$, $\bar{\xi}_{\bar{\rho}}(\bar{g})$, $\bar{\xi}_{\bar{\rho}}(\bar{h})$, where $(\bar{f},\bar{g},\bar{h})$ is a coherent lift of the marked ideal triangle $(f,g,h)$ in the double cover $\partial_\infty\bar{\pi}_1(S)\subset \mathbb{S}^1$ of $\partial_\infty \pi_1(S)$ (Definitions~\ref{definition:Arep1} and \ref{definition:Arep2}). The \emph{vertex function} for $V$ is defined by
\[
\Delta_{V}(\bar{\rho},\bar{\xi}):=\Delta_{V}:= \Delta\left(f^i \wedge g^j \wedge h^k\right).
\]
The \emph{(Fock--Goncharov) $\mathcal{A}$-coordinate} $A_{V}(\bar{\rho},\bar{\xi}):=A_{V}$ is equal to $\Delta_V$ up to sign.
\end{defn}

\begin{rmk}
The choice of sign for $A_V$ is technical and dependent upon a choice of spin structure on $S_{g,m}$ \cite{FG06}. In this paper, we focus on the part of $\mathcal{A}_{\operatorname{SL}_n,S_{g,m}}$ where all the $\mathcal{A}$-coordinate are positive. In these cases, we have $A_{V}=|\Delta_{V}|$. Hence the complete definition of $A_{V}$ is not necessary for the content of the paper.
\end{rmk}

\subsection{Fock--Goncharov $\mathcal{X}$-coordinates}

There are two types of Fock--Goncharov $\mathcal{X}$-coordinates respectively corresponding to edge functions and triple ratios. The former are labeled by vertices in $\mathcal{I}_n$, correspond to degree four vertices in the quiver $\Gamma_{\mathcal{T}_n}$, and generalize Thurston's shear coordinate \cite{MR1413855, Thu98}. The latter are labeled by vertices in $\mathcal{J}_n$ and are degree $6$ vertices in $\Gamma_{\mathcal{T}_n}$.

\begin{defn}[{\cite[\S9]{FG06}} $\mathcal{X}$-coordinates]
\label{defn:FGXcoordinate}
We define one $\mathcal{X}$-coordinate for each vertex in $\mathcal{I}_n\cup\mathcal{J}_n$. For a vertex $V\in\mathcal{I}_n$, let $(x,y)$ denote an oriented edge in $E_{\tilde{\mathcal{T}}}$ containing a lift $\tilde{V}=v_{x,y}^{n-i,i}$ of $V$. Further let $\xvec{xyz}$ and $\xvec{xty}$ denote the two (anticlockwise) oriented ideal triangles in $\tilde{\mathcal{T}}$ which contain the edge $\overline{xy}$. The \emph{(Fock--Goncharov) $\mathcal{X}$-coordinate} for $V$, evaluated at $(\rho,\xi)\in\mathcal{X}_{\operatorname{PGL}_n,S_{g,m}}$, is defined as the edge function in Definition \ref{definition:edgefunction}:
\[
X_V(\rho,\xi):=X_V:=D_i(x,y,z,t):=D_i(\xi_\rho(x),\xi_\rho(y),\xi_\rho(z),\xi_\rho(t)).
\]
For a vertex $V\in\mathcal{J}_n$, let $(f,g,h)$ be a marked (anticlockwise) oriented ideal triangle in $\tilde{\mathcal{T}}$ containing a lift $\tilde{V}=v_{i,j,k}^{f,g,h}$ of $V$. The \emph{(Fock--Goncharov) $\mathcal{X}$-coordinate} $X_V$, evaluated at $(\rho,\xi)\in\mathcal{X}_{\operatorname{PGL}_n,S_{g,m}}$, is defined as the triple ratio in Definition \ref{definition:tripleratio}:
\[
X_V(\rho,\xi):=X_V:=T_{i,j,k}(f,g,h):=T_{i,j,k}(\xi_\rho(f),\xi_\rho(g),\xi_\rho(h)).
\]
\end{defn}

As with configuration spaces, the $\mathcal{X}$-coordinates are crucial examples of projective invariants as rational functions of $\mathcal{A}$-coordinates and define rational functions on the $\mathcal{X}$-moduli space.

\subsection{Positivity for $\mathcal{X}$ and $\mathcal{A}$-moduli spaces}

\begin{defn}[\cite{FG06} Positive higher Teichm\"uller spaces]
\label{defn:positive}
The \emph{positive} Fock--Goncharov higher Techm\"uller space $\mathcal{A}_n(S_{g,m})$ and $\mathcal{X}_n(S_{g,m})$ are the respective subsets of $\mathcal{A}_{\operatorname{SL}_n,S_{g,m}}$ and $\mathcal{X}_{\operatorname{PGL}_n,S_{g,m}}$ consisting of points which are positive in every coordinate with respect to some $\mathcal{A}$ or $\mathcal{X}$-coordinate chart. 
\end{defn}

One key advantage of the Fock--Goncharov approach to higher Teichm\"uller theory is that we can explicitly write down rational functions specifying the transition maps between coordinate patches. This aspect of the story is an example of the powerful theory of cluster ensembles \cite[\S10]{FG06}. We do explicitly utilize these coordinate transformations in our derivation of McShane identities --- especially the $\mathcal{A}$-coordinate transformations. It is worth noting that these coordinate changes  are always \emph{positive rational maps} in the sense that they are fractions of two polynomials (of $\mathcal{A}$-coordinates) with positive coefficients, and hence send positive coordinates to positive coordinates.

\begin{defn}[Flips for $n=3$]
\label{definition:flip}
Consider two adjacent ideal triangles $\overline{xyt}$ and $\overline{yzt}$ sharing a common edge $\overline{yt}$. A flip along $\overline{yt}$ produces a new ideal triangulation by replacing $\overline{yt}$ with $\overline{xz}$. For $n=3$, we now explicitly write down the corresponding coordinate change for such a flip (See Figure~\ref{Figure:flipgeneral}): denote the $\mathcal{A}$-coordinates for $\mathcal{A}_{\operatorname{SL}_3,S_{1,1}}$ by $\{a,b,c,d,r,s,q,w\}$. After successive mutations at the vertices corresponding to $r, s , p , q$, we obtain new coordinates $\{a,b,c,d,r',s',q',w'\}$ given by: 
\begin{align}
\label{equation:mutation3}
r'= \tfrac{bq+cw}{r},\quad s'= \tfrac{aw+dq}{s},\quad w'= \tfrac{as'+cr'}{w},\quad q'= \tfrac{b r'+d s'}{q}.
\end{align}
\begin{figure}[h!]
\includegraphics[scale=0.2]{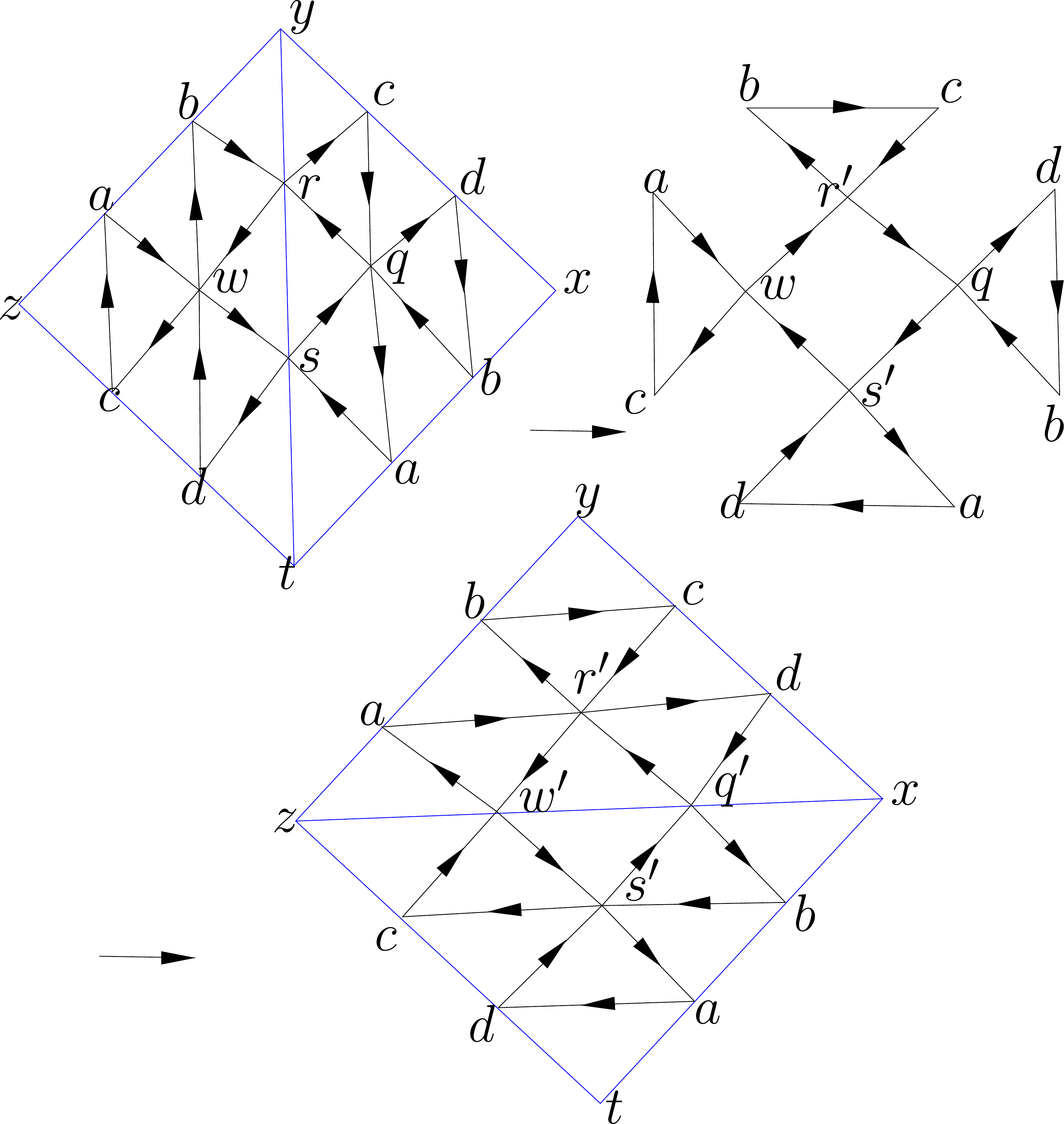}
\caption{For $\mathcal{A}_{\operatorname{SL}_3,S_{1,1}}$, given an ideal triangulation $\mathcal{T}$ with $V_{\mathcal{T}}=\{x,y,z,t\}$ and $E_{\mathcal{T}}=\{\overline{xy},\overline{yt},\overline{tx},\overline{yz}, \overline{zt}\}$, we have its $n$-triangulation $\mathcal{T}_n$.}
\label{Figure:flipgeneral}
\end{figure}
\end{defn}
For general $n$ as well as for the $\mathcal{X}$-moduli space, the coordinate changes for a flip are described in \cite[\S10.3, pg. 147]{FG06}.\medskip

\subsubsection{Relation to positive representation varieties}

The set consisting of the underlying $\operatorname{PGL}_n(\mathbb{R})$-representations (see Definitions~\ref{definition:Arep1} and \ref{definition:Arep2}) of decorated twisted $\operatorname{SL}_n$-local systems in $\mathcal{A}_n(S_{g,m})$ is precisely the unipotent-bordered $\operatorname{PGL}_n(\mathbb{R})$-positive representation variety $\mathrm{Pos}_{n}^u(S_{g,m})$. Moreover, the set consisting of the underlying $\operatorname{PGL}_n(\mathbb{R})$-representations of positive $\operatorname{PGL}_n$-local systems in $\mathcal{X}_n(S_{g,m})$ is precisely the $\operatorname{PGL}_n(\mathbb{R})$-positive representation variety $\mathrm{Pos}_{n}(S_{g,m})$. The map from $\mathcal{X}_n(S_{g,m})$ to $\mathrm{Pos}_{n}(S_{g,m})$ which takes $(\rho,\xi)$ to $\rho$ is a finite to one map. In fact, it is generically a finite covering map in the following sense:

\begin{prop}[{\cite[Proposition 7.1]{FG06} and \cite[Proposition 10.2.1.1]{LM09}}]
Let $\mathcal{W}$ be the Weyl group of $\operatorname{PGL}_n(\mathbb{R})$. For any $\rho \in \mathrm{Pos}_{n}^h(S_{g,m})$, there exists $|\mathcal{W}|^m=(n!)^m$  many lifts of $\rho$ in $\mathcal{X}_n(S_{g,m})$, where the lifts are parameterized by $\mathcal{W}^m$. 
\end{prop}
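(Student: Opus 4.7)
The plan is to analyze the forgetful map $\pi : \mathcal{X}_n(S_{g,m}) \to \mathrm{Pos}_n(S_{g,m})$ sending $(\rho,\xi) \mapsto \rho$, and to show that the fiber over every $\rho \in \mathrm{Pos}_n^h(S_{g,m})$ is a free $\mathcal{W}^m$-torsor, hence has cardinality $(n!)^m$. First, I will count the candidate framings. By Definition~\ref{defn:FGX}, a lift of $\rho$ is determined by a choice, for each peripheral class $\alpha_i$, of a flag $\xi(p_i)\in\mathcal{B}$ fixed by $\rho(\alpha_i)$. Since $\rho\in\mathrm{Pos}_n^h$, each $\rho(\alpha_i)$ is loxodromic and therefore has $n$ pairwise distinct positive eigenvalues with one-dimensional eigenlines $V_1,\ldots,V_n$. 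The $\rho(\alpha_i)$-invariant subspaces are exactly direct sums of these lines, so the fixed flags are precisely those of the form $F_\sigma^{(k)} = V_{\sigma(1)}\oplus\cdots\oplus V_{\sigma(k)}$ for $\sigma\in S_n$. This gives exactly $n!=|\mathcal{W}|$ fixed flags per cusp, and hence $(n!)^m$ candidate framings $\xi$.

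Second, I will check that distinct candidates descend to distinct equivalence classes. Positive representations are irreducible in the strong sense that $\rho(\pi_1(S_{g,m}))$ does not preserve a proper subspace (this follows from positivity of boundary monodromies together with the existence of a positive equivariant flag map; alternatively from \cite[Theorem~1.14]{FG06}). By Schur's lemma the centralizer of $\rho$ in $\operatorname{PGL}_n(\mathbb{R})$ is trivial, so the equivalence relation $(\rho,\xi)\sim(g\rho g^{-1},g\xi)$ restricted to $\pi^{-1}([\rho])$ is trivial, and each of the $(n!)^m$ candidates represents a distinct equivalence class in the moduli space $\mathcal{X}_{\operatorname{PGL}_n,S_{g,m}}$.

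Third, and this is the main obstacle, I need to verify that every such candidate actually lies in the positive part $\mathcal{X}_n(S_{g,m})$, i.e.\ that the candidate framing extends to a $\rho$-equivariant positive map $\xi_\rho:\partial_\infty\pi_1(S)\to\mathcal{B}$. Starting from the canonical positive framing supplied by Definition~\ref{definition:posrep}, any other candidate is obtained by acting at each cusp $p_i$ by a Weyl element $\sigma_i\in\mathcal{W}$ permuting the eigenlines of $\rho(\alpha_i)$. The effect of this action on the Fock--Goncharov $\mathcal{X}$-coordinates (Definition~\ref{defn:FGXcoordinate}) is localized in the triangles adjacent to $p_i$ and is implemented by the boundary mutation formulas of \cite[\S10]{FG06}. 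The key technical point is that these transformations are positive rational maps, so positivity in some coordinate chart is preserved; combined with Theorem~\ref{thm:posconf} this guarantees the extended equivariant map remains positive on every triangle and every edge of the $n$-triangulation.

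Putting the three steps together, the fiber of $\pi$ over any $\rho\in\mathrm{Pos}_n^h(S_{g,m})$ is naturally identified with $\mathcal{W}^m$ via the Weyl group action permuting the eigenline-ordering at each boundary cusp, yielding exactly $(n!)^m$ distinct lifts. The primary difficulty is the third step: verifying that every Weyl orbit representative preserves positivity of the full $\mathcal{X}$-coordinate chart, which requires an explicit inspection of the cluster-type transformations induced by the boundary Weyl action.
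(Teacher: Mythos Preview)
The paper does not supply its own proof of this proposition; it is quoted directly from \cite[Proposition~7.1]{FG06} and \cite[Proposition~10.2.1.1]{LM09}. So your proposal must be judged on its own merits.

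Your first two steps are sound: the enumeration of $\rho(\alpha_i)$-invariant flags as the $n!$ eigen-flags $F_\sigma$ is correct for loxodromic monodromy, and the triviality of the centralizer of a positive representation does ensure that distinct framings give distinct equivalence classes.

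The third step, however, contains a genuine gap. You assert that the Weyl group action on the boundary framing is ``implemented by the boundary mutation formulas of \cite[\S10]{FG06}'' and that these are positive rational maps. This is not correct. Section~10 of \cite{FG06} treats cluster mutations associated to \emph{flips of the ideal triangulation}; those change the coordinate chart on $\mathcal{X}_{\operatorname{PGL}_n,S}$ while fixing the underlying point $(\rho,\xi)$. Replacing $\xi(p_i)$ by a different eigen-flag $F_\sigma$ is the opposite situation: it changes the \emph{point} in the moduli space, not the chart, and there is no finite sequence of cluster mutations that realizes it. (Your localization claim is also misleading: for $S_{1,1}$, every ideal triangle has all three vertices at $p$, so nothing is ``localized near $p_i$''.) Consequently the positivity of cluster transition maps, which is what the paper records after Definition~\ref{defn:positive}, says nothing about whether the new framed local system $(\rho,\xi')$ lies in $\mathcal{X}_n(S_{g,m})$.

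The actual argument in \cite[\S7]{FG06} is of a different nature. One must show directly that if a cyclically ordered tuple of flags is positive and one of the entries is the repelling flag $\xi_\rho(\alpha_i^-)$ of a loxodromic element, then replacing it by \emph{any} of the $n!$ eigen-flags $F_\sigma$ keeps the tuple positive. This uses the total positivity of the boundary monodromy (Theorem~\ref{theorem:loxo}) and an analysis of how the eigen-flags sit relative to the positive semigroup generated by $\rho(\alpha_i)$, not cluster combinatorics. Without that verification your argument establishes only that there are at most $(n!)^m$ lifts in $\mathcal{X}_{\operatorname{PGL}_n,S_{g,m}}$, and at least one (the canonical one) in $\mathcal{X}_n(S_{g,m})$; the remaining $(n!)^m-1$ are left unaccounted for.
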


\begin{defn}[Canonical lift]
\label{definition:ratioperiod}
For any $\rho \in \mathrm{Pos}_n^h(S_{g,m})$ with loxodromic boundary monodromy, there is a \emph{canonical lift} $(\rho,\xi)\in \mathcal{X}_n(S_{g,m})$ with the following property: 

for any peripheral $\delta \in \pi_1(S_{g,m})$ around a boundary component $p\in m_p$ of $S_{g,m}$ oriented such that $S_{g,m}$ is on the left of $\delta$, 
\begin{itemize}
\item by Theorem \ref{theorem:loxo} there exists a lift of $\rho(\delta)$ into $\operatorname{SL}_n(\mathbb{R})$ with eigenvectors $\delta_1,\cdots,\delta_n$ and corresponding eigenvalues $\lambda_1,\cdots,\lambda_n$ labeled in decreasing magnitude
\[
\lambda_1>\ldots>\lambda_n>0;
\] 
\item $(\delta_n,\cdots,\delta_1)$ is a basis for the flag $\xi(p)=\xi_\rho(\delta^-)$ (Notation \ref{notation:opensur}).
\end{itemize}
\end{defn}

Note that $(\delta_1,\cdots,\delta_n)$ is also a basis for $\xi_\rho(\delta^+)$. Further note that every other lift of $\rho$ can be obtained by the group action of $\mathcal{W}^m$ given by permuting the basis $(\delta_n,\cdots,\delta_1)$ for the flag $\xi(p)$ for each of the $m$ boundaries.

\clearpage

\section{Properties of projective invariants}
\label{sec:xcoords}

\subsection{Uniform boundedness of the triple ratio}

Let $S=S_{g,m}$ be a topological surface of genus $g$ with $m$ holes with negative Euler characteristic. Given a positive representation $\rho \in \mathrm{Pos}_n(S)$, recall that we define $\partial_\infty\pi_1(S)$ (Definition~\ref{definition:bin}) for $\rho$ by taking the boundary at infinity for an auxiliary complete hyperbolic metric $h_\rho$  (Definition~\ref{definition:aux}) chosen so as to satisfy the following: 
\begin{itemize}
\item
when $\rho$ admits at least one boundary with non-unipotent monodromy, the auxiliary hyperbolic metric $(S,h_\rho)$ has geodesic boundary components (as well as possibly cusps), and the boundary at infinity for $\rho$ is homeomorphic to the Cantor set;
\item
for all other positive $\rho$, the auxiliary hyperbolic metric $(S,h_\rho)$ is cusped (or closed) surface, and the boundary at infinity $\partial_\infty\pi_1(S)$ for $\rho$ is homeomorphic to $\mathbb{RP}^1$.
\end{itemize}
In either case, the inclusion of $\partial_\infty\pi_1(S)$ as a subset of $\partial\mathbb{H}^2$ imposes an anticlockwise cyclic ordering on $\partial_\infty\pi_1(S)$.

\begin{defn}[Set of marked ideal triangles]
We define the set of marked (oriented) ideal triangles on the universal cover $\tilde{S}$ of $S$ by:
\begin{align*}
\mathit{Tri}(\tilde{S}):=
\mathit{Tri}(\tilde{S},\tilde{h}_\rho):=
\left\{
(a,b,c)\in(\partial_\infty\pi_1(S))^3\; 
\begin{array}{|c}
a,b,c\text{ are distinct elements arranged}\\
\text{in anticlockwise order along $\partial_\infty\pi_1(S)$}
\end{array}
\right\}.
\end{align*}
We define the set of \emph{ideal triangles on $S$} as
\begin{align*}
\mathit{Tri}(S):=\mathit{Tri}(S,h_\rho):=\mathit{Tri}(\tilde{S})/\pi_1(S),
\end{align*}
where $\pi_1(S)$ acts diagonally on $\mathit{Tri}(\tilde{S})$. Moreover, we denote the $\pi_1(S)$ orbit of $(a,b,c)$ representing an element in $\mathit{Tri}(S,h_\rho)$ by $[a,b,c]_\rho$. We regard each $[a,b,c]_\rho$ as an immersed marked ideal triangle on $S$ and denote its anticlockwise-oriented sides by $[a,b]_\rho$, $[b,c]_\rho$ and $[c,a]_\rho$.
\end{defn}

\begin{fact}[e.g.: {\cite[Section~4.1, pg.~7]{pressure}}]
When $S$ is closed, the set $\mathit{Tri}(S)$ of marked (oriented) ideal triangles on $S$ is homeomorphic to the unit tangent bundle $T^1S$  on $S$.
\end{fact}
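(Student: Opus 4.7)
The plan is to promote the identification at the level of universal covers into one downstairs. Specifically, I will construct an explicit $\pi_1(S)$-equivariant homeomorphism
\[
\Phi:\mathit{Tri}(\tilde{S})\xrightarrow{\;\sim\;} T^1\tilde{S},
\]
and then pass to the quotient by the diagonal $\pi_1(S)$-action on both sides, which is free and properly discontinuous on $T^1\tilde{S}$ (since the Fuchsian action on $\tilde{S}\cong\mathbb{H}^2$ is).

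For the construction of $\Phi$, I would first use the auxiliary hyperbolic metric $h_\rho$ to identify $\tilde{S}$ with $\mathbb{H}^2$ and $\partial_\infty\pi_1(S)$ with $\partial\mathbb{H}^2\cong S^1$. The orientation-preserving isometry group $\operatorname{PSL}_2(\mathbb{R})$ acts simply transitively both on $T^1\mathbb{H}^2$ (a classical fact, yielding $T^1\mathbb{H}^2\cong\operatorname{PSL}_2(\mathbb{R})$) and on the space of anticlockwise-ordered triples of distinct points in $\partial\mathbb{H}^2$ (by the transitivity of $\operatorname{PSL}_2(\mathbb{R})$ on such triples and the fact that the stabilizer of an ordered triple is trivial, since a nontrivial M\"obius transformation fixes at most two points). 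Fixing a base triple and a base unit tangent vector then yields mutually $\operatorname{PSL}_2(\mathbb{R})$-equivariant homeomorphisms between both spaces and $\operatorname{PSL}_2(\mathbb{R})$; composing gives $\Phi$.

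Concretely, one may describe $\Phi$ geometrically: to a triple $(a,b,c)\in\mathit{Tri}(\tilde{S})$ one associates the unique pair $(p,v)\in T^1\mathbb{H}^2$, where $p$ is the foot of the perpendicular from $c$ onto the oriented geodesic from $a$ to $b$, and $v$ is the unit tangent vector at $p$ pointing along that geodesic in the direction from $a$ to $b$. This assignment is a homeomorphism (its inverse sends $(p,v)$ to the triple consisting of the two endpoints of the geodesic through $(p,v)$ together with the ideal endpoint of the geodesic ray emanating from $p$ orthogonally to $v$ on the side that respects the anticlockwise orientation). By construction $\Phi$ is continuous with continuous inverse and commutes with the $\operatorname{PSL}_2(\mathbb{R})$-action, hence in particular with the $\pi_1(S)$-action induced by the Fuchsian holonomy of $h_\rho$.

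Finally, I would quotient both sides of $\Phi$ by $\pi_1(S)$ to conclude $\mathit{Tri}(S)\cong T^1S$. There is no substantive obstacle here; the only mild point to verify is that anticlockwise cyclic orderings are preserved by $\pi_1(S)$ (immediate, since orientation-preserving isometries of $\mathbb{H}^2$ preserve the orientation of $\partial\mathbb{H}^2$), so that the $\pi_1(S)$-action indeed restricts to a diagonal action on $\mathit{Tri}(\tilde{S})$ and the quotient is well defined.
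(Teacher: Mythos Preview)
Your argument is correct and is the standard proof of this classical fact. Note, however, that the paper does not actually prove this statement: it is recorded as a \texttt{Fact} with a citation to \cite[Section~4.1, pg.~7]{pressure} and no proof is given. So there is nothing to compare against beyond observing that your approach---identifying both $\mathit{Tri}(\tilde S)$ and $T^1\tilde S$ with $\operatorname{PSL}_2(\mathbb{R})$ via simple transitivity, then quotienting by the free properly discontinuous $\pi_1(S)$-action---is exactly the usual one.
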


\begin{defn}[$k$-intersecting ideal triangle]
Given an auxiliary complete hyperbolic metric $h_\rho$ on $S$ for a positive representation $\rho\in\mathrm{Pos}_n(S)$, we say that an ideal triangle $[a,b,c]_\rho$ on $S$ is \emph{$k$-intersecting} if the unique geodesic representatives of each of the three sides $[a,b]_\rho,[b,c]_\rho,[c,a]_\rho$ of $[a,b,c]_\rho$ on $(S,h_\rho)$ have:
\begin{itemize}
\item
at most $k$ self-intersections, and
\item
at most $k$ pairwise intersections.
\end{itemize}
We denote the set of $k$-intersecting ideal triangle on $S$ by $\mathit{Tri}_k(S):=\mathit{Tri}_k(S,h_\rho)$.
\end{defn}

The goal of this subsection is to prove the following:

\begin{thm}[Triple-ratio boundedness]
\label{thm:bounded}
Let $S=S_{g,m}$ be a surface with negative Euler characteristic. For a positive framed $\operatorname{PGL}_n$-local system $(\rho,\xi)\in \mathcal{X}_n(S)$ where $\rho\in\mathrm{Pos}_n(S)$, let us consider the unique $\rho$-equivariant map $\xi_\rho:\partial_\infty \pi_1(S)\rightarrow \mathcal{B}$ associated to $(\rho,\xi)$ by Definition~\ref{definition:rhoequivariant}. For any triple of positive integers $(i_0,j_0,k_0)$ satisfying $i_0+j_0+k_0=n$, by Definition \ref{definition:tripleratio}, the triple ratio function $T^{\xi_\rho}:\mathit{Tri}(S)\rightarrow\mathbb{R}_{>0}$ of the form:
\begin{align}
T^{\xi_\rho}(x,y,z):=T_{i_0,j_0,k_0}(\xi_\rho(x),\xi_\rho(y),\xi_\rho(z)),
\end{align}
restricted to the set $\mathit{Tri}_k(S)$ of $k$-intersecting ideal triangles on $S$, is bounded within some closed interval $[T^{\xi_\rho}_{\min}(k),T^{\xi_\rho}_{\max}(k)]\subset\mathbb{R}_{>0}$.
\end{thm}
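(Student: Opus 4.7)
The plan is to reduce the claim to compactness of $\mathit{Tri}_k(S)$ inside a natural model and then invoke continuity. Concretely, $T^{\xi_\rho}:\mathit{Tri}(S)\to\mathbb{R}_{>0}$ is continuous and strictly positive: $\xi_\rho$ is continuous by \cite[Theorem~1.14]{FG06}, every pairwise distinct anticlockwise triple in $\partial_\infty\pi_1(S)$ is sent by it to a positive, hence generic, triple of flags (Definition~\ref{definition:posconf}), and $T_{i_0,j_0,k_0}$ is a continuous positive function of generic flag triples that is invariant under the diagonal $\operatorname{PGL}_n$-action, so it descends via $\mathit{Tri}(\tilde S)\to\mathit{Tri}(S)$. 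It therefore suffices to show that $\mathit{Tri}_k(S)$ has compact closure inside the ambient space $\mathit{Tri}(S)$.

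To get a handle on this ambient space, I would identify $\mathit{Tri}(\tilde S)$ with an open subset of $T^1\tilde S$ via the incenter-plus-marking map sending each anticlockwise triple $(a,b,c)$ to the pair consisting of the incenter of the ideal triangle $abc\subset\mathbb{H}^2$ together with, say, the unit tangent vector from the incenter pointing toward the midpoint of the side $\overline{bc}$. Since all ideal triangles in $\mathbb{H}^2$ are mutually isometric, this is a well-defined $\pi_1(S)$-equivariant homeomorphism onto its image, and it descends to a homeomorphism identifying $\mathit{Tri}(S)$ with an open subset of $T^1S$.

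The cases where $S$ is closed and where $(S,h_\rho)$ has only geodesic boundary are then immediate: in the closed case $T^1S$ is itself compact, and in the purely loxodromic bordered case the convex core of $(S,h_\rho)$ is compact, so $\mathit{Tri}(S)$ has compact closure in $T^1S$. The substantive case is that in which $(S,h_\rho)$ has at least one cusp. I would argue by contradiction: suppose some sequence of $k$-intersecting triangles $[a_j,b_j,c_j]_\rho$ has incenters $p_j$ escaping to infinity into a cusp $c$. Lifting to $\mathbb{H}^2$ with $c$ placed at $\infty$ and normalizing the cusp stabilizer to $\langle z\mapsto z+1\rangle$, we may arrange that $p_j=x_j+iy_j$ with $x_j\in[0,1)$ and $y_j\to\infty$. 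An explicit isometry from a fixed model ideal triangle to the $j$-th triangle places the three ideal vertices on $\partial\mathbb{H}^2$ so that two of them occupy Euclidean positions on the real axis separated by a distance $\Theta(y_j)$.

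The key geometric claim, which I expect to be the main obstacle, is that projecting this configuration through the cusp quotient produces three curves on $(S,h_\rho)$ whose mutual and self-intersection numbers grow unboundedly with $y_j$. Indeed, the relevant semicircular side traverses horizontal length $\Theta(y_j)$ inside the standard horocyclic cusp neighborhood, and modding out by $\langle z\mapsto z+1\rangle$ forces $\Theta(y_j)$ self-crossings of its projection to $S$; analogous lower bounds can be extracted for intersections between the different projected sides. For $j$ sufficiently large this contradicts the $k$-intersecting hypothesis. Making this intersection count rigorous --- in particular isolating contributions from the cusp stabilizer from those of the remaining deck transformations and ensuring that they accumulate rather than cancel --- is the technical heart of the argument. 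Once compact closure of $\mathit{Tri}_k(S)$ in $T^1S$ is established, continuity and positivity of $T^{\xi_\rho}$ on this compact set yield the desired interval $[T^{\xi_\rho}_{\min}(k),T^{\xi_\rho}_{\max}(k)]\subset\mathbb{R}_{>0}$.
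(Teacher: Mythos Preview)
Your overall strategy---continuity of $T^{\xi_\rho}$ plus compactness of $\mathit{Tri}_k(S)$---and your treatment of the closed case agree with the paper. There is, however, a gap in the loxodromic-bordered case. When $\partial_\infty\pi_1(S)$ is a Cantor set, the incenter map does \emph{not} identify $\mathit{Tri}(S)$ with an open subset of $T^1S$ (its image is nowhere dense), and knowing only that $\mathit{Tri}(S)$ has compact closure in $T^1S$ does not bound $T^{\xi_\rho}$: the closure in $T^1S$ could contain points outside the image, where $T^{\xi_\rho}$ is undefined. What you actually need is that $\mathit{Tri}(S)$ itself is compact here; this is true but requires an argument (e.g.\ triples whose incenters lie in a fixed compact fundamental domain cannot degenerate, since $\partial_\infty\pi_1(S)$ is closed in $S^1$). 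The paper handles this by doubling: embed $\mathit{Tri}(S)$ into $\mathit{Tri}(dS)\cong T^1(dS)$ for the closed double $dS$ and observe that both ``lying entirely in $\iota(S)$'' and ``being $k$-intersecting'' are closed conditions there, so $\mathit{Tri}_k(S)$ is closed in a compact space.

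Your cusped argument is genuinely different from the paper's and, as you say, incomplete. The paper avoids horocyclic intersection counting entirely: it invokes Floyd/Gromov boundary theory to produce a continuous $\pi_1(S)$-equivariant surjection $u:\partial_\infty\pi_1(S,h_0)\to\partial_\infty\pi_1(S,h_\rho)$, where $h_0$ is an auxiliary geodesic-bordered metric on $S$ (so the source is the Cantor-set limit set and the target is the full circle), bijective away from parabolic fixed points and two-to-one over them. This induces a continuous surjection $u_*:\mathit{Tri}_k(S,h_0)\to\mathit{Tri}_k(S,h_\rho)$; well-definedness on $\mathit{Tri}_k$ uses that a triple with two vertices at the endpoints of a boundary geodesic of $(S,h_0)$ has sides spiraling around that boundary in opposite directions, forcing infinitely many mutual intersections and hence never lying in $\mathit{Tri}_k$. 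Since the source is compact by the loxodromic case, so is the target. Your direct approach is plausible and the cusp-stabilizer count you sketch can likely be made rigorous, but the paper's reduction is shorter and sidesteps the issue of separating intersections coming from the cusp stabilizer from those of other deck transformations.
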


\begin{rmk}
We need not only $\rho$ but also $\xi$ to induce $\xi_\rho:\partial_\infty \pi_1(S)\rightarrow \mathcal{B}$ which provides enough data to define $T^{\xi_\rho}$.
The above theorem can be also stated for any $ \rho \in \mathrm{Pos}_n^h(S)\cup \mathrm{Pos}_n^u(S)$, since for $ \rho \in \mathrm{Pos}_n^h(S)$ we choose a canonical lift $(\rho,\xi)\in \mathcal{X}_n(S)$ (Definition \ref{definition:ratioperiod}), and for $ \rho \in \mathrm{Pos}_n^u(S)$ (including $S$ being a closed surface) we have a unique lift $(\rho,\xi)\in \mathcal{X}_n(S)$.

It is clear that $T^{\xi_\rho}$ defines a strictly positive function on $\mathit{Tri}(\tilde{S})$. However, triple ratios are projective invariants and hence invariant with respect to the diagonal action of the fundamental group $\pi_1(S)$ on $\mathit{Tri}(\tilde{S})$ and hence $T^{\xi_\rho}$ descends to a well-defined continuous function on $\mathit{Tri}(S)$.

\end{rmk}

\begin{rmk}[Boundedness under flips]
Consider the mapping class group orbit\footnote{Or more generally, any group/groupoid of which the mapping class group is a finite-index subgroup, e.g.: the Ptolemy groupoid.} of an arbitrary point in the $\mathcal{X}$-moduli space $\mathcal{X}_{\operatorname{PGL}_n, S_{g,m}}$, Theorem~\ref{thm:bounded} then asserts that the triple ratio coordinates of this orbit of points are all bounded away from $0$ and $\infty$. In other words, from the cluster dynamic point of view, triple ratios are bounded under the flips. We believe this to be a novel observation.
\end{rmk}

Let us first consider the special case when $S$ is closed. The following argument comes from Fran\c{c}ois Labourie and also independently from Tengren Zhang:

\begin{prop}[Labourie, Zhang]
When $S=S_{g,0}$ is a closed surface, the triple ratio function $T^{\xi_\rho}$ is bounded within some closed interval $[T^{\xi_\rho}_{\min},T^{\xi_\rho}_{\max}]\subset\mathbb{R}_{>0}$.
\end{prop}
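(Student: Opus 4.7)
The plan is to exploit compactness in the closed surface setting: when $S=S_{g,0}$, the quotient $\mathit{Tri}(S)\cong T^1 S$ is compact, so a continuous strictly positive function on it is automatically bounded away from $0$ and $\infty$. The substantive content is therefore to verify continuity and $\pi_1(S)$-invariance of $T^{\xi_\rho}$, then invoke compactness.

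\textbf{Step 1: Continuity of $T^{\xi_\rho}$ on $\mathit{Tri}(\tilde S)$.} Fix the auxiliary hyperbolic metric $h_\rho$ (a genuine hyperbolic structure since $S$ is closed), so $\partial_\infty\pi_1(S)\cong \mathbb{RP}^1$. By \cite[Theorem~1.14]{FG06} (cited as the extension statement below Definition~\ref{definition:posrep}), the equivariant framing $\xi_\rho:\partial_\infty\pi_1(S)\to\mathcal B$ is continuous. Since distinct points in $\partial_\infty\pi_1(S)$ are mapped to a triple of flags in generic position (positivity implies genericity), and the triple ratio $T_{i_0,j_0,k_0}$ is a rational function of the flag coordinates whose denominator does not vanish on the open subset of generic triples (see Definition~\ref{definition:tripleratio}), the composition
\[
T^{\xi_\rho}(x,y,z)=T_{i_0,j_0,k_0}(\xi_\rho(x),\xi_\rho(y),\xi_\rho(z))
\]
is a continuous strictly positive function on $\mathit{Tri}(\tilde S)$.

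\textbf{Step 2: Descent to $\mathit{Tri}(S)$.} Triple ratios are projective invariants (they are defined intrinsically from the configuration of flags up to the diagonal $\operatorname{PGL}_n(\mathbb{R})$ action). Because $\xi_\rho$ is $\rho$-equivariant, for any $\gamma\in\pi_1(S)$ we have
\[
T^{\xi_\rho}(\gamma x,\gamma y,\gamma z)=T_{i_0,j_0,k_0}(\rho(\gamma)\xi_\rho(x),\rho(\gamma)\xi_\rho(y),\rho(\gamma)\xi_\rho(z))=T^{\xi_\rho}(x,y,z).
\]
Hence $T^{\xi_\rho}$ factors through the quotient map $\mathit{Tri}(\tilde S)\to\mathit{Tri}(S)$ and defines a continuous function on $\mathit{Tri}(S)$.

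\textbf{Step 3: Apply compactness.} Using the stated fact $\mathit{Tri}(S)\cong T^1S$ for closed $S$, the target space is a compact manifold. A continuous strictly positive function on a compact space attains positive minimum and maximum values; call them $T^{\xi_\rho}_{\min}$ and $T^{\xi_\rho}_{\max}$. This gives the desired interval $[T^{\xi_\rho}_{\min},T^{\xi_\rho}_{\max}]\subset\mathbb{R}_{>0}$.

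The only conceivable obstacle is verifying that genericity of triples is preserved globally, i.e.\ that $\xi_\rho$ never sends a triple of pairwise distinct ideal points to a degenerate configuration; but this is guaranteed by positivity of $\xi_\rho$ (Definition~\ref{dfn:pmaps}), which implies genericity for every tuple of distinct boundary points. Everything else is soft topology. For the general (non-closed) case, compactness of $\mathit{Tri}(S)$ fails and one must replace it with compactness of the closure of the set of $k$-intersecting triangles inside a suitable compactification, which is why the restriction to $\mathit{Tri}_k(S)$ appears in the theorem statement but not in this special case.
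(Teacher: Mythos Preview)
Your proof is correct and follows exactly the paper's approach: the paper's proof is the single sentence ``the domain $\mathit{Tri}(S)\cong T^1 S$ is compact when $S$ is compact,'' and you have simply supplied the routine details (continuity of $\xi_\rho$, projective invariance, and the extreme value theorem) that the paper leaves implicit.
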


\begin{proof}
The proposition follows from the simple fact that the domain $\mathit{Tri}(S)\cong T^1 S$ is compact when $S$ is compact.
\end{proof}

We now proceed onto the general case where $(S,h_\rho)$ has geodesic boundary holes and punctures. Our proof in this case is also based on compactness, with the adjustment that the role of $\mathit{Tri}(S_{g,0})$ is supplanted by $\mathit{Tri}_k(S_{g,m})$.

\begin{prop}\label{thm:kcompact}
For $S=S_{g,m}$ with negative Euler characteristic, let $\rho:\pi_1(S)\rightarrow \operatorname{PGL}_n(\mathbb{R})$ be a positive representation and let $h_\rho$ denote an auxiliary hyperbolic metric for $\rho$ (see Definition~\ref{definition:aux}). The set $\mathit{Tri}_k(S)$ of $k$-intersecting ideal triangles on $S$ is a compact subset of $\mathit{Tri}(S)$.
\end{prop}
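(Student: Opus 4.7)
The plan is to prove sequential compactness of $\mathit{Tri}_k(S)$ in $\mathit{Tri}(S)$, with the entire weight of the argument concentrated in a single geometric lemma: the incenters of all $k$-intersecting ideal triangles lie in a compact subsurface $K_k \subseteq S$ depending only on $k$ and $h_\rho$. The rest of the proof follows by standard compactness manipulations on $(\partial_\infty\pi_1(S))^3$.

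I anticipate the incenter-confinement lemma being the main obstacle, and I would prove it via the thick--thin decomposition of $(S, h_\rho)$. Since $h_\rho$ is chosen so that unipotent boundaries become cusps and loxodromic boundaries become closed geodesic boundaries, the thin part decomposes as a finite disjoint union of cusp neighbourhoods and hyperbolic collars. Each thin component has infinite cyclic fundamental group generated by a single parabolic or hyperbolic element $\gamma$. A bi-infinite geodesic that penetrates to depth $d$ into such a component must wrap around $\gamma$ a number of times growing without bound in $d$, producing correspondingly many transverse intersections with itself and with any other geodesic sharing the thin region. Since a $k$-intersecting ideal triangle admits at most $k$ self-intersections per side and at most $k$ pairwise intersections, each side may enter each thin component only to a bounded depth $d(k)$. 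Combined with the universal bound on the inradius of a hyperbolic ideal triangle, this confines the incenter of every $k$-intersecting triangle to a compact $K_k$.

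With this lemma in hand, given any sequence $\{[a_n, b_n, c_n]_\rho\}_n \subseteq \mathit{Tri}_k(S)$ I would select lifts $(a_n, b_n, c_n) \in \mathit{Tri}(\tilde{S})$ whose triangle's incenter lies in a fixed compact lift $\tilde{K}_k \subseteq \tilde{S}$ of $K_k$. Compactness of $\partial_\infty\pi_1(S)$ then extracts a subsequence along which $a_n \to a$, $b_n \to b$, $c_n \to c$. The confinement of incenters to $\tilde{K}_k$ forbids any pair of limit vertices from coinciding, since otherwise the incenters would be pulled toward this common boundary point and escape every compact subset of $\tilde{S}$. Hence $(a, b, c)$ is again in $\mathit{Tri}(\tilde{S})$, and it defines the sought-after limit $[a, b, c]_\rho \in \mathit{Tri}(S)$ of the subsequence.

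To conclude that the limit remains $k$-intersecting: each transverse intersection of the limit triangle's sides is $C^1$-stable under small perturbations of the ideal endpoints, and hence persists in $[a_n, b_n, c_n]_\rho$ for all sufficiently large $n$. The uniform bound of $k$ is therefore inherited by the limit. The central point of difficulty is the depth-versus-intersection trade-off in the incenter-confinement lemma; for cusps the estimate reduces to a direct computation in the parabolic cylinder $\{z \in \mathbb{H}^2 : \mathrm{Im}\, z \geq T\}/\langle z \mapsto z+1\rangle$, and for loxodromic boundary components an analogous computation in the standard hyperbolic collar suffices. The Margulis lemma ensures that a uniform thick--thin splitting exists, so the resulting $d(k)$ and $K_k$ are well-defined.
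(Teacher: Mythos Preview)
Your proof is correct and takes a genuinely different route from the paper's. The paper splits into two cases. When all boundary monodromies are non-unipotent, $(S,h_\rho)$ has geodesic boundary; the paper doubles to a closed surface $dS$, embeds $\mathit{Tri}(S)\hookrightarrow\mathit{Tri}(dS)\cong T^1(dS)$, and observes that $\mathit{Tri}_k(S)$ is the intersection of two closed conditions (``contained in $\iota(S)$'' and ``$k$-intersecting'') inside the compact $\mathit{Tri}(dS)$. When some boundary is unipotent, the paper invokes Floyd's theorem on group completions to produce a continuous $\pi_1(S)$-equivariant surjection $u:\partial_\infty\pi_1(S,h_0)\to\partial_\infty\pi_1(S,h_\rho)$ from the Gromov boundary for a geodesic-bordered auxiliary metric $h_0$; this induces a continuous surjection $u_*:\mathit{Tri}_k(S,h_0)\to\mathit{Tri}_k(S,h_\rho)$, so compactness is inherited from the first case.

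Your argument trades this two-step reduction and the Floyd machinery for a single elementary geometric lemma (incenter confinement via the depth--intersection trade-off in thin parts), together with the identification of the space of marked ideal triangles with incenter in a compact $\tilde K_k\subset\mathbb{H}^2$ as a compact subset of $T^1\mathbb{H}^2$. This is more self-contained and yields the explicit geometric content that $k$-intersecting triangles cannot have incenters deep in a cusp. One small point worth making explicit in your write-up: a side with a vertex at the cusp $p$ does not wrap in \emph{that} cusp, so the depth bound comes from the side \emph{opposite} the cusp vertex, which must pass through its incircle-tangency point at depth $\sim D$ and therefore wraps; since at most one vertex of the lifted triangle lies at the chosen lift of $p$, such an opposite side always exists. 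Your closedness-of-$k$-intersecting step agrees with the paper's: having $\geq k+1$ transverse intersections is open because it is determined by the cyclic configuration of endpoints.
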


\begin{proof}
Let us first consider the case when none of the boundary monodromies of $\rho$ are unipotent. In this case, $(S,h_\rho)$ is a hyperbolic surface with $m$ closed geodesic boundary components and no cusps. Let $(dS,dh_\rho)$ denote the closed orientable double of $(S,h_\rho)$, then the inclusion isometric embedding
\[
\iota:(S,h_\rho)\hookrightarrow (dS,dh_\rho)
\] 
induces an embedding of ideal triangles $\iota_*:\mathit{Tri}(S)\hookrightarrow\mathit{Tri}(dS)$. In particular, observe that $\iota_*(\mathit{Tri}(S))$ is precisely the set of ideal triangles on $dS$ which lie completely on $\iota(S)$. The intersection of the closed condition of being contained on $\iota(S)$ and the closed condition of being a $k$-intersecting ideal triangle is a closed condition. To see that being a $k$-intersecting ideal triangle is a closed condition, we show that the complement is an open condition. Consider the lifts of the relevant geodesics to the universal cover which has at least $(k+1)$ self-intersections or at least $(k+1)$ pairwise intersections. Each intersection depends purely on how their end points are configured. Hence small perturbations in the vertices of an ideal triangle can only increase the number of intersection points and therefore describes an open condition. Thus, $\mathit{Tri}_k(S)$ is a closed subset of $\mathit{Tri}(dS)$. Since $\mathit{Tri}(dS)$ is compact, the set $\mathit{Tri}_k(S)$ must be compact.\medskip

We now consider the case when one or more of the boundary monodromies of $\rho$ are unipotent, which is to say that the auxiliary metric $h_\rho$ is cuspidal at those corresponding boundary components. The boundary at infinity $\partial_\infty\pi_1(S,h_\rho)$ for the universal cover $(\tilde{S},\tilde{h}_\rho)$ of $(S,h_\rho)$ is equal to the set of limit points with respect to the holonomy representation action of $\pi_1(S,h_\rho)$ on $\overline{\mathbb{H}^2}$. The second main theorem of \cite[pg. 207]{Flo80} tells us that there is a $\pi_1(S)$-equivariant map from the Floyd boundary of $\pi_1(S)$ to the boundary at infinity $\partial_\infty\pi_1(S,h_\rho)$ which is injective everywhere except over parabolic fixed points, where the map is $2:1$. Moreover, the Floyd boundary of $\pi_1(S)$ is homeomorphic to the Gromov boundary of the hyperbolic group $\pi_1(S)$ \cite[see, e.g., Corollary~2.3]{BK09}.
%\begin{itemize}
%\item
% the metric space to be $(S,h_0)$, where $h_0$ is a geodesic bordered hyperbolic metric on  $S$;
%\item
%the scaling function $\mathit{g}:[0,\infty)\to[0,\infty)$ to be $t\mapsto\frac{1}{1+t^2}$;
%\item
%the constant $\epsilon>0$ to be any sufficiently small positive constant.
%\end{itemize}
The Gromov boundary of $\pi_1(S)$ naturally identifies with $\partial_\infty\pi_1(S,h_0)$ where $h_0$ is a geodesic bordered hyperbolic metric on  $S$, and we therefore obtain a continuous $\pi_1(S)$-equivariant map
\[
\mathit{u}:\partial_\infty\pi_1(S,h_0)\to\partial_\infty\pi_1(S,h_\rho).
\]
The image of $\mathit{u}$ is:
\begin{itemize}
\item
closed, since $\partial_\infty\pi_1(S,h_0)$ is compact, and
\item
dense in $\partial_\infty\pi_1(S,h_\rho)$, since parabolic fixed points are dense.
\end{itemize}
Therefore, $\mathit{u}$ is surjective and defines a quotient map from $\partial_\infty\pi_1(S,h_0)$ to $\partial_\infty\pi_1(S,h_\rho)$ which identifies the $2$ lifts of each parabolic fixed point in $\partial_\infty\pi_1(S,h_\rho)$. In fact, $\mathit{u}$ in turn induces a continuous map $\mathit{u}_*:\mathit{Tri}_k(S,h_0)\rightarrow\mathit{Tri}_k(S,h_\rho)$. It is crucial to note that $\mathit{u}_*$ is well-defined on $\mathit{Tri}_k(S,h_0)$, for any $k$, but does not extend to a map $\mathit{Tri}(S,h_0)\to\mathit{Tri}(S,h_\rho)$ because 
\[
\mathit{u}_*([a,b,c]_0)=[u(a),u(b),u(c)]_\rho
\] 
does not produce a triangle if $u(a),u(b)$ and $u(c)$ are not pairwise distinct. This cannot happen to a triangle $[a,b,c]_0\in\mathit{Tri}_k(S,h_0)$: if (without loss of generality) $a$ and $b$ are the two endpoints of a lift of a boundary geodesic of $(S,h_0)$, then the geodesics $[b,c]_0$ and $[c,a]_0$ spiral toward the same boundary in opposite directions and hence intersect infinitely often. Finally, since $\mathit{Tri}_k(S,h_\rho)$ is the image of a compact set, it is compact.
\end{proof}

\begin{proof}[Proof of Theorem~\ref{thm:bounded} for $S_{g,m}$, $m\geq1$ case.]
The triple ratio function $T^{\xi_\rho}:\mathit{Tri}(S)\rightarrow\mathbb{R}_{>0}$ restricts to a positive continous function $T^{\xi_\rho}|_{\mathit{Tri}_k(S)}$ defined over the compact set $\mathit{Tri}_k(S)$. We then take $T^{\xi_\rho}_{\min}(k)$ and $T^{\xi_\rho}_{\max}(k)$ to be the respective minimum and the maximum for the restricted function $T^{\xi_\rho}|_{\mathit{Tri}_k(S)}$.
\end{proof}

\begin{rmk}
Our proof is essentially topological, and so Theorem~\ref{thm:bounded} holds true even for $(\rho,\xi)\in \mathcal{X}_3(S)$ where $\rho$ is the holonomy representation of a convex real projective surface with quasihyperbolic boundary monodromy (see, e.g., \cite{Mar12}).
\end{rmk}

\subsection{$n$-Fuchsian rigidity conditions}
\label{sec:fuchsiancharacterization}

We now shift from the study of triple ratio boundedness to that of Fuchsian rigidity. 

\begin{rmk}
\label{remark:fuchsianvero}
A $n$-Fuchsian representation $\rho$ is a composition of the discrete faithful homomorphism $\rho_0$ from $\pi_1(S)$ to $\operatorname{PSL}_2(\mathbb{R})$ with the unique irreducible representation $\iota$ from $\operatorname{PSL}_2(\mathbb{R})$ to $\operatorname{PGL}_n(\mathbb{R})$. There exists a $\rho_0$-equivariant map $\xi_0$ from $\partial_\infty \pi_1(S)$ to $\mathbb{RP}^1$. The Veronese curve is $v:\mathbb{RP}^1\rightarrow \mathbb{RP}^{n-1}$
\[
[x,y]\mapsto [x^{n-1}, x^{n-2}y,\cdots, y^{n-1}].
\]
The unique irreducible representation $\iota$ is defined by 
\[v(M \cdot [x,y]^T)=\iota(M)\cdot [x^{n-1}, x^{n-2}y,\cdots, y^{n-1}]^T.\]
Thus the $\rho$-equivariant map $v\circ \xi_0$ for $\rho=\iota\circ \rho_0$ is a reparameterization of the Veronese curve. 
\end{rmk}

The following lemma is probably well-known to the experts, but we do not find a proper reference.
\begin{lem}
\label{lemma:nfuchrig}
For a $n$-Fuchsian representation $\rho=\iota\circ \rho_0$, there exists a lift $(\rho,\xi)\in\mathcal{X}_n(S_{g,m})$ such that all the triple ratios of $(\rho,\xi)$ are equal to $1$, and for any quadrilateral with a diagonal edge, the $(n-1)$ edge functions along the diagonal edge are equal.
\end{lem}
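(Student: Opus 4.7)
My plan is to construct the lift via osculating flags of the Veronese curve and then reduce both assertions to explicit determinantal computations on $\mathbb{RP}^1$ using the $\operatorname{PSL}_2(\mathbb{R})$-equivariance of the Veronese map.

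\textbf{Construction of the lift.} With $\rho = \iota \circ \rho_0$ and $v$ the Veronese curve of Remark~\ref{remark:fuchsianvero}, define $V \colon \mathbb{RP}^1 \to \mathcal{B}$ by sending $p \in \mathbb{RP}^1$ to the osculating flag at $v(p)$, i.e.\ the flag whose $i$-dimensional piece is spanned by $v(p), v'(p), \ldots, v^{(i-1)}(p)$. The intertwining relation $v(M \cdot p) = \iota(M) \cdot v(p)$ differentiates to $\operatorname{PSL}_2(\mathbb{R})$-equivariance of $V$. Setting $\xi := V \circ \xi_0$, where $\xi_0$ is the $\rho_0$-equivariant Fuchsian boundary map, yields a $\rho$-equivariant framing. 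At each puncture $p_i$, the flag $\xi(p_i)$ is the osculating flag at a fixed point of $\rho_0(\alpha_i)$, hence fixed by $\rho(\alpha_i)$, so $(\rho, \xi) \in \mathcal{X}_n(S_{g,m})$.

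\textbf{Reduction via equivariance.} Because $V$ is $\operatorname{PSL}_2(\mathbb{R})$-equivariant and triple ratios (resp.\ edge functions) are $\operatorname{PGL}_n(\mathbb{R})$-invariants, the value $T_{i,j,k}(V(p_1), V(p_2), V(p_3))$ depends only on the $\operatorname{PSL}_2(\mathbb{R})$-orbit of $(p_1, p_2, p_3)$ in $(\mathbb{RP}^1)^3$, and similarly for $D_i(V(p_1), V(p_2), V(p_3), V(p_4))$. Sharp $3$-transitivity of $\operatorname{PSL}_2(\mathbb{R})$ lets me normalize the triple to $(0, \infty, s)$ for an arbitrary $s$, and the quadruple to $(0, \infty, z, w)$ encoding the cross-ratio.

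\textbf{Explicit computation.} In these normalizations I take bases $x_l = e_l$ for $V(0)$, $y_l = e_{n+1-l}$ for $V(\infty)$, and $z_l$ with $z_l[m] = \binom{m-1}{l-1} s^{m-l}$ for $V(s)$ (and analogously for $V(w)$). A short wedge computation gives, for any $a + b + c = n$,
\[
\Delta\bigl(x^a \wedge y^b \wedge z^c\bigr) = (-1)^{b(b-1)/2}\, s^{ca}\, \det(C_{a,c}),
\]
where $C_{a,c}$ is the $c \times c$ matrix with entries $\binom{a+q-1}{l-1}$. Factoring $C_{a,c} = L \cdot W$, with $L$ the lower-triangular matrix of coefficients of $\binom{a+q-1}{l-1}$ as a degree-$(l-1)$ polynomial in $q$, and $W$ the Vandermonde matrix on $q = 1, \ldots, c$, gives $\det(L) = \prod_{l=0}^{c-1} 1/l!$ and $\det(W) = \prod_{l=0}^{c-1} l!$, whence $\det(C_{a,c}) = 1$ independent of $a$. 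Substituting into the six-term triple ratio formula, the signs $(-1)^{b(b-1)/2}$ cancel between numerator and denominator, and the exponents of $s$ both total $3ki - 1$, so $T_{i,j,k}(V(0), V(\infty), V(s)) = 1$. For the edge function, the same formula with $c = 1$ (where $C_{a,1} = [1]$) produces $\Delta$-values $\pm z^{n-i}, \pm z^{n-i-1}, \pm w^{n-i-1}, \pm w^{n-i}$ with matching signs, and direct substitution into Definition~\ref{definition:edgefunction} yields $D_i(V(0), V(\infty), V(z), V(w)) = -z/w$, independent of $i$.

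The main technical hurdle is the combinatorial identity $\det(C_{a,c}) = 1$ together with careful sign bookkeeping across the six wedge terms in the triple ratio; once these are handled, the cancellations are forced by the $\operatorname{PSL}_2(\mathbb{R})$-symmetry.
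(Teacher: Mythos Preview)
Your proof is correct, but it takes a genuinely different route from the paper's. The paper argues geometrically: using Remark~\ref{remark:triple} it projects $\mathbb{R}^n$ onto the $3$-dimensional quotient $\mathbb{R}^n/(A^{(i-1)}\oplus B^{(j-1)}\oplus C^{(k-1)})$, observes that the image of the Veronese curve under this projection is a conic (hence projectively a circle), and reads off $T_{i,j,k}=1$ from the equal-chord symmetry $|wb|=|aw|$, $|uc|=|bu|$, $|va|=|cv|$ in Figure~\ref{fig:ceva}. The edge-function claim is dismissed as ``similar.'' Your approach instead normalizes via the transitive $\operatorname{PSL}_2(\mathbb{R})$-action and computes all the wedge determinants explicitly, reducing the triple-ratio claim to the combinatorial identity $\det\bigl(\binom{a+q-1}{l-1}\bigr)_{q,l=1}^{c}=1$, which you prove cleanly by the Vandermonde factorization. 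What the paper's argument buys is brevity and a conceptual explanation (the projection-to-a-conic picture recurs in the Fuchsian rigidity theorem, Lemma~\ref{lem:conicsegment}); what your argument buys is complete explicitness, including an actual formula $D_i=-z/w$ for the edge functions rather than an appeal to analogy, and independence from the geometric interpretation in Remark~\ref{remark:triple}.
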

\begin{proof}
Take $(\rho,\xi)\in\mathcal{X}_n(S_{g,m})$ such that the $\rho$-equivariant map $\xi_\rho:\partial_\infty \pi_1(S) \rightarrow \mathcal{B}$ is the osculating curve of the $\rho$-equivariant map $v\circ \xi_0: \partial_\infty \pi_1(S) \rightarrow \mathbb{RP}^{n-1}$ in Remark \ref{remark:fuchsianvero}. For any triple ratio $T_{i,j,k}(A,B,C)$ where $A, B, C$ are in the image of $\xi_\rho$, let us use the notations in Remark \ref{remark:triple} and consider the space $Q=\mathbb{P}\left(\mathbb{R}^n/\left(A^{(i-1)}\oplus  B^{(j-1)} \oplus B^{(k-1)}\right)\right)$. The projection of the Veronese curve in $Q$ is a conic, thus a circle up to projective transformations. Thus 
\[|wb|=|aw|, \;\; |uc|=|bu|, \;\; |va|=|cv| .\] 
Hence
\[T_{i,j,k}(A,B,C)=
\frac
{|wb|\cdot |uc|\cdot |va|}
{|bu|\cdot |cv| \cdot |aw|}=1.\]
The proof for the edge functions is similar.
\end{proof}

We propose the following candidate conditions for characterizing when a positive representation (or a positive framed local system) is a $n$-Fuchsian one:

\begin{defn}[Candidate $n$-Fuchsian characterizing conditions]
We define the following conditions for positive framed $\operatorname{PGL}_n$-local system $(\rho,\xi)\in\mathcal{X}_n(S_{g,m})$:
\begin{description}
\item[Triple ratio rigidity] 
for every ideal triangle in every ideal triangulation, the triple ratios (Definition~\ref{defn:FGXcoordinate}) are all equal to $1$.

\item[Strong triple ratio rigidity]
the $(n-1)(n-2)/2$ triple ratio functions $T^{\xi_\rho}_{i,j,k}:\mathrm{Tri}(S)\to\mathbb{R}_{>0}$ are identically equal to $1$.

\item[Edge function rigidity]
for every (interior) ideal edge on every ideal triangulation, the $(n-1)$ edge functions (Definition~\ref{defn:FGXcoordinate}) along said edge are equal.

\item[Strong edge function rigidity] 
for each diagonal of every ideal quadrilateral (i.e.: quadrilateral with cyclically ordered vertices in $\partial_\infty\pi_1(S)$), the $(n-1)$ edge functions along the diagonal are all equal.
\end{description}
Furthermore, whenever there exists a map $\xi$ such that $(\rho,\xi)\in \mathcal{X}_n(S)$ satisfies one of the above stated conditions, we say that the underlying positive representation $\rho$ (also) satisfies the corresponding condition.
\end{defn}

\begin{rmk}
\label{rmk:rigidityproof}
By Lemma \ref{lemma:nfuchrig}, all four of the above rigidity conditions are necessary conditions for $n$-Fuchsian representations. Conversely, the triple ratio rigidity and edge function rigidity combine to give defining equations for a $n$-Fuchsian slice of $\mathcal{X}_n(S_{g,m})$. Therefore, to show that the triple ratio rigidity condition characterizes $n$-Fuchsian representations, we need only to show that triple ratio rigidity implies edge function rigidity, or vice versa. Using these observations, we show:
\end{rmk}

\begin{prop}[Triple ratio rigidity for $n=3,4$]
\label{prop:triple34}
For $n=3,4$, a positive representation $\rho\in \mathrm{Pos}_n(S_{g,m})$ is $n$-Fuchsian if and only if $\rho$ satisfies the triple ratio rigidity condition.
\end{prop}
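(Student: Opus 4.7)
The plan is to follow the roadmap laid out in Remark~\ref{rmk:rigidityproof}. Necessity is already given by Lemma~\ref{lemma:nfuchrig}. For sufficiency, since triple ratio rigidity together with edge function rigidity cut out the $n$-Fuchsian slice of $\mathcal{X}_n(S_{g,m})$, it is enough to show that the triple ratio rigidity condition alone implies the edge function rigidity condition. I would then conclude by invoking the characterization from Remark~\ref{rmk:rigidityproof}.

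The key geometric setup is this: fix an arbitrary ideal quadrilateral with cyclically ordered vertices whose associated flags (under $\xi_\rho$) are $X, Y, Z, W$. The quadrilateral admits two ideal triangulations, one with diagonal $\overline{XZ}$ (triangles $(X,Y,Z)$ and $(X,Z,W)$) and one with diagonal $\overline{YW}$ (triangles $(X,Y,W)$ and $(Y,Z,W)$). Because triple ratio rigidity is imposed on every triangle of every ideal triangulation, I obtain $T_{i,j,k}=1$ for all four triangles and all admissible index triples $(i,j,k)$. The task is to use these equations to force the $(n-1)$ edge functions $D_1,\ldots,D_{n-1}$ along the diagonal $\overline{XZ}$ to coincide.

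For the case $n=3$, the positive configuration space $\mathrm{Conf}_4^+$ has dimension $4$ (Theorem~\ref{thm:posconf}), parameterized by the two triangle ratios for $(X,Y,Z)$ and $(X,Z,W)$ together with the two edge functions $D_1,D_2$ along $\overline{XZ}$. I would normalize by the $\operatorname{PGL}_3(\mathbb{R})$-action to pin down the bases of three of the four flags, express the remaining two triple ratios $T(Y,Z,W)$ and $T(X,Y,W)$ as explicit rational functions in $(D_1,D_2)$ using the mutation rules in Equation~\eqref{equation:mutation3} (or directly from Definition~\ref{definition:tripleratio}), and verify algebraically that imposing all four triple ratio equations forces $D_1=D_2$. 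The case $n=4$ proceeds by exactly the same strategy: now each triangle carries three triple ratios $T_{1,1,2}, T_{1,2,1}, T_{2,1,1}$ and the diagonal carries three edge functions $D_1,D_2,D_3$, but the underlying principle is unchanged — the constraints from the second triangulation, propagated through the flip coordinate transformations of \cite[\S10.3]{FG06}, supply enough additional relations beyond the obvious ones to collapse the edge functions.

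The main obstacle is the explicit algebraic computation, especially for $n=4$, where naively writing out triple ratio constraints in terms of $\mathcal{A}$-coordinates for a 4-triangulation and the corresponding mutation rules produces unwieldy expressions. I expect that substantial simplification comes from exploiting the cyclic symmetry $T_{i,j,k}(F,G,H)=T_{j,k,i}(G,H,F)$ and the edge symmetry $D_i(X,Y,Z,W)=D_{n-i}(Y,X,W,Z)$, together with a judicious normalization of bases (for instance, choosing $X,Y,Z$ so that the associated $\mathcal{A}$-coordinates in the triangle $(X,Y,Z)$ are all equal to $1$). The fact that the proposition is only stated for $n=3,4$ and that the general $n$ case requires the unipotent boundary assumption (Theorem~\ref{thm:highrankrigidity}) strongly suggests that this brute-force route does not scale and that a more conceptual argument is used in the higher-rank unipotent-bordered setting.
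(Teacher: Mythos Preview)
Your overall roadmap matches the paper's: invoke Remark~\ref{rmk:rigidityproof} for necessity and for the reduction to showing that triple ratio rigidity forces edge function rigidity. The difference lies in how the algebraic step is executed. You propose to normalize bases, work in $\mathcal{A}$-coordinates via the mutations of Equation~\eqref{equation:mutation3}, and compute all of the flipped triple ratios as rational functions of the edge functions, then solve. The paper instead stays entirely in $\mathcal{X}$-coordinates and writes down a \emph{single} $\mathcal{X}$-coordinate flip formula: for the flip at $\overline{xz}$, the ratio $X'_{v^{t,y,z}_{1,1,n-2}}/X_{v^{x,y,z}_{1,1,n-2}}$ is an explicit rational expression in the two edge coordinates $X_{v^{x,z}_{1,n-1}}$, $X_{v^{x,z}_{2,n-2}}$ and two triple ratios. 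Setting all triple ratios (before and after the flip) equal to $1$ in that one formula immediately forces $X_{v^{x,z}_{1,n-1}}=X_{v^{x,z}_{2,n-2}}$, and the symmetric formula at the other end gives $X_{v^{x,z}_{n-1,1}}=X_{v^{x,z}_{n-2,2}}$. Since for $n=3,4$ there are at most three edge coordinates along $\overline{xz}$, this already forces them all equal. So the paper avoids any basis normalization or brute-force $\mathcal{A}$-coordinate algebra, and the $n=4$ case requires no additional work beyond $n=3$. Your route would also succeed, but it is considerably more laborious and obscures why the argument stops at $n=4$: in the paper's formulation it is transparent that the flip formula only controls the two outermost edge coordinates on each side, which is why $n\geq 5$ escapes this method.
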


\begin{prop}[Edge function rigidity for $n=3$]
\label{prop:edge3}
For $n=3$, a positive representation $\rho\in \mathrm{Pos}_n(S_{g,m})$ is $n$-Fuchsian if and only if $\rho$ satisfies the edge function rigidity condition.
\end{prop}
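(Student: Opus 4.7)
By Lemma~\ref{lemma:nfuchrig}, every $n$-Fuchsian representation satisfies edge function rigidity, which handles the ``only if'' direction. For the converse, by Remark~\ref{rmk:rigidityproof} combined with Proposition~\ref{prop:triple34} (in the $n=3$ case), it suffices to show that edge function rigidity implies triple ratio rigidity. To this end, fix a lift $(\rho,\xi) \in \mathcal{X}_3(S_{g,m})$ and an arbitrary triangle $(X,Y,Z)$ appearing in some ideal triangulation $\mathcal{T}$ of $S_{g,m}$; we aim to show $T(\xi_\rho(x), \xi_\rho(y), \xi_\rho(z)) = 1$.

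The first step is to translate the hypothesis into $\mathcal{A}$-coordinates. After locally lifting $(\rho,\xi)$ to $(\bar{\rho},\bar{\xi}) \in \mathcal{A}_{\operatorname{SL}_3, S_{g,m}}$ via Remark~\ref{remark:AX}, both the triple ratio $T(X,Y,Z)$ and each edge function $D_i$ become explicit monomials in the $\mathcal{A}$-coordinates $A_V$ of vertices $V$ of the $3$-triangulation $\mathcal{T}_3$. The edge function rigidity condition $D_1(X,Y,Z,W) = D_2(X,Y,Z,W)$ on the edge $\overline{XY}$ (with $W$ the fourth vertex of the adjacent triangle) becomes a first polynomial identity among the $\mathcal{A}$-coordinates lying on $\overline{XY}$ and in the interiors of the two surrounding triangles of $\mathcal{T}$.

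The second step is to perform a flip of $\mathcal{T}$ along $\overline{XY}$ to produce a new ideal triangulation $\mathcal{T}'$ containing the diagonal $\overline{ZW}$. Since the edge function rigidity hypothesis applies to every ideal triangulation, we obtain another equation $D_1' = D_2'$ along $\overline{ZW}$ in $\mathcal{T}'$. The $\mathcal{A}$-coordinate mutation formulas of Equation~\eqref{equation:mutation3}, which are specific to $n=3$, allow us to rewrite this second condition as a polynomial identity in the original $\mathcal{A}$-coordinates. Combining the two identities should pin down the monomial expressing the triple ratio at the interior vertex of $(X,Y,Z)$ to equal $1$. Since $(X,Y,Z)$ and $\mathcal{T}$ were arbitrary, this yields triple ratio rigidity and completes the proof.

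I expect the main obstacle to be the algebraic bookkeeping in the second step: Equation~\eqref{equation:mutation3} couples several $\mathcal{A}$-coordinates simultaneously, and a direct substitution into the $\mathcal{X}$-coordinate expression for $D_1'/D_2'$ on $\overline{ZW}$ can produce a dense polynomial identity. A potentially cleaner route is to run the entire argument in $\mathcal{X}$-coordinates using the $n=3$ specialization of the $\mathcal{X}$-coordinate mutation rules of \cite[\S10.3]{FG06}; here the post-flip triple ratios and edge functions are short rational functions of the pre-flip $\mathcal{X}$-coordinates $\{T_{\mathrm{old}}, T_{\mathrm{old}}', D_1^{\mathrm{old}}, D_2^{\mathrm{old}}\}$ of the quadrilateral, and one then verifies directly that imposing both $D_1^{\mathrm{old}} = D_2^{\mathrm{old}}$ and $D_1^{\mathrm{new}} = D_2^{\mathrm{new}}$ forces the two triple ratios in the quadrilateral to equal $1$. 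Performing this computation for a quadrilateral with a single diagonal flip is the core case; localization of the triple ratio to the triangle $(X,Y,Z)$ together with the arbitrariness of $W$ then globalizes the conclusion.
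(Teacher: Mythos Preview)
Your overall strategy is the paper's strategy: invoke Remark~\ref{rmk:rigidityproof}, flip, and exploit edge function rigidity in both the old and new triangulations. Your ``alternative cleaner route'' via $\mathcal{X}$-coordinates is in fact exactly what the paper does, so you should commit to that from the start rather than detouring through $\mathcal{A}$-coordinates.

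The gap is in the choice of which edges to compare. Restricting attention to the diagonal $\overline{XY}$ before the flip and the new diagonal $\overline{ZW}$ after the flip gives only two equations among the four quadrilateral $\mathcal{X}$-coordinates $(T_{XYZ},T_{XYW},D_1,D_2)$; after imposing $D_1=D_2$ you are left with one relation among the three unknowns $(T_{XYZ},T_{XYW},D)$, which does not force either triple ratio to equal $1$. The remark about ``arbitrariness of $W$'' does not help here, since $W$ is determined by the quadrilateral and varying it changes the whole system rather than adding an independent equation in the same unknowns.

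The paper resolves this by comparing the edge functions on a \emph{boundary} edge $\overline{xy}$ of the quadrilateral before and after flipping the diagonal $\overline{xz}$. In the original triangulation, $\overline{xy}$ is adjacent to $(x,y,z)$; after the flip it is adjacent to $(x,y,t)$. The mutation formulas then give
\[
X'_{v_{1,2}^{x,y}}=\frac{X_{v_{1,2}^{x,y}}\,X_{v_{1,1,1}^{x,y,z}}\,X_{v_{1,2}^{x,z}}}{1+X_{v_{1,1,1}^{x,y,z}}\,X_{v_{1,2}^{x,z}}},\qquad
X'_{v_{2,1}^{x,y}}=\frac{X_{v_{2,1}^{x,y}}\,X_{v_{2,1}^{x,z}}}{1+X_{v_{2,1}^{x,z}}},
\]
and using rigidity on $\overline{xz}$ and $\overline{xy}$ in $\mathcal{T}$ together with rigidity on $\overline{xy}$ in $\mathcal{T}'$ immediately yields $X_{v_{1,1,1}^{x,y,z}}=1$. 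The point is that only one of the two triple ratios enters the transformation of the $\overline{xy}$ edge functions, so the system decouples and isolates $T_{xyz}$ directly.
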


We establish Propsitions~\ref{prop:triple34} and \ref{prop:edge3} via explicit algebraic computation (see Appendix~\ref{sec:appendixrigidity}). The advantage of such a proof is not merely in its simplicity, but also in its extensibility:
\begin{itemize}
\item 
it applies to $\mathcal{X}_n(\hat{S})$ \cite[Definition~1.2]{FG07}, where $\hat{S}$ is a surface with marked points on the boundary;
\item
it applies to the universal higher Teichm\"uller space context \cite[Definition~1.9]{FG07};
\item
and it also applies to general coefficient fields.
\end{itemize}
This method of proof does, however, quickly become difficult upon increasing $n$.

\subsubsection{Strong triple ratio rigidity}

We will show that the strong triple ratio rigidity condition characterizes $n$-Fuchsian representations for positive representations with unipotent boundary monodromy. We turn to the geometry of Frenet curves to help establish these rigidity conditions. 

\begin{defn}[\cite{Lab06} Frenet curve and osculating curve]
\label{defn:frenet}
A continuous curve $\xi^1:\mathbb{RP}^1\rightarrow\mathbb{RP}^{n-1}$ is called a \emph{Frenet curve} if there exists a curve 
\[
\xi=(\xi^1,\ldots,\xi^{n-1}):\mathbb{RP}^1\rightarrow\mathcal{B}
\] 
such that for every $k$-tuple of positive integers $(j_1,\ldots,j_k)$ such that $j_1+\ldots+j_k=j\leq n$, the curve $\xi$ satisfies the following properties
\begin{itemize}
\item
hyperconvexity: for every $k$-tuple of distinct points $x_1,\ldots,x_k\in\mathbb{RP}^1$, the following sum is direct
\[
\bigoplus^k_{i=1}\xi^{j_i}(x_i)\subset \mathbb{R}^n.
\]
\item
for every $x\in\mathbb{RP}^1$, the following limit exists and satisfies
\[
\lim_{(x_i)\to x}
\bigoplus^k_{i=1}\xi^{j_i}(x_i)=\xi^j(x),
\]
where the limit is taken over $k$-tuples $(x_1,\ldots,x_k)$ of pairwise distinct points.
\end{itemize}
We refer to $\xi=(\xi^1,\ldots,\xi^{n-1})$ as the \emph{osculating curve} of the Frenet curve $\xi^1$.
\end{defn}

Frenet curves are central to the study of positive representations. 

\begin{rmk}[Positive representations that have Frenet curves]
\label{remark:mpcont}
One important geometric property of positive representations $\rho$ is that for any positive framed $\operatorname{PGL}_n(\mathbb{R})$-local system $(\rho,\xi)$, by Definition~\ref{definition:rhoequivariant}, its respective associated map $\xi_{\rho}:\tilde{m}_p\rightarrow\mathcal{B}$ extends uniquely to the positive $\rho$-equivariant map $\xi_\rho:\partial_\infty\pi_1(S)\to\mathcal{B}$. When $\rho$ only admits unipotent boundary monodromy, $\partial_\infty\pi_1(S)\cong \mathbb{RP}^1$, then $\xi_\rho$ is the osculating curve of a Frenet curve where hyperconvexity follows \cite[Proposition 9.4]{FG06} and second Frenet property follows \cite[Lemma 5.1]{Lab06}. For positive representations with only (at least one) loxodromic boundary monodromy, let $dS$ denote the closed surface obtained by taking $S=S_{g,m}$ and an orientation-reversed copy of $S_{g,m}$ and identifying all corresponding boundary components. In this setting, there exists an osculating curve $d\xi:\partial_\infty\pi_1(dS)\cong \mathbb{RP}^1\rightarrow \mathcal{B}$ which restricts to $\xi_\rho$ on $\tilde{m}_p\subset\partial_\infty\pi_1(S)\subset\partial_\infty\pi_1(dS)$. This extension is far from being unique, but the osculating curve $d\xi$ for Hitchin double representation $d\rho$ \cite[Definition~9.2.2.3]{LM09} gives a canonical construction for such an extended Frenet curve. The ``extensions'' we describe in this remark are all $\rho$-equivariant. 
\end{rmk}

\begin{rmk}
Frenet curves have low regularity. They are $C^1$, and usually they are not $C^\infty$. By \cite[Theorem~D]{potrie2017eigenvalues} (or \cite[Proposition~6.1]{benoist2001convexes} for the $n=3$ case), for a positive (Hitchin) representation of a closed surface $S_{g,0}$ with $g\geq 2$, the Frenet curve $\xi_\rho^1$ is $C^\infty$ if and only if $\rho$ is a $n$-Fuchsian representation. 
\end{rmk}

We now prove a ``local'' version of the claim that strong triple ratio rigidity condition characterizes $3$-Fuchsian representations.

\begin{lem}[Elliptical subarc]
\label{lem:conicsegment}
For $n=3$, consider the restricted osculating curve 
\[
\xi=(\xi^1,\xi^2):[0,1]\rightarrow\mathcal{B}
\] 
for a subarc of a Frenet curve. If the triple ratio $T(\xi(0),\xi(1),\xi(s))$ is equal to $1$ for every $s\in(0,1)$, then the image of $\xi^1$ in $\mathbb{RP}^2$ is the subarc of an ellipse.
\end{lem}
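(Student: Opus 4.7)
The plan is to turn the hypothesis $T(\xi(0),\xi(1),\xi(s)) = 1$ into a first-order ODE for $\xi^1$ in a projective chart adapted to the two endpoints of the arc, and then to recognize its solutions as conics. The key tool will be the Ceva/geometric reformulation of the triple ratio in Remark~\ref{remark:triple}.

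First, the hyperconvexity property of Frenet curves (Definition~\ref{defn:frenet}) implies that the tangent lines $L_0 := \xi^2(0)$ and $L_1 := \xi^2(1)$ are distinct and that each endpoint avoids the opposite tangent line. After a projective change of coordinates I may therefore assume $\xi^1(0) = [1:0:0]$, $\xi^1(1) = [0:1:0]$, $L_0 = \{Y = 0\}$, $L_1 = \{X = 0\}$, and $w := L_0 \cap L_1 = [0:0:1]$. In the affine chart $Z = 1$, hyperconvexity additionally gives $\xi^1(s) = (x(s), y(s))$ with $x(s) y(s) \neq 0$ for $s \in (0,1)$.

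Second, I would apply Remark~\ref{remark:triple} to rewrite $T = 1$ as the concurrency of the three cevians $\overline{\xi^1(0)\, u(s)}$, $\overline{\xi^1(1)\, v(s)}$, $\overline{w\, \xi^1(s)}$ in the triangle with vertices $w, u(s), v(s)$, where $u(s) := L_1 \cap L_s$ and $v(s) := L_0 \cap L_s$ for $L_s := \xi^2(s)$. Writing $L_s$ in the homogeneous form $\alpha(s) X + \beta(s) Y + \gamma(s) Z = 0$, with $(\alpha,\beta,\gamma)$ proportional to the cross product of a chosen lift of $\xi^1(s)$ to $\mathbb{R}^3$ with its derivative, I obtain $u(s) = [0:-\gamma:\beta]$ and $v(s) = [-\gamma:0:\alpha]$. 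An explicit $3 \times 3$ determinant computation then reduces the concurrency condition to $\gamma(s) \cdot (x(s) y(s))' = 0$ in the affine chart.

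Finally, I would rule out the spurious factor $\gamma(s) \equiv 0$: since $\gamma = xy' - yx'$, vanishing on a subinterval forces $y/x$ to be constant there, so $\xi^1$ would trace a straight line through $w$ on that subinterval, contradicting the non-collinearity that hyperconvexity imposes on triples of Frenet points. Hence $\gamma$ is nonzero on an open dense subset of $(0,1)$, $(xy)' \equiv 0$ there, and by continuity $x(s) y(s) \equiv k$ for some nonzero constant $k$. The image of $\xi^1$ therefore lies in the smooth projective conic $\{XY = k Z^2\} \subset \mathbb{RP}^2$, which is projectively equivalent to an ellipse and passes through both endpoints with the prescribed tangent lines; being a connected continuous image of $[0,1]$, it is a subarc of an ellipse. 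The main obstacle is carrying out the Ceva determinant computation cleanly so that $(xy)' = 0$ emerges after cancelling the $\gamma$-factor, together with verifying that the exceptional branch $\gamma \equiv 0$ is incompatible with hyperconvexity.
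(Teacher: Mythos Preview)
Your proof is correct and takes a genuinely different normalization from the paper's. The paper places the endpoints at the finite affine points $(0,0)$ and $(1,0)$ with \emph{vertical} tangent lines, reparametrizes the arc as the graph $(s,f(s))$, and computes the triple ratio directly from the Euclidean length-ratio formula to obtain the separable ODE $f'/f = (1-2s)/(2s(1-s))$, whose solutions $f(s)=C_0\sqrt{s(1-s)}$ are visibly half-ellipses. You instead send both endpoints to infinity, making the two tangent lines the coordinate axes, and then invoke the Ceva concurrency characterization of $T=1$ rather than the length-ratio formula. This yields the strikingly simple condition $\gamma\cdot(xy)'=0$, hence $xy\equiv k$, i.e.\ the arc lies on the conic $XY=kZ^2$. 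The cost of your approach is the extra branch $\gamma=xy'-yx'=0$, which you correctly eliminate using hyperconvexity (three distinct points on a Frenet curve are never collinear) together with continuity of $(xy)'$; the paper avoids this by its graph reparametrization. In exchange, your ODE is essentially trivial and the conic emerges with no integration, whereas the paper's normalization identifies the curve as an ellipse in the affine sense without further projective change. Both routes are sound and each has its own economy.
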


\begin{proof}
We first observe that we may freely apply $\operatorname{PGL}_3(\mathbb{R})$ to $\xi$ without affecting the smoothness of $\xi$ or its triples ratios. In particular, our degree of freedom is high enough so that we may assume without loss of generality that
\begin{enumerate}
\item
the subarc maps to $\mathbb{R}^2=\{(x,y)\}\cong\{[x,y:1]^t \in\mathbb{RP}^{2}\}\subset\mathbb{RP}^2$;
\item
$\xi^1(0)$ and $\xi^1(1)$ are respectively positioned at $(0,0)$ and $(1,0)$;
\item
$\xi^2(0)$ and $\xi^2(1)$ are vertical lines $x=0$ and $x=1$ respectively;
\item
and $\xi^1$ is parameterized so that $\xi^1(s)=(s,f(s))$ for some $C^1$ function $f(s)$ such that $f(s)>0$ for $s\in(0,1)$.
\end{enumerate}
Note that conditions $(2)$ and $(3)$ mean that the lines $\xi^2(0),\xi^2(1)$ intersect at $[0,1,0]^t$. Further note that condition~$(4)$ is possible because Frenet curves are necessarily hyperconvex and the subarc $\xi^1$ is forced to be entirely on the upper half plane or lower half plane of $\mathbb{R}^2$, then choosing one of the two possible cases is equivalent up to a projective transformation. Consider the triple ratio rigidity condition
\[
T(\xi(0),\xi(1), \xi(s))=1.
\]
Explicitly writing out this condition for a $C^1$ curve $(s,f(s))$ yields the following:
\[
\frac{(1-s)(f(s)-sf'(s))}
{s(f(s)+(1-s)f'(s))}
=1,
\]
which leads to: 
\[
d (\log f(s))=\frac{f'(s)}{f(s)}
=\frac{1-2s}{2s(1-s)}
=\tfrac{1}{2} d (\log(s)+ \log(1-s)).
\]
We conclude that the family of half-ellipses of the form $f(s)=C_0 \sqrt{s(1-s)}$ for a positive constant $C_0$ constitute the full set of possible solutions for this ODE.
\end{proof}

We use Lemma~\ref{lem:conicsegment} to show the main result of this subsection:

\begin{thm}[Strong triple ratio rigidity characterizes $n$-Fuchsian]
\label{thm:highrankrigidity}
Given $S=S_{g,m}$ a surface with negative Euler characteristic and $n\geq2$, a positive representation $\rho:\pi_1(S)\rightarrow \operatorname{PGL}_n(\mathbb{R})$ with unipotent boundary monodromy (including $S$ being a closed surface) is $n$-Fuchsian if and only if $\rho$ satisfies the strong triple ratio condition.
\end{thm}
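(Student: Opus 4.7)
The ``only if'' direction follows from Lemma~\ref{lemma:nfuchrig}, whose argument actually shows that the triple ratios equal $1$ for every triple of distinct points on the Veronese curve, not only those coming from an ideal triangulation. For the converse, assume $(\rho,\xi)\in\mathcal{X}_n(S)$ satisfies strong triple ratio rigidity. Since $\rho$ has unipotent boundary monodromy, Remark~\ref{remark:mpcont} ensures that $\xi_\rho$ is the osculating curve of a Frenet curve $\xi^1:\mathbb{RP}^1\to\mathbb{RP}^{n-1}$. By Remark~\ref{remark:fuchsianvero}, it suffices to show that the image of $\xi^1$ is projectively equivalent to a Veronese curve: then $\rho(\pi_1(S))$ lies in the stabilizer of this rational normal curve in $\operatorname{PGL}_n(\mathbb{R})$, which is the image of the irreducible $\iota:\operatorname{PSL}_2(\mathbb{R})\to\operatorname{PGL}_n(\mathbb{R})$, forcing $\rho$ to be $n$-Fuchsian.

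The plan is to reduce to Lemma~\ref{lem:conicsegment} via the geometric interpretation of triple ratios in Remark~\ref{remark:triple}. Fix any distinct $a,c\in\mathbb{RP}^1$ with flags $A=\xi_\rho(a)$, $C=\xi_\rho(c)$ and any positive integers $i,k\geq 1$ with $i+k=n-1$. Remark~\ref{remark:triple} applied with $j=1$ identifies the triple ratio $T_{i,1,k}(A,B,C)$ with the triple ratio of the projected flags in the $3$-dimensional quotient $\mathbb{R}^n/(A^{(i-1)}\oplus C^{(k-1)})\cong\mathbb{R}^3$. By strong triple ratio rigidity this quantity is $1$ for every $B=\xi_\rho(b)$ with $b\neq a,c$, and hyperconvexity of the Frenet curve $\xi^1$ ensures that the projected curve in $\mathbb{RP}^2$ is the $\xi^1$-part of a Frenet curve on each open subarc of $\mathbb{RP}^1\setminus\{a,c\}$. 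Lemma~\ref{lem:conicsegment} therefore forces the projected curve on each subarc to lie on an ellipse, and the standard fact that two distinct conics in $\mathbb{RP}^2$ cannot agree on infinitely many points, applied with overlapping choices of $(a,c)$, forces the full projected image of $\xi^1$ in $\mathbb{RP}^2$ to lie on a single conic. Consequently, for every pair of distinct points $a,c$ and every $i,k\geq 1$ with $i+k=n-1$, the projection of $\xi^1(\mathbb{RP}^1)$ from the vector subspace $A^{(i-1)}\oplus C^{(k-1)}$ of $\mathbb{R}^n$ is a conic in $\mathbb{RP}^2$.

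It remains to promote this ``every such projection is a conic'' condition to ``$\xi^1$ parametrizes a rational normal curve''. The base case $n=3$ is immediate from Lemma~\ref{lem:conicsegment}. For $n\geq 4$, the natural approach is a degree count: a linear projection of an algebraic curve $\Gamma\subset\mathbb{RP}^{n-1}$ from a projective subspace meeting $\Gamma$ with total intersection multiplicity $n-3$ at $a$ and $c$ produces an image curve in $\mathbb{RP}^2$ of degree $\deg\Gamma-(n-3)$; requiring this to equal $2$ for every such admissible projection center forces $\deg\Gamma=n-1$, and any non-degenerate irreducible curve of degree $n-1$ in $\mathbb{RP}^{n-1}$ is projectively a rational normal curve. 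The principal obstacle is establishing algebraicity of $\xi^1(\mathbb{RP}^1)$ and justifying the multiplicity count, since $\xi^1$ is only $C^1$ a priori. This should be accomplished by generalizing the local ODE method of Lemma~\ref{lem:conicsegment}: in an affine chart write $\xi^1$ as $[1:t:f_2(t):\cdots:f_{n-1}(t)]$ for $C^1$ functions $f_j$, and derive a system of ODEs from the triple-ratio rigidity conditions whose solutions force the $f_j$ to be polynomials of degree at most $n-1$, so that $\xi^1$ is algebraic of the predicted degree, and the projective characterization concludes the argument.
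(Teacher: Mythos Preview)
Your overall strategy is correct and the first half matches the paper: invoke Remark~\ref{remark:mpcont} to pass to a Frenet curve, then for fixed endpoints project to $\mathbb{RP}^2$ using the interpretation in Remark~\ref{remark:triple} with one triple-ratio index equal to $1$, and apply Lemma~\ref{lem:conicsegment} to get a conic. The paper (Proposition~\ref{prop:triplefrenet}) does exactly this.

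The gap is in your final step, promoting the conic projections to algebraicity of the full curve. Your degree-count argument presupposes algebraicity (as you acknowledge), and the proposed ODE system is left as a plan. The paper fills this cleanly by a chaining argument that you do not identify. Fix a \emph{single} pair of points $0,1\in\mathbb{RP}^1$ and choose a basis $(e_1,\ldots,e_n)$ so that $(e_1,\ldots,e_n)$ is a basis for $\xi(0)$ and $(e_n,\ldots,e_1)$ is a basis for $\xi(1)$; write $X(t)=\sum_i x_i(t)e_i$. In this basis the projection associated to $T_{n-k-1,k,1}(\xi(0),\xi(1),\xi(t))$ has center $\mathrm{Span}\{e_j:j\neq n-k-1,n-k,n-k+1\}$, so it simply reads off the three \emph{consecutive} coordinates $(x_{n-k-1},x_{n-k},x_{n-k+1})$. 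The conic conclusion from Lemma~\ref{lem:conicsegment} is then an algebraic relation among these three ratios. Running $k=1,\ldots,n-2$ chains these relations through overlapping triples and expresses every $x_i/x_n$ algebraically in $x_{n-1}/x_n$; hence $\xi^1$ is algebraic. Hyperconvexity forces the degree to be exactly $n-1$ (any $n-1$ distinct points span a hyperplane, and a generic hyperplane meets the curve in at most $n-1$ points), smoothness and connectedness give irreducibility, and the Griffiths--Harris characterization of degree $n-1$ nondegenerate curves finishes. So the key idea you are missing is to vary the \emph{index} $k$ while keeping the two base points fixed and in opposite position, exploiting the resulting consecutive-coordinate overlap, rather than varying the base pair $(a,c)$.
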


\begin{prop}
\label{prop:triplefrenet}
Let $\xi=(\xi^1,\cdots,\xi^{n-1}):\mathbb{RP}^1 \rightarrow \mathcal{B}$ be an osculating curve of a Frenet curve $\xi^1$.
If for every triple of distinct points in $\mathbb{RP}^1$ and any positive integers $i,j,k$ sum to $n$ the triple ratio equals to $1$, the Frenet curve $\xi^1$ is the Veronese curve up to projective equivalence.
\end{prop}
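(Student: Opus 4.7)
The plan is to reduce the general-rank statement to the $n=3$ case (Lemma \ref{lem:conicsegment}) via well-chosen three-dimensional projections of $\xi^1$, exploiting the geometric interpretation of higher-rank triple ratios from Remark \ref{remark:triple}. After applying a projective transformation in $\operatorname{PGL}_n(\mathbb{R})$, fix two distinguished points $0, \infty \in \mathbb{RP}^1$ so that $\xi(0)$ is in standard position ($\xi^k(0) = \operatorname{span}(e_1, \ldots, e_k)$) and $\xi(\infty)$ is in reverse-standard position ($\xi^k(\infty) = \operatorname{span}(e_n, \ldots, e_{n-k+1})$), and use the residual diagonal torus freedom to normalize $\xi^1(1) = [1:1:\cdots:1]$. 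Writing $\xi^1(t) = [f_1(t): f_2(t): \cdots: f_n(t)]$, these choices give $f_i(0) = \delta_{i,1}$, $f_i(1) = 1$, and $\xi^1(t) \to [e_n]$ projectively as $t \to \infty$.

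For each $i \in \{1, \ldots, n-2\}$, consider the projection $\pi_i : \mathbb{R}^n \twoheadrightarrow \mathbb{R}^n / (\xi^{i-1}(0) \oplus \xi^{n-2-i}(\infty))$, which is three-dimensional and naturally identifies with $\operatorname{span}(e_i, e_{i+1}, e_{i+2})$. The composition $\bar\xi_i := \pi_i \circ \xi$ is the osculating curve of a Frenet curve in $\mathbb{RP}^2$, with projected flag at $0$ equal to $([e_i], \operatorname{span}(e_i, e_{i+1}))$ and at $\infty$ equal to $([e_{i+2}], \operatorname{span}(e_{i+1}, e_{i+2}))$. By Remark \ref{remark:triple}, the $\mathbb{R}^3$-triple ratio of $(\bar\xi_i(0), \bar\xi_i(\infty), \bar\xi_i(t))$ coincides with $T_{i, n-1-i, 1}(\xi(0), \xi(\infty), \xi(t))$, which equals $1$ by hypothesis. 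Lemma \ref{lem:conicsegment} applied to $\bar\xi_i$ (after an auxiliary projective transformation of $\mathbb{R}^3$ to reach the lemma's standard setup) then shows the image of $\bar\xi_i^1$ lies on a conic in $\mathbb{RP}^2$.

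This conic passes through $[1:0:0]$ at $t = 0$, through $[0:0:1]$ at $t = \infty$, and through $[1:1:1]$ at $t = 1$; the Frenet structure further forces tangency to $\{z = 0\}$ at $[1:0:0]$ (as the 2D osculating space there is $\pi_i(\xi^{i+1}(0)) = \operatorname{span}(e_i, e_{i+1})$) and tangency to $\{x = 0\}$ at $[0:0:1]$. A short computation with the symmetric $3 \times 3$ matrix defining the conic yields the unique equation $y^2 = xz$, which translates to the recurrence $f_{i+1}^2 = f_i f_{i+2}$ for $i = 1, \ldots, n-2$.

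Combining these recurrences gives $f_i(t) = f_1(t) \cdot r(t)^{i-1}$ with $r(t) := f_2(t)/f_1(t)$, so $\xi^1(t) = [1 : r(t) : r(t)^2 : \cdots : r(t)^{n-1}]$ on the open subarc where $f_1(t) \neq 0$. Since $\xi^1$ is $C^1$ and injective (by Frenet hyperconvexity), $r$ extends to a homeomorphism of $\mathbb{RP}^1$ sending $0 \mapsto 0$, $1 \mapsto 1$, $\infty \mapsto \infty$, and the image of $\xi^1$ coincides with the image of the Veronese embedding --- proving the proposition. The main technical obstacle is verifying that each projected curve $\bar\xi_i$ genuinely extends to an osculating curve of a Frenet curve in $\mathbb{RP}^2$ at the endpoints $t = 0, \infty$, where the naive limit $\pi_i(\xi^1(0))$ vanishes for $i \geq 2$; this requires carefully using the defining limit property of Frenet curves (Definition \ref{defn:frenet}) to identify the limiting projected direction with $\pi_i(\xi^i(0))$.
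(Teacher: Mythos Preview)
Your reduction step---projecting to the three-dimensional quotients $\operatorname{span}(e_i,e_{i+1},e_{i+2})$ and invoking Lemma~\ref{lem:conicsegment} via the hypothesis $T_{i,n-1-i,1}(\xi(0),\xi(\infty),\xi(t))=1$---matches the paper's argument exactly (their ``Step~$k$'' with $k=n-1-i$). The endgames differ: the paper extracts from the conic projections only that each subarc of $\xi^1$ lies on an algebraic curve, then argues this curve is nondegenerate, nonsingular, irreducible of degree $n-1$, and finishes by citing the classification of rational normal curves from Griffiths--Harris. You instead use the extra normalization $\xi^1(1)=[1:\cdots:1]$ to pin down each projected conic explicitly as $y^2=xz$, obtain the recurrence $f_{i+1}^2=f_if_{i+2}$, and read off the Veronese parametrization directly. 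Your route is more elementary and self-contained, trading the algebraic-geometry citation for an explicit computation.

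One point needs care: Lemma~\ref{lem:conicsegment} is stated for a subarc, so your conic computation as written only covers the arc of $\mathbb{RP}^1\setminus\{0,\infty\}$ containing $t=1$. On the complementary arc you again get a conic of the form $f_{i+1}^2=C_i\,f_if_{i+2}$ for some constant, but your five conditions (two endpoints, two tangencies, and the point at $t=1$) do not force $C_i=1$ there, since the normalization point lies on the wrong side. The paper sidesteps this by letting the two anchor points range over all subarcs and then implicitly invoking the identity principle for algebraic curves to glue. You can patch in the same spirit: once the recurrence is established on the first arc, re-run the projection argument with anchors $0$ and some $t_1$ already known to lie on the Veronese locus, chosen so that the new subarc covers the previously undetermined region.
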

\begin{proof}
Let $X:[0,1]\rightarrow\mathbb{RP}^{n-1}$ be a subarc of $\xi^1$. By applying the action of $\operatorname{PGL}_n(\mathbb{R})$, we assume without loss of generality that: 
\begin{itemize}
\item
the standard basis $(e_1,e_2,\ldots,e_n)$ is a basis for the flag $\xi(0)$;
\item
the reversed standard basis $(e_n,e_{n-1},\ldots,e_1)$ is a basis for the flag $\xi(1)$.
\end{itemize}
We identify $X(t)$ with the following lift to $\mathbb{R}^n$:
\[
X(t)=x_1(t) e_1+\ldots+x_{n-1}(t) e_{n-1}+x_n(t)e_n.
\]
The hyperconvexity of $\xi$ (Definition~\ref{defn:frenet}) ensures that, for $i=1,\cdots,n$,
\[\xi^{(i-1)}(0)+ \xi^{1}(t) +\xi^{(n-i)}(1)=\mathbb{R}^n.\]
Thus 
\[x_i(t)\neq 0\quad for \;\;\;t\neq 0,1\;\;\;\text{ and }\;\;\; i=1,\cdots,n.\]  

We now show that there exists an algebraic
relation among $\frac{x_{n-k-1}(t)}{x_{n}(t)}$, $\frac{x_{n-k}(t)}{x_n(t)}$ and $\frac{x_{n-k+1}(t)}{x_n(t)}$ for $k=1,\ldots,n-2$.

\textbf{Step $\mathbf{k}$:} We know from the given assumption that the triple ratios
\[
T_{n-k-1,k,1}(X(0),X(1),X(t))=1\text{ for all }t\neq 0,1.
\] 
Remark~\ref{remark:triple} tells us that these triple ratios are still equal to $1$ after projecting $X(t)$ into the orthogonal complement
$\mathbb{V}_k^{\perp}$ of
\begin{align*}
\mathbb{V}_k:=
\mathrm{Span}\{
e_1,e_2,\ldots,e_{n-k-2},e_{n-k+2},\ldots,e_{n}
\}.
\end{align*}
By Lemma~\ref{lem:conicsegment}, the projected image 
\begin{align*}
\mathit{proj}_{\mathbb{V}_k^\perp}(X(t))
=x_{n-k-1}(t)e_{n-k-1} + x_{n-k}(t)e_{n-k} + x_{n-k+1}(t)e_{n-k+1}
\end{align*}
defines a subsegment of an ellipse when further projected into $\mathbb{RP}^2$. Thus there exists an algebraic relation between $\frac{x_{n-k-1}(t)}{x_{n-k+1}(t)}$ and 
\[\frac{x_{n-k}(t)}{x_{n-k+1}(t)}=\left(\frac{x_{n-k}(t)}{x_{n}(t)}\right)\big/\left(\frac{x_{n-k+1}(t)}{x_{n}(t)}\right).\]
Hence there exists an algebraic relation among
\[\frac{x_{n-k-1}(t)}{x_n(t)}=\frac{x_{n-k-1}(t)}{x_{n-k+1}(t)} \cdot \frac{x_{n-k+1}(t)}{x_{n}(t)},\]
 $\frac{x_{n-k}(t)}{x_{n}(t)}$ and $\frac{x_{n-k+1}(t)}{x_{n}(t)}$ for $k=1,\cdots, n-2$.

Thus any subarc $X$ of $\xi^1$ is an algebraic arc. Hence the Frenet curve $\xi^1$ from $\mathbb{RP}^1$ to $\mathbb{RP}^{n-1}$ is a reparameterization of an algebraic curve $c$. By hyperconvexity of $\xi^1$ in Definition~\ref{defn:frenet}: 
\begin{enumerate}
\item the algebraic curve $c$ is nondegenerate since it does not lie in any hyperplane;
\item for any mutually distinct points $x_1,\cdots,x_{n-1}$ on the curve $c$, there exists a unique hyperplane passing through these $n-1$ points, thus $\deg(c)\geq n-1$;
\item a generic hyperplane intersects $c$ transversely and contains at most $n-1$ points of $c$, thus $\deg(c)\leq n-1$.
\end{enumerate}
Hence $\deg(c)= n-1$. If there is a singular point $q$ in $c$, then a hyperplane passing through $q$ and any other $n-2$ points will imply that $\deg (c)\geq n$, which is impossible. Thus the curve $c$ is non-singular. Since $\xi^1$ is continuous and $\mathbb{RP}^1$ is connected, the curve $c$ is connected. Thus the non-singular algebraic curve $c$ is irreducible. 
By \cite[Proposition in Chapter 1, Section 4, Line Bundles and Maps to Projective Spaces, pg.~179]{GH78}, every irreducible nondegenerate algebraic curve of degree $n-1$ in $\mathbb{RP}^{n-1}$ is projectively isomorphic to the Veronese curve.
Thus the Frenet curve $\xi^1$ is the Veronese curve up to projective equivalence.
\end{proof}
%Thus the algebraic curve $\xi^1$ can be extended to an algebraic curve $f:\mathbb{CP}^1\rightarrow \mathbb{CP}^{n-1}$. We say an algebraic curve $c:\mathbb{CP}^1\rightarrow \mathbb{CP}^{n-1}$ is {\em unramified} at a point $x$ if the Jacobian of $c$ at $x$ is not a zero vector. The algebraic curve $c$ is totally unramified if $c$ is unramified at any point $x\in \mathbb{CP}^1$. Since the curve $\xi^1$ is Frenet by Definition~\ref{defn:frenet}, we have $f'\neq 0$ on $\mathbb{RP}^1$. Thus the algebraic curve $f$ is unramified on $\mathbb{RP}^1$, which implies that the curve $f$ is unramified on $\mathbb{CP}^1$. , the only totally unramified curve $c:\mathbb{CP}^1\rightarrow \mathbb{CP}^{n-1}$ is the Veronese curve $v:\mathbb{CP}^1\rightarrow \mathbb{CP}^{n-1}$. 

\begin{proof}[Proof of Theorem \ref{thm:highrankrigidity}]
For such positive representation $\rho$, the boundary at infinity $\partial_\infty \pi_1(S)$ is a circle $\mathbb{RP}^1$. By Proposition \ref{prop:triplefrenet}, the Frenet curve $\xi_\rho^1:\partial_\infty \pi_1(S) \rightarrow \mathbb{RP}^{n-1}$ is projectively equivalent to the Veronese curve after reparameterizing $\partial_\infty \pi_1(S)$. Hence $\rho$ is a $n$-Fuchsian representation.
\end{proof}

%Notice that only the $C^1$ property and the hyperconvexity of $\xi_\rho^1$ are used in our proof.

\clearpage

\section{Goncharov--Shen potentials}
\label{sec:GS}

The positive $\mathcal{A}$-moduli space $\mathcal{A}_2(S_{1,1})$ is better known as Penner's decorated Teichm\"uller space \cite{pennercoords}. The elements of this space correspond to marked hyperbolic surfaces decorated with a horocycle around its solitary cusp. Let $(x,y,z)$ be the $\mathcal{A}$-coordinates (i.e.: $\lambda$-length coordinates) for $\mathcal{A}_2(S_{1,1})\cong\{(x,y,z)\in\mathbb{R}_{>0}^3\}$ with respect to an ideal triangulation $\mathcal{T}$ of $S_{1,1}$. Penner showed that the length $P$ of the decorating horocycle is a rational function of these coordinates:
\begin{align}
P=
2\left(
\frac{x}{y z}
+\frac{y}{x z}
+\frac{z}{x y}
\right).
\label{equation:SL2S11P}
\end{align}
Any ideal triangulation $\mathcal{T}$ of $S_{1,1}$ decomposes $S_{1,1}$ into two ideal triangles, each of which may be expressed as three different marked (oriented) ideal triangles. The first coordinate of a marked ideal triangle distinguishes one of its vertices, and \eqref{equation:SL2S11P} is obtained from summing the horocyclic segments at the distinguished vertices of these $6$ marked ideal triangles.

\subsection{Goncharov--Shen potentials}
Goncharov and Shen \cite{GS15} generalize $P$ for Fock--Goncharov $\mathcal{A}$-moduli spaces $\mathcal{A}_{\mathrm{SL}_n,S_{g,m}}$ of surfaces $S=S_{g,m}$ with negative Euler characteristic and at least one boundary component (i.e.: $m\geq1$) and relate to Knuston--Tao's hives \cite{KT98}. They associate $n-1$ types of expressions (Definition~\ref{definition:ichar}) to each marked ideal triangle and sum each type of expression over the marked oriented ideal triangles to obtain $n-1$ functions. The algebraic assignment of these $n-1$ expressions derives from the following fact:

\begin{fact}\label{fact:torsor}
For any triple of decorated flags $(F,G,H)\in \mathcal{A}^{3}$, if $(F,G,H)$ are in generic position, there is a unique linear transformation $g$, which can be expressed by an upper triangular unipotent matrix with respect to any basis for $F$, such that 
\[
g\cdot (F,\pi(G))=(F,\pi(H)),
\] where $\pi$ is the decoration-forgetting projection map from $\mathcal{A}$ to $\mathcal{B}$ (see Equation~\eqref{equation:projpi}).
\end{fact}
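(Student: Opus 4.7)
The plan is to identify the sought-after $g$ as an element of the unipotent radical $U_F$ of the Borel subgroup $B_F\subset\operatorname{SL}_n$ stabilizing $F$, and then to invoke the Bruhat decomposition of the flag variety. First I would fix any basis $(f_1,\ldots,f_n)$ for the decorated flag $F$ in the sense of Notation~\ref{notation:flag}, so that $F^{(i)}=\mathrm{span}(f_1,\ldots,f_i)$. With respect to this basis, $B_F$ is the group of invertible upper triangular matrices and $U_F=[B_F,B_F]$ is the group of upper triangular unipotent matrices. A change of basis compatible with $F$ amounts to conjugation by an element of $B_F$, and since $U_F\trianglelefteq B_F$, the description of $U_F$ as ``upper triangular unipotent'' holds intrinsically with respect to any basis for $F$. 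Moreover, each $u\in U_F$ acts trivially on every quotient $F^{(i)}/F^{(i-1)}$, so it fixes $F$ not merely as a flag but as a decorated flag. The problem therefore reduces to showing that $U_F$ acts simply transitively on the set of flags $F'$ in generic position with $F$ in the sense of Definition~\ref{defn:genpos}.

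For this reduction I would invoke the Bruhat decomposition of $\mathcal{B}\cong\operatorname{SL}_n/B_F$ into $B_F$-orbits indexed by the Weyl group $S_n$. The unique open orbit (the big Schubert cell) consists precisely of those flags $F'$ for which $F^{(i)}\oplus F'^{(n-i)}=\mathbb{R}^n$ for every $0\leq i\leq n$, which is exactly the pairwise generic position condition. Fixing any representative $F^{op}$ of the open cell (for instance the flag with $F^{op,(i)}=\mathrm{span}(f_n,\ldots,f_{n-i+1})$), the stabilizer of $F^{op}$ inside $B_F$ equals $B_F\cap B_{F^{op}}$, which is a maximal torus of $\operatorname{SL}_n$. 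In particular it meets $U_F$ only in the identity, so the orbit map $u\mapsto u\cdot F^{op}$ is a bijection from $U_F$ onto the big cell.

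Finally, the generic position of $(F,G,H)$ ensures that both $\pi(G)$ and $\pi(H)$ lie in the big cell of $F$, so there is a unique $u\in U_F$ with $u\cdot\pi(G)=\pi(H)$; this $u$ is the desired $g$. I anticipate no serious obstacle: the only delicate point is to match the generic position condition of Definition~\ref{defn:genpos} with membership in the big Bruhat cell, and this reduces to an immediate linear-algebra computation using the direct sum decompositions $\mathbb{R}^n=F^{(i)}\oplus F'^{(n-i)}$. As an alternative, more constructive route, one can write $g$ directly in the basis $(f_i)$ by solving recursively for its strictly upper-triangular entries using the generic-position minors built from $(F,\pi(G))$ and $(F,\pi(H))$; this constructive form would also produce an explicit rational expression for $g$ that could be convenient for the subsequent computation of the Goncharov--Shen potentials.
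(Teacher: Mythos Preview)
Your argument via the Bruhat decomposition is correct and complete: the identification of $U_F$ as the unipotent radical of $B_F$, its normality in $B_F$, and the simple transitivity of $U_F$ on the big cell are exactly what is needed, and you correctly observe that the triple generic-position hypothesis places both $\pi(G)$ and $\pi(H)$ in the big cell relative to $F$.

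The paper, however, does not argue abstractly. It proves Fact~\ref{fact:torsor} constructively via the Goncharov--Shen ``snake'' (\S4.2): it introduces the one-dimensional spaces $L_a^{b,c}=F^{(a+1)}\cap(G^{(b)}\oplus H^{(c)})$ with distinguished vectors $e_a^{b,c}$, shows that consecutive vectors differ by a multiple $\alpha_{a,b,c}^{F;G,H}$ of a previous one (Lemma~\ref{lemma:lozenge} gives these $\alpha$'s as explicit ratios of determinants), and then writes $g$ as an ordered product of elementary unipotent matrices $N_a(\alpha_{a,b,c}^{F;G,H})$ (Equation~\eqref{equ:rotationmatrix}). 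This is precisely the ``alternative, more constructive route'' you allude to in your final sentence, and in the paper it is not an alternative but the main point: the $(n-i,n-i+1)$-entry of $g$ is by definition the $i$-th character $P_i(F;G,H)$, and the explicit factorisation yields the formula $P_i=\sum_{c}\alpha_{n-i,i-c,c}^{F;G,H}$ (Equation~\eqref{equ:potential}) on which every subsequent computation of Goncharov--Shen potentials and McShane summands rests. Your Bruhat argument buys a clean conceptual proof of existence and uniqueness; the paper's construction buys the explicit $\mathcal{A}$-coordinate expressions that the rest of the paper actually uses.
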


\begin{defn}[$i$-th character]
\label{definition:ichar}
For any generic triple  $(F,G,H)\in \mathcal{A}^{3}$, let $(g_{ij})$ be the upper triangular unipotent matrix for $g$ in Fact \ref{fact:torsor} with respect to some basis for the \emph{decorated flag} $F$. For $i=1,\cdots, n-1$, we define the \emph{$i$-th character} $P_i(F;G,H)$ of $(F,G,H)$ to be
\[
P_i(F;G,H):=g_{n-i,n-i+1},
\]
which does not depend on the basis that we choose. The $i$-th character $P_i$ satisfies the following additive properties:
\begin{align*}
P_i(F;G,H)&=P_i(F;G,W)+P_i(F;W,H);\\
P_i(F;G,H)&=-P_i(F;H,G).
\end{align*}
\end{defn}

\begin{rmk}[$i$-th character for triangles]
\label{remark:ichartri}
For a decorated twisted $\operatorname{SL}_n$-local system $(\bar{\rho},\bar{\xi}) \in \mathcal{A}_{\operatorname{SL}_n,S}$, recall the $\bar{\rho}$-equivariant map from the double cover $\bar{\tilde{m}}_p\subset \partial_\infty\bar{\pi}_1(S)\subset \mathbb{S}^1$ of $\tilde{m}_p\subset \partial_\infty\pi_1(S)\subset \mathbb{RP}^1$ to $\mathcal{A}$ in Definition~\ref{definition:Arep1}. 
Recall $w=\bar{\rho}(\bar{\sigma}_S):=(-1)^{n-1}\mathrm{Id}_{n\times n}$ in Definition \ref{defn:FGA}. Since
\[w g w^{-1} \cdot (w F, w\pi(G))= (w F, w\pi(H))\]
and $w g w^{-1}=g$, we obtain
\begin{equation}
\label{equation:wtri}
P_i(F;G,H)=P_i(wF;wG,wH). 
\end{equation}
Given a marked ideal triangle $(f,g,h)$ in $\tilde{\mathcal{T}}$ ( in the universal cover), we define
\[
P_i(f;g,h):=P_i(\bar{\xi}_{\bar{\rho}}(\bar{f});\bar{\xi}_{\bar{\rho}}(\bar{g}),\bar{\xi}_{\bar{\rho}}(\bar{h})),
\] 
to see that this is well-defined, we note that every marked ideal triangle $(f,g,h)$ has two coherent lifts $(\bar{f},\bar{g},\bar{h})$ and $(s\bar{f},s\bar{g},s\bar{h})$ to the double cover of $\mathbb{RP}^1=\tilde{S}$ (see Definition~\ref{definition:Arep2}), related by the antipodal involution $s:\mathbb{S}^1\to\mathbb{S}^1$. Equation~\eqref{equation:wtri} then ensures that 
\begin{align*}
P_i(\bar{\xi}_{\bar{\rho}}(\bar{f});\bar{\xi}_{\bar{\rho}}(\bar{g}),\bar{\xi}_{\bar{\rho}}(\bar{h}))&=P_i(\bar{\rho}(\bar{\sigma}_S)\bar{\xi}_{\bar{\rho}}(\bar{f});\bar{\rho}(\bar{\sigma}_S)\bar{\xi}_{\bar{\rho}}(\bar{g}),\bar{\rho}(\bar{\sigma}_S)\bar{\xi}_{\bar{\rho}}(\bar{h}))\\
&=P_i(\bar{\xi}_{\bar{\rho}}(s\bar{f});\bar{\xi}_{\bar{\rho}}(s\bar{g}),\bar{\xi}_{\bar{\rho}}(s\bar{h})),
\end{align*}
which is to say that $P_i(f;g,h)$ is independent of the choice of coherent lift for $(f,g,h)$.
\end{rmk}

\begin{rmk}
Given a positive decorated twisted local system $(\bar{\rho},\bar{\xi}) \in \mathcal{A}_n(S)$, for any cyclically ordered $(e,f,g,h)\in \partial_\infty\pi_1(S)$, the $i$-th characters satisfy the following positivity property:
\[
\frac{P_i(e;f,h)}{P_i(e;f,g)}>1.
\] 
\end{rmk}

\begin{defn}[\cite{GS15} Goncharov--Shen potential]
\label{definition:GSp}
Given $(\bar{\rho},\bar{\xi})\in\mathcal{A}_{\operatorname{SL}_n,S_{g,m}}$, we fix an ideal triangulation $\mathcal{T}$ of $S_{g,m}$. Fix a puncture $p\in m_p$ of $S$ and let $\Theta_p$ denote the set of marked anticlockwise-oriented ideal triangles with the first vertex being $p$. For each $i=1,\cdots,n-1$, the \emph{$i$-th Goncharov--Shen potential $P_i^p$ at $p$} is a regular function $\mathcal{A}_{\operatorname{SL}_n,S_{g,m}}$
given by
\begin{align}
\label{equation:gspotentialp}
P_i^p := \sum_{ \Delta \in \Theta_p} P_i(\Delta).
\end{align}
\end{defn}

Goncharov and Shen show that $P_i^p$ is well-defined, independent of the chosen ideal triangulation $\mathcal{T}$ and hence mapping class group invariant. They further demonstrate the following beautiful fact: 

\begin{thm}[{\cite[Theorem 10.7]{GS15}}]
The $m(n-1)$ Goncharov--Shen potentials $\left\{P_i^p\right\}_{p,i}$ generate the algebra of mapping class group invariant regular functions on the moduli space $\mathcal{A}_{\operatorname{SL}_n,S_{g,m}}$.
\end{thm}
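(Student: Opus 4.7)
The plan is to exploit the cluster-algebra presentation of $\mathcal{O}(\mathcal{A}_{\operatorname{SL}_n,S_{g,m}})$ together with the fact that the mapping class group acts as a subgroup of the cluster modular groupoid, realized by finite compositions of flips of ideal triangulations and vertex relabelings. The well-definedness and mapping class group invariance of each $P_i^p$ are already noted in the excerpt and follow from the telescoping identities in Definition~\ref{definition:ichar}; the real content is the generation claim, so I would focus the proof there.

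My strategy is to factor through the residual $B^m$-action on $\mathcal{A}_{\operatorname{SL}_n,S_{g,m}}$ that modifies the decoration at each puncture $p\in m_p$ by a Borel element, where $B=U\cdot H$. The quotient by $B^m$ is essentially $\mathcal{X}_{\operatorname{PGL}_n,S_{g,m}}$. The additive cocycle property $P_i(F;G,H)=P_i(F;G,W)+P_i(F;W,H)$ from Definition~\ref{definition:ichar} implies that $P_i^p$ transforms as a lowest-weight vector for the $U$-factor at $p$, with $H$-weight the $i$-th simple root. Consequently, monomials in $\{P_i^p\}_i$ exhaust the ring of $B$-semi-invariants at $p$ of non-negative dominant weight. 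I would then argue that an MCG-invariant regular function is automatically $B^m$-semi-invariant of weight zero: the MCG orbit of any fixed ideal triangulation contains a ``cluster Dehn twist'' at each puncture whose infinitesimal action coincides with that of the principal unipotent, so invariance under this twist forces lowest-weight behaviour in the $U$-direction, while invariance under the associated vertex relabelings forces the $H$-weight to vanish. Once the function is confirmed to lie in the $B^m$-semi-invariant weight-zero subring, the previous step identifies it as a polynomial in the $P_i^p$.

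The main obstacle will be showing that MCG-invariance genuinely forces the lowest-weight behaviour at each puncture independently, and that no extra invariant arises from the interior topology of $S_{g,m}$ beyond what the boundary potentials already detect. The cleanest route is an induction on the topological complexity of $S_{g,m}$ via a pants decomposition, with base case a disk with $d$ marked boundary points, where the claim reduces to the first fundamental theorem of invariant theory for $\operatorname{SL}_n$ acting on configurations of decorated flags. The inductive step requires using the Plücker-type relations implicit among the triple ratios $T_{i,j,k}$ and edge functions $D_i$ of Definitions~\ref{definition:tripleratio} and \ref{definition:edgefunction}, together with the flip transformations of Definition~\ref{definition:flip}, to successively eliminate interior $\mathcal{A}$-coordinates in favor of boundary quantities. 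Goncharov--Shen's own argument sidesteps this bookkeeping by invoking the geometric Satake correspondence to realize cluster monomials as tensor invariants of finite-dimensional $\operatorname{SL}_n$-modules, which yields generation in one stroke but shifts the burden onto representation-theoretic input; the elementary induction is conceptually clearer but computationally heavier.
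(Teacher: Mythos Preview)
The paper does not prove this theorem: it is quoted as \cite[Theorem~10.7]{GS15} and attributed entirely to Goncharov and Shen. The surrounding text only records that the potentials are well-defined and mapping class group invariant, then cites the generation statement as a ``beautiful fact'' established elsewhere. There is therefore no proof in the paper to compare your proposal against.

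That said, your proposal contains a genuine internal inconsistency. You claim that an MCG-invariant regular function must be $B^m$-semi-invariant of weight zero, yet the potentials $P_i^p$ themselves are MCG-invariant (as the paper states just before the theorem) while carrying non-trivial weight under rescaling the decoration at $p$; you acknowledge this one sentence earlier when you identify their $H$-weight with the $i$-th simple root. So the weight-zero conclusion is false on its face, and with it the bridge you build from MCG-invariance to the $B^m$-picture collapses. Separately, the mechanism you propose for forcing lowest-weight behaviour --- a ``cluster Dehn twist'' at a puncture whose infinitesimal action matches the principal unipotent --- does not work: the Dehn twist about a loop encircling a single puncture is isotopic to the identity in the mapping class group of a punctured surface, so it imposes no constraint whatsoever, and in any case the mapping class group acts discretely, not infinitesimally. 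Goncharov and Shen's actual argument in \cite{GS15} proceeds through the tropicalization of $\mathcal{A}_{\operatorname{SL}_n,S_{g,m}}$ and its identification with hive-type combinatorial data, which is structurally quite different from the semi-invariant reduction you sketch.
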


\begin{rmk}
Goncharov and Shen refer to these potentials as Landau--Ginzberg partial potentials because an important aspect of their hitherto unproven homological mirror symmetry conjecture asserts that these potentials should correspond to Landau--Ginzburg partial potentials from Landau-Ginzburg theory. We opt to refer to these potentials as \emph{Goncharov--Shen potentials} to acknowledge their contribution in discovering this geometrically fascinating object.
\end{rmk}

\subsection{Constructing Goncharov--Shen potentials}

We now demonstrate how one might motivate and construct the aforementioned $P_i(F;G,H)$ expressions. This is essentially taken from \cite[Section 3]{GS15} which can be understood as the $\mathcal{A}$-coordinate version of Fock--Goncharov's snakes \cite[Section 9]{FG06}. We include this section both for expositional completeness and because many of our later derivations depend upon these foundational computations.\medskip

Consider a triple of decorated flags $(F,G,H)\in \mathcal{A}^3$ is in generic position with respective bases $(f_1,\cdots, f_n)$, $(g_1,\cdots, g_n)$, and $(h_1,\cdots, h_n)$. For any non-negative integers $a,b,c$ with $a+b+c=n$, define a $1$-dimensional vector space
\[
L_{a}^{b,c}:=F^{a+1} \cap (G^{b} \oplus H^{c}),
\]
and choose $e_{a}^{b,c}$ to be the unique vector in $L_{a}^{b,c}$ such that $e_{a}^{b,c} - f_{a+1} \in F^{a}$ (to clarify: $e_0^{i,n-i}=f_1$ for every $i=0,\cdots,n$). Then 
\[e_{a}^{b-1,c+1} - e_{a}^{b,c}\in G^{b} \oplus H^{c+1},\]
\[e_{a}^{b-1,c+1} - e_{a}^{b,c} = (e_{a}^{b-1,c+1}-f_{a+1}) - (e_{a}^{b,c}-f_{a+1})  \in F^{a}.\]
Thus there exist $\alpha^{F;G,H}_{a,b,c}\in\mathbb{R}$ such that 
\begin{align}
\label{equ:trans}
e_{a}^{b-1,c+1} - e_{a}^{b,c}  = \alpha^{F;G,H}_{a,b,c} \cdot e_{a-1}^{b,c+1}\in L_{a-1}^{b,c+1}.
\end{align}

\begin{lem}[{\cite[Lemma 3.1]{GS15}}]
\label{lemma:lozenge}
\begin{align*}
\alpha^{F;G,H}_{a,b,c} = 
\frac{\Delta\left(f^{a-1} \wedge h^{c+1} \wedge g^{b}\right)\cdot \Delta\left(f^{a+1} \wedge h^{c} \wedge g^{b-1}\right)}{\Delta\left(f^{a} \wedge h^{c} \wedge g^{b}\right) \cdot \Delta\left(f^{a} \wedge h^{c+1} \wedge g^{b-1} \right)}.
\end{align*}
\end{lem}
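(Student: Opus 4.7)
The plan is a direct wedge-product calculation exploiting the two complementary characterizations of each $e_A^{B,C}$: as a vector in $F^{A+1}$ with prescribed residue $f_{A+1}$ modulo $F^A$, and as a vector in $G^B \oplus H^C$. The key observation is that all three vectors $e_a^{b,c}$, $e_a^{b-1,c+1}$, $e_{a-1}^{b,c+1}$ lie in the subspace $M := F^{a+1} \cap (G^b \oplus H^{c+1})$, which by the genericity of $(F,G,H)$ has dimension $2$. This confirms their linear dependence and makes $\alpha^{F;G,H}_{a,b,c}$ in \eqref{equ:trans} well-defined.

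First I would wedge both sides of \eqref{equ:trans} against $f^{a-1} \wedge h^c \wedge g^b$, which has degree $n-1$ so the result is a top form. On the left, using $e_{a-1}^{b,c+1} = f_a + u$ with $u \in F^{a-1}$, the $f^{a-1}$ wedge kills $u$ and produces a scalar multiple of $f^a \wedge h^c \wedge g^b$. On the right, the $e_a^{b,c}$ summand vanishes identically, because $e_a^{b,c} \in G^b \oplus H^c$ is annihilated by $h^c \wedge g^b$. Writing $e_a^{b-1,c+1} = v'_G + v'_H$ with $v'_G \in G^{b-1}$ and $v'_H \in H^{c+1}$, the $v'_G$ part is killed by $g^b$ while only the coefficient $\eta'$ of $h_{c+1}$ in $v'_H$ survives, producing a scalar multiple of $f^{a-1} \wedge h^{c+1} \wedge g^b$. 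Carefully tracking signs yields
\[
\alpha^{F;G,H}_{a,b,c} = (-1)^{c}\, \eta' \cdot \frac{\Delta(f^{a-1} \wedge h^{c+1} \wedge g^b)}{\Delta(f^a \wedge h^c \wedge g^b)}.
\]

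Next I would evaluate $\eta'$ by wedging the two descriptions of $e_a^{b-1,c+1}$ itself — as $f_{a+1}$ modulo $F^a$ on the one hand, and as $v'_G + v'_H$ on the other — against $f^a \wedge h^c \wedge g^{b-1}$. The first description yields a scalar multiple of $f^{a+1} \wedge h^c \wedge g^{b-1}$, and the second isolates $\eta'$ multiplied by $f^a \wedge h^{c+1} \wedge g^{b-1}$, producing
\[
\eta' = (-1)^{c}\, \frac{\Delta(f^{a+1} \wedge h^c \wedge g^{b-1})}{\Delta(f^a \wedge h^{c+1} \wedge g^{b-1})}.
\]
Substituting and observing that the two $(-1)^c$ factors cancel then yields the claimed formula.

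The argument is essentially mechanical; the only subtlety lies in carefully tracking signs when permuting vectors past wedges of varying degree, and in verifying termwise that the ``wrong'' summands vanish because the relevant $e$-vector lives in a subspace already annihilated by the chosen wedge. Genericity of $(F,G,H)$ guarantees that each denominator determinant is nonzero, so both $\eta'$ and $\alpha^{F;G,H}_{a,b,c}$ are well-defined by these expressions.
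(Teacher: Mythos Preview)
Your argument is correct: the two wedgings against $f^{a-1}\wedge h^c\wedge g^b$ and $f^a\wedge h^c\wedge g^{b-1}$ isolate exactly the right terms, and the sign bookkeeping checks out (the two $(-1)^c$ factors indeed cancel). The paper itself does not supply a proof of this lemma; it simply cites \cite[Lemma~3.1]{GS15} and states the formula, so there is no in-paper argument to compare against. Your direct exterior-algebra computation is the standard way to establish such determinantal identities and would serve perfectly well as a self-contained proof.
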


\begin{rmk}
The above formula differs from Goncharov--Shen's in that we construct $g$ satisfying $g\cdot (F,\pi(G))=(F,\pi(H))$, instead of $g'$ such that $g'\cdot(F,\pi(H)) =(F,\pi(G))$.
\end{rmk}

The following relationship between $\alpha^{F;G,H}_{a,b,c}$ and $T_{i,j,k}(F,G,H)$ is an immediate consequence of Lemma~\ref{lemma:lozenge}:

\begin{lem}
\label{lem:triplea}
For positive integers $a,b,c$ with $a+b+c=n$, we have
\begin{align*}
\frac{\alpha^{F;G,H}_{a,b+1,c-1} }{\alpha^{F;G,H}_{a,b,c}}= T_{a,b,c}(F,G,H).
 \end{align*}
\end{lem}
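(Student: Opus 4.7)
The plan is to prove Lemma~\ref{lem:triplea} by direct substitution of Lemma~\ref{lemma:lozenge}, since both $\alpha^{F;G,H}_{a,b,c}$ and $T_{a,b,c}(F,G,H)$ are already given as explicit ratios of wedge determinants.

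First, I would write down the two formulas for $\alpha^{F;G,H}_{a,b+1,c-1}$ and $\alpha^{F;G,H}_{a,b,c}$ using Lemma~\ref{lemma:lozenge}. Note that $\alpha^{F;G,H}_{a,b+1,c-1}$ requires $b+1\geq 1$ and $c-1\geq 0$, but the wedge term $\Delta(f^{a+1}\wedge h^{c-1}\wedge g^{b})$ only makes sense when $c\geq 1$, which is covered by the positivity hypothesis $c\geq 1$ (equivalently $c-1\geq 0$; when $c=1$ we interpret $h^0=1$). Forming the ratio, the factor $\Delta(f^{a}\wedge h^{c}\wedge g^{b})$ appears in both numerators/denominators and cancels, leaving a product/quotient of six wedge determinants.

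Next, I would compare this against the definition of $T_{a,b,c}(F,G,H)$, which is also a ratio of six wedge determinants. The only discrepancy is bookkeeping: in Lemma~\ref{lemma:lozenge} the $h$-term precedes the $g$-term, whereas in Definition~\ref{definition:tripleratio} the $g$-term precedes the $h$-term. I would use the antisymmetry identity
\begin{equation*}
\Delta(f^{p}\wedge g^{j}\wedge h^{k})=(-1)^{jk}\,\Delta(f^{p}\wedge h^{k}\wedge g^{j})
\end{equation*}
to swap $g^{j}$ and $h^{k}$ in each of the six factors of $T_{a,b,c}(F,G,H)$, and then verify that the accumulated sign is trivial. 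A quick parity count gives sign exponent
\begin{equation*}
\bigl[b(c-1)+(b+1)c+(b-1)(c+1)\bigr]-\bigl[(b-1)c+(b+1)(c-1)+b(c+1)\bigr]=-2b+2c,
\end{equation*}
which is even, so the signs cancel globally. The resulting rearranged expression for $T_{a,b,c}(F,G,H)$ matches the already simplified ratio $\alpha^{F;G,H}_{a,b+1,c-1}/\alpha^{F;G,H}_{a,b,c}$ factor by factor, completing the proof.

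There is essentially no obstacle: once Lemma~\ref{lemma:lozenge} is in hand, the lemma reduces to a bookkeeping computation with wedge determinants. The only minor care needed is the sign tracking when reordering $g^{j}\wedge h^{k}$, but as shown above the global sign is $+1$, so no further work is required.
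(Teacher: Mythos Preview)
Your proof is correct and follows exactly the approach indicated in the paper, which simply states that the lemma is an immediate consequence of Lemma~\ref{lemma:lozenge}; you have supplied the details of that direct substitution. One minor arithmetic slip: the sign exponent you compute is actually $0$, not $-2b+2c$ (both brackets expand to $3bc-1$), but since your answer is even either way the conclusion is unaffected.
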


Equation~\eqref{equ:trans} tells us that there is a change of bases 
\begin{align*}
N_a(\alpha^{F;G,H}_{a,b,c})\cdot &\left(e_{0}^{a+b-1,c+1}, \cdots, e_{a-1}^{b,c+1}, e_{a}^{b,c}, e_{a+1}^{b-1,c}, \cdots, e_{a+b-1}^{1,c}, e_{a+b}^{0,c}, \cdots, e_{n-1}^{0,1}\right)
\\
=&\left(e_{0}^{a+b-1,c+1}, \cdots, e_{a-1}^{b,c+1}, e_{a}^{b-1,c+1}, e_{a+1}^{b-1,c}, \cdots, e_{a+b-1}^{1,c}, e_{a+b}^{0,c}, \cdots, e_{n-1}^{0,1}\right)
\end{align*} 
encoded by unipotent matrices of the form
\[N_a(x)=\left(\begin{array}{cccc}
  Id_{a-1} & 0 & 0 & 0 \\ 
 0 & 1 & x & 0 \\ 
 0 & 0 & 1 & 0 \\ 
 0 & 0 & 0 &  Id_{b+c-1}
 \end{array} \right).\]

For $c=0$, applying a $n-1$ chain of such transformations, we obtain:
\[
N_{n-1}(\alpha^{F;G,H}_{n-1,1,0})\cdots N_{1}(\alpha^{F;G,H}_{1,n-1,0}) \cdot \left(f_1,e_{1}^{n-1,0},\cdots, e_{n-1}^{1,0} \right)
=  
\left(f_1,e_{1}^{n-2,1},\cdots, e_{n-1}^{0,1}\right).
\]

Similarly, for $c=k, 1\leq k\leq n-2$, applying a $n-1-k$ chain of such transformations, we obtain:
\begin{align*}
N_{n-1-k}(\alpha^{F;G,H}_{n-1-k,1,k}) \cdots N_{1}(\alpha^{F;G,H}_{1,n-1-k,k})\cdot 
&\left(f_1,e_{1}^{n-1-k,k},\cdots,e_{n-k-1}^{1,k}, e_{n-k}^{0,k},\cdots, e_{n-1}^{0,1} \right)
\\
=&\left(f_1,e_{1}^{n-2-k,k+1},\cdots, e_{n-k-1}^{0,k+1},e_{n-k}^{0,k},\cdots, e_{n-1}^{0,1} \right).
\end{align*}
Precomposing the above $n-1$ (chains of) transformations, starting from $c=0$ all the way until $c=n-1$, we get:
\begin{align}
g&\cdot \left(e_{0}^{n,0},e_{1}^{n-1,0},\cdots, e_{n-1}^{1,0} \right)
=  \left(e_{0}^{0,n},e_{1}^{0,n-1},\cdots, e_{n-1}^{0,1}\right)\text{ ,where}\notag\\
g&:=\prod_{c=n-2}^{0} \prod_{a=n-c-1}^{1} N_a(\alpha^{F;G,H}_{a,b,c}),\quad\text{ for }b=n-a-c.\label{equ:rotationmatrix}
\end{align}
The above process explicitly constructs $g$ so that $g\cdot (F, \pi(G)) = (F, \pi(H))$, thereby verifying Fact~\ref{fact:torsor}. In particular, the $i$-th character:
\begin{align}
\label{equ:potential}
P_i(F;G,H) =\sum_{c = 0}^{i-1} \alpha^{F;G,H}_{n-i,i-c,c}.
\end{align}
As a consequence of Lemma \ref{lemma:lozenge}
\begin{cor}
\label{cor:triuni}
Let $(\bar{\rho},\bar{\xi})\in \mathcal{A}_n(S)$ and $\gamma \in \pi_1(S)$. Then for any ideal triangle $(f,g,h)$ of $\tilde{\mathcal{T}}$ in the universal cover,
\[
P_i(\gamma f;\gamma g,\gamma h).
=P_i(f;g,h).
\]
\end{cor}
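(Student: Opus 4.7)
The plan is to reduce the statement to the fact that each coefficient $\alpha^{F;G,H}_{a,b,c}$ appearing in the snake construction is invariant under the diagonal $\operatorname{SL}_n$-action on triples of decorated flags, and then invoke the $\bar{\rho}$-equivariance of the map $\bar{\xi}_{\bar{\rho}}$.

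First I would unwind the definition on the left-hand side. Lift $\gamma \in \pi_1(S)$ to some $\bar{\gamma} \in \bar{\pi}_1(S)$ and pick a coherent lift $(\bar{f},\bar{g},\bar{h})\in \bar{\tilde{m}}_p^{\,3}$ of $(f,g,h)$; then $(\bar{\gamma}\bar{f},\bar{\gamma}\bar{g},\bar{\gamma}\bar{h})$ is a coherent lift of $(\gamma f,\gamma g,\gamma h)$. By the equivariance property of $\bar{\xi}_{\bar{\rho}}$ stated in Definition~\ref{definition:Arep1},
\[
\bar{\xi}_{\bar{\rho}}(\bar{\gamma}\bar{x}) = \bar{\rho}(\bar{\gamma})\cdot \bar{\xi}_{\bar{\rho}}(\bar{x})\quad\text{for }\bar{x}\in\{\bar{f},\bar{g},\bar{h}\},
\]
and Remark~\ref{remark:ichartri} confirms that the resulting value $P_i(\gamma f;\gamma g,\gamma h)$ does not depend on the choice of $\bar{\gamma}$ or the coherent lift. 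Thus it suffices to prove that, for every $M\in\operatorname{SL}_n(\mathbb{R})$ and every generic triple $(F,G,H)\in\mathcal{A}^3$,
\[
P_i(MF;MG,MH)=P_i(F;G,H).
\]

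The second (and main) step is to prove this $\operatorname{SL}_n$-invariance using Lemma~\ref{lemma:lozenge}. If $(f_1,\ldots,f_n)$ is a basis for the decorated flag $F$, then $(Mf_1,\ldots,Mf_n)$ is a basis for $MF$ (compatibly with Notation~\ref{notation:flag}, since $\det M=1$ ensures $\Delta((Mf)^n)=\Delta(f^n)=1$), and similarly for $G,H$. Each factor of the form $\Delta(f^p\wedge g^q\wedge h^r)$ with $p+q+r=n$ appearing in Lemma~\ref{lemma:lozenge} is an $n\times n$ determinant, which therefore transforms as
\[
\Delta\bigl((Mf)^p\wedge (Mg)^q\wedge (Mh)^r\bigr) = (\det M)\,\Delta\bigl(f^p\wedge g^q\wedge h^r\bigr)=\Delta\bigl(f^p\wedge g^q\wedge h^r\bigr).
\]
Substituting into the explicit formula for $\alpha^{F;G,H}_{a,b,c}$, every factor is unchanged, so $\alpha^{MF;MG,MH}_{a,b,c}=\alpha^{F;G,H}_{a,b,c}$. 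Since Equation~\eqref{equ:potential} expresses $P_i$ as a sum of such $\alpha$'s with fixed indices, we conclude the desired $\operatorname{SL}_n$-invariance. Setting $M=\bar{\rho}(\bar{\gamma})$ and combining with the first step yields the corollary.

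The only subtlety, and the step I expect could cause a hiccup, is the independence from the lift $\bar{\gamma}$ of $\gamma$: two different lifts differ by $\bar{\sigma}_S$, which acts on $\mathcal{A}$ by $\bar{\rho}(\bar{\sigma}_S)=(-1)^{n-1}\mathrm{Id}$. This is a scalar, so it acts trivially on flags and decorated flags up to the overall sign, but more importantly the computation in Remark~\ref{remark:ichartri} (see Equation~\eqref{equation:wtri}) already showed that $P_i$ is invariant under this scalar. Hence the choice of lift $\bar{\gamma}$ and coherent lift $(\bar{f},\bar{g},\bar{h})$ genuinely does not matter, and the proof goes through as stated.
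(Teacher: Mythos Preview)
Your proof is correct and follows exactly the route the paper indicates: the paper simply states that the corollary is ``a consequence of Lemma~\ref{lemma:lozenge}'', and your argument fills in precisely those details by showing that each $\alpha^{F;G,H}_{a,b,c}$ is $\operatorname{SL}_n$-invariant via the determinant formula, then combining with the $\bar{\rho}$-equivariance of $\bar{\xi}_{\bar{\rho}}$ and the well-definedness from Remark~\ref{remark:ichartri}.
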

\begin{exa}
\label{example:lozenge}
Let us consider the simple example of the $n=3$ case. We denote 
\[
R^{f}_{g,h}:=\alpha^{F;G,H}_{2,1,0},\quad
S^{f}_{g,h}:=\alpha^{F;G,H}_{1,2,0},\quad 
T^{f}_{g,h}:=\alpha^{F;G,H}_{1,1,1}. 
\]
Figure~\ref{Figure:Psl3} encodes a schematic for the construction procedure for $g$ we describe above. Each ``lozenge'' (or diamond) here is labeled by one of the $\alpha^{F;G,H}_{i,j,k}$, which specifies the linear transformation to pass from the basis (encoded by a colored path) on the left of the lozenge to the right of the lozenge. In this particular case, Equation~\eqref{equ:rotationmatrix} yields: 
\begin{align*}
g = N_1(T^{f}_{g,h}) \cdot N_2(R^{f}_{g,h})\cdot  N_1(S^{f}_{g,h})
=\left(
\begin{array}{ccc}
1 &  S^{f}_{g,h}+ T^{f}_{g,h} & T^{f}_{g,h} R^{f}_{g,h} \\
0 & 1 &R^{f}_{g,h}  \\
0 & 0 & 1 
\end{array} 
\right),
\end{align*}
and the $i$-th characters are
\[
P_1(F;G,H) =R^{f}_{g,h}\text{ and }P_2(F;G,H) =S^{f}_{g,h}+T^{f}_{g,h}.
\]
\end{exa}

\begin{figure}[h!]
\includegraphics[scale=0.5]{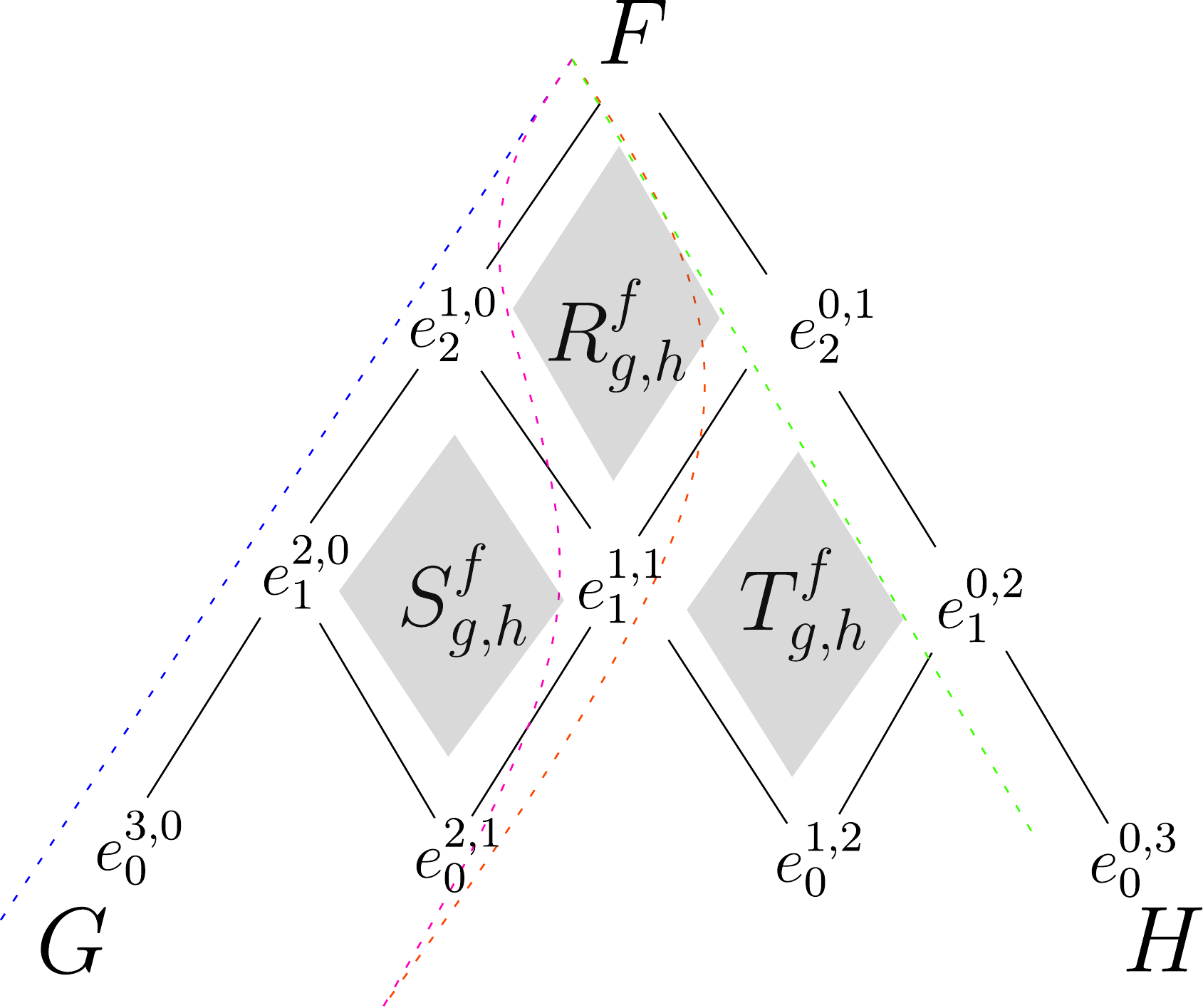}
\caption{The above figures encodes how to construct the unipotent matrix taking $(F,\pi(G))$ to $(F,\pi(H))$. Each colored path corresponds to a basis; basis $1$ is blue, basis $2$ is magenta, basis $3$ is red, and basis $4$ is green.}
\label{Figure:Psl3}
\end{figure}

\subsection{Goncharov--Shen potential and $\mathcal{A}$-coordinates}

One beautiful achievement of Goncharov and Shen's work we reviewed above is that \eqref{equ:potential} combined with Lemma~\ref{lemma:lozenge} (as well as and Notation~\ref{notation:flag}) tells us how to express Goncharov--Shen potentials in terms of rational functions of $\mathcal{A}$-coordinates. Let us see this explicated with an important example: the $\mathcal{A}_3(S_{1,1})$ case.\medskip

Given $(\bar{\rho},\bar{\xi})\in \mathcal{A}_3(S_{1,1})$ and an ideal triangulation $\mathcal{T}$ of $S_{1,1}$, we lift $\mathcal{T}$ into the universal cover $\tilde{\mathcal{T}}$. Choosing one fundamental domain, we denote the $\mathcal{A}$-coordinates as in Figure~\ref{Figure:potentialp31}. In this case, we have $P_1(y;z,t)=\frac{w}{br}$. Then by Equation~\eqref{equation:gspotentialp}, we have:
\begin{align*}
 P_1^p
&={\color{red}\tfrac{w}{br}+ \tfrac{q}{cr}}
 +{\color{blue}\tfrac{w}{ds}+\tfrac{q}{as}}
 +{\color{magenta}\tfrac{w}{ac}} + 
 {\color{orange}\tfrac{q}{bd}},\text{ and}\\
P_2^p 
&= \tfrac{bc}{aw} + \tfrac{rd}{ws}+  \tfrac{bs}{wr} + \tfrac{ad}{wc}+  \tfrac{ar}{bw} + \tfrac{cs}{dw}+ \tfrac{ar}{sq} + \tfrac{cb}{dq}+  \tfrac{dr}{cq} + \tfrac{bs}{aq}+  \tfrac{ad}{bq} + \tfrac{cs}{rq}.
\end{align*}

\begin{figure}[h!]
\includegraphics[scale=0.2]{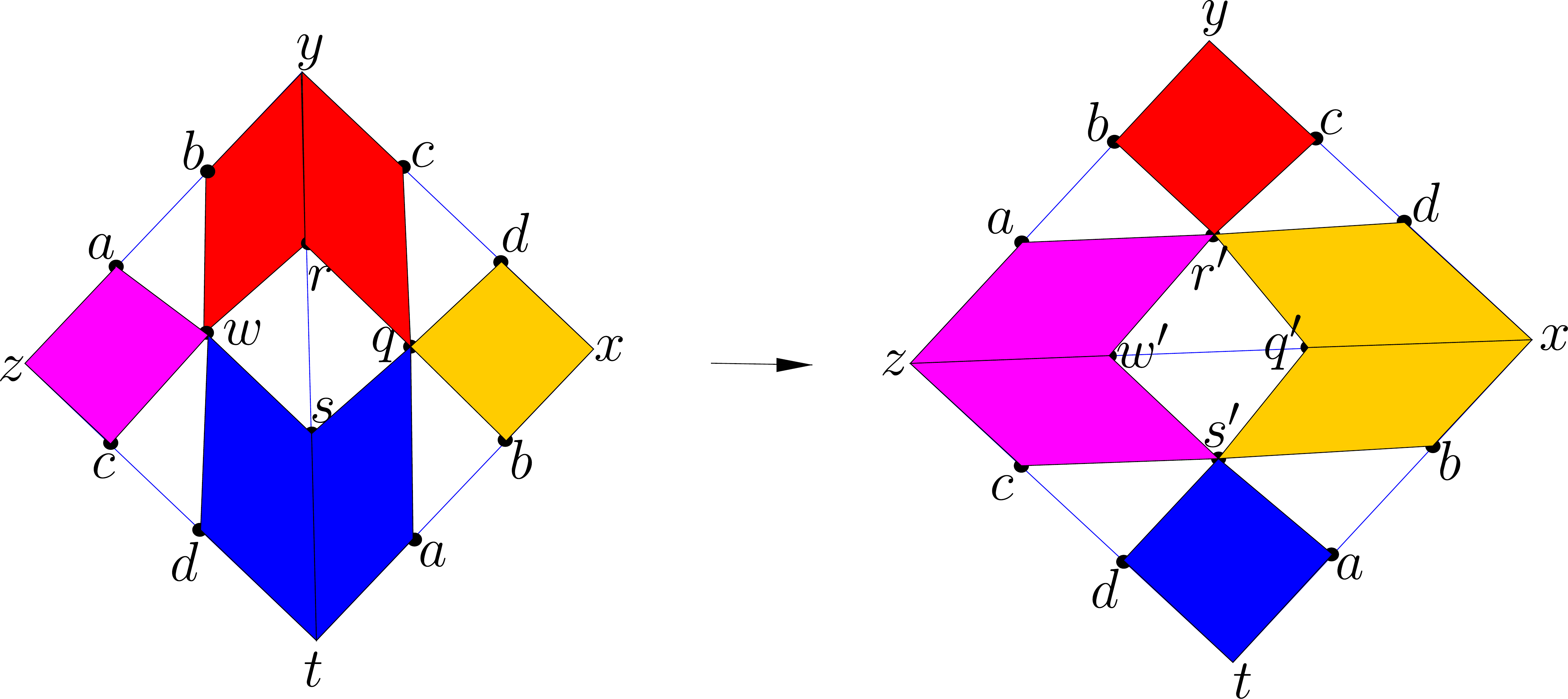}
\caption{The colored lozenges each correspond to the $1$-character $P_1(\triangle)$ of a marked ideal triangle $\triangle$.}
\label{Figure:potentialp31}
\end{figure}

Goncharov--Shen potentials are invariant under flips, and so let us now observe what happens to the above expressions of $P_1^p$ and $P_2^p$ under the change of $\mathcal{A}$-coordinates corresponding to a flip along the edge $\overline{yt}$ as depicted in Figure~\ref{Figure:potentialp31}. We saw previously (description adjacent to Figure~\ref{Figure:flipgeneral}) that such a flip is composed of four successive cluster mutations. In fact, the algebraic relations for these four mutation may be expressed in the following manner:

\begin{lem}
\label{lem:add}
Given $\mathcal{A}$-coordinates for $\mathcal{A}_3(S_{1,1})$ as depicted in Figure~\ref{Figure:potentialp31}, we have
\begin{align*}
{\color{red}P_1(y;z,t)+P_1(y;t,x)}&{\color{red}=\tfrac{w}{br}+ \tfrac{q}{cr} = \tfrac{r'}{bc}=P_1(y;z,x)},\\
{\color{blue}P_1(t;x,y)+P_1(t;y,z)}&{\color{blue}=\tfrac{q}{sa}+\tfrac{w}{ds} = \tfrac{s'}{ad}=P_1(t;x,z)},\\
{\color{magenta}P_1(z;t,y)}&{\color{magenta}=\tfrac{w}{ac}=\tfrac{s'}{c w'}+\tfrac{r'}{a w'}=P_1(z;t,x)+P_1(z;x,y)},\\
{\color{orange}P_1(x;y,t)}&{\color{orange}=\tfrac{q}{bd}=\tfrac{r'}{d q'}+ \tfrac{s'}{b q'}=P_1(x;y,z)+P_1(x;z,t)}.
\end{align*}
There are analogous formulae for $P_2$-related terms.
\end{lem}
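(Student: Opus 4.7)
The plan is to verify Lemma~\ref{lem:add} by direct computation, reducing each of the four identities to one of the four mutation relations in Equation~\eqref{equation:mutation3}. The only serious input is Lemma~\ref{lemma:lozenge} combined with Equation~\eqref{equ:potential}.

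First I would use Example~\ref{example:lozenge} together with Equation~\eqref{equ:potential} to write, in the $n=3$ case, $P_1(F;G,H) = \alpha^{F;G,H}_{2,1,0}$. Then Lemma~\ref{lemma:lozenge} expresses this $\alpha$ as the ratio
\[
P_1(F;G,H) \;=\; \frac{\Delta(f^{1}\wedge h^{1}\wedge g^{1})\cdot \Delta(f^{3}\wedge h^{0}\wedge g^{0})}{\Delta(f^{2}\wedge h^{0}\wedge g^{1})\cdot \Delta(f^{2}\wedge h^{1}\wedge g^{0})} \;=\; \frac{\Delta(f^{1}\wedge g^{1}\wedge h^{1})}{\Delta(f^{2}\wedge g^{0}\wedge h^{1})\cdot \Delta(f^{2}\wedge g^{1}\wedge h^{0})},
\]
using the normalization $\Delta(f^{n})=1$ of Notation~\ref{notation:flag}. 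The numerator is the interior (triple-ratio) $\mathcal{A}$-coordinate of the marked triangle $(f,g,h)$, while the denominator is the product of the two edge $\mathcal{A}$-coordinates incident to the distinguished vertex $f$. Reading off the labels in Figure~\ref{Figure:potentialp31} then identifies, for instance, $P_1(y;z,t)=\tfrac{w}{br}$, $P_1(y;t,x)=\tfrac{q}{cr}$, $P_1(y;z,x)=\tfrac{r'}{bc}$, and analogously for the other terms appearing in the lemma.

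With these identifications in place, each of the four stated identities reduces to one mutation formula. Clearing denominators:
\[
\tfrac{w}{br}+\tfrac{q}{cr}=\tfrac{r'}{bc}\iff rr' = cw+bq,\qquad \tfrac{q}{sa}+\tfrac{w}{ds}=\tfrac{s'}{ad}\iff ss' = aw+dq,
\]
\[
\tfrac{s'}{cw'}+\tfrac{r'}{aw'}=\tfrac{w}{ac}\iff ww' = as'+cr',\qquad \tfrac{r'}{dq'}+\tfrac{s'}{bq'}=\tfrac{q}{bd}\iff qq' = br'+ds',
\]
and each right-hand equation is precisely one of the four mutations listed in Equation~\eqref{equation:mutation3}. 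The analogous $P_2$ identities follow by the same scheme: Example~\ref{example:lozenge} writes $P_2(F;G,H)=\alpha^{F;G,H}_{1,2,0}+\alpha^{F;G,H}_{1,1,1}=S^{f}_{g,h}+T^{f}_{g,h}$, so each $P_2$ term is a sum of two $\mathcal{A}$-coordinate monomials given by Lemma~\ref{lemma:lozenge}, and regrouping the six resulting summands on each side of a proposed identity again reduces the statement to Equation~\eqref{equation:mutation3}.

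The only point requiring real care, rather than algebra, is matching each symbolic $\mathcal{A}$-coordinate to its location in Figure~\ref{Figure:potentialp31}, so that the marked-triangle normalization (the distinguished first vertex goes into the denominator, the interior vertex into the numerator) is applied consistently. Once that bookkeeping is fixed, no further work is needed: the lemma is the assertion that, \emph{term by term}, the Goncharov--Shen potential contributions grouped around each vertex of the pre-flip triangulation equal those grouped around the corresponding vertex of the post-flip triangulation, and this grouping is engineered so that the equality becomes exactly one Pl\"ucker--type relation from \eqref{equation:mutation3}. This also furnishes an \emph{a posteriori} explanation of why $P_1^{p}$ (and similarly $P_2^{p}$) is flip-invariant: the four local identities above are simply the four mutation relations rewritten in potential form.
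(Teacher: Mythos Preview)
Your proposal is correct and is precisely the verification the paper leaves to the reader: the lemma is stated without proof, with the intermediate $\mathcal{A}$-coordinate expressions already displayed so that each identity visibly reduces to one of the four mutation relations in Equation~\eqref{equation:mutation3}. Your derivation of $P_1(F;G,H)=\alpha^{F;G,H}_{2,1,0}$ from Example~\ref{example:lozenge} and its expression via Lemma~\ref{lemma:lozenge} as (interior coordinate)$/$(product of the two edge coordinates at the distinguished vertex) is exactly the bookkeeping needed, and your four cleared-denominator equations match \eqref{equation:mutation3} on the nose.
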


Lemma~\ref{lem:add} tells us that, with respect to the $\mathcal{A}$-coordinates after flipping in the ideal edge corresponding to $\overline{yt}$, the Goncharov--Shen potential $P_1^p$ is equal to:
\[
P_1^p =
{\color{red}\tfrac{r'}{bc}}
+{\color{blue}\tfrac{s'}{ad}}
+{\color{magenta}\tfrac{s'}{c w'}+\tfrac{r'}{a w'}}+
{\color{orange}\tfrac{r'}{d q'}+ \tfrac{s'}{b q'}}.
\]
We need not restrict ourselves to using just a single set of $\mathcal{A}$-coordinates. For instance, by utilizing both sets of coordinates, we obtain the following compact expressions for $P_1^p$:
\[
P_1^p = 
{\color{red}\tfrac{r'}{bc}}
+{\color{blue}\tfrac{s'}{ad}}
+{\color{magenta}\tfrac{w}{ac}}+
{\color{orange}\tfrac{q}{bd}}.
\]
This an important idea that we make use of in the proofs of Theorem~\ref{theorem:inequsl3s11}, (implicitly in) Theorem~\ref{theorem:equsl3} and Theorem~\ref{thm:equsl3pp}.\medskip

Conversely, we may split the terms up as much as possible to obtain:
\begin{align*}
P_1^p=
{\color{red}\tfrac{w}{br}+ \tfrac{q}{cr}}
+{\color{blue}\tfrac{q}{sa}+\tfrac{w}{ds}}
+{\color{magenta}\tfrac{s'}{c w'}+\tfrac{r'}{a w'}}
+{\color{orange}\tfrac{r'}{d q'}+ \tfrac{s'}{b q'}}.
\end{align*}
We can then go further: flipping $\mathcal{T}$ in the edge covered by $\overline{yz}$ (or equivalently, $\overline{xt}$) produces yet another ideal triangulation and splits ${\color{red}\tfrac{q}{cr}}$ and ${\color{blue}\tfrac{w}{ds}}$ into two new summands each. Similar, flipping $\mathcal{T}$ in the edge covered by $\overline{xy}$ (or equivalently, $\overline{zt}$) splits ${\color{red}\tfrac{w}{br}}$ and ${\color{blue}\tfrac{q}{sa}}$ into two new summands each. Of course, we need not stop here, we can keep flipping to new ideal triangulations and deriving finer and finer expressions of $P_1^p$ as a (finite) series. It is natural, and tempting, to pose:\medskip

\emph{What happens if we flip to all possible ideal triangulations of $S_{1,1}$ whilst always splitting $P_1^p$ as finely as possible?}\medskip

In the Fuchsian (and indeed, quasi-Fuchsian \cite{markofftriples}) setting, Bowditch \cite{Bow96} shows that there is a precise sense in which this procedure limits to the McShane identity for $1$-cusped hyperbolic tori. This idea has been exploited in several papers \cite{MR2399656, HN17, HSY18, HPZ19} to obtain McShane identities for various types of geometric objects. We adopt this idea as a starting point, and show that one does indeed obtain McShane identities for positive representations of surface groups with unipotent boundary as a consequence (Theorems~\ref{theorem:inequsl3s11}, \ref{theorem:equsl3} and \ref{thm:equsl3pp}). However, the type of analysis conducted in \cite{Bow96} is difficult to completely replicate in our setting and we draw upon McShane's classical strategy of proof. We explain this in \S\ref{sec:proofideasummary}.

\clearpage

\section{Identities for $\operatorname{PGL}_3(\mathbb{R})$-representations with unipotent boundary}
\label{sec:splitting}

The goal of this section is to establish McShane identities for positive $\operatorname{PGL}_3(\mathbb{R})$-representations of hyperbolic surface groups $\pi_1(S_{g,m})$ with (only) unipotent boundary monodromy. These representations are not Anosov and so many ``standard'' higher Teichm\"uller theoretic techniques do not apply. Our proof takes advantage of the fact that, for $n=3$, positive representations with unipotent boundary monodromy arise as the holonomy representations of finite-area cusped convex real projective surfaces (see \S\ref{sec:sparsity}). The picture is then sufficiently geometric that we may adapt the ideas of the classical proof of McShane's identity with computational techniques availed by the cluster algebraic structure of Fock--Goncharov coordinates.

\subsection{Structure of proof in the classical case}
\label{sec:proofideasummary}
We begin with an overview of the general strategy for proving McShane identities in the hyperbolic case due to McShane \cite{mcshane_thesis,mcshane_allcusps}. Let $S_{g,m}$ denote a cusped hyperbolic surface with $m\geq1$ cusps, and distinguish one of these cusps by labeling it as $p$. McShane's identity for $S_{g,m}$ (and hence its Fuchsian holonomy representation) may be obtained via the following steps:\medskip

\textbf{Step 1: a probabilistic partition of geodesics.}\newline
The set of geodesics on $S_{g,m}$ emanating from cusp $p$ naturally identifies with the length $1$ horocycle $\eta$ based at cusp $p$, and hence inherits a natural probability measure via the horocyclic length measure on $\eta$. The points on $\eta$ partition into:
\begin{itemize}
\item
a Cantor set $\mathfrak{C}$ corresponding to \emph{simple} geodesics which emanate from $p$ and spiral towards a geodesic lamination;
\item
a countable set $\mathfrak{A}$ corresponding to \emph{simple ideal geodesic arcs} emanating from $p$ and ending also at a cusp; 
\item
and a countable collection of open horocylic intervals, hitherto referred to as \emph{gap intervals}, corresponding to geodesics which self-intersect (generic) as well as simple geodesics with both ends at $p$ (non-generic, only countably many such points). 
\end{itemize}
The Birman-Series geodesic sparsity theorem ensures that $\mathfrak{C}\cup\mathfrak{A}$ has horocyclic length measure $0$ because its thickening to an $\epsilon$-neighborhood of $\eta$ has hyperbolic area $0$. The McShane identity then comes from expressing the total measure (i.e.: $1$) of $\eta$ as the sum of the horocyclic lengths of the gap intervals which make up $\eta-(\mathfrak{C}\cup\mathfrak{A})$.\medskip

\textbf{Step~2: indexing the gap intervals}\newline
The aforementioned horocyclic gap intervals are in $4:1$ correspondence with embedded pairs of pants containing cusp $p$ (as $I_1,I_2,I_3,I_4$ in Figure~\ref{Figure:gap}). In fact, the subsegment of any geodesic launched from $p$ within a given gap interval, up to the first point of self-intersection, lies completely on the pair of pants corresponding to the given gap interval. This can be shown, for example, via local Gauss-Bonnet based arguments \cite[Lemma~4.6]{huang_thesis}. In any case, the classical McShane identity is a series taken over the set of embedded pairs of pants on $S_{g,m}$ containing cusp $p$, with the summands given by the sum of the four intervals referred to above.\medskip

\begin{figure}
\includegraphics[scale=0.35]{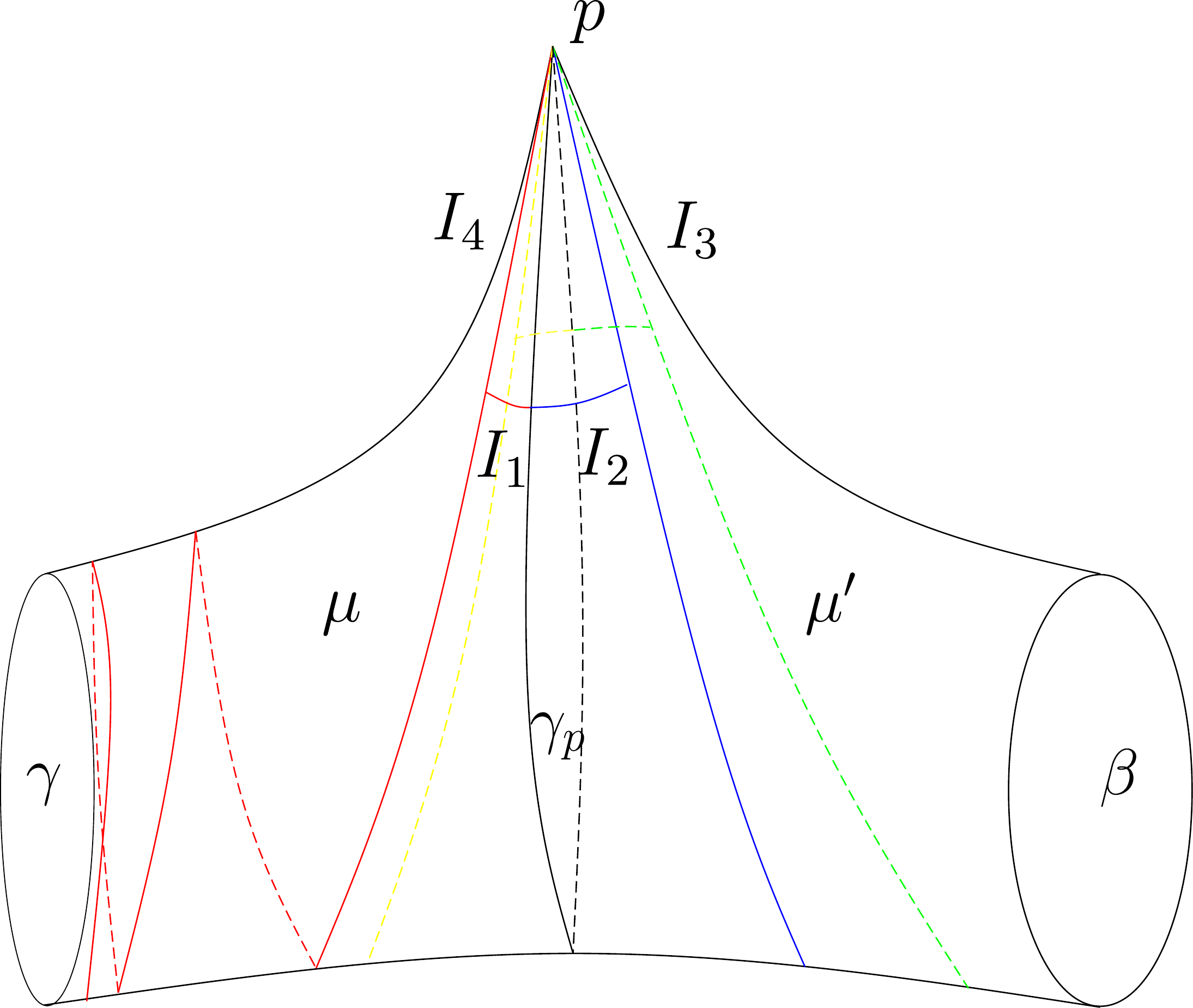}
\caption{The red and yellow simple curves spiral around the left side hole to the infinity in two different directions, while the blue and the green simple curves spiral around the right side hole to the infinity.}
\label{Figure:gap}
\end{figure}

\textbf{Step~3: computing the lengths of the gaps.}\newline 
The gaps $I_1,I_2,I_3,I_4$ are purely dependent upon the geometry of the pair of pants they lie on, and hence may be expressed purely in terms of the boundary lengths of the relevant pair of pants. McShane computes this directly in the Poincar\'{e} upper half plane model \cite[page 619]{mcshane_allcusps}, whereas Mirzakhani \cite[Lemma~3.1]{mirz_simp} and Tan-Wong-Zhang \cite[\S7]{tan_zhang_cone} do so by invoking hyperbolic trigonometric identities.

\subsubsection{Adapting the proof for finite-area cusped convex real projective surfaces}

The strategy of proof of the McShane identity for a cusped convex real projective surface $\Sigma_{g,m}$ with $m\geq1$ cusps, negative Euler characteristic and a distinguished cusp $p$ is fundamentally the same as for hyperbolic surfaces, but with the following adjustments for each of the three steps:

\begin{description}
\item[Step 1]
We again identify the set of geodesics emanating from cusp $p$ with a horocycle $\eta$ (with respect to the Hilbert metric on $S_{g,m}$). We further identify $\eta$ with a $\mathbb{Z}$ quotient of $\partial\Omega-\{\tilde{p}\}$, where 
\begin{itemize}
\item
$\Omega\subset\mathbb{RP}^2$ is the convex domain universal cover of $\Sigma_{g,m}$;
\item
$\tilde{p}\in\partial\Omega$ is a lift of the cusp $p$;
\item
the $\mathbb{Z}$ quotient is taken with respect to the stabilizer subgroup of $\tilde{p}$ in the group of deck transformations $\pi_1(\Sigma_{g,m})$ acting on $\Omega$. This group consists of unipotent linear transformations which preserve $\tilde{p}$ and $\Omega$.
\end{itemize}
One inherits from this identification a partition of $\eta$ into $\mathfrak{C}$, $\mathfrak{A}$ and countably gap intervals. We then normalize and reinterpret each of the $i=1,2$ Goncharov--Shen potentials as a probability measure on $\eta$ --- we refer to these probability measures as \emph{Goncharov--Shen potential measures}. We invoke our generalization of the Birman-Series theorem (Theorem~\ref{thm:birmanseries}) to ensure that $\mathfrak{C}\cup\mathfrak{A}$ has measure $0$ with respect to the Goncharov--Shen potential measure. This is the basis for the McShane identity for cusped convex real projective surfaces: the total sum of the Goncharov--Shen potential measures of the gap intervals is equal to $1$. 

\item[Step 2]
Although the gap intervals on $\eta$ satisfy the same $4:1$ correspondence with the set of embedded pairs of pants on $\Sigma_{g,m}$ (and hence may again be used to index the McShane identity), we choose to adopt finer summation indices. In the classical setting, the lengths of $I_1$ and $I_4$ are the same (and the lengths of $I_2$ and $I_3$ are the same) due to all hyperbolic pairs of pants admitting a boundary-component-fixing ``reflection'' isometry. The richness of convex real projective structures generically breaks this symmetry, and this is one reason why we instead sum over the set $\xvec{\mathcal{P}}_p$ of boundary-parallel pairs of pants (Definition~\ref{definition:Pp}) containing cusp $p$, which is a  $2:1$ covering set of the set $\mathcal{P}_p$ of pairs of pants on $\Sigma_{g,m}$. We consider also a even finer summation index (Theorem~\ref{theorem:equsl3}), consisting of boundary-parallel pairs of half-pants (Definition~\ref{defn:hp}), which are in natural bijection with the gap intervals.

\item[Step 3]
We use the cluster algebraic structure of Fock and Goncharov's $\mathcal{A}$-coordinates to compute the Goncharov--Shen potential measure of gap intervals (and pairs of gap intervals). This strategy is implicit in Bowditch's \cite{Bow96}, which essentially uses Penner's $\lambda$-length to perform the requisite horocyclic length computations, albeit expressed in terms of traces of Fuchsian holonomy representations for hyperbolic surfaces.
\end{description}

\begin{rmk}
One key idea that we utilize in Step~3 is the derivation of gap terms for a given positive representation $\rho\in\operatorname{Pos}_n^u(S_{g,m})$ via decorated twisted local systems $(\bar{\rho},\bar{\xi})\in\mathcal{A}_n(S_{g,m})$. To be precise, given such a positive representation $\rho$, we know by \cite[Theorems 1.12, 1.14]{FG06} that there exists a positive decorated twisted $\operatorname{SL}_n$-local system $(\bar{\rho},\bar{\xi})\in \mathcal{A}_n(S_{g,m})$ such that the underlying framed $\operatorname{PGL}_n$-local system for $(\bar{\rho},\bar{\xi})$ takes the form $(\rho,\xi)$ (see Remark~\ref{remark:AX}). Generally speaking, the decorated twisted local system $(\bar{\rho},\bar{\xi})$ is not unique, however, by imbuing $\rho$ with extra data, we enable computations regarding the properties of $\rho$ via the cluster algebraic language of Fock and Goncharov's $\mathcal{A}$-coordinates. The resulting gap terms are purely expressed in terms of projective invariants associated to $\rho$ and hence depend only on $\rho$ and not on any of the decorating data in $(\bar{\rho},\bar{\xi})$.
\end{rmk}

We now go through each of these three steps in detail, starting with Step~1 (\S\ref{sec:step1}). 
Step~2 has already been covered after Definition \ref{definition:Pp} for McShane identities summed over boundary-parallel pairs of pants. We describe the boundary-parallel half-pants case in \S\ref{sec:halfpantsseries}. The majority of the remainder is focused on steps~3 --- the computation of the actual summands, which we shall handle on a case-by-case basis depending on the type of summation index used (\S\ref{subsection:s11summand}, \S\ref{subsection:bphpsummand}, \S\ref{subsection:bpppsummand}).

\subsection{Generalizing step~1: a probabilistic partition of geodesics.}
\label{sec:step1}
Let $S_{g,m}$ denote a genus $g$ oriented surface with $m\geq1$ boundary components, negative Euler characteristic and a distinguished cusp $p$. For any $\operatorname{PGL}_3(\mathbb{R})$ positive representation $\rho:\pi_1(S_{g,m})\to\mathrm{PGL}_3(\mathbb{R})$ with unipotent boundary monodromy, there is a unique framed $\operatorname{PGL}_3(\mathbb{R})$ local system $(\rho,\xi) \in \mathcal{X}_3(S_{g,m})$ with underlying holonomy representation $\rho$. By Remark~\ref{remark:mpcont}, the $\rho$-equivariant map $\xi_\rho:\tilde{m}_p \rightarrow \mathcal{B}$ extends uniquely to a positive map
\[
\xi_\rho=(\xi_\rho^1,\xi_\rho^2): \partial_\infty \pi_1(S_{g,m})\to\mathcal{B}.
\]
In particular, the image $\xi_\rho^1(\pi_1(S_{g,m}))$ is a $C^1$ smooth curve in $\mathbb{RP}^2$ and bounds a simply connected region $\Omega$. The quotient of $\Omega$ by $\rho(\pi_1(S_{g,m}))$ is a finite-area cusped convex real projective surface $\Sigma_{g,m}$ homeomorphic to $S_{g,m}$ by \cite{Mar10}. More accurately speaking, $\Omega$ defines a convex real projective structure on $S_{g,m}$ with $\rho$ as the holonomy representation.\medskip

We henceforth identify $\partial_\infty\pi_1(S_{g,m})$ with $\partial\Omega$ via the following result: 

\begin{thm}
\label{lem:boundaryidentification}
The map $\xi_\rho^1:\partial_\infty\pi_1(S_{g,m})\to\partial\Omega$ is a homeomorphism. 
\end{thm}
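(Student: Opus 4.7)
The plan is to combine three standard facts: (i) the domain $\partial_\infty \pi_1(S_{g,m})$ is homeomorphic to a circle, (ii) $\xi_\rho^1$ is continuous and injective, and (iii) a continuous injection from a compact space into a Hausdorff space is a homeomorphism onto its image. Then I would verify that this image is exactly $\partial\Omega$.

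First, since $\rho$ has only unipotent boundary monodromy, the auxiliary hyperbolic metric $h_\rho$ (Definition~\ref{definition:aux}) makes every end of $S_{g,m}$ into a cusp, so by Definition~\ref{definition:bin} the boundary $\partial_\infty \pi_1(S_{g,m})$ is homeomorphic to $\mathbb{RP}^1\cong S^1$ with its anticlockwise cyclic order. Continuity of $\xi_\rho:\partial_\infty \pi_1(S_{g,m})\to\mathcal{B}$ is already supplied by \cite[Theorem~1.14]{FG06} (cited after Definition~\ref{definition:posrep}), and postcomposing with the continuous projection $\mathcal{B}\to\mathbb{RP}^2$, $(F^{(1)},F^{(2)})\mapsto F^{(1)}$ yields continuity of $\xi_\rho^1$.

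Next I would prove injectivity by invoking the Frenet/hyperconvexity property of $\xi_\rho$ recorded in Remark~\ref{remark:mpcont}. For any two distinct points $x,y\in\partial_\infty \pi_1(S_{g,m})$, hyperconvexity with the index tuple $(1,1)$ forces $\xi_\rho^1(x)\oplus \xi_\rho^1(y)$ to be a direct sum, so in particular $\xi_\rho^1(x)\neq \xi_\rho^1(y)$. Combined with continuity and the compactness of $S^1$, the map $\xi_\rho^1:\partial_\infty \pi_1(S_{g,m})\to \mathbb{RP}^2$ is a continuous injection of a compact space into a Hausdorff space, hence a homeomorphism onto its image.

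Finally, it remains to identify the image with $\partial\Omega$. The paragraph preceding the theorem \emph{defines} $\Omega$ as the simply connected region bounded by the topological Jordan curve $\xi_\rho^1(\partial_\infty \pi_1(S_{g,m}))$ (which is indeed a Jordan curve by the previous paragraph, and which bounds a convex region by positivity, via the Frenet/osculating interpretation); therefore $\partial\Omega$ coincides with this image by construction.

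I do not foresee a serious obstacle: the only subtlety is checking that ``$\Omega$'' is well-defined by the image curve in the projective plane, which is why the Jordan-curve step above is stated explicitly. Everything else is a direct application of the properties of positive/Frenet curves already established in Section~\ref{sec:fgmodcor} together with the elementary point-set topology fact about compact-to-Hausdorff injections.
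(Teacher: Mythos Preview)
Your proof is correct and follows essentially the same skeleton as the paper's: continuity from \cite[Theorem~1.14]{FG06}, injectivity, and then the compact-to-Hausdorff argument to conclude a homeomorphism onto the image. The paper obtains injectivity directly from positivity (order-preservation) rather than from hyperconvexity as you do; these are equivalent inputs here. The one substantive difference is surjectivity onto $\partial\Omega$: you observe that, since the paragraph before the theorem \emph{defines} $\Omega$ as the region bounded by the Jordan curve $\xi_\rho^1(\partial_\infty\pi_1(S_{g,m}))$, surjectivity is tautological once you have established that this image is indeed an embedded circle. The paper instead invokes \cite[Theorem~6.14]{Mar12} for surjectivity, which in effect also identifies this $\Omega$ with the canonical convex domain for the representation in Marquis's sense. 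Your route is more self-contained for the statement as written; the paper's route has the side benefit of confirming that the $\Omega$ just introduced really is the convex-projective universal cover, which is used later in \S\ref{sec:sparsity}.
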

\begin{proof}
By \cite[Theorem 1.14]{FG06}, the $\rho$-equivariant map $\xi_\rho: \partial_\infty\pi_1(S_{g,m})\to \mathcal{B}$ is a continuous positive map. Thus $\xi_\rho^1: \partial_\infty\pi_1(S_{g,m})\to \partial\Omega$ is continuous order-preserving injective map. By \cite[Theorem 6.14]{Mar12}, $\xi_\rho^1$ is surjective. Hence $\xi_\rho^1$ is a homeomorphism.
\end{proof}

Let $\tilde{p}\in\partial\Omega=\xi_\rho^1(\partial_\infty\pi_1(S_{g,m}))$ denote a lift of the cusp $p$, and let $\alpha_{\tilde{p}}\in\pi_1(S_{g,m})$ be the unique primitive peripheral homotopy class which fixes $\tilde{p}$ oriented such that $S_{g,m}$ is on the left side $\alpha_{\tilde{p}}$. In addition, let $\eta$ denote a cusp $p$ horocycle on $\Sigma_{g,m}$ small enough so as to be embedded, and let $\tilde{\eta}\subset\Omega$ denote the unique lift of $\eta$ which limits to $\tilde{p}$ in both directions (see Figure~\ref{fig:projection}). The ``fan'' of complete Hilbert metric geodesics (i.e.: Euclidean straight lines) on $\Omega$ emanating from $\tilde{p}$ gives a natural identification between: the horocycle $\tilde{\eta}$ and $\partial\Omega-\{\tilde{p}\}$. This identifcation is preserved under quotienting by $\alpha_{\tilde{p}}$ and hence descends to an identification between:
\begin{itemize}
\item
the set of (Hilbert metric) geodesics on $\Sigma_{g,m}$ emanating from cusp $p$
\item
the (Hilbert metric) horocycle $\eta$, and
\item
the quotient curve $(\partial\Omega-\{\tilde{p}\})/\langle\rho(\alpha_{\tilde{p}})\rangle$.
\end{itemize}
We intentionally conflate these sets and refer to them all as $\eta$.

\begin{defn}[Goncharov--Shen potential measure]
Consider the collection of closed intervals on $\eta$ with measure defined as follows: for $i=1$ or $2$, given an arbitrary closed interval $[q_0,q_1]\subset\eta$, choose a lift $[\tilde{q}_0,\tilde{q}_1]\subset\partial\Omega$ (see Figure~\ref{fig:projection}) and assign the outer measure of $[q_0,q_1]$ to be
\[ {P_i(\tilde{p},\tilde{q}_0,\tilde{q}_1)}
/P_i^p
.\]
The value so assigned is independent of the choice of the lift $\tilde{p}$ of $p$ and the lift $[\tilde{q}_0.\tilde{q}_1]$ of $[q_0,q_1]$ thanks to Corollary \ref{cor:triuni} and hence suffices to generate a well-defined Borel measure on $\eta$ by the Caratheodory procedure. We refer to this measure as the $i$-th \emph{Goncharov--Shen potential measure}.
\end{defn}

\begin{lem}
\label{lem:gsmeasure}
The Goncharov--Shen potential measure on the horocycle $\eta$ is a probability measure and is in the same measure class as the Hilbert length measure on $\eta$.
\end{lem}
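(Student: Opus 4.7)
The plan is to verify the two assertions separately: that the Goncharov--Shen potential measure has total mass $1$, and that it is mutually absolutely continuous with the Hilbert length.

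For the probability assertion, I would exhibit one fundamental period of the covering $\tilde{\eta}\to\eta$ and telescope using the additive property of the $i$-th character recorded in Definition~\ref{definition:ichar}. Fix any ideal triangulation $\mathcal{T}$ of $S_{g,m}$ and lift it to $\tilde{\mathcal{T}}$; the triangles of $\tilde{\mathcal{T}}$ incident to $\tilde{p}$ form a bi-infinite fan with consecutive vertices $\ldots,\tilde{q}_{-1},\tilde{q}_0,\tilde{q}_1,\ldots$ on $\partial\Omega$ arranged anticlockwise. Projection into $S_{g,m}$ sets up a bijection between the fan-triangles in one $\langle\alpha_{\tilde{p}}\rangle$-period and the elements of $\Theta_p$---each fan-triangle $(\tilde{p},\tilde{q}_k,\tilde{q}_{k+1})$ corresponds to the marked triangle of $\Theta_p$ in which $p$ arises as the first vertex---so that $\alpha_{\tilde{p}}$ shifts the indices by $k\mapsto k+N$ with $N=|\Theta_p|$. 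The additive property of $P_i$ then telescopes:
\[
P_i\bigl(\tilde{p};\tilde{q}_0,\alpha_{\tilde{p}}\tilde{q}_0\bigr)
=\sum_{k=0}^{N-1}P_i\bigl(\tilde{p};\tilde{q}_k,\tilde{q}_{k+1}\bigr)
=\sum_{\Delta\in\Theta_p}P_i(\Delta)
=P_i^p,
\]
and dividing by $P_i^p$ yields total mass $1$.

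For the measure-class assertion, it suffices to show that the Goncharov--Shen potential measure admits a continuous, strictly positive density with respect to the Hilbert length on the compact horocycle $\eta$. I would parameterize $\partial\Omega-\{\tilde{p}\}$ by arc length in an affine chart containing $\overline{\Omega}$. The Frenet condition of Definition~\ref{defn:frenet} (applied with $k=2$, $j_1=j_2=1$, $j=2$; see Remark~\ref{remark:mpcont}) ensures that $\xi_\rho^1$ is a $C^1$ immersion whose tangent line at each point $\tilde{q}$ coincides with $\xi_\rho^2(\tilde{q})$ and varies continuously in $\tilde{q}$, so that $\partial\Omega$ is $C^1$ and Hilbert length on $\eta$ is comparable to arc length on $\partial\Omega$ by continuity and compactness. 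Combining Lemma~\ref{lemma:lozenge} with Equation~\eqref{equ:potential} and Taylor-expanding $P_i(\tilde{p};\tilde{q}_0,\tilde{q}_0+\varepsilon)$ to first order in $\varepsilon$---using $P_i(F;G,G)=0$ to cancel the zeroth-order terms---the leading coefficient is a continuous rational function of $\xi_\rho(\tilde{p})$, $\xi_\rho(\tilde{q}_0)$ and the tangent $\xi_\rho^2(\tilde{q}_0)$, strictly positive by the hyperconvexity of the Frenet curve combined with the positivity property of $P_i$ on positive triples. Hence the Radon--Nikodym derivative is continuous and positive on the compact $\eta$, bounded above and away from zero, which is the desired measure-class equivalence.

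The main obstacle is the derivative computation in the second step: the Frenet curve $\xi_\rho^1$ is only $C^1$ in general and typically fails to be $C^2$ outside the $n$-Fuchsian locus, and establishing both the existence of the limit and the strict positivity of the leading Taylor coefficient requires care. Fortunately, the first-order expansion of $\alpha^{F;G,H}_{a,b,c}$ from Lemma~\ref{lemma:lozenge} depends only on the flag values and their tangent directions, so the $C^1$ regularity guaranteed by the Frenet condition is precisely what is needed; no higher-order regularity of $\xi_\rho^1$ is required.
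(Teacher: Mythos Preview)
Your argument for the probability part is correct and in fact more explicit than the paper's, which simply says ``by construction (see Example~\ref{example:lozenge}).'' Your telescoping over a fundamental fan of ideal triangles at $\tilde p$ makes the total-mass computation transparent.

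For the measure-class assertion your approach differs from the paper's. The paper parametrizes $\partial\Omega-\{\tilde p\}$ by a one-parameter family $M(t)$ of unipotent matrices (in a basis for the decorated flag at $\tilde p$) with $M(t)\cdot\xi_\rho(\tilde q_0)=\xi_\rho(\tilde q(t))$ as \emph{flags}; the $(n-i,n-i+1)$-entry of $M(t)$ is by Definition~\ref{definition:ichar} exactly $P_i(\tilde p;\tilde q_0,\tilde q(t))$, so taking this entry to be $tP_i^p$ realizes the Goncharov--Shen measure directly as $dt$, and the $C^1$-ness of $\partial\Omega$ gives $C^1$-compatibility with Hilbert length. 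You instead differentiate the explicit rational formula from Lemma~\ref{lemma:lozenge}.

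There is a gap in your argument when $i\geq 2$. By Equation~\eqref{equ:potential} and Lemma~\ref{lemma:lozenge}, the expression $P_i(\tilde p;\tilde q_0,\tilde q)$ involves $h^{c}$ for $c=0,\ldots,i$, so it depends on $\xi_\rho^{j}(\tilde q)$ for every $j\leq i$. Your first-order expansion therefore requires differentiability of each $\xi_\rho^{j}$, not merely of $\xi_\rho^{1}$. The Frenet condition only yields $\xi_\rho^{1}\in C^1$ with tangent line $\xi_\rho^{2}$; the components $\xi_\rho^{j}$ for $j\geq 2$ are in general only continuous (equivalently, $\partial\Omega$ is typically not $C^2$ off the Fuchsian locus). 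Thus your computation goes through for $i=1$, but for $i=2$ --- needed for the second identity in Theorem~\ref{theorem:equsl3} --- the leading coefficient would a priori involve $(\xi_\rho^{2})'$, whose existence you have not justified. One remedy for $n=3$ is projective duality: passing to the dual domain $\Omega^*\subset(\mathbb{RP}^2)^*$ swaps $\xi_\rho^{1}\leftrightarrow\xi_\rho^{2}$ and turns $P_2$ into $P_1$ for the dual representation, reducing to the case you already handled. The paper's unipotent-flow argument avoids this issue altogether because it never differentiates the rational formula for $P_i$.
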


\begin{proof}
We know from Fact~\ref{fact:torsor} that there is a unique unipotent matrix $M$ that takes $(\bar{\xi}_\rho(\tilde{p}),\xi_\rho(\tilde{q}_0))$ to $(\bar{\xi}_\rho(\tilde{p}),\xi_\rho(\tilde{q}_1))$. Fix a basis for $\rho(\tilde{p})$ and let $M(t)$ denote the path of unipotent matrices, expressed with respect to said basis,
\begin{align*}
M(t):=\left(
\begin{array}{ccc}
1& m_{12}(t)& m_{13}(t)\\
0&1&tP_1^p\\
0&0&1
\end{array}
\right)\text{, such that:}
\end{align*}
\begin{itemize}
\item
the parametrized path $M(t)\cdot \tilde{q}_0$ traces out the interval $[\tilde{q}_0,\tilde{q}_1]$ on $\partial\Omega$ and
\item
$M(t)$ takes the tangent space $T_{\tilde{q}_0}\partial\Omega$ to $T_{M(t)\cdot \tilde{q}_0}\partial\Omega$. 
\end{itemize}
The entries $m_{12}(t)$ and $tP_1^p$ are unaffected by the choice of basis for $\bar{\xi}_\rho(\tilde{p})$. This allows us pullback the $M(t)\cdot q_0$ parametrization of $\partial\Omega-\{\tilde{p}\}$ onto $\tilde{\eta}$. Note that this is a parametrization of $\tilde{\eta}$ which is $C^1$ compatible to the Hilbert length parameterization of $\tilde{\eta}$  because $\partial\Omega$ is $C^1$-smooth. Moreover, the $t\in[0,1]$ parameter, by construction, precisely parametrizes the $\mathbb{Z}$-invariant lift of the Goncharov--Shen measure on $\eta$ to $\tilde{\eta}$, thereby telling us that the Goncharov--Shen potential measure on $\eta$ is the Hilbert length measure on $\eta$ weighted by an almost everywhere positive $C^1$ function. Thus, the two measures are in the same measure class. Finally, the Goncharov--Shen potential measure of $\eta$ is $1$ by construction (see Example~\ref{example:lozenge}).
\end{proof}

\begin{figure}[h]
\includegraphics[scale=0.7]{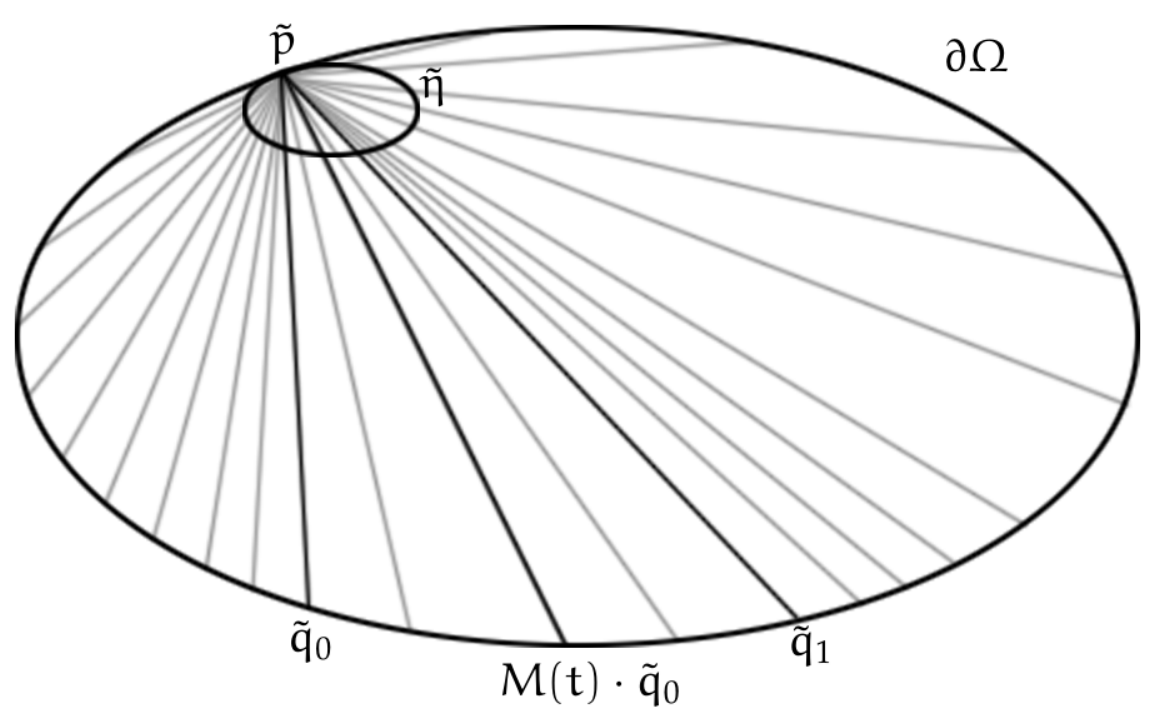}
\caption{The lighter grey lines specify a $C^1$ identification between $\tilde{\eta}$ and $\partial\Omega-\{\tilde{p}\}$.}
\label{fig:projection}
\end{figure}

\begin{lem}
The Goncharov--Shen potential measure of $\mathfrak{C}\cup\mathfrak{A}\subset\eta$ is $0$.
\end{lem}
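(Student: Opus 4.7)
The plan is to reduce the statement to Lemma~\ref{lem:gsmeasure} together with the Birman--Series geodesic sparsity theorem for convex real projective surfaces (Theorem~\ref{thm:birmanseries}). By Lemma~\ref{lem:gsmeasure}, the Goncharov--Shen potential measure on $\eta$ is in the same measure class as the Hilbert length measure, so it suffices to show that $\mathfrak{C}\cup\mathfrak{A}$ has zero Hilbert length measure in $\eta$.

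First I would dispense with $\mathfrak{A}$: the set of simple ideal geodesic arcs on $\Sigma_{g,m}$ joining cusps is countable (up to homotopy there are only countably many, and each gives one point on $\eta$ after fixing the emanation cusp $p$), so the corresponding subset of $\eta$ is at most countable, hence has zero Hilbert length measure. The essential content is therefore the claim that $\mathfrak{C}$ has zero Hilbert length measure in $\eta$.

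Next I would set up the following area/length comparison, mimicking the classical Birman--Series argument. For each point $x \in \mathfrak{C}$, the corresponding geodesic ray $\gamma_x$ emanating from $p$ is simple and complete. For any fixed $L>0$, consider the length-$L$ initial segment $\gamma_x|_{[0,L]}$, and let
\begin{equation*}
\mathcal{N}_\epsilon(L) := \bigcup_{x \in \mathfrak{C}} \operatorname{Nbhd}_\epsilon\bigl(\gamma_x|_{[0,L]}\bigr)
\end{equation*}
be its $\epsilon$-neighborhood (Hilbert metric). Since every such $\gamma_x$ is simple and complete, $\mathcal{N}_\epsilon(L)$ lies inside the Birman--Series set $\mathcal{BS}(\Sigma_{g,m})$, which by Theorem~\ref{thm:birmanseries} has zero Hilbert area. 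The main step is a Fubini-type comparison: for fixed $L$ large enough that $\gamma_x|_{[0,L]}$ exits the horocyclic collar around $p$, the Hilbert area of $\mathcal{N}_\epsilon(L)$ is bounded below by a constant (depending on $L$ but not $\epsilon$) times $\epsilon$ times the Hilbert length measure of $\mathfrak{C}$ in $\eta$. Hence the Hilbert length of $\mathfrak{C}$ is $0$.

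The main obstacle is the Fubini-type comparison: one needs a uniform lower bound showing that near $\eta$ the fan of geodesics emanating from the cusp is ``bi-Lipschitz'' with respect to (horocyclic length) $\times$ (geodesic time) versus Hilbert area. In the hyperbolic case this is transparent from an explicit upper half plane model; here one must invoke the local geometry of a convex real projective cusp, using the fact (contained in the proof of Theorem~\ref{lem:boundaryidentification} and the cusp model of \cite{Mar10}) that a neighborhood of $p$ is uniformly bi-Lipschitz to a hyperbolic cusp. Once this local comparison is in hand, the calculation
\begin{equation*}
0 = \operatorname{HilbArea}\bigl(\mathcal{N}_\epsilon(L)\bigr) \geq C(L)\cdot \epsilon \cdot \operatorname{HilbLength}_\eta(\mathfrak{C})
\end{equation*}
forces $\operatorname{HilbLength}_\eta(\mathfrak{C})=0$, and the measure-class equivalence from Lemma~\ref{lem:gsmeasure} then yields the claim.
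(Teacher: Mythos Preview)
Your overall strategy is right --- reduce to Hilbert length via Lemma~\ref{lem:gsmeasure}, then invoke the Birman--Series theorem --- but the central step contains a genuine error. You claim that
\[
\mathcal{N}_\epsilon(L) = \bigcup_{x \in \mathfrak{C}} \operatorname{Nbhd}_\epsilon\bigl(\gamma_x|_{[0,L]}\bigr)
\]
lies inside the Birman--Series set $\mathcal{BS}(\Sigma_{g,m})$ and therefore has zero Hilbert area. This is false: $\mathcal{BS}(\Sigma_{g,m})$ is the union of the simple complete geodesics \emph{themselves}, not of their tubular neighborhoods. Your set $\mathcal{N}_\epsilon(L)$ is open and has strictly positive area whenever $\mathfrak{C}$ is nonempty, so the displayed inequality
\[
0 = \operatorname{HilbArea}\bigl(\mathcal{N}_\epsilon(L)\bigr) \geq C(L)\cdot\epsilon\cdot\operatorname{HilbLength}_\eta(\mathfrak{C})
\]
fails at the very first equality.

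The fix is to reverse the roles. Instead of thickening the geodesics and trying to sit the result inside $\mathcal{BS}$, take a fixed annular neighborhood $A$ of the horocycle $\eta$, identify $A \cong \eta \times (-\epsilon,\epsilon)$ so that each fiber $\{x\}\times(-\epsilon,\epsilon)$ is the segment of the geodesic from $p$ through $x$, and intersect $A$ with $\mathcal{BS}(\Sigma_{g,m})$. That intersection has zero area by Theorem~\ref{thm:birmanseries}, and it \emph{contains} the product set $(\mathfrak{C}\cup\mathfrak{A})\times(-\epsilon,\epsilon)$, since for each $x\in\mathfrak{C}\cup\mathfrak{A}$ the corresponding geodesic $\gamma_x$ is simple and complete and hence lies in $\mathcal{BS}$. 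On the relatively compact annulus $A$ the Busemann area is in the same measure class as the product of Hilbert length on $\eta$ with Hilbert length on $(-\epsilon,\epsilon)$ (the density is continuous and bounded away from $0$ and $\infty$), so Fubini forces $\operatorname{HilbLength}_\eta(\mathfrak{C}\cup\mathfrak{A})=0$. This is the paper's argument, and note that it sidesteps the bi-Lipschitz cusp-model issue you flagged as an obstacle: no comparison with a hyperbolic cusp is needed, only the automatic product-measure comparison on a fixed compact annulus. Your separate countability argument for $\mathfrak{A}$ is correct but unnecessary once the argument is set up this way.
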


\begin{proof}
In Theorem~\ref{thm:birmanseries} (we postpone the proof until the next Section), we show that the set of simple geodesics with respect to the Hilbert metric of $\Omega$ occupies zero Busemann area on $S_{g,m}$. This implies that the set of points on an $\epsilon$-neighborhood of $\eta$ which lie on simple geodesics occupies zero Busemann area. For sufficiently small $\epsilon>0$, the neighborhood of $\eta$ is annular and identifies with $\eta\times (-\epsilon,\epsilon)$. The restriction of the Busemann area on $\eta\times (-\epsilon,\epsilon)$ is in the same measure class as the product measure of the geodesic Hilbert length measure on $\eta$ multiplied by the Hilbert length measure on $(-\epsilon,\epsilon)$ because they differ by a strictly positive density function bounded away from $0$ and $\infty$. This in turn tells us that $\mathfrak{C}\cup\mathfrak{A}$ occupies $0$ horocyclic Hilbert length measure on $\eta$. By Lemma~\ref{lem:gsmeasure}, the Goncharov--Shen potential measure of $\mathfrak{C}\cup\mathfrak{A}$ is $0$. 
\end{proof}

\subsection{Generalizing step 2: indexing the gap intervals} 
\label{sec:halfpantsseries}

We first emphasize that $\mathfrak{C}\cup\mathfrak{A}$, when regarded as a subset of the ideal boundary $\partial_\infty\pi_1(S_{g,m})$, is a purely topological condition (see \cite[pg. 290 and 291]{LM09}) and is independent of the choice of auxiliary metric for $S_{g,m}$. Indeed, Labourie and McShane take advantage of this fact to express their identities in \cite{LM09} as series over homotopy classes of embeddings of a given pair of pants into $S_{g,m}$. It is clear that Labourie and McShane's summation index are in natural bijection with our boundary-parallel pairs of pants simply by choosing, once-and-for-all, boundary orientations on the domain pair of pants in the Labourie--McShane summation scheme so as to cause the domain to be a boundary-parallel pair of pants.

\subsubsection{Summation indices}
Our first summation scheme comes from a $2:1$ correspondence between the countable collection of open intervals in $\partial_\infty\pi_1(S_{g,m})-(\mathfrak{C}\cup\mathfrak{A})$ and the set of boundary-parallel pairs of pants on $S_{g,m}$ in Definition \ref{definition:Pp}. We consider refined summation scheme over boundary-parallel pairs of half-pants in Theorem~\ref{theorem:equsl3}. The idea of refining the summation scheme in this manner makes an appearance in both \cite{huangclosed} and  \cite[Theorem~4.5]{huang_thesis}.

\begin{defn}[Boundary-parallel pairs of half-pants]
\label{defn:hp}
Given a surface $S_{g,m}$ with negative Euler characteristic, an (embedded) \emph{boundary-parallel pair of half-pants} $\mu$ containing $p$ is one of the two pieces obtained by cutting along the unique simple bi-infinite geodesic of an embedded pair of pants on $S_{g,m}$ containing $p$, equipped with parallel orientations on the simple bi-infinite geodesic and the boundary component. We denote the collection of all boundary-parallel pairs of half-pants containing $p$ up to homotopy by $\xvec{\mathcal{H}}_p$. When $S_{g,m}$ is not a $1$-cusped torus, knowing the oriented boundaries $\gamma$ and $\gamma_p$ of a boundary-parallel pair of half-pants $\mu$, where $\gamma$ is closed and $\gamma_p$ is bi-infinite, suffices to uniquely specify $\mu$ and we adopt the notation $\mu=(\gamma,\gamma_p)$.
\end{defn}

\begin{figure}[h!]
\includegraphics[scale=0.2]{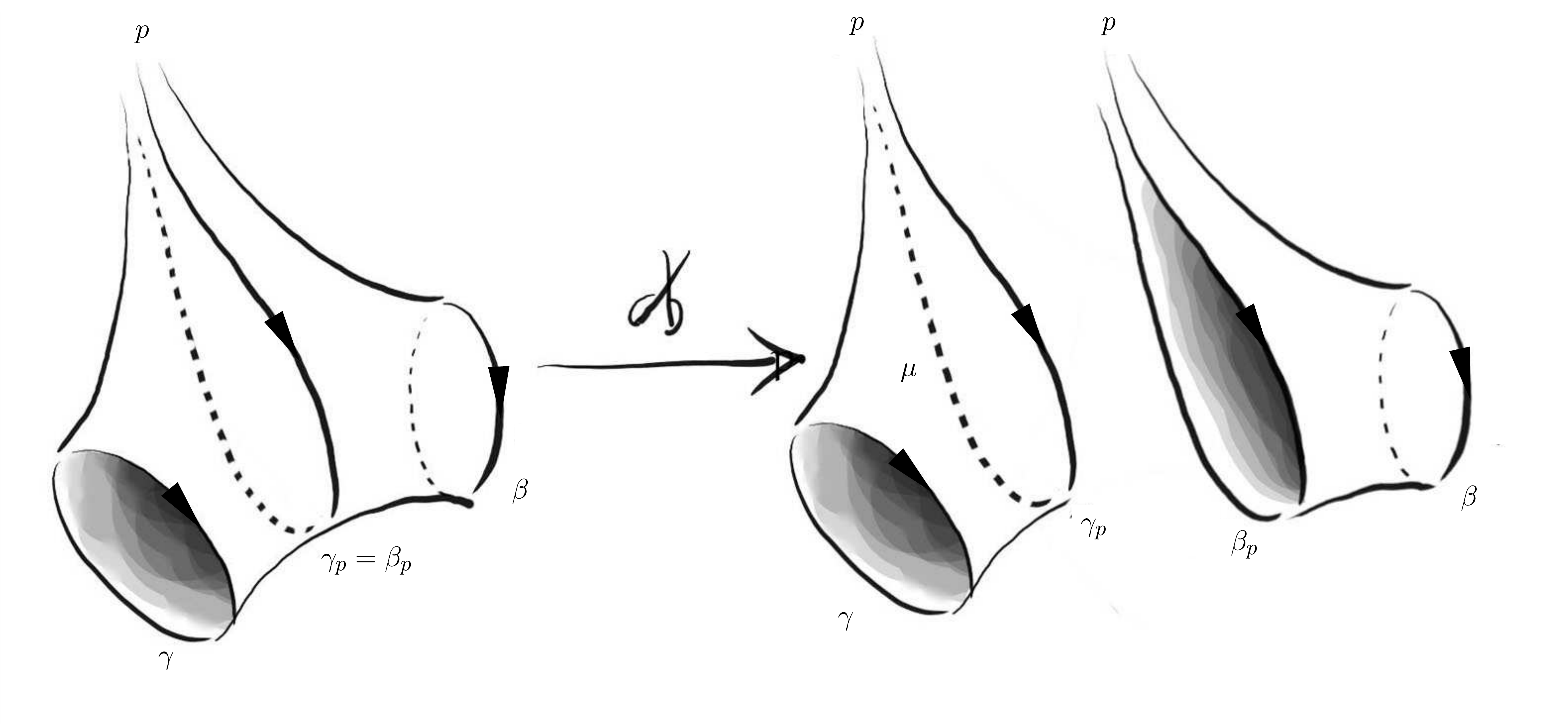}
\caption{Cutting a boundary-parallel pair of pants into two boundary-parallel pairs of half-pants $\mu=(\gamma,\gamma_p)$ and $(\beta,\beta_p)$.}
\label{fig:hpdecomp}
\end{figure}

\begin{rmk}[Notation for half-pants without boundary orientation]
We write $\bar{\mu}$ to refer to the underlying pair of half-pants for a boundary-parallel pair of half-pants $\mu$. Similarly, we use $(\bar{\gamma},\bar{\gamma}_p)$ (when $S_{g,m}$ is not a $1$-cusped torus) to refer to the underlying pair of half-pants without boundary orientation for $\mu=(\gamma,\gamma_p)$. Finally, we denote the collection of all pairs of half-pants containing $p$, up to homotopy, by $\mathcal{H}_p$.
\end{rmk}

\subsubsection{Geometric invariants}
We now introduce two types of geometric invariants of boundary-parallel pairs of half-pants used to express our McShane identity summands. The first (triangle invariants) is based on triple ratios and the second (half-pants ratios) is based on Goncharov--Shen potentials.

\begin{defn}[Triangle invariant for $\xvec{\mathcal{H}}_p$]
\label{definition:Tgammap}
For each boundary-parallel pair of half-pants $\mu=(\gamma,\gamma_p)$, the unique simple bi-infinite geodesic which shoots out from $p$ and spirals towards $\gamma$ parallel to its orientation cuts the underlying pair of half-pants $\bar{\mu}$ into a marked ideal triangle $\triangle_{\gamma,\gamma_p}$ as in Figure~\ref{fig:hpidealtriangle}. We adopt the notation
\begin{align*}
T(\gamma,\gamma_p):=T(\tilde{p},\gamma\cdot\tilde{p},\gamma^+)\text{ and }\tau(\gamma,\gamma_p):=\log T(\gamma,\gamma_p).
\end{align*}
\end{defn}

\begin{figure}[h!]
\includegraphics[scale=0.25]{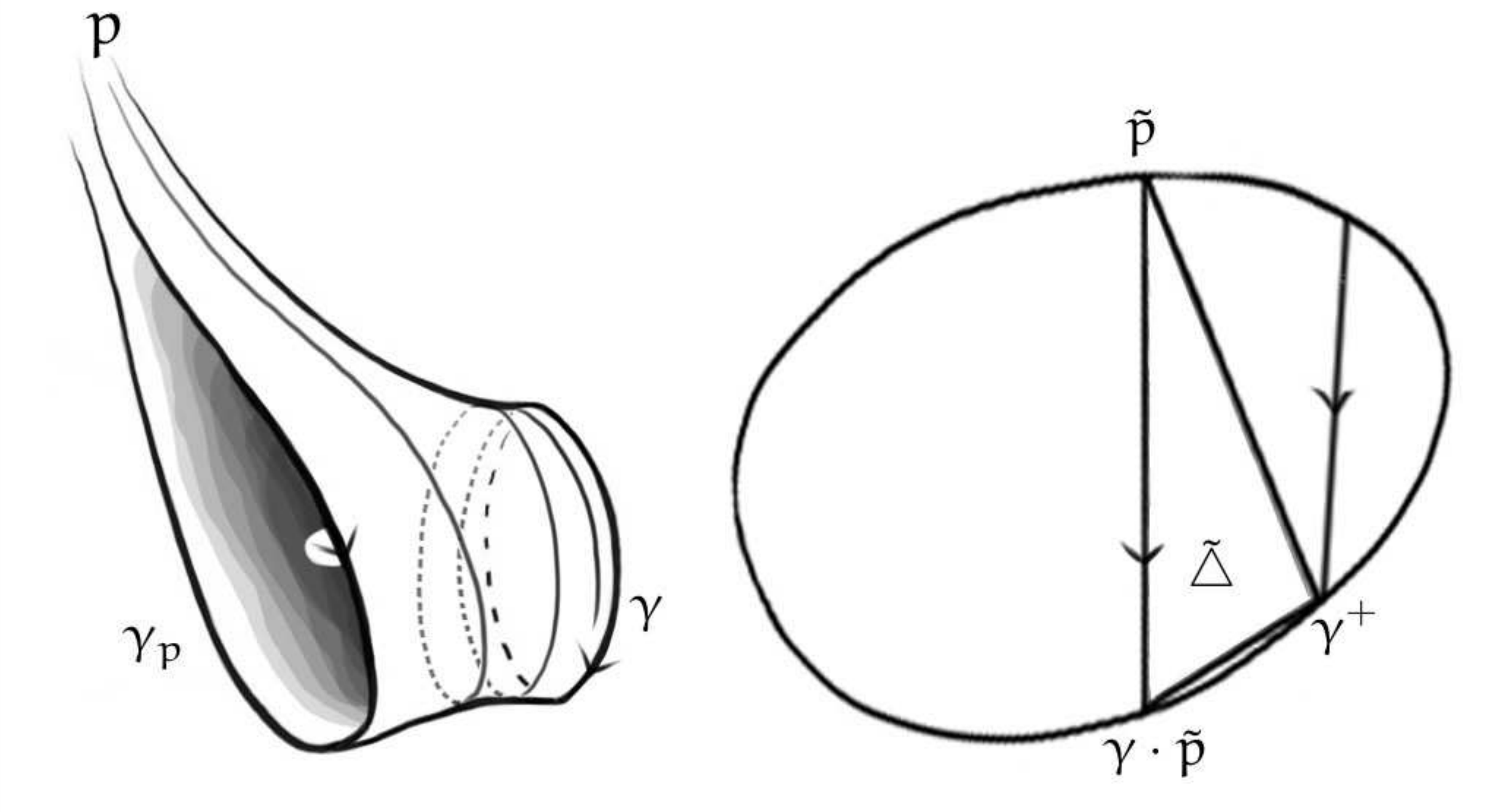}
\caption{Cutting along the spiraling geodesic on the boundary-parallel pair of half-pants $(\gamma,\gamma_p)$ (left figure) results in a marked ideal triangle $\triangle_{\gamma,\gamma_p}$, and the marked triangle $\tilde{\triangle}=(\tilde{p},\gamma\cdot\tilde{p},\gamma^+)$ (right figure) is a lift of $\triangle_{\gamma,\gamma_p}$.}
\label{fig:hpidealtriangle}
\end{figure}

The next geometric invariant we consider is naturally phrased in terms of the Goncharov--Shen potential measure. 

\begin{defn}[Half-pants ratio]
\label{definition:hpr}
Consider a surface $S_{g,m}$ with negative Euler characteristic endowed with a cusped strictly convex real projective surface structure $\Sigma_{g,m}$ (given via a holonomy representation which is positive and has unipotent boundary monodromy). Given any embedded pair of half-pants $\bar{\mu}\in\mathcal{H}_p$ on $S_{g,m}$. We define the \emph{$i$-th half-pants ratio} as the $i$-th Goncharov--Shen potential measure of the subinterval of any embedded horocycle $\eta$ around cusp $p$ lying on (the unique geodesic bordered homotopy representative of) $\bar{\mu}$, and denote it by $B_i(\bar{\mu})$. For any boundary-parallel pair of half-pants $\mu$ with underlying pair of half-pants $\bar{\mu}$, we also define $B_i(\mu):=B_i(\bar{\mu})$.
\end{defn}

\begin{rmk}\label{remark:Bgammap}
When $S_{g,m}$ is not the once-punctured torus, pairs of half-pants $\bar{\mu}$ are uniquely specified by its (unoriented) cuff $\bar{\gamma}$ and its (unoriented) seam $\bar{\gamma}_p$ (see Figure~\ref{fig:hpdecomp}). In these cases, we may write $B_i(\bar{\mu})$ as $B_i(\bar{\gamma},\bar{\gamma}_p)$ and $B_i(\mu)$ as $B_i(\gamma,\gamma_p)$.
\end{rmk}

Practically speaking, it is convenient to also be able to express half-pants ratios in terms of $\mathcal{A}$-coordinates.

\begin{defn}[$(\mu,i)$-Goncharov--Shen potential]
\label{definition:phalfgsp}
For $(\bar{\rho},\bar{\xi})\in \mathcal{A}_{\operatorname{SL}_n, S_{g,m}}$, and a pair of half-pants $\bar{\mu}$ with bi-infinite boundary $\bar{\gamma}_p$, let $\mathcal{T}$ be an ideal triangulation of $S_{g,m}$ which contains $\bar{\gamma}_p$ as one of its ideal edges. Choose a collection $\Theta_p$ of marked anticlockwise-oriented ideal triangles with the first vertex being $p$ as per Definition~\ref{definition:GSp}. Then, for $\bar{\mu} \in \mathcal{H}_p$, let $\Theta_{\bar{\mu}}$ denote set of marked ideal triangles $\triangle\in\Theta_p$ such that a small neighborhood of the first cusp of $\triangle$ is contained  in $\mu$. We define \emph{$(\mu,i)$-Goncharov--Shen potential} to be
\begin{align*}
P_i^\mu := P_i^{\bar{\mu}}:= \sum_{ (f;g,h) \in \Theta_\mu} P_i(f;g,h).
\end{align*}
\end{defn}

\begin{rmk}
The following relationship between the half-pants ratio and the $(\mu,i)$-Goncharov--Shen potential follows by definition:
\begin{align*}
B_i(\mu):=B_i(\bar{\mu})=
\frac{P^{\mu}_i}{P^p_i},
\end{align*}
where $P^{\mu}_i$ is the $(\mu,i)$-th Goncharov--Shen potential. Moreover, suppose that $\bar{\mu},\bar{\mu}'\in \mathcal{H}_p$ have the same bi-infinite geodesic boundary and (hence) glue to an embedded pair of pants containing $p$. Then we know from Lemma~\ref{lem:gsmeasure} that $B_i(\bar{\mu})+B_i(\bar{\mu}')=1$, or equivalently:
\begin{align*}
P_i^p=P_i^\mu+P_i^{\mu'}.
\end{align*}
\end{rmk}

\subsection{Generalizing step~3: McShane identity for the $S_{1,1}$ case}
\label{subsection:s11summand}
We begin by spelling out the $S_{1,1}$ case in detail. This is to motivate and familiarize readers to what needs to occur in general. We show the following:

\begin{thm}[McShane identity for $S_{1,1}$, $n=3$]
\label{theorem:inequsl3s11}
Let $\rho:\pi_1(S_{1,1})\rightarrow\operatorname{PGL}_3(\mathbb{R})$ be a positive representation with unipotent boundary monodromy. Let ${\xvec{\mathcal{C}}_{1,1}}$ denote the collection of oriented simple closed curves up to homotopy on $S_{1,1}$. Then 
\begin{align}
\label{equation:SL3S11identity}
\sum_{\gamma\in{\subvec{\mathcal{C}}_{1,1}}}
\frac{1}{1+e^{\ell_1(\gamma)+\tau(\gamma)}}
= 1,
\end{align}
where $\tau(\gamma)=\log T(\tilde{p},\gamma \tilde{p}, \gamma^+)$ is defined as per Figure~\ref{fig:torussummand}.
\end{thm}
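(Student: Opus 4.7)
The plan is to realize the identity as a measure decomposition of the horocycle $\eta$ at the cusp $p$ on the finite-area convex $\mathbb{RP}^2$ torus $\Sigma = \Omega/\rho(\pi_1(S_{1,1}))$. By Lemma~\ref{lem:boundaryidentification}, the fan of Hilbert geodesics from a fixed lift $\tilde p \in \partial\Omega$ of $p$ identifies $\eta$ with $(\partial\Omega \setminus \{\tilde p\})/\langle \rho(\alpha_{\tilde p})\rangle$. By Lemma~\ref{lem:gsmeasure}, the first Goncharov--Shen potential measure is a probability measure on $\eta$ in the same class as horocyclic Hilbert length, and by the generalized Birman--Series theorem (Theorem~\ref{thm:birmanseries}) the subset $\mathfrak C \cup \mathfrak A$ corresponding to complete simple geodesics and simple ideal arcs has measure zero. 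Consequently the total measure $1$ of $\eta$ equals the sum of the Goncharov--Shen potential measures of the countably many complementary open \emph{gap intervals}.

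Next I would enumerate the gaps via oriented simple closed curves $\gamma \in \subvec{\mathcal{C}}_{1,1}$. Cutting $S_{1,1}$ along the unoriented curve $\bar\gamma$ yields a pair of pants; further cutting along the unique bi-infinite geodesic $\gamma_p$ from $p$ to itself produces two boundary-parallel pairs of half-pants, which the two orientations of $\bar\gamma$ distinguish. Slicing $(\gamma,\gamma_p)$ along its unique spiraling geodesic gives the marked ideal triangle $(\tilde p, \gamma\tilde p, \gamma^+)$ of Definition~\ref{definition:Tgammap}, whose two non-seam sides lift the boundary of the associated gap: in $\partial\Omega$, this gap is the open arc from $\gamma\tilde p$ (endpoint of a lift of $\gamma_p$) to $\gamma^+$ (endpoint of the spiraling geodesic). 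A local Gauss--Bonnet argument in the spirit of \cite[Lemma~4.6]{huang_thesis} shows that any geodesic emanating from $\tilde p$ into this arc stays within the half-pants until its first self-intersection, so distinct $\gamma$ yield disjoint gaps and every gap arises this way. The Goncharov--Shen measure of the $\gamma$-gap is $P_1(\tilde p;\gamma\tilde p,\gamma^+)/P_1^p$, and the identity reduces to the formula
\begin{equation*}
\frac{P_1(\tilde p;\,\gamma\tilde p,\,\gamma^+)}{P_1^p} \;=\; \frac{1}{1+e^{\ell_1(\gamma)+\tau(\gamma)}}.
\end{equation*}

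To prove this formula I would lift $\rho$ to a decorated twisted $\operatorname{SL}_3$-local system $(\bar\rho,\bar\xi) \in \mathcal{A}_3(S_{1,1})$ and work with Fock--Goncharov $\mathcal A$-coordinates for a carefully chosen ideal triangulation (as in Figure~\ref{Figure:potentialp31}). Using additivity of $P_1$ (Definition~\ref{definition:ichar}), $\pi_1$-invariance (Corollary~\ref{cor:triuni}), and the fact that $P_1^p = P_1(\tilde p;\gamma\tilde p,\rho(\alpha_{\tilde p})\gamma\tilde p)$ is the measure of a fundamental domain under $\alpha_{\tilde p}$, I obtain
\begin{equation*}
P_1^p - P_1(\tilde p;\,\gamma\tilde p,\,\gamma^+) \;=\; P_1(\tilde p;\,\gamma^+,\,\rho(\alpha_{\tilde p})\gamma\tilde p),
\end{equation*}
so the target identity becomes a computation of the ratio of two $P_1$ terms, both naturally expressible via the lozenge coefficients $\alpha^{F;G,H}_{a,b,c}$ of Lemma~\ref{lemma:lozenge}. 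Here the eigenvalue ratio $\lambda_1(\rho(\gamma))/\lambda_2(\rho(\gamma)) = e^{\ell_1(\gamma)}$ appears because, in the canonical basis at $\gamma^+$ (Definition~\ref{definition:ratioperiod}), the element $\gamma$ rescales the relevant lozenge entry by this ratio when translating from the triangle $(\tilde p,\gamma\tilde p,\gamma^+)$ to its $\alpha_{\tilde p}$-translate, while the triple ratio $T(\tilde p,\gamma\tilde p,\gamma^+) = e^{\tau(\gamma)}$ enters through Lemma~\ref{lem:triplea}.

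The main obstacle will be executing this $\mathcal A$-coordinate computation cleanly and canonically. Although $\mathcal A_3(S_{1,1})$ has only eight cluster coordinates, relating coordinates adapted to $\tilde p$ versus those adapted to $\gamma\tilde p$ requires translating along a path in the Farey graph of ideal triangulations of $S_{1,1}$, i.e.\ through a chain of flip mutations governed by Equation~\eqref{equation:mutation3} and Lemma~\ref{lem:add}. A secondary delicacy is ensuring that the iterative splitting of $P_1^p$ under all flip sequences converges termwise to $\sum_\gamma P_1(\tilde p; \gamma\tilde p, \gamma^+)$, which is where Birman--Series is indispensable: it certifies that the residual mass supported on limits of flip sequences (simple geodesic directions) is zero. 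Once these pieces are in hand, summing over $\gamma \in \subvec{\mathcal{C}}_{1,1}$ produces the identity, and the dual identity in $\ell_2(\gamma) - \tau(\gamma)$ then follows from the symmetries $\ell_1(\gamma^{-1}) = \ell_2(\gamma)$ and $\tau(\gamma^{-1}) = -\tau(\gamma)$ to be established in Lemma~\ref{lem:12termcompare}.
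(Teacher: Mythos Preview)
Your overall strategy---identify $\eta$ with $(\partial\Omega\setminus\{\tilde p\})/\langle\alpha_{\tilde p}\rangle$, use the Goncharov--Shen potential measure, invoke Birman--Series to kill $\mathfrak C\cup\mathfrak A$, then compute the gap terms in $\mathcal A$-coordinates---is precisely the paper's strategy. The genuine problem is in your enumeration of the gaps and the formula you reduce to.

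Cutting along $\bar\gamma$ and then along $\bar\gamma_p$ produces two half-pants \emph{pieces}, but each piece carries \emph{both} orientations of $\bar\gamma$ as a boundary-parallel structure; the two orientations do not distinguish the two pieces. For each oriented $\gamma\in\xvec{\mathcal C}_{1,1}$ there are therefore \emph{two} gap intervals (one on each half-pants $\mu,\mu'$, each bounded by a lift of $\gamma_p$ and a geodesic spiralling towards $\gamma^+$), not one. Your arc from $\gamma\tilde p$ to $\gamma^+$ is only one of them, and in the paper's computation (Equation~\eqref{equation:s11sl3bmu}) its Goncharov--Shen measure is
\[
\frac{B_1(\mu)}{1+e^{\ell_1(\gamma)+\tau(\gamma)}},
\]
with the half-pants ratio $B_1(\mu)$ of Definition~\ref{definition:hpr}, not $\tfrac{1}{1+e^{\ell_1(\gamma)+\tau(\gamma)}}$. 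The second gap, on $\mu'$, contributes $\tfrac{B_1(\mu')}{1+e^{\ell_1(\gamma)+\tau(\gamma)}}$; only because $B_1(\mu)+B_1(\mu')=1$ do the two combine to the desired summand. Consequently your reduction via the complement cannot work: the arc from $\gamma^+$ to $\alpha_{\tilde p}\gamma\tilde p$ is not a single gap but contains all the gaps for every other simple closed curve, so the ratio $P_1(\tilde p;\gamma^+,\alpha_{\tilde p}\gamma\tilde p)/P_1(\tilde p;\gamma\tilde p,\gamma^+)$ has no reason to equal $e^{\ell_1(\gamma)+\tau(\gamma)}$.

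The paper's actual computation also differs in method. Rather than working directly with flags at $\gamma^+$ (where no $\mathcal A$-decoration lives), the paper approximates: it takes the sequence of ideal triangulations $\mathcal T^k$ obtained by Dehn-twisting along $\gamma$, writes each gap as $P_1(x;y_k,t)/P_1^p$ in honest $\mathcal A$-coordinates for $\mathcal T^k$, and passes to the limit $k\to\pm\infty$ via Lemma~\ref{proposition:lim}, which controls ratios such as $d_k/d_{k-1}\to\lambda_1(\rho(\gamma))$. The triple ratio and eigenvalue ratio then emerge from this limiting process (Equation~\eqref{eq:aal}), not from a single chain of mutations.
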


\begin{rmk}
There are unexpected (not just topological) ``coincidences'' (see Equations~\eqref{equation:triplesym} and \eqref{equation:d121s11}, and Remark~\ref{rmk:trianglewelldefined}) in the specialized $\mathcal{A}_3(S_{1,1})$ setting which lead to the above elegant expression over the (relatively simpler) summation index. Furthermore, the identities obtained via the $i=1,2$ Goncharov--Shen potentials are equal (\S\ref{sec:surprisesymmetry}), this is specialized to the $S_{1,1}$ setting. 
\end{rmk}

\subsubsection{A sequence of ideal triangulations}

Our strategy for obtaining the gap term (i.e.: summands) of \eqref{equation:SL3S11identity} indexed by $\gamma\in\xvec{\mathcal{C}}_{1,1}$ is to compute the Goncharov--Shen potential measures of the intervals $I_1,I_2,I_3,I_4$ in Figure~\ref{Figure:gap} by approximation. Namely, consider the following bi-infinite sequence of ideal triangulations $\{\mathcal{T}^k\}_{k\in\mathbb{Z}}$ of $S_{1,1}$. Let
\begin{itemize}
\item
$\gamma_p$ denote the unique oriented simple bi-infinite geodesic $\gamma_p$ with both ends going up cusp $p$ such that $\gamma$ and $\gamma_p$ are the boundary components of some boundary parallel pair of half-pants on $S_{1,1}$; 
\item
$\mathcal{T}^0$ be any ideal triangulation of $S_{1,1}$ that contains $\gamma_p$ as an ideal edge.
\item
$\mathcal{T}^k$ be the ideal triangulation of $S_{1,1}$ obtained by applying the $k$-fold Dehn twist $\mathrm{tw}_\gamma^k$ along $\gamma$ to $\mathcal{T}^{0}$.
\end{itemize}
We construct a bi-infinite sequence of marked ideal triangles $\{\triangle^k\}_{k\in\mathbb{Z}}$, where $\triangle^k$ lies in $\mathcal{T}^k$, such that $\triangle^k$ converges to $\triangle_\gamma$ as $k$ tends to $\infty$ and to $\triangle_{\gamma^{-1}}$ as $k$ tends to $-\infty$. We then express, using $\mathcal{A}$-coordinates with respect to $\mathcal{T}^k$, the Goncharov--Shen potential measure of the horocyclic segment based at the first ideal vertex of $\triangle^k$ and take the limit as $k\to\pm\infty$. In order to obtain a sensible limit, we then need to have an understanding of the behavior of $\mathcal{A}$-coordinates with respect to $\mathcal{T}^k$ as $k\to\pm\infty$. This strategy was previously applied in \cite[Chapter 4.4.1]{huang_thesis} to compute McShane identity summands for hyperbolic surface via the $n=2$ Goncharov--Shen potential (i.e.: horocycle length).

 \subsubsection{Asymptotic behavior of $\mathcal{A}$-coordinates under Dehn twists} 

In order to facilitate our discussions regarding the behavior of $\mathcal{A}$-coordinates with respect to $\mathcal{T}^k$, we first develop some notation.\medskip

To begin with, given a positive decorated twisted local system $(\bar{\rho},\bar{\xi})\in\mathcal{A}_3(S_{1,1})$, the underlying surface group representation $\rho=\mathbf{pr}\circ \bar{\rho}$ (Remarks~\ref{remark:AX} and \ref{remark:ichartri}) provides the $\rho$-equivariant map $\bar{\xi}_\rho:\partial_\infty \pi_1(S)\rightarrow \mathcal{A}$ by deck transformations. We abuse notation slightly and let $\gamma\in\pi_1(S_{1,1})$ denote an arbitrary homotopy class representative of the oriented simple closed geodesic $\gamma$. Further let
\begin{itemize}
\item
$x=\tilde{p}$ be a lift of $p$ such that there exists a path from $x$ to the axes of $\gamma$ which projects to a simple path on $S_{1,1}$; 
\item
$t:=\gamma\cdot x$ be another lift of $p$;
\item
$y_0$ be yet another lift of $p$ such that the oriented ideal triangle $(x,y_0,t)$ is (the marked representative of) a lift of an ideal triangle in $\mathcal{T}^0$;
\item
$\{z_k:=y_{k+1}:=\gamma^{k+1}\cdot y_0\}$ be the orbit of $y_0$ with respect to the subgroup $\langle\gamma\rangle\leq\pi_1(S_{1,1})$ generated by $\gamma$.
\end{itemize}

\begin{figure}[H]
\includegraphics[scale=0.4]{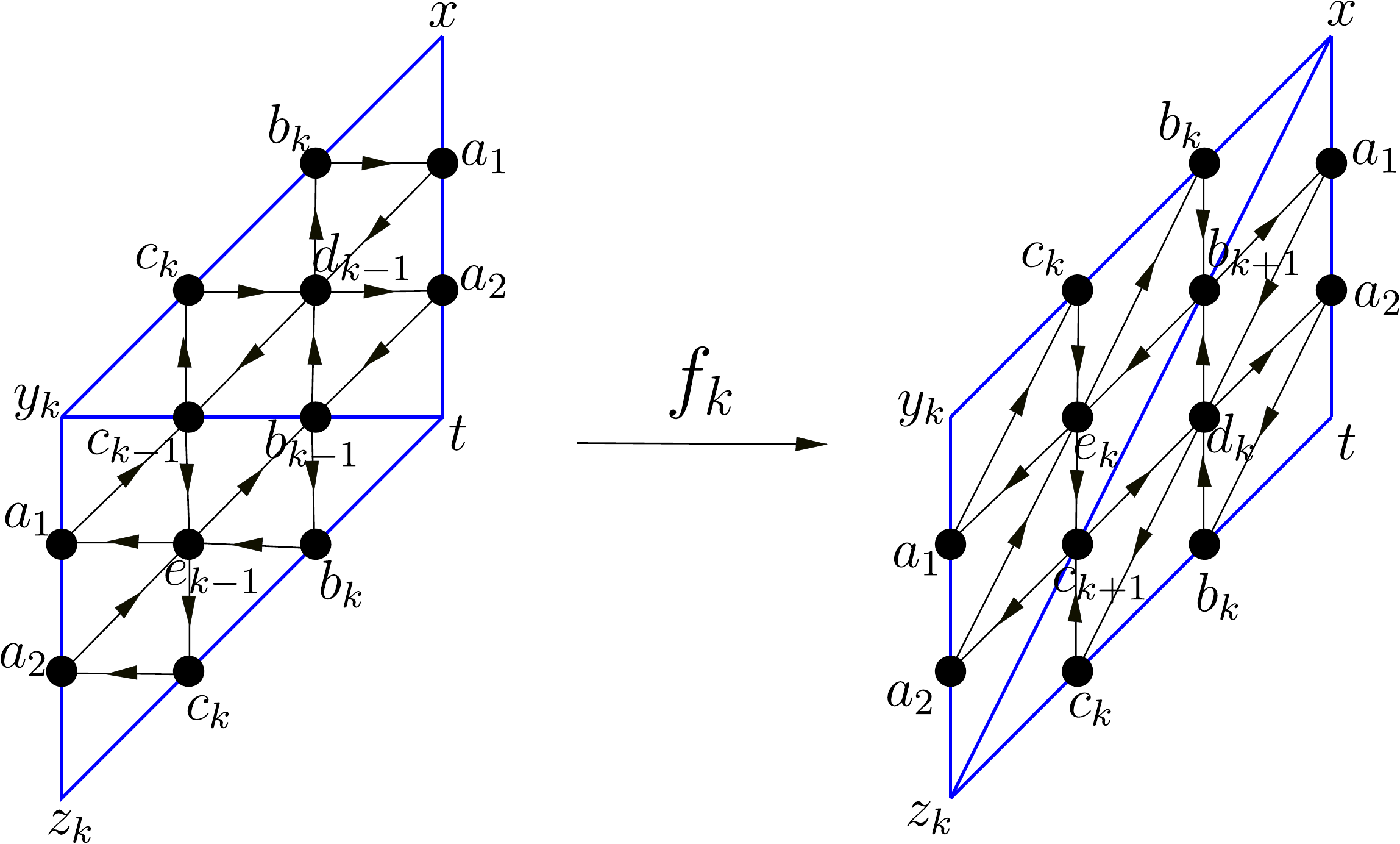}
\caption{A flip $f_k$ along the edge $\overline{y_k t}$.}
\label{Figure:cludyn}
\end{figure}

It is straight-forward to verify the following topological consequences:
\begin{itemize}
\item
the oriented geodesic from $x$ to $t$ and the oriented geodesics from $y_k$ to $z_k$ are all lifts of the oriented bi-infinite geodesic $\gamma_p$;
\item
$x,y_k,z_k,t$ constitute the ideal vertices of an ideal quadrilateral $\overline{xy_kz_kt}$, which is a fundamental domain for $S_{1,1}$;
\item
the ideal triangulation $\hat{\mathcal{T}}^k$ consisting of the two ideal triangles $\overline{xy_kt},\overline{y_ktz_k}$ is a lift of $\mathcal{T}^k$ (see Figure~\ref{Figure:cludyn}) and an ideal triangulation of $\overline{xy_kz_kt}$.
\end{itemize}

Let us coordinatize $\mathcal{A}_3(S_{1,1})$ with respect to $\mathcal{T}^k$ (or $\hat{\mathcal{T}}^k$) as per Figure~\ref{Figure:cludyn}. Then the action of the Dehn twist $tw_\gamma$ on $\mathcal{A}$-coordinates with respect to $\mathcal{T}^k$ is:
\[
(a_1,a_2, b_{k-1}, c_{k-1}, c_k, b_k, d_{k-1}, e_{k-1})\mapsto (a_1,a_2, b_{k}, c_{k}, c_{k+1}, b_{k+1}, d_{k}, e_{k}).
\]
Before continuing, we would like to thank Binbin Xu for clarifying our thinking regarding the following lemma:
\begin{lem}
\label{proposition:lim}
Consider $(\bar{\rho},\bar{\xi})\in \mathcal{A}_3(S_{1,1})$ and let $\gamma$ be a non-peripheral oriented simple closed curve on $S_{1,1}$. By Theorem~\ref{theorem:loxo} we know that the eigenvalues of $\rho(\gamma)$ satisfy 
\[
\lambda_1(\rho(\gamma))> \lambda_2(\rho(\gamma))> \lambda_3(\rho(\gamma))>0.
\]
Let $\{tw_\gamma^k\}_{k\in\mathbb{Z}}$ denote the bi-infinite sequence of Dehn-twists along $\gamma$, then the $\mathcal{A}$-coordinates (as per Figure~\ref{Figure:cludyn}) for $\mathcal{A}_3(S_{1,1})$ under the action of $\{tw_\gamma^k\}$ satisfy the following:
\begin{align}
\label{equ:bdce1}
\lim_{k\rightarrow + \infty} \tfrac{b_{k+1}}{b_k}
=\lim_{k\rightarrow + \infty} \tfrac{d_{k+1}}{d_k}
=\lim_{k\rightarrow - \infty} \tfrac{c_k}{c_{k+1}}
=\lim_{k\rightarrow - \infty} \tfrac{e_k}{e_{k+1}}
=\lambda_1(\rho(\gamma)),
\end{align}
and conversely,
\begin{align}
\label{equ:bdce2}
\lim_{k\rightarrow + \infty} \tfrac{c_{k+1}}{c_k}
=\lim_{k\rightarrow + \infty} \tfrac{e_{k+1}}{e_k}
=\lim_{k\rightarrow - \infty} \tfrac{b_{k}}{b_{k+1}}
=\lim_{k\rightarrow - \infty} \tfrac{d_k}{d_{k+1}}
=\lambda_1(\rho(\gamma^{-1})),
\end{align}
noting that $\lambda_1(\rho(\gamma^{-1}))=\lambda_1(\rho(\gamma)) \lambda_2(\rho(\gamma))$. Moreover, the following limits exist and yield strictly positive real numbers
\begin{align}
\label{equ:bdce3}
\lim_{k\rightarrow +\infty} 
\tfrac{b_k}{d_k}, \;
\lim_{k\rightarrow -\infty} 
\tfrac{b_k}{d_k},\;
\lim_{k\rightarrow +\infty} 
\tfrac{c_{k}}{e_k},\;
\lim_{k\rightarrow -\infty} \tfrac{c_k}{e_k}\in\mathbb{R}_{>0}.
\end{align}
\end{lem}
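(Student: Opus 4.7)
The plan is to lift $\rho(\gamma)$ to $\operatorname{SL}_3(\mathbb{R})$, diagonalize, and track the asymptotic behaviour of the basis vectors of $\bar{\xi}_\rho(y_k)$ as $k\to\pm\infty$. Since each of $b_k,c_k,d_k,e_k$ is (up to sign) a $3\times 3$ determinant of three wedged basis vectors drawn from the $k$-independent bases at $x,t$ and the $k$-dependent bases at $y_k$ or $z_k=y_{k+1}$, the ratios in Equations~\eqref{equ:bdce1}--\eqref{equ:bdce3} reduce to ratios of such determinants, controlled by products of eigenvalues of $\rho(\gamma)$.

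Concretely, by Theorem~\ref{theorem:loxo} fix a lift of $\rho(\gamma)$ to $\operatorname{SL}_3(\mathbb{R})$ with eigenvalues $\lambda_1>\lambda_2>\lambda_3>0$ (so $\lambda_1\lambda_2\lambda_3=1$, whence $\lambda_1(\rho(\gamma^{-1}))=\lambda_3^{-1}=\lambda_1\lambda_2$) and corresponding eigenvectors $v_1,v_2,v_3$; by Notation~\ref{notation:opensur} the attracting and repelling flags are $\xi_\rho(\gamma^+)=(\langle v_1\rangle,\langle v_1,v_2\rangle)$ and $\xi_\rho(\gamma^-)=(\langle v_3\rangle,\langle v_2,v_3\rangle)$. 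Let $(u_1,u_2,u_3)$ be a basis of $\bar{\xi}_\rho(y_0)$ and write $u_1=\sum_i\alpha_i v_i$ and $u_1\wedge u_2=\sum_{i<j}\beta_{ij}\,v_i\wedge v_j$; by $\bar{\rho}$-equivariance the basis $Y^{(k)}_j:=\rho(\gamma)^k u_j$ of $\bar{\xi}_\rho(y_k)$ satisfies
\[
Y^{(k)}_1=\sum_i\alpha_i\lambda_i^k\, v_i,\qquad Y^{(k)}_1\wedge Y^{(k)}_2=\sum_{i<j}\beta_{ij}(\lambda_i\lambda_j)^k\,v_i\wedge v_j.
\]
Positivity of $(\bar{\rho},\bar{\xi})\in\mathcal{A}_3(S_{1,1})$ puts the flags at $y_0,\gamma^+,\gamma^-$ in pairwise generic position, forcing $\alpha_1,\alpha_3,\beta_{12},\beta_{23}$ all to be nonzero. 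Consequently $Y^{(k)}_1\sim\alpha_1\lambda_1^k v_1$ and $Y^{(k)}_1\wedge Y^{(k)}_2\sim\beta_{12}(\lambda_1\lambda_2)^k v_1\wedge v_2$ as $k\to+\infty$, with symmetric leading terms $\alpha_3\lambda_3^k v_3$ and $\beta_{23}(\lambda_2\lambda_3)^k v_2\wedge v_3$ as $k\to-\infty$.

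Finally, inspection of Figure~\ref{Figure:cludyn} identifies $b_k,d_k$ as $\mathcal{A}$-coordinates \emph{linear} in the $y_k$-flag (determinants containing $Y^{(k)}_1$ as a single wedge factor) and $c_k,e_k$ as $\mathcal{A}$-coordinates \emph{quadratic} in the $y_k$-flag (determinants containing $Y^{(k)}_1\wedge Y^{(k)}_2$ as two wedge factors). Substituting the asymptotic expansions into each determinant gives $b_k,d_k\sim\mathrm{const}\cdot\lambda_1^k$ and $c_k,e_k\sim\mathrm{const}\cdot(\lambda_1\lambda_2)^k$ as $k\to+\infty$, and $b_k,d_k\sim\mathrm{const}\cdot\lambda_3^k$ and $c_k,e_k\sim\mathrm{const}\cdot(\lambda_2\lambda_3)^k$ as $k\to-\infty$, each constant being a nonzero $3\times 3$ determinant guaranteed by positivity. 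Equations~\eqref{equ:bdce1} and \eqref{equ:bdce2} now follow by taking consecutive ratios (using $\lambda_3^{-1}=\lambda_1\lambda_2$ and $(\lambda_2\lambda_3)^{-1}=\lambda_1$), and Equation~\eqref{equ:bdce3} follows because $b_k,d_k$ (resp.\ $c_k,e_k$) share the same leading power of the $\lambda_i$, so their ratio converges to the positive quotient of the nonvanishing leading constants. The main obstacle is organisational: correctly reading off from Figure~\ref{Figure:cludyn} the wedge structure of each $\mathcal{A}$-coordinate and verifying the non-vanishing of the relevant leading constants; once this is settled, the asymptotic computations are immediate.
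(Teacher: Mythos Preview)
Your proposal is correct and follows essentially the same approach as the paper: both express the $\mathcal{A}$-coordinates as determinants involving $\rho(\gamma)^k$ applied to basis vectors of the flag at $y_0$, then use the eigen-decomposition of $\rho(\gamma)$ to extract the leading asymptotics, with nonvanishing of the leading determinantal constants guaranteed by hyperconvexity/positivity. The paper phrases the key step as ``$\rho(\gamma^k)y_{0,1}$ converges in direction to $v_1$'' rather than writing out the expansion $Y^{(k)}_1=\sum_i\alpha_i\lambda_i^k v_i$ explicitly, but this is the same computation; your only remaining task is the bookkeeping you flag yourself (e.g.\ $d_k$ is attached to $z_k=y_{k+1}$ rather than $y_k$, which shifts the constant but not the ratio).
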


\begin{proof}
Recall that $t=\gamma \cdot x$, $y_k=\gamma^k\cdot y_0$ and $z_k=\gamma^{k+1} \cdot y_0$. Let 
\[
(x_1,x_2,x_3),\quad (y_{k,1},y_{k,2},y_{k,3}),\quad (z_{k,1},z_{k,2},z_{k,3}), \quad(t_1,t_2,t_3)
\] 
denote respective bases for the flags $\xi_\rho(x),\xi_\rho(y_k),\xi_\rho(z_k)$, $\xi_\rho(t)$ and let $v_1,v_2,v_3$ denote the respective eigenvectors for the eigenvalues $\lambda_1(\rho(\gamma)), \lambda_2(\rho(\gamma)),\lambda_3(\rho(\gamma))$ of $\rho(\gamma)$. The $\mathcal{A}$-coordinates $(a_1,a_2, b_{k-1}, c_{k-1}, c_k, b_k, d_{k-1}, e_{k-1})$ for $(\bar{\rho},\bar{\xi})$ are functions of these bases (Definition~\ref{defn:FGAcoordinate}), and in particular, we have:
\[
\frac{d_{k+1}}{d_{k}} 
=\left|
\frac{\Delta\left(x_1 \wedge \rho(\gamma^{k+2}) y_{0,1} \wedge \rho(\gamma) x_1\right)}
{\Delta\left(x_1 \wedge \rho(\gamma^{k+1}) y_{0,1} \wedge \rho(\gamma) x_1\right)}
\right|.
\]
Since $y\neq\gamma^-$, the sequence of ideal points  $\{\gamma^k\cdot y\}$ necessarily converges to the attracting fix point $\gamma^+$ as $k$ approaches $+\infty$, and hence the vector $\rho(\gamma^{k}) y_{0,1}$ converges to the eigenvector $v_1$. Thus the ratio $\frac{d_{k+1}}{d_k}$ converges to $\lambda_1(\rho(\gamma))$ as $k$ goes to $+\infty$. All other cases in Equations~\eqref{equ:bdce1}, \eqref{equ:bdce2} follow by essentially the same argument.\medskip

Furthermore, we have 
\[
\lim_{k\rightarrow +\infty} 
\frac{b_k}{d_k}
=\lim_{k\rightarrow +\infty}  
\left|
\frac{\Delta\left(x_1 \wedge x_2 \wedge \rho(\gamma^k) y_{0,1} \right)}
{\Delta\left(x_1 \wedge \rho(\gamma^{k+1}) y_{0,1} \wedge \rho(\gamma) x_1\right)}
\right|
=
\left|
\frac{\Delta\left(x_1 \wedge x_2 \wedge v_1 \right)}
{\Delta\left(x_1 \wedge \rho(\gamma) v_1 \wedge \rho(\gamma) x_1\right)}
\right|,
\]
which is well-defined and strictly positive because $\xi_\rho$ is hyperconvex. All other cases in Equation~\eqref{equ:bdce3} follow by the same argument.
\end{proof}

\begin{rmk}
\label{remark:generallim}
Although Lemma~\ref{proposition:lim} is stated for $\mathcal{A}$-coordinates with respect to a triangulation on $S_{1,1}$, the proof itself does not use the full topology of $S_{1,1}$ and merely requires that we have an annulus around some $\gamma$ with one puncture on each of the boundaries of the annulus. The resulting $\mathcal{A}$-coordinates for an ideal triangulation of such an annulus is depicted in Figure~\ref{Figure:cpp}.
\end{rmk}

\subsubsection{Limiting summand for $S_{1.1},\;n=3$ case}

\begin{proof}[Proof of Theorem~\ref{theorem:inequsl3s11}] 
We continue to use the notation developed in the previous two subsubsections. Given an oriented simple closed geodesic $\gamma$, the goal of this subsection is compute the McShane identity summand indexed by $\gamma$. Recall that the relevant term is the sum of the $i=1$ Goncharov--Shen potential measures of the horocyclic segments $I_1$ and $I_2$ as depicted in Figure~\ref{Figure:gap} (albeit with $\beta$ identified with $\gamma$), and hence takes the form:
\begin{align*}
\lim_{k\rightarrow + \infty}
\tfrac{P_1(x;y_k,t)}{P_1^p}+  
\lim_{k\rightarrow -\infty} 
\tfrac{P_1(y_k;z_k,x)}{P_1^p}. 
\end{align*}
Expressed in terms of $\mathcal{A}$-coordinates as per Figure~\ref{Figure:cludyn}, this is equal to
\begin{align*}
\lim_{k\rightarrow + \infty}
\tfrac{d_{k-1}}{a_1 b_k P_1^p}
+ \lim_{k\rightarrow -\infty}
\tfrac{e_k}{ a_1c_kP_1^p}
\end{align*}
The closed oriented geodesic $\gamma$ and the bi-infinite ideal $\gamma_p$ constitute the oriented boundaries of two distinct pairs of boundary-parallel half pants $\mu,\mu'\in\xvec{\mathcal{H}}_p$. Let $\mu$ denote the pair of half-pants with $(\mu,1)$-Goncharov--Shen potential
\[
P_1^{\mu}=
{\color{red}P_1(x;y_k,t)}
+{\color{blue}P_1(t;x,z_k)},
\]
and let $\mu'$ denote the pair of half-pants with $(\mu',1)$-Goncharov--Shen potential
\[
P_1^{\mu'}=P_1(y_k;t,x)+P_1(z_k;t,y_k).
\]
Then, expressed in terms of $\mathcal{A}$-coordinates, we have: 
\[
P_1^\mu=
{\color{red}\tfrac{d_{k-1}}{a_1b_k}}
+{\color{blue}\tfrac{d_k}{a_2b_k}},
\]
where the {\color{red}red} term is expressed in terms of the $\mathcal{A}$-coordinates for $\mathcal{T}^k$ and the {\color{blue}blue} term is expressed with respect to $\mathcal{T}^{k+1}$. Invoking Lemma~\ref{proposition:lim} to take limits, we obtain the following:
\begin{align}
\begin{aligned}
\label{equation:s11sl3bmu}
\lim_{k\rightarrow + \infty}\frac{d_{k-1}}{b_k a_1 P_1^p} 
&=\lim_{k\rightarrow + \infty} \frac{\frac{d_{k-1}}{ a_1b_k}}{\frac{d_{k-1}}{ a_1b_k}+ \frac{d_k}{ a_2b_k}} \cdot \frac{P_1^\mu}{P_1^p}
=\lim_{k\rightarrow + \infty} \frac{B_1(\mu)}{1+ \frac{a_1 d_k}{a_2 d_{k-1}}}\\
&= \frac{B_1(\mu)}{1+ \frac{a_1 }{a_2} \lambda_1(\rho(\gamma))}.
\end{aligned}
\end{align}
We similarly obtain:
\begin{align*}
\lim_{k\rightarrow -\infty} \frac{e_k}{c_k a_1 P_1^p}
=\frac{B_1(\mu')}{1+ \frac{a_1 }{a_2} \lambda_1(\rho(\gamma))}
\end{align*}
Since $B_1(\mu)+ B_1(\mu')=1$, the two summands add to
\begin{align*}
\frac{1}{1+ \frac{a_1}{a_2} \lambda_1(\rho(\gamma))}.
\end{align*}
Moreover, by Lemma~\ref{proposition:lim}, observe that
\begin{align}
\begin{aligned}
\label{eq:aal}
T(\tilde{p},\gamma \tilde{p}, \gamma^+)
&=\lim_{k \rightarrow + \infty} \frac{a_1 b_{k-1} c_k}{a_2 c_{k-1} b_k} = \frac{a_1}{a_2} \cdot \lim_{k \rightarrow + \infty}\frac{b_{k-1}}{b_k} \cdot \lim_{k \rightarrow + \infty} \frac{c_{k}}{c_{k-1}} \\
&= \frac{a_1}{a_2}\cdot \frac{1}{\lambda_1(\rho(\gamma))}\cdot \lambda_1(\rho(\gamma))\lambda_2(\rho(\gamma))= \frac{a_1 \lambda_2(\rho(\gamma))}{a_2}.
\end{aligned}
\end{align}
Thus the McShane identity summand for $\gamma\in\xvec{\mathcal{C}}_{1,1}$ is
\begin{align*}
\left(1+T(\tilde{p},\gamma \tilde{p}, \gamma^+)\cdot \frac{\lambda_1(\rho(\gamma))}{\lambda_2(\rho(\gamma))}\right)^{-1}
&=\left(1+e^{\ell_1(\gamma)+\tau(\gamma)}\right)^{-1},\\
\text{and hence}
\quad
\sum_{\gamma\in{\subvec{\mathcal{C}}_{1,1}}}
\frac{1}{1+e^{\ell_1(\gamma)+\tau(\gamma)}}
&=1\quad\text{as desired.}
\end{align*}
\end{proof}

\begin{rmk}
\label{remark:c11}
In the $3$-Fuchsian locus, we know that $\tau(\gamma)=0$ and $\ell_1(\gamma)=\ell_1(\gamma^{-1})$, the gap term $\frac{1}{1+e^{\ell_1(\gamma)+\tau(\gamma)}}$ is same as $\frac{1}{1+e^{\ell_1(\gamma^{-1})+\tau(\gamma^{-1})}}$ for any $\gamma \in \xvec{\mathcal{C}}_{1,1}$. After catering for the canonical $2:1$ orientation-forgetting map between $\xvec{\mathcal{C}}_{1,1}$ and $\mathcal{C}_{1,1}$, one immediately obtains the classical McShane identity.
\end{rmk}

\begin{rmk}
\label{rmk:trianglewelldefined}
Recall from the statement of Theorem~\ref{thm:firstn3S11}, that there are two possible candidate ideal triangles spiraling to $\gamma$ which may be used to define $\tau(\gamma)$. Equation~\ref{eq:aal} ensures that their triangle invariants are equal, and hence that $\tau(\gamma)$ is well-defined.
\end{rmk}

\subsubsection{Extra symmetries in the $S_{1,1},\;n=3$ case.}
\label{sec:surprisesymmetry}

\begin{lem}
\label{lem:12termcompare}
Let $\rho:\pi_1(S_{1,1})\to\operatorname{PGL}_3(\mathbb{R})$ be a positive representation with unipotent boundary, then the McShane identity summand indexed by every $\gamma\in\xvec{C}_{1,1}$ satisfies
\begin{align*}
\frac{1}{1+e^{\ell_1(\gamma^{-1})+\tau(\gamma^{-1})}}=
\frac{1}{1+e^{\ell_2(\gamma)-\tau(\gamma)}}.
\end{align*}
\end{lem}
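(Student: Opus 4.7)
The plan is to reduce the summand equality to two separate identities, $\ell_1(\gamma^{-1})=\ell_2(\gamma)$ and $\tau(\gamma^{-1})=-\tau(\gamma)$, whose sum recovers the required equality of exponents $\ell_1(\gamma^{-1})+\tau(\gamma^{-1})=\ell_2(\gamma)-\tau(\gamma)$.

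The length identity is immediate from Definition~\ref{definition:ilength}: lifting $\rho(\gamma)\in\operatorname{PGL}_3(\mathbb{R})$ to $\operatorname{SL}_3(\mathbb{R})$ with eigenvalues $\lambda_1>\lambda_2>\lambda_3>0$, the inverse has eigenvalues $\lambda_3^{-1}>\lambda_2^{-1}>\lambda_1^{-1}$, so $\ell_1(\gamma^{-1})=\log(\lambda_2/\lambda_3)=\ell_2(\gamma)$.

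For the triangle invariant identity, I would simply rerun the proof of Equation~\eqref{eq:aal} with $\gamma$ replaced by $\gamma^{-1}$. Keeping the same lift $\tilde{p}$ of the cusp and setting $t':=\gamma^{-1}\tilde{p}$, one builds the analog of the configuration in Figure~\ref{Figure:cludyn} along the axis of $\gamma^{-1}$; by Remark~\ref{remark:generallim}, Lemma~\ref{proposition:lim} still applies (with the roles of $\gamma$ and $\gamma^{-1}$ interchanged). The same limiting argument that produced \eqref{eq:aal} will yield
$$T(\tilde{p},\gamma^{-1}\tilde{p},\gamma^-)=\frac{a_1'\,\lambda_2(\rho(\gamma^{-1}))}{a_2'},$$
where $a_1',a_2'$ are the two $\mathcal{A}$-coordinates on the edge $\overline{\tilde{p},t'}$.

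The main obstacle is then to identify $a_1',a_2'$ in terms of the original $a_1,a_2$ from the edge $\overline{\tilde{p},t}$. The decorated flag at $t'$ is the $\bar{\rho}(\gamma^{-1})$-translate of the decorated flag at $\tilde{p}$, so the $\operatorname{SL}_3$-invariance of the volume form $\Delta$ gives $\bar{\rho}(\gamma^{-1})$-equivariance of vertex functions, that is, $\Delta_V=\Delta_{\gamma^{-1}V}$ for every vertex $V$. Combining this with the index-swap convention $v_{i,n-i}^{x,y}=v_{n-i,i}^{y,x}$ from the vertex notation in \S\ref{subsection:FGcoor} forces $a_1'=a_2$ and $a_2'=a_1$ (the orientation-reversal of the edge swaps the two labels). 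Substituting and using $\lambda_2(\rho(\gamma^{-1}))=\lambda_2(\rho(\gamma))^{-1}$, we obtain
$$T(\tilde{p},\gamma^{-1}\tilde{p},\gamma^-)=\frac{a_2}{a_1\lambda_2(\rho(\gamma))}=\frac{1}{T(\tilde{p},\gamma\tilde{p},\gamma^+)},$$
so that $\tau(\gamma^{-1})=-\tau(\gamma)$, completing the proof.
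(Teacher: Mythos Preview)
Your proof is correct and follows the same overall strategy as the paper: reduce the claimed equality of summands to the two identities $\ell_1(\gamma^{-1})=\ell_2(\gamma)$ and $\tau(\gamma^{-1})=-\tau(\gamma)$. The length identity is handled identically.

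For the triangle-invariant identity your tactic differs slightly from the paper's. You build a fresh copy of the Figure~\ref{Figure:cludyn} configuration for $\gamma^{-1}$, rerun Equation~\eqref{eq:aal} to obtain $T(\tilde{p},\gamma^{-1}\tilde{p},\gamma^-)=a_1'\lambda_2(\rho(\gamma^{-1}))/a_2'$, and then identify $(a_1',a_2')=(a_2,a_1)$ by applying the deck transformation $\gamma^{-1}$ to the edge $(\gamma\tilde p,\tilde p)$ together with the index convention $v^{x,y}_{i,n-i}=v^{y,x}_{n-i,i}$. The paper instead stays inside the \emph{original} coordinate system: it first uses projective invariance and the swap antisymmetry $T(A,B,C)=T(B,A,C)^{-1}$ to rewrite
\[
T(\tilde p,\gamma^{-1}\tilde p,\gamma^-)=T(\gamma\tilde p,\tilde p,\gamma^-)=T(\tilde p,\gamma\tilde p,\gamma^-)^{-1},
\]
and then evaluates $T(\tilde p,\gamma\tilde p,\gamma^-)$ as the $k\to-\infty$ limit of $\dfrac{a_1 b_{k-1} c_k}{a_2 c_{k-1} b_k}$ using the $k\to-\infty$ clauses of Lemma~\ref{proposition:lim}. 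Both routes land on $\dfrac{a_2}{a_1\lambda_2(\rho(\gamma))}$. The paper's route is a touch cleaner since it avoids erecting a parallel coordinate system and tracking which edge vertex is which; your route is more hands-on but equally valid once the swap $(a_1',a_2')=(a_2,a_1)$ is justified, as you do.
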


\begin{proof}
First observe that
\[
\ell_2(\gamma)
=\log\left(\tfrac{\lambda_2(\rho(\gamma))}{\lambda_3(\rho(\gamma))}\right)
=\log\left(\tfrac{\lambda_2(\rho(\gamma^{-1}))^{-1}}{\lambda_1(\rho(\gamma^{-1}))^{-1}}\right)
=\ell_1(\gamma^{-1}).
\]
Then, utilizing properties of triple ratios and  Lemma~\ref{proposition:lim}, we obtain that
\begin{align*}
&T(\tilde{p},\gamma^{-1} \tilde{p}, \gamma^-)
=T(\gamma \tilde{p}, \tilde{p},\gamma^-)
=T(\tilde{p},\gamma \tilde{p}, \gamma^-)^{-1} 
=\left(\lim_{k \rightarrow - \infty} \frac{a_1 b_{k-1} c_k}{a_2 c_{k-1} b_k}\right)^{-1} 
\\=& \left(\frac{a_1}{a_2} \cdot \lim_{k \rightarrow - \infty}\frac{b_{k-1}}{b_k} \cdot \lim_{k \rightarrow - \infty} \frac{c_{k}}{c_{k-1}}\right)^{-1} 
= \frac{a_2}{a_1 \lambda_2(\rho(\gamma))}.
\end{align*}
Combined with Equation~\eqref{eq:aal}, we obtain 
\begin{align}
\label{equation:triplesym}
T(\tilde{p},\gamma \tilde{p}, \gamma^+) \cdot T(\tilde{p},\gamma^{-1} \tilde{p}, \gamma^-) =1,\quad
\text{and hence}\quad
\tau(\gamma^{-1})=-\tau(\gamma).
\end{align}
\end{proof}
As an immediate corollary to Lemma~\ref{lem:12termcompare}, we obtain the following (alternative form of the) $S_{1,1}$ McShane identity:

\begin{prop}
\label{prop:dualidentity}
Let $\rho:\pi_1(S_{1,1})\rightarrow\operatorname{PGL}_3(\mathbb{R})$ be a positive representation with unipotent boundary monodromy. Let ${\xvec{\mathcal{C}}_{1,1}}$ denote the collection of oriented simple closed curves up to homotopy on $S_{1,1}$. Then 
\begin{align*}
\sum_{\gamma\in{\subvec{\mathcal{C}}_{1,1}}}
\frac{1}{1+e^{\ell_2(\gamma)-\tau(\gamma)}}
= 1,
\end{align*}
where $\tau(\gamma)=\log T(\tilde{p},\gamma \tilde{p}, \gamma^+)$ is defined as per Figure~\ref{fig:torussummand}.
\end{prop}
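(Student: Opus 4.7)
The plan is to obtain Proposition~\ref{prop:dualidentity} as a near-immediate consequence of Theorem~\ref{theorem:inequsl3s11} together with Lemma~\ref{lem:12termcompare}. The key observation is that the orientation-reversing involution $\gamma \mapsto \gamma^{-1}$ is a bijection of the set $\xvec{\mathcal{C}}_{1,1}$ of oriented simple closed geodesics up to homotopy. Thus, any series indexed over $\xvec{\mathcal{C}}_{1,1}$ can be reindexed by this involution without changing the total sum.

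First, I would start with the identity established in Theorem~\ref{theorem:inequsl3s11},
\[
\sum_{\gamma \in \subvec{\mathcal{C}}_{1,1}} \frac{1}{1+e^{\ell_1(\gamma)+\tau(\gamma)}} = 1,
\]
and apply the bijection $\gamma \mapsto \gamma^{-1}$ on $\xvec{\mathcal{C}}_{1,1}$ to rewrite the identity as
\[
\sum_{\gamma \in \subvec{\mathcal{C}}_{1,1}} \frac{1}{1+e^{\ell_1(\gamma^{-1})+\tau(\gamma^{-1})}} = 1.
\]
This step is purely formal and simply relabels the summation index.

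Next, I would invoke Lemma~\ref{lem:12termcompare}, which precisely asserts the termwise equality
\[
\frac{1}{1+e^{\ell_1(\gamma^{-1})+\tau(\gamma^{-1})}} = \frac{1}{1+e^{\ell_2(\gamma)-\tau(\gamma)}}.
\]
Substituting this into each summand of the reindexed series yields the desired identity. There is essentially no obstacle here since both ingredients (the reindexing and the termwise identification of summands) are already in hand: Lemma~\ref{lem:12termcompare} packages the two nontrivial symmetries $\tau(\gamma^{-1}) = -\tau(\gamma)$ (Equation~\eqref{equation:triplesym}) and $\ell_1(\gamma^{-1}) = \ell_2(\gamma)$ (from the inverse relationship of eigenvalues of $\rho(\gamma)$ and $\rho(\gamma^{-1})$), and Theorem~\ref{theorem:inequsl3s11} supplies the original identity.

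The only part that warrants a sentence of justification is the reindexing step: one should confirm that $\gamma \mapsto \gamma^{-1}$ is indeed a well-defined involution on $\xvec{\mathcal{C}}_{1,1}$, which is clear from the definition of $\xvec{\mathcal{C}}_{1,1}$ as oriented simple closed geodesics up to (free) homotopy, since reversing orientation preserves simplicity and homotopy class. No serious technical obstacle arises; the proof is essentially a one-line combination of Theorem~\ref{theorem:inequsl3s11} and Lemma~\ref{lem:12termcompare}.
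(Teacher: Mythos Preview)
Your proposal is correct and matches the paper's approach exactly: the paper presents Proposition~\ref{prop:dualidentity} as an immediate corollary of Lemma~\ref{lem:12termcompare} (applied to Theorem~\ref{theorem:inequsl3s11}), and your write-up simply spells out the reindexing $\gamma\mapsto\gamma^{-1}$ and the termwise substitution that make this corollary work.
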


In fact, Proposition~\ref{prop:dualidentity} is precisely the identity that one obtains if one uses the $i=2$ Goncharov--Shen potential measure instead of the the $i=1$ measure. The following lemma, specific to $(\bar{\rho},\bar{\xi})\in \mathcal{A}_3(S_{1,1})$, is the root cause of this extra symmetry.

\begin{lem}
\label{lemma:extrasym}
Adopting notation as per Figure~\ref{Figure:cludyn}, then we have:
\begin{align}
\label{equation:extrasym}
\begin{aligned}
\tfrac{P_1(z_k;t,y_k)}{P_1^p}=\tfrac{P_2(x;y_k,t)}{P_2^p},
\tfrac{P_2(z_k;t,y_k)}{P_2^p}=\tfrac{P_1(x;y_k,t)}{P_1^p},\\
\tfrac{P_1(y_k;z_k,x)}{P_1^p}=\tfrac{P_2(t;x,z_k)}{P_2^p},
\tfrac{P_1(y_k;z_k,x)}{P_1^p}=\tfrac{P_2(t;x,z_k)}{P_2^p}.
\end{aligned}
\end{align}
\end{lem}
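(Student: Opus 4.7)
The plan is to establish all four identities by direct computation in Fock--Goncharov $\mathcal{A}$-coordinates with respect to the ideal triangulation $\mathcal{T}^k$ depicted in Figure~\ref{Figure:cludyn}. First I would apply Example~\ref{example:lozenge} together with Lemma~\ref{lemma:lozenge} to tabulate rational expressions for each $P_i$-character of each of the six marked anticlockwise-oriented ideal triangles in $\Theta_p$: the $P_1$-characters are already collected in Lemma~\ref{lem:add}, and the twelve rational summands making up $P_2^p$ were recorded earlier in \S\ref{sec:GS}. In particular, all four triangles $(z_k;t,y_k)$, $(x;y_k,t)$, $(y_k;z_k,x)$, $(t;x,z_k)$ occurring in the statement admit explicit rational expressions of the form monomial (for $P_1$) or a two-term sum (for $P_2$) in the eight $\mathcal{A}$-coordinates.

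For the first identity $P_1(z_k;t,y_k)/P_1^p = P_2(x;y_k,t)/P_2^p$, I would cross-multiply and clear denominators to reduce the claim to a polynomial identity in the eight coordinates. Exploiting that $P_1^p$ is linear in the triangle coordinates $w,q$ while $P_2^p$ (after multiplying through by $wq$) is linear in $w,q$ as well, and that the numerator $P_1(z_k;t,y_k) = w/(ac)$ is a single monomial while $P_2(x;y_k,t)$ is a sum of two terms, the identity would split along the degrees in $(w,q)$ into pieces that can be verified individually. The $S_{1,1}$-specific feature that makes the pairing work is that all four vertices $x,y_k,z_k,t$ of the fundamental quadrilateral are lifts of the same cusp $p$, so that every marked oriented ideal triangle at $p$ contributes simultaneously to both $P_1^p$ and $P_2^p$; this is what produces the clean ratio match that fails in general.

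The remaining three identities then follow either by analogous direct computations of the same length and difficulty, or, more efficiently, by appealing to the discrete symmetries of the setup: the $\mathbb{Z}/2$ involution of the fundamental quadrilateral that interchanges its two triangles (sending $(x,y_k,z_k,t)\mapsto(t,z_k,y_k,x)$ up to relabeling), the equivariance $P_i(\gamma f;\gamma g,\gamma h)=P_i(f;g,h)$ of Corollary~\ref{cor:triuni} applied to a Dehn twist power, and the antisymmetry $P_i(F;G,H) = -P_i(F;H,G)$ combined with the additivity of $P_i$.

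The main obstacle, and the reason this is flagged as a coincidence particular to $\mathcal{A}_3(S_{1,1})$, is conceptual rather than technical: the polynomial identity emerging from cross-multiplication can be mechanically verified by expansion, but it does not appear to arise from a structural principle valid for larger surfaces or higher rank. A cleaner explanation would presumably come from the Cartan involution of $\operatorname{SL}_3$ interchanging $P_1$ and $P_2$, combined with the hyperelliptic involution of $S_{1,1}$ acting on the configuration so as to preserve Goncharov--Shen potentials up to swap; making this rigorous is the key subtlety, but for the purposes of Lemma~\ref{lemma:extrasym} the direct algebraic verification suffices.
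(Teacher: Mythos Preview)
Your plan is correct in principle and would succeed, but it takes a different route from the paper's proof, and the difference is worth noting.

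You propose to attack each identity of the form $P_1(\triangle)/P_1^p = P_2(\triangle')/P_2^p$ head-on: cross-multiply, clear denominators, and verify the resulting polynomial identity in the eight $\mathcal{A}$-coordinates. This works, and your degree-splitting observation (that $P_1^p$ is linear in the two interior coordinates while $wq\cdot P_2^p$ is as well) is a sensible way to organize the verification. But $P_2^p$ has twelve monomial summands, so the cross-multiplied identity is bulkier than it needs to be.

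The paper sidesteps $P_i^p$ entirely. It first proves a statement \emph{not} involving the potentials: that the four ratios
\[
\tfrac{P_2(z_k;t,y_k)}{P_1(x;y_k,t)},\quad
\tfrac{P_2(t;x,z_k)}{P_1(y_k;z_k,x)},\quad
\tfrac{P_2(x;y_k,t)}{P_1(z_k;t,y_k)},\quad
\tfrac{P_2(y_k;z_k,x)}{P_1(t;x,z_k)}
\]
are all equal. Each such equality is a much smaller polynomial identity (two terms against two), and the paper verifies it not by expansion but by substituting the \emph{mutation relations} (the cluster exchange relations) until the claim reduces to a single known mutation. Only then does it invoke the elementary mediant fact $\tfrac{a}{b}=\tfrac{c}{d}\Rightarrow \tfrac{a}{b}=\tfrac{a+c}{b+d}$: since the four equal ratios have numerators summing to $P_2^p$ and denominators summing to $P_1^p$, the common value is $P_2^p/P_1^p$, and rearranging gives the lemma.

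So: your approach is a brute-force polynomial check; the paper's is a structural reduction via cluster mutations plus a mediant trick that makes the appearance of $P_i^p$ almost automatic. Your symmetry remarks for deducing the remaining identities, and your closing observation about the hyperelliptic/Cartan involution, match the paper's own commentary.
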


\begin{rmk}
\label{remark:extrasym}
The equations in \eqref{equation:extrasym} all take the following form: 
\[
B_1(I)=B_2(I'),
\]
where $I$ and $I'$ are horocyclic segments topologically related by the (topological) hyperelliptic involution of $S_{1,1}$. This is highly suggestive that this fact could be true for any two horocyclic segment $I,I'$ related by hyperelliptic involution. In any case, this symmetry is specific to $S_{1,1}$, and one can easily construct counter-examples for general surfaces. 
\end{rmk}

\begin{proof}[Proof of Lemma~\ref{lemma:extrasym}]
We first prove that 
\[
\tfrac{P_2(z_k;t,y_k)}{P_1(x;y_k,t)} = \tfrac{P_2(t;x,z_k)}{P_1(y_k;z_k,x)},
\] 
noting that this is tantamount to showing:
\begin{align}
\label{equation:p1p2s}
\tfrac{a_2 e_k c_{k-1}}{a_1^2 c_k e_{k-1}}
+ \tfrac{e_k b_{k-1}}{a_1 b_k e_{k-1}} 
= \tfrac{a_2 d_{k-1} b_{k+1}}{a_1^2 b_k d_k }
+ \tfrac{d_{k-1} c_{k+1}}{a_1 c_k d_k}.
\end{align}
Using mutation formulae, we make the following substitutions
\[
b_{k+1}=\tfrac{b_k d_k+a_1 e_k}{d_{k-1}},\;
c_{k+1}=\tfrac{e_k c_k+d_k a_2}{e_{k-1}},\;
e_k c_{k-1}=c_k e_{k-1}+ a_1 d_{k-1}
\]
to reduce Equation~\eqref{equation:p1p2s} to the mutation formula at $d_k$:
\begin{align*}
b_{k-1} d_k = a_2 e_{k-1} + b_k d_{k-1},
\end{align*}
which we know to be true. Then, by symmetry, we have:
\[
\tfrac{P_2(x;y_k,t)}{P_1(z_k;t,y_k)}
=\tfrac{P_2(y_k;z_k,x)}{P_1(t;x,z_k)}.
\]
By computing directly
\[
\tfrac{P_2(z_k;t,y_k)}{P_1(x;y_k,t)} 
=\tfrac{P_2(x;y_k,t)}{P_1(z_k;t,y_k)}.
\]
Thus we obtain:
\[
\tfrac{P_2(z_k;t,y_k)}{P_1(x;y_k,t)} 
=\tfrac{P_2(t;x,z_k)}{P_1(y_k;z_k,x)}
=\tfrac{P_2(x;y_k,t)}{P_1(z_k;t,y_k)}
=\tfrac{P_2(y_k;z_k,x)}{P_1(t;x,z_k)}.
\]

We now use the fact that: for $a,b,c,d\in\mathbb{R}_{>0}$, if $\frac{a}{b}=\frac{c}{d}$, then $\frac{a}{b}=\frac{c}{d}=\frac{a+c}{b+d}$, to conclude that 
\begin{align*}
\tfrac{P_2(x;y_k,t)}{P_1(z_k;t,y_k)} 
=\tfrac{P_2(z_k;t,y_k)}{P_1(x;y_k,t)} 
=\tfrac{P_2(t;x,z_k)}{P_1(y_k;z_k,x)} 
=\tfrac{P_2(y_k;z_k,x)}{P_1(t;x,z_k)}
=\tfrac{P_2^p}{P_1^p},
\end{align*}
rearranging yields the desired result.
\end{proof}

\begin{rmk}
We previously asserted that Proposition~\ref{prop:dualidentity} follows from Lemma~\ref{lemma:extrasym}. To see this, recall Remark~\ref{remark:extrasym} and observe that:
\begin{itemize}
\item
the horocyclic intervals $I_1$ and $I_2$ (in Figure~\ref{Figure:gap}, with $\beta$ identified with $\gamma$) are respectively related to the intervals $I_3$ and $I_4$ via hyperelliptic involution;
\item
each of these intervals arises as a limit of the intervals (implicitly) described in Equation~\eqref{equation:extrasym}.
\end{itemize}
\end{rmk}

We also make mention of the following simple corollary of Lemma~\ref{lemma:extrasym}:
\begin{lem}
\label{lem:compareBs}
Consider an oriented simple closed geodesic $\gamma\in\xvec{C}_{1,1}$ on a finite-area $1$-cusped convex real projective torus $\Sigma_{1,1}$, and let $\gamma_p$ be the unique oriented simple ideal geodesic which is boundary-parallel to $\gamma$. The oriented geodesics $\gamma$ and $\gamma_p$ constitute the boundaries of two boundary-parallel pair of half-pants $\mu$ and $\mu'$. Then,
\[
B_1(\mu)=B_2(\mu')
\quad\text{and}\quad
B_1(\mu')=B_2(\mu).
\]
\end{lem}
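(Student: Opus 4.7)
The plan is to deduce the claim directly from the chain of equalities at the heart of Lemma~\ref{lemma:extrasym}'s proof. First, by additivity we have $B_1(\mu) + B_1(\mu') = 1 = B_2(\mu) + B_2(\mu')$, so it suffices to prove the first equality $B_1(\mu) = B_2(\mu')$; the other equality then follows automatically.

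Adopting the notation and coordinate setup from the proof of Theorem~\ref{theorem:inequsl3s11}, with fundamental ideal quadrilateral $(x, y_k, z_k, t)$ and diagonal $\overline{y_k t}$ separating the two lifts of the half-pants, I would invoke the four-fold chain of equalities derived in the proof of Lemma~\ref{lemma:extrasym}:
\[
\frac{P_2(x;y_k,t)}{P_1(z_k;t,y_k)} = \frac{P_2(z_k;t,y_k)}{P_1(x;y_k,t)} = \frac{P_2(t;x,z_k)}{P_1(y_k;z_k,x)} = \frac{P_2(y_k;z_k,x)}{P_1(t;x,z_k)} = \frac{P_2^p}{P_1^p}.
\]
Rearranging the two relevant ratios yields the pair of identities $P_1(x;y_k,t)/P_1^p = P_2(z_k;t,y_k)/P_2^p$ and $P_1(t;x,z_k)/P_1^p = P_2(y_k;z_k,x)/P_2^p$.

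Next, I would combine these with the mixed-triangulation computations of the half-pants potentials. From the proof of Theorem~\ref{theorem:inequsl3s11} we have $P_1^\mu = P_1(x; y_k, t) + P_1(t; x, z_k)$; running the structurally identical argument with $i = 2$ in place of $i = 1$ (Definition~\ref{definition:phalfgsp} and the flip-invariance arguments applying verbatim to both $i$), I obtain the analogous expression $P_2^{\mu'} = P_2(z_k; t, y_k) + P_2(y_k; z_k, x)$. Summing the two symmetry identities and dividing produces
\[
B_1(\mu) \;=\; \frac{P_1(x;y_k,t) + P_1(t;x,z_k)}{P_1^p} \;=\; \frac{P_2(z_k;t,y_k) + P_2(y_k;z_k,x)}{P_2^p} \;=\; B_2(\mu').
\]

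The only subtlety, which is more a bookkeeping matter than a genuine obstacle, lies in justifying the formula $P_2^{\mu'} = P_2(z_k;t,y_k) + P_2(y_k;z_k,x)$. This decomposition is obtained by repeating the mixed-triangulation argument (using both $\hat{\mathcal{T}}^k$ and $\hat{\mathcal{T}}^{k+1}$) that produced the corresponding formula for $P_1^\mu$; alternatively, one may compute $P_2^{\mu'} = P_2^p - P_2^\mu$ and invoke the four-term expression for $P_2^p$ recorded in the proof of Lemma~\ref{lemma:extrasym}, namely $P_2^p = P_2(x;y_k,t) + P_2(z_k;t,y_k) + P_2(t;x,z_k) + P_2(y_k;z_k,x)$.
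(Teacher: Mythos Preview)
Your proposal is correct and follows precisely the route the paper intends: the lemma is stated as an immediate corollary of Lemma~\ref{lemma:extrasym}, and your argument is exactly how one fills in those details, pairing the four equalities from that lemma with the two-term expressions for $P_i^{\mu}$ and $P_i^{\mu'}$. The only minor point is that your phrase ``running the structurally identical argument with $i=2$'' slightly obscures that one needs the decomposition for $\mu'$ rather than $\mu$; your alternative derivation via $P_2^{\mu'}=P_2^p-P_2^{\mu}$ and the four-term expression for $P_2^p$ is the cleanest way to justify this step.
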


\subsection{Generalizing step~3: McShane identity for series over pairs of half-pants}
\label{subsection:bphpsummand}
\begin{thm}
\label{theorem:equsl3}
Let $\rho:\pi_1(S_{g,m})\rightarrow\operatorname{PGL}_3(\mathbb{R})$ be a positive representation with unipotent boundary monodromy and let $p$ be a distinguished cusp on $S_{g,m}$. Let $\xvec{\mathcal{H}}_{p}$ be the set of the homotopy classes of boundary-parallel pairs of half-pants containing $p$ (Definition~\ref{defn:hp}). Then,
\begin{align*}
\sum_{(\gamma,\gamma_p)\in\subvec{\mathcal{H}}_{p}}
\frac{B_1(\gamma,\gamma_p)}{1+e^{\ell_1(\gamma)+\tau(\gamma,\gamma_p)}}
=\sum_{(\gamma,\gamma_p)\in\subvec{\mathcal{H}}_{p}}
\frac{B_2(\gamma,\gamma_p)}{1+e^{\ell_2(\gamma)-\tau(\gamma,\gamma_p)}}
= 1,
\end{align*}
where $B_i(\gamma,\gamma_p)$ is the $i$-th half-pants ratio (Definition~\ref{definition:hpr}) and $\tau(\gamma,\gamma_p):=\log T(\tilde{p},\gamma\cdot\tilde{p},\gamma^+)$ as per Figure~\ref{fig:hpidealtriangle}.
\end{thm}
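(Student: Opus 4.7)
The plan is to adapt the three-step McShane-type proof schema laid out in~\S\ref{sec:proofideasummary} and executed for $S_{1,1}$ in the proof of Theorem~\ref{theorem:inequsl3s11}, now refined at the level of boundary-parallel pairs of half-pants on an arbitrary $S_{g,m}$. I focus on the $i=1$ identity; the $i=2$ identity follows by the verbatim parallel argument using the second Goncharov--Shen potential measure in place of the first, with the sign change $+\tau\mapsto-\tau$ arising from the limiting eigenvalue ratio featuring $\lambda_2/\lambda_3$ instead of $\lambda_1/\lambda_2$ and the cyclic antisymmetry of triple ratios (cf.\ Lemma~\ref{lem:12termcompare}).

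Steps~1 and~2 (probabilistic partition and indexing): from~\S\ref{sec:step1}, the $1$-st Goncharov--Shen potential measure on the horocycle $\eta$ around $p$ is a probability measure mutually absolutely continuous with horocyclic Hilbert length, and Theorem~\ref{thm:birmanseries} forces the non-gap set $\mathfrak{C}\cup\mathfrak{A}\subset\eta$ to have zero measure. Total mass $1$ therefore decomposes as a countable sum of gap-interval measures. The gap intervals are in canonical bijection with $\vec{\mathcal{H}}_p$: to each $\mu=(\gamma,\gamma_p)$ is associated the unique gap interval $I_\mu$ bounded on one side by the ray of $\gamma_p$ emanating from $p$, and on the other by the ray from $p$ spiraling into $\gamma$ inside $\bar\mu$ parallel to $\gamma$'s orientation. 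This correspondence is verified by a local Gauss--Bonnet-type argument tracking the first self-intersection of the geodesic launched from $p$ in any gap direction, in the spirit of~\cite[Lemma~4.6]{huang_thesis}.

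Step~3 (gap term evaluation): for each $\mu=(\gamma,\gamma_p)$, lift to the universal cover so that $\tilde\gamma_p$ joins $\tilde p$ to $\gamma\cdot\tilde p$ with $\gamma^+$ on the far side of $\tilde\gamma_p$. Then $I_\mu$ lifts to the boundary arc between $\gamma\cdot\tilde p$ and $\gamma^+$, so by the rotation-matrix construction of~\S\ref{sec:GS}, its measure equals $P_1(\tilde p;\,\gamma\cdot\tilde p,\,\gamma^+)/P_1^p$ (up to a sign fixed by the ordering conventions of~\S\ref{sec:step1}). To evaluate this, approximate $\gamma^+$ by $y_k=\gamma^k\cdot y_0$ for a generic lift $y_0$ of $p$ and use the Dehn-twisted sequence $\mathcal T^k=tw_\gamma^k(\mathcal T^0)$ beginning from any ideal triangulation $\mathcal T^0$ of $S_{g,m}$ containing $\gamma_p$ as an edge. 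The asymptotic $\mathcal A$-coordinate estimates of Lemma~\ref{proposition:lim}, extended via Remark~\ref{remark:generallim} to any annular neighborhood of $\gamma$ with one puncture on each boundary, apply locally regardless of the ambient topology of $S_{g,m}$. The identical chain of identities that produced~\eqref{equation:s11sl3bmu} in the $S_{1,1}$ proof then gives
\begin{align*}
\mathrm{meas}(I_\mu)
=\lim_{k\to+\infty}\frac{P_1(\tilde p;\,y_k,\,\gamma\cdot\tilde p)}{P_1^p}
=\frac{B_1(\mu)}{1+e^{\ell_1(\gamma)+\tau(\gamma,\gamma_p)}},
\end{align*}
where the last equality identifies the limiting ratio $\tfrac{a_1}{a_2}\lambda_1(\rho(\gamma))$ of $\mathcal A$-coordinates with $e^{\ell_1(\gamma)+\tau(\gamma,\gamma_p)}$ via~\eqref{eq:aal}. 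Summing over $\mu\in\vec{\mathcal{H}}_p$ yields the desired identity.

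The main obstacle is to ensure that the $(\mu,1)$-Goncharov--Shen potential $P_1^\mu$ decomposes, relative to a suitable triangulation extending $\gamma_p$, into finitely many $\mathcal A$-coordinate summands whose $tw_\gamma^k$-asymptotics are dominated by a single leading term, so that Lemma~\ref{proposition:lim} produces precisely $B_1(\mu)$ and not some subtler combination. In the $S_{1,1}$ case this manifested as the clean ``red$+$blue'' two-term split leading to~\eqref{equation:s11sl3bmu}. In general, the triangulation-invariance of $P_1^\mu$ (Definition~\ref{definition:phalfgsp}) allows reduction to a minimal triangulation of the half-pants $\mu$ consisting only of the triangles adjacent to $\gamma_p$, bringing the local computation back to precisely the $S_{1,1}$ picture; verifying this reduction, and matching the signs and orientations across it, is the main technical point.
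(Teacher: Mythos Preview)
Your proposal is correct and follows essentially the same three-step strategy as the paper: partition the horocycle via the Goncharov--Shen potential measure and Theorem~\ref{thm:birmanseries}, index the gaps by $\vec{\mathcal{H}}_p$, and compute each gap term as a Dehn-twist limit via Lemma~\ref{proposition:lim} and Remark~\ref{remark:generallim}. The one place where the paper handles things more cleanly is precisely your ``main obstacle'': rather than reducing to a minimal triangulation of $\mu$ inside $S_{g,m}$ via triangulation-invariance of $P_1^\mu$, the paper observes that the gap summand depends only on the intrinsic geometry of the half-pants $(\gamma,\gamma_p)$ together with its decoration at $p$, and therefore assumes without loss of generality that an auxiliary half-pants $\hat\mu$ is glued to $\mu$ along $\gamma$, producing an annulus with one cusp on each boundary (Figure~\ref{Figure:cpp}). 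In that annulus the $\mathcal A$-coordinate picture for $\mathcal T^k$ is literally the two-term $S_{1,1}$ split, so the computation~\eqref{equation:s11sl3bmu} carries over verbatim to give~\eqref{equation:sgmsl3}; your triangulation-invariance reduction would work too but is more laborious to write out.
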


\begin{proof}
We need to compute the McShane identity summand indexed by an arbitrary boundary-parallel pair of half-pants $\mu = (\gamma,\gamma_p)\in\xvec{\mathcal{H}}_p$. The relevant summand is the Goncharov--Shen potential measure of $I_1$, as depicted in Figure~\ref{Figure:gap}, and we use essentially the same derivation as for the $S_{1,1}$ case.\medskip

First observe that the summand is only dependant upon the geometry of $(\gamma,\gamma_p)$ and we may therefore assume without loss of generality that there exists another embedded pair of half-pants $\hat{\mu}$ on the given surface which glues with $\mu$ along $\gamma$ to yield an ``annulus'' as per the right hand side of Figure~\ref{Figure:cpp}. We designate a $k\in\mathbb{Z}$ family of $\mathcal{A}$-coordinates (left hand side of Figure~\ref{Figure:cpp}) related by $k$-th Dehn twist along $\gamma$ in analogy to the coordinates described in Figure~\ref{Figure:cludyn}.

\begin{figure}[h!]
\includegraphics[scale=0.4]{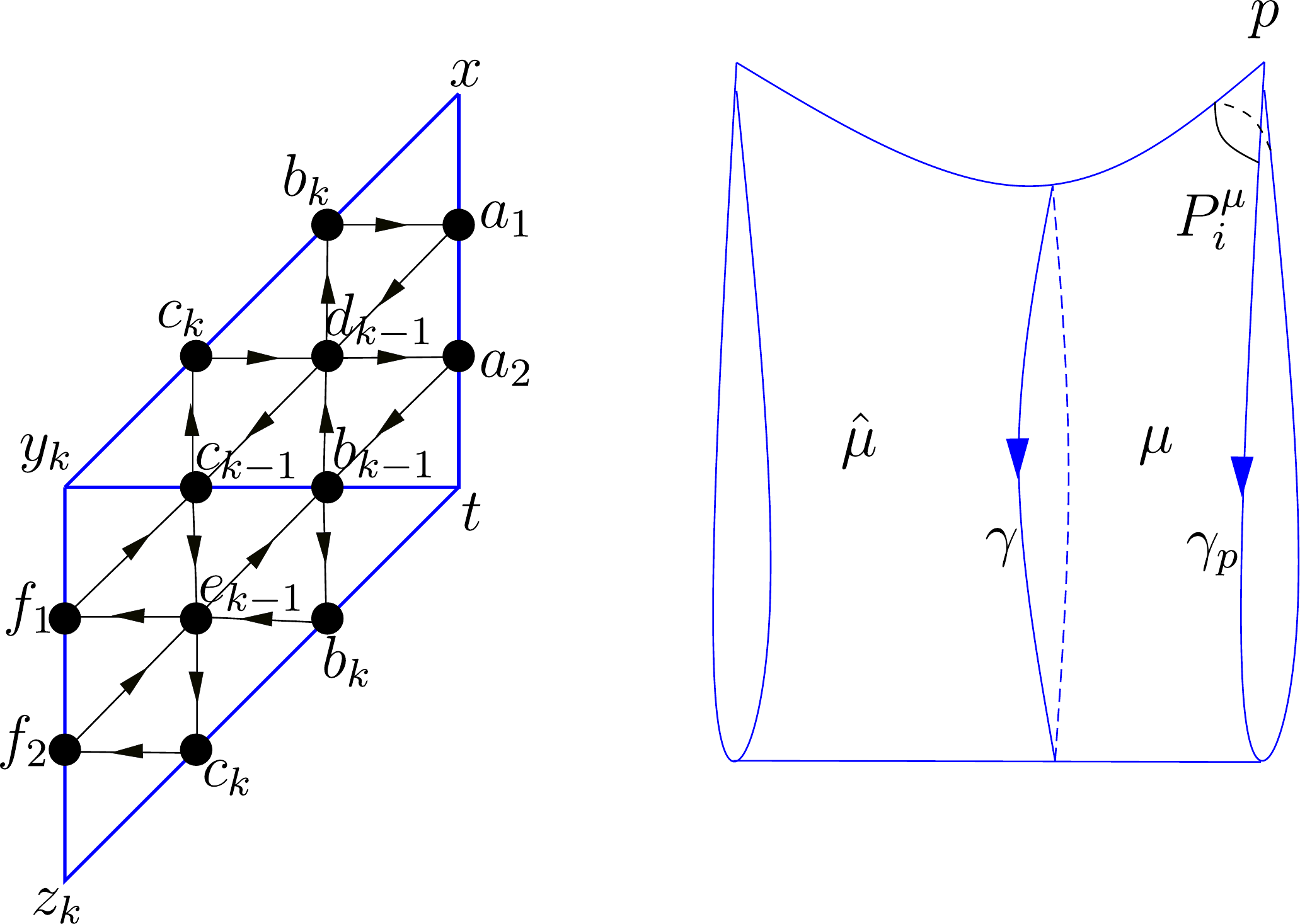}
\caption{The embedded boundary-parallel pair of half-pants $\mu$ is glued to another embedded boundary-parallel pair of half-pants $\hat{\mu}$.}
\label{Figure:cpp}
\end{figure}

We again have $t= \gamma\cdot x$, and $z_k=y_{k+1}=\gamma^{k+1}\cdot y_0$, and the summand that we require is once again
\begin{align}
\label{equation:sgmsl3}
\begin{aligned}
\lim_{k\rightarrow + \infty}\frac{d_{k-1}}{b_k a_1 P_1^p} 
=\lim_{k\rightarrow + \infty} \tfrac{\frac{d_{k-1}}{b_k a_1}}{\frac{d_{k-1}}{b_k a_1}+ \frac{d_k}{b_k a_2}} \cdot \frac{P_1^\mu}{P_1^p}
=\lim_{k\rightarrow + \infty} \tfrac{B_1(\mu)}{1+ \frac{a_1 d_k}{a_2 d_{k-1}}}= \tfrac{B_1(\mu)}{1+ \frac{a_1 }{a_2} \lambda_1(\rho(\gamma))},
\end{aligned}
\end{align}
where Lemma~\ref{proposition:lim} and Remark~\ref{remark:generallim} allow us to take the limit. Again invoking Equation~\eqref{eq:aal}, we obtain the desired summand of
\[
\frac{B_1(\gamma,\gamma_p)}{1+e^{\ell_1(\gamma)+\tau(\gamma,\gamma_p)}}.
\]
The analogous computed $i=2$ Goncharov--Shen potential measure for $I_1$ is
\[
\frac{B_2(\gamma,\gamma_p)}{1+e^{\ell_2(\gamma)-\tau(\gamma,\gamma_p)}}.
\]
\end{proof}

\subsection{Generalizing step~3: McShane identity for series over pairs of pants}
\label{subsection:bpppsummand}
Finally, we prove the pairs of pants summation form of the McShane identity. Recall that the present series is indexed by $\xvec{\mathcal{P}}_p$ --- the set of boundary-parallel pairs of pants (Definition~\ref{definition:Pp}). 

\begin{thm}
\label{thm:equsl3pp}
Let $\rho:\pi_1(S_{g,m})\rightarrow\operatorname{PGL}_3(\mathbb{R})$ be a positive representation with unipotent boundary monodromy and let $p$ be a distinguished cusp on $S_{g,m}$. Then 
\begin{align*}
\sum_{(\beta,\gamma)\in\subvec{\mathcal{P}}_p}
\left(
1+\tfrac{\cosh \frac{e_1(\beta,\gamma)}{2}}
{\cosh \frac{d_1(\beta,\gamma)}{2}}
\cdot 
e^{\frac{1}{2}\left(\ell_1(\beta)+\tau(\beta,\beta_p)+\ell_1(\gamma)+\tau(\gamma,\gamma_p)\right)}
\right)^{-1}
= 1.
\end{align*}
where $d_1(\beta,\gamma)$ and $e_1(\beta,\gamma)$ are edge invariants (Definition~\ref{definition:edgeinvariant}), and $\tau(\gamma,\gamma_p)$ and $\tau(\beta,\beta_p)$ triangle invariants (Definition~\ref{definition:Tgammap}).
\end{thm}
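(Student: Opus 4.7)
My plan is to derive Theorem~\ref{thm:equsl3pp} from the half-pants identity Theorem~\ref{theorem:equsl3} by collecting summation terms according to the pair of pants from which they arise. Every boundary-parallel pair of pants $(\beta,\gamma)\in\xvec{\mathcal{P}}_p$ separates, along its unique simple bi-infinite geodesic $\beta_p=\gamma_p$, into two boundary-parallel pairs of half-pants $(\beta,\beta_p),(\gamma,\gamma_p)\in\xvec{\mathcal{H}}_p$, and this yields a $2$-to-$1$ surjection $\xvec{\mathcal{H}}_p\twoheadrightarrow\xvec{\mathcal{P}}_p$. Summing the half-pants identity accordingly, Theorem~\ref{thm:equsl3pp} follows at once from the pairwise identity
\begin{equation}
\frac{B_1(\beta,\beta_p)}{1+e^{\ell_1(\beta)+\tau(\beta,\beta_p)}}+\frac{B_1(\gamma,\gamma_p)}{1+e^{\ell_1(\gamma)+\tau(\gamma,\gamma_p)}}=\left(1+\tfrac{\cosh(e_1(\beta,\gamma)/2)}{\cosh(d_1(\beta,\gamma)/2)}\,e^{(\ell_1(\beta)+\tau(\beta,\beta_p)+\ell_1(\gamma)+\tau(\gamma,\gamma_p))/2}\right)^{-1}\label{eq:pop-pairwise}
\end{equation}
for each $(\beta,\gamma)\in\xvec{\mathcal{P}}_p$.

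To prove the pairwise identity~\eqref{eq:pop-pairwise} I compute the half-pants ratios $B_1(\beta,\beta_p)$ and $B_1(\gamma,\gamma_p)$ explicitly. I fix a positive lift $(\bar\rho,\bar\xi)\in\mathcal{A}_3(S_{g,m})$ of $\rho$ and an ideal triangulation $\mathcal{T}$ containing $\beta_p=\gamma_p$ as an edge, so that the ideal quadrilateral $(\tilde p,\gamma\tilde p,\beta^+,\gamma^+)$ of Definition~\ref{definition:edgeinvariant} arises in the universal cover, decomposing along the diagonal $(\tilde p,\gamma\tilde p)$ into the ideal triangles $(\tilde p,\gamma^+,\gamma\tilde p)$ and $(\tilde p,\gamma\tilde p,\beta^+)$ underlying $\triangle_{\gamma,\gamma_p}$ and $\triangle_{\beta,\beta_p}$ respectively. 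Emulating the proof of Theorem~\ref{theorem:equsl3}, I use the iterated-Dehn-twist technique of Lemma~\ref{proposition:lim} (justified in the pair-of-pants setting via Remark~\ref{remark:generallim}) applied separately along $\gamma$ and along $\beta$: each limit expresses one of the $(\mu,1)$-Goncharov--Shen potentials $P_1^{(\gamma,\gamma_p)}$ or $P_1^{(\beta,\beta_p)}$ in terms of $\mathcal{A}$-coordinate minors of the quadrilateral. Applying the lozenge relation of Lemma~\ref{lemma:lozenge} then rewrites the ratios $B_1(\gamma,\gamma_p)=P_1^{(\gamma,\gamma_p)}/P_1^p$ and $B_1(\beta,\beta_p)=P_1^{(\beta,\beta_p)}/P_1^p$ as positive rational functions of the triple ratios of the two triangles and the edge functions $D_1,D_2$ along $(\tilde p,\gamma\tilde p)$, i.e.\ of $\tau(\gamma,\gamma_p),\tau(\beta,\beta_p),d_1,e_1,\ell_1(\beta),\ell_1(\gamma)$ (the length data entering via the limit relation~\eqref{eq:aal}).

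Writing $A=\ell_1(\beta)+\tau(\beta,\beta_p)$, $B=\ell_1(\gamma)+\tau(\gamma,\gamma_p)$ and $c=\cosh(e_1/2)/\cosh(d_1/2)$, and using the geometric fact $B_1(\beta,\beta_p)+B_1(\gamma,\gamma_p)=1$ (since the pair of pants contains a full neighborhood of $p$ and therefore the two half-pants partition every sufficiently small embedded horocycle at $p$), the identity~\eqref{eq:pop-pairwise} reduces to a single linear equation in $B_1(\beta,\beta_p)$ whose unique solution is
\begin{equation*}
B_1(\beta,\beta_p)=\frac{(1+e^A)(e^B-ce^{(A+B)/2})}{(e^B-e^A)(1+ce^{(A+B)/2})},\qquad B_1(\gamma,\gamma_p)=\frac{(1+e^B)(e^A-ce^{(A+B)/2})}{(e^A-e^B)(1+ce^{(A+B)/2})}.
\end{equation*}
The remaining core task is to verify that these algebraic formulas match the $\mathcal{A}$-coordinate expressions produced in the previous paragraph. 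The main obstacle will be exactly this bookkeeping step: organising the lozenge expansions so that the combined sum $P_1^{(\gamma,\gamma_p)}+P_1^{(\beta,\beta_p)}$ produces the common denominator $(e^B-e^A)(1+ce^{(A+B)/2})$, and in particular recognising the $\cosh(e_1/2)/\cosh(d_1/2)$ factor as arising from the $D_1\leftrightarrow D_2$ symmetry across the diagonal $(\tilde p,\gamma\tilde p)$. The $(g,m)=(1,1)$ specialization (where $d_1=-e_1$ so $c=1$, and the identity collapses to Equation~\eqref{equation:mc3c}) and the Fuchsian locus (where additionally $\tau=0$ and $d_1=e_1$) furnish useful consistency checks. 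Once this computation is completed and the match-up verified, summing~\eqref{eq:pop-pairwise} over $\xvec{\mathcal{P}}_p$ and invoking Theorem~\ref{theorem:equsl3} completes the proof.
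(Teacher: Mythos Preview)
Your approach is correct and the pairwise identity you propose does hold, but it is a detour relative to the paper's argument. The paper does not pass through Theorem~\ref{theorem:equsl3}: it computes the pants summand directly as the Goncharov--Shen potential measure of $I_1\cup I_2$, via a single family of triangulations $\{\mathcal{T}^k\}$ obtained by \emph{simultaneous} Dehn twists along $\gamma$ and $\beta^{-1}$ (Figure~\ref{Figure:ppkflp}). The combined limit $\lim_{k\to\infty}\bigl(\tfrac{h_{k-1}}{a_1 b_k P_1^p}+\tfrac{q_{k-1}}{a_1 d_k P_1^p}\bigr)$ is then simplified in one pass using $\lim\tfrac{d_k h_{k-1}}{b_k q_{k-1}}=e^{d_1}$, $\lim\tfrac{h_{k-1}d_{k-1}}{b_{k-1}q_{k-1}}=e^{-e_1}$, and the relation $D_1D_2=\lambda_1(\rho(\beta))/\lambda_1(\rho(\gamma))$, yielding the stated summand without ever isolating the individual half-pants ratios.

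Your route does go through: one finds $B_1(\beta,\beta_p)=(1+e^A)/\bigl(1+e^A+e^{d_1}(1+e^B)\bigr)$, and this matches your displayed formula once one uses $e^{d_1+e_1}=e^{A-B}$, after which the pairwise identity is a direct check. The catch is that extracting $B_1(\beta,\beta_p)$ in these variables still requires exactly the simultaneous-twist picture of Figure~\ref{Figure:ppkflp}; your phrase ``applied separately along $\gamma$ and along $\beta$'' is misleading, since twisting along $\gamma$ alone never produces $\beta^+$, and the edge invariants $d_1,e_1$ are cross-ratios of the full quadrilateral $(\tilde p,\gamma\tilde p,\beta^+,\gamma^+)$. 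So your approach buys a cleaner logical reduction (Theorem~\ref{thm:equsl3pp} as a corollary of Theorem~\ref{theorem:equsl3} plus one local identity), at the cost of an extra algebraic verification on top of the same limit computation the paper already performs.
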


\begin{proof}
The McShane identity summand indexed by the boundary-parallel pairs of pants $(\beta,\gamma)\in\xvec{\mathcal{P}}_p$ is the Goncharov--Shen potential measure of $I_1\cup I_2$ as illustrated in Figure~\ref{Figure:gap}. We adapt our previous strategy for computing half-pants gap terms (i.e.: the half-pants McShane identity summands) by describing a $k\in\mathbb{Z}$ family of ideal triangulations of the surface depicted in the right hand side of Figure~\ref{Figure:ppkflp}, such that this family of ideal triangulations are related by simultaneous Dehn-twists in $\beta^{-1}$ and $\gamma$. \medskip

The intervals $I_1$ and $I_2$ respectively lie on the boundary-parallel pairs of half-pants $\mu=(\gamma,\gamma_p)$ and $\mu'=(\beta,\beta_p)$. As in the proof of Theorem~\ref{theorem:equsl3}, we may respectively attach boundary-parallel pairs of half-pants $\hat{\mu}$ and $\hat{\mu}'$ to $\mu$ and $\mu'$ along $\gamma$ and $\beta$. Having done so, 
\begin{itemize}
\item
fix any ideal triangulation $\mathcal{T}^0$ of $\hat{\mu}\cup \mu \cup \mu'\cup \hat{\mu}'$ which contains the unoriented ideal geodesic underlying $\beta_p=\gamma_p$;
\item
let $\mathcal{T}^k$ denote the ideal triangulation $\mathcal{T}^k$ of $\hat{\mu}\cup \mu \cup \mu'\cup \hat{\mu}'$ obtained from Dehn twisting $\mathcal{T}^0$ by $(\mathrm{tw}_{\beta}^{-1} \cdot \mathrm{tw}_{\gamma})^k$.
\end{itemize}
With a slight abuse of notation, we now regard the oriented simple geodesics $\beta,\gamma$ as homotopy representatives $\beta,\gamma\in\pi_1(S_{g,m})$ such that $\gamma\beta^{-1}$ corresponds to a simple loop going around cusp $p$ once. Then we construct a sequence of lifts of $\{\mathcal{T}^k\}_{k\in\mathbb{Z}}$ (see the left hand side of Figure~\ref{Figure:ppkflp}) as follows: let
\begin{itemize}
\item
$x=\tilde{p}$ denote the unique lift of $p$ which is fixed by $\gamma^{-1}\beta$, then $\gamma\cdot x=\beta\cdot x$;
\item
$y_0$ and $v_0$ be lifts of $p$ such that the ideal triangles $(x,y_0,\gamma x)$ and $(x,\gamma x,v_0)$ are lifts anti-clockwise oriented ideal triangles in $\mathcal{T}^0$;
\item
$\{z_k:=y_{k+1}:=\gamma^{k+1}\cdot y_0\}$ be the orbit of $y_0$ with respect to the subgroup $\langle\gamma\rangle\leq\pi_1(S_{1,1})$ generated by $\gamma$;
\item
$\{w_k:=v_{k+1}:=\beta^{k+1}\cdot v_0\}$ be the orbit of $v_0$ with respect to the subgroup $\langle\beta\rangle\leq\pi_1(S_{1,1})$ generated by $\beta$.
\end{itemize}

\begin{figure}[h!]
\includegraphics[scale=0.35]{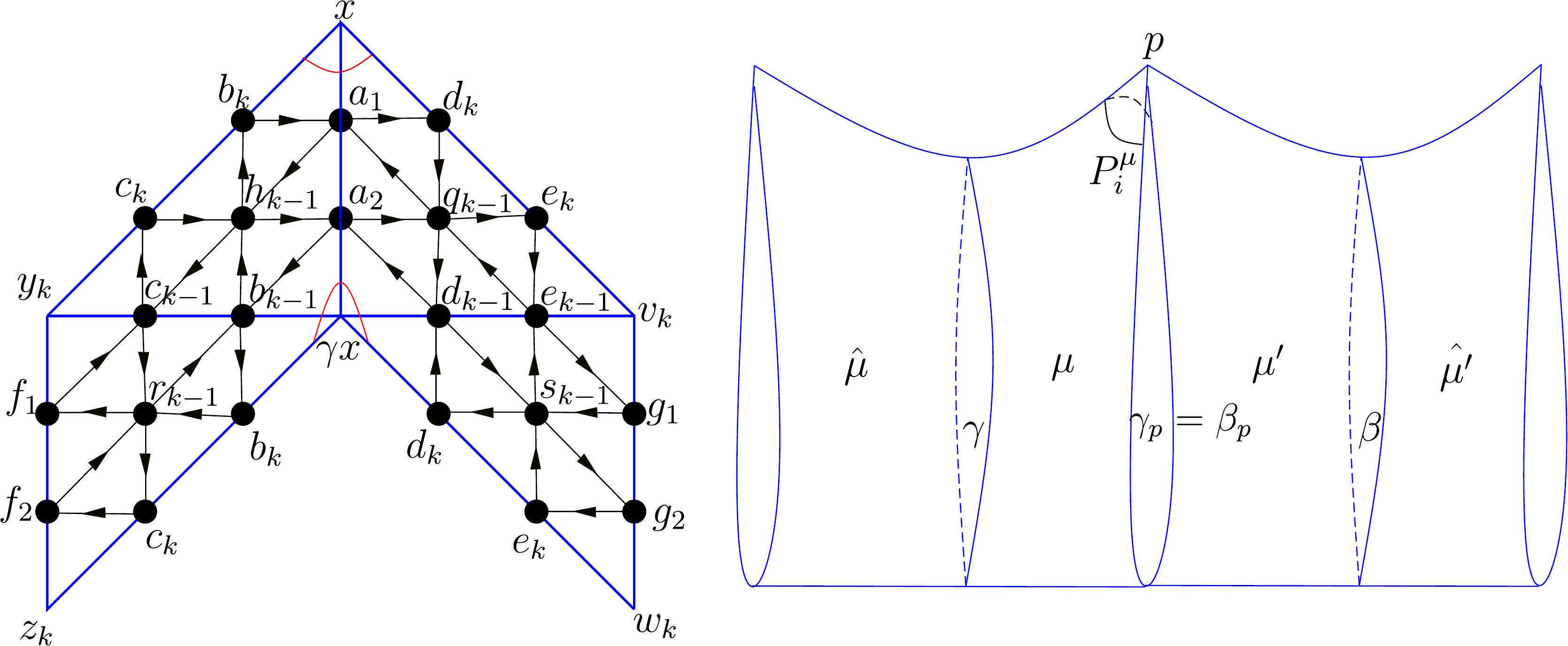}
\caption{The embedded boundary-parallel pair of half-pants $\mu$ is glued to $\hat{\mu}$ and $\mu'$ is glued to $\hat{\mu}'$.}
\label{Figure:ppkflp}
\end{figure}

Going forwards, we label $\mathcal{A}$-coordinates with respect to $\{\mathcal{T}^k\}_{k\in\mathbb{Z}}$ as per Figure~\ref{Figure:ppkflp}. We first express $P_1^p$ in terms of $\mathcal{A}$-coordinates:
\begin{align*}
P_1^p
&={\color{red}P_1(x;y_k,\gamma x)}
+{\color{blue}P_1(\gamma x;x,z_k)}
+{\color{red}P_1(x;\gamma x,v_k)}
+{\color{blue}P_1(\gamma x, w_k,x)}\\
&={\color{red}\frac{h_{k-1}}{a_1 b_k}}
+{\color{blue}\frac{h_k}{a_2b_k}}
+{\color{red}\frac{q_{k-1}}{a_1d_k}}+
{\color{blue}\frac{q_k}{a_2d_k}}
= \frac{b_k q_{k-1}+d_k h_{k-1}}{a_1 b_k d_k}
+ \frac{b_{k} q_{k}+d_{k} h_{k}}{a_2 b_{k} d_{k}}.
\end{align*}
Note that the {\color{red}red} terms above are obtained from $\mathcal{A}$-coordinates with respect to $\mathcal{T}^k$, whereas the {\color{blue}blue} terms are obtained with respect to $\mathcal{T}^{k+1}$. The gap term for $(\beta,\gamma)$ is the Goncharov--Shen potential measure of $I_1\cup I_2$ and is given by the following limit:
\begin{align*}
\begin{aligned}
\lim_{k\rightarrow +\infty}\left( \frac{h_{k-1}}{b_k a_1 P_1^p} +\frac{q_{k-1}}{d_k a_1 P_1^p}\right)
&=\lim_{k\rightarrow +\infty} \frac{\frac{b_k q_{k-1}+d_k h_{k-1}}{a_1 b_k d_k}}{\frac{b_k q_{k-1}+d_k h_{k-1}}{a_1 b_k d_k}+ \frac{b_{k} q_{k}+d_{k} h_{k}}{a_2 b_{k} d_{k}}}\\
&=\lim_{k\rightarrow +\infty} \frac{\frac{b_k q_{k-1}+d_k h_{k-1}}{a_1 b_k d_k}}{\frac{b_k q_{k-1}+d_k h_{k-1}}{a_1 b_k d_k}+ \frac{b_{k-1} q_{k-1}+d_{k-1} h_{k-1}}{a_2 b_{k-1} d_{k-1}}}\\
&=\lim_{k\rightarrow +\infty} \left(1+ \tfrac{a_1 d_k}{a_2 d_{k-1}} \cdot \tfrac{1+\frac{h_{k-1} d_{k-1}}{b_{k-1} q_{k-1}}}{1+\frac{d_k h_{k-1}}{b_k q_{k-1}}}\right)^{-1}.
\end{aligned}
\end{align*}
We invoke Lemma~\ref{proposition:lim} and Remark~\ref{remark:generallim} assert that the following limits exist, and may be (essentially definitionally) expressed as edge functions
\[
\lim_{k\rightarrow +\infty}\tfrac{h_{k-1} d_{k-1}}{b_{k-1} q_{k-1}}
= D_2(x,\gamma x, \beta^+,\gamma^+)^{-1},\quad 
\lim_{k\rightarrow +\infty} \tfrac{d_k h_{k-1}}{b_k q_{k-1}}
=D_1(x,\gamma x, \beta^+,\gamma^+), 
\]
thereby obtaining:
\begin{align}
&\left(
1+ \tfrac{a_1 \lambda_1(\rho(\beta))}{a_2} 
\cdot 
\tfrac{1+\frac{1}{D_2(x,\gamma x,\beta^+,\gamma^+)}}{1+D_1(x,\gamma x,\beta^+,\gamma^+)}
\right)^{-1}\notag\\
=&
\left(
1+\tfrac{a_1 \lambda_1(\rho(\beta))}{a_2} 
\cdot 
e^{-\frac{d_1(\beta,\gamma)}{2}-\frac{e_1(\beta,\gamma)}{2}} 
\cdot 
\tfrac{\cosh \frac{e_1(\beta,\gamma)}{2}}{\cosh \frac{d_1(\beta,\gamma)}{2}}
\right)^{-1}
\label{equation:sl3sgmbr}
\end{align}
Using Equation~\eqref{eq:aal} and 
\[
D_1(x,\gamma x, \beta^+,\gamma^+) 
\cdot D_2(x,\gamma x, \beta^+,\gamma^+) 
= \tfrac{\lambda_1(\rho(\beta))}{\lambda_1(\rho(\gamma))},
\] 
we see that \eqref{equation:sl3sgmbr} equals to 
\begin{align*}
\begin{aligned}
&\left(1+
\sqrt{\tfrac{a_1 \lambda_1(\rho(\beta))}{a_2}} 
\sqrt{\tfrac{a_1 \lambda_1(\rho(\gamma))}{a_2}}  
\cdot 
\tfrac{\cosh \frac{e_1(\beta,\gamma)}{2}}{\cosh \frac{d_1(\beta,\gamma)}{2}}
\right)^{-1}\\
&=\left(1+ 
\tfrac{\cosh \frac{e_1(\beta,\gamma)}{2}}{\cosh \frac{d_1(\beta,\gamma)}{2}}  
\cdot e^{\frac{1}{2}\left(\ell_1(\beta)+\tau(\beta,\beta_p)+\ell_1(\gamma)+\tau(\gamma,\gamma_p)\right)}
\right)^{-1}.
\end{aligned}
\end{align*}
\end{proof}

\begin{rmk}[Recovering the fuchsian identity]
In the $3$-Fuchsian locus, we can show that $\tau(\beta,\beta_p)=\tau(\gamma,\gamma_p)=0$ and $d_1(\beta,\gamma)=e_1(\beta,\gamma)$, and thus the above identity recovers the classical McShane identity for cusped hyperbolic surfaces \cite{mcshane_allcusps}.
\end{rmk}

\begin{rmk}[Recovering the $S_{1,1}$, $n=3$ identity]
\label{remark:sl3s11td}
When $(g,m)=(1,1)$, the homotopy classes $\beta$ and $\gamma$ are different representatives of the same $\pi_1(S_{1,1})$ conjugacy class, i.e.: $\beta= \delta^{-1} \gamma \delta$ for some $\delta\in\pi_1(S_{1,1})$. In this setting, the ideal vertices $(x,\beta x=\gamma x , y_k, z_k, v_k, w_k)$ may be chosen to be
\[
(x,\;
\beta x=\gamma x,\;
y_k=\gamma^{k}\delta x,\; 
z_k=\gamma^{k+1}\delta x,\; 
v_k=\beta^{k}\delta^{-1} x,\;
w_k=\beta^{k+1} \delta^{-1} x ).
\]
In any case, by Lemma~\ref{proposition:lim}, we have
\begin{equation}
\label{equation:tts11}
T(x,\beta x ,\beta^+)= \lim_{k\rightarrow +\infty} \frac{a_1 d_{k-1} e_k}{a_2 d_k e_{k-1}}= \frac{a_1 \lambda_2(\rho(\beta))}{a_2}= \frac{a_1 \lambda_2(\rho(\gamma))}{a_2}=T(x,\gamma x ,\gamma^+).
\end{equation}
Moreover, we see that
\begin{equation}
\label{equation:d121s11}
D_1(x,\gamma x, \beta^+,\gamma^+)\cdot D_2(x,\gamma x, \beta^+,\gamma^+)= \lim_{k\rightarrow +\infty} \frac{b_{k-1} d_k}{b_k d_{k-1}}=1.
\end{equation}
Therefore, $\tau(\gamma,\gamma_p)=\tau(\beta,\beta_p)$ and $\cosh \frac{e_1(\beta,\gamma)}{2}=\cosh \frac{d_1(\beta,\gamma)}{2}$, thereby recovering Theorem~\ref{theorem:inequsl3s11} from Theorem~\ref{thm:equsl3pp}.
\end{rmk}

\clearpage

\section{Simple geodesic sparsity for convex real projective surfaces}
\label{sec:sparsity}

The theory of convex real projective surfaces is a natural geometric avatar of positive $\operatorname{PGL}_3(\mathbb{R})$ representation theory: Goldman and Choi \cite{G1990convex,choi1993convex} established that for holonomy representations for closed convex real projective surfaces $S_g$ correspond to (conjugacy classes of) $n=3$ positive representations of $\pi_1(S_g)$; Marquis \cite{Mar10,Mar12} generalized this picture for surfaces $S_{g,m}$ with cusps (i.e.: $m> 0$), showing that holonomy representations of cusped convex real projective surfaces correspond to $n=3$ positive representations of $\pi_1(S_{g,m})$ with unipotent boundary monodromy. We make use of this dictionary to bring convex real projective geometric techniques to the study of $n=3$ higher Teichm\"uller theory, and vice versa.\medskip

We first give some background for convex real projective surfaces, before moving onto our main goal of this chapter: to generalize the Birman-Series geodesic sparsity theorem to the context of finite-area convex real projective surface context. Our proof is fundamentally geometric topological in nature, and we adjust our language accordingly. This complements the primarily algebraic treatment we give in the previous chapters.

\subsection{Convex real projective surfaces}

\begin{defn}[Convex sets]
A domain $\Omega\subset\mathbb{RP}^2$ contained in an affine patch is called \emph{convex} if the intersection of $\Omega$ with every line in $\mathbb{R}^2$ is connected. Furthermore, a convex domain $\Omega$ is called
\begin{itemize}
\item
\emph{properly convex}, if the closure $\overline{\Omega}$ is convex and contained within the complement $\mathbb{R}^2=\mathbb{RP}^2-\mathbb{RP}^1$ of some $\mathbb{RP}^1$ linearly embedded in $\mathbb{RP}^2$;
\item
\emph{strictly convex}, if the boundary $\partial \Omega$ of the properly convex domain $\Omega$ contains no line segments.
\end{itemize}
\end{defn}

\begin{defn}[Convex real projective surface]
A \emph{real projective surface} $\Sigma$ is a topological surface $S$ equipped with an atlas $\{(U,\varphi: U\rightarrow \mathbb{RP}^2)\}$, with
\begin{itemize}
\item
coordinate patches $U$ embedded as open sets in $\mathbb{RP}^2$ and 
\item
transition maps that are (restrictions of) projective linear transformations $\operatorname{PGL}_3(\mathbb{R})$ acting on $\mathbb{RP}^2$. 
\end{itemize}
Equivalently, \emph{convex real projective surface} $\Sigma=(S,\{(U,\varphi)\})$ is the quotient of a properly convex open domain $\Omega$ by a discrete subgroup of $\operatorname{PGL}_3(\mathbb{R})$ which is isomorphic to $\pi_1(S)$.
\end{defn}

Since convex domains are contractible, every convex real projective surface $\Sigma$ inherits a universal cover $\Omega\subset\mathbb{RP}^2$ from its developing map. Every such $\Omega$ lies within some copy of $\mathbb{R}^2$ linearly embedded in $\mathbb{RP}^2$.\medskip

The fact that $\Sigma$ is equal to the quotient of $\Omega$ by a discrete subgroup $\Gamma$ of $\operatorname{PGL}_3(\mathbb{R})$ means that there is a discrete faithful representation
\[
\rho:\pi_1(S)\rightarrow\operatorname{PGL}_3(\mathbb{R}).
\]
We refer to $\rho$ as the holonomy representation for $\Sigma$.

\begin{defn}[Projective equivalence]
We say that two convex real projective surfaces $\Sigma_1$ and $\Sigma_2$ are \emph{projectively equivalent} if, given their respective associated universal covers $\Omega_1,\Omega_2\subset\mathbb{RP}^2$, let $\Gamma_1,\Gamma_2$ be the images of the corresponding holonomy representations, there is a projective linear transformation $g\in\operatorname{PGL}_3(\mathbb{R})$ such that $(\Omega_2,\Gamma_2)=(g \Omega_1,g \Gamma_1 g^{-1})$. The map $g$ sending $\Sigma_1$ to $\Sigma_2$ is called a \emph{projective equivalence} between $\Sigma_1$ and $\Sigma_2$.
\end{defn}

Goldman \cite{G1990convex} studied the space of marked convex real projective structures on a smooth surface $S$
\[
\mathrm{Conv}(S):=
\left\{ (\Sigma,f) \mid  f:S\rightarrow \Sigma\text{ is a diffeomorphism}\right\}/\sim_\mathrm{conv},
\]
where $(\Sigma_1,f_1)\sim_{\mathrm{conv}}(\Sigma_2,f_2)$ if and only if $f_2\circ f_1^{-1}$ is homotopy equivalent to a projective equivalence between $\Sigma_1$ and $\Sigma_2$.\medskip

\begin{thm}
\label{theorem:cgm}
We have the following correspondences between spaces of marked convex real projective structures on smooth surfaces $S$ and positive/Hitchin representation varieties:
\begin{itemize}
\item
For closed surfaces $S=S_{g,0}$, Choi and Goldman \cite{choi1993convex,G1990convex} showed that the space $\mathrm{Conv}(S)$ of marked convex real projective structures is homeomorphic to the $\operatorname{PGL}_3(\mathbb{R})$-positive/Hitchin representation variety $\mathrm{Pos}_3(S)$.
\item
For the cusped surfaces $S=S_{g,m}$, Marquis \cite{Mar10} showed that the space $\mathrm{Conv}^u(S)$ of marked cusped convex real projective structures is homeomorphic to the unipotent bordered $\operatorname{PGL}_3(\mathbb{R})$-positive representation variety $\mathrm{Pos}^u_3(S)$.
\end{itemize}
\end{thm}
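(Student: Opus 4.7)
The plan is to establish, for each of the two statements, a mutually inverse pair of maps between $\mathrm{Conv}(S)$ (resp.\ $\mathrm{Conv}^u(S)$) and $\mathrm{Pos}_3(S)$ (resp.\ $\mathrm{Pos}_3^u(S)$), and to verify these are continuous bijections with respect to the natural topologies. For the forward direction, starting from a marked convex real projective surface $(\Sigma,f)$, I would take the holonomy $\rho:\pi_1(S)\to\operatorname{PGL}_3(\mathbb{R})$ and its associated developing image $\Omega$, which is properly (indeed strictly, by Benoist's work) convex and open in $\mathbb{RP}^2$. To exhibit $\rho$ as positive I would construct a $\rho$-equivariant flag curve $\xi_\rho:\partial_\infty\pi_1(S)\to\mathcal{B}$ by identifying $\partial_\infty\pi_1(S)$ with $\partial\Omega$ and assigning $\xi_\rho(x)=(x,T_x\partial\Omega)$, using the $C^1$-smoothness of $\partial\Omega$ to define the line component. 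Positivity would then follow from the hyperconvexity afforded by strict convexity: for any cyclically ordered tuple $x_1,\ldots,x_d\in\partial\Omega$, the points and their supporting tangent lines lie in generic position, and the cyclic ordering translates directly into the totally positive unipotent matrix condition of Definition~\ref{definition:posconf}.

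For the reverse direction, given $\rho\in\mathrm{Pos}_3(S)$ with equivariant Frenet curve $\xi_\rho=(\xi_\rho^1,\xi_\rho^2):\partial_\infty\pi_1(S)\to\mathcal{B}$, I would show that the image of $\xi_\rho^1$ bounds a strictly convex domain $\Omega$, with $\xi_\rho^2$ providing the supporting tangent line data. Hyperconvexity of $\xi_\rho^1$ (Definition~\ref{defn:frenet}) forbids line segments and triple collinearities in the image, so $\xi_\rho^1$ parametrizes a Jordan curve, and the Frenet limit property verifies that the enclosed region is convex. Equivariance yields a free, properly discontinuous action of $\rho(\pi_1(S))$ on $\Omega$, and the quotient carries the desired projective structure, with the marking induced by $\rho$. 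Theorem~\ref{lem:boundaryidentification} (or rather the argument behind it) ensures the identification $\partial_\infty\pi_1(S)\cong\partial\Omega$ is a homeomorphism, so the two constructions are inverses. Continuity of the bijection in both directions follows from the continuous dependence of $\xi_\rho$ on $\rho$ via \cite[Theorem 1.14]{FG06} together with continuous dependence of $\Omega$ on its boundary.

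For the cusped case (Marquis), the key additional step is matching cusp geometry to unipotent holonomy: around each puncture $p_i$, the unipotent element $\rho(\alpha_i)$ fixes exactly one flag, and the orbit of a small horoball-type neighborhood near this fixed boundary point produces a cusp in the quotient with finite Hilbert area (via the standard paraboloid model in an affine chart adapted to the maximal Jordan block). The hardest part will be establishing strict convexity of $\Omega$ in the presence of cusps and globally matching the topologies of $\mathrm{Conv}^u(S)$ and $\mathrm{Pos}_3^u(S)$, which requires patching local coordinates across the cusp strata. Since the statement is essentially a dictionary attributed to Choi--Goldman \cite{G1990convex,choi1993convex} and Marquis \cite{Mar10,Mar12}, I would defer the hardest technical points (Benoist regularity, strict convexity dichotomies, cusp thickness estimates) to those works and focus on presenting the correspondence cleanly and on verifying the algebraic positivity condition from the convex projective side (and vice versa) using the flag-curve construction outlined above.
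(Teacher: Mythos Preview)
The paper does not prove this theorem. It is stated purely as a citation of results due to Choi--Goldman \cite{choi1993convex,G1990convex} and Marquis \cite{Mar10}, with no accompanying argument; the paper treats it as background from the literature on which the later sections rely. So there is no ``paper's own proof'' to compare your proposal against.

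That said, your outline is a reasonable sketch of how those classical results are actually established, and it is broadly compatible with how the paper \emph{uses} the correspondence elsewhere (e.g.\ the flag curve $\xi_\rho^1$ identifying $\partial_\infty\pi_1(S)$ with $\partial\Omega$ in Theorem~\ref{lem:boundaryidentification}, and the Frenet property in Remark~\ref{remark:mpcont}). A few cautions if you were to flesh this out: in the closed case, strict convexity and $C^1$ regularity of $\partial\Omega$ are not automatic from proper convexity of a divisible domain---this is a nontrivial theorem of Benoist, which you allude to but should state as a genuine input rather than a passing remark. In the reverse direction for the cusped case, you need the surjectivity of $\xi_\rho^1$ onto $\partial\Omega$ (the paper invokes \cite[Theorem~6.14]{Mar12} for this), and you need to know that the quotient $\Omega/\rho(\pi_1(S))$ has finite area and the correct topological type, which is the substantive content of Marquis's work. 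Your final paragraph correctly identifies that these are the hard points and defers them; that is the right call, since reproducing them would amount to reproving the cited theorems.
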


\subsection{The geometry of convex real projective surfaces}

\begin{defn}[Hilbert distance]
Given any two distinct points $x,y$ in a convex domain $\Omega\subset\mathbb{R}^2$, extend the straight line segment running between $x$ and $y$ to a segment running between boundary points $p_x,p_y\in\partial\Omega$, where $p_x$ is closer to $x$ and $p_y$ is closer to $y$. We define the \emph{Hilbert distance} to be
\begin{equation}
\label{eq:hilbertsum}
d(x,y):=\frac{1}{2}\log\frac{|x-p_y|\cdot|y-p_x|}{|y-p_y|\cdot|x-p_x|},
\end{equation}
where $|u-v|$ denotes the Euclidean length of the distance between $u,v\in\Omega\subset\mathbb{R}^2$. The Hilbert distance is invariant under projective linear transformations and hence descends to a distance metric on $\Sigma=\Omega/\Gamma$. We refer to both the metric $d$ on $\Omega$ and the metric $d_\Sigma$ on $\Sigma$ as the \emph{Hilbert metric}.
\end{defn}
In the special case when $\Sigma$ is a hyperbolic surface, its universal cover $\Omega$ is an ellipse, and the Hilbert metric on $\Omega$ is the usual hyperbolic metric on $\Omega$ with respect to the Klein model.

\begin{defn}[Area]
The \emph{area} (also known as the area for Busemann measure) on $(\Omega,d_\Omega)$ is defined as the the measure obtained by weighting the Lebesgue measure on $\Omega$ with density 
\begin{align*}
\frac{\mathrm{Leb}(B_1)}{\mathrm{Leb}(B_{d_\Omega}(x,1))}\text{, where:}
\end{align*}
\begin{itemize}
\item $B_1$ is the Euclidean unit ball in $\mathbb{R}^2$;
\item $B_{d_\Omega}(x,1)$ denotes the Hilbert distance unit ball in the tangent space $T_x \Omega\cong 
 \mathbb{R}^2$,
\item $\mathrm{Leb}(\cdot)$ denotes the canonical Lebesgue measure of $\mathbb{R}^2$ which equals to $1$ on the unit square.
\end{itemize}
The measure thus produced is invariant with respect to the action of the fundamental group because Hilbert distance $d_\Omega$ is invariant under projective transformations. It therefore descends to an area measure on the quotient surface $\Sigma=\Omega/\Gamma$. A \emph{finite area convex real surface} is a convex real projective surface with finite area. 
\end{defn}
By \cite{Mar12}, the two cases in Theorem~\ref{theorem:cgm} are finite area convex real projective surfaces.

\subsection{H\"older regularity and convexity of $\partial\Omega$}
We primarily deal with convex real projective surfaces $\Sigma$ with the two cases in Theorem \ref{theorem:cgm}, where the universal cover $\Omega$ for such a surface $\Sigma$ is necessarily strictly convex with $C^1$ boundary regularity. 

\begin{defn}\cite[Definitions~4.1 and 4.3]{benoist2001convexes}
Let $\Omega\subset\mathbb{R}^2$ be a convex open domain of $\mathbb{R}^2\subset\mathbb{RP}^2$ and fix an arbitrary Euclidean metric $d_E$ on $\mathbb{R}^2$. We say that $\partial\Omega$ is \emph{$\alpha$-H\"older}, for $\alpha\in(1,2]$, if for every compact subset $K\subset\partial\Omega$, there exists a constant $C_K>0$ such that, for all $p,q\in K$, we have:
\begin{align*}
d_E(q, T_p\partial\Omega)\leq C_K\cdot d_E(q,p)^\alpha;
\end{align*}
and we say that $\partial\Omega$ is \emph{$\beta$-convex}, for $\beta\in[2,\infty)$, if there exists a constant $C>0$ such that for all $p,q\in\partial\Omega$, we have:
\begin{align*}
d_E(q, T_p\partial\Omega)\geq C^{-1}\cdot d_E(q,p)^\beta.
\end{align*}
\end{defn}

When $\Omega$ covers a closed convex real projective surface $\Sigma$, the boundary regularity of $\partial\Omega$ may be extended to $\alpha_\Sigma$-H\"older, for some $\alpha_\Sigma\in(1,2]$ \cite[Proposition~4.6]{benoist2001convexes}. Using an argument taught to us by Benoist, we show that this is also true when $\Sigma$ is a finite area cusped convex real projective surface:

\begin{prop}[Benoist-Hulin]\label{thm:convprojregularity}
The boundary $\partial\Omega$ for $\Omega$ universally covering a finite area cusped convex real projective surface $\Sigma$ satisfies:
\begin{itemize}
\item
$\alpha_\Sigma$-H\"older for $\alpha_\Sigma\in(1,2]$,
\item
and $\beta_\Sigma$-convex for $\beta_\Sigma\in[2,\infty)$.
\end{itemize}
\end{prop}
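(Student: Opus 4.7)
The strategy is a thick-thin decomposition of $\Sigma$, proving the two regularity estimates separately on each piece. Since $\Sigma$ has finite Hilbert area by \cite{Mar12}, one may choose disjoint embedded horocyclic cusp neighborhoods $\mathcal{H}_1,\ldots,\mathcal{H}_m$ of the cusps so that $\Sigma_{\mathrm{thick}} := \Sigma \setminus \bigsqcup \mathcal{H}_i$ is compact. Lifting to $\Omega$, this induces a $\pi_1(\Sigma)$-invariant decomposition of $\partial\Omega$ (minus the countable set of parabolic fixed points) into countably many cuspidal arcs $\partial\widetilde{\mathcal{H}_i}\cap\partial\Omega$ accumulating at parabolic fixed points, together with a $\pi_1(\Sigma)$-invariant closed set $\mathcal{K}\subset\partial\Omega$ whose quotient lies inside the compact $\Sigma_{\mathrm{thick}}$.

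For the thick part, I would apply essentially verbatim Benoist's argument from \cite[Proposition~4.6]{benoist2001convexes}. That argument uniformizes pairs of nearby boundary points by applying elements of $\pi_1(\Sigma)$ to bring them into a fixed fundamental region, then extracts $\alpha$-H\"older and $\beta$-convex exponents from the top eigenvalue ratios of the relevant group elements. The only structural input is cocompactness of the $\pi_1(\Sigma)$-action on a suitable frame bundle over the portion of $\partial\Omega$ under consideration, which holds for $\mathcal{K}$ since $\mathcal{K}/\pi_1(\Sigma)\subset\Sigma_{\mathrm{thick}}$ is compact. This yields uniform exponents $\alpha_{\mathrm{thick}}\in(1,2]$ and $\beta_{\mathrm{thick}}\in[2,\infty)$ valid for pairs of points in $\mathcal{K}$.

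Near a parabolic fixed point $\tilde{p}$, the stabilizer in $\rho(\pi_1(\Sigma))$ is an infinite cyclic group generated by a totally positive unipotent $u = \rho(\alpha_{\tilde{p}})\in\operatorname{SL}_3(\mathbb{R})$ having a single Jordan block of size $3$. Choosing affine coordinates so that $\tilde{p}=(0,0)$ and $T_{\tilde{p}}\partial\Omega=\{y=0\}$, and invoking Marquis' local normal form for cusps of convex real projective surfaces \cite{Mar10}, one can explicitly write $\partial\Omega$ near $\tilde{p}$ as the graph of a function $y=f(x)$ with $f(x)=cx^{2}+o(x^{2})$ for some $c>0$, and the $u$-orbit structure propagates this expansion with uniform constants along the whole arc $\partial\widetilde{\mathcal{H}_i}\cap\partial\Omega$. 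This gives local H\"older and convexity exponents both equal to $2$ on each cuspidal arc.

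Combining these, setting $\alpha_\Sigma := \min(\alpha_{\mathrm{thick}},2)$ and $\beta_\Sigma := \max(\beta_{\mathrm{thick}},2)$ should yield the desired global exponents; the constants across the interface between $\mathcal{H}_i$ and $\Sigma_{\mathrm{thick}}$ can be matched by compactness at the bounding horocycles. The hard part will be the uniform $\beta$-convex estimate: unlike $\alpha$-H\"older regularity, $\beta$-convexity is a non-local condition, and must be verified for pairs of points that lie on \emph{different} pieces of the decomposition (e.g.\ one in a cuspidal arc and one in $\mathcal{K}$). Handling this requires carefully interpolating the parabolic-type estimate near $\tilde{p}$ with the Benoist-type estimate on $\mathcal{K}$, using the dilation structure provided by the unipotent $u$ together with the compactness of the interface.
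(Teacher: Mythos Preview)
Your approach differs substantially from the paper's. The paper's proof is a short chain of citations: it invokes the Blaschke (Cheng--Yau affine) metric, uses \cite[Proposition~3.1]{benoist2013cubic} to show the Blaschke curvature on $\Sigma$ tends to a negative constant in each cusp and hence is bounded away from $0$ on all of $\Sigma$, then \cite[Corollary~4.7]{benoist2014cubic} to conclude that $(\Omega,d_\Omega)$ is Gromov-hyperbolic, and finally \cite[Corollary~1.5]{benoist2003convexes}, which produces the $\alpha$-H\"older and $\beta$-convex exponents directly from Gromov-hyperbolicity of the Hilbert metric. No thick--thin decomposition of $\partial\Omega$ is needed.

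Your thick--thin scheme has a genuine setup problem. The lifted horoballs $\widetilde{\mathcal{H}_i}\subset\Omega$ meet $\partial\Omega$ in \emph{single points} (the parabolic fixed points), not arcs; so $\partial\widetilde{\mathcal{H}_i}\cap\partial\Omega$ is a countable set rather than a family of cuspidal arcs, and what you call $\mathcal{K}$ is $\partial\Omega$ minus a countable dense set. There is no honest decomposition of $\partial\Omega$ induced from the thick--thin decomposition of $\Sigma$, and the assertion ``$\mathcal{K}/\pi_1(\Sigma)\subset\Sigma_{\mathrm{thick}}$'' is a category error, since $\mathcal{K}\subset\partial\Omega$ does not project to $\Sigma$ at all. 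Because parabolic fixed points are dense in $\partial\Omega$, one cannot isolate a compact ``thick boundary'' on which Benoist's cocompact argument from \cite[Proposition~4.6]{benoist2001convexes} applies verbatim; any adaptation must cope with normalising group elements that are high powers of parabolics, for which the eigenvalue-ratio control driving that proposition degenerates. The interface difficulty you flag for $\beta$-convexity is therefore not a final patching step but is already present in the basic estimate. Your observation about the quadratic local model at each parabolic fixed point is correct, but it does not by itself supply uniform control away from those points. The Gromov-hyperbolicity route bypasses all of this.
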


\begin{proof}
The proof of this fact relies on another famous metric for convex real projective sets in $\mathbb{R}^2$: Yau-Cheng's \cite{cheng1977regularity} Blaschke metric (also known as the affine metric) for strictly convex domains. This is a negatively curved Riemannian metric on $\Omega$. Proposition~3.1 of \cite{benoist2013cubic} tells us that the curvature on $\Sigma$ approaches a negative constant as one heads deeper into a cusp, and hence is bounded away from $0$ on the entire surface. Combining this with \cite[Corollary~4.7]{benoist2014cubic} then shows that $\Sigma$ (and hence $\Omega$) is Gromov-hyperbolic with respect to the Hilbert metric. Hence, by \cite[ Corollary~1.5]{benoist2003convexes}, the ideal  boundary $\partial\Omega$ satisfies the desired $\alpha_\Sigma$-H\"older and $\beta_\Sigma$-convex.
\end{proof}

Benoist communicated to us the proof for Lemma~\ref{thm:expshrinkball} below, and it is a key estimate in our proof of the Birman-Series geodesic sparsity theorem for finite area convex real projective surfaces.

\begin{lem}[Exponentially shrinking balls, courtesy of Benoist]
\label{thm:expshrinkball}
Fix a point $O\in\Omega=\tilde{\Sigma}$ and a number $R\in\mathbb{R}_{>0}$. For any $u\in\Omega$, let $B(u,R)\subset\Omega$ denote the ball of (Hilbert) radius $R$ about $u$, and for any bounded set $U\subset\mathbb{R}^2$ let $\mathrm{diam}_E(U)$ denote the Euclidean diameter of $U$. Then there exists a positive constant $c=c_{\Omega,O,R}$ such that
\begin{align*}
\mathrm{diam}_E(B(u,R))<ce^{\frac{-d(u,O)}{c}}.
\end{align*}
\end{lem}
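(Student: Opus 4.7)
The plan is to prove the bound by splitting it into two Euclidean-to-Hilbert estimates: first, the Euclidean diameter of a Hilbert ball of radius $R$ centered at $u$ should be controlled by $\epsilon(u):=d_E(u,\partial\Omega)$ using the uniform $\beta_\Sigma$-convexity of $\partial\Omega$ from Proposition~\ref{thm:convprojregularity}; second, the Euclidean distance $\epsilon(u)$ should decay exponentially in $d(u,O)$ via the cross-ratio definition \eqref{eq:hilbertsum} of the Hilbert metric. Composing these two estimates will yield the desired exponential decay of $\mathrm{diam}_E(B(u,R))$ in $d(u,O)$.

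For the first estimate, I would pass to affine coordinates on $\mathbb{R}^2\supset\Omega$ so that a nearest boundary point $p\in\partial\Omega$ to $u$ sits at the origin, $T_p\partial\Omega$ is the $x$-axis, $\Omega$ lies locally in $\{y>0\}$, and $u=(0,\epsilon)$ with $\epsilon:=\epsilon(u)$. Proposition~\ref{thm:convprojregularity} supplies a uniform constant $C>0$ such that $\partial\Omega$ lies in $\{y\geq C^{-1}|x|^{\beta_\Sigma}\}$ on the side of $p$ containing $\Omega$. A key geometric observation is that for every direction $\theta=(\cos\phi,\sin\phi)$, the line through $u$ with direction $\theta$ meets the lower arc of $\partial\Omega$ on at least one side at Euclidean distance $t$ from $u$ satisfying $\epsilon-ts=C^{-1}(tc)^{\beta_\Sigma}$ (writing $s=\sin\phi$, $c=\cos\phi$); this forces either $ts$ or $(tc)^{\beta_\Sigma}$ to be of order $\epsilon$, and combining with $\max(|c|,|s|)\geq 1/\sqrt{2}$ yields $t\leq C_0(\Omega)\,\epsilon^{1/\beta_\Sigma}$. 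Feeding this into \eqref{eq:hilbertsum} along the line, with $A_\theta$ and $B_\theta$ the Euclidean distances from $u$ to $\partial\Omega$ in directions $\theta$ and $-\theta$ respectively, a direct manipulation of the cross-ratio bound for $v=u+L\theta\in B(u,R)$ gives
\begin{equation*}
L \;\leq\; \frac{A_\theta B_\theta(e^{2R}-1)}{A_\theta+e^{2R}B_\theta} \;\leq\; (e^{2R}-1)\min(A_\theta,B_\theta) \;\leq\; C_1(R,\Omega)\,\epsilon(u)^{1/\beta_\Sigma},
\end{equation*}
so $\mathrm{diam}_E(B(u,R))\leq 2C_1(R,\Omega)\,\epsilon(u)^{1/\beta_\Sigma}$.

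For the second estimate, I would apply \eqref{eq:hilbertsum} directly to the line through $u$ and $O$, meeting $\partial\Omega$ at $p_u$ (closer to $u$) and $p_O$ (closer to $O$). The distances $|u-p_O|$ and $|O-p_u|$ are bounded above by $\mathrm{diam}_E(\Omega)$, while $|O-p_O|\geq\epsilon(O)>0$ is bounded below. Since $p_u\in\partial\Omega$ we have $|u-p_u|\geq\epsilon(u)$. Rearranging the cross-ratio formula for $d(u,O)$ then yields $\epsilon(u)\leq C_2(\Omega,O)\cdot e^{-2d(u,O)}$. Combining with the first estimate gives $\mathrm{diam}_E(B(u,R))\leq C_3(\Omega,O,R)\cdot e^{-2d(u,O)/\beta_\Sigma}$, which is of the desired form once $c=c_{\Omega,O,R}$ is chosen to dominate $C_3$ and $\beta_\Sigma/2$ simultaneously.

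The hard part will be justifying that the $\beta_\Sigma$-convexity constant from Proposition~\ref{thm:convprojregularity} is genuinely uniform on all of $\partial\Omega$, so that $C_0$ (and hence $C_1$) does not degenerate as $u$ runs deep into a cusp. The literal statement of $\beta$-convexity is global, but the quantitative constant for a cusped convex real projective surface must be inherited from the global Gromov hyperbolicity of $(\Omega,d)$ established in the proof of Proposition~\ref{thm:convprojregularity} (via Benoist--Hulin together with \cite[Corollary~1.5]{benoist2003convexes}). Without this uniformity, the first estimate would only hold with constants depending on the part of $\partial\Omega$ closest to $u$, and the decay in $d(u,O)$ could fail along sequences accumulating in cusps.
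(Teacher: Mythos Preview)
Your proof is correct and uses the same key input as the paper ($\beta_\Sigma$-convexity from Proposition~\ref{thm:convprojregularity}), but the organization is genuinely different. The paper parameterizes points $u$ by the geodesic ray from $O$ to a boundary point $p\in\partial\Omega$, normalizes at that particular $p$, obtains a constant $c(p)$ depending continuously on $p$, and then invokes compactness of $\partial\Omega$ to take $c=\max_p c(p)$; it also splits $\Omega$ into a compact part near $O$ (handled trivially) and its complement. Your approach instead normalizes at the \emph{nearest} boundary point to $u$ and proves two clean uniform estimates: $\mathrm{diam}_E(B(u,R))\lesssim \epsilon(u)^{1/\beta_\Sigma}$ and $\epsilon(u)\lesssim e^{-2d(u,O)}$, then composes them. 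Your route is more modular and avoids the compactness sweep over $\partial\Omega$ and the $\Omega_1/\Omega_2$ dichotomy; the paper's route makes the dependence on the limiting boundary point more explicit. Both yield the same exponent $1/\beta_\Sigma$ in the end.

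One remark: your final paragraph of concern is unnecessary. The definition of $\beta$-convexity already requires a \emph{single} constant $C$ valid for all $p,q\in\partial\Omega$, and Proposition~\ref{thm:convprojregularity} asserts exactly this for the universal cover of a finite-area cusped convex real projective surface. So the uniformity you worry about is guaranteed by the statement you are invoking, and no additional work is needed. Also, when you ``pass to affine coordinates'' you should say explicitly that you are applying a Euclidean isometry (translation plus rotation), since that is what preserves both $d_E$ and the global $\beta$-convexity constant; a general affine change would not.
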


\begin{proof}
We show that for the geodesic ray $\{tO+(1-t)p\mid 0<t\leq1\}$ shooting out from $O$ to an arbitrary boundary point $p\in\partial\Omega$, there exists a constant $c(p)>0$ such that for any point $u$ along the ray,
\begin{align*}
\mathrm{diam}_E(B(u,R))<c(p)e^{-\frac{d(u,O)}{c(p)}}.
\end{align*}
In particular, we shall construct $c(p)$ in such a way that $c(\cdot)$ is a function that continuously varies with respect to $p\in\partial\Omega$. Then, we may use the compactness of $\partial\Omega$ to take
\begin{align*}
c_{\Omega,O,R}:=\max_{p\in\partial\Omega} c(p).
\end{align*}
Let us consider the radius $R$ ball $B(u,R)$ based at $u$, where $u$ is a point along the geodesic ray from $O$ to $p\in\partial\Omega$. By applying an affine (Euclidean) isometry on $\mathbb{R}^2$, we assume without loss of generality that $p$ is placed at the origin in $\mathbb{R}^2$ and that the tangent line $T_p\partial\Omega$ is the $x$-axis in $\mathbb{R}^2$. Let $u=(x_0,y_0)$ with respect to this parametrization, and let $p_1$ and $p_2$ respectively denote the left and right intersection points of the line $y=y_0$ with $\partial\Omega$. Further let $D$ denote the (closed) sector of $\Omega$ below $y=y_0$. (see Figure~\ref{fig:expshrink1}). For $u$ taken sufficiently close to $p$, the region $D$ is contained in the rectangle fenced by the horizontal lines $y=0$, $y=y_0$, and the two vertical lines passing through $p_1$ and $p_2$. Let $u(p)$ denote the $u$ closest to $O$ such that its induced $D$ satisfies the above rectangle-fencing property. Thanks to the $C^1$ smoothness of $\partial\Omega$, $u(p)$ varies continuously with respect to $p$. This partitions $\Omega$ into the union of a compact set 
\[
\Omega_1:=\left\{ u\in\Omega\;\mid\; u\text{ lies on the line segment between }u(p)\text{ and }O
\right\}\] and its open complement $\Omega_2:=\Omega-\Omega_1$.

\medskip

\begin{figure}[h]
\includegraphics[scale=1.25]{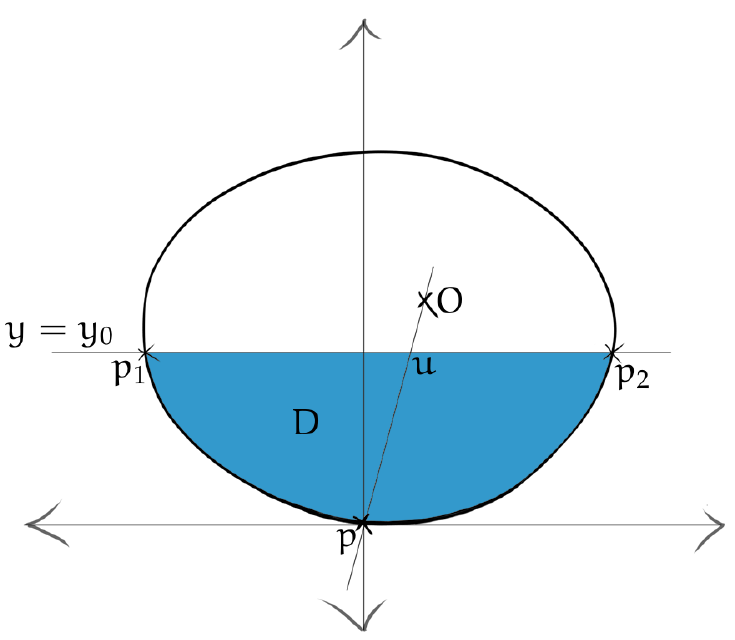}
\caption{$D$ is the shaded region below the $y=y_0$ horizontal line.}
\label{fig:expshrink1}
\end{figure}

For any $u\in\Omega_1$, the compactness of $\Omega_1$ ensures that there is a constant $C$ such that $\mathrm{diam}_E(B(u,R))<ce^{\frac{-d(u,O)}{c}}$. Let us consider the remaining case of $u\in\Omega_2$. Any complete geodesic going through $u$ consists of two geodesic rays, at least one of which lies in $D$. The Euclidean length of any such geodesic ray must then be less than $\mathrm{diam}_E(D)$, which is in turn less than:
\begin{align*}
\mathrm{diam}_E(D\cap\{x\leq 0\})+ \mathrm{diam}_E(D\cap\{x\geq 0\})=
d_E(p,p_1)+d_E(p,p_2). 
\end{align*}
Now invoking the $\beta$-convexity of $\partial\Omega$, we see that:
\begin{align*}
d_E(p,p_1)+d_E(p,p_2)\leq 2(Cy_0)^{\frac{1}{\beta}}\leq 2(C\cdot d_E(u,p))^{\frac{1}{\beta}}.
\end{align*}
We are now equipped to estimate the Euclidean diameter of $B(u,R)$. The triangle inequality tells us that $\mathrm{diam}_E(B(u,R))$ is at most $2$ times the Euclidean length $r$ of the longest geodesic segment $\sigma$ joining $u$ and the boundary of $B(u,R)$. Such a geodesic segment lies on the unique complete geodesic in $\Omega$ joining $u$ and some boundary point $q\in D\cap\partial\Omega$. If $\sigma$ lies on the geodesic ray $\overline{uq}$, then Equation~\eqref{eq:hilbertsum} tells us that
\begin{align*}
R>\frac{1}{2}\log\left(\frac{d_E(u,q)}{d_E(u,q)-r}\right)\text{, and hence }r<(1-e^{-2R})d_E(u,q).
\end{align*}
Similarly, if $\sigma$ lies on the geodesic ray complementary to $\overline{uq}$, then
\begin{align*}
R>\frac{1}{2}\log\left(\frac{d_E(u,q)+r}{d_E(u,q)}\right)\text{, and hence }r<(e^{2R}-1)d_E(u,q).
\end{align*}
Therefore, the diameter of $B(u,R)$ is bounded above by
\begin{align*}
2r<2(e^{2R}-1)d_E(u,q)<2e^{2R}(d_E(p,p_1)+d_E(p,p_2))\leq 4e^{2R}(C\cdot d_E(u,p))^{\frac{1}{\beta}}.
\end{align*}
We substitute in the Hilbert length
\begin{align*}
d(u,O)=\log\left(\frac{d_E(O,p)\cdot d_E(u,\hat{p})}{d_E(u,p)\cdot d_E(O,\hat{p}))}\right),
\end{align*}
where $\hat{p}$ is the ``antipodal" ideal point to $p$ on the opposite side of $O$ (i.e.: $p,\hat{p}$ and $O$ are collinear). This then gives us $\mathrm{diam}_E(B(u,R))<c(p)e^{-\frac{d(u,O)}{c(p)}}$, with
\begin{align*}
c(p):=\max\left\{\beta,4e^{2R}\left(\frac{C\cdot d_E(O,p)\cdot d_E(u,\hat{p})}{d_E(O,\hat{p}))}\right)^{\frac{1}{\beta}}\right\}.
\end{align*}
Since $\hat{p}$ varies continuously with respect to $p$, we conclude that $c(\cdot)$ is a continuous function, as required.
\end{proof}

\subsection{Geodesic Sparsity for finite-area convex real projective surfaces}\label{sec:birmanseries}

Let $\Sigma$ be a finite-area convex real projective surface, and let:
\begin{itemize}
\item
$I_k$ denote the collection of complete geodesics on $\Sigma$ with at most $k$ (geometric) self-intersections (counted with multiplicity);
\item
$|I_k|$ denote the subset of $\Sigma$ consisting of every single point which lies on (at least one) complete geodesic in the collection $I_k$ of geodesics with at most $k$ self-intersections. 
\end{itemize}

The goal of this subsection is to prove the following claim:

\begin{thm}[Geodesic sparsity]
\label{thm:birmanseries}
The area of $|I_k|$ is $0$ and the Hausdorff dimension of $|I_k|$ is $1$.
\end{thm}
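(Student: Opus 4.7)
The plan is to imitate Birman and Series's original argument \cite{birman_series}, substituting their use of constant negative curvature by Benoist's exponentially shrinking ball estimate (Lemma~\ref{thm:expshrinkball}) together with the boundary regularity established in Proposition~\ref{thm:convprojregularity}. Fix a lift $\Omega\subset\mathbb{RP}^2$ of $\Sigma$, a basepoint $O\in\Omega$, and a Hilbert ball $B(O,R)$ large enough to contain a fundamental domain $F$. Decompose $\Sigma$ into a compact thick part $\Sigma^{\mathrm{thick}}$ and finitely many horocyclic cusp neighborhoods $\Sigma^{\mathrm{cusp}}_j$; by Proposition~\ref{thm:convprojregularity} this decomposition is geometrically well-behaved and the Hilbert area of each $\Sigma^{\mathrm{cusp}}_j$ is finite.

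The first step is a combinatorial/topological count: for every $T>0$, the number of homotopy classes (rel endpoints in $F$) of geodesic arcs of Hilbert length $\leq T$ with at most $k$ self-intersections grows at most polynomially in $T$. This is a purely topological statement about the coding of arcs by edge-crossings of a fixed ideal triangulation of $\Sigma$: a classical ``train-track'' argument of Birman–Series shows that a bound on self-intersection number restricts the symbolic sequences to a set of polynomial growth rate. This portion transfers verbatim to our setting since it depends only on the topology of $S_{g,m}$ and the cocompactness of the $\pi_1(S_{g,m})$-action on the thick part.

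The second and critical step is a geometric area estimate. For a representative arc of Hilbert length between $T_0$ and $T$ meeting $F$, its lift to $\Omega$ passes through a chain of translated fundamental domains $g_jF$ with $d(g_j O,O)$ roughly linear in~$j$ (\v{S}varc–Milnor in the thick part, Hilbert-length control in the cusps via unipotent monodromy). By Lemma~\ref{thm:expshrinkball}, an $\epsilon$-Hilbert-tube about the arc has Euclidean diameter at most $ce^{-j/c}$ in the $j$-th domain, so its total Hilbert area contribution from portions of combinatorial depth $\geq T_0$ decays exponentially in $T_0$. Multiplying by the polynomial bound on the number of arcs of length $T$ from the first step and letting $T_0\to\infty$ along a sequence $T_0\gg\log T$ shows the Hilbert area of $|I_k|\cap F$ is zero; the same argument, together with the unipotent horoball structure (which forces any geodesic lingering too long in a cusp to either be a simple ideal bi-infinite arc or to self-intersect beyond the allowed $k$ count), handles $|I_k|\cap \Sigma^{\mathrm{cusp}}_j$.

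The main obstacle will be the cusp contribution: since $F$ cannot be chosen compact in the cusped setting, one must verify uniformly that the exponential shrinking in Lemma~\ref{thm:expshrinkball} dominates the sub-polynomial proliferation of arcs even for arcs entering deep horoballs. This is where Proposition~\ref{thm:convprojregularity} is essential: the $\beta$-convexity of $\partial\Omega$ yields the quantitative bound in the shrinking-ball lemma uniformly across $\partial\Omega$, and the unipotent boundary monodromy restricts how combinatorial complexity can accumulate in a cusp. Finally, the Hausdorff dimension claim follows easily: the upper bound $\dim_H|I_k|\leq 1$ is read off from the covering just constructed (polynomially many Hilbert geodesic strips of exponentially small Euclidean width), while the lower bound $\dim_H|I_k|\geq 1$ is automatic since $|I_k|$ contains the $1$-dimensional geodesic representative of any simple closed curve on $\Sigma$.
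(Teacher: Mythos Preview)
Your overall strategy matches the paper's: a topological Birman--Series count giving polynomially many combinatorial types, combined with Lemma~\ref{thm:expshrinkball} for the geometric shrinking, and the cusp control you sketch is exactly what the paper packages as Proposition~\ref{prop:compact}. So the architecture is right.

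However, your second step has the geometric mechanism inverted, and as written it does not bound the area of $|I_k|\cap F$. You argue that an $\epsilon$-Hilbert-tube around a single arc has small Euclidean diameter in the $j$-th translated domain $g_jF$, and conclude that ``the total Hilbert area contribution from portions of combinatorial depth $\geq T_0$ decays exponentially''. But we are not trying to bound the area of the tube far from $F$; we need the area of $|I_k|$ \emph{inside} $F$ itself, and in $F$ an $\epsilon$-Hilbert-tube about any single arc has Euclidean width comparable to $\epsilon$, not $ce^{-j/c}$. Moreover, each combinatorial class $[\gamma]$ contains a continuum of geodesic arcs, so covering one representative by a tube does not cover the class. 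The paper's argument is different: for each class $[\gamma]\in[J_k(2N+1)]$ whose middle segment sits in $\overline{F}$, \emph{all} representatives have their two endpoints in the \emph{same} pair of translated domains $\overline{F'},\overline{F''}$, which lie at Hilbert distance $\geq \alpha_{\Sigma,\Gamma}N$ from $F$ (this is Lemma~\ref{thm:linearinN}). By Lemma~\ref{thm:expshrinkball}, $\overline{F'}$ and $\overline{F''}$ therefore have Euclidean diameter $\leq ce^{-\alpha_{\Sigma,\Gamma}N/c}$, and since geodesics are Euclidean straight lines the entire class is contained in the Euclidean convex hull of $F'\cup F''$, a rectangle of width $ce^{-\alpha_{\Sigma,\Gamma}N/c}$. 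That is what gets multiplied by the polynomial count $P_k(2N+1)$.

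Once you swap your tube picture for this convex-hull-of-endpoint-domains picture, the rest of your outline (the three-case cusp analysis, the Hausdorff dimension bound via the same covering) goes through essentially as in the paper.
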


When the surface $\Sigma$ is hyperbolic, the above result is referred to as the Birman--Series theorem \cite{birman_series}. They construct a descending filtration of subsets of $\Sigma$ such that:
\begin{itemize}
\item each subset covers $|I_k|$,
\item each subset is a union of finitely many convex geodesic quadrilaterals,
\item the number of convex quadrilaterals at the $k$-th level of the filtration asymptotically grows as a polynomial in $k$,
\item the Euclidean area of the quadrilaterals shrinks exponentially in $k$.
\end{itemize}
The polynomial growth in the number of quadrilaterals versus the exponential shrinkage their area gives us the requisite (Busemann) area $0$ conclusion. The fact that these quadrilaterals become exponentially thin then gives the desired Hausdorff dimension $1$ conclusion.\medskip

Much of the proof is topological, and we use Birman-Series' original arguments. However, we introduce the following tweaks:
\begin{itemize}
\item insteading of encoding geodesics as segments on a single geodesically bordered fundamental domain (such as a Ford domain), we use geodesic triangulations (Lemma~\ref{thm:triangles}). This is to avoid justifying why finitely sided geodesic fundamental domains exist, to highlight the flexibility of the Birman-Series construction and partially to use convexity to replace traditional hyperbolic geometric arguments (such as in Lemma~\ref{thm:linearinN}).
\item we require Lemma~\ref{thm:expshrinkball} to show that Hilbert radius $R$ balls shrink uniformly exponentially as one approaches the boundary.
\end{itemize}

\begin{lem}
\label{thm:triangles}
Any finite-area strictly convex real projective surface $\Sigma$ decomposes into a finite collection of (convex) geodesic triangles $\{\triangle_1,\ldots,\triangle_l\}$ glued along a finite collection of geodesic edges $\Gamma$.
\end{lem}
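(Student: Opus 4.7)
The plan is to treat the cusped case $m \geq 1$ and the closed case $m = 0$ separately. In both settings I would work in the universal cover $\Omega \subset \mathbb{RP}^2$, where Hilbert geodesics are exactly Euclidean straight-line segments; consequently any geodesic triangle in $\Omega$ is the intersection of three Euclidean half-planes and is Euclidean-convex, so any finite $\pi_1(\Sigma)$-invariant decomposition of $\Omega$ into such triangles automatically descends to the required decomposition of $\Sigma$.

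For the cusped case, I would fix a topological ideal triangulation $\mathcal{T}$ of $\Sigma_{g,m}$ (with $4g-4+2m$ triangles and $6g-6+3m$ edges), identify each cusp with its canonical point of $\partial\Omega$ via Theorem~\ref{lem:boundaryidentification}, and replace each ideal edge by the unique Euclidean chord in $\overline{\Omega}$ with the prescribed pair of endpoints on $\partial\Omega$. Two chords with distinct endpoint pairs in the strictly convex $\partial\Omega$ cross at most once; since the lifted edges of $\mathcal{T}$ have pairwise distinct endpoint pairs in $\partial\Omega$, the resulting $\pi_1(\Sigma)$-equivariant chord configuration is non-crossing and bounds a decomposition of $\Omega$ into ideal geodesic triangles which descends to a finite decomposition of $\Sigma$.

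For the closed case, I would pass to a geodesic pants decomposition: by Theorem~\ref{theorem:loxo}, every non-peripheral element of $\pi_1(\Sigma)$ is loxodromic and hence admits a unique translation axis in $\Omega$ whose projection is a simple closed geodesic, and disjoint simple closed curves have disjoint geodesic representatives (since upstairs their axes form disjoint chord families). Cutting along $3g-3$ such geodesics produces $2g-2$ pairs of pants with geodesic boundary; on each pair of pants $P$ I would add three disjoint simple geodesic seams joining each pair of boundary components, cutting $P$ into two simple hexagonal pieces. Each such hexagon lifts to a simple planar hexagon in $\Omega$ bounded by six straight segments, and by the classical ear-cutting triangulation of simple planar polygons it decomposes into four geodesic triangles by three non-crossing diagonals.

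The main obstacle will be verifying the embeddedness claims in the closed case: that the geodesic representatives of disjoint simple closed curves are disjoint, that three disjoint simple geodesic seams can be chosen on each pair of pants, and that the resulting hexagons are in fact simple planar polygons. I plan to verify each of these by lifting to $\Omega$ and using only the planar geometry of chords in a strictly convex region: any two chords with pairwise distinct endpoints on $\partial\Omega$ intersect in at most one interior point, so a topological intersection downstairs forces an intersection upstairs that can be ruled out by the combinatorics of the pants decomposition and the chosen seam endpoints. Once embeddedness is secured, the ear-cutting triangulation step is elementary and completes the construction.
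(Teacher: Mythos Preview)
Your treatment of the cusped case matches the paper's: both simply take an ideal triangulation and realise it by straight-line chords in $\Omega$.

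For the closed case the approaches diverge. The paper builds a \emph{filling} collection of simple closed geodesics (invoking Goldman's result \cite[Theorem~3.2]{G1990convex} that every essential simple closed curve has a unique simple geodesic representative), observes that each complementary region is a convex polygon since it is cut out by straight lines in $\Omega$, and triangulates those polygons. Your route via pants decompositions and seams is more explicit about the combinatorics but introduces an extra ingredient: the seams are \emph{arcs} with endpoints on interior boundary geodesics, not on $\partial\Omega$, so your ``chords with pairwise distinct endpoints on $\partial\Omega$'' argument does not apply to them as stated. This is patchable---on each pair of pants, choose one point on each boundary geodesic in a lifted fundamental domain and connect them $\pi_1$-equivariantly by straight segments, then check that the resulting hexagon is simple---but it is a little more work than your final paragraph suggests. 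The paper's filling-curve approach sidesteps this entirely by never needing arcs; your pants-and-seams decomposition, on the other hand, gives tighter control over the number and shape of the triangles produced.
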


\begin{proof}
For cusped convex real projective surfaces, we may take an ideal triangulation. For compact $\Sigma$, \cite[Theorem~3.2]{G1990convex} tells us that every essential simple curve is uniquely realizable as a simple geodesic, we may therefore employ standard hyperbolic-surface-case arguments for finding a filling set of simple closed geodesics on $\Sigma$. Namely, if one of the complementary regions of a given collection of geodesics isn't contractible, then it must contain an essential simple closed curve, and one then adds this to the collection of geodesics. In any case, since simple geodesics lift to straight lines in the universal cover $\Omega$, the complementary regions of a filling collection of geodesics is made up of polytopes. These polytopes must be convex because the region is expressible as the intersection of convex regions in $\mathbb{R}^2$. Each polytope then cuts into finitely many triangles, as desired.
\end{proof}

For the remainder of this subsection, we fix one such collection $\{\triangle_1,\ldots,\triangle_l\}$ of geodesic triangles for $\Sigma$ glued along $\Gamma$ as described by Lemma~\ref{thm:triangles}. 

\subsection{Polynomial growth of the number of $k$-diagrams}

\begin{defn}[$k$-diagrams]
\label{defn:kdiag}
Let $J_k$ denote the set of geodesic arcs on $\Sigma$ which:
\begin{itemize}
\item
start and end on $\Gamma$ and/or cusps,
\item
have at most $k$ self-intersections.
\end{itemize}
Further let $J_k(N)$ denote the subset of geodesic arcs in $J_k$ that are cut up into $N$ geodesic segments by $\Gamma$. Also let $[J_k]$ denote the equivalence classes of geodesic arcs in $J_k$ with respect to isotopies of $\Sigma$ which preserve $\Gamma$ as a set. Similarly define $[J_k(N)]$. We refer to the elements of $[J_k]$ as \emph{$k$-diagrams} and the elements of $[J_0]$ as \emph{simple diagrams}.
\end{defn}

\begin{lem}
The cardinality of $[J_k(N)]$ is bounded above by a polynomial $P_k(N)$ in $N$.
\end{lem}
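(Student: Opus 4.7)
The plan is to adapt Birman and Series' two-stage combinatorial counting argument to our triangulated setting. The key idea is that a $k$-diagram decomposes into a simple diagram (a $0$-diagram) plus at most $k$ combinatorial ``crossing'' decorations; if both the simple-diagram count and the number of decorations grow polynomially in $N$, so does $|[J_k(N)]|$.

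\textbf{Stage one: counting simple diagrams.} I would first show $|[J_0(N)]| \leq Q(N)$ for some polynomial $Q$. A simple geodesic arc cut by $\Gamma$ into $N$ segments is determined, up to $\Gamma$-preserving isotopy, by (i) its intersection-number vector $(n_e)_{e \in E(\Gamma)}$, an integer vector with entries summing to at most $N+1$, contributing $O(N^{|E(\Gamma)|})$ possibilities, and (ii) for each triangle $\triangle_i$, the matching pattern by which the arc interiors connect the marked points of the arc on $\partial\triangle_i$. Simplicity of the arc forces this local matching to be non-crossing and to respect the cyclic edge ordering; a standard counting argument shows only polynomially many such patterns occur in each triangle. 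Taking the product over the finitely many triangles supplied by Lemma~\ref{thm:triangles} yields the claimed polynomial bound.

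\textbf{Stage two: reducing $[J_k(N)]$ to $[J_0(\cdot)]$.} Given $\gamma \in [J_k(N)]$, I would resolve each of its $j \leq k$ self-intersections by one of the two local smoothings, producing a simple diagram (or disjoint union of simple subarcs) $\sigma(\gamma)$ whose segment count $N'$ satisfies $N \leq N' \leq N + O(k)$. Conversely, $\gamma$ can be reconstructed from $\sigma(\gamma)$ by specifying which pairs of strands to cross and in what order, of which there are at most $\binom{N'}{2k} \cdot 2^k$ choices. Combining with stage one gives
\[
|[J_k(N)]| \;\leq\; 2^k \binom{N+O(k)}{2k} \, Q(N+O(k)) \;=:\; P_k(N),
\]
which is polynomial in $N$ once $k$ is fixed.

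The main obstacle I anticipate is making the surgery map $\gamma \mapsto \sigma(\gamma)$ precise at the level of $\Gamma$-preserving isotopy classes, and bounding its multiplicity uniformly in $N$. The original Birman--Series treatment accomplishes this via explicit local moves in a neighbourhood of each self-intersection; because those moves are purely combinatorial, they transfer verbatim to our convex projective setting without invoking strict convexity, $C^1$-regularity of $\partial\Omega$, or the Hilbert metric, all of which enter only in the subsequent estimate on Euclidean diameters of the resulting diagram neighbourhoods.
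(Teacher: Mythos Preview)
Your outline is correct and would yield the polynomial bound, but it diverges from the paper's argument in both stages.

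For simple diagrams, the paper encodes $[\gamma]\in[J_0(N)]$ by the unordered multiset of its $N$ constituent segments (each an element of the finite set $[J_0(1)]$) together with the starting and ending segments and their directions; simplicity then forces a unique threading of the segments into an arc. This gives $|[J_0(N)]|\leq N^2\binom{|[J_0(1)]|+N-1}{N-1}$. Your intersection-number-plus-matching encoding (essentially normal coordinates for arcs) is an equivalent idea packaged differently.

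The more substantial divergence is in Stage two. The paper performs \emph{no surgery at all}: a $k$-diagram is encoded directly as its multiset of $N$ segments, its endpoints, and the additional data of which $j\leq k$ unordered pairs among the $N$ segments actually cross (two straight segments in a single triangle meet at most once, so this pair data suffices). The extra combinatorial factor is $\sum_{j=0}^{k}\binom{\binom{N}{2}}{j}$. Your smoothing-then-reconstruction route follows the original Birman--Series paper more closely, but is more delicate here: smoothing can produce closed components rather than arcs, the segment count is in fact unchanged (not $N+O(k)$, since smoothings are disjoint from $\Gamma$), and bounding the multiplicity of the surgery map requires the care you flag but do not supply. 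The paper's direct encoding sidesteps all of this at the cost of a slightly less intuitive ``why does the data determine the diagram'' step.
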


\begin{proof}
Every $k$-diagram $[\gamma]\in [J_k(N)]$ may be encoded as the ordered sequence $\sigma_1,\ldots,\sigma_N$ of elements of $[J_0(1)]$ obtained from cutting $[\gamma]$ along $\Gamma$. The key observation is that we do not need to retain the entire ordering of the sequence to recover a $k$-diagram: any simple diagram $[\gamma]\in [J_0(N)]$ may be completely recovered from the following data:
\begin{itemize}
\item 
the (unordered) multiset of $N$ segments in $[J_0(1)]$ constitute $[\gamma]$;
\item
the starting and ending segments for $[\gamma]$ (including the direction of the starting and ending segment).
\end{itemize}
This efficient encoding is used in the original proof of the Birman--Series theorem (\cite[Lemma~2.1]{birman_series}).\medskip

The consequence of this encoding is that
\begin{align*}
\mathrm{Card}[J_0(N)]\leq
N^2\cdot{{\mathrm{Card}[J_0(1)]+N-1} \choose {N-1}}=: P_0(N).
\end{align*}
For general $k$-diagrams $[\gamma]$, we need to introduce additional data to specify the intersection loci. Since two segments may intersect at most once, the degree of freedom introduced by this intersection data is bounded above by the number of ways of designating at most $k$ unordered pairs of segments to denote the intersections out of all possible unordered pairs of segments. Therefore:
\begin{align*}
\mathrm{Card}[J_k(N)]\leq 
P_0(N)\cdot\left[
{{N \choose 2} \choose {0}}+\ldots+{{N \choose 2} \choose {k}}
\right]=:P_k(N).
\end{align*}
\end{proof}

\subsection{Topological versus geometric length}

We have so far introduced $k$-diagrams, which afford us topological control over geodesics with $k$ self-intersections. We now show that the number of segments constituting a $k$-diagram is proportional to the Hilbert length of the segment it encodes. This promotes our topological control to geometric control.

\begin{lem}\label{thm:linearinN}
For any finite-area convex real projective surface $\Sigma$, there exists a positive constant $\alpha_{\Sigma,\Gamma}>0$ so that for any complete geodesic $\hat{\gamma}$ with at most $k$ self-intersections, the length of any geodesic subarc $\gamma\subset\hat{\gamma}$, such that $\gamma$ is an element of $J_k(N)$, grows at least linearly in $N$ for $N$ large enough. That is: there exists an integer $N_{\Sigma,\Gamma}\geq 0$ such that the Hilbert length
\begin{align*}
\ell_\gamma\geq\alpha_{\Sigma,\Gamma}\cdot N\text{ for all }\gamma\in J_k(N),\quad\text{where } N\geq N_{\Sigma,\Gamma}.
\end{align*}
\end{lem}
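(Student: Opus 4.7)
The plan is to reduce the problem to a compact setting by trimming away small horoball neighborhoods of each cusp, handling the thick part by finiteness and the cusp neighborhoods by a self-intersection counting argument.

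\textbf{Step 1 (thick part bound).} Let $p_1,\dots,p_m$ be the cusps (if any) of $\Sigma$, and fix pairwise disjoint embedded horoball (Hilbert) neighborhoods $H_{p_1},\dots,H_{p_m}$, chosen small enough that each lift of $H_{p_i}$ to $\Omega$ is convex and any complete geodesic that enters a lift either exits it after bounded time or is asymptotic to the cusp. Set $\Sigma^{\mathrm{thick}}:=\Sigma\setminus\bigcup_i\mathrm{int}(H_{p_i})$. Since $\Sigma^{\mathrm{thick}}$ is compact and the finite graph $\Gamma\cap\Sigma^{\mathrm{thick}}$ cuts it into a finite number of convex pieces, there are only finitely many homotopy classes of geodesic arcs in $\Sigma^{\mathrm{thick}}$ with endpoints on $\Gamma\cup\bigcup_i\partial H_{p_i}$. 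Each such class admits a straight-line representative in $\Omega$, and compactness together with strict convexity gives each class a strictly positive infimum of Hilbert length. Let $\alpha_1>0$ be the minimum of these finitely many positive lower bounds, so that every geodesic segment of $\gamma$ lying in $\Sigma^{\mathrm{thick}}$ with both endpoints on $\Gamma\cup\partial H_{p_i}$ has Hilbert length at least $\alpha_1$.

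\textbf{Step 2 (cusp excursion bound).} I would next show that for a complete geodesic $\hat{\gamma}$ with at most $k$ self-intersections, the total number of $\Gamma$-crossings that occur inside $\bigcup_i H_{p_i}$ is bounded by some constant $C(k,\Sigma,\Gamma)$. The idea is standard: a connected cusp excursion of $\hat{\gamma}$ into $H_{p_i}$ lifts in $\Omega$ to a geodesic arc that penetrates a horoball based at a lift $\tilde p_i$. The parabolic stabiliser of $\tilde p_i$ is generated by a single unipotent element $u$ which also permutes the edges of $\tilde\Gamma$ incident to $\tilde p_i$; hence the number of $\tilde\Gamma$-crossings inside the excursion is, up to a bounded multiplicative factor, the winding number $W$ of the excursion around $p_i$ in $\Sigma$. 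Two excursions that together wind $W_1,W_2$ times around $p_i$ create at least $W_1W_2-O(1)$ topological self-intersections in $H_{p_i}$, and a single excursion of winding $W$ creates at least $\binom{W}{2}-O(1)$ (these counts follow by projecting the excursions to the annular cover of $H_{p_i}$ determined by the parabolic stabiliser and counting intersection numbers of the resulting arcs). Imposing $\#\text{self-int}(\hat\gamma)\le k$ therefore bounds each $W$ and the number of excursions at each cusp by explicit functions of $k$, which bounds the total number of cusp-region $\Gamma$-crossings by some $C(k,\Sigma,\Gamma)$.

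\textbf{Step 3 (assembly).} Let $\gamma\in J_k(N)$, and let $N_{\mathrm{thick}}$ be the number of $\Gamma$-crossings of $\gamma$ occurring outside the cusp horoballs, so that $N=N_{\mathrm{thick}}+N_{\mathrm{cusp}}$ with $N_{\mathrm{cusp}}\le C(k,\Sigma,\Gamma)$ by Step~2. Consecutive $\Gamma$-crossings in $\Sigma^{\mathrm{thick}}$ are joined by a subarc of $\gamma$ that is either an arc in $\Sigma^{\mathrm{thick}}$ of Hilbert length at least $\alpha_1$ (Step~1) or an arc whose middle portion lies in some $H_{p_i}$; in either case the subarc has Hilbert length at least $\alpha_1$, since the bounded cusp excursions also have uniformly positive lengths (again by Step~1 applied to segments with one endpoint on $\partial H_{p_i}$). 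Summing these lower bounds,
\[
\ell_\gamma \;\ge\; \alpha_1\bigl(N_{\mathrm{thick}}-1\bigr)\;\ge\;\alpha_1\bigl(N-C(k,\Sigma,\Gamma)-1\bigr).
\]
Setting $N_{\Sigma,\Gamma}:=2(C(k,\Sigma,\Gamma)+1)$ and $\alpha_{\Sigma,\Gamma}:=\alpha_1/2$ then yields $\ell_\gamma\ge \alpha_{\Sigma,\Gamma}\cdot N$ once $N\ge N_{\Sigma,\Gamma}$.

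\textbf{Main obstacle.} Step~1 is essentially a compactness exercise that uses strict convexity of $\Omega$ to get positive minima, and should be straightforward. The substantive part is Step~2: translating ``bounded self-intersection of $\hat\gamma$'' into ``bounded winding about each cusp'' requires care because the geometry near a cusp of a strictly convex projective surface is not hyperbolic — one must invoke the precise dynamics of the unipotent parabolic stabiliser acting on $\Omega$ together with the convex shape of the horoballs, rather than the exact Euclidean picture available in the upper half-plane. The cleanest implementation is to project each cusp excursion into the annular quotient $H_{p_i}/\langle u\rangle$ and use the standard fact that $k$ simple arcs in an annulus with total winding number $W$ force $\Omega(W^2)$ pairwise intersections, an argument that is topological and therefore survives the passage from hyperbolic to Hilbert geometry.
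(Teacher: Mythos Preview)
Your thick--thin decomposition is the right shape, but there is a genuine gap in Step~1 for the closed case, and Step~2 is both more complicated than needed and contains incorrect estimates.

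For Step~1: whenever $\Gamma$ has an interior vertex (the closed case), your claim that every homotopy class of arc in a convex piece has strictly positive infimal Hilbert length is false. Two edges of a triangle meeting at such a vertex admit connecting arcs that slide into the corner and become arbitrarily short, so the infimum is zero. The paper repairs this by excising small balls $B_{r_i}(x_i)$ around every vertex of $\Gamma$, defining $\ell_{\min}$ only over arcs with endpoints on $\Gamma\setminus\bigcup B_{r_i}(x_i)$, and then using convexity: each ball is cut by $\Gamma$ into at most $3l$ convex sectors, a geodesic meets each sector at most once, so among any $3l+1$ consecutive segments at least one has both endpoints outside all the balls and hence length at least $\ell_{\min}$. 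Your horoball removal handles vertices at cusps but not these interior vertices.

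For Step~2: your intersection counts are wrong---a single winding-$W$ excursion into a cusp produces about $W-1$ self-intersections, not $\binom{W}{2}$, and two excursions of windings $W_1,W_2$ need not create $W_1W_2$ crossings---and even with corrected linear counts you would still need to bound the number of excursions, which your sketch does not do. The paper bypasses all of this with a one-step cuspidal collar argument: choose a horocycle $\eta_R$ of length $R$ and a deeper horoball $C_r$ at distance at least $\tfrac{R(k+1)}{2}$ from $\eta_R$. Any subarc of $\hat\gamma$ entering and exiting $C_r$ extends to a subarc $\bar\gamma\subset C_R$ with both endpoints on $\eta_R$; this $\bar\gamma$ has length at least $R(k+1)$ (it reaches $C_r$ and returns) yet is homotopic rel endpoints to a horocyclic path wrapping at most $k$ times (else it would have more than $k$ self-intersections), hence of length strictly less than $R(k+1)$---contradicting that geodesics minimize. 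So $\hat\gamma$ never enters $C_r$ at all, every segment of $\gamma$ lies in the thick part, and the cusped case reduces to a one-line compactness bound with $N_{\Sigma,\Gamma}=1$.
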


\begin{proof}[Proof of Lemma~\ref{thm:linearinN} for closed $\Sigma$]
We first prove this for closed $\Sigma$. Fix a disjoint collection of embedded open balls $B_{r_i}(x_i)$ around every vertex $x_i$ of $\Gamma$. Let $N_{\Sigma,\Gamma}$ be $3l+1$ (recall here that $l$ is the number of geodesic triangles constituting $\Sigma$) and let $\alpha_{\Sigma,\Gamma}>0$ be $\frac{\ell_\mathrm{min}}{2N_{\Sigma,\Gamma}}$, where $\ell_\mathrm{min}$ is the length of the shortest geodesic arc in $J_0(1)$ with end points on $\Gamma\backslash \cup B_{r_i}(x_i)$. The fact that $\ell_\mathrm{min}$ is well-defined is because the subset of $J_0(1)$ with end points on $\Gamma\backslash \cup B_{r_i}(x_i)$ is a compact set. To be precise: this subset of $J_0(1)$ is the disjoint union of $3l$ closed (solid) rectangles formed by taking products of distinct pairs of segments in $\Gamma\backslash \cup B_{r_i}(x_i)$ which lie on the same triangle. Moreover, we know that $\ell_\mathrm{min}>0$ because segments have starting and ending points on distinct edges and hence cannot be of length $0$.\medskip

Next observe that $\Gamma$ cuts each $B_{r_i}(x_i)$ into at most $3l$ convex sectors. Since the intersection of convex sets is convex and hence contractible, the intersection of any contiguous subarc of $\gamma$ with $B_{r_i}(x_i)$ may meet each sector at most once. This means that we may have at most $3l$ consecutive segments of $\gamma$ lying within $B_{r_i}(x_i)$ and hence any $3l+1$ consecutive segments on $\gamma$ must have length strictly greater than $\ell_\mathrm{min}$. This in turn gives us our choice of $N_{\Sigma,\Gamma}$ and $\alpha_{\Sigma,\Gamma}$ when $\Sigma$ is compact.
\end{proof}

We now look to the situation when $\Sigma$ is a (finite-area) cusped strictly convex real projective surface. We show that geodesics with $k$ self-intersections cannot penetrate arbitrarily far into a cusp (unless it goes straight into the cusp), thus effectively reducing the analysis to being on a compact subset of the surface:

\begin{prop}[Cuspidal collar neighborhood]
\label{prop:compact}
Fix a finite-area (cusped) convex real projective surface $\Sigma$ and some integer $k\geq0$. There is a compact subset $K\subset \Sigma$ which contains all (complete) compactly-supported geodesics on $\Sigma$ which self-intersect at most $k$ times when counted with multiplicity.
\end{prop}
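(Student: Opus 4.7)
The plan is to establish compactness by handling each cusp separately and showing that a geodesic with at most $k$ self-intersections cannot penetrate arbitrarily deeply into any cusp. Concretely, I would fix embedded disjoint horocyclic neighborhoods $U_1,\dots,U_m$ around the cusps $p_1,\dots,p_m$ of $\Sigma$, and show that for each cusp $p_j$ there is a depth $d_j(k)$ such that any compactly-supported geodesic on $\Sigma$ with at most $k$ self-intersections cannot enter the horocyclic neighborhood $U_j$ below depth $d_j(k)$. The desired compact set $K$ is then $\Sigma$ minus the union of these depth-$d_j(k)$ subneighborhoods, since the exterior of a collection of horocyclic neighborhoods in a finite-area cusped convex real projective surface is compact by \cite{Mar10}.

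To obtain the depth bound, I would lift to the universal cover $\Omega$. A subarc of a compactly-supported geodesic $\gamma$ contained in $U_j$ lifts to a Euclidean line segment $\tilde{\gamma}$ whose two endpoints lie on a lift $\tilde{\eta}$ of the bounding horocycle that limits to the cusp lift $\tilde{p}_j \in \partial\Omega$. The cusp stabilizer of $\tilde{p}_j$ is generated by a unipotent element $u = \rho(\alpha_{p_j})$ that preserves $\tilde{\eta}$ and acts on it by a nontrivial translation of some fixed horocyclic displacement $L_j > 0$. The key geometric claim is that the horocyclic length along $\tilde{\eta}$ between the two endpoints of $\tilde{\gamma}$ is a monotone increasing function of the depth that $\gamma$ reaches inside $U_j$, and tends to $\infty$ as that depth does. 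This monotone relationship is the direct analogue of the hyperbolic $e^d$-estimate and should be extracted from the unipotent normal form of $u$ acting on $\Omega$ together with the $C^1$ strict convexity of $\partial\Omega$ at $\tilde{p}_j$ provided by Proposition~\ref{thm:convprojregularity}.

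Given this estimate, suppose the depth of $\gamma$ in $U_j$ is so large that the horocyclic span of $\tilde{\gamma}$ on $\tilde{\eta}$ exceeds $(k+2)L_j$. Then the $u$-translates $\tilde{\gamma}, u\tilde{\gamma},\dots, u^{k+1}\tilde{\gamma}$ are $k+2$ distinct lifts of $\gamma$ whose horocyclic endpoint-spans on $\tilde{\eta}$ overlap consecutively. Any two such Euclidean segments whose endpoint intervals on $\tilde{\eta}$ properly overlap and which descend into the same cusp region are forced to cross inside $\Omega$, so each of the $k+1$ consecutive pairs contributes at least one crossing. These $k+1$ crossings project to $k+1$ distinct self-intersections of $\gamma$ in $\Sigma$, contradicting the hypothesis that $\gamma$ has at most $k$ self-intersections. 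Taking $d_j(k)$ to be the depth at which the horocyclic span first reaches $(k+2)L_j$ then yields the required bound.

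The main obstacle will be making precise the monotone quantitative relationship between the penetration depth of $\tilde{\gamma}$ inside the cusp region and the horocyclic distance between its endpoints on $\tilde{\eta}$, which must be done in the Hilbert metric rather than the hyperbolic metric; this requires careful use of the unipotent cusp normal form on $\Omega$ together with the $C^1$ strict convexity of $\partial\Omega$ at $\tilde{p}_j$. Once this estimate is in hand, the combinatorial-geometric step involving $u$-translates and forced crossings is routine.
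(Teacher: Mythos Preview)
There is a genuine gap in the step you describe as routine. The $k+1$ \emph{consecutive} crossings --- of $u^i\tilde{\gamma}$ with $u^{i+1}\tilde{\gamma}$ for $i=0,\dots,k$ --- are all $u$-translates of the single crossing $\tilde{\gamma}\cap u\tilde{\gamma}$ and hence project to \emph{one and the same} self-intersection of $\gamma$ in $\Sigma$, not to $k+1$ distinct ones. The repair is to count instead the crossings of the fixed lift $\tilde{\gamma}$ with $u^i\tilde{\gamma}$ for $i=1,\dots,k+1$: since the horocyclic span exceeds $(k+1)L_j$, each $u^i\tilde{\gamma}$ has its endpoints on $\tilde{\eta}$ interleaved with those of $\tilde{\gamma}$, so (granting your interleaving-implies-crossing claim) they intersect, and the associated deck transformations $u^{\pm 1},\dots,u^{\pm(k+1)}$ being pairwise distinct forces the resulting self-intersections of $\gamma$ to be distinct.

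It is also worth noting that the paper's argument sidesteps both your flagged obstacle and the interleaving-implies-crossing claim (which itself needs some convexity of $\tilde{\eta}$, or of the cusp region, in the affine chart). Rather than relate penetration depth to horocyclic span in the cover, the paper compares lengths: it fixes two nested horocycles $\eta_R\supset\eta_r$ with Hilbert distance at least $R(k+1)/2$ between them, so any geodesic subarc lying in $C_R$ with endpoints on $\eta_R$ that reaches $\eta_r$ has length at least $R(k+1)$; but such a subarc is homotopic rel endpoints to a path on $\eta_R$ wrapping at most $k$ times (else the geodesic would have more than $k$ self-intersections inside the cusp annulus), hence of length strictly less than $R(k+1)$. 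This contradiction needs neither the unipotent normal form nor the $C^1$ boundary regularity of $\Omega$.
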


\begin{rmk}
The complement of this compact subset $K$ in $\Sigma$ consists of annular neighborhoods around cusps and we refer to them as \emph{cuspidal collar neighborhoods} --- our nomenclature alludes to collar neighborhoods.
\end{rmk}

\begin{proof}
Consider a length $R$ embedded horocycle $\eta_R$ bounding an annular neighborhood $C_R$ of a given cusp. Now choose an even shorter horocycle $\eta_r$ bounding a smaller cuspidal annular neighborhood $C_r\subset C_R$, so that the minimal distance between $\eta_r$ and $\eta_R$ is at least $\frac{R(k+1)}{2}$ (this is always possible since $C_R$ is infinitely long). We claim that no geodesic arc $\gamma\in J_k$ enters and then exits $C_r$, that is: $C_r$ is a cuspidal collar neighborhood.\medskip

Assume otherwise that $\gamma$ enters and exits $C_r$. The complete geodesic extension $\hat{\gamma}$ is the union of two overlapping geodesic rays $\hat{\gamma}^+$ and $\hat{\gamma}^-$ with overlap given by a subarc of $\gamma$ lying within $C_r$ and with end points on $\eta_r$. In order for the ray $\hat{\gamma}^\pm$ to lie completely within $C_R$, the ideal end point of any lift of $\hat{\gamma}^\pm$ would need to be the unique ideal boundary point of the horodisk in the universal cover of $\Sigma$ covering $C_r$. This in turn characterizes $\hat{\gamma}^\pm$ as a geodesic going straight up the cusp, and therefore hitting every horocycle at most once. This is a contradiction as $\hat{\gamma}^\pm$ meets $C_r$ in two places. Therefore, both $\hat{\gamma}^+$ and $\hat{\gamma}^-$ leave $C_r$ at some point and hence there is a geodesic subarc $\bar{\gamma}$ of $\hat{\gamma}$ which:
\begin{itemize}
\item
lies completely within $C_R$;
\item
has both its endpoints on $\eta_R$;
\item
enters and exits $C_r$.
\end{itemize}
Since $\bar{\gamma}$ joins $\eta_r$ and $\eta_R$ along two subarcs, it has length at least $R(k+1)$. On the other hand, the geodesic arc $\bar{\gamma}$ is (endpoint-fixing) homotopy equivalent to a horocyclic path along $\eta_R$ which wraps around $\eta_R$ at most $k$ times (this can be shown by unwrapping $C_R$ to a $k$-fold cover of $C_R$ that undoes the self-intersections of $\bar{\gamma}$). This in turn means that the length of $\bar{\gamma}$ must be strictly less than $R(k+1)$, leading to a contradiction. Therefore, no geodesic arc $\gamma\in J_k$ which extends to a complete geodesic $\hat{\gamma}$ with at most $k$ self-intersections may enter $C_r$.
\end{proof}

We now return to the proof of Lemma~\ref{thm:linearinN}, but addressing the cusped case.

\begin{proof}[Proof of Lemma~\ref{thm:linearinN} for cusped $\Sigma$]
Finally, we complete our proof for the cusped case as follows: fix a horocyclic neighborhood $C_r$ for each cusp on $\Sigma$ and take $N_{\Sigma,\Gamma}=1$ and $\alpha_{\Sigma,\Gamma}>0$ to be the length (in the closed interval $[0,\infty]$ rather than $\mathbb{R}_{>0}$) of the shortest geodesic arc in $J_0(1)$ with endpoints outside of the horocyclic regions. Again, such a length exists due to compactness and is finite. These choices for constants clearly work because every segment on $\gamma$ lies outside of $C_r$ and hence must be at least of length $\alpha_{\Sigma,\Gamma}$.
\end{proof}

\subsection{Geodesic sparsity: area $0$}

We are now prepared to prove the geodesic sparsity theorem for finite-area convex real projective surfaces. Fix a fundamental domain $F\subset \tilde{\Sigma}=:\Omega$ made up of lifts of the triangles $\triangle_1,\ldots,\triangle_{l}$ decomposing $\Sigma$. Represent $\Omega$ as a subset of $\mathbb{R}^2\subset\mathbb{RP}^2$, and let $\overline{F}$ denote the closure of $F$ in $\mathbb{R}^2$. Define the following collection of geodesic arcs
\[
\hat{I}_k:=\left\{\sigma=\tilde{\gamma}\cap\triangle_i \mid \text{for some }i=1,\ldots, l\text{ and where }\tilde{\gamma}\text{ is a lift of }\gamma\in J_k\right\},
\]
and further define $|\hat{I}_k|\subset\overline{F}$ to be the collection of points lying on geodesic arcs $\sigma$ in $\hat{I}_k$. Our goal is to show that $|\hat{I}_k|\cap F$ has zero area. However, since the area on $\Omega$ is definitionally in the same measure class as the Lebesgue measure, we see that we just need to show that $|\hat{I}_k|$ occupies zero Euclidean area.

\begin{proof}[Proof of Theorem~\ref{thm:birmanseries} for compact $\Sigma$ -- area $0$.]
We first consider the case when $\Sigma$ is compact. For each $N\geq N_{\Sigma,\Gamma}$, we partition $\hat{I}_k$ using the fact that any geodesic arc\footnote{We may ignore the case when $\sigma$ is a vertex of $F$ as it does not affect the measure or the Hausdorff dimension of $|\hat{I}_k|$.} $\sigma$ is uniquely expressible as the middle (i.e.: $(N+1)^{\text{st}}$) segment of a lift of some representative of $[\gamma]\in J_k(2N+1)$. This gives us a partition of $\hat{I}_k$ into at most $P_k(2N+1)$ sets.\medskip

We next show that the Euclidean area occupied by all the lifts of representatives of $[\gamma]\in[J_k(2N+1)]$ with the middle segment in $\overline{F}$ is exponentially decreasing in $N$. Consider an arbitrary lift $\gamma$ of a representative of $[\gamma]$ positioned so that its middle segment is in $\overline{F}$ and let the endspoints of $\gamma$ lie on $\overline{F'}$ and $\overline{F''}$ --- two deck transformation translates of the fundamental domain $\overline{F}$. By the unique path lifting property, every such $\gamma$ necessarily ends on the same $\overline{F'}$ and $\overline{F''}$ pair. In particular, this means that the union of every such representative of $[\gamma]$ is contained within the convex hull of $F'\cup F''$. We know from Lemma~\ref{thm:linearinN} that both $\overline{F'}$ and $\overline{F''}$ are at least distance $\alpha_{\Sigma,\Gamma}N$ away from $F$. We now use this fact to control the Euclidean area for the convex hull of $F'\cup F''$.\medskip

Let $O$ be an arbitrary point on the interior of $F$. Since $\bar{F}$ is compact, for some $R>0$ the domain $\overline{F}\subset B(0,R)$. The domains $\overline{F'}$ and $\overline{F''}$ are deck transform translates of $\overline{F}$ and the corresponding translated points $x'\in F'$ and $x''\in F''$ of $O\in F$ satisfy that $d(x',O),d(x'',O)>\alpha_{\Sigma,\Gamma} N$. Therefore, the Euclidean diameters of $F'$ and $F''$ must both be less than $ce^{\frac{-\alpha_{\Sigma,\Gamma} N}{c}}$. This in turn means that the convex hull of $F'\cup F''$ may be covered by an Euclidean rectangle of width $ce^{\frac{-\alpha_{\Sigma,\Gamma} N}{c}}$ and length $\mathrm{diam}_E(\Omega)$. We absorb $\mathrm{diam}_E(\Omega)$ into $c$ and ignore it henceforth.\medskip

We next note that the convex hull of $F'\cup F''$ necessarily covers every representative geodesic segment in $[\gamma]\in[J_k(2N+1)]$. Since there are fewer than $P_k(2N+1)$ homotopy classes $[\gamma]$ constituting $[J_k(2N+1)]$ and each class is covered by a rectangle of area $ce^{\frac{-\alpha_{\Sigma,\Gamma} N}{c}}$, this means that the set $|\hat{I}_k|$ has Euclidean area less than $P_k(2N+1)\cdot ce^{\frac{-\alpha_{\Sigma,\Gamma} N}{c}}$. Since $N$ may be set to be arbitrarily large, this means that $|\hat{I}_k|$ has zero Euclidean area and hence zero area. Finally observe that $|\hat{I}_k|$ is the lift of $|I_k|$ to $\overline{F}$ (except for perhaps finitely many closed geodesics lying completely on $\Gamma$) and hence $|I_k|\cap F$ has zero area.
\end{proof}

\begin{proof}[Proof of Theorem~\ref{thm:birmanseries} for cusped $\Sigma$ -- area $0$.]
We now turn to the case when $\Sigma$ is non-compact, that is: we are dealing with a cusped convex real projective surface. Given a geodesic segment $\sigma\in\hat{I}_k$, when we try to geodesically extend $\sigma$ using deck transform translates of segments in $\hat{I}_k$, one of the following three things occurs:
\begin{enumerate}
\item 
$\sigma$ can be extended by $N$ segments in both directions, this produces a geodesic arc in $J_k(2N+1)$;
\item
$\sigma$ can be extended by $N$ segments in one direction and hits a cusp in the other direction, this produces an arc in $J_k(M)$, for $M\leq 2N$;
\item
$\sigma$ cannot be extended by $N$ segments in either direction and hits a cusp in the both directions, this produces an arc in $J_k(M)$, for $M\leq 2N-1$;
\end{enumerate}
This behavioral classification allows us to partition $\hat{I}_k$ into the following three classes of objects:
\begin{enumerate}
\item
$\sigma$ is the middle (i.e.: $(N+1)^{\mathrm{st}}$) segment of a lift of some representative of $[\gamma]\in J_k(2N+1)$;
\item
$\sigma$ is a segment of a lift of some representative $\gamma$ of $[\gamma]\in J_k(M)$, for $M\leq 2N$, where $\gamma$ is a geodesic ray (i.e.: one of the ends of $\gamma$ is a cuspidal ideal point) and $\sigma$ is the $i^\text{th}$ segment, for $1\leq i\leq M-(N+1)$, indexed from the cuspidal end;
\item
$\sigma$ is a segment of a lift of the unique representative $\gamma$ of $[\gamma]\in J_k(M)$, for $M\leq 2N-1$, where $\gamma$ is a bi-infinite geodesic (i.e.: both end points of $\gamma$ are cuspidal ideal points) and $\sigma$ has index (strictly) less than $N+1$ indexed from both ends of $\gamma$.
\end{enumerate}
Case~1 is identical to the previous compact closed $\Sigma$ analysis, and each homotopy class $[\gamma]$ may be covered by a Euclidean rectangle of Euclidean area $ce^{\frac{-\alpha_{\Sigma,\Gamma} N}{c}}$. For Case~2, note that one end of $\gamma$ is a single cuspidal ideal point on $\partial\Omega$, and therefore $[\gamma]$ may be covered by a Euclidean trapezium with Euclidean area less than $ce^{\frac{-\alpha_{\Sigma,\Gamma} N}{c}}$. Case~3 concerns bi-infinite geodesics joining two cuspidal ideal points and may be covered by a single line. This means that $|\hat{I}_k|$ may be covered by a finite collection of quadrilaterals (and lines) of total Euclidean area less than
\begin{align*}
ce^{\frac{-\alpha_{\Sigma,\Gamma} N}{c}}(P_k(2N+1)+P_k(2N)+\ldots+P_k(1)) < ce^{\frac{-\alpha_{\Sigma,\Gamma} N}{c}}\cdot(2N+1)\cdot P_k(2N+1).
\end{align*}
Once again, by taking $N$ to be arbitrarily large, we see that the Euclidean area of $|\hat{I}_k|$ is zero and hence area of $|I_k|$ is zero.
\end{proof}

\subsection{Geodesic sparsity: Hausdorff dimension $1$}

Finally, we show that $|I_k|$, or equivalently $\hat{I}_k$, has Hausdorff dimension $1$. 

\begin{proof}[Proof of Theorem~\ref{thm:birmanseries} -- Hausdorff dimension $1$.]
Consider $\Omega$ equipped with the Hilbert metric $d$ (which is Finsler) in comparison with $\Omega$ endowed with the Euclidean metric $d_E$ (but regarded as a Finsler manifold). The Finsler metric for $(\Omega,d)$ is a $C^1$ rescaling of $(\Omega,d_E)$ due to the dependence on the boundary smoothness (which is at least $C^1$). This means that, for any (possibly non-compact) subset $K$ of a compact subset of $\Omega$, the identity map between $(\Omega,d)$ and $(\Omega,d_E)$ restricts to a bi-Lipschitz map between $(K,d)$ and $(K,d_E)$. Combined with the fact that Hausdorff dimension is preserved under bi-Lipschitz maps, this means that when $\Sigma$ (and hence $\overline{F}$) is compact the Hausdorff dimension of $(|I_k|,d)$ and $(|\hat{I}_k|,d_E)$ are the same. Combined with the further fact that the Hausdorff dimension is preserved with respect to taking countable unions of sets with the same Hausdorff dimension, the equivalence in Hausdorff dimension between $(|I_k|,d)$ and $(|\hat{I}_k|,d_E)$ is true when $\Sigma$ is cusped.\medskip

We have reduced our Hausdorff dimension derivation problem to that of $(|I_k|,d_E)$. We first show that the $(1+\epsilon)$-dimensional Hausdorff content of $(|I_k|,d_E)$ is $0$ for every $\epsilon>0$. Recall from earlier in this proof that for every $N\geq N_{\Sigma,\Gamma}$, there we may cover $|I_k|$ with fewer than $N\cdot P_k(2N+1)$ Euclidean rectangles of length $\mathrm{diam}_E(\Omega)$ and width $ce^{\frac{-\alpha_{\Sigma,\Gamma} N}{c}}$. Each such rectangle may be covered by $\left\lceil \frac{\mathrm{diam}_E(\Omega)}{ce^{\frac{-\alpha_{\Sigma,\Gamma} N}{c}}}\right\rceil$ Euclidean balls of radius $\frac{3}{2}ce^{\frac{-\alpha_{\Sigma,\Gamma} N}{c}}$. The $(1+\epsilon)$-dimensional Hausdorff content of $(|\hat{I}_k|,d_E)$ is $0$, because:
\begin{align*}
\lim_{N\to\infty}N\cdot P_k(2N+1)
\cdot
\left\lceil \frac{\mathrm{diam}_E(\Omega)}{ce^{\frac{-\alpha_{\Sigma,\Gamma} N}{c}}}\right\rceil
\cdot
\left(
 \frac{3}{2}ce^{\frac{-\alpha_{\Sigma,\Gamma} N}{c}}
\right)^{1+\epsilon}=0.
\end{align*}
This means that the Hausdorff dimension of $(|I_k|,d)$ is at most $1$. On the other hand, since $|I_k|$ contains geodesic arcs, it necessarily has Hausdorff dimension at least $1$.
\end{proof}

\clearpage

\section{McShane identities for higher Teichm\"uller space}
\label{sec:highermcshane}
In this section, we derive McShane identities (Theorem \ref{theorem:boundaryith}) for $\operatorname{PGL}_n(\mathbb{R})$-positive representations with loxodromic boundary monodromy from $i$-th (Goncharov--Shen) potential ratio (Definition \ref{defn:ratio}). Such ratio is a symmetry breaking analogue of the general cross ratio \cite{Lab07}, which is called the ratio (Definition \ref{definition:4ratio}).

\subsection{Labourie--McShane identities}
Let us firstly recall the general cross ratio introduced by Labourie.
\begin{defn}[\cite{Lab07} Cross ratio]
\label{definition:crossratio}
Let 
\[\partial_{\infty}\pi_1(S_{g,m})^{4*}=\{(x,y,z,t)\in \partial_{\infty}\pi_1(S_{g,m})^4 \;|\; x \neq t, y \neq z\}.\]
A {\em cross ratio} on $S_{g,m}$ is a $\pi_1(S_{g,m})$-invariant H\"older function $\mathbb{B}$ from $\partial_{\infty}\pi_1(S_{g,m})^{4*}$ to $\mathbb{R}$ which satisfies the following rules:
\begin{enumerate}
\item (Normalization): $\mathbb{B}(x,y,z,t)=0$ if and only if $x=z$ or $y=t$,
\item (Normalization): $\mathbb{B}(x,y,z,t)=1$ if and only if $x=y$ or $z=t$, 
\item (cocycle): $\mathbb{B}(x,y,z,t)=\mathbb{B}(x,y,z,w) \cdot \mathbb{B}(x,y,w,t)$,
\item (cocycle): $\mathbb{B}(x,y,z,t)=\mathbb{B}(x,w,z,t) \cdot \mathbb{B}(w,y,z,t)$. 
\end{enumerate}
An {\em ordered cross ratio} is a cross ratio $\mathbb{B}$ on $S_{g,m}$ which satisfies, for four different points $x,y,z,t \in \partial_{\infty}\pi_1(S_{g,m})$:
\begin{enumerate}
\item $\mathbb{B}(x,y,z,t)>0$ if $z,t$ are on the same side of $(x,y)$,
\item $\mathbb{B}(x,y,z,t)>1$ if $x,y,z,t$ are cyclically ordered.
\end{enumerate}
\end{defn}

\begin{defn}
For any non-trivial $\alpha\in \pi_1(S_{g,m})$, let $y \neq \alpha^+, \alpha^-$, the {\em period} of $\alpha$ with respect to the ordered cross ratio $\mathbb{B}$, which does not depend on $y$, is
\[\ell^{\mathbb{B}}(\alpha):=
\log \left(\mathbb{B}(\alpha^+,\alpha^-,y,\alpha(y))\right).
\]
\end{defn}
By Definition \ref{definition:crossratio}, we have $\ell^{\mathbb{B}}(\alpha)=\ell^{\mathbb{B}}(\alpha^{-1})$.

Given $\rho \in \mathrm{Pos}_n(S_{g,m})$ (Definition \ref{definition:posrep}) with loxodromic boundary mondromy which is also a Hitchin representation \cite[Section 9]{LM09} by Remark \ref{remark:hitpos}, let $\xi_\rho:\partial_\infty \pi_1(S_{g,m})\rightarrow \mathcal{B}$ be the $\rho$-equivariant positive map with respect to the canonical lift (Definition \ref{definition:ratioperiod}). Then we define the associated ordered cross ratio $\mathbb{B}^{\rho}$ using $\xi_\rho$. For example, we define $\mathbb{B}^{\rho}_i$ for $i=1,\cdots, n-1$ using $\xi_\rho$ as follows. Let $\tilde{\xi}_\rho^{i}$ be the lift of $\xi_\rho^{i}$ with values in $\wedge^i(\mathbb{R}^n)$ for $i=1,\cdots, n-1$. Recall $\Delta$ is the volume form of $\mathbb{R}^n$. For any four different points $x,y,z,t\in\partial_{\infty}(\pi_1(S_{g,m}))$:
\begin{equation}
\label{equation:labcr}
\mathbb{B}^{\rho}_i(x,y,z,t) = \frac{\Delta\left(\tilde{\xi}_\rho^{n-i}(x) \wedge \tilde{\xi}_\rho^{i}(z)\right) }{\Delta\left(\tilde{\xi}_\rho^{n-i}(x) \wedge \tilde{\xi}_\rho^{i}(t)\right)} \cdot \frac{\Delta\left(\tilde{\xi}_\rho^{n-i}(y) \wedge \tilde{\xi}_\rho^{i}(t)\right)}{\Delta\left(\tilde{\xi}_\rho^{n-i}(y) \wedge \tilde{\xi}_\rho^{i}(z)\right)}
\end{equation}
which does not depend the lift of $\xi_\rho$. The cross ratio $\mathbb{B}^{\rho}_i$ is indeed the ordered cross ratio by positivity \cite{FG06}. For $i=1$ or $n-1$, it is called \emph{rank $n$ weak cross ratio}. The {\em period} of $\alpha$ with respect to $\mathbb{B}^{\rho}_i$ is
$\log \frac{\lambda_1\left(\rho(\alpha)\right)\cdots \lambda_i\left(\rho(\alpha)\right)}{\lambda_{n-i+1}\left(\rho(\alpha)\right)\cdots \lambda_n\left(\rho(\alpha)\right)}.$

By splitting the ordered cross ratio the same way as the classical McShane identity for the horocycle, Labourie and McShane \cite{LM09} obtained the following identity. Recall the loxodromic-bordered $\operatorname{PGL}_n(\mathbb{R})$-representation variety $\mathrm{Pos}_n^h(S_{g,m})$ in Definition \ref{definition:posrep}.
\begin{thm}\cite[Theorem 4.1.2.1]{LM09}
\label{theorem:labmc}
Given $\rho \in \mathrm{Pos}_n^h(S_{g,m})$ with loxodromic boundary monodromy, let $\alpha$ be an oriented boundary component of $S_{g,m}$ such that $S_{g,m}$ is on the left side of $\alpha$ and $\mathbb{B}^\rho$ is an ordered cross ratio for $\rho$. Then
\begin{equation*}
\begin{aligned}
\sum_{(\beta,\gamma)\in \subvec{\mathcal{P}}_\alpha}
 \log \mathbb{B}^{\rho}(\alpha^-,\alpha^+,\gamma^+,\beta^+)  + \sum_{(\beta,\gamma)\in \subvec{\mathcal{P}}^{\partial}_\alpha}
\log \mathbb{B}^{\rho}(\alpha^-,\alpha^+,\gamma^-,\gamma^+)=\ell^{\mathbb{B}^\rho}(\alpha),
\end{aligned}
\end{equation*}
where $\xvec{\mathcal{P}}_\alpha$ is the set of the homotopy classes of boundary-parallel pairs of pants in Definition \ref{definition:Pp}, and $\xvec{\mathcal{P}}^{\partial}_\alpha$ is a subset of $\vec{\mathcal{P}}_\alpha$ with $\alpha$ and another boundary component of the pair of pants as the boundary components of $S_{g,m}$. For each boundary-parallel pair of pants, we fix a marking on the boundary components $\alpha,\beta,\gamma$ such that $\alpha \beta^{-1}\gamma =1$ as in Figure \ref{Figure:pti}.
\end{thm}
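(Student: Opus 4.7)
The plan is to adapt the classical McShane proof strategy to the ordered-cross-ratio setting. First I would endow $S_{g,m}$ with an auxiliary geodesic-bordered hyperbolic metric in which $\alpha$ represents a closed boundary geodesic; this identifies $\partial_\infty\pi_1(S_{g,m})$ with a Cantor-like subset of $\partial\mathbb{H}^2$ carrying a natural cyclic order, and distinguishes the attracting/repelling fixed points $\alpha^+,\alpha^-$. The goal is then to reinterpret the period $\ell^{\mathbb{B}^\rho}(\alpha)=\log\mathbb{B}^\rho(\alpha^+,\alpha^-,y,\alpha\cdot y)$ as the total ``$\mathbb{B}^\rho$-length'' of the quotient by $\langle\alpha\rangle$ of the arc $\mathcal{A}\subset\partial_\infty\pi_1(S_{g,m})$ lying to one side of $(\alpha^-,\alpha^+)$ --- the side corresponding to geodesics perpendicular to $\alpha$ entering $S_{g,m}$.

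The cocycle axioms of Definition~\ref{definition:crossratio} make $\log\mathbb{B}^\rho(\alpha^-,\alpha^+,\cdot,\cdot)$ behave exactly like a signed length on $\mathcal{A}$: for any three points $z,w,t$ on the same side of $(\alpha^-,\alpha^+)$ one has $\log\mathbb{B}^\rho(\alpha^-,\alpha^+,z,t)=\log\mathbb{B}^\rho(\alpha^-,\alpha^+,z,w)+\log\mathbb{B}^\rho(\alpha^-,\alpha^+,w,t)$. Together with ordered positivity, this promotes $\log\mathbb{B}^\rho(\alpha^-,\alpha^+,\cdot,\cdot)$ to a (Radon) measure $\nu_{\mathbb{B}^\rho}$ on $\mathcal{A}/\langle\alpha\rangle$ whose total mass is, definitionally, $\ell^{\mathbb{B}^\rho}(\alpha)$. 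I would then partition $\mathcal{A}/\langle\alpha\rangle$ into three pieces: a Cantor set $\mathfrak{C}$ of limit endpoints of complete simple perpendicular geodesics, a countable set $\mathfrak{A}$ of endpoints of simple geodesics that either dive into a cusp or return to $\partial S_{g,m}$, and an open countable collection of \emph{gap intervals} corresponding to perpendicular geodesics that self-intersect or hit $\alpha$ again.

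The gap intervals are in bijection with the embedded boundary-parallel pairs of pants on $S_{g,m}$ containing $\alpha$, split into two classes. When the other two cuffs of such a pair of pants are internal closed geodesics $\beta,\gamma$ with the marking convention $\alpha\beta^{-1}\gamma=1$, the two endpoints of the gap interval in $\partial_\infty\pi_1(S_{g,m})$ are precisely the attracting fixed points $\beta^+$ and $\gamma^+$ of the lifts dictated by this marking, so cocycle additivity gives the summand $\log\mathbb{B}^\rho(\alpha^-,\alpha^+,\gamma^+,\beta^+)$ indexed by $\xvec{\mathcal{P}}_\alpha$. When instead one of the other cuffs of the pair of pants coincides with a boundary component $\gamma$ of $S_{g,m}$, the corresponding gap is bounded by the two fixed points $\gamma^-$ and $\gamma^+$ on the arc $\mathcal{A}$, contributing $\log\mathbb{B}^\rho(\alpha^-,\alpha^+,\gamma^-,\gamma^+)$ and indexed by $\xvec{\mathcal{P}}^\partial_\alpha$. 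Summing both families of gap contributions recovers $\ell^{\mathbb{B}^\rho}(\alpha)$ \emph{provided} that $\mathfrak{C}\cup\mathfrak{A}$ has $\nu_{\mathbb{B}^\rho}$-measure zero.

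The hard part, accordingly, is precisely this measure-zero statement. By the classical Birman--Series theorem $\mathfrak{C}\cup\mathfrak{A}$ has Lebesgue (equivalently, hyperbolic horocyclic) measure zero; the task is to upgrade this to $\nu_{\mathbb{B}^\rho}$-measure zero. This is where the Anosov/H\"older property of $\rho$ enters (cf.\ Remark~\ref{remark:holderanosov}): since $\rho\in\mathrm{Pos}_n^h(S_{g,m})$ is Hitchin by Remark~\ref{remark:hitpos}, its boundary map $\xi_\rho$ is H\"older continuous, which forces $\nu_{\mathbb{B}^\rho}$ to be absolutely continuous with respect to Lebesgue on every compact arc of $\partial_\infty\pi_1(S_{g,m})$, with a bounded density. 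A secondary technical obstacle is keeping careful track of marking conventions: one must verify that the bijection between gap intervals and (marked) boundary-parallel pairs of pants respects orientations so that the two resulting summation indices are exactly $\xvec{\mathcal{P}}_\alpha$ and $\xvec{\mathcal{P}}^\partial_\alpha$ as defined, with no double counting or sign ambiguity.
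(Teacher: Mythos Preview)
Your overall architecture matches exactly what the paper (and \cite{LM09}) does: the paper does not give its own proof of this statement but cites it, and in Remark~\ref{remark:holderanosov} sketches the mechanism just as you do --- build a finitely additive ``cross-ratio length'' on $\mathcal{A}/\langle\alpha\rangle$ from the cocycle axioms, partition into gaps indexed by $\xvec{\mathcal{P}}_\alpha$ and $\xvec{\mathcal{P}}^\partial_\alpha$, and kill the Birman--Series set using the H\"older regularity of $\xi_\rho$ coming from the Anosov property.

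There is, however, a genuine error in how you invoke H\"older regularity. You write that H\"older continuity of $\xi_\rho$ ``forces $\nu_{\mathbb{B}^\rho}$ to be absolutely continuous with respect to Lebesgue on every compact arc \ldots\ with a bounded density.'' This is false: H\"older continuity does not imply absolute continuity, let alone bounded density (the Cantor function is H\"older and its associated measure is purely singular). If this were the mechanism, Lebesgue-measure-zero of $\mathfrak{C}\cup\mathfrak{A}$ would suffice --- but it does not. The actual argument in \cite{LM09} uses the stronger output of Birman--Series, namely that $\mathfrak{C}\cup\mathfrak{A}$ has \emph{Hausdorff dimension zero} on the boundary circle. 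The map $t\mapsto\log\mathbb{B}^\rho(\alpha^-,\alpha^+,y_0,t)$ is H\"older, and an $\alpha$-H\"older map sends a set of Hausdorff dimension $d$ to one of dimension at most $d/\alpha$; hence the image of $\mathfrak{C}\cup\mathfrak{A}$ in $\mathbb{R}$ still has Hausdorff dimension zero, and therefore Lebesgue measure zero, which is exactly $\nu_{\mathbb{B}^\rho}(\mathfrak{C}\cup\mathfrak{A})=0$. You should replace your absolute-continuity claim with this Hausdorff-dimension argument; the rest of your outline is correct.
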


\begin{rmk}
\label{remark:holderanosov}
The proof of \cite[Theorem 4.1.2.1]{LM09} is different from the step 2 in the proof of Theorem \ref{theorem:equsl3} where we compare the horocyclic Hilbert length measure and the Goncharov--Shen potential measure through $\partial_\infty \pi_1(S_{g,m})\backslash \tilde{p}$. Labourie and McShane compare directly the horocyclic measure with respect to the auxiliary hyperbolic metric for $\partial_\infty \pi_1(S_{g,m})$ and the ordered cross ratio measure induced from the $\rho$-equivariant positive H\"older map $\xi_\rho:\partial_\infty \pi_1(S_{g,m})\rightarrow \mathcal{B}$. The Anosov property of the representation $\rho$ induces the H\"older property of $\xi_\rho$ which ensures that these two measures are comparable. But the Anosov property fails for the positive representation with unipotent boundary monodromy (cusps).
\end{rmk}
\subsection{Ordered ratios and identities}

We introduce a generalization of ordered cross ratios, which called \emph{ordered ratios}. 
\begin{defn}[Ratio]
\label{definition:4ratio}
Consider the following collection of $4$-tuples
\[
\partial_{\infty}\pi_1(S_{g,m})^{4**}
=\left\{(x,y,z,t)\in \partial_{\infty}\pi_1(S_{g,m})^4 \mid x \neq y, x \neq z, x \neq t, y \neq z\right\}.
\]
A \emph{ratio} $B:\partial_{\infty}\pi_1(S_{g,m})^{4**}\to\mathbb{R}$ is a $\pi_1(S_{g,m})$-invariant continuous function which satisfies the following three \emph{ratio conditions}:
\begin{enumerate}
\item (normalization): $B(x,y,z,t)=0$ if and only if $y=t$,
\item (normalization): $B(x,y,z,t)=1$ if and only if $z=t$, 
\item (cocycle): $B(x,y,z,t)=B(x,y,z,w) \cdot B(x,y,w,t)$,
\end{enumerate}
An \emph{ordered ratio} is a ratio $B$ which satisfies two \emph{order conditions}: for four different points $x,y,z,t \in \partial_{\infty}\pi_1(S_{g,m})$:
\begin{enumerate}
\item $B(x,y,z,t)>0$ if $z,t$ are on the same side of $(x,y)$,
\item $B(x,y,z,t)>1$ if $x,y,z,t$ are cyclically ordered.
\end{enumerate}
\end{defn}

\begin{rmk}
Let us start with a few examples for examining the conditions to be a ratio.
\begin{enumerate}
\item For $(\bar{\rho},\bar{\xi}) \in \mathcal{A}_{\operatorname{SL}_n,S_{g,m}}$, we define 
\[B(x,y,z,t):=\frac{P_i(x;y,t)}{P_i(x;y,z)}.\] 
It is a ratio but not an ordered ratio in general. It is an ordered ratio when $(\bar{\rho},\bar{\xi})\in \mathcal{A}_n(S_{g,m})$. This is the main ordered ratio that we study in this section.
\item For $\rho\in \mathcal{X}_n(S_{g,m})$, we define the ordered ratio $B(x,y,z,t)$ to be the edge function $-D_i(x,y,z,t)$ in Definition \ref{definition:edgefunction}.
\end{enumerate}
\end{rmk}

\begin{defn}[Periods for ratios]
\label{defn:period}
For non-trivial $\alpha \in \pi_1(S_{g,m})$ and $y \neq\alpha^-, \alpha^+$, the \emph{period} of $\alpha$ for the ordered ratio $B$ is 
\[
\ell^B(\alpha):=\log B\left(\alpha^-,\alpha^+,\alpha(y),y\right),
\]
\end{defn}
As with periods for cross ratios, periods for ratios are also independent of the choice of $y$. For any $z \in \partial_\infty \pi_1(S_{g,m}) \backslash \{\alpha^-, \alpha^+\}$, by
\begin{itemize}
\item
 $\pi_1(S_{g,m})$-invariance:
  $B\left(\alpha^-,\alpha^+,\alpha(y),\alpha(z)\right)= B\left(\alpha^-,\alpha^+,y,z\right)$,
  \item
  and the cocycle identity for the ordered ratios,
  \end{itemize}
  we obtain that:
\begin{align*}
&B\left(\alpha^-,\alpha^+,\alpha(y),y\right)
\\=&B\left(\alpha^-,\alpha^+,\alpha(y),\alpha(z)\right)\cdot B\left(\alpha^-,\alpha^+,\alpha(z),z\right)\cdot B\left(\alpha^-,\alpha^+,z,y\right)
\\=&B\left(\alpha^-,\alpha^+,y,z\right)\cdot B\left(\alpha^-,\alpha^+,\alpha(z),z\right)\cdot B\left(\alpha^-,\alpha^+,z,y\right)
\\=&B\left(\alpha^-,\alpha^+,\alpha(z),z\right).
\end{align*}

Ordered ratios satisfy one fewer cocycle axiom than ordered cross ratios. As a consequence periods $\ell^B$ of an ordered ratio $B$ do not necessarily satisfy $\ell^B(\alpha)= \ell^B(\alpha^{-1})$. One immediate advantage of ordered ratios is that $i$-th lengths (Definition \ref{definition:ilength}) can now be periods.

Before we state the generalized McShane identity for ordered ratios, we define the boundary-parallel pairs of half-pants when an oriented boundary component $\alpha$ of $S_{g,m}$ is playing the role of the cusp $p$ in Definition \ref{defn:hp}. 
\begin{defn}
\label{definition:hparital}
Let $\alpha$ be an oriented boundary component of $S_{g,m}$ such that $S_{g,m}$ is on the left side of $\alpha$ as in Figure~\ref{fig:hpspiral}. An (embedded) {\em boundary-parallel pairs of half-pants} on $S_{g,m}$ is one of the two pieces obtained by cutting along the unique simple bi-infinite geodesic of an embedded pair of pants with both ends emanating from $\alpha^-$. Usually, denoted by $(\gamma,\gamma_{\alpha^-})$ or $\mu$.

Let $\xvec{\mathcal{H}}_\alpha$ denotes the collection of (embedded) boundary-parallel pairs of half-pants on $S_{g,m}$ with both ends of its seam emanating from $\alpha^-$. Moreover, let $\xvec{\mathcal{H}}^\partial_\alpha\subset\xvec{\mathcal{H}}_\alpha$ denote the subset of half-pants with a peripheral cuff.
\end{defn}

\begin{figure}[h!]
\includegraphics[scale=1.5]{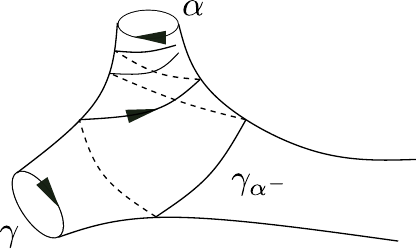}
\caption{An example of a boundary-parallel pair of pants $(\gamma,\gamma_{\alpha^-})$ with both ends of the seam $\bar{\gamma}_{\alpha^-}$ emanating from ${\alpha^-}$.}
\label{fig:hpspiral}
\end{figure}

\begin{rmk}
Given a boundary-parallel pair of half-pants $(\gamma,\gamma_{\alpha_-})\in\xvec{\mathcal{H}}^{\partial}_\alpha$, there is a unique boundary-parallel pair of pants in $\xvec{\mathcal{P}}_\alpha$ that contains $(\gamma,\gamma_{\alpha_-})$ and agrees with its boundary orientations. We thereby identify $\xvec{\mathcal{H}}^{\partial}_\alpha$ with the subset $\xvec{\mathcal{P}}^{\partial}_\alpha\subset\xvec{\mathcal{P}}_\alpha$ of boundary-parallel pairs of pants with a peripheral cuff.
\end{rmk}

\begin{thm}[McShane identity for loxodromic-bordered positive representations]
\label{thm:boundary}
For a $\operatorname{PGL}_n(\mathbb{R})$-positive representation $\rho \in \mathrm{Pos}_n^h(S_{g,m})$ with loxodromic boundary monodromy, let $\alpha$ be a distinguished oriented boundary component $S_{g,m}$ such that $S_{g,m}$ is on the left side of $\alpha$ and $B^\rho$ is an ordered ratio for $\rho$, we have the equality:
\begin{align*}
\begin{aligned}
&\ell^{B^\rho}(\alpha)
=\sum_{(\beta,\gamma)\in \subvec{\mathcal{P}}_\alpha}
 \log B^\rho(\alpha^-,\alpha^+,\gamma^+,\beta^+)  + \sum_{(\beta,\gamma)\in \subvec{\mathcal{P}}^{\partial}_\alpha}
\log B^\rho(\alpha^-,\alpha^+,\gamma^-,\gamma^+)
\\&=\sum_{(\delta,\delta_{\alpha^-})\in \subvec{\mathcal{H}}_\alpha}
\left| \log B^\rho\left(\alpha^-;\alpha^+,\delta(\alpha^-),\delta^+\right) \right| 
+ \sum_{(\gamma,\gamma_{\alpha^-})\in \subvec{\mathcal{H}}^\partial_\alpha}
\log B^\rho\left(\alpha^-;\alpha^+,\gamma^-,\gamma^+\right).
\end{aligned}
\end{align*}
where $\xvec{\mathcal{P}}_\alpha$ is the set of the homotopy classes of boundary-parallel pairs of pants, and $\xvec{\mathcal{P}}^{\partial}_\alpha$ is a subset of $\vec{\mathcal{P}}_\alpha$ containing another boundary component of $S_{g,m}$ as in Definition \ref{definition:Pp}. And $\xvec{\mathcal{H}}_\alpha$ and $\xvec{\mathcal{H}}^{\partial}_\alpha$ are the sets of boundary-parallel pairs of half-pants in Definition \ref{definition:hparital}. For each boundary-parallel pair of pants or pair of half-pants, we fix a marking on the boundary components $\alpha,\beta,\gamma$ such that $\alpha \beta^{-1}\gamma =1$ as in Figure \ref{Figure:pti}.
\end{thm}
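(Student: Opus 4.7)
The plan is to adapt the McShane--Labourie strategy by reinterpreting the period $\ell^{B^\rho}(\alpha)$ as the total $B^\rho$-measure of a fundamental arc on $\partial_\infty\pi_1(S_{g,m})$ and then decomposing that arc into ``gap'' intervals indexed by boundary-parallel pairs of pants (or pairs of half-pants). The strategy closely parallels the classical proof of McShane's identity and the argument of Labourie--McShane for Theorem~\ref{theorem:labmc}, with the cocycle axiom of an ordered ratio (Definition~\ref{definition:4ratio}) playing the role of the cross-ratio cocycle.

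First, I would interpret the period as a measure. Fix any $y\in \partial_\infty\pi_1(S_{g,m})\setminus\{\alpha^-,\alpha^+\}$ lying on the component of $\partial_\infty\pi_1(S_{g,m})\setminus\{\alpha^-,\alpha^+\}$ on the side of the axis of $\alpha$ that corresponds to $S_{g,m}$ being on the left, so that $I_y := (y,\alpha(y))$ is a fundamental domain for the $\langle\alpha\rangle$-action on that component. By $\pi_1$-invariance together with the ratio cocycle axiom and Definition~\ref{defn:period}, the assignment
\begin{align*}
(x_1,x_2)\mapsto \log B^\rho(\alpha^-,\alpha^+,x_1,x_2)
\end{align*}
extends to a signed Borel measure $\nu$ on $I_y$ of total mass $\ell^{B^\rho}(\alpha)$; the order condition on $B^\rho$ ensures that $\nu$ is positive on subintervals whose endpoints are cyclically ordered with $\alpha^-,\alpha^+$. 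It therefore suffices to exhibit $I_y$ as the disjoint union of a $\nu$-null set together with an explicit countable family of gap intervals, and to compute $\nu$ on each gap.

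Second, I would carry out a Birman--Series-type partition of $I_y$. Viewing points $x\in I_y$ as ideal endpoints of geodesic rays shot into $S_{g,m}$ from $\alpha$ with respect to the auxiliary hyperbolic metric, the arc splits into (a) a Cantor set $\mathcal{C}$ of simple geodesics spiraling to laminations, (b) a countable set $\mathcal{A}$ of simple arcs terminating at another boundary, and (c) a countable disjoint union of open gap intervals, each associated topologically to an embedded pair of pants on $S_{g,m}$ meeting $\alpha$. A pair of pants $(\beta,\gamma)\in\subvec{\mathcal{P}}_\alpha$ with both non-$\alpha$ cuffs interior produces a single gap $(\gamma^+,\beta^+)$ with $\nu$-measure $\log B^\rho(\alpha^-,\alpha^+,\gamma^+,\beta^+)$, while $(\beta,\gamma)\in\subvec{\mathcal{P}}^\partial_\alpha$ with a peripheral cuff $\gamma$ produces an additional gap $(\gamma^-,\gamma^+)$ between the two fixed points of the peripheral boundary, of $\nu$-measure $\log B^\rho(\alpha^-,\alpha^+,\gamma^-,\gamma^+)$. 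Summing these gap measures yields the pair-of-pants form of the identity, provided $\nu(\mathcal{C}\cup\mathcal{A}) = 0$.

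The principal obstacle will be exactly this null-set verification. Because $\rho\in \mathrm{Pos}_n^h(S_{g,m})=\mathrm{Hit}_n(S_{g,m})$ by Remark~\ref{remark:hitpos}, the representation is Anosov in the sense of Labourie, so the boundary map $\xi_\rho$ is H\"older-continuous and the $B^\rho$-induced measure is absolutely continuous with respect to the hyperbolic horocyclic length measure on $\partial_\infty\pi_1(S_{g,m})$ (compare Remark~\ref{remark:holderanosov}). The classical Birman--Series geodesic sparsity theorem then forces $\mathcal{C}\cup\mathcal{A}$ to be $\nu$-null, exactly as in \cite[Theorem~4.1.2.1]{LM09}; the verification that the weaker ordered-ratio axioms still make $\nu$ a well-defined signed measure amenable to this argument is the only step requiring care beyond the cross-ratio case. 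Finally, to obtain the half-pants refinement I would further subdivide each pair-of-pants gap along the unique bi-infinite geodesic emanating from $\alpha^-$ and spiraling through the pair of pants; this decomposes the gap into two subintervals naturally indexed by $\subvec{\mathcal{H}}_\alpha$. Applying the cocycle identity with the dividing endpoint $\delta(\alpha^-)$ splits the pair-of-pants summand into two terms of the form $\log B^\rho(\alpha^-,\alpha^+,\delta(\alpha^-),\delta^+)$; the absolute value in the statement appears because the two half-pants contributions lie on opposite sides of the dividing point, so one term enters with a sign flip in the signed measure $\nu$ whereas writing each contribution as an absolute value correctly recovers the total gap length.
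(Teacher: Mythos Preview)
Your proposal is correct and follows essentially the same route as the paper: the paper's proof is deliberately terse, observing that the argument of \cite[Theorem~4.1.2.1]{LM09} uses only one of the two cross-ratio cocycle identities together with positivity and the H\"older property of $\xi_\rho$ (available here via the Anosov property, cf.\ Remark~\ref{remark:hitpos} and Remark~\ref{remark:holderanosov}), so it goes through verbatim with $\mathbb{B}^\rho$ replaced by the ordered ratio $B^\rho$; the half-pants refinement is then obtained exactly as you describe, by inserting the dividing point $\gamma(\alpha^-)=\beta(\alpha^-)$ via the cocycle identity. One small terminological slip: since $\alpha$ is loxodromic there is no horocycle here---the comparison measure lives on the boundary component of $\partial_\infty\pi_1(S_{g,m})$ itself---but your identification of the null-set step as the crux and its resolution via H\"older continuity plus Birman--Series is exactly right.
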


\begin{figure}[h!]
\includegraphics[scale=0.28]{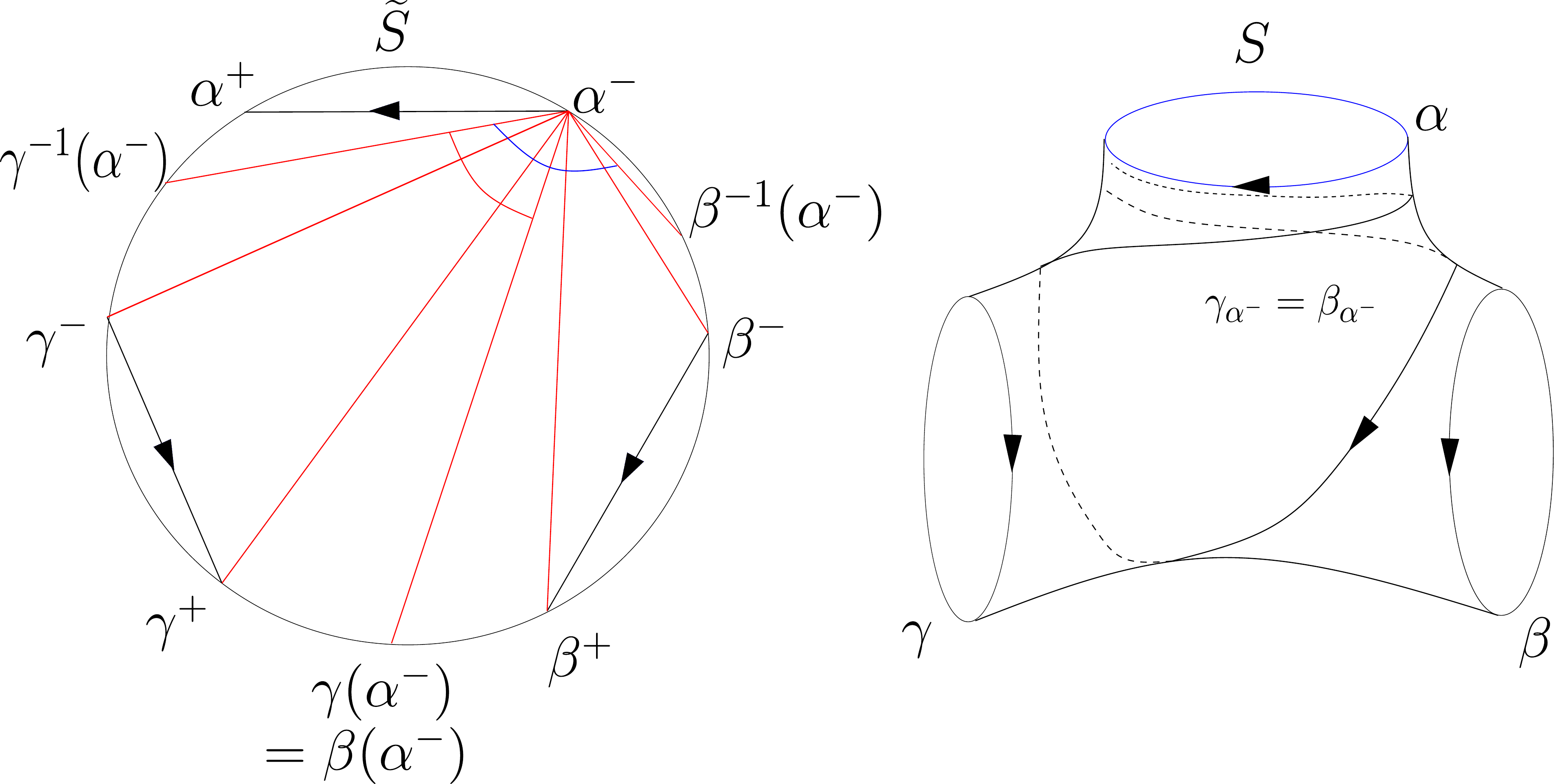}
\caption{The boundary-parallel pair of pants $(\beta,\gamma)$ has the boundary components $\alpha$, $\beta$, $\gamma$ with $\alpha \beta^{-1} \gamma= 1$ and $(\beta,\gamma)$ is cut into $(\beta,\beta_{\alpha^-}),(\gamma,\gamma_{\alpha^-})$ along the simple bi-infinite geodesic $\gamma_{\alpha^-}=\beta_{\alpha^-}$. }
\label{Figure:pti}
\end{figure}

\begin{proof}
Notice that only one cocycle property is used in the proof of \cite[Theorem 4.1.2.1]{LM09}.
Using the cocycle property, positivity and H\"older property of $B^\rho$ (due to H\"older property of $\xi_\rho$), we follow the proof presented in \cite[Theorem 4.1.2.1]{LM09} almost line by line and replace the ordered cross ratio $\mathbb{B}^\rho$ by the ordered ratio $B^\rho$. We obtain
\begin{align*}
\ell_i(\alpha)=\sum_{(\beta,\gamma)\in \subvec{\mathcal{P}}_\alpha}
 \log B^\rho\left(\alpha^-;\alpha^+,\gamma^+,\beta^+\right)
+ \sum_{(\beta,\gamma)\in \subvec{\mathcal{P}}^{\partial}_\alpha}
\log B^\rho\left(\alpha^-;\alpha^+,\gamma^-,\gamma^+\right).
\end{align*}

Recall the refinement in the proof of Theorem \ref{theorem:equsl3} step (4). The right hand side of the above equation equals
\begin{align*}
\begin{aligned}
&=\sum_{(\beta,\gamma)\in \subvec{\mathcal{P}}_\alpha}
\left( \log B^\rho\left(\alpha^-;\alpha^+,\gamma^+,\gamma(\alpha^-)\right)+  \log B^\rho\left(\alpha^-;\alpha^+, \beta(\alpha^-),\beta^+\right) \right) 
\\&+ \sum_{(\beta,\gamma)\in \subvec{\mathcal{P}}^{\partial}_\alpha}
\log B^\rho\left(\alpha^-;\alpha^+,\gamma^-,\gamma^+\right)
\\&=\sum_{(\delta,\delta_{\alpha^-})\in \subvec{\mathcal{H}}_\alpha}
\left| \log B^\rho\left(\alpha^-;\alpha^+,\delta(\alpha^-),\delta^+\right) \right| + \sum_{(\gamma,\gamma_{\alpha^-})\in \subvec{\mathcal{H}}^\partial_\alpha}
\log B^\rho\left(\alpha^-;\alpha^+,\gamma^-,\gamma^+\right).
\end{aligned}
\end{align*}
\end{proof}

As is, the identity is not expressed in terms of explicit geometric/projective invariants attached to the representation $\rho$. We do this crucial step later in this section.

\subsection{$i$-th potential ratio}
\label{section:potentialratio}

The $i$-th character $P_i(f;g,h)$ (Definition \ref{definition:ichar}) depends on the choice of a lift of the flag $\xi_\rho(f)$ in $\mathcal{A}$. For elements of $\mathcal{A}_n(S_{g,m})$, this is canonically assigned, but not so for $\mathcal{X}_n(S_{g,m})$. To resolve this issue, we consider taking ratios of two $i$-th characters, thereby providing a well-defined regular function on $\mathcal{X}_n(S_{g,m})$. 

\begin{defn}[$i$-th potential ratio]
\label{defn:ratio}
For a $\operatorname{PGL}_n(\mathbb{R})$-positive representation $\rho \in \mathrm{Pos}_n^h(S_{g,m})$ with loxodromic boundary monodromy, let $(\rho,\xi) \in \mathcal{X}_n(S_{g,m})$ be the canonical lift (Definition \ref{definition:ratioperiod}) of $\rho$ (or $\rho \in \mathrm{Pos}_n^u(S_{g,m})$ with unipotent boundary monodromy with a unique lift $(\rho,\xi) \in \mathcal{X}_n(S_{g,m})$). Recall the unique $\rho$-equivariant map $\xi_\rho:\partial_\infty \pi_1(S)\rightarrow \mathcal{B}$ for $(\rho,\xi)$ in Definition~\ref{definition:rhoequivariant}. For any $(x,y,z,t)\in \partial_{\infty}\pi_1(S_{g,m})^{4**}$ (Definition \ref{definition:4ratio}), choose a lift $X$ of $\xi_\rho(x)$ in $\mathcal{A}$. We define the \emph{$i$-th potential ratio} for $\rho$ as:
\begin{align*}
B_i(x;y,z,t)
:=\frac{P_i(x;y,t)}{P_i(x;y,z)}
:=\frac{P_i(X;\xi_\rho(y),\xi_\rho(t))}{P_i(X;\xi_\rho(y),\xi_\rho(z))}.
\end{align*}
\end{defn}
\begin{rmk}
The $i$-th potential ratio can be defined directly for $(\rho,\xi) \in \mathcal{X}_n(S_{g,m})$ using $\xi_\rho$. Thus a different choice of the lift for any $\rho \in \mathrm{Pos}_n^h(S_{g,m})$ induces a different gap function expression similar as in \cite[Appendix A]{LM09}.
\end{rmk}

\begin{figure}
\includegraphics[scale=0.7]{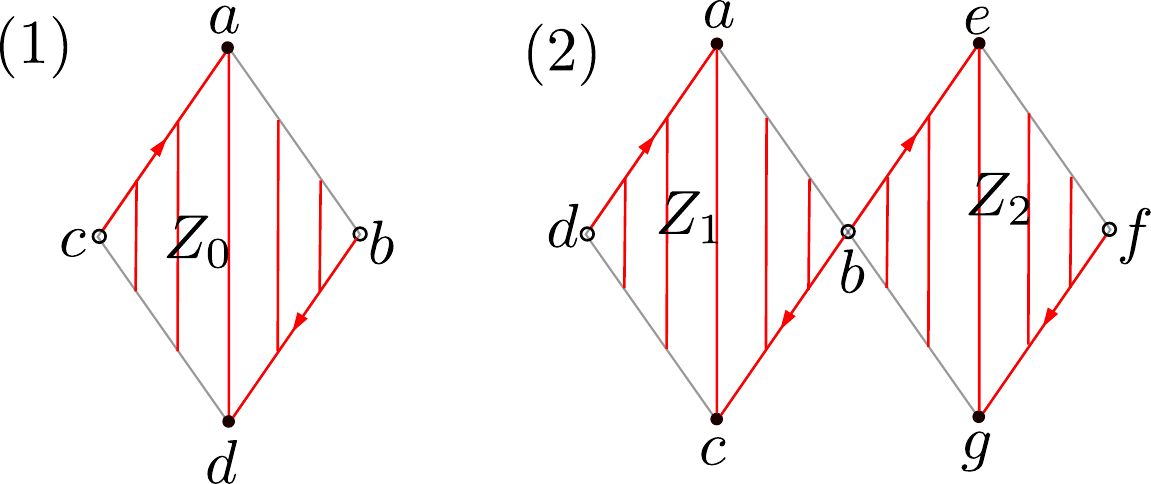}
\caption{The $\mathcal{A}$ coordinates labelled on the vertices correspondingly, $Z_0=\frac{ad}{bc}$ is some $\alpha$,\; $Z_1/Z_2=\frac{acf}{dge}$ is the triple ratio.}
\label{Figure:lozengeratio}
\end{figure}
We ``visualize'' our computations in the following way. Given a $n$-triangulation (Definition \ref{definition:ntri}), the {\em oriented edge ratio} (\cite[Definition 5.9]{Sun20a}) for an oriented edge of the $n$-triangulation is the $\mathcal{A}$ coordinate (Definition \ref{defn:FGAcoordinate}) at the head divided by the $\mathcal{A}$ coordinate at the tail of the oriented edge. By Lemma \ref{lemma:lozenge}, as shown in Figure \ref{Figure:lozengeratio} (1), each $\alpha$, represented by a lozenge, is the ratio of two oriented edge ratio. By Lemma \ref{lem:triplea}, as shown in Figure \ref{Figure:lozengeratio} (2), each triple ratio is the ratio of two lozenges.
The $i$-th level of $x$ (\cite[Definition 5.11]{Sun20a}) is union of the edges of the $n$-triangulation where for any vertex of any edge, the least number of edges towards $x$ is $i$.
Equation \eqref{equ:potential} shows us that $P_i(x;y,z)$ is the summation of lozenges crossed by the $i$-th level of $x$ in the triangle $(x,y,z)$ (Figure \ref{Figure:edgeratio}).

\begin{figure}
\includegraphics[scale=0.7]{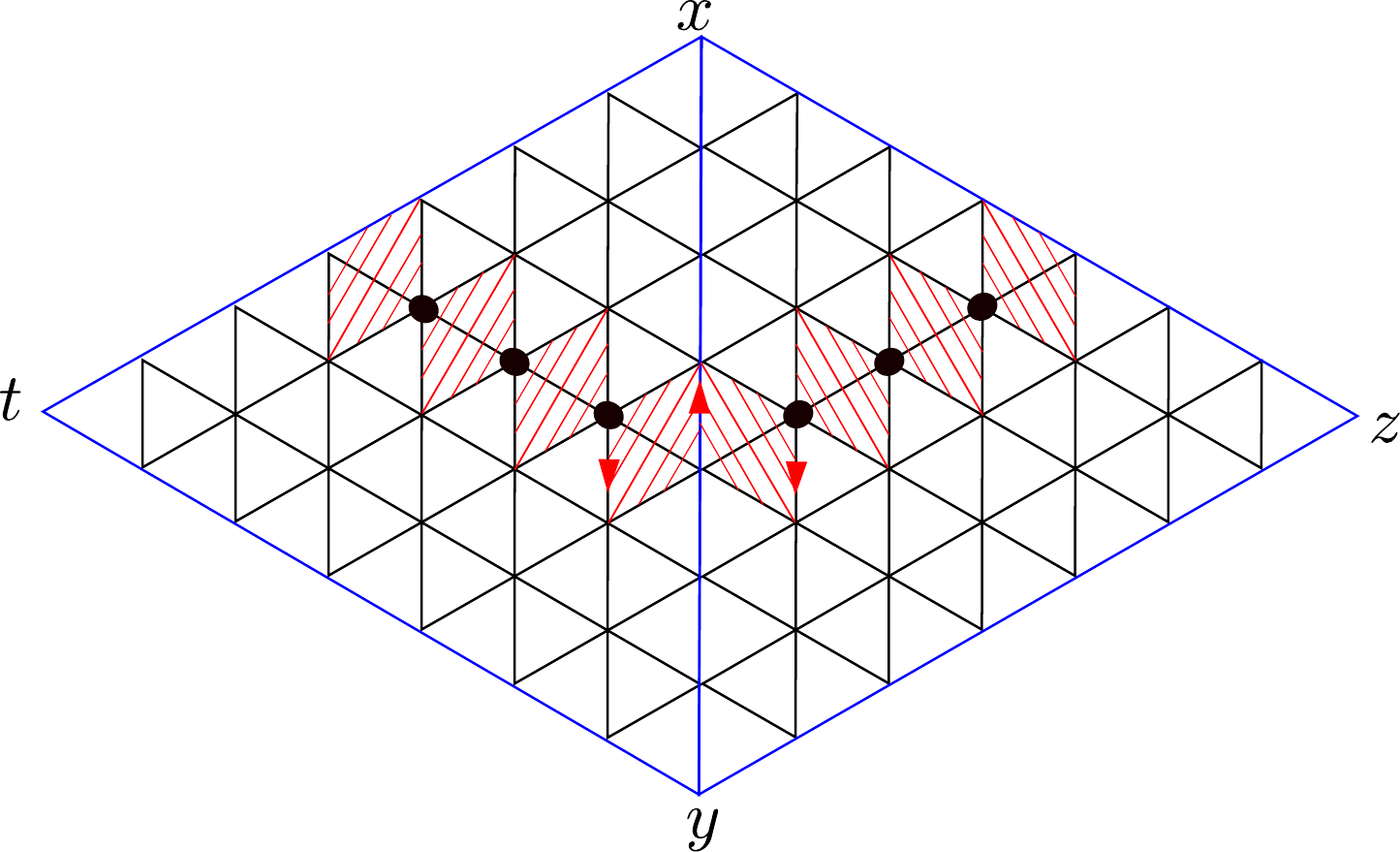}
\caption{Figure for $B_i(x;y,z,t)$.}
\label{Figure:edgeratio}
\end{figure}

\begin{prop}
\label{prop:PT}
For any positive triple $(F,G,H)\in \mathcal{A}^3$, we have
\begin{align*}
P_i(F;G,H) = \alpha^{F;G,H}_{n-i,i,0} \left(1+\sum_{c=1}^{i-1} \prod_{j=1}^c \frac{1}{ T_{n-i,i-j,j}(F,G,H)}\right).
\end{align*}
\end{prop}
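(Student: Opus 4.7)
The proof is essentially a direct computation combining Equation~\eqref{equ:potential} with Lemma~\ref{lem:triplea}. My plan is to start from the formula
\[
P_i(F;G,H) = \sum_{c=0}^{i-1} \alpha^{F;G,H}_{n-i,\,i-c,\,c}
\]
and factor out the $c=0$ term $\alpha^{F;G,H}_{n-i,\,i,\,0}$. This reduces the proposition to the identity
\[
\frac{\alpha^{F;G,H}_{n-i,\,i-c,\,c}}{\alpha^{F;G,H}_{n-i,\,i,\,0}} = \prod_{j=1}^{c} \frac{1}{T_{n-i,\,i-j,\,j}(F,G,H)} \qquad \text{for } 1 \le c \le i-1.
\]

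To establish this, I will telescope using Lemma~\ref{lem:triplea}. Applied with the triple of indices $(a,b,c') = (n-i,\, i-j,\, j)$ (which satisfies $a+b+c'=n$ as required), the lemma gives
\[
\frac{\alpha^{F;G,H}_{n-i,\,i-j+1,\,j-1}}{\alpha^{F;G,H}_{n-i,\,i-j,\,j}} = T_{n-i,\,i-j,\,j}(F,G,H),
\]
so the ratio of consecutive $\alpha$-terms (as $c$ decreases by one) is exactly a triple ratio. Forming the telescoping product
\[
\frac{\alpha^{F;G,H}_{n-i,\,i-c,\,c}}{\alpha^{F;G,H}_{n-i,\,i,\,0}} = \prod_{j=1}^{c} \frac{\alpha^{F;G,H}_{n-i,\,i-j,\,j}}{\alpha^{F;G,H}_{n-i,\,i-j+1,\,j-1}}
\]
then yields the required formula. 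Substituting back into the sum and factoring $\alpha^{F;G,H}_{n-i,\,i,\,0}$ out gives exactly the proposition.

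There is no real obstacle here — the two inputs (Equation~\eqref{equ:potential} giving $P_i$ as a sum of $\alpha$'s, and Lemma~\ref{lem:triplea} identifying consecutive $\alpha$-ratios with triple ratios) combine immediately via telescoping. The only thing to verify carefully is bookkeeping: that the indices $(n-i,\,i-j,\,j)$ remain non-negative throughout the range $1 \le j \le c \le i-1$ (they do, since $i-j \ge i-c+1 \ge 2 > 0$ and $j \ge 1$), and that the special $c=0$ term is correctly isolated before telescoping. The visual ``lozenge'' interpretation given in Figure~\ref{Figure:lozengeratio} and the discussion around Figure~\ref{Figure:edgeratio} makes the geometric content of the telescoping transparent and could be invoked for intuition, but the algebraic argument sketched above is self-contained and short.
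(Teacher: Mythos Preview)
Your proof is correct and is essentially identical to the paper's own argument: both start from Equation~\eqref{equ:potential}, iterate Lemma~\ref{lem:triplea} to express each $\alpha^{F;G,H}_{n-i,i-c,c}$ as $\alpha^{F;G,H}_{n-i,i,0}$ times a product of inverse triple ratios, and then factor out the common term. (One minor slip in your bookkeeping: the bound should read $i-j \ge i-c \ge 1$ rather than $i-j \ge i-c+1 \ge 2$, since $j$ ranges up to $c$; this does not affect the validity of the argument.)
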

\begin{proof}
Iteratively applying Lemma~\ref{lem:triplea} $c$ times, we obtain
\begin{align*}
\alpha^{F;G,H}_{n-i,i-c,c} = \alpha^{F;G,H}_{n-i,i,0} \cdot \prod_{j=1}^c \frac{1}{T_{n-i,i-j,j}(F,G,H)}.
 \end{align*}

Re-expressing Equation~\eqref{equ:potential}, we get:
\begin{align*}
P_i(F;G,H) =\sum_{c = 0}^{i-1} \alpha^{F;G,H}_{n-i,i-c,c}= \alpha^{F;G,H}_{n-i,i,0} \left(1+\sum_{c=1}^{i-1} \prod_{j=1}^c \frac{1}{ T_{n-i,i-j,j}(F,G,H)}\right).
\end{align*}
\end{proof}

\begin{prop}
\label{prop:BC}
The $i$-th potential ratio $B_i(x;y,z,t)$ is expressed as follows:
\begin{align*}
B_i\left(x;y,z,t\right)= \frac{1+\sum_{c=1}^{i-1} \prod_{j=1}^c \frac{1}{ T_{n-i,i-j,j}(x,y,t)}}{1+\sum_{c=1}^{i-1} \prod_{j=1}^c \frac{1}{ T_{n-i,i-j,j}(x,y,z)}} \cdot (-D_i(x,y,z,t)).
\end{align*}
Thus the $i$-th potential ratio does not depend on the lift $X$ of $\xi_\rho(x)$ and is an ordered ratio. 
\end{prop}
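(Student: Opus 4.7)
The plan is to reduce the formula to an algebraic manipulation based on Proposition~\ref{prop:PT} and then extract the edge function as the ratio of the leading $\alpha$'s. Specifically, applying Proposition~\ref{prop:PT} to both the numerator and denominator of $B_i(x;y,z,t)=P_i(x;y,t)/P_i(x;y,z)$ factors off the bracketed sums of reciprocal triple ratios exactly as they appear in the claimed expression, leaving behind the single quotient $\alpha^{F;G,T}_{n-i,i,0}/\alpha^{F;G,Z}_{n-i,i,0}$, where $F,G,Z,T$ are arbitrary lifts of $\xi_\rho(x),\xi_\rho(y),\xi_\rho(z),\xi_\rho(t)$ to $\mathcal{A}$. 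The main computation is therefore to identify this quotient with $-D_i(x,y,z,t)$.

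First I would expand both $\alpha$'s using Lemma~\ref{lemma:lozenge} with $(a,b,c)=(n-i,i,0)$. The factors $\Delta(f^{n-i+1}\wedge g^{i-1})$ and $\Delta(f^{n-i}\wedge g^i)$ appear identically in the numerator and denominator and cancel, leaving
\[
\frac{\alpha^{F;G,T}_{n-i,i,0}}{\alpha^{F;G,Z}_{n-i,i,0}}=\frac{\Delta(f^{n-i-1}\wedge t^1\wedge g^i)\,\Delta(f^{n-i}\wedge z^1\wedge g^{i-1})}{\Delta(f^{n-i-1}\wedge z^1\wedge g^i)\,\Delta(f^{n-i}\wedge t^1\wedge g^{i-1})}.
\]
Permuting a single vector $t^1$ (resp.\ $z^1$) past the wedge $g^i$ or $g^{i-1}$ contributes matching signs in numerator and denominator which cancel. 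Comparing with Definition~\ref{definition:edgefunction} then yields exactly $-D_i(x,y,z,t)$, completing the identification.

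Next I would verify that both factors of the resulting formula are independent of the lift $X$ of $\xi_\rho(x)$; the bracketed sum involves only triple ratios, which are projective invariants by Definition~\ref{definition:tripleratio}, and the edge function $-D_i(x,y,z,t)$ is likewise projective (each of $x,y,z,t$ enters numerator and denominator with matching total wedge degree, so any rescaling of basis vectors cancels). Therefore $B_i$ descends to a function of the underlying $\rho$-equivariant flag map $\xi_\rho$.

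Finally, to see that $B_i$ is an ordered ratio in the sense of Definition~\ref{definition:4ratio} I would check the four axioms directly. The cocycle property $B_i(x,y,z,t)=B_i(x,y,z,w)\cdot B_i(x,y,w,t)$ is immediate from the definition as a quotient of $P_i$'s. The normalization $B_i(x,y,z,t)=1\Leftrightarrow z=t$ follows because the explicit formula shows $B_i=1$ when $z=t$, and conversely the edge function together with the bracketed ratios is a genuine nonconstant continuous function whose only zero of $-D_i-\text{something}$ type can be traced back to $z=t$ by hyperconvexity of $\xi_\rho$. The normalization $B_i(x,y,z,t)=0\Leftrightarrow y=t$ uses the additivity $P_i(F;G,H)=-P_i(F;H,G)$ (so $P_i(x;y,y)=0$) together with the hyperconvexity assertion that $P_i(x;y,t)$ vanishes only in this degenerate case. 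The two order conditions follow from the positivity of triple ratios and edge functions on $\mathcal{X}_n(S_{g,m})$ (Theorem~\ref{thm:posconf}) applied on each subarc; the main subtlety here will be tracking signs of $-D_i$ depending on whether $z,t$ lie on the same side of the geodesic $(x,y)$, which I expect to be the main obstacle and which I would handle by reducing to the positive configuration space via Definition~\ref{definition:posconf}.
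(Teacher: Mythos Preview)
Your proposal is correct and follows essentially the same route as the paper: apply Proposition~\ref{prop:PT} to both $P_i(x;y,t)$ and $P_i(x;y,z)$, then use Lemma~\ref{lemma:lozenge} with $(a,b,c)=(n-i,i,0)$ to cancel the common $\Delta(f^{n-i+1}\wedge g^{i-1})$ and $\Delta(f^{n-i}\wedge g^i)$ factors and identify $\alpha^{x;y,t}_{n-i,i,0}/\alpha^{x;y,z}_{n-i,i,0}$ with $-D_i(x,y,z,t)$. The paper is terser about the ordered ratio verification---it simply observes that the formula exhibits $B_i$ as a rational function of triple ratios and edge functions (hence independent of the lift $X$), and the ratio/order axioms are then immediate from the additivity of $P_i$ together with positivity (cf.\ the remark following Definition~\ref{definition:4ratio}); your more explicit axiom-by-axiom check is fine but not needed.
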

\begin{proof}
By Proposition~\ref{prop:PT}, we have
\begin{align*}
\begin{aligned}
&P_i(x;y,t)
=  \alpha^{x;y,t}_{n-i,i,0} \left(1+\sum_{c=1}^{i-1} \prod_{j=1}^c \frac{1}{ T_{n-i,i-j,j}(x,y,t)}\right),
\end{aligned}
\end{align*}
and
\begin{align*}
\begin{aligned}
&P_i(x;y,z)
=  \alpha^{x;y,z}_{n-i,i,0} \left(1+\sum_{c=1}^{i-1} \prod_{j=1}^c \frac{1}{ T_{n-i,i-j,j}(x,y,z)}\right).
\end{aligned}
\end{align*}
Moreover, by Lemma~\ref{lemma:lozenge} (or by observing the red arrows in Figure \ref{Figure:edgeratio}), we get
\begin{align*}
\begin{aligned}
&\frac{\alpha^{x;y,t}_{n-i,i,0}}{\alpha^{x;y,z}_{n-i,i,0}}
= \frac{\frac{\Delta\left(x^{n-i-1}\wedge t^1  \wedge y^{i}\right)\cdot \Delta\left(x^{n-i+1} \wedge y^{i-1}\right)}{\Delta\left(x^{n-i} \wedge  y^{i} \right) \cdot \Delta\left(x^{n-i}  \wedge t^{1} \wedge y^{i-1}\right)}}{\frac{\Delta\left(x^{n-i-1}\wedge z^1  \wedge y^{i}\right)\cdot \Delta\left(x^{n-i+1} \wedge y^{i-1}\right)}{\Delta\left(x^{n-i} \wedge  y^{i} \right) \cdot \Delta\left(x^{n-i}  \wedge z^{1} \wedge y^{i-1}\right)}}
\\&=\frac{\Delta\left(x^{n-i-1}\wedge t^1  \wedge y^{i}\right)}{\Delta\left(x^{n-i}  \wedge t^{1} \wedge y^{i-1}\right)}\cdot \frac{\Delta\left(x^{n-i}  \wedge z^{1} \wedge y^{i-1}\right)}{ \Delta\left(x^{n-i-1}\wedge z^1  \wedge y^{i}\right)}
\\&=-D_i(x,y,z,t).
\end{aligned}
\end{align*}

Thus we obtain
\begin{align*}
\begin{aligned}
B_i\left(x;y,z,t\right)=\frac{P_i(x;y,t)}{P_i(x;y,z)}
&=\frac{1+\sum_{c=1}^{i-1} \prod_{j=1}^c \frac{1}{ T_{n-i,i-j,j}(x,y,t)}}{1+\sum_{c=1}^{i-1} \prod_{j=1}^c \frac{1}{ T_{n-i,i-j,j}(x,y,z)}} \cdot \frac{\alpha^{x;y,t}_{n-i,i,0}}{\alpha^{x;y,z}_{n-i,i,0}}
\\
& = \frac{1+\sum_{c=1}^{i-1} \prod_{j=1}^c \frac{1}{ T_{n-i,i-j,j}(x,y,t)}}{1+\sum_{c=1}^{i-1} \prod_{j=1}^c \frac{1}{ T_{n-i,i-j,j}(x,y,z)}} \cdot (-D_i(x,y,z,t)).
\end{aligned}
\end{align*}
Thus $i$-th potential ratio does not depend on the lift $X$ of $\xi_\rho(x)$. And it is an ordered ratio.
\end{proof}
Thus we have the well-definedness of the $i$-th potential ratio.

Recall Equation \eqref{equation:labcr}, we have 
\begin{equation*}
\begin{aligned}
&\mathbb{B}_1^\rho(x,y,z,t)=\prod_{i=1}^{n-1} \frac{\Delta\left(x^{n-i-1}\wedge t^1  \wedge y^{i}\right)}{\Delta\left(x^{n-i}  \wedge t^{1} \wedge y^{i-1}\right)}\cdot \frac{\Delta\left(x^{n-i}  \wedge z^{1} \wedge y^{i-1}\right)}{ \Delta\left(x^{n-i-1}\wedge z^1  \wedge y^{i}\right)}
\\&=\prod_{i=1}^{n-1} (-D_i(x,y,z,t))
\\&= \prod_{i=1}^{n-1} \left(B_i(x,y,z,t)\cdot \frac{1+\sum_{c=1}^{i-1} \prod_{j=1}^c \frac{1}{ T_{n-i,i-j,j}(x,y,z)}}{1+\sum_{c=1}^{i-1} \prod_{j=1}^c \frac{1}{ T_{n-i,i-j,j}(x,y,t)}} \right).
\end{aligned}
\end{equation*}

We therefore obtain:
\begin{cor}
\label{corollary:LM-HS}
The rank $n$ weak cross ratio and the $i$-th potential ratio is related by:
\begin{equation*}
\mathbb{B}_1^\rho(x,y,z,t)= \prod_{i=1}^{n-1} B_i(x,y,z,t)\cdot \prod_{i=1}^{n-1} \left( \frac{1+\sum_{c=1}^{i-1} \prod_{j=1}^c \frac{1}{ T_{n-i,i-j,j}(x,y,z)}}{1+\sum_{c=1}^{i-1} \prod_{j=1}^c \frac{1}{ T_{n-i,i-j,j}(x,y,t)}} \right),
\end{equation*}
which relate Labourie--McShane's rank $n$ weak cross ratio used in their explicit identities \cite[Section 10]{LM09} to the $i$-th potential ratio.
\end{cor}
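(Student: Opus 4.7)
The plan is to carry out a direct computation that proceeds in three short steps, all of which are already prefigured by Proposition~\ref{prop:BC} and the paragraph immediately preceding the corollary.

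First I would unpack the definition of the rank $n$ weak cross ratio from Equation~\eqref{equation:labcr}, which gives
\[
\mathbb{B}_1^\rho(x,y,z,t) = \frac{\Delta(x^{n-1}\wedge z^1)\,\Delta(y^{n-1}\wedge t^1)}{\Delta(x^{n-1}\wedge t^1)\,\Delta(y^{n-1}\wedge z^1)},
\]
using chosen bases for the flags $\xi_\rho(x),\xi_\rho(y),\xi_\rho(z),\xi_\rho(t)$ in the notation of Notation~\ref{notation:flag}. The first step is to recognize this as a telescoping product: for each factor of the form $\Delta(u^{n-1}\wedge v^1)/\Delta(u'^{n-1}\wedge v'^1)$, I insert intermediate determinants $\Delta(x^{n-i}\wedge y^{i-1}\wedge v^1)$ for $i=1,\ldots,n-1$, producing a product of $n-1$ quotients which cancels telescopically to the desired expression. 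Up to reshuffling of wedge factors (which introduces the same sign $(-1)^{i}$ in matching numerator and denominator terms and so cancels), the $i$-th factor is precisely
\[
\frac{\Delta(x^{n-i-1}\wedge y^{i}\wedge t^1)\,\Delta(x^{n-i}\wedge y^{i-1}\wedge z^1)}{\Delta(x^{n-i}\wedge y^{i-1}\wedge t^1)\,\Delta(x^{n-i-1}\wedge y^{i}\wedge z^1)},
\]
which by Definition~\ref{definition:edgefunction} equals $-D_i(x,y,z,t)$. This yields the intermediate identity $\mathbb{B}_1^\rho(x,y,z,t)=\prod_{i=1}^{n-1}(-D_i(x,y,z,t))$.

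Second, I would invoke Proposition~\ref{prop:BC} directly. It expresses each edge function as
\[
-D_i(x,y,z,t) = B_i(x;y,z,t)\cdot\frac{1+\sum_{c=1}^{i-1}\prod_{j=1}^{c}\frac{1}{T_{n-i,i-j,j}(x,y,z)}}{1+\sum_{c=1}^{i-1}\prod_{j=1}^{c}\frac{1}{T_{n-i,i-j,j}(x,y,t)}}.
\]
Substituting into the telescoping identity from Step~1 and reorganizing the product into the $B_i$-part and the triple-ratio-part yields exactly the formula stated in Corollary~\ref{corollary:LM-HS}.

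The main bookkeeping obstacle is the sign tracking in the telescoping step: each wedge permutation used to align $\Delta(u^{n-i-1}\wedge v^1\wedge w^i)$ with the canonical order $\Delta(u^{n-i-1}\wedge w^i\wedge v^1)$ used by the edge function contributes a sign $(-1)^{i}$, and one must verify that these signs pair off between the two quotients forming the $i$-th factor (one pair involving $t$, one involving $z$), so that the combined $i$-th factor is genuinely $-D_i$ with no residual sign. Once this parity computation is done, and one checks that the telescoping cancels out all intermediate determinants leaving exactly the four boundary terms matching $\mathbb{B}_1^\rho$, the rest is a purely formal substitution. No convergence or analytic input is required, since both sides are rational functions of the flag data.
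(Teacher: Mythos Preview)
Your proposal is correct and follows essentially the same route as the paper: the paper's proof, contained in the displayed computation immediately preceding the corollary, is exactly the telescoping identity $\mathbb{B}_1^\rho=\prod_{i=1}^{n-1}(-D_i)$ followed by substitution via Proposition~\ref{prop:BC}. Your additional remarks about sign bookkeeping in the telescoping step are a welcome clarification but do not constitute a different argument.
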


\begin{prop}
\label{prop:potentialspectral}
Given $\rho \in \mathrm{Pos}_n^h(S_{g,m})$ and its canonical lift $(\rho,\xi) \in \mathcal{X}_n(S_{g,m})$, we define $P_i$ as in Definition \ref{defn:ratio}. Let $\alpha$ be an oriented boundary component of $S_{g,m}$. For any distinct $x,y,z \in \partial_\infty \pi_1(S_{g,m})$ where $x=\alpha^-$, for any integers $a,b,c$ such that $a,b\geq1$ and $c=n-a-b\geq0$, we have
\begin{align*}
\frac{\alpha^{x;y,z}_{a,b,c}}{\alpha^{x;\alpha y,\alpha z}_{a,b,c}}= e^{\ell_{n-a}(\alpha)}.
\end{align*}
Thus, for $i=1,\cdots,n-1$, the following $i$-th potential ratio
\begin{align*}
\frac{P_{i}(\alpha^-;y,z)}{P_{i}(\alpha^-;\alpha y,\alpha z)}
=e^{\ell_i(\alpha)}.
\end{align*}

\end{prop}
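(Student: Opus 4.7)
The strategy is a direct algebraic computation that exploits how the canonical lift (Definition \ref{definition:ratioperiod}) interacts with the closed form of Lemma \ref{lemma:lozenge}. By the canonical lift, there is a lift of $\rho(\alpha)$ to $\operatorname{SL}_n(\mathbb{R})$ with positive eigenvalues $\lambda_1 > \cdots > \lambda_n > 0$ and corresponding eigenvectors $\alpha_1, \ldots, \alpha_n$, chosen so that the reversed ordering $(f_1,\ldots,f_n):=(\alpha_n,\ldots,\alpha_1)$ furnishes a basis for $\xi_\rho(\alpha^-)$. This forces $\rho(\alpha)$ to act diagonally with $\rho(\alpha)f_k = \lambda_{n-k+1}f_k$, so the wedges satisfy $\rho(\alpha) f^k = \bar{\Lambda}_k f^k$ where $\bar{\Lambda}_k := \lambda_n\lambda_{n-1}\cdots \lambda_{n-k+1}$, with $\bar{\Lambda}_n = \det \rho(\alpha) = 1$.

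Next, I would fix arbitrary bases $(g_1,\ldots,g_n),(h_1,\ldots,h_n)$ for the flags $\xi_\rho(y),\xi_\rho(z)$, and use the above $\operatorname{SL}_n$-lift to transport them to bases $(\rho(\alpha)g_k)_k,(\rho(\alpha)h_k)_k$ of $\xi_\rho(\alpha y),\xi_\rho(\alpha z)$. (The resulting $\alpha$-quantities are independent of this choice, as already used in the proof of Proposition \ref{prop:BC}.) Expanding Lemma \ref{lemma:lozenge}'s formula for $\alpha^{\alpha^-;\alpha y,\alpha z}_{a,b,c}$, each of the four top-dimensional volume forms is of the shape $\Delta(f^{\bullet}\wedge(\rho(\alpha)h)^{\bullet}\wedge(\rho(\alpha)g)^{\bullet})$; pulling back by $\rho(\alpha)^{-1}$ preserves $\Delta$ (since $\det \rho(\alpha)=1$) and produces exactly one scalar factor $\bar{\Lambda}_{\bullet}^{-1}$ coming from the $f^{\bullet}$ slot. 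Collecting these four scalars gives
\begin{align*}
\frac{\alpha^{\alpha^-;y,z}_{a,b,c}}{\alpha^{\alpha^-;\alpha y,\alpha z}_{a,b,c}} = \frac{\bar{\Lambda}_a^{-2}}{\bar{\Lambda}_{a-1}^{-1}\bar{\Lambda}_{a+1}^{-1}} = \frac{\bar{\Lambda}_{a-1}\bar{\Lambda}_{a+1}}{\bar{\Lambda}_a^2} = \frac{\lambda_{n-a}}{\lambda_{n-a+1}} = e^{\ell_{n-a}(\alpha)},
\end{align*}
where the penultimate equality is an immediate cancellation in the definition of $\bar{\Lambda}_k$.

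For the second assertion, Equation \eqref{equ:potential} gives $P_i(\alpha^-;\cdot,\cdot) = \sum_{c=0}^{i-1}\alpha^{\alpha^-;\cdot,\cdot}_{n-i,\,i-c,\,c}$, so every summand carries the same first index $a=n-i$ and therefore transforms by the common scalar $e^{\ell_i(\alpha)}$ when $(y,z)$ is replaced by $(\alpha y,\alpha z)$. This common factor pulls out of the sum, giving $P_i(\alpha^-;y,z)/P_i(\alpha^-;\alpha y,\alpha z) = e^{\ell_i(\alpha)}$. I do not anticipate any serious obstacle: the argument is purely linear-algebraic and the entire computation is baked into the canonical lift, which was engineered precisely so that $\rho(\alpha)$ diagonalizes on the decoration of $\xi_\rho(\alpha^-)$. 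The only thing to watch is the bookkeeping of which $\bar{\Lambda}$-subscripts appear in each of the four $\Delta$-factors of Lemma \ref{lemma:lozenge}.
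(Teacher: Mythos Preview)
Your proposal is correct and follows essentially the same route as the paper: both arguments plug the canonical-lift eigenbasis for $\xi_\rho(\alpha^-)$ into the four determinant factors of Lemma~\ref{lemma:lozenge}, use $\det\rho(\alpha)=1$ to move $\rho(\alpha)$ across the volume form, and read off the ratio $\lambda_{n-a}/\lambda_{n-a+1}$ from the resulting telescoping. Your packaging via $\bar{\Lambda}_k$ is a slight notational variation on the paper's direct factor-by-factor computation, and the deduction of the $P_i$-statement from Equation~\eqref{equ:potential} is identical.
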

\begin{proof}
Recall the notation \ref{notation:flag}. By Definition \ref{definition:ratioperiod}, $\rho(\alpha)$ has a lift into $\operatorname{SL}_n(\mathbb{R})$ where its eigenvalues are $\lambda_1>\cdots>\lambda_n>0$. For any non-negative integer $u, v$ with $a+u+v=n$, we obtain
\begin{align*}
\Delta(x^a \wedge z^u \wedge y^v)
&= \Delta(\rho(\alpha)x^a \wedge \rho(\alpha) z^u \wedge \rho(\alpha) y^v)\\
&=\frac{1}{\lambda_n \cdots \lambda_{n-a+1}} \cdot \Delta(x^a \wedge (\alpha z)^u \wedge (\alpha y)^v).
\end{align*}
Thus
\begin{align*}
\alpha^{x;y,z}_{a,b,c}
&=\frac
{\Delta\left(x^{a-1} \wedge z^{c+1} \wedge y^{b}\right)
\cdot \Delta\left(x^{a+1} \wedge z^{c} \wedge y^{b-1}\right)}
{\Delta\left(x^{a} \wedge z^{c}\wedge y^{b} \right) 
\cdot \Delta\left(x^{a}  \wedge z^{c+1} \wedge y^{b-1}\right)}\\
&=\frac{\lambda_{n-a} \cdot \Delta\left(x^{a-1} \wedge (\alpha z)^{c+1} \wedge (\alpha y)^{b}\right) \cdot \Delta\left(x^{a+1} \wedge (\alpha z)^{c} \wedge (\alpha y)^{b-1} \right)}{\lambda_{n-a+1} \cdot \Delta\left(x^{a} \wedge (\alpha z)^{c} \wedge (\alpha y)^{b}\right) \cdot \Delta\left(x^{a} \wedge (\alpha z)^{c+1}\wedge (\alpha y)^{b-1} \right)}
\\&=\frac{\lambda_{n-a}}{\lambda_{n-a+1}} \cdot \alpha^{x;\alpha y,\alpha z}_{a,b,c}=e^{\ell_{n-a}(\alpha)} \cdot \alpha^{x;\alpha y,\alpha z}_{a,b,c}.
\end{align*}
Then by Equation \eqref{equ:potential}, for $i=1,\cdots, n-1$, we get
\[
\frac{P_{i}(x;y,z)}{P_i(x;\alpha y,\alpha z)}=\frac{\lambda_{i}}{\lambda_{i+1}}=e^{\ell_i(\alpha)}.
\] 
\end{proof}

As a consequence of Proposition~\ref{prop:potentialspectral}, we obtain
\begin{cor}
\label{cor:iratiocycle}
For $\rho \in \mathrm{Pos}_n^h(S_{g,m})$ and its canonical lift $(\rho,\xi) \in \mathcal{X}_n(S_{g,m})$, we define the $i$-th potential ratio $B_i$ for $\rho$. Let $\alpha$ be an oriented boundary component of $S_{g,m}$. The \emph{$i$-th period} of $\alpha$ for $B_i$:
\begin{align*}
\log B_i\left(\alpha^-;\alpha^+,\alpha(y),y\right)=\ell_i(\alpha),\quad \text{where $y\neq \alpha^\pm$.}
\end{align*}
\end{cor}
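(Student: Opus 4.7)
The plan is to unpack the definition of the $i$-th potential ratio and then apply Proposition~\ref{prop:potentialspectral} directly. First I would expand
\[
B_i\bigl(\alpha^-;\alpha^+,\alpha(y),y\bigr)
=\frac{P_i\bigl(\alpha^-;\alpha^+,y\bigr)}{P_i\bigl(\alpha^-;\alpha^+,\alpha(y)\bigr)}
\]
using Definition~\ref{defn:ratio} (noting that the roles of the third and fourth arguments match the numerator--denominator convention $B_i(x;y,z,t)=P_i(x;y,t)/P_i(x;y,z)$).

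Next I would exploit the key observation that $\alpha^+$ is fixed by $\rho(\alpha)$, so $\alpha(\alpha^+)=\alpha^+$. This lets us rewrite the denominator as $P_i(\alpha^-;\alpha(\alpha^+),\alpha(y))$, so the whole ratio becomes
\[
\frac{P_i\bigl(\alpha^-;\alpha^+,y\bigr)}{P_i\bigl(\alpha^-;\alpha(\alpha^+),\alpha(y)\bigr)}.
\]
This is precisely the quantity computed in Proposition~\ref{prop:potentialspectral} with the pair $(y,z)$ taken to be $(\alpha^+,y)$, and hence it equals $e^{\ell_i(\alpha)}$. Taking logarithms yields the claimed identity.

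The only subtlety worth double-checking is that the application of Proposition~\ref{prop:potentialspectral} is valid, i.e.\ that the triples $(\alpha^-,\alpha^+,y)$ and $(\alpha^-,\alpha^+,\alpha(y))$ lie in the domain of $P_i$ (pairwise distinct, so $\xi_\rho$ sends them to flags in generic position). For $y\neq\alpha^\pm$ this holds automatically, and Theorem~\ref{theorem:loxo} together with positivity of $\xi_\rho$ guarantees the requisite genericity. No further analysis is needed, so this corollary is a one-step consequence of Proposition~\ref{prop:potentialspectral} combined with the $\alpha$-invariance of $\alpha^+$; the computational work has already been done in the proof of that proposition, and the only real content of the corollary is the interpretation of the resulting quantity as a period of the ordered ratio $B_i$ in the sense of Definition~\ref{defn:period}.
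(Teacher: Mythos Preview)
Your proof is correct and matches the paper's approach exactly: the paper simply states that the corollary is ``a consequence of Proposition~\ref{prop:potentialspectral}'' without further detail, and your unpacking via $\alpha(\alpha^+)=\alpha^+$ is precisely the intended deduction.
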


The following lemma is crucial to obtain a closed-form formula for the gap functions with respect to $i$-th potential ratios in Theorem \ref{theorem:boundaryith}.
\begin{prop}
\label{prop:ratioP}
For $\rho\in \mathrm{Pos}_n^h(S_{g,m})\cup \mathrm{Pos}_n^u(S_{g,m})$ and its canonical lift $(\rho,\xi)\in \mathcal{X}_n(S_{g,m})$, we define $P_i$ as in Definition \ref{defn:ratio}. Let $x\neq \delta^-, \delta^+$ (Usually $x=\alpha^-,p$ and $\delta\in\{\beta,\beta^{-1},\gamma,\gamma^{-1}\}$ for an embedded pair of pants containing $\alpha$ or $p$). We have
\begin{align*}
\frac{P_i(x;\delta^+,\delta^{-1}x)}{P_i(x;\delta x,\delta^+)}=K_i(\delta,\delta_x)\cdot  \frac{\lambda_i(\rho(\delta))}{\lambda_{i+1}(\rho(\delta))},
\end{align*}
where
\begin{align}
\label{equation:kappa}
K_i(\delta,\delta_x) = \frac{1+\sum_{c=1}^{i-1} \prod_{j=1}^c T_{n-i,j,i-j}(\delta x,\delta^+,x)}{1+\sum_{c=1}^{i-1} \prod_{j=1}^c T_{n-i,j,i-j}(x,\delta x,\delta^+)} \cdot \frac{\prod_{j=1}^{n-i-1} T_{n-i-j,j,i}(x,\delta x,\delta^+)}{\prod_{j=1}^{i-1} T_{j,n-i,i-j}(x,\delta x,\delta^+)}.
\end{align}
Let us define
\begin{align*}
\kappa_i(\delta,\delta_x):=\log K_i(\delta,\delta_x).
\end{align*}

\end{prop}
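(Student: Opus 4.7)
The plan is to compute both sides of the identity directly, using Proposition~\ref{prop:PT} to split $P_i$ as the product of a lozenge coordinate $\alpha_{n-i,i,0}^{F;G,H}$ and a polynomial in inverse triple ratios, and then attacking the two resulting factors separately. Concretely, we write
\[
\frac{P_i(x;\delta^+,\delta^{-1}x)}{P_i(x;\delta x,\delta^+)}
= \frac{\alpha^{x;\delta^+,\delta^{-1}x}_{n-i,i,0}}{\alpha^{x;\delta x,\delta^+}_{n-i,i,0}}
\cdot
\frac{1+\sum_{c=1}^{i-1}\prod_{j=1}^c \tfrac{1}{T_{n-i,i-j,j}(x,\delta^+,\delta^{-1}x)}}
     {1+\sum_{c=1}^{i-1}\prod_{j=1}^c \tfrac{1}{T_{n-i,i-j,j}(x,\delta x,\delta^+)}},
\]
and must show that the second factor evaluates to the first factor of $K_i(\delta,\delta_x)$, while the first factor evaluates to the second factor of $K_i(\delta,\delta_x)$ multiplied by the eigenvalue ratio $\lambda_i(\rho(\delta))/\lambda_{i+1}(\rho(\delta))$.

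The triple-ratio polynomial factor is the easier half. Since triple ratios are projective invariants and $\rho(\delta)$ carries the triple $(x,\delta^+,\delta^{-1}x)$ to $(\delta x,\delta^+,x)$, each $T_{n-i,i-j,j}(x,\delta^+,\delta^{-1}x)$ equals $T_{n-i,i-j,j}(\delta x,\delta^+,x)$, and the cyclic symmetry of Definition~\ref{definition:tripleratio} rewrites the latter as a triple ratio of $(x,\delta x,\delta^+)$. After re-indexing the summation (which replaces the product of inverse triple ratios over $j=1,\ldots,c$ by a product of triple ratios over $j=c,\ldots,i-1$, using the defining identity $\prod_{j=1}^{i-1}T_{n-i,i-j,j}(F,G,H)^{-1}\cdot (\text{matching factor})$ to rebalance the denominators), the result aligns with $\tfrac{1+\sum_{c=1}^{i-1}\prod_{j=1}^c T_{n-i,j,i-j}(\delta x,\delta^+,x)}{1+\sum_{c=1}^{i-1}\prod_{j=1}^c T_{n-i,j,i-j}(x,\delta x,\delta^+)}$.

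The lozenge ratio is the crux. Using Lemma~\ref{lemma:lozenge}, we expand each $\alpha$ as a ratio of four determinants, and then apply $\rho(\delta)\in\operatorname{SL}_n$ to every determinant appearing in $\alpha^{x;\delta^+,\delta^{-1}x}_{n-i,i,0}$. Because $\det\rho(\delta)=1$, this preserves $\Delta$; it sends $x^a$ to $(\delta x)^a$ and $(\delta^{-1}x)^c$ to $x^c$, and scales $(\delta^+)^b$ by $\lambda_1\cdots\lambda_b$. The scalar factors appear in the same total degree in the numerator and denominator of $\alpha$ and thus cancel, yielding $\alpha^{x;\delta^+,\delta^{-1}x}_{n-i,i,0}=\alpha^{\delta x;\delta^+,x}_{n-i,i,0}$. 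We are then left to compute the ratio $\alpha^{\delta x;\delta^+,x}_{n-i,i,0}/\alpha^{x;\delta x,\delta^+}_{n-i,i,0}$ of lozenges attached to two different cyclic orderings of the same flag triple. Following Lemma~\ref{lem:triplea}, we connect them by a chain of adjacent lozenges in the $n$-triangulation spanned by $(x,\delta x,\delta^+)$; each step contributes a triple ratio factor $T_{n-i-j,j,i}(x,\delta x,\delta^+)$ or $T_{j,n-i,i-j}(x,\delta x,\delta^+)$, reproducing the product $\prod_{j=1}^{n-i-1}T_{n-i-j,j,i}/\prod_{j=1}^{i-1}T_{j,n-i,i-j}$ that constitutes the second factor of $K_i$. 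The residual discrepancy — the ``jump'' between corner lozenges associated to cyclically distinct first flags — is precisely the eigenvalue ratio $\lambda_i/\lambda_{i+1}$: it arises by the same mechanism as in Proposition~\ref{prop:potentialspectral}, applied at the fixed flag $\delta^+$ of $\rho(\delta)$, and can be verified in the $n=2$ base case by direct determinantal computation.

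The main obstacle will be the combinatorial bookkeeping in this last step: tracking the path through the lozenge lattice that converts one cyclic ordering into another, and matching the indices of the resulting triple ratios against the precise form of $K_i$ given in the statement. The substance of the proof is a finite and explicit verification, but the index gymnastics — simultaneously juggling the cyclic symmetry, the $\rho(\delta)$-equivariance, and the repeated applications of Lemma~\ref{lem:triplea} — is where most of the writing will reside.
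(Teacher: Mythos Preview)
Your strategy is the paper's strategy, but two points in the plan as written would not go through cleanly.

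First, the polynomial factor. After $\rho(\delta)$-equivariance and cyclic symmetry, your numerator becomes $1+\sum_{c}\prod_{j=1}^{c} T_{j,n-i,i-j}(x,\delta x,\delta^+)^{-1}$. Your ``re-indexing'' (factoring out $\prod_{j=1}^{i-1} T^{-1}$ and reversing the sum) does convert this to $1+\sum_{c}\prod_{j=1}^{c} T_{n-i,j,i-j}(\delta x,\delta^+,x)$, but only \emph{up to} the prefactor $\prod_{j=1}^{i-1}T_{n-i,j,i-j}(\delta x,\delta^+,x)^{-1}$; the analogous prefactor from the denominator involves $(x,\delta x,\delta^+)$, so the two do \emph{not} cancel. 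The paper avoids this entirely: after writing $P_i(x;\delta^+,\delta^{-1}x)=P_i(\delta x;\delta^+,x)$ it applies the antisymmetry $P_i(F;G,H)=-P_i(F;H,G)$ to \emph{both} numerator and denominator first, obtaining $P_i(\delta x;x,\delta^+)/P_i(x;\delta^+,\delta x)$, and \emph{then} invokes Proposition~\ref{prop:PT}. With that ordering the triple-ratio sums come out as the first factor of $K_i$ on the nose, no re-indexing needed, and the lozenge ratio to be analyzed is $\alpha^{\delta x;x,\delta^+}_{n-i,i,0}/\alpha^{x;\delta^+,\delta x}_{n-i,i,0}$ rather than yours.

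Second, the eigenvalue extraction is not ``the same mechanism as in Proposition~\ref{prop:potentialspectral}'' (that proposition acts through the repelling flag $\alpha^-$, whereas here the relevant fixed flag is $\delta^+$). The paper's actual step is an explicit three-term factorization of the lozenge ratio (its Equation~\eqref{equation:lem7171}): the first two factors telescope via Lemma~\ref{lem:triplea} into the two triple-ratio products in $K_i$, and the residual is a ratio of two \emph{products of adjacent lozenges} along the edges $(\delta x,\delta^+)$ and $(x,\delta^+)$. Each such product collapses (Figure~\ref{Figure:combdimer}) to a ratio of two-flag determinants $\Delta((\delta x)^{a}\wedge(\delta^+)^{b})/\Delta(x^{a}\wedge(\delta^+)^{b})=(\lambda_1\cdots\lambda_b)^{-1}$, and the combination of three such terms yields $\lambda_i/\lambda_{i+1}$ directly.
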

\begin{proof}
Firstly, we have 
\begin{align*}
\frac{P_i(x;\delta^+,\delta^{-1}(x))}{P_i(x;\delta x,\delta^+)}=\frac{P_i(\delta x;\delta^+,x)}{P_i(x;\delta x,\delta^+)}.
\end{align*}
We compute the right hand side of the above equation. By Proposition \ref{prop:PT}, we have
\begin{align*}
P_i(x;\delta^+,\delta x) =\alpha^{x;\delta^+,\delta x}_{n-i,i,0} \left(1+\sum_{c=1}^{i-1} \prod_{j=1}^c T_{n-i,j,i-j}(x,\delta x,\delta^+)\right),
\end{align*}
\begin{align*}
P_i(\delta x;x,\delta^+) =\alpha^{\delta x;x,\delta^+}_{n-i,i,0} \left(1+\sum_{c=1}^{i-1} \prod_{j=1}^c T_{n-i,j,i-j}(\delta x,\delta^+,x)\right).
\end{align*}

\begin{figure}
\includegraphics[scale=0.7]{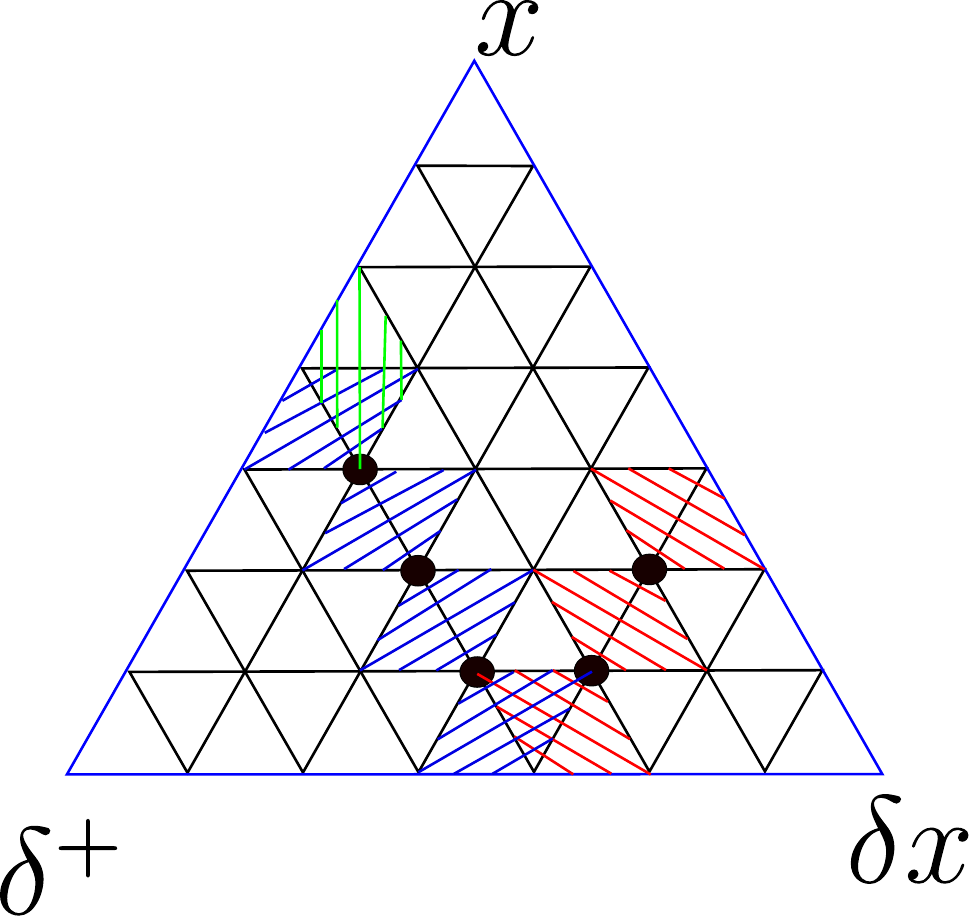}
\caption{Lozenges for Proposition \ref{prop:ratioP}.}
\label{Figure:Pi}
\end{figure}

Then
\begin{align*}
\begin{aligned}
&\frac{P_i(\delta x;\delta^+,x)}{P_i(x;\delta x,\delta^+)}=\frac{P_i(\delta x;x,\delta^+)}{P_i(x;\delta^+,\delta x)}
=\frac{1+\sum_{c=1}^{i-1} \prod_{j=1}^c T_{n-i,j,i-j}(\delta x,\delta^+,x)}{1+\sum_{c=1}^{i-1} \prod_{j=1}^c T_{n-i,j,i-j}(x,\delta x,\delta^+)} \cdot \frac{\alpha^{\delta x;x,\delta^+}_{n-i,i,0}}{\alpha^{x;\delta^+,\delta x}_{n-i,i,0}}.
\end{aligned}
\end{align*}

Observing Figure \ref{Figure:Pi}, to compute $\alpha^{\delta x;x,\delta^+}_{n-i,i,0}/\alpha^{x;\delta^+,\delta x}_{n-i,i,0}$, we need to divide the the red lozenge with one edge on $(x,\delta x)$ by the green lozenge with one edge on $(x,\delta^+)$. Then we decompose this red/green as:
\begin{align}
\label{equation:lem7171}
\frac{\alpha^{\delta x;x,\delta^+}_{n-i,i,0}}{\alpha^{x;\delta^+,\delta x}_{n-i,i,0}}
= \frac{\alpha^{\delta x;x,\delta^+}_{n-i,i,0}}{\alpha^{\delta x;x,\delta^+}_{n-i,1,i-1}}  \cdot \frac{\alpha^{\delta^+;\delta x, x}_{i,1,n-i-1}}{ \alpha^{\delta^+;\delta x,x}_{i,n-i,0}} \cdot \frac{\alpha^{\delta x;x,\delta^+}_{n-i,1,i-1} \alpha^{\delta^+;\delta x,x}_{i,n-i,0}}{\alpha^{\delta^+;\delta x,x}_{i,1,n-i-1} \alpha^{x;\delta^+,\delta x}_{n-i,i,0}},
\end{align}
where the first (second resp.) term of the product on the right hand side corresponds to the two corner red (blue resp.) lozenges crossed by the $i$-th level of $\delta x$ ($(n-i)$-th level of $\delta^+$ resp.).   Using all the intermediate lozenges crossed by the $i$-th level of $\delta x$ ($(n-i)$-th level of $\delta^+$ resp.), the right hand side of Equation \eqref{equation:lem7171} equals 
\begin{align*}
\begin{aligned}
&\prod_{j=1}^{i-1} T_{n-i,j,i-j}(\delta x, x,\delta^+) \cdot \frac{1}{\prod_{j=1}^{n-i-1} T_{i,j,n-i-j}(\delta^+,\delta x,x)} \cdot  \frac{\alpha^{\delta x;x,\delta^+}_{n-i,1,i-1} \alpha^{\delta^+;\delta x,x}_{i,n-i,0}}{\alpha^{\delta^+;\delta x,x}_{i,1,n-i-1} \alpha^{x;\delta^+,\delta x}_{n-i,i,0}}
\\& =\frac{\prod_{j=1}^{n-i-1} T_{n-i-j,j,i}(x,\delta x,\delta^+)}{\prod_{j=1}^{i-1} T_{j,n-i,i-j}(x,\delta x,\delta^+)} \cdot \frac{\alpha^{\delta x;x,\delta^+}_{n-i,1,i-1} \alpha^{\delta^+;\delta x,x}_{i,n-i,0}}{\alpha^{\delta^+;\delta x,x}_{i,1,n-i-1} \alpha^{x;\delta^+,\delta x}_{n-i,i,0}}.
\end{aligned}
\end{align*}

\begin{figure}
\includegraphics[scale=0.7]{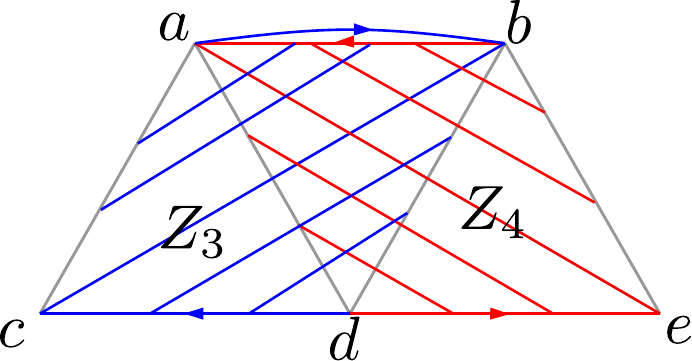}
\caption{$\mathcal{A}$ coordinates are labelled on the vertices.}
\label{Figure:combdimer}
\end{figure}
Since the product of the two lozenges $Z_3=\frac{bc}{ad}$ and $Z_4=\frac{ae}{bd}$ in Figure \ref{Figure:combdimer} is $\frac{ce}{d^2}$, two similar products
$\alpha^{\delta x;x,\delta^+}_{n-i,1,i-1} \alpha^{\delta^+;\delta x,x}_{i,n-i,0}$ and $\alpha^{\delta^+;\delta x,x}_{i,1,n-i-1} \alpha^{x;\delta^+,\delta x}_{n-i,i,0}$ along the edge $(\delta x,\delta^+)$ and $(x,\delta^+)$ respectively can be explicitly expressed. We obtain

\begin{align*}
\begin{aligned}
&\frac{\alpha^{\delta x;x,\delta^+}_{n-i,1,i-1} \alpha^{\delta^+;\delta x,x}_{i,n-i,0}}{\alpha^{\delta^+;\delta x,x}_{i,1,n-i-1} \alpha^{x;\delta^+,\delta x}_{n-i,i,0}}
\\&= \frac{\Delta \left((
\delta x)^{n-i+1}  \wedge \delta^{i-1}  \right)}{\Delta \left(x^{n-i+1}  \wedge \delta^{i-1}  \right)}\cdot \frac{\Delta \left((
\delta x)^{n-i-1}  \wedge \delta^{i+1}  \right)}{\Delta \left(x^{n-i-1}  \wedge \delta^{i+1}  \right)} \cdot \left(\frac{\Delta \left((\delta x)^{n-i}  \wedge \delta^{i}  \right)}{\Delta \left((x^{n-i}  \wedge \delta^{i}  \right)}\right)^{-2}
\\&=\frac{1}{\lambda_1\cdots \lambda_{i-1}(\rho(\delta)) } \cdot \frac{1}{\lambda_1\cdots \lambda_{i+1}(\rho(\delta)) } \cdot \left(\lambda_1\cdots \lambda_{i}(\rho(\delta))\right)^2
= \frac{\lambda_i(\rho(\delta))}{\lambda_{i+1}(\rho(\delta))}.
\end{aligned}
\end{align*}
We conclude that 
\begin{align*}
\frac{P_i(x;\delta^+,\delta^{-1}(x))}{P_i(x;\delta x,\delta^+)}=K_i(\delta,\delta_x)\cdot  \frac{\lambda_i(\rho(\delta))}{\lambda_{i+1}(\rho(\delta))}.
\end{align*}
\end{proof}

\subsection{McShane identities for $i$-th potential ratio}

\begin{thm}[McShane identity for $i$-th potential ratio]
\label{theorem:boundaryith}
For a $\operatorname{PGL}_n(\mathbb{R})$-positive representation $\rho \in \mathrm{Pos}_n^h(S_{g,m})$ with loxodromic boundary monodromy, let $(\rho,\xi) \in \mathcal{X}_n(S_{g,m})$ be the canonical lift (Definition \ref{definition:ratioperiod}) of $\rho$ which induces the $i$-th potential ratio $B_i$. Let $\alpha$ be a distinguished oriented boundary component of $S_{g,m}$ such that $S_{g,m}$ is on the left side of $\alpha$. Recall Equation~\eqref{equation:mirid} 
\begin{equation*}
\mathcal{D}(x, y, z) = \, \log \left( \frac{e^{\frac{x}{2}} + e^{\frac{y+z}{2}}}{e^{\frac{-x}{2}} + e^{\frac{y+z}{2}}} \right).
\end{equation*}
For $i=1,\cdots,n-1$, we have the equality:
\begin{align}
\label{eq:altsummand}
\begin{aligned}
&\sum_{(\beta,\gamma)\in \subvec{\mathcal{P}}_\alpha}
 \mathcal{D}(\ell_i(\alpha), \phi_i(\beta,\gamma)+\kappa_i(\beta,\beta_{\alpha^-})+\ell_i(\beta), \phi_i(\beta,\gamma)+\kappa_i(\gamma,\gamma_{\alpha^-})+\ell_i(\gamma))
\\&  + \sum_{(\beta,\gamma)\in \subvec{\mathcal{P}}^{\partial}_\alpha}
(\mathcal{D}(\ell_i(\alpha), \phi_i'(\beta,\gamma)+\kappa_i(\beta,\beta_{\alpha^-})+\ell_i(\beta), \phi_i'(\beta,\gamma)-\kappa_i(\gamma^{-1},\gamma^{-1}_{\alpha^-})-\ell_i(\gamma^{-1}))
\\&-\mathcal{D}(\ell_i(\alpha), \phi_i(\beta,\gamma)+\kappa_i(\beta,\beta_{\alpha^-})+\ell_i(\beta), \phi_i(\beta,\gamma)+\kappa_i(\gamma,\gamma_{\alpha^-})+\ell_i(\gamma)))=\ell_i(\alpha),
\end{aligned}
\end{align}
where $\xvec{\mathcal{P}}_\alpha$ is the set of the homotopy classes of boundary-parallel pairs of pants, and $\xvec{\mathcal{P}}^{\partial}_\alpha$ is a subset of $\vec{\mathcal{P}}_\alpha$ containing another boundary component of $S_{g,m}$ as in Definition \ref{definition:Pp}. For each boundary-parallel pair of pants, we fix a marking on the boundary components $\alpha,\beta,\gamma$ such that $\alpha \beta^{-1}\gamma =1$ as in Figure \ref{Figure:pti}.
Recall Proposition \ref{prop:ratioP} for the definition of $\kappa_i(\delta,\delta_x)$.
Here
\begin{align*}
d_i(\beta,\gamma):=\log (-B_i(\alpha^-;\gamma(\alpha^-),\beta^+,\gamma^+)),
\end{align*}
\begin{align*}
 e_i(\beta,\gamma):= \log(-B_i(\alpha^-;\gamma^{-1}(\alpha^-),\gamma^+, \gamma^{-1}(\beta^+))),
\end{align*}
\begin{align*}
d_i'(\beta,\gamma):=\log (-B_i(\alpha^-;\gamma(\alpha^-),\beta^+,\gamma^-)),
\end{align*}
\begin{align*}
e_i'(\beta,\gamma):= \log(-B_i(\alpha^-;\gamma^{-1}(\alpha^-),\gamma^-, \gamma^{-1}(\beta^+))),
\end{align*}
\begin{align*}
\phi_i(\beta,\gamma):=\log \frac{\cosh \frac{e_i(\beta,\gamma)}{2}}{\cosh \frac{d_i(\beta,\gamma)}{2}},\;\;
\phi_i'(\beta,\gamma):=\log \frac{\cosh \frac{e_i'(\beta,\gamma)}{2}}{\cosh \frac{d_i'(\beta,\gamma)}{2}}.
\end{align*}
\end{thm}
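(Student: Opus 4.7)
The plan is to instantiate Theorem~\ref{thm:boundary} with the ordered ratio $B^{\rho}=B_{i}$ and then convert the abstract gap terms into the explicit $\mathcal{D}$-form of \eqref{eq:altsummand}. First, I check that $B_{i}$ qualifies as an ordered ratio: Proposition~\ref{prop:BC} expresses $B_{i}$ as a positive triple-ratio factor multiplying the edge function $-D_{i}$, from which the normalization, cocycle, and positivity/ordering axioms of Definition~\ref{definition:4ratio} follow. Together with Corollary~\ref{cor:iratiocycle}, which identifies the period $\ell^{B_{i}}(\alpha)=\ell_{i}(\alpha)$, this reduces the proof to the pointwise identification of each summand.

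For the summand over $\xvec{\mathcal{P}}_{\alpha}$, I begin with $B_{i}(\alpha^{-},\alpha^{+},\gamma^{+},\beta^{+})=P_{i}(\alpha^{-};\alpha^{+},\beta^{+})/P_{i}(\alpha^{-};\alpha^{+},\gamma^{+})$. The marking convention $\alpha\beta^{-1}\gamma=1$ coupled with $\alpha(\alpha^{-})=\alpha^{-}$ forces $\beta(\alpha^{-})=\gamma(\alpha^{-})$, so the additivity of $P_{i}$ in its last two arguments (Definition~\ref{definition:ichar}) decomposes both numerator and denominator around this common point. Writing $A:=P_{i}(\alpha^{-};\alpha^{+},\beta(\alpha^{-}))$ for the shared ``trunk'' and $C_{\delta}:=P_{i}(\alpha^{-};\delta(\alpha^{-}),\delta^{+})$ for the two ``tails'' indexed by $\delta\in\{\beta,\gamma\}$, one obtains $B_{i}=(A+C_{\beta})/(A+C_{\gamma})$.

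The three algebraic inputs of \S\ref{section:potentialratio} each supply one ingredient of the $\mathcal{D}$-form. Proposition~\ref{prop:ratioP} converts each tail-to-trunk ratio $C_{\delta}/A$ into an expression carrying the factor $e^{\kappa_{i}(\delta,\delta_{\alpha^{-}})+\ell_{i}(\delta)}$, thereby producing the $\ell_{i}(\beta)$, $\ell_{i}(\gamma)$, $\kappa_{i}(\beta,\beta_{\alpha^{-}})$, $\kappa_{i}(\gamma,\gamma_{\alpha^{-}})$ contributions. Proposition~\ref{prop:potentialspectral}, applied after shifting by $\alpha$, inserts the $e^{\pm\ell_{i}(\alpha)/2}$ factors characteristic of $\mathcal{D}$. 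The remaining leftover quotient admits the description of Proposition~\ref{prop:BC}, which rewrites it as edge-function logs together with triple-ratio corrections; these are exactly $d_{i}(\beta,\gamma)$ and $e_{i}(\beta,\gamma)$, and combining them via the $\cosh$-symmetrization produces $\phi_{i}(\beta,\gamma)=\log(\cosh(e_{i}/2)/\cosh(d_{i}/2))$. Reassembling yields the required closed form. The peripheral summand over $\xvec{\mathcal{P}}^{\partial}_{\alpha}$ is handled by the same procedure with $\gamma^{+}$ replaced by $\gamma^{-}$ at the appropriate places, producing the primed quantities $d_{i}',e_{i}',\phi_{i}'$; the subtraction in \eqref{eq:altsummand} records the overlap between the half-pants refinement in Theorem~\ref{thm:boundary} and the pair-of-pants summation.

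The principal obstacle is the algebraic reassembly in the last stage: after the substitutions from Propositions~\ref{prop:ratioP}, \ref{prop:potentialspectral}, and \ref{prop:BC}, one is left with an expression whose numerator and denominator each decompose into two summands, and one must verify that the $\cosh$-symmetrization by $\phi_{i}$ is precisely what is needed to reconcile the $\beta$--$\gamma$ asymmetry of the tails $C_{\beta},C_{\gamma}$ with the $\beta\leftrightarrow\gamma$-symmetric denominator $e^{-\ell_{i}(\alpha)/2}+e^{(y+z)/2}$ inside $\mathcal{D}$. Careful tracking of factors of $2$ and of the signs inherited from $-D_{i}$ in Proposition~\ref{prop:BC} will be the delicate bookkeeping step that makes everything fall into place.
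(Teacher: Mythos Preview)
Your overall strategy---instantiate Theorem~\ref{thm:boundary} with $B^\rho=B_i$, identify the period via Corollary~\ref{cor:iratiocycle}, then rewrite each gap term---matches the paper exactly. The gap is in your computation of the individual summand.

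The decomposition $B_i(\alpha^-;\alpha^+,\gamma^+,\beta^+)=(A+C_\beta)/(A+C_\gamma)$ with $A=P_i(\alpha^-;\alpha^+,\beta(\alpha^-))$ and $C_\delta=P_i(\alpha^-;\delta(\alpha^-),\delta^+)$ is correct, but your claim that Proposition~\ref{prop:ratioP} handles the ratio $C_\delta/A$ is not: that proposition computes
\[
\frac{P_i(\alpha^-;\delta^+,\delta^{-1}(\alpha^-))}{P_i(\alpha^-;\delta(\alpha^-),\delta^+)}
= e^{\kappa_i(\delta,\delta_{\alpha^-})+\ell_i(\delta)},
\]
whose numerator involves $\delta^{-1}(\alpha^-)$, not $\alpha^+$. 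The denominator is indeed your $C_\delta$, but the trunk $A$ carries the point $\alpha^+$, and nothing in your three inputs removes it. Until $\alpha^+$ is eliminated, the $\mathcal{D}$-form---whose only $\alpha$-dependence is through $\ell_i(\alpha)$---cannot emerge.

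The paper resolves this with two moves you are missing. First, it passes both sides through the strictly increasing map $f(A)=(e^A-1)/(e^{\ell_i(\alpha)}-1)$; on the left this yields $P_i(\alpha^-;\gamma^+,\beta^+)\big/\bigl[P_i(\alpha^-;\alpha^+,\gamma^+)(e^{\ell_i(\alpha)}-1)\bigr]$, and Proposition~\ref{prop:potentialspectral} rewrites the denominator as $P_i(\alpha^-;\gamma^+,\alpha^{-1}(\gamma^+))$, which no longer contains $\alpha^+$. Second, the paper isolates as Lemma~\ref{lemma:cosh12} the identity
\[
e^{\phi_i(\beta,\gamma)+\frac{1}{2}(\kappa_i(\gamma,\gamma_{\alpha^-})+\ell_i(\gamma)+\kappa_i(\beta,\beta_{\alpha^-})+\ell_i(\beta)+\ell_i(\alpha))}
=-e^{\ell_i(\alpha)}\,B_i(\alpha^-;\gamma^+,\beta^+,\gamma^{-1}(\beta^+)),
\]
and \emph{this} is where Proposition~\ref{prop:ratioP} actually enters (applied to $e^{d_i+e_i+\kappa_i(\gamma,\gamma_{\alpha^-})+\ell_i(\gamma)}$, then combined with $\alpha^{-1}\gamma^{-1}=\beta^{-1}$ and Proposition~\ref{prop:potentialspectral}). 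After these two steps the target equality reduces to the additivity statement
\[
P_i(\alpha^-;\gamma^+,\alpha^{-1}(\gamma^+))
= P_i(\alpha^-;\gamma^+,\beta^+)+e^{\ell_i(\alpha)}P_i(\alpha^-;\gamma^{-1}(\beta^+),\gamma^+).
\]
Your ``principal obstacle'' is therefore not just bookkeeping: it is the elimination of $\alpha^+$, and the $f$-trick together with Lemma~\ref{lemma:cosh12} is precisely what accomplishes it.
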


Let us start with the following lemma before we prove the above theorem.

\begin{lem}
\label{lemma:cosh12}
Set up the notations as in Theorem \ref{theorem:boundaryith} where $B_i$ is defined with respect to $\rho \in \mathrm{Pos}_n^h(S_{g,m})$ and $(\rho,\xi)\in \mathcal{X}_n(S_{g,m})$, we have
\begin{align*}
\begin{aligned}
&e^{\phi_i(\beta,\gamma)} \cdot e^{\frac{1}{2}\left(\kappa_i(\gamma,\gamma_{\alpha^-})+\ell_i(\gamma)+\kappa_i(\beta,\beta_{\alpha^-})+\ell_i(\beta)+\ell_i(\alpha)\right)}
\\&=-e^{\ell_i(\alpha)} \cdot B_i(\alpha^-;\gamma^+,\beta^+,\gamma^{-1}(\beta^+)).
\end{aligned}
\end{align*}
\end{lem}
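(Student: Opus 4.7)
The plan is to expand both sides of the lemma as (signed) ratios of Goncharov--Shen potentials $P_i$ based at $\alpha^-$ (with a single common choice of lift of $\xi_\rho(\alpha^-)$), and verify the resulting equality using the additive cocycle identity for $P_i$, Propositions~\ref{prop:potentialspectral} and \ref{prop:ratioP}, together with the elementary identity $\cosh(x/2) = (1+e^x)e^{-x/2}/2$. Throughout I use the relation $\alpha\beta^{-1}\gamma = 1$ (enforced by the marking of Theorem~\ref{thm:boundary}), which rearranges as $\beta = \gamma\alpha$ and hence gives $\beta\alpha^- = \gamma\alpha^-$.

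First I introduce shorthand for the eight $P_i$-values that will appear:
\begin{align*}
\mathsf{A} &:= P_i(\alpha^-;\gamma\alpha^-,\gamma^+), & \mathsf{B} &:= P_i(\alpha^-;\gamma\alpha^-,\beta^+), \\
\mathsf{C} &:= P_i(\alpha^-;\gamma^+,\gamma^{-1}\beta^+), & \mathsf{E} &:= P_i(\alpha^-;\gamma^+,\beta^+), \\
\mathsf{F} &:= P_i(\alpha^-;\beta^+,\beta^{-1}\alpha^-), & \mathsf{G} &:= P_i(\alpha^-;\gamma^+,\gamma^{-1}\alpha^-), \\
\mathsf{H} &:= P_i(\alpha^-;\gamma^{-1}\alpha^-,\gamma^+), & \mathsf{I} &:= P_i(\alpha^-;\gamma^{-1}\alpha^-,\gamma^{-1}\beta^+).
\end{align*}
Directly from the definitions of $d_i,e_i$ and the $i$-th potential ratio, $e^{d_i(\beta,\gamma)} = -\mathsf{A}/\mathsf{B}$ and $e^{e_i(\beta,\gamma)} = -\mathsf{I}/\mathsf{H}$. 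The cocycle $P_i(x;y,z) = P_i(x;y,w) + P_i(x;w,z)$ applied with $w = \gamma\alpha^- = \beta\alpha^-$ (respectively $w = \gamma^{-1}\beta^+$) produces $\mathsf{E} = \mathsf{B} - \mathsf{A}$ and $\mathsf{H} = \mathsf{I} - \mathsf{C}$, so $1+e^{d_i} = \mathsf{E}/\mathsf{B}$ and $1+e^{e_i} = -\mathsf{C}/\mathsf{H}$. The $\cosh$-identity then produces
$$e^{\phi_i(\beta,\gamma)} \;=\; \frac{1+e^{e_i}}{1+e^{d_i}}\cdot e^{(d_i-e_i)/2} \;=\; \frac{-\mathsf{C}\mathsf{B}}{\mathsf{H}\mathsf{E}}\sqrt{\mathsf{A}\mathsf{H}/(\mathsf{B}\mathsf{I})}.$$
Meanwhile Proposition~\ref{prop:ratioP} gives $e^{\kappa_i(\gamma,\gamma_{\alpha^-})+\ell_i(\gamma)} = \mathsf{G}/\mathsf{A}$ and $e^{\kappa_i(\beta,\beta_{\alpha^-})+\ell_i(\beta)} = \mathsf{F}/\mathsf{B}$. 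Assembling the LHS of the lemma, the factors of $\mathsf{A},\mathsf{B}$ cancel neatly and one obtains
$$\text{LHS} \;=\; \frac{-\mathsf{C}}{\mathsf{E}}\sqrt{\mathsf{G}\mathsf{F}/(\mathsf{H}\mathsf{I})}\cdot e^{\ell_i(\alpha)/2}, \qquad \text{RHS} \;=\; -e^{\ell_i(\alpha)}\cdot\frac{\mathsf{C}}{\mathsf{E}}.$$
Equality of LHS and RHS thus reduces to the single identity $\mathsf{G}\mathsf{F}/(\mathsf{H}\mathsf{I}) = e^{\ell_i(\alpha)}$.

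Antisymmetry of $P_i$ in its last two arguments gives $\mathsf{G} = -\mathsf{H}$, so it remains to prove $\mathsf{F}/\mathsf{I} = -e^{\ell_i(\alpha)}$. This is the key step and the main obstacle. I apply Proposition~\ref{prop:potentialspectral} (the $\alpha$-shift for $P_i$ based at $\alpha^-$) to $\mathsf{F} = P_i(\alpha^-;\beta^+,\beta^{-1}\alpha^-)$, obtaining $\mathsf{F} = e^{\ell_i(\alpha)}\,P_i(\alpha^-;\alpha\beta^+,\alpha\beta^{-1}\alpha^-)$. The relation $\alpha\beta^{-1}=\gamma^{-1}$ immediately yields $\alpha\beta^{-1}\alpha^- = \gamma^{-1}\alpha^-$, and from $\beta = \gamma\alpha$ we compute $\gamma\alpha\cdot\beta^+ = \beta\cdot\beta^+ = \beta^+$, giving the crucial equivariance $\alpha\beta^+ = \gamma^{-1}\beta^+$. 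Hence $\mathsf{F} = e^{\ell_i(\alpha)}P_i(\alpha^-;\gamma^{-1}\beta^+,\gamma^{-1}\alpha^-) = -e^{\ell_i(\alpha)}\mathsf{I}$, as required, completing the reduction.

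The principal subtlety throughout is the bookkeeping: one must use the same lift of $\xi_\rho(\alpha^-)$ in every $P_i$-expression (the Goncharov--Shen characters depend on this lift, while only their ratios and the spectral shift of Proposition~\ref{prop:potentialspectral} are lift-independent), and the cocycle manipulations must be arranged so that the identity $\alpha\beta^+=\gamma^{-1}\beta^+$ is the only non-trivial equivariant input. All other steps are routine algebra with the cocycle and the $\cosh$-identity.
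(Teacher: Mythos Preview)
Your proof is correct and follows essentially the same route as the paper's: both reduce the identity, via Proposition~\ref{prop:ratioP} and the $\cosh$-rewriting, to the single relation $P_i(\alpha^-;\beta^+,\beta^{-1}\alpha^-)=-e^{\ell_i(\alpha)}P_i(\alpha^-;\gamma^{-1}\alpha^-,\gamma^{-1}\beta^+)$, which is then established from Proposition~\ref{prop:potentialspectral} together with the equivariance $\alpha\beta^+=\gamma^{-1}\beta^+$ coming from $\alpha\beta^{-1}\gamma=1$. The only difference is organizational: you introduce explicit labels $\mathsf{A},\ldots,\mathsf{I}$ and manipulate symbolically, whereas the paper first packages the same relation as $e^{d_i+e_i+\kappa_i(\gamma,\gamma_{\alpha^-})+\ell_i(\gamma)}=e^{\kappa_i(\beta,\beta_{\alpha^-})+\ell_i(\beta)-\ell_i(\alpha)}$ and then substitutes.
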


\begin{proof}
Since $\gamma(\alpha^-)=\beta(\alpha^-)$, by Proposition~\ref{prop:ratioP} we have
\begin{align}
\label{equation:lem719comp1}
\begin{aligned}
& e^{d_i(\beta,\gamma)+e_i(\beta,\gamma)+\kappa_i(\gamma,\gamma_{\alpha^-})+\ell_i(\gamma)} 
\\= & \frac{P_i(\alpha^-;\gamma^+,\gamma(\alpha^-))}{P_i(\alpha^-;\beta(\alpha^-),\beta^+)} \cdot 
\frac{P_i(\alpha^-;\gamma^{-1}(\beta^+),\gamma^{-1}(\alpha^-))}{P_i(\alpha^-;\gamma^{-1}(\alpha^-),\gamma^+)}
\cdot \frac{P_i(\alpha^-;\gamma^+,\gamma^{-1}(\alpha^-))}{P_i(\alpha^-;\gamma(\alpha^-),\gamma^+)}
\\&=\frac{P_i(\alpha^-;\gamma^{-1}(\beta^+),\gamma^{-1}(\alpha^-))}{P_i(\alpha^-;\beta(\alpha^-),\beta^+)}.
\end{aligned}
\end{align}

By $\alpha^{-1} \gamma^{-1}=\beta^{-1}$ and Proposition \ref{prop:potentialspectral}, we get
\begin{equation}
\label{equation:lem719comp2}
P_i(\alpha^-;\gamma^{-1}(\beta^+),\gamma^{-1}(\alpha^-))=e^{-\ell_i(\alpha)}\cdot P_i(\alpha^-;\beta^+,\beta^{-1}(\alpha^-)).
\end{equation}
Thus the right hand side of Equation \eqref{equation:lem719comp1} is equal to
\begin{align*}
\frac{e^{-\ell_i(\alpha)}\cdot P_i(\alpha^-;\beta^+,\beta^{-1}(\alpha^-))}{P_i(\alpha^-;\beta(\alpha^-),\beta^+)}=e^{\kappa_i(\beta,\beta_{\alpha^-})+\ell_i(\beta)-\ell_i(\alpha)} .
\end{align*}
Thus 
\begin{align}
\label{equation:lem719comp3}
\begin{aligned}
& e^{\phi_i(\beta,\gamma)} \cdot e^{\frac{1}{2}\left(\kappa_i(\gamma,\gamma_{\alpha^-})+\ell_i(\gamma)+\kappa_i(\beta,\beta_{\alpha^-})+\ell_i(\beta)+\ell_i(\alpha)\right)} 
=  \frac{1+ e^{-e_i(\beta,\gamma)}}{1+e^{d_i(\beta,\gamma)}}\cdot e^{\kappa_i(\beta,\beta_{\alpha^-})+\ell_i(\beta)}.
\end{aligned}
\end{align}
By Equation \eqref{equation:lem719comp2} and additivity of $i$-th characters, the right hand side of Equation \eqref{equation:lem719comp3} is equal to

\begin{align*}
\begin{aligned}
& \frac{\frac{P_i(\alpha^-;\gamma^{-1}(\beta^+),\gamma^+)}{P_i(\alpha^-;\gamma^{-1}(\beta^+),\gamma^{-1}(\alpha^-))}}{\frac{P_i(\alpha^-;\gamma^+,\beta^+)}{P_i(\alpha^-;\beta(\alpha^-),\beta^+)}} \cdot \frac{P_i(\alpha^-;\beta^+,\beta^{-1}(\alpha^-))}{P_i(\alpha^-;\beta(\alpha^-),\beta^+)}
= \frac{e^{\ell_i(\alpha)}\cdot P_i(\alpha^-;\gamma^{-1}(\beta^+),\gamma^+)}{P_i(\alpha^-;\gamma^+,\beta^+)}
\\&=-e^{\ell_i(\alpha)} \cdot B_i(\alpha^-;\gamma^+,\beta^+,\gamma^{-1}(\beta^+)).
\end{aligned}
\end{align*}
\end{proof}

\begin{proof}[Proof of Theorem \ref{theorem:boundaryith}]
Firstly, let us show that
\begin{equation}
\begin{aligned}
\label{equation:mcloxo1}
&\log B_1(\alpha^-;\alpha^+,\gamma^+,\beta^+)
\\&= \log \left(\frac{e^{\ell_i(\alpha)}+e^{\phi_i(\beta,\gamma)}  \cdot e^{\frac{1}{2}\left(\kappa_i(\gamma,\gamma_{\alpha^-})+\ell_i(\gamma)+\kappa_i(\beta,\beta_{\alpha^-})+\ell_i(\beta)+\ell_i(\alpha)\right)}}{1 +e^{\phi_i(\beta,\gamma)}\cdot e^{\frac{1}{2}\left(\kappa_i(\gamma,\gamma_{\alpha^-})+\ell_i(\gamma)+\kappa_i(\beta,\beta_{\alpha^-})+\ell_i(\beta)+\ell_i(\alpha)\right)}}\right).
\end{aligned}
\end{equation}

We show two sides of the above equation are equal by evaluating two sides at the strictly increasing function $f(A)=\frac{e^A-1}{e^{\ell_i(\alpha)}-1}$. Then the left hand side of Equation \eqref{equation:mcloxo1} becomes $\frac{P_i(\alpha^-; \gamma^+,\beta^+)}{P_i(\alpha^-; \alpha^+,\gamma^+)\cdot (e^{\ell_i(\alpha)}-1)}$, and by Lemma \ref{lemma:cosh12} the right hand side of Equation \eqref{equation:mcloxo1} becomes $\frac{1}{1+\frac{e^{\ell_i(\alpha)}\cdot P_i(\alpha^-;\gamma^{-1}(\beta^+),\gamma^+)}{P_i(\alpha^-;\gamma^+,\beta^+)}}$. 
Equation \eqref{equation:mcloxo1} is equivalent to 
\begin{align}
\label{equation:mcloxo2}
P_i(\alpha^-; \alpha^+,\gamma^+)\cdot (e^{\ell_i(\alpha)}-1)=e^{\ell_i(\alpha)}\cdot P_i(\alpha^-;\gamma^{-1}(\beta^+),\gamma^+)+P_i(\alpha^-;\gamma^+,\beta^+).
\end{align}

By Proposition \ref{prop:potentialspectral}, we have
\begin{align*}
\begin{aligned}
&P_i(\alpha^-; \alpha^+,\gamma^+)\cdot (e^{\ell_i(\alpha)}-1)
\\=& P_i(\alpha^-; \alpha^+,\alpha^{-1}(\gamma^+)) - P_i(\alpha^-; \alpha^+,\gamma^+)
\\=& P_i(\alpha^-; \gamma^+,\alpha^{-1}(\gamma^+)).
\end{aligned}
\end{align*}
Thus Equation \eqref{equation:mcloxo2} is equivalent to
\begin{align*}
P_i(\alpha^-; \beta^+,\alpha^{-1}(\gamma^+))
=e^{\ell_i(\alpha)}\cdot P_i(\alpha^-;\gamma^{-1}(\beta^+),\gamma^+),
\end{align*}
which is a consequence of Proposition \ref{prop:potentialspectral}. Hence we obtain Equation \eqref{equation:mcloxo1}.

For each gap function for $\xvec{\mathcal{P}}^{\partial}_\alpha$, similarly, we have
\begin{equation*}
\begin{aligned}
&\log B_i\left(\alpha^-;\alpha^+,\gamma^-,\beta^+\right) 
\\&=\log 
\left(\frac{e^{\ell_i(\alpha)}+e^{\phi_i'(\beta,\gamma)}  \cdot e^{\frac{1}{2}\left(-\kappa_i(\gamma^{-1},\gamma^{-1}_{\alpha^-})-\ell_i(\gamma^{-1})+\kappa_i(\beta,\beta_{\alpha^-})+\ell_i(\beta)+\ell_i(\alpha)\right)}}{1 +e^{\phi_i'(\beta,\gamma)}\cdot e^{\frac{1}{2}\left(-\kappa_i(\gamma^{-1},\gamma^{-1}_{\alpha^-})-\ell_i(\gamma^{-1})+\kappa_i(\beta,\beta_{\alpha^-})+\ell_i(\beta)+\ell_i(\alpha)\right)}}\right).
\end{aligned}
\end{equation*}
Then, we use
\begin{align*}
\begin{aligned}
&\log B_i\left(\alpha^-;\alpha^+,\gamma^-,\gamma^+\right) = \log B_i\left(\alpha^-;\alpha^+,\gamma^-,\beta^+\right)-\log B_i\left(\alpha^-;\alpha^+,\gamma^+,\beta^+\right).
\end{aligned}
\end{align*}
Finally, we conclude the theorem by Theorem \ref{thm:boundary} with $B^\rho=B_i$.
\end{proof}

The similar formulas as Equations \eqref{equation:tts11}\eqref{equation:d121s11} hold after replacing $x$ by $\alpha^-$ for $\rho \in \mathrm{Pos}_3^h(S_{1,1})$, thus we can simplify Equation \eqref{eq:altsummand} as follows. 
\begin{cor}
\label{corollary:S11sl3h}
For a $\operatorname{PGL}_3(\mathbb{R})$-positive representation $\rho \in \mathrm{Pos}_3^h(S_{1,1})$ with loxodromic boundary monodromy, let $(\rho,\xi) \in \mathcal{X}_3(S_{1,1})$ be the canonical lift (Definition \ref{definition:ratioperiod}) of $\rho$ which induces the $1$st potential ratio $B_1$. Let $\alpha$ be the oriented boundary component of $S_{1,1}$ such that $S_{1,1}$ is on the left side of $\alpha$. Let $\xvec{\mathcal{C}}_{1,1}$ be the collection of oriented simple closed curves up to homotopy on $S_{1,1}$. We have
\begin{align*}
&\sum_{\gamma \in \subvec{\mathcal{C}}_{1,1}}
 \log \left(\frac{e^{\frac{\ell_1(\alpha)}{2}}+ e^{\tau(\gamma)+\ell_1(\gamma)}}{e^{-\frac{\ell_1(\alpha)}{2}} +e^{\tau(\gamma)+\ell_1(\gamma)}}\right)=\ell_1(\alpha),
\end{align*}
where $\tau(\gamma):=\log T(\alpha^-,\gamma(\alpha^-),\gamma^+)$.
\end{cor}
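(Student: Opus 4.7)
The plan is to derive Corollary~\ref{corollary:S11sl3h} as a direct specialization of Theorem~\ref{theorem:boundaryith} (with $n=3$, $i=1$) to $S_{1,1}$, in the same spirit that Theorem~\ref{theorem:inequsl3s11} was recovered from Theorem~\ref{thm:equsl3pp} via Remark~\ref{remark:sl3s11td}. Since $S_{1,1}$ has a single boundary component, the peripheral sum $\sum_{(\beta,\gamma)\in\vec{\mathcal{P}}^\partial_\alpha}$ of Theorem~\ref{theorem:boundaryith} is empty, and only the sum over $\vec{\mathcal{P}}_\alpha$ survives.

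The first step is to verify three algebraic identities specific to the $S_{1,1}$ setting, valid for every $(\beta,\gamma)\in\vec{\mathcal{P}}_\alpha$: (a) $\ell_1(\beta)=\ell_1(\gamma)$, using that $\alpha\beta^{-1}\gamma=1$ forces $\beta$ and $\gamma$ to be conjugate in $\pi_1(S_{1,1})$; (b) the triple-ratio coincidence $T(\alpha^-,\beta\alpha^-,\beta^+)=T(\alpha^-,\gamma\alpha^-,\gamma^+)$, analogous to Equation~\eqref{equation:tts11}; and (c) the edge-function coincidence $D_1(\alpha^-,\gamma\alpha^-,\beta^+,\gamma^+)\cdot D_2(\alpha^-,\gamma\alpha^-,\beta^+,\gamma^+)=1$, analogous to Equation~\eqref{equation:d121s11}. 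Both (b) and (c) are extracted by coordinatizing a Dehn-twist family $\{\mathcal{T}^k\}$ of ideal triangulations of an annular neighbourhood of the (unique) cuff class, and invoking Lemma~\ref{proposition:lim} together with Remark~\ref{remark:generallim} applied with the lift of the cusp $p$ replaced by the lift $\alpha^-$. The relevant eigenvalue limits only use hyperconvexity of $\xi_\rho$ and the topological facts $\beta\alpha^-=\gamma\alpha^-$, $\beta=\gamma\alpha$, so they work equally well in the loxodromic-bordered setting with respect to the canonical lift of Definition~\ref{definition:ratioperiod}.

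The second step is to collapse the summand of Theorem~\ref{theorem:boundaryith}. For $n=3$, $i=1$, the formula~\eqref{equation:kappa} of Proposition~\ref{prop:ratioP} degenerates to $K_1(\delta,\delta_{\alpha^-})=T_{1,1,1}(\alpha^-,\delta\alpha^-,\delta^+)=T(\alpha^-,\delta\alpha^-,\delta^+)$, so (b) gives $\kappa_1(\beta,\beta_{\alpha^-})=\kappa_1(\gamma,\gamma_{\alpha^-})=\tau(\gamma)$; similarly (c) forces $e_1(\beta,\gamma)=-d_1(\beta,\gamma)$, whence $\cosh(e_1/2)=\cosh(d_1/2)$ and $\phi_1(\beta,\gamma)=0$. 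Plugging this and (a) into Theorem~\ref{theorem:boundaryith} reduces each $(\beta,\gamma)$-term to
\[
\mathcal{D}\bigl(\ell_1(\alpha),\,\tau(\gamma)+\ell_1(\gamma),\,\tau(\gamma)+\ell_1(\gamma)\bigr)
=\log\!\left(\frac{e^{\ell_1(\alpha)/2}+e^{\tau(\gamma)+\ell_1(\gamma)}}{e^{-\ell_1(\alpha)/2}+e^{\tau(\gamma)+\ell_1(\gamma)}}\right),
\]
which is exactly the summand in the corollary. To finish, I identify the summation indices via the bijection $\vec{\mathcal{P}}_\alpha\to\vec{\mathcal{C}}_{1,1}$, $(\beta,\gamma)\mapsto\gamma$: in $S_{1,1}$ a boundary-parallel pair of pants containing $\alpha$ is determined by one oriented simple closed curve, with $\beta$ recovered from the relation $\alpha\beta^{-1}\gamma=1$.

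The main obstacle is the loxodromic-bordered analogue of (b) and (c). In Remark~\ref{remark:sl3s11td} these were unpacked via a limit of $\mathcal{A}$-coordinates, assuming the $\rho$-equivariant map comes from the unique lift in the unipotent setting. In the canonical-lift setting of Definition~\ref{definition:ratioperiod} the bases at $\alpha^-$ are pinned down by eigenvector normalization rather than by a unipotent stabilizer, and one must check that the limiting basis of $\xi_\rho(\alpha^-)$ used in the Dehn-twist computation agrees with this canonical choice; this is routine but is the one place where the loxodromic convention intrudes. Apart from this bookkeeping, the argument is a mechanical specialization of Theorem~\ref{theorem:boundaryith}.
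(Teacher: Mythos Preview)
Your proposal is correct and follows essentially the same approach as the paper: the paper's proof simply asserts that the analogues of Equations~\eqref{equation:tts11} and \eqref{equation:d121s11} hold with $x$ replaced by $\alpha^-$ and then specializes Theorem~\ref{theorem:boundaryith}, which is precisely your steps (b), (c), and the subsequent collapse of $\kappa_1$ and $\phi_1$. Your write-up is more explicit about the bookkeeping (the $K_1=T_{1,1,1}$ reduction, the bijection $\vec{\mathcal{P}}_\alpha\leftrightarrow\vec{\mathcal{C}}_{1,1}$, and the canonical-lift caveat), but there is no substantive difference in method.
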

For $\rho \in \mathrm{Pos}_n^h(S_{1,1})$ with $n\geq 4$, we do not have similar formula as that in Corollary \ref{corollary:S11sl3h}, since we do not have similar formulas as Equations \eqref{equation:tts11}\eqref{equation:d121s11}.

\subsubsection{Newly inserted parameters} 
we briefly study the parameters arising in Theorem \ref{theorem:boundaryith}.
Since $\kappa_i(\beta,\beta_{\alpha^-})$ is the log of a positive rational function of triple ratios (Equation \eqref{equation:kappa}), by Theorem \ref{thm:bounded} we have
\begin{cor}
For $\rho \in \mathrm{Pos}_n(S_{g,m})$, the collection $\{\kappa_i(\beta,\beta_{\alpha^-}),\kappa_i(\gamma,\gamma_{\alpha^-})\}_{(\beta,\gamma)\in \subvec{\mathcal{P}}_\alpha}$ is bounded within some compact interval.
\end{cor}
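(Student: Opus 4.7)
The plan is to interpret $\kappa_i(\delta,\delta_{\alpha^-})=\log K_i(\delta,\delta_{\alpha^-})$, for $\delta\in\{\beta,\gamma\}$, as the logarithm of a positive rational function of finitely many triple ratios of a \emph{single embedded} ideal triangle on $S_{g,m}$, and then to invoke the triple-ratio boundedness Theorem~\ref{thm:bounded} to extract a uniform bound. First, I would read off from Equation~\eqref{equation:kappa} that every triple ratio appearing in $K_i(\delta,\delta_{\alpha^-})$ is of the form $T_{a,b,c}(\alpha^-,\delta(\alpha^-),\delta^+)$ or $T_{a,b,c}(\delta(\alpha^-),\delta^+,\alpha^-)$ with $a+b+c=n$; by the cyclic symmetry of Definition~\ref{definition:tripleratio}, these are all triple ratios of the single marked ideal triangle $\tilde\triangle_{\delta,\delta_{\alpha^-}}$ with vertices $\alpha^-,\delta(\alpha^-),\delta^+$, i.e.\ the triangle from Definition~\ref{definition:Tgammap} and Figure~\ref{fig:hpidealtriangle}.

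The crucial geometric input is then to verify that $\triangle_{\delta,\delta_{\alpha^-}}$ is an embedded ideal triangle on $S_{g,m}$. I expect this to be the main (in fact, essentially the only) obstacle --- not because it is deep, but because it is the one place where the underlying geometry of $S_{g,m}$ enters and requires an explicit inspection of the half-pants decomposition. By construction $\triangle_{\delta,\delta_{\alpha^-}}$ is cut out of the embedded boundary-parallel pair of half-pants $(\delta,\delta_{\alpha^-})\in\xvec{\mathcal{H}}_\alpha$ by the unique simple bi-infinite geodesic that spirals from $\alpha^-$ towards $\delta$; its three ideal edges --- namely the seam $\delta_{\alpha^-}$ together with two parallel spiraling geodesics --- are pairwise disjoint and individually simple on the embedded half-pants, so $\triangle_{\delta,\delta_{\alpha^-}}\in\mathit{Tri}_0(S_{g,m})$, uniformly over $(\beta,\gamma)\in\xvec{\mathcal{P}}_\alpha$.

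With this in hand I would apply Theorem~\ref{thm:bounded} to each of the finitely many triple-ratio functions $T^{\xi_\rho}_{a,b,c}$ with $a+b+c=n$, restricted to $\mathit{Tri}_0(S_{g,m})$, and take the intersection of the resulting compact subintervals of $\mathbb{R}_{>0}$. This produces a single closed interval $[T_{\min},T_{\max}]\subset\mathbb{R}_{>0}$ containing every triple ratio appearing in \eqref{equation:kappa} as $(\beta,\gamma)$ varies over $\xvec{\mathcal{P}}_\alpha$. Since $K_i$ is a positive rational function (a ratio of finite sums of products with positive coefficients) in finitely many variables, all ranging in $[T_{\min},T_{\max}]$, both its numerator and denominator are bounded between positive constants. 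Hence $K_i$ takes values in a compact subset of $\mathbb{R}_{>0}$, and $\kappa_i=\log K_i$ lies in a compact interval of $\mathbb{R}$, as required. The remaining steps after the embeddedness check are finite-dimensional bookkeeping that needs no further analytic input beyond Theorem~\ref{thm:bounded}.
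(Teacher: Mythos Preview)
Your proposal is correct and follows essentially the same approach as the paper: the paper's proof is the single sentence preceding the corollary, noting that $\kappa_i$ is the logarithm of a positive rational function of triple ratios (Equation~\eqref{equation:kappa}) and invoking Theorem~\ref{thm:bounded}. Your write-up simply makes explicit what the paper leaves implicit --- the cyclic reduction of all occurring triple ratios to those of the single marked triangle $(\alpha^-,\delta(\alpha^-),\delta^+)$ and the verification that this triangle is embedded (hence lies in $\mathit{Tri}_0(S_{g,m})$) --- so there is nothing further to add.
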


The function $\phi_i'(\beta,\gamma)$ is similar to $\phi_i(\beta,\gamma)$. Let us consider $\phi_i(\beta,\gamma)=\log \frac{\cosh \frac{e_i(\beta,\gamma)}{2}}{\cosh \frac{d_i(\beta,\gamma)}{2}}$. By Proposition \ref{prop:BC}, we have
\[e_i(\beta,\gamma)=\log\left(\frac{1+\sum_{c=1}^{i-1} \prod_{j=1}^c \frac{1}{ T_{n-i,i-j,j}(\gamma(\alpha^-),\alpha^-,\beta^+)}}{1+\sum_{c=1}^{i-1} \prod_{j=1}^c \frac{1}{ T_{n-i,i-j,j}(\gamma(\alpha^-),\alpha^-,\gamma^+)}} \cdot D_{n-i}(\alpha^-;\gamma(\alpha^-),\beta^+,\gamma^+)\right),\]
and
\[d_i(\beta,\gamma)=\log\left(\frac{1+\sum_{c=1}^{i-1} \prod_{j=1}^c \frac{1}{ T_{n-i,i-j,j}(\alpha^-;\gamma(\alpha^-),\gamma^+)}}{1+\sum_{c=1}^{i-1} \prod_{j=1}^c \frac{1}{ T_{n-i,i-j,j}(\alpha^-,\gamma(\alpha^-),\beta^+)}} \cdot D_i(\alpha^-;\gamma(\alpha^-),\beta^+,\gamma^+)\right).\]
By Theorem \ref{thm:bounded}, the collection 
\begin{small}
\[\left\{\frac{1+\sum_{c=1}^{i-1} \prod_{j=1}^c \frac{1}{ T_{n-i,i-j,j}(\gamma(\alpha^-),\alpha^-,\beta^+)}}{1+\sum_{c=1}^{i-1} \prod_{j=1}^c \frac{1}{ T_{n-i,i-j,j}(\gamma(\alpha^-),\alpha^-,\gamma^+)}},\; \frac{1+\sum_{c=1}^{i-1} \prod_{j=1}^c \frac{1}{ T_{n-i,i-j,j}(\alpha^-;\gamma(\alpha^-),\gamma^+)}}{1+\sum_{c=1}^{i-1} \prod_{j=1}^c \frac{1}{ T_{n-i,i-j,j}(\alpha^-,\gamma(\alpha^-),\beta^+)}}\right\}_{(\beta,\gamma)\in \subvec{\mathcal{P}}_\alpha}\]
\end{small}
is bounded. 
\begin{conj}
\label{conjecture:phii}
For any $\operatorname{PGL}_n(\mathbb{R})$-positive representation $\rho$, the collection $\{\phi_i(\beta,\gamma)\}_{(\beta,\gamma)\in \subvec{\mathcal{P}}_\alpha}$ is bounded within some compact interval.
\end{conj}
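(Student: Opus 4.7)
The plan is to adapt the compactness argument used in Theorem~\ref{thm:bounded} (triple-ratio boundedness). Each boundary-parallel pair of pants $(\beta,\gamma)\in\xvec{\mathcal{P}}_\alpha$ determines an anticlockwise oriented ideal quadrilateral with vertices $(\alpha^-,\gamma(\alpha^-),\beta^+,\gamma^+)\subset\partial_\infty\pi_1(S)$, and both $d_i(\beta,\gamma)$ and $e_i(\beta,\gamma)$ (hence $\phi_i(\beta,\gamma)$) are continuous projective invariants of this $4$-tuple of flags. I would introduce a set $\mathit{Quad}_0(S)$ of non-crossing (``embedded'') ideal quadrilaterals on $S$, in direct analogy with $\mathit{Tri}_0(S)$, and try to show that the map $(\beta,\gamma)\mapsto(\alpha^-,\gamma(\alpha^-),\beta^+,\gamma^+)$ has precompact image, so that continuity of $\phi_i$ forces boundedness.

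The compactness of $\mathit{Quad}_0(S)$ should follow by replicating Proposition~\ref{thm:kcompact}, since being non-crossing is a closed condition on $\partial_\infty\pi_1(S)^4$. Theorem~\ref{thm:bounded} would then control the triple-ratio prefactors of $d_i$ and $e_i$ via Proposition~\ref{prop:BC}, reducing the conjecture to uniformly controlling the edge-function contributions $D_i$ and $D_{n-i}$ at the diagonal $(\alpha^-,\gamma(\alpha^-),\beta^+,\gamma^+)$. Using the elementary asymptotic $2\log\cosh(x/2)=|x|+O(e^{-|x|})$, one obtains $2\phi_i(\beta,\gamma)=|e_i|-|d_i|+O(1)$, so the task reduces to bounding $|\log D_i|-|\log D_{n-i}|$. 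The expected key input is a generalization of the $n=3$ identity $D_1\cdot D_2=\lambda_1(\rho(\beta))/\lambda_1(\rho(\gamma))$ observed in the proof of Theorem~\ref{thm:equsl3pp}, which would express $D_i\cdot D_{n-i}$ in terms of controlled boundary eigenvalues modulo bounded triple ratios.

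The main obstacle is that even granting such an identity for the product $D_i\cdot D_{n-i}$, the individual factors can a priori degenerate as the pair of pants becomes topologically complicated, and one needs $D_i$ and $D_{n-i}$ to blow up or vanish in lockstep. To rule out a mismatch I would revisit the compactness step and extract finer data about the diagonal edge: because the vertices $\gamma(\alpha^-),\beta^+,\gamma^+$ accumulate only on peripheral or attracting/repelling fixed points of $\rho$, any limiting configuration is constrained by the hyperconvexity and $C^1$ Frenet regularity (Definition~\ref{defn:frenet}) of $\xi_\rho$. Combined with the cluster-algebraic positivity of edge functions under flips and the Birman--Series sparsity of~\S\ref{sec:birmanseries}, this should provide enough uniform control to bootstrap the boundedness from the $n=3$ case implicitly handled by Theorem~\ref{thm:equsl3pp} to arbitrary rank; verifying that this bootstrap actually closes is the delicate part of the plan.
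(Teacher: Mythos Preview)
The statement you are attempting to prove is labeled as a \emph{Conjecture} in the paper, and the paper offers no proof of it; on the contrary, it is later invoked only hypothetically (e.g.\ ``The well-definedness of Definition~\ref{defn:pantsgapmetric} requires Conjecture~\ref{conjecture:phii}'' and ``We expect Conjecture~\ref{conjecture:phii} to be true. Provided that this can be demonstrated\ldots''). So there is no paper proof to compare against, and your proposal should be read as an attack on an open problem rather than a reconstruction.

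As an attack, your outline has a genuine gap at exactly the point you flag but do not close. The reason the compactness argument of Theorem~\ref{thm:bounded} succeeds for triple ratios is that $T_{i,j,k}$ is a continuous strictly positive function on $\mathit{Tri}_k(S)$, and $\mathit{Tri}_k(S)$ is compact. Neither ingredient survives for $\phi_i$. First, an analogue $\mathit{Quad}_0(S)$ of non-crossing ideal quadrilaterals is \emph{not} compact: already for closed $S$ the space of ideal quadrilaterals fibers over $\mathit{Tri}(S)\cong T^1S$ with noncompact fiber (an open arc), and the quadrilaterals $(\alpha^-,\gamma(\alpha^-),\beta^+,\gamma^+)$ arising from $(\beta,\gamma)\in\xvec{\mathcal{P}}_\alpha$ genuinely degenerate as the pair of pants becomes long (e.g.\ $\beta^+$ and $\gamma^+$ can accumulate on $\alpha^\pm$ or on each other). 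Second, and more fatally, the edge functions $D_i$ are not bounded away from $0$ and $\infty$ on any set whose closure contains such degenerations: when two adjacent vertices of the quadrilateral collide, $D_i$ typically blows up or vanishes. So even a correct compactification of $\mathit{Quad}_0(S)$ would not let you conclude boundedness of $\phi_i$ by continuity.

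Your reduction $2\phi_i=|e_i|-|d_i|+O(1)$ is correct and clarifying, and controlling the \emph{product} $D_i\cdot D_{n-i}$ via eigenvalue identities (as in the $n=3$ computation in the proof of Theorem~\ref{thm:equsl3pp}) is a reasonable first step. But the remaining task---showing that $D_i$ and $D_{n-i}$ degenerate at comparable rates so that $|\log D_i|-|\log D_{n-i}|$ stays bounded---is the whole difficulty, and your appeal to Frenet regularity, cluster positivity, and Birman--Series sparsity is not a mechanism for proving this. None of those inputs gives quantitative two-sided control on individual edge functions along a degenerating family of quadrilaterals; they were used in the paper for qualitatively different purposes (measure-zero statements, positivity of coordinates, $C^1$ regularity of the limit curve). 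In short, your plan correctly identifies the obstruction but does not supply the missing estimate, which is precisely why the authors left this as a conjecture.
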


%\begin{rmk}
%\label{remark:3phib}
%For $n=3$, in \cite[Theorem~1.2]{kim2018degeneration}, Kim claimed that given an ideal quadrilateral $(\alpha^-,\gamma \alpha^-, \beta^+,\gamma^+)$ embedded in a pair of pants $(\beta,\gamma)$, under the deformation of the representation, when the bulging invariant
%\[B(\beta,\gamma):=|\log (-D_2(\alpha^-,\gamma \alpha^-,\beta^+,\gamma^+))-\log (-D_1(\alpha^-,\gamma \alpha^-,\beta^+,\gamma^+))|\] 
%diverges, the Hilbert area of the ideal quadrilateral for the $\operatorname{PGL}_3(\mathbb{R})$-positive representation $\rho$ goes to infinity. If the above claim is valid, we can deduce that the collection $\{B(\beta,\gamma)\}_{(\beta,\gamma)\in \subvec{\mathcal{P}}_\alpha}$ is bounded within some compact interval, then the collection $\{\phi_1(\beta,\gamma)\}_{(\beta,\gamma)\in \subvec{\mathcal{P}}_\alpha}$ is bounded within some compact interval.
%\end{rmk}

\begin{rmk}
For each boundary-parallel pair of pants $(\beta,\gamma)\in \xvec{\mathcal{P}}_\alpha$, comparing the corresponding gap function 
\[\mathcal{D}(\ell_i(\alpha), \phi_i(\beta,\gamma)+\kappa_i(\beta,\beta_{\alpha^-})+\ell_i(\beta), \phi_i(\beta,\gamma)+\kappa_i(\gamma,\gamma_{\alpha^-})+\ell_i(\gamma))\]
in Theorem \ref{theorem:boundaryith} with the gap function $\mathcal{D}(\ell(\alpha),\ell(\beta),\ell(\gamma))$ in hyperbolic case, 
\[\kappa_i(\beta,\beta_{\alpha^-}),\;\;\;\kappa_i(\gamma,\gamma_{\alpha^-}),\;\;\;\phi_i(\beta,\gamma)\]
are the newly inserted parameters. In $n$-Fuchsian case,
\begin{enumerate}
\item all the triple ratios are $1$, by Equation \eqref{equation:kappa}, for $i=1,\cdots,n-1$,
\[\kappa_i(\beta,\beta_{\alpha^-})=\kappa_i(\gamma,\gamma_{\alpha^-})=0;\] 
\item and for any $i,j=1,\cdots,n-1$,
\[D_i(\alpha^-;\gamma(\alpha^-),\beta^+,\gamma^+)=D_j(\alpha^-;\gamma(\alpha^-),\beta^+,\gamma^+),\]
thus 
\[\phi_i(\beta,\gamma)=\phi_i'(\beta,\gamma)=0;\]
\item and for any $\delta\in\pi_1(S)$ and any $i,j=1,\cdots,n-1$,
\[\ell_i(\delta)=\ell_j(\delta).\]
\end{enumerate}
 Hence the $(n-1)$ McShane identities in that case are all the same as the Mizakhani's generalized McShane identity \cite[Theorem 4.2]{mirz_simp} after rearrangement as remarks in \cite[Theorem 4.1.2.1]{LM09}. 
\end{rmk}

Using Corollary \ref{corollary:LM-HS} and Theorem \ref{theorem:boundaryith}, we have the following corollary.
\begin{cor}
The Labourie--McShane's identities in \cite[Section 10]{LM09} can be written as summation of regular expressions of the Fock-Goncharov coordinates, where the regular expressions are the gap functions in Theorem \ref{theorem:boundaryith} and the logs of certain rational functions of triple ratios. 
\end{cor}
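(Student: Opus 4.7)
The plan is to apply Corollary~\ref{corollary:LM-HS} summand-by-summand to the Labourie--McShane identity (Theorem~\ref{theorem:labmc}) and then invoke Theorem~\ref{theorem:boundaryith} to express each resulting piece as a regular function of the Fock--Goncharov coordinates. First, I would take the logarithm of the identity in Corollary~\ref{corollary:LM-HS}, giving, for any admissible $4$-tuple $(x,y,z,t)\in\partial_\infty\pi_1(S_{g,m})^{4**}$,
\begin{align*}
\log \mathbb{B}_1^\rho(x,y,z,t) = \sum_{i=1}^{n-1}\log B_i(x;y,z,t) + \sum_{i=1}^{n-1}\log R_i(x,y,z,t),
\end{align*}
where $R_i$ denotes the explicit ratio
\begin{align*}
R_i(x,y,z,t) := \frac{1+\sum_{c=1}^{i-1}\prod_{j=1}^c T_{n-i,i-j,j}(x,y,z)^{-1}}{1+\sum_{c=1}^{i-1}\prod_{j=1}^c T_{n-i,i-j,j}(x,y,t)^{-1}}
\end{align*}
of positive rational expressions in triple ratios.

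Next, I would substitute this decomposition into each of the two sums appearing in the Labourie--McShane identity with $\mathbb{B}^\rho = \mathbb{B}_1^\rho$, applied at the quadruples $(\alpha^-,\alpha^+,\gamma^+,\beta^+)$ for $(\beta,\gamma)\in\xvec{\mathcal{P}}_\alpha$ and $(\alpha^-,\alpha^+,\gamma^-,\gamma^+)$ for $(\beta,\gamma)\in\xvec{\mathcal{P}}^\partial_\alpha$. By Theorem~\ref{theorem:boundaryith}, for each $i=1,\ldots,n-1$, the sum $\sum_{(\beta,\gamma)\in\xvec{\mathcal{P}}_\alpha}\log B_i(\alpha^-;\alpha^+,\gamma^+,\beta^+)$ together with its $\xvec{\mathcal{P}}^\partial_\alpha$-counterpart equals $\ell_i(\alpha)$ and each summand is exactly the closed-form $\mathcal{D}$-type gap function expressed in $i$-th lengths, triangle invariants ($\tau, \kappa_i$) and edge-function derivatives ($\phi_i,\phi_i'$) --- all regular in the Fock--Goncharov coordinates. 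The remaining triple-ratio correction contributes
\begin{align*}
\sum_{(\beta,\gamma)\in\xvec{\mathcal{P}}_\alpha}\sum_{i=1}^{n-1}\log R_i(\alpha^-,\alpha^+,\gamma^+,\beta^+) + \sum_{(\beta,\gamma)\in\xvec{\mathcal{P}}^\partial_\alpha}\sum_{i=1}^{n-1}\log R_i(\alpha^-,\alpha^+,\gamma^-,\gamma^+),
\end{align*}
which is the advertised sum of logs of positive rational functions of triple ratios. Matching constants on both sides via $\ell^{\mathbb{B}_1^\rho}(\alpha)=\sum_{i=1}^{n-1}\ell_i(\alpha)$ (a direct consequence of Corollary~\ref{cor:iratiocycle} together with the definition of the period of $\mathbb{B}_1^\rho$) then recovers the full Labourie--McShane identity as a sum of these two categories of regular terms.

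The only genuinely nontrivial point is to justify the interchange of the finite $i$-sum with the (absolutely convergent) series over $\xvec{\mathcal{P}}_\alpha \cup \xvec{\mathcal{P}}^\partial_\alpha$. I would handle this by first observing that for $(\beta,\gamma)$ with $\ell_1(\beta)+\ell_1(\gamma)$ large, both $\log B_i$ and $\log R_i$ decay exponentially (the former by the $\mathcal{D}$-gap estimates of Theorem~\ref{theorem:boundaryith} combined with the triple-ratio boundedness Theorem~\ref{thm:bounded}, which uniformly controls the $\kappa_i$ and $R_i$ terms; the latter because the $R_i$ factors are bounded between two positive constants). Thus each of the $2(n-1)$ separated series converges absolutely and Fubini allows the rearrangement. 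The remaining identities for Labourie--McShane's other explicit rank $n$ weak cross ratio identities in \cite[Section~10]{LM09} follow by the same template: apply Corollary~\ref{corollary:LM-HS} (or its analog dualizing $1 \leftrightarrow n-1$), split into $B_i$-terms and triple-ratio corrections, and invoke Theorem~\ref{theorem:boundaryith}. I expect the main obstacle to be bookkeeping for the $\xvec{\mathcal{P}}^\partial_\alpha$ contributions, where the third argument $\gamma^-$ rather than $\gamma^+$ changes the explicit form of $R_i$ and requires the primed gap functions $\phi_i'$, $e_i'$, $d_i'$ of Theorem~\ref{theorem:boundaryith}.
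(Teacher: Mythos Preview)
Your core approach is correct and matches the paper's one-line indication (``Using Corollary~\ref{corollary:LM-HS} and Theorem~\ref{theorem:boundaryith}''): apply Corollary~\ref{corollary:LM-HS} to each Labourie--McShane summand to split $\log\mathbb{B}_1^\rho$ into $\sum_i\log B_i$ plus the triple-ratio correction $\sum_i\log R_i$, then cite the proof of Theorem~\ref{theorem:boundaryith} (specifically Equation~\eqref{equation:mcloxo1} and its $\xvec{\mathcal{P}}^\partial_\alpha$ analogue) to identify each $\log B_i(\alpha^-;\alpha^+,\gamma^+,\beta^+)$ with the explicit $\mathcal{D}$-type gap function. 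That already proves the corollary, which is a \emph{termwise} rewriting of each summand.

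Your Fubini paragraph is both unnecessary and, as written, incorrect. The corollary does not assert that the triple-ratio correction series $\sum_{(\beta,\gamma)}\sum_i\log R_i$ converges on its own; it only asserts that each summand of the Labourie--McShane identity decomposes into the two advertised kinds of regular expressions. More importantly, your stated reason that $\log R_i$ decays exponentially --- ``because the $R_i$ factors are bounded between two positive constants'' --- is a non sequitur: boundedness of $R_i$ away from $0$ and $\infty$ gives only that $\log R_i$ is bounded, not that it tends to zero. (If you did want to split the series, you would need to argue that $\beta^+$ and $\gamma^+$ coalesce as $\ell_i(\beta)+\ell_i(\gamma)\to\infty$, forcing $R_i\to 1$ with a quantitative rate; Theorem~\ref{thm:bounded} alone does not give this.) Simply drop the interchange-of-sums discussion and the proof is complete.
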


\textbf{Boundary-parallel pairs of half-pants summation}

\begin{defn}
\label{definition:Ri}
Set up as in Theorem \ref{theorem:boundaryith}.
For $\delta \in \{\beta,\beta^{-1}, \gamma, \gamma^{-1}\}$, we define 
\begin{align*}
r_i(\delta,\delta_{\alpha^-})  :=\frac{\log B_i(\alpha^-;\alpha^+,\delta(\alpha^-), \delta^{-1}(\alpha^-)) }{\ell_i(\alpha)}.
\end{align*}
Then for $\delta\in \{\beta,\gamma^{-1}\}$, we have 
\begin{align*}
r_i(\delta,\delta_{\alpha^-})=-r_i(\delta^{-1}, \delta^{-1}_{\alpha^-})>0,
\end{align*} 
and 
\begin{align*}
r_i(\beta,\beta_{\alpha^-})+r_i(\gamma^{-1},\gamma^{-1}_{\alpha^-})=1.
\end{align*} 
In the case $S_{g,m}=S_{1,1}$, for $\mu \in \xvec{\mathcal{H}}_\alpha$, we denote $r_i(\mu)$ instead.
\end{defn}
Later, we will relate $r_i(\delta,\delta_{\alpha^-})$ to the $i$-th half-pants ratio in Definition \ref{definition:hpr}.

\begin{lem}
\label{lem:equb}
Let $B_i$ be the $i$-th potential ratio for $\rho \in \mathrm{Pos}_n^h(S_{g,m})$. Let $a,b,c,d,e$ be five distinct points in $\partial_\infty \pi_1(S_{g,m})$. Then
\begin{align*}
B_i\left(a;b,c,d\right) = \frac{B_i(a;b,c,e)- B_i(a;d,c,e)}{1 - B_i(a;d,c,e)}.
\end{align*}
\end{lem}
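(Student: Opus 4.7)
\textbf{Proof plan for Lemma \ref{lem:equb}.} The plan is to reduce the claim to a purely algebraic identity in the $i$-th characters $P_i$ by unpacking the definition $B_i(x;y,z,t)=P_i(x;y,t)/P_i(x;y,z)$ from Definition \ref{defn:ratio}, and then to verify that identity using only the two additivity relations from Definition \ref{definition:ichar}:
\begin{align*}
P_i(a;u,v)+P_i(a;v,w)=P_i(a;u,w),\qquad P_i(a;u,v)=-P_i(a;v,u).
\end{align*}

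First, I would substitute the definitions into the right hand side to obtain
\begin{align*}
\frac{B_i(a;b,c,e)-B_i(a;d,c,e)}{1-B_i(a;d,c,e)}
=\frac{\dfrac{P_i(a;b,e)}{P_i(a;b,c)}-\dfrac{P_i(a;d,e)}{P_i(a;d,c)}}{1-\dfrac{P_i(a;d,e)}{P_i(a;d,c)}}
=\frac{P_i(a;b,e)\,P_i(a;d,c)-P_i(a;d,e)\,P_i(a;b,c)}{P_i(a;b,c)\,\bigl(P_i(a;d,c)-P_i(a;d,e)\bigr)}.
\end{align*}
By additivity applied to $P_i(a;d,c)-P_i(a;d,e)$, the denominator simplifies to $P_i(a;b,c)\cdot P_i(a;e,c)$. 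Hence the claim becomes the single polynomial identity
\begin{align*}
P_i(a;b,e)\,P_i(a;d,c)-P_i(a;d,e)\,P_i(a;b,c)=P_i(a;b,d)\,P_i(a;e,c).
\end{align*}

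The last step is to verify this identity. I would rewrite $P_i(a;d,c)=P_i(a;e,c)+P_i(a;d,e)$ and $P_i(a;b,e)-P_i(a;b,c)=P_i(a;c,e)=-P_i(a;e,c)$ using additivity and antisymmetry; substituting these into the left hand side collapses the expression to $\bigl(P_i(a;b,e)-P_i(a;d,e)\bigr)\cdot P_i(a;e,c)=P_i(a;b,d)\cdot P_i(a;e,c)$, as required. There is no serious obstacle; the only thing to be careful about is sign bookkeeping under the antisymmetry relation when clearing the common denominator. Once the $P_i$-identity is established, dividing by $P_i(a;b,c)\cdot P_i(a;e,c)$ recovers $B_i(a;b,c,d)=P_i(a;b,d)/P_i(a;b,c)$ and completes the proof.
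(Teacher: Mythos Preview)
Your proposal is correct and follows essentially the same approach as the paper: both arguments unfold $B_i$ in terms of $P_i$ and reduce the claim to a polynomial identity in the $i$-th characters, which is then verified using only the additivity and antisymmetry relations from Definition~\ref{definition:ichar}. The only cosmetic difference is the order of operations---you simplify the denominator $1-B_i(a;d,c,e)$ first and isolate a single numerator identity, whereas the paper computes the numerator $B_i(a;b,c,e)-B_i(a;d,c,e)$ separately and then divides---but the algebraic content is identical.
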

\begin{proof}
Using additivity of the $i$-th character, we have
\begin{align*}
\begin{aligned}
&B_i(a;b,c,e)- B_i(a;d,c,e)=\frac{P_i(a;b,e)}{P_i(a;b,c)}- \frac{P_i(a;d,e)}{P_i(a;d,c)}
\\&=\frac{P(a;b,e)\cdot P(a;d,c)-P(a;d,e)\cdot P(a;b,c)}{P(a;b,c) \cdot P(a;d,c)}
\\&=\frac{(P(a;b,c)+P(a;c,e))\cdot (P(a;d,e)+P(a;e,c))-P(a;d,e)\cdot P(a;b,c)}{P(a;b,c)\cdot P(a;d,c)}
\\&=\frac{P(a;b,e)\cdot P(a;e,c)+ P(a;c,e)\cdot P(a;d,e)}{P(a;b,c)\cdot P(a;d,c)}
\\&=\frac{P(a;b,d)\cdot P(a;e,c)}{P(a;b,c)\cdot P(a;d,c)}.
\end{aligned}
\end{align*}
Thus 
\begin{equation*}
\begin{aligned}
&\frac{B_i(a;b,c,e)- B_i(a;d,c,e)}{1 - B_i(a;d,c,e)}
= \frac{P(a;b,d)\cdot P(a;e,c)}{P(a;b,c)\cdot P(a;d,c)} \big/\frac{P(a;e,c)}{P(a;d,c)}
=\frac{P(a;b,d)}{P(a;b,c)}
\\&=B_i(a;,b,c,d).
\end{aligned}
\end{equation*}

\end{proof}

\begin{thm}[Boundary-parallel pairs of Half-pants summation]
\label{thm:equgeo}
For a $\operatorname{PGL}_n(\mathbb{R})$-positive representation $\rho \in \mathrm{Pos}_n^h(S_{g,m})$ with loxodromic boundary monodromy, let $(\rho,\xi) \in \mathcal{X}_n(S_{g,m})$ be the canonical lift (Definition \ref{definition:ratioperiod}) of $\rho$ which induces the $i$-th potential ratio $B_i$. Let $\alpha$ be a distinguished oriented boundary component of $S_{g,m}$ such that $S_{g,m}$ is on the left side of $\alpha$. For $i=1,\cdots,n-1$, we have the equality:
\begin{align}
\begin{aligned}
&\ell_i(\alpha)
=\sum_{(\delta,\delta_{\alpha^-})\in \subvec{\mathcal{H}}_\alpha}
 \left|\log \frac{e^{r_i(\delta,\delta_{\alpha^-}) \cdot \ell_i(\alpha)}+e^{\kappa_i(\delta,\delta_{\alpha^-})+  \ell_i(\delta)}}{1 +e^{\kappa_i(\delta,\delta_{\alpha^-})+  \ell_i(\delta)}}\right|
\\&  + \sum_{(\beta,\gamma)\in \subvec{\mathcal{H}}^{\partial}_\alpha}
\log 
\left(B_i(\alpha^-;\alpha^+,\gamma^-,\gamma^+)\right),
\end{aligned}
\end{align}
where $\xvec{\mathcal{H}}_\alpha$ is the set of the homotopy classes of boundary-parallel pairs of half-pants, and $\xvec{\mathcal{H}}^{\partial}_\alpha$ is a subset of $\xvec{\mathcal{H}}_\alpha$ containing another boundary component of $S_{g,m}$ as in Definition \ref{definition:hparital}. Recall $r_i(\delta,\delta_{\alpha^-})$ in Definition \ref{definition:Ri}. For each pair of half-pants, we fix a marking as in Figure \ref{Figure:pti}.
\end{thm}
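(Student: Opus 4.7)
\textbf{Proof proposal for Theorem~\ref{thm:equgeo}.}
The plan is to begin from the half-pants form of the identity already established as the second equality in Theorem~\ref{thm:boundary}, namely
\begin{align*}
\ell^{B^\rho}(\alpha)=\sum_{(\delta,\delta_{\alpha^-})\in \subvec{\mathcal{H}}_\alpha}
\left|\log B^\rho(\alpha^-;\alpha^+,\delta(\alpha^-),\delta^+)\right|
+\sum_{(\gamma,\gamma_{\alpha^-})\in\subvec{\mathcal{H}}^\partial_\alpha}\log B^\rho(\alpha^-;\alpha^+,\gamma^-,\gamma^+),
\end{align*}
specialised to the ordered ratio $B^\rho=B_i$ of Definition~\ref{defn:ratio}. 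By Corollary~\ref{cor:iratiocycle} the period on the left becomes $\ell_i(\alpha)$, and the $\xvec{\mathcal{H}}^\partial_\alpha$ summand is already in the form displayed in the theorem. It therefore suffices to convert each generic half-pants summand $|\log B_i(\alpha^-;\alpha^+,\delta(\alpha^-),\delta^+)|$ into a closed expression in $r_i(\delta,\delta_{\alpha^-})$, $\kappa_i(\delta,\delta_{\alpha^-})$ and $\ell_i(\delta)$.

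The main step is a single algebraic manipulation: I apply Lemma~\ref{lem:equb} with
\[
(a,b,c,d,e)=\bigl(\alpha^-,\,\alpha^+,\,\delta(\alpha^-),\,\delta^+,\,\delta^{-1}(\alpha^-)\bigr)
\]
to obtain
\begin{align*}
B_i(\alpha^-;\alpha^+,\delta(\alpha^-),\delta^+)=
\frac{B_i(\alpha^-;\alpha^+,\delta(\alpha^-),\delta^{-1}(\alpha^-))-B_i(\alpha^-;\delta^+,\delta(\alpha^-),\delta^{-1}(\alpha^-))}{1-B_i(\alpha^-;\delta^+,\delta(\alpha^-),\delta^{-1}(\alpha^-))}.
\end{align*}
The first numerator term is $e^{r_i(\delta,\delta_{\alpha^-})\ell_i(\alpha)}$ by Definition~\ref{definition:Ri}. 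For the second ratio I first rewrite it as the quotient of $i$-th characters
\[
\frac{P_i(\alpha^-;\delta^+,\delta^{-1}(\alpha^-))}{P_i(\alpha^-;\delta^+,\delta(\alpha^-))}=-\,\frac{P_i(\alpha^-;\delta^+,\delta^{-1}(\alpha^-))}{P_i(\alpha^-;\delta(\alpha^-),\delta^+)}
\]
using the antisymmetry of $P_i$ from Definition~\ref{definition:ichar}, and then apply Proposition~\ref{prop:ratioP} with $x=\alpha^-$ to recognise the right-hand side as $-e^{\kappa_i(\delta,\delta_{\alpha^-})+\ell_i(\delta)}$. Substituting both evaluations back yields
\begin{align*}
B_i(\alpha^-;\alpha^+,\delta(\alpha^-),\delta^+)=\frac{e^{r_i(\delta,\delta_{\alpha^-})\ell_i(\alpha)}+e^{\kappa_i(\delta,\delta_{\alpha^-})+\ell_i(\delta)}}{1+e^{\kappa_i(\delta,\delta_{\alpha^-})+\ell_i(\delta)}},
\end{align*}
which is precisely (the exponential of) the summand claimed in Theorem~\ref{thm:equgeo}; the absolute value in the identity is inherited from the absolute value in Theorem~\ref{thm:boundary} and reflects that $r_i(\delta,\delta_{\alpha^-})$ has opposite signs for the two half-pants comprising a pair of pants, as recorded in Definition~\ref{definition:Ri}.

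The calculations are essentially forced once Lemma~\ref{lem:equb} and Proposition~\ref{prop:ratioP} are combined; the minor obstacle is bookkeeping. One must verify that the five points $\alpha^-,\alpha^+,\delta(\alpha^-),\delta^+,\delta^{-1}(\alpha^-)$ are pairwise distinct (so that Lemma~\ref{lem:equb} applies and Proposition~\ref{prop:ratioP} is valid), which follows because $\delta\in\pi_1(S_{g,m})$ is non-peripheral relative to $\alpha$ and hence has axis disjoint from $\alpha^\pm$ by positivity (Theorem~\ref{theorem:loxo}); and one must track the sign twist coming from the antisymmetry $P_i(f;g,h)=-P_i(f;h,g)$, which is what produces the plus signs (rather than minus signs) inside the final exponential expression. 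Once these points are checked, summing the resulting expressions over $\xvec{\mathcal{H}}_\alpha\cup\xvec{\mathcal{H}}^\partial_\alpha$ gives the stated identity.
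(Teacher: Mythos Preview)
Your proposal is correct and follows essentially the same route as the paper: apply Lemma~\ref{lem:equb} with $(a,b,c,d,e)=(\alpha^-,\alpha^+,\delta(\alpha^-),\delta^+,\delta^{-1}(\alpha^-))$, identify the two pieces via Definition~\ref{definition:Ri} and Proposition~\ref{prop:ratioP}, and then invoke the half-pants form of Theorem~\ref{thm:boundary} together with Corollary~\ref{cor:iratiocycle}. Your write-up is in fact slightly more explicit than the paper's in tracking the antisymmetry sign from $P_i(f;g,h)=-P_i(f;h,g)$ and in noting why the five points are distinct, but these are elaborations of the same argument rather than a different approach.
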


\begin{proof}
For any $(\delta,\delta_{\alpha^-})\in \xvec{\mathcal{H}}_\alpha$, by Lemma \ref{lem:equb}, we have
\begin{align*}
 B_i\left(\alpha^-;\alpha^+,\delta(\alpha^-),\delta^+\right)=\frac{B_i(\alpha^-;\alpha^+,\delta(\alpha^-), \delta^{-1}(\alpha^-))-B_i\left(\alpha^-;\delta^+,\delta(\alpha^-),\delta^{-1}(\alpha^-)\right)}{1-B_i\left(\alpha^-;\delta^+,\delta(\alpha^-),\delta^{-1}(\alpha^-)\right)}.
\end{align*}

Then by definition of $r_i(\delta,\delta_{\alpha^-})$ and Proposition \ref{prop:ratioP}, we obtain
\begin{align*}
\left|\log B_i\left(\alpha^-;\alpha^+,\delta(\alpha^-),\delta^+\right)\right| =\left|\log \frac{e^{r_i(\delta,\delta_{\alpha^-}) \cdot \ell_i(\alpha)}+e^{\kappa_i(\delta,\delta_{\alpha^-})+  \ell_i(\delta)}}{1 +e^{\kappa_i(\delta,\delta_{\alpha^-})+  \ell_i(\delta)}}\right|.
\end{align*}

We conclude our theorem after Theorem~\ref{thm:boundary}.
\end{proof}

\subsection{McShane-type inequalities for unipotent-bordered positive representations}

We in fact have two strategies for deriving McShane-type inequalities for unipotent-bordered positive representations. The first is to follow the Goncharov--Shen potential splitting idea we employed in Section~\ref{sec:splitting}. The second is to take the loxodromic-bordered identities we just obtained and to consider them under deformation to the unipotent-bordered locus in the representation variety. We choose to illustrate the second strategy which may work for a general positive representation with certain boundary simple root length zero; the necessary ingredients for computing via the first strategy is nevertheless contained in what follows.

\begin{thm}[McShane-type inequality for unipotent-bordered positive representations]
\label{theorem:puncturecase}
Consider a $\operatorname{PGL}_n(\mathbb{R})$-positive representation $\rho$ with unipotent boundary monodromy and let $p\in m_p$ be a distinguished puncture/cusp on $S_{g,m}$. Then, for $i=1,\cdots, n-1$, we have 
\begin{align}
\begin{aligned}
\label{equation:unipps}
&\sum_{(\beta,\gamma)\in \subvec{\mathcal{P}}_p}
 \frac{1}{1 +e^{\phi_i(\beta,\gamma)}  \cdot e^{\frac{1}{2}\left(\kappa_i(\gamma,\gamma_{p})+\ell_i(\gamma)+\kappa_i(\beta,\beta_{p})+\ell_i(\beta)\right)}}
\leq 1,
\end{aligned}
\end{align}
\begin{align}
\begin{aligned}
\label{equation:unihps}
&\sum_{(\delta,\delta_p)\in \subvec{\mathcal{H}}_p}
\frac{B_i(\delta,\delta_p)}{1+e^{\kappa_i(\delta,\delta_p)+\ell_i(\delta)}}
\leq 1,
\end{aligned}
\end{align}
where $\xvec{\mathcal{P}}_p$ is the set of boundary-parallel pairs of pants containing $p$ in Definition~\ref{definition:Pp} and $\xvec{\mathcal{H}}_p$ is the set of the homotopy classes of boundary-parallel pairs of half-pants containing $p$ in Definition \ref{defn:hp}. 
For any pair of pants or pair of half-pants, we fix a marking as in Figure \ref{Figure:pti} taking $\alpha^-$ as a lift of the cusp $p$.
Recall $\phi_i(\beta,\gamma)$ defined in Theorem \ref{theorem:boundaryith}, $\kappa_i(\delta,\delta_p)$ defined in Proposition \ref{prop:ratioP} and $B_i(\delta,\delta_p)$ is the $i$-th half-pants ratio in Definition \ref{definition:hpr}.
\end{thm}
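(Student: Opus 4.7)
The plan is to derive both inequalities \eqref{equation:unipps} and \eqref{equation:unihps} as limits of the loxodromic-bordered identities (Theorems \ref{theorem:boundaryith} and \ref{thm:equgeo}), using Fatou's lemma to pass the limit inside the (infinite) sum. Given a unipotent-bordered $\rho \in \mathrm{Pos}^u_n(S_{g,m})$, I would begin by choosing a smooth $1$-parameter deformation $\{\rho_t\}_{t\in(0,1]} \subset \mathrm{Pos}^h_n(S_{g,m})$ with $\rho_t \to \rho$ as $t \to 0$, such that the distinguished cusp $p$ of $\rho$ opens into a loxodromic boundary component $\alpha_t$ of $\rho_t$ with $\ell_i(\alpha_t) > 0$ and $\ell_i(\alpha_t) \to 0$. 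Such a deformation exists because the positive representation variety is smooth and $\mathrm{Pos}^u_n(S_{g,m})$ sits inside the closure of $\mathrm{Pos}^h_n(S_{g,m})$; continuity of Fock--Goncharov coordinates and of the equivariant map $\xi_{\rho_t}$ (via \cite[Theorem 1.14]{FG06}) ensures that all projective invariants $\phi_i(\beta,\gamma), \kappa_i(\beta,\beta_p), \ell_i(\beta), \ldots$ vary continuously in $t$ and limit to the corresponding invariants for $\rho$.

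For each $t > 0$, Theorem \ref{theorem:boundaryith} gives the McShane identity for $\rho_t$. Dividing both sides by $\ell_i(\alpha_t) > 0$ produces a non-negative sum equal to $1$. A direct Taylor expansion at $x = 0$ yields
\[
\lim_{x \to 0^+} \frac{\mathcal{D}(x, y, z)}{x} = \frac{1}{1 + e^{(y+z)/2}},
\]
so every summand indexed by a fixed $(\beta, \gamma) \in \xvec{\mathcal{P}}_p$ converges termwise to the corresponding summand appearing in \eqref{equation:unipps}. Applying Fatou's lemma to the counting measure on $\xvec{\mathcal{P}}_p$, and using non-negativity of each term, gives
\[
\sum_{(\beta,\gamma) \in \subvec{\mathcal{P}}_p} \lim_{t \to 0} \frac{\mathcal{D}(\ell_i(\alpha_t),\ldots)}{\ell_i(\alpha_t)} \ \leq\ \liminf_{t \to 0} \sum_{(\beta,\gamma) \in \subvec{\mathcal{P}}_{\alpha_t}} \frac{\mathcal{D}(\ell_i(\alpha_t),\ldots)}{\ell_i(\alpha_t)} \ \leq\ 1,
\]
which is exactly \eqref{equation:unipps}. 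The half-pants inequality \eqref{equation:unihps} follows by the same recipe starting from Theorem \ref{thm:equgeo}: using
\[
\lim_{x \to 0^+} \frac{1}{x} \log\frac{e^{rx} + K}{1 + K} = \frac{r}{1+K},
\]
and showing that $r_i(\delta, \delta_{\alpha^-_t}) \to B_i(\delta, \delta_p)$ as $t \to 0$ (which follows from the definition of $r_i$ in Definition~\ref{definition:Ri} and l'H\^opital applied at the merging fixed points $\alpha_t^\pm \to p$).

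The main obstacle is treating the extra $\xvec{\mathcal{P}}^{\partial}_{\alpha_t}$ sum in the loxodromic identity, which does not appear in the unipotent statement. These pairs of pants have a cuff $\gamma_t$ that is itself peripheral and hence unipotent (with repelling and attracting fixed points coinciding in the limit). The corresponding summand $\mathcal{D}(\cdots) - \mathcal{D}(\cdots)$ in Theorem \ref{theorem:boundaryith} must be shown to be non-negative along the deformation (so Fatou's inequality remains valid when we discard it in the limit), and must be shown to either vanish or to coincide with the corresponding $\xvec{\mathcal{P}}_p$ summand as $\gamma_t$ collapses to a cusp; this follows from the positivity of ordered ratios (Proposition~\ref{prop:BC}) combined with the cocycle identity, but it is the most delicate bookkeeping step. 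The conjectural upgrade to an equality (Theorem~\ref{theorem:equuni}) would require proving that no mass escapes to infinity in the series, which is where a Birman--Series-type geodesic sparsity result for arbitrary $n$ would be needed and is precisely what we cannot yet establish beyond the $n = 3$ case handled in Section~\ref{sec:sparsity}.
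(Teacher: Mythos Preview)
Your approach is essentially the paper's: choose an analytic path $l$ from loxodromic-bordered to the given unipotent-bordered $\rho$ (the paper's Definition~\ref{definition:hypara}), divide the identities of Theorems~\ref{theorem:boundaryith} and \ref{thm:equgeo} by $\ell_i(\alpha)$, compute termwise limits, and use positivity (equivalently Fatou for counting measure) to obtain the inequality. The one place where you are vaguer than the paper is the $\xvec{\mathcal{P}}^\partial_\alpha$ summands: they do not ``coincide'' with anything but simply vanish in the limit, and the paper isolates this as Lemma~\ref{lem:Sa} via the direct computation $P_i(\alpha^-;\gamma^-,\gamma^+)/P_i(\alpha^-;\gamma^-,\alpha^{-1}(\gamma^-))\to 0$ since the other peripheral cuff $\gamma$ also becomes unipotent and $\gamma^+\to\gamma^-$; your non-negativity concern is automatic because the summand is $\log B_i(\alpha^-;\alpha^+,\gamma^-,\gamma^+)$ with the four points cyclically ordered.
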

\begin{rmk}
As in the proof of Theorem \ref{thm:boundary}, the reason that we can establish inequality is that we can split using positivity, but this does not ensure the Cantor set introduced in Section \ref{sec:proofideasummary} with respect to the ($i$-th) Goncharov--Shen measure has measure zero.
\end{rmk}

\begin{defn}[Path $l$]
\label{definition:hypara}
For $(\rho,\xi) \in \mathcal{X}_n(S_{g,m})$ with (purely) loxodromic-bordered monodromy representation $\rho$, we choose an analytic path $l$ in $\mathcal{X}_n(S_{g,m})$ satisfying the following conditions:
\begin{enumerate}
\item $l(0) =(\rho_0,\xi_0)$;
\item every element of $l([0,1)) \subset \mathcal{X}_n(S_{g,m})$ has loxodromic monodromy around all of its boundary components;
\item $l(1)=(\rho,\xi) \in \mathcal{X}_n(S_{g,m})$ where $\rho\in \mathrm{Pos}_n^u(S_{g,m})$ is the positive representation with unipotent boundary monodromy as in Theorem \ref{theorem:puncturecase}.
\end{enumerate}
We denote the limit of a function $f$ on $\mathcal{X}_n(S_{g,m})$ under a sequence of elements in $l([0,1))$ that converges to $l(1)$ by {\em $\lim_{\text{hyp$\to$para}} f$}. Under the sequence, the $i$-th length $\ell_i$ of each oriented boundary component converges to $0$ for $i=1,\cdots,n-1$. Geometrically speaking, this is tantamount to the boundary $\alpha$ of $S_{g,m}$ deforms to a cusp $p$.
\end{defn}

\begin{lem}
\label{lem:Sa}
Consider a path $l$ as in Definition \ref{definition:hypara} and the second summation in Theorem \ref{thm:boundary} (or Theorem \ref{theorem:boundaryith}). For any pair of half-pants $\mu \in \xvec{\mathcal{H}}^{\partial}_{\alpha}$ with its cuff a boundary component $\gamma$, as $\gamma$ deforms to a unipotent boundary, we have:
\begin{align*}
\lim_{\text{hyp$\to$para}} \frac{ \log B_i\left(\alpha^-;\alpha^+,\gamma^-,\gamma^+\right)
}{ \ell_i(\alpha)}=0.
\end{align*}
\end{lem}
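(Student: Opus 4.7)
My approach exploits the fact that $\log B_i(\alpha^-;\alpha^+,\gamma^-,\gamma^+)$ vanishes on \emph{both} the $\alpha$-unipotent and the $\gamma$-unipotent loci of $\mathcal{X}_n(S_{g,m})$, and bounds this log to leading order in $t-1$ by a product of the two ``fixed-point separations.'' As a first reduction, I would use the additivity of $P_i$ from Definition~\ref{definition:ichar} to write
\begin{align*}
\log B_i(\alpha^-;\alpha^+,\gamma^-,\gamma^+) = \log\!\left(1 + \frac{P_i(\alpha^-;\gamma^-,\gamma^+)}{P_i(\alpha^-;\alpha^+,\gamma^-)}\right).
\end{align*}
The $\gamma$-unipotent vanishing is immediate: along $l$, the eigenflags $\gamma^\pm(t)$ collapse to the unique parabolic flag of $\rho_1(\gamma)$ as $t\to 1$, so by Fact~\ref{fact:torsor} the numerator $P_i(\alpha^-;\gamma^-,\gamma^+)$ vanishes; equivalently, by the normalization axiom~(2) of Definition~\ref{definition:4ratio}, $B_i = 1$ on the $\gamma$-unipotent locus. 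The $\alpha$-unipotent vanishing is not a consequence of the ratio axioms but follows from the explicit formula in Proposition~\ref{prop:BC}: when $\alpha^-=\alpha^+$, the bases for the flags $\xi(\alpha^-)$ and $\xi(\alpha^+)$ coincide up to scaling and the determinantal factors in $-D_i$ cancel, giving $B_i=1$ (direct in the $n=2$ cross-ratio case, and inductively extendable for general $n$ via the triple-ratio correction factors in Proposition~\ref{prop:BC}).

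With both vanishing properties in hand, the next step is to extract a bilinear leading-order bound. Using analyticity of $\xi_{\rho_t}$ in $t$ together with the determinantal formula for $P_i$ from Definition~\ref{definition:ichar}, I would derive
\begin{align*}
\bigl|\log B_i(\alpha^-;\alpha^+,\gamma^-,\gamma^+)\bigr|(t) \;\leq\; c(t)\cdot\bigl|\alpha^+(t)-\alpha^-(t)\bigr|\cdot\bigl|\gamma^+(t)-\gamma^-(t)\bigr|,
\end{align*}
where $|\cdot|$ denotes Euclidean separation in any fixed affine chart on $\partial_\infty\pi_1(S_{g,m})$ and where $c(t)$ is bounded uniformly in a neighborhood of $t=1$. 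Boundedness of $c(t)$ is a consequence of Theorem~\ref{thm:bounded} (triple-ratio boundedness) combined with continuity of the Fock--Goncharov coordinate description; the bilinearity (rather than linearity in each separation separately) is precisely the content of the two vanishing statements above.

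Finally, standard spectral perturbation for totally positive matrices approaching a unipotent limit yields $\bigl|\alpha^+(t)-\alpha^-(t)\bigr|\leq C\sum_{j=1}^{n-1}\ell_j(\alpha)(t)$, and along the analytic path $l$ the simple root lengths $\{\ell_j(\alpha)\}_j$ are real-analytic functions of $t$ obtained from the characteristic polynomial of $\rho_t(\alpha)$, which degenerates jointly at $t=1$; they therefore vanish at comparable rates, so $\sum_j\ell_j(\alpha)(t) = O(\ell_i(\alpha)(t))$. Combining with the bilinear bound gives
\begin{align*}
\frac{\bigl|\log B_i(\alpha^-;\alpha^+,\gamma^-,\gamma^+)\bigr|}{\ell_i(\alpha)} \;\leq\; C'\cdot\bigl|\gamma^+(t)-\gamma^-(t)\bigr| \;\longrightarrow\; 0 \quad\text{as }t\to 1,
\end{align*}
which proves the lemma. \textbf{The main obstacle} lies in the second step: while the bilinear bound is essentially a direct cross-ratio estimate for $n=2$, establishing it for general $n$ requires carefully unpacking the explicit formulas in Proposition~\ref{prop:BC} and Lemma~\ref{lemma:lozenge} and identifying which determinantal minors vanish on the $\alpha$-degenerate versus the $\gamma$-degenerate stratum, so as to verify that the mixed bilinear term dominates any separately linear terms.
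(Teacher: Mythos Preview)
Your plan has a genuine gap, and also misses the key simplification that makes the paper's proof a five-line computation.

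\textbf{The gap.} The bilinear bound
\[
\bigl|\log B_i(\alpha^-;\alpha^+,\gamma^-,\gamma^+)\bigr| \;\le\; c(t)\,\bigl|\alpha^+(t)-\alpha^-(t)\bigr|\cdot\bigl|\gamma^+(t)-\gamma^-(t)\bigr|
\]
is asserted but not derived, and in fact your supporting claim that ``$B_i=1$ on the $\alpha$-unipotent locus because the bases for $\xi(\alpha^\pm)$ coincide and the determinantal factors in $-D_i$ cancel'' is not correct: when $\alpha^+\to\alpha^-$ the triple $(\alpha^-,\alpha^+,\cdot)$ leaves generic position and $P_i(\alpha^-;\alpha^+,\gamma^-)$ \emph{blows up} rather than the factors cancelling. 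So the $\alpha$-vanishing is a statement about a singular limit, not a value on a locus, and the passage from ``vanishes on each stratum'' to ``bilinear in the two separations'' would require a careful blow-up analysis you have not carried out. Separately, the claim that analyticity of $l$ forces all the $\ell_j(\alpha)(t)$ to vanish at comparable rates is false as stated: $(1-t)$ and $(1-t)^2$ are both real-analytic and vanish at $t=1$ but are not comparable. You would need to either restrict the path or argue this from the structure of the characteristic polynomial, neither of which you do.

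\textbf{What the paper does instead.} The paper avoids all of this by expressing $\ell_i(\alpha)$ \emph{itself} as a log-potential-ratio with the same first two arguments. By Proposition~\ref{prop:potentialspectral} (equivalently Corollary~\ref{cor:iratiocycle}) one has $e^{\ell_i(\alpha)} = P_i(\alpha^-;\alpha^+,\alpha^{-1}(\gamma^-))/P_i(\alpha^-;\alpha^+,\gamma^-)$. Hence both numerator and denominator of the limit are of the form $\log\bigl(P_i(\alpha^-;\alpha^+,\cdot)/P_i(\alpha^-;\alpha^+,\gamma^-)\bigr)$, so as both go to~$0$ the ratio of logs becomes the ratio of the increments, which by additivity of $P_i$ is simply
\[
\frac{P_i(\alpha^-;\gamma^-,\gamma^+)}{P_i(\alpha^-;\gamma^-,\alpha^{-1}(\gamma^-))}.
\]
Crucially, this expression no longer involves $\alpha^+$ at all: the divergence of $P_i(\alpha^-;\alpha^+,\cdot)$ has been cancelled exactly, with no need for a bilinear estimate or any comparison among the $\ell_j(\alpha)$. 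In the limit the denominator is a nonzero $P_i$ of three distinct flags (the unipotent $\alpha$ still moves $\gamma^-$), while the numerator tends to~$0$ since $\gamma^+\to\gamma^-$. That is the whole proof.

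The lesson: rather than bounding $\log B_i$ and $\ell_i(\alpha)$ separately against Euclidean separations and then dividing, rewrite $\ell_i(\alpha)$ in the same $P_i$-language so that the singular behaviour at $\alpha^+=\alpha^-$ cancels algebraically.
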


\begin{proof}
We have
\begin{align*}
&\lim_{\text{hyp$\to$para}} 
\frac{ \log B_i\left(\alpha^-;\alpha^+,\gamma^-,\gamma^+\right)}
{\ell_i(\alpha)}
=\lim_{\text{hyp$\to$para}} 
\frac{\frac{P_i(\alpha^-;\alpha^+,\gamma^+)}
{P_i(\alpha^-;\alpha^+,\gamma^-)}-1}
{\frac{P_i(\alpha^-;\alpha^+,\alpha^{-1}(\gamma^-))}
{P_i(\alpha^-;\alpha^+,\gamma^-)}-1}
\\
&=\lim_{\text{hyp$\to$para}} 
\frac{P_i(\alpha^-;\gamma^-,\gamma^+)}
{P_i(\alpha^-;\gamma^-,\alpha^{-1}(\gamma^-))}.
\end{align*}
Since $\gamma$ is another boundary component which converges to a cusp under $\lim_{\text{hyp$\to$para}}$, $\gamma^+$ converges to $\gamma^-$. Hence we obtain
\begin{align*}
\begin{aligned}
& \lim_{\text{hyp$\to$para}} \frac{P_i(\alpha^-;\gamma^-,\gamma^+)}{P_i(\alpha^-;\gamma^-,\alpha^{-1}(\gamma^-))}
=0.
\end{aligned}
\end{align*}
\end{proof}

\begin{rmk}
The previous result explains why there are no $\xvec{\mathcal{P}}^{\partial}_{\alpha}\cong \xvec{\mathcal{H}}^{\partial}_{\alpha}$ summands in the unipotent-bordered McShane identity.
\end{rmk}

\begin{lem}
\label{lem:lim1}
Consider a path $l$ as in Definition \ref{definition:hypara} and the first summation in Theorem \ref{theorem:boundaryith}. For any pair of pants $(\beta,\gamma)\in \xvec{\mathcal{P}}_{\alpha}$, we have
\begin{align*}
\lim_{\text{hyp$\to$para}}
\frac{\log B_i\left(\alpha^-;\alpha^+,\gamma^+,\beta^+\right) 
}{\ell_i(\alpha)}
= \frac{1}{1 +e^{\phi_i(\beta,\gamma)}  \cdot e^{\frac{1}{2}\left(\kappa_i(\gamma,\gamma_{p})+\ell_i(\gamma)+\kappa_i(\beta,\beta_{p})+\ell_i(\beta)\right)}}.
\end{align*}
\end{lem}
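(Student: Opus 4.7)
The plan is to reduce the computation to the closed-form expression for $\log B_i(\alpha^-;\alpha^+,\gamma^+,\beta^+)$ already derived inside the proof of Theorem \ref{theorem:boundaryith}, and then extract the limit by a first-order Taylor expansion in $\ell_i(\alpha)$. To set notation, write
\[
C := \phi_i(\beta,\gamma)+\tfrac{1}{2}\bigl(\kappa_i(\gamma,\gamma_{\alpha^-})+\ell_i(\gamma)+\kappa_i(\beta,\beta_{\alpha^-})+\ell_i(\beta)\bigr), \qquad A := e^{C+\ell_i(\alpha)/2},
\]
so that Equation~\eqref{equation:mcloxo1} (which holds verbatim for each $i=1,\ldots,n-1$) reads $\log B_i(\alpha^-;\alpha^+,\gamma^+,\beta^+) = \log\bigl((e^{\ell_i(\alpha)}+A)/(1+A)\bigr)$. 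First, I would rewrite this as
\[
\log B_i(\alpha^-;\alpha^+,\gamma^+,\beta^+) = \log\!\left(1+\frac{e^{\ell_i(\alpha)}-1}{1+A}\right).
\]

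Next, I would verify that $A$ converges to a finite, strictly positive limit as $l(t)\to l(1)$. Along the analytic path $l$, the framed local system $(\rho_t,\xi_t)$ depends analytically on $t$, and the $\rho_t$-equivariant flag maps $\xi_{\rho_t}$ evaluated on the fixed points $\alpha^\pm,\beta^\pm,\gamma^\pm$ depend continuously on $t$ (when $\alpha$ deforms to a cusp, both of its fixed points $\alpha^\pm$ merge to the unique unipotent fixed point identified with $p$). Since $\phi_i(\beta,\gamma)$, $\kappa_i(\beta,\beta_{\alpha^-})$ and $\kappa_i(\gamma,\gamma_{\alpha^-})$ are, by Proposition~\ref{prop:BC} and Equation~\eqref{equation:kappa}, logarithms of explicit positive rational functions of triple ratios and edge functions associated to the boundary-parallel pair of pants $(\beta,\gamma)$, continuity of $\xi_{\rho_t}$ together with the positivity of these projective invariants ensures the limits
\[
\lim_{\text{hyp$\to$para}}\phi_i(\beta,\gamma),\quad \lim_{\text{hyp$\to$para}}\kappa_i(\beta,\beta_{\alpha^-})=\kappa_i(\beta,\beta_p),\quad \lim_{\text{hyp$\to$para}}\kappa_i(\gamma,\gamma_{\alpha^-})=\kappa_i(\gamma,\gamma_p)
\]
all exist and are finite; similarly $\ell_i(\beta)$ and $\ell_i(\gamma)$ stay finite since $\beta,\gamma$ remain non-peripheral loxodromic. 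Combined with $\ell_i(\alpha)\to 0$, this gives $A\to A_0:=e^{C_0}$ with
\[
C_0 = \phi_i(\beta,\gamma)+\tfrac{1}{2}\bigl(\kappa_i(\gamma,\gamma_p)+\ell_i(\gamma)+\kappa_i(\beta,\beta_p)+\ell_i(\beta)\bigr),
\]
where all quantities are now evaluated at the limit unipotent-bordered representation.

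Finally, applying the expansions $e^{\ell_i(\alpha)}-1 = \ell_i(\alpha)+O(\ell_i(\alpha)^2)$ and $\log(1+u)=u+O(u^2)$ for $u=(e^{\ell_i(\alpha)}-1)/(1+A)\to 0$, I obtain
\[
\frac{\log B_i(\alpha^-;\alpha^+,\gamma^+,\beta^+)}{\ell_i(\alpha)} = \frac{1}{1+A} + O(\ell_i(\alpha)),
\]
which tends to $(1+A_0)^{-1}$ and yields exactly the claimed formula. The only step requiring genuine care is the continuity statement for $\phi_i,\kappa_i$ at the parabolic limit, which however follows directly from the regularity of the Fock--Goncharov $\mathcal{X}$-coordinates along $l$ and the description of how $\alpha^\pm$ merge into $p$; there is no analytic subtlety (such as a vanishing denominator) because $\beta$ and $\gamma$ remain interior loxodromic elements with all relevant triple ratios and edge functions bounded and positive.
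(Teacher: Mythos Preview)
Your proposal is correct and follows essentially the same approach as the paper: both start from the closed-form expression \eqref{equation:mcloxo1} obtained in the proof of Theorem~\ref{theorem:boundaryith}, and extract the limit by a first-order expansion in $\ell_i(\alpha)$ (the paper writes this as $\lim \tfrac{\log X}{\ell_i(\alpha)}=\lim \tfrac{X-1}{\ell_i(\alpha)}$, which is your Taylor step). Your treatment of the continuity of $\phi_i,\kappa_i,\ell_i$ along the path $l$ is more explicit than the paper's, which simply takes this for granted.
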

\begin{proof}
Take the formula in Theorem \ref{theorem:boundaryith}, let 
\[Z(x):=e^{\phi_i(\beta,\gamma)}  \cdot e^{\frac{1}{2}\left(\kappa_i(\gamma,\gamma_{x})+\ell_i(\gamma)+\kappa_i(\beta,\beta_{x})+\ell_i(\beta)+\ell_i(\alpha)\right)}.\]
Then 
\begin{equation*}
\begin{aligned}
&\lim_{\text{hyp$\to$para}}
\frac{\log B_i\left(\alpha^-;\alpha^+,\gamma^+,\beta^+\right) } {\ell_i(\alpha)}  =\lim_{\text{hyp$\to$para}} \frac{\log \frac{e^{\ell_i(\alpha)+Z(\alpha^-)}}{1+Z(\alpha^-)} } {\ell_i(\alpha)} 
\\&=\lim_{\text{hyp$\to$para}} \frac{ \frac{e^{\ell_i(\alpha)+Z(\alpha^-)}}{1+Z(\alpha^-)} -1} {\ell_i(\alpha)} =\frac{1}{1+Z(p)}.
\end{aligned}
\end{equation*}
\end{proof}

By a choice of fundamental domain, we define the normalized $(\mu,i)$-Goncharov--Shen potential for the boundary case.
\begin{defn}
For a $\operatorname{PGL}_n(\mathbb{R})$-positive representation $\rho \in \mathrm{Pos}_n^h(S_{g,m})$ with loxodromic boundary monodromy, let $(\rho,\xi) \in \mathcal{X}_n(S_{g,m})$ be the canonical lift (Definition \ref{definition:ratioperiod}) of $\rho$ which induces the $i$-th potential ratio $B_i$. For any $(\beta,\gamma)\in \xvec{\mathcal{P}}_\alpha$ and a choice of its fundamental domain as in Figure \ref{Figure:pti}. Then we define 
\begin{align*}
B_i(\gamma,\gamma_{\alpha^-})
:=\frac{P_i(\alpha^-;\gamma^{-1}(\alpha^-),\gamma(\alpha^-))}{P_i(\alpha^-;\gamma^{-1}(\alpha^-),\beta^{-1}(\alpha^-))},
\end{align*}
\begin{align*}
B_i(\beta,\beta_{\alpha^-})
:=\frac{P_i(\alpha^-;\beta(\alpha^-),\beta^{-1}(\alpha^-))}{P_i(\alpha^-;\gamma^{-1}(\alpha^-),\beta^{-1}(\alpha^-))}.
\end{align*}
\end{defn}
Thus we have
\begin{align*}
B_i(\gamma,\gamma_{\alpha^-})+B_i(\beta,\beta_{\alpha^-})=1
\end{align*}
When we take the limit $\lim_{\text{hyp$\to$para}}$ along the path $l$ in Definition \ref{definition:hypara}, $\alpha^-$ converges to $p$, the ratio $B_i(\delta,\delta_{\alpha^-})$ converges to the $i$-th half-pants ratio $B_i(\delta,\delta_p)$ for $\delta\in (\beta,\gamma)$. Actually $B_i(\delta,\delta_p)$ does not depend on the fundamental domain that we choose.

The following lemma provides the relation between $r_i(\delta,\delta_{\alpha^-})$ and $B_i(\delta,\delta_{\alpha^-})$.
\begin{lem}
\label{lem:equl}
Set up as in Theorem \ref{thm:equgeo}. We have 
\begin{align*}
 \frac{ e^{-r_i(\gamma,\gamma_{\alpha^-})\cdot \ell_i(\alpha)}  
-1}{  e^{\ell_i(\alpha)}-1}=B_i(\gamma,\gamma_{\alpha^-}).
\end{align*}

\begin{align*}
 \frac{ e^{r_i(\beta,\beta_{\alpha^-})\cdot \ell_i(\alpha)} 
-1}{  e^{\ell_i(\alpha)}-1}=\frac{B_i(\beta,\beta_{\alpha^-})}{B_i(\beta,\beta_{\alpha^-})+B_i(\gamma,\gamma_{\alpha^-})\cdot e^{\ell_i(\alpha)}}.
\end{align*}

\end{lem}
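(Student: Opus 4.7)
My plan is to treat the two equations as a pair: the first is a direct computation unfolding definitions, and the second follows from the first together with the additive relations $r_i(\beta,\beta_{\alpha^-})+r_i(\gamma^{-1},\gamma^{-1}_{\alpha^-})=1$ and $B_i(\beta,\beta_{\alpha^-})+B_i(\gamma,\gamma_{\alpha^-})=1$ (the latter being immediate from the definitions via $\beta(\alpha^-)=\gamma(\alpha^-)$ and additivity of $P_i$). The key group-theoretic input throughout is the relation $\alpha\beta^{-1}\gamma=1$, which I will use in the form $\beta^{-1}=\alpha^{-1}\gamma^{-1}$ and $\beta(\alpha^-)=\gamma\alpha(\alpha^-)=\gamma(\alpha^-)$.

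For the first equation, I would first unfold $r_i(\gamma,\gamma_{\alpha^-})$ using Definition~\ref{definition:Ri} to obtain
\[
e^{-r_i(\gamma,\gamma_{\alpha^-})\cdot\ell_i(\alpha)}=B_i(\alpha^-;\alpha^+,\gamma^{-1}(\alpha^-),\gamma(\alpha^-))=\frac{P_i(\alpha^-;\alpha^+,\gamma(\alpha^-))}{P_i(\alpha^-;\alpha^+,\gamma^{-1}(\alpha^-))}.
\]
Then, applying Proposition~\ref{prop:potentialspectral} to $(y,z)=(\alpha^+,\beta^{-1}(\alpha^-))$ and using $\alpha^{-1}\gamma^{-1}=\beta^{-1}$, I get $P_i(\alpha^-;\alpha^+,\beta^{-1}(\alpha^-))=e^{\ell_i(\alpha)}P_i(\alpha^-;\alpha^+,\gamma^{-1}(\alpha^-))$, hence $e^{\ell_i(\alpha)}-1=P_i(\alpha^-;\gamma^{-1}(\alpha^-),\beta^{-1}(\alpha^-))/P_i(\alpha^-;\alpha^+,\gamma^{-1}(\alpha^-))$ by the additivity of $P_i$. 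Dividing the two identities and recognizing the result as $B_i(\gamma,\gamma_{\alpha^-})$ yields the first equation.

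For the second equation, I would avoid repeating the computation from scratch and instead exploit the relation $r_i(\beta,\beta_{\alpha^-})=1+r_i(\gamma,\gamma_{\alpha^-})$ from Definition~\ref{definition:Ri}, which gives $e^{r_i(\beta,\beta_{\alpha^-})\ell_i(\alpha)}=e^{\ell_i(\alpha)}/e^{-r_i(\gamma,\gamma_{\alpha^-})\ell_i(\alpha)}$. Substituting the first equation in the form $e^{-r_i(\gamma,\gamma_{\alpha^-})\ell_i(\alpha)}=1+B_i(\gamma,\gamma_{\alpha^-})(e^{\ell_i(\alpha)}-1)$ and then simplifying the difference $e^{r_i(\beta,\beta_{\alpha^-})\ell_i(\alpha)}-1$ using $1-B_i(\gamma,\gamma_{\alpha^-})=B_i(\beta,\beta_{\alpha^-})$, the ratio collapses to $B_i(\beta,\beta_{\alpha^-})/\bigl(B_i(\beta,\beta_{\alpha^-})+e^{\ell_i(\alpha)}B_i(\gamma,\gamma_{\alpha^-})\bigr)$, which is precisely the claimed right-hand side.

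The main (modest) obstacle is purely bookkeeping: keeping straight which boundary point carries which label through the group relations, in particular the identity $\beta^{-1}=\alpha^{-1}\gamma^{-1}$ underpinning the single application of Proposition~\ref{prop:potentialspectral}. No deeper input (no ideal triangulation mechanics, no cluster dynamics, no H\"older/Anosov estimates) is required; the lemma is an algebraic consequence of the defining relations of $r_i$ and $B_i(\gamma,\gamma_{\alpha^-})$ together with the $\rho(\alpha)$-equivariance of $P_i$ encoded in Proposition~\ref{prop:potentialspectral}.
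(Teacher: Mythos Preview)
Your proposal is correct and follows essentially the same approach as the paper: unfold the definitions of $r_i$ and $B_i(\delta,\delta_{\alpha^-})$, use the additivity of $P_i$, and invoke Proposition~\ref{prop:potentialspectral} via the relation $\alpha\beta^{-1}=\gamma^{-1}$. The only minor difference is organizational: the paper computes the $\beta$ identity directly (and says ``similarly'' for $\gamma$), whereas you compute the simpler $\gamma$ identity first and then obtain the $\beta$ identity algebraically from it using $r_i(\beta,\beta_{\alpha^-})=1+r_i(\gamma,\gamma_{\alpha^-})$ and $B_i(\beta,\beta_{\alpha^-})+B_i(\gamma,\gamma_{\alpha^-})=1$, which is a slight economy.
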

\begin{proof}
By direct computation
\begin{align*}
\begin{aligned}
&\frac{ B_i\left(\alpha^-;\alpha^+,\beta(\alpha^-),\beta^{-1}(\alpha^-)\right) 
-1}{ e^{\ell_i(\alpha)}-1}
=\frac{\frac{P_i(\alpha^-;\alpha^+,\beta^{-1}(\alpha^-)) - P_i(\alpha^-;\alpha^+,\beta(\alpha^-)) }{P_i(\alpha^-;\alpha^+,\beta(\alpha^-))}}{ e^{\ell_i(\alpha)}-1}
\\&=  \frac{P_i(\alpha^-;\beta(\alpha^-),\beta^{-1}(\alpha^-))}{(e^{\ell_i(\alpha)}-1)\cdot P_i(\alpha^-;\alpha^+,\beta(\alpha^-)) }
\\&=  \frac{P_i(\alpha^-;\beta(\alpha^-),\beta^{-1}(\alpha^-))}{P_i(\alpha^-;\alpha^+,\alpha^{-1}\beta(\alpha^-))- P_i(\alpha^-;\alpha^+,\beta(\alpha^-)) }
\\&=  \frac{P_i(\alpha^-;\beta(\alpha^-),\beta^{-1}(\alpha^-))}{P_i(\alpha^-;\beta(\alpha^-),\beta^{-1}(\alpha^-))+P_i(\alpha^-;\beta^{-1}(\alpha^-),\alpha^{-1}\beta(\alpha^-)) }
\\&=  \frac{P_i(\alpha^-;\beta(\alpha^-),\beta^{-1}(\alpha^-))}{P_i(\alpha^-;\beta(\alpha^-),\beta^{-1}(\alpha^-))+e^{\ell_i(\alpha)}\cdot P_i(\alpha^-;\gamma^{-1}(\alpha^-),\gamma(\alpha^-)) }
\\&=\frac{B_i(\beta,\beta_{\alpha^-})}{B_i(\beta,\beta_{\alpha^-})+B_i(\gamma,\gamma_{\alpha^-})\cdot e^{\ell_i(\alpha)}}.
\end{aligned}
\end{align*}
Similarly for the other formula.
\end{proof}

Then Lemma \ref{lem:equb} and Lemma \ref{lem:equl} allow us to compute the following.
\begin{lem}
\label{lemma:B11X}
Set up as in Theorem \ref{thm:equgeo}.
Evaluating the function $f(A)=\frac{e^A-1}{e^{\ell_i(\alpha)}-1}$ at the four gap functions for $\xvec{\mathcal{H}}_\alpha$ in Figure~\ref{Figure:pti}, we get:
\begin{enumerate}
\item 
\begin{align*}
\frac{ B_i\left(\alpha^-;\alpha^+,\gamma^{-1}(\alpha^-),\gamma^-\right)-1 
}{ e^{\ell_i(\alpha)}-1}
= B_i(\gamma,\gamma_{\alpha^-}) \cdot \frac{1}{1 +\frac{P_i(\alpha^-;\gamma^-,\gamma(\alpha^-))}{P_i(\alpha^-;\gamma^{-1}(\alpha^-),\gamma^-)}};
\end{align*}
\item 
\begin{align*}
\frac{ B_i\left(\alpha^-;\alpha^+,\gamma^+,\gamma(\alpha^-)\right)-1 
}{ e^{\ell_i(\alpha)}-1}
= B_i(\gamma,\gamma_{\alpha^-}) \cdot \frac{1}{1 + e^{-r_i(\gamma,\gamma_{\alpha^-})\cdot \ell_i(\alpha)} \cdot \frac{P_i(\alpha^-;\gamma^+,\gamma^{-1}(\alpha^-))}{P_i(\alpha^-;\gamma(\alpha^-),\gamma^+)}};
\end{align*}
\item 
\begin{align*}
\begin{aligned}
&\frac{ B_i\left(\alpha^-;\alpha^+,\beta(\alpha^-),\beta^+\right)-1 
}{ e^{\ell_i(\alpha)}-1}
\\=& \frac{B_i(\beta,\beta_{\alpha^-})}{B_i(\beta,\beta_{\alpha^-})+B_i(\gamma,\gamma_{\alpha^-})\cdot e^{\ell_i(\alpha)}} \cdot \frac{1}{1 + \frac{P_i(\alpha^-;\beta^+,\beta^{-1}(\alpha^-))}{P_i(\alpha^-;\beta(\alpha^-),\gamma^-)}};
\end{aligned}
\end{align*}
\item 
\begin{align*}
\begin{aligned}
&\frac{ B_i\left(\alpha^-;\alpha^+,\beta^-,\beta^{-1}(\alpha^-)\right)-1 
}{ e^{\ell_i(\alpha)}-1}
\\=& \frac{B_i(\beta,\beta_{\alpha^-})}{B_i(\beta,\beta_{\alpha^-})+B_i(\gamma,\gamma_{\alpha^-})\cdot e^{\ell_i(\alpha)}} \cdot \frac{1}{1 + e^{r_i(\beta,\beta_{\alpha^-})\cdot \ell_i(\alpha)}\cdot \frac{P_i(\alpha^-;\beta^-,\beta(\alpha^-))}{P_i(\alpha^-;\beta^{-1}(\alpha^-),\beta^-)}}.
\end{aligned}
\end{align*}
\end{enumerate}

\end{lem}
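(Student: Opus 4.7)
The four identities are purely algebraic, and I would prove them in parallel using three tools that have already been established: the additivity/antisymmetry of the $i$-th characters $P_i$ (Definition~\ref{definition:ichar}); the five-point rearrangement identity of Lemma~\ref{lem:equb}, which gives $B_i(a;b,c,d) - 1 = (B_i(a;b,c,e) - 1)/(1 - B_i(a;d,c,e))$; and the translation Lemma~\ref{lem:equl} which converts $(e^{\pm r_i(\delta,\delta_{\alpha^-})\ell_i(\alpha)} - 1)/(e^{\ell_i(\alpha)} - 1)$ into the half-pants-ratio prefactors $B_i(\gamma,\gamma_{\alpha^-})$ and $B_i(\beta,\beta_{\alpha^-})/(B_i(\beta,\beta_{\alpha^-}) + B_i(\gamma,\gamma_{\alpha^-})e^{\ell_i(\alpha)})$. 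The boundary equivariance from Proposition~\ref{prop:potentialspectral} enters only implicitly, through Definition~\ref{definition:Ri} and Lemma~\ref{lem:equl}.

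The common recipe is to apply Lemma~\ref{lem:equb} with $(a,b) = (\alpha^-,\alpha^+)$, with $(c,d)$ read off from the entry of the relevant $B_i$ on the left-hand side, and with the auxiliary point $e$ chosen so that $B_i(\alpha^-;\alpha^+,c,e)$ is exactly an expression $e^{\pm r_i(\delta,\delta_{\alpha^-})\ell_i(\alpha)}$ from Definition~\ref{definition:Ri}. For item~(1) I take $(c,d,e) = (\gamma^{-1}(\alpha^-),\gamma^-,\gamma(\alpha^-))$, so $B_i(\alpha^-;\alpha^+,\gamma^{-1}(\alpha^-),\gamma(\alpha^-)) = e^{-r_i(\gamma,\gamma_{\alpha^-})\ell_i(\alpha)}$; for~(2), $(c,d,e) = (\gamma^+,\gamma(\alpha^-),\gamma^{-1}(\alpha^-))$; for~(3), $(c,d,e) = (\beta(\alpha^-),\beta^+,\beta^{-1}(\alpha^-))$, so $B_i(\alpha^-;\alpha^+,\beta(\alpha^-),\beta^{-1}(\alpha^-)) = e^{r_i(\beta,\beta_{\alpha^-})\ell_i(\alpha)}$; and for~(4), $(c,d,e) = (\beta^-,\beta^{-1}(\alpha^-),\beta(\alpha^-))$. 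Dividing by $e^{\ell_i(\alpha)} - 1$ and invoking Lemma~\ref{lem:equl} on the numerator then produces exactly the $B_i(\gamma,\gamma_{\alpha^-})$ or $B_i(\beta,\beta_{\alpha^-})/(B_i(\beta,\beta_{\alpha^-})+B_i(\gamma,\gamma_{\alpha^-})e^{\ell_i(\alpha)})$ prefactor asserted.

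It remains to recognize the leftover factor $1/(1 - B_i(\alpha^-;d,c,e))$ as the stated $1/(1 + R)$-expression. Expanding via $B_i(x;y,z,t) = P_i(x;y,t)/P_i(x;y,z)$ and applying the additivity $P_i(\alpha^-;y,z) - P_i(\alpha^-;y,w) = P_i(\alpha^-;w,z)$ gives $1 - B_i(\alpha^-;d,c,e) = P_i(\alpha^-;e,c)/P_i(\alpha^-;d,c)$ (up to an antisymmetry sign); inverting and then splitting $P_i(\alpha^-;d,c)$ once more along the relevant intermediate point produces the $1/(1 + \text{ratio of two }P_i\text{'s})$ form. The residual $e^{\pm r_i\ell_i(\alpha)}$ factor that appears inside the denominator of items~(2) and~(4) (but not~(1) and~(3)) arises because in those two cases the cocycle splitting is performed on the ``far'' side of $e$, so one of the resulting $P_i$ ratios is precisely $B_i(\alpha^-;\alpha^+,c,e)^{\pm 1} = e^{\pm r_i(\delta,\delta_{\alpha^-})\ell_i(\alpha)}$ again via Definition~\ref{definition:Ri}.

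I expect the main obstacle to be purely combinatorial: keeping straight which of the eight ideal points $\gamma^{\pm}, \beta^{\pm}, \gamma^{\pm 1}(\alpha^-), \beta^{\pm 1}(\alpha^-)$ plays the role of $c$, $d$, $e$ and the cocycle midpoint in each of the four items, and verifying the four sign choices with the conventions of Figure~\ref{Figure:pti} and the pants relation $\alpha\beta^{-1}\gamma = 1$. Once the right choice of $e$ is made in each case, the calculation reduces to a two-line manipulation of $P_i$'s; no new ideas are required beyond Lemmas~\ref{lem:equb} and~\ref{lem:equl} and the additivity of $P_i$.
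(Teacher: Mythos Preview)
Your proposal is correct and follows exactly the route taken in the paper: apply the rearranged form of Lemma~\ref{lem:equb}, namely $B_i(a;b,c,d)-1=\dfrac{B_i(a;b,c,e)-1}{1-B_i(a;d,c,e)}$, then invoke Lemma~\ref{lem:equl} on the numerator and simplify the denominator via the additivity of $P_i$; the paper carries this out explicitly only for item~(1), with your choice $(c,d,e)=(\gamma^{-1}(\alpha^-),\gamma^-,\gamma(\alpha^-))$, and dismisses the remaining three with ``similarly for the other cases.'' One small caution: for items~(2) and~(4) your stated choices of $e$ do not make $B_i(\alpha^-;\alpha^+,c,e)$ literally equal to $e^{\pm r_i\ell_i(\alpha)}$ (since $c=\gamma^+$ or $\beta^-$ rather than $\gamma^{\pm1}(\alpha^-)$ or $\beta^{\pm1}(\alpha^-)$), so the numerator is not handled by Lemma~\ref{lem:equl} in a single stroke as you suggest; what actually happens is that after expanding everything in $P_i$'s the identity reduces to the Ptolemy-type relation $P(a,c)P(b,d)=P(a,b)P(c,d)+P(a,d)P(b,c)$ for $P(y,z):=P_i(\alpha^-;y,z)$, which is immediate from additivity. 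This is precisely the ``two-line manipulation of $P_i$'s'' you anticipate, so the obstacle is indeed only bookkeeping.
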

\begin{proof}
By direct compuation, we have
\begin{align}
\begin{aligned}
\\&\frac{ B_i\left(\alpha^-;\alpha^+,\gamma^{-1}(\alpha^-),\gamma^-\right)-1 
}{ e^{\ell_i(\alpha)}-1}
\\&=\frac{ \frac{B_i\left(\alpha^-;\alpha^+,\gamma^{-1}(\alpha^-),\gamma(\alpha^-)\right)- B_i\left(\alpha^-;\gamma^-,\gamma^{-1}(\alpha^-),\gamma(\alpha^-)\right)}{1 - B_i\left(\alpha^-;\gamma^-,\gamma^{-1}(\alpha^-),\gamma(\alpha^-)\right)}-1}{e^{\ell_i(\alpha)}-1}      
\quad\text{ by Lemma~\ref{lem:equb}}
\\&= \frac{ B_i\left(\alpha^-;\alpha^+,\gamma^{-1}(\alpha^-),\gamma(\alpha^-)\right)    
-1}{ e^{\ell_i(\alpha)}-1} \cdot \frac{1}{1 - B_i\left(\alpha^-;\gamma^-,\gamma^{-1}(\alpha^-),\gamma(\alpha^-)\right)} 
\\&=  
\frac{B_i(\gamma,\gamma_{\alpha^-})}{1 +\frac{P_i(\alpha^-;\gamma^-,\gamma(\alpha^-))}{P_i(\alpha^-; \gamma^{-1}(\alpha^-),\gamma^-)}}     
\quad\text{by Lemma~\ref{lem:equl} }.
\end{aligned}
\end{align}
Similarly for the other cases.
\end{proof}

A direct consequence of Lemma \ref{lemma:B11X} is the following.
\begin{cor}
\label{cor:lim2}
Consider a path $l$ as in Definition \ref{definition:hypara}. Suppose $\delta \in \{\beta,\beta^{-1},\gamma,\gamma^{-1}\}$. Let $x$ be a lift of the cusp $p$ such that $(x, \delta x,\delta^+)$ is a lift of the ideal triangle. For the four cases in Lemma~\ref{lemma:B11X}, we have
\begin{align*}
\lim_{\text{hyp$\to$para}}
\frac{\left|\log B_i\left(\alpha^-;\alpha^+,\delta(\alpha^-),\delta^+\right) \right|
}{\ell_i(\alpha)}
= \frac{B_i(\delta,\delta_p) }{1 +\frac{P_i(x;\delta^+,\delta^{-1}x)}{P_i(x;\delta x,\delta^+)}}.
\end{align*}
\end{cor}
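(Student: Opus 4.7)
\textbf{Proof plan for Corollary \ref{cor:lim2}.} The plan is to reduce the stated limits to the algebraic identities established in Lemma~\ref{lemma:B11X} by a standard linearization trick, and then to pass to the parabolic limit on each of the four explicit right-hand sides. The key preliminary observation is that along the path $l$ in Definition~\ref{definition:hypara}, we have $\ell_i(\alpha)\to 0$ and the two fixed points $\alpha^-,\alpha^+$ of $\rho_t(\alpha)$ coalesce to the single fixed point $p\in m_p$ of the limiting unipotent element. Consequently the quantity $B_i\bigl(\alpha^-;\alpha^+,\delta(\alpha^-),\delta^+\bigr)$ tends to $1$ in all four cases: its numerator and denominator involve $P_i(\alpha^-;\alpha^+,\cdot)$, and the two flags $\xi_\rho(\alpha^-)$, $\xi_\rho(\alpha^+)$ merge in the limit.

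The core computation is therefore to write
\[
\frac{\bigl|\log B_i(\alpha^-;\alpha^+,\delta(\alpha^-),\delta^+)\bigr|}{\ell_i(\alpha)}
=
\frac{|\log B|}{|B-1|}\cdot
\frac{|B-1|}{e^{\ell_i(\alpha)}-1}\cdot
\frac{e^{\ell_i(\alpha)}-1}{\ell_i(\alpha)},
\]
and note that the first and third factors tend to $1$ (standard $\log(1+x)/x\to 1$ and $(e^x-1)/x\to 1$). Hence the desired limit equals $\lim_{\text{hyp}\to\text{para}} (B-1)/(e^{\ell_i(\alpha)}-1)$ up to sign, and this is exactly the quantity evaluated in each of the four cases of Lemma~\ref{lemma:B11X}. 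In case~(2) the argument $(\gamma(\alpha^-),\gamma^+)$ is swapped relative to $(\delta(\alpha^-),\delta^+)$ in the corollary; this only changes the sign of $\log B_i$ and is absorbed by the outer absolute value. The same remark handles case~(4) for $\delta=\beta^{-1}$.

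It remains to take the limit $\lim_{\text{hyp}\to\text{para}}$ of the four explicit expressions furnished by Lemma~\ref{lemma:B11X}. For cases (1)--(2) with $\delta\in\{\gamma,\gamma^{-1}\}$, the prefactor is $B_i(\gamma,\gamma_{\alpha^-})$, which by construction converges to the $i$-th half-pants ratio $B_i(\gamma,\gamma_p)=B_i(\delta,\delta_p)$. For cases (3)--(4) with $\delta\in\{\beta,\beta^{-1}\}$, the prefactor is $B_i(\beta,\beta_{\alpha^-})/(B_i(\beta,\beta_{\alpha^-})+B_i(\gamma,\gamma_{\alpha^-})e^{\ell_i(\alpha)})$; using $B_i(\beta,\beta_{\alpha^-})+B_i(\gamma,\gamma_{\alpha^-})=1$ and $e^{\ell_i(\alpha)}\to 1$, this limits to $B_i(\beta,\beta_p)=B_i(\delta,\delta_p)$. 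The exponential factors $e^{\pm r_i(\delta,\delta_{\alpha^-})\ell_i(\alpha)}$ in the denominators of (2) and (4) tend to $1$ because $r_i(\delta,\delta_{\alpha^-})\in(0,1)$ (Definition~\ref{definition:Ri}) and $\ell_i(\alpha)\to 0$; and the $P_i$-ratios $P_i(\alpha^-;\delta^+,\delta^{-1}\alpha^-)/P_i(\alpha^-;\delta\alpha^-,\delta^+)$ converge to $P_i(x;\delta^+,\delta^{-1}x)/P_i(x;\delta x,\delta^+)$ by continuity of the $\rho$-equivariant flag map under the path~$l$ and the fact that $\alpha^-$ approaches a lift $x$ of the cusp $p$ such that $(x,\delta x,\delta^+)$ is a lift of the relevant ideal triangle.

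The mildest obstacle is the bookkeeping of signs and of which of $\{\beta,\gamma\}$ supplies the prefactor in each of the four cases, together with checking that the $P_i$-ratios in Lemma~\ref{lemma:B11X} indeed assemble, under $\lim_{\text{hyp}\to\text{para}}$, into the uniform expression $P_i(x;\delta^+,\delta^{-1}x)/P_i(x;\delta x,\delta^+)$ appearing on the right-hand side of the corollary. This is a direct case-by-case verification and presents no substantive difficulty.
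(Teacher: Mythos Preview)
Your proposal is correct and follows the same approach as the paper, which simply states that the corollary is a direct consequence of Lemma~\ref{lemma:B11X}; you are supplying the details the paper omits. One small slip: you assert $r_i(\delta,\delta_{\alpha^-})\in(0,1)$, but Definition~\ref{definition:Ri} gives $r_i(\gamma,\gamma_{\alpha^-})<0$ and $r_i(\beta^{-1},\beta^{-1}_{\alpha^-})<0$; what you actually need (and what holds) is $|r_i(\delta,\delta_{\alpha^-})|\in(0,1)$, which still forces $e^{\pm r_i\ell_i(\alpha)}\to 1$.
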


\begin{proof}[Proof of Theorem \ref{theorem:puncturecase}]
Let us study the formulas in Theorem \ref{theorem:boundaryith} and Theorem \ref{thm:equgeo} under the sequence of elements in $l([0,1))$ that converges to $l(1)$ as in Definition \ref{definition:hypara}. Lemma \ref{lem:Sa} shows us that there are no $\xvec{\mathcal{P}}^{\partial}_{\alpha}\cong \xvec{\mathcal{H}}^{\partial}_{\alpha}$ summands. Formula \eqref{equation:unipps} is a consequence of Lemma \ref{lem:lim1} and Formula \eqref{equation:unihps} is deduced by Corollary \ref{cor:lim2}. 
\end{proof}

\begin{rmk}
\label{remark:xninq}
Theorem \ref{theorem:puncturecase} can be extended to the general $(\rho,\xi)\in \mathcal{X}_n(S_{g,m})$ where the distinguished oriented boundary component $\alpha$ has $i$-th length zero. Following the above proof, the only thing that we need to modify is Lemma \ref{lem:Sa}. For any $(\gamma,\gamma_p)\in \xvec{\mathcal{H}}_p^\partial\cong \xvec{\mathcal{P}}_p^\partial$, using
\[B_i\left(\alpha^-;\alpha^+,\gamma^-,\gamma^+\right)=B_i\left(\alpha^-;\alpha^+,\gamma^-,\gamma(\alpha^-)\right)-B_i\left(\alpha^-;\alpha^+,\gamma^+,\gamma(\alpha^-)\right),\]
we have the corresponding gap function:
\begin{align*}
\lim_{\text{hyp$\to$para}} \frac{ \log B_i\left(\alpha^-;\alpha^+,\gamma^-,\gamma^+\right)
}{ \ell_i(\alpha)}=B_p(\gamma,\gamma_p)\cdot \frac{e^{\kappa_i(\gamma,\gamma_p)+\ell_i(\gamma)}-1}{e^{\kappa_i(\gamma,\gamma_p)+\ell_i(\gamma)}+1}.
\end{align*}

Similarly, Theorems \ref{theorem:boundaryith} and \ref{thm:equgeo} can be extended to the general $(\rho,\xi)\in \mathcal{X}_n(S_{g,m})$, but the equalities will be changed into inequalities. 

\end{rmk}

\subsection{A strategy for establishing the unipotent-bordered McShane identity}

\begin{lem}
\label{lemma:B1}
Let $\rho$ be a $\operatorname{PGL}_n(\mathbb{R})$-positive representation with only unipotent boundary monodromy (only loxodromic boundary monodromy resp.). Let $x$ be a cusp $p$ (a boundary component $\alpha$ resp.). Let $\xvec{\mathcal{H}}_x(\gamma)$ be the subset of $\xvec{\mathcal{H}}_x$ containing $\gamma$ as the cuff. Then  
\begin{align}
\label{eq:prob}
\sum_{\mu\in\subvec{\mathcal{H}}_x(\gamma)}
B_i(\mu)\leq 1.
\end{align} 
\end{lem}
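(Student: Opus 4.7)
The plan is to interpret each summand $B_i(\mu)$ as the measure of a subinterval of the horocycle $\eta$ at the cusp $p$ (or the analogous probability-measured curve around the loxodromic boundary $\alpha$) with respect to the $i$-th Goncharov--Shen potential measure, and then show that the subintervals attached to distinct half-pants $\mu \in \xvec{\mathcal{H}}_x(\gamma)$ sharing the cuff $\gamma$ are pairwise disjoint. By Lemma~\ref{lem:gsmeasure}, this measure is a probability measure of total mass $1$, so the disjointness will immediately yield $\sum_\mu B_i(\mu) \leq 1$. In the loxodromic case, the same strategy applies with the analogous normalized measure around $\alpha$ built from the $i$-th potential ratio $B_i$ of Section~\ref{section:potentialratio}.

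Concretely, $I_\mu \subset \eta$ is the arc bounded by the two points at which the seam $\gamma_p$ meets $\eta$ on the cuff side of $\gamma_p$. The key geometric claim is that for distinct $\mu_1, \mu_2 \in \xvec{\mathcal{H}}_x(\gamma)$, the seams $\gamma_{p,1}, \gamma_{p,2}$ are pairwise non-crossing simple bi-infinite geodesics from $x$ to $x$: a transverse crossing of their lifts in the universal cover would obstruct the simultaneous embedding of two half-pants sharing the cuff $\gamma$ (cf.\ Definition~\ref{defn:hp}). Given non-crossing, the two arcs $I_{\mu_1}, I_{\mu_2}$ are either disjoint or nested; nesting is excluded because if one embedded half-pants were contained in the other, the complementary region would have Euler characteristic $0$ and would form an essential annulus between the two seams, forcing an identification that contradicts their being distinct elements of $\xvec{\mathcal{H}}_x(\gamma)$.

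Assuming the disjointness claim, the proof of the lemma concludes as
\begin{align*}
\sum_{\mu \in \subvec{\mathcal{H}}_x(\gamma)} B_i(\mu) \;=\; \sum_\mu m(I_\mu) \;=\; m\Bigl(\bigsqcup_\mu I_\mu\Bigr) \;\leq\; m(\eta) \;=\; 1.
\end{align*}

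The main obstacle will be the rigorous verification of pairwise non-crossing of the seams: in simple cases (e.g.\ $S_{1,1}$ or $S_{0,4}$) the set $\xvec{\mathcal{H}}_x(\gamma)$ has at most one element per side of $\gamma$ and the claim is immediate, but for more complex topology $\xvec{\mathcal{H}}_x(\gamma)$ may be infinite, and the argument must carefully use the uniqueness of the seam within each embedded pair of pants together with a lifting argument in the universal cover to rule out every crossing configuration.
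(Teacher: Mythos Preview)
Your overall strategy matches the paper's: interpret $B_i(\mu)$ as the Goncharov--Shen potential measure of the horocyclic arc $I_\mu=\eta\cap\mu$ and deduce the bound from pairwise interior-disjointness of these arcs. The gap is in your mechanism for disjointness. You claim that the seams of two distinct half-pants in $\xvec{\mathcal{H}}_x(\gamma)$ cannot cross, but this is false once $3g-3+m\ge 2$. Choose $\mu_1\in\xvec{\mathcal{H}}_p(\gamma)$ with seam $\sigma_1$ and an essential simple closed curve $\delta$ disjoint from $\gamma$ with $i(\delta,\sigma_1)>0$; such $\delta$ exists because the complement of $\gamma$ has positive complexity and contains curves separating the cusp $p$ from the relevant copy of $\gamma$. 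For large $N$ the Dehn twist $T_\delta^N$ fixes the oriented curve $\gamma$, so $\mu_2:=T_\delta^N(\mu_1)\in\xvec{\mathcal{H}}_p(\gamma)$, while its seam $\sigma_2=T_\delta^N(\sigma_1)$ satisfies $i(\sigma_1,\sigma_2)>0$. Thus the heuristic ``a transverse crossing would obstruct simultaneous embedding'' is simply wrong, your disjoint-or-nested dichotomy for the intervals does not follow, and the argument stalls exactly at the step you yourself flagged as the main obstacle.

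The paper obtains disjointness of the $I_\mu$ by a route that is indifferent to how the seams sit relative to one another. A direction in $I_\mu$ launches a geodesic from $p$ whose portion up to its first self-intersection either meets $\gamma$ or lies entirely in a half-pants with cuff $\gamma$; the paper then invokes McShane's classical uniqueness argument \cite[Proposition~1]{mcshane_allcusps}: that simple initial portion, together with $\gamma$, determines a \emph{unique} embedded half-pants with cuff $\gamma$. Hence an interior overlap of $I_{\mu_1}$ and $I_{\mu_2}$ forces $\mu_1=\mu_2$. The horocyclic arcs are therefore disjoint even when the seams cross, which is precisely what your seam-based approach cannot see.
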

\begin{proof}
Let us prove for the case $\rho \in \mathrm{Pos}_n^u(S_{g,m})$ and $x$ is a cusp $p$. The argument for $\rho \in \mathrm{Pos}_n^h(S_{g,m})$ is the same since the following is a topological argument.
For each $\mu\in\xvec{\mathcal{H}}_p(\gamma)$, $B_i(\mu)$ is the probability (with respect to the Goncharov--Shen potential measure) that the portion of a geodesic launched from cusp $p$ up to its first point of self-intersection will either:
\begin{itemize}
\item intersect $\gamma$, or
\item be completely contained on a pair of half-pants with $\gamma$ as its cuff.
\end{itemize}
Suppose two $\mu,\mu'\in\xvec{\mathcal{H}}_p(\gamma)$ has non-empty interior intersection. We argue the same way as \cite[Proposition 1]{mcshane_allcusps}. If $\mu$ and $\mu'$ contain a common geodesic launched from cusp $p$ up to $\gamma$, there is a unique pair of half-pants in $\xvec{\mathcal{H}}_p(\gamma)$ containing $\gamma$ and that geodesic. If $\mu$ and $\mu'$ contain a common geodesic launched from cusp $p$ up to its first point of self-intersection completely contained on a pair of half-pants with $\gamma$ as its cuff, there is a unique pair of half-pants in $\xvec{\mathcal{H}}_p(\gamma)$ containing $\gamma$ and that geodesic. We have $\mu=\mu'$ for both of the above two cases. We conclude Formula \eqref{eq:prob}.

\end{proof}

\begin{thm}[Equality under assumption]
\label{theorem:equuni}
Let $\xvec{\mathcal{C}}_{g,m}$ is the set of free homotopy classes of oriented simple closed curves on $S_{g,m}$. For any $\operatorname{PGL}_n(\mathbb{R})$-positive representation $\rho$, let 
\[
D_i(N,\rho) := \#\left\{[\delta] \in \xvec{\mathcal{C}}_{g,m}\;
\begin{array}{|l}
\; \log \frac{\lambda_i(\rho(\delta))}{\lambda_{i+1}(\rho(\delta))} \leq N
\end{array}
\right\}.
\]

Assume that we have the following property:

\emph{Consider a path $l$ in Definition \ref{definition:hypara} where $l(1)=\rho\in \mathrm{Pos}_n^u(S_{g,m})$ has only unipotent boundary monodromy, there exists a positive continuous function for $s\in [0,1]$ such that
\begin{align}
\label{equation:assumption}
D_i(N,l(s)) \leq  c(l(s))\cdot N^{6g-6+2m}.
\end{align}  }

Then the inequalities in theorem \ref{theorem:puncturecase} are equalities.
\end{thm}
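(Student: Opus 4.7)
The plan is to derive the unipotent-bordered equality as the $s \to 1$ limit of the loxodromic-bordered equalities of Theorem \ref{theorem:boundaryith} (respectively Theorem \ref{thm:equgeo}) along the deformation path $l(s)$. Evaluating \eqref{eq:altsummand} at $l(s)$ and dividing through by $\ell_i(\alpha_{l(s)})$, the left-hand side becomes identically $1$, while each summand on the right converges pointwise to the corresponding summand in \eqref{equation:unipps} by Lemmas \ref{lem:Sa} and \ref{lem:lim1}. The theorem thus reduces to justifying the interchange of limit and infinite sum; combined with the inequality already established in Theorem \ref{theorem:puncturecase}, such an interchange upgrades $\leq 1$ to $= 1$.

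To produce a uniform $(s, \beta, \gamma)$-dominating majorant, I would first use the elementary estimate
\[
\frac{\mathcal{D}(x, y, z)}{x} \;\leq\; \frac{C_0}{1 + e^{(y+z)/2}}
\]
for some universal $C_0 > 0$, valid as $x \to 0^+$ with $y + z$ bounded below. By Theorem \ref{thm:bounded}, the triple ratios (and hence the $\kappa_i(\beta, \beta_{\alpha^-})$ and $\kappa_i(\gamma, \gamma_{\alpha^-})$ terms) are uniformly bounded over all boundary-parallel pairs of pants and continuously along $l([0,1])$. The same compactness of the parameter path, together with Proposition \ref{prop:BC} expressing $\phi_i$ as a rational function of triple ratios and edge functions, is expected to furnish a uniform bound on $\phi_i(\beta, \gamma)$. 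Absorbing these constants, each summand after division is dominated by $C \cdot (1 + e^{(\ell_i(\beta) + \ell_i(\gamma))/2})^{-1}$ with $C$ independent of $(\beta, \gamma)$ and of $s$ near $1$.

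The polynomial growth assumption $D_i(N, l(s)) \leq c(l(s)) \cdot N^{6g-6+2m}$ with $c$ continuous on the compact interval $[0,1]$ yields $c_\ast := \sup_{s\in[0,1]} c(l(s)) < \infty$. Partitioning $\xvec{\mathcal{P}}_\alpha$ by $N = \lfloor \ell_i(\beta) + \ell_i(\gamma) \rfloor$, the number of pairs in each slice is bounded, uniformly in $s$, by $(c_\ast(N+1)^{6g-6+2m})^2$, and the series $\sum_N (N+1)^{2(6g-6+2m)} e^{-N/2}$ converges, producing the required summable majorant. Dominated convergence then commutes the limit with the sum, and the identity \eqref{equation:unipps} follows; the half-pants version \eqref{equation:unihps} is obtained by the same argument from Theorem \ref{thm:equgeo} together with Corollary \ref{cor:lim2}.

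The main obstacle will be establishing the uniform $s$-boundedness of $\phi_i(\beta, \gamma)$ along the deformation path, essentially Conjecture \ref{conjecture:phii} upgraded with parameter uniformity. While Theorem \ref{thm:bounded} cleanly handles the triple-ratio component of $\phi_i$, the edge-function component demands genuine attention; one expects this to follow from compactness of $l([0,1])$ combined with continuity and properness of the relevant positive projective invariants, but the details require care, and without such a uniform bound the dominated majorant above fails to control the summands along the approach to the unipotent locus.
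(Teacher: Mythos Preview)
Your overall architecture (divide the loxodromic identity by $\ell_i(\alpha)$, use Lemmas~\ref{lem:Sa}, \ref{lem:lim1}, \ref{cor:lim2} for pointwise convergence, and seek a dominating majorant uniform in $s$) matches the paper's. The gap is in how you build the majorant.

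Your pair-of-pants route hinges on a uniform bound for $\phi_i(\beta,\gamma)$, and you correctly flag this as the obstacle. But this is exactly Conjecture~\ref{conjecture:phii}, which is open: compactness of $l([0,1])$ controls nothing here because $\phi_i$ ranges over \emph{all} $(\beta,\gamma)\in\xvec{\mathcal{P}}_\alpha$, and its edge-function component (unlike the triple-ratio component) has no known uniform bound. So the majorant you write down is not available. The paper sidesteps this entirely by proving only the half-pants equality \eqref{equation:unihps}, whose summands $\tfrac{B_i(\delta,\delta_p)}{1+e^{\kappa_i(\delta,\delta_p)+\ell_i(\delta)}}$ contain no $\phi_i$ term; once \eqref{equation:unihps} is an equality so is \eqref{equation:unipps}, since both express that the complementary Cantor set has zero Goncharov--Shen measure.

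Your claim that the half-pants version follows ``by the same argument'' then fails for a different reason: for a fixed cuff $\delta$ there are infinitely many half-pants $(\delta,\delta_p)\in\xvec{\mathcal{H}}_p(\delta)$, so partitioning by $\ell_i(\delta)$ alone and counting via $D_i(N)$ does not give a finite majorant. The missing ingredient is Lemma~\ref{lemma:B1}, the probabilistic bound $\sum_{(\delta,\delta_p)\in\subvec{\mathcal{H}}_p(\delta)} B_i(\delta,\delta_p)\leq 1$, which collapses the inner sum over seams. Combined with the uniform lower bound $K_i(\delta,\delta_p)\geq K>0$ from triple-ratio boundedness along the compact path, one obtains a majorant of the form $\sum_{\delta\in\subvec{\mathcal{C}}_{g,m}} \tfrac{C_0}{K\,e^{\ell_i(\delta)}}$, and \emph{now} the hypothesis $D_i(N,l(s))\leq c(l(s))N^{6g-6+2m}$ with $c$ continuous on $[0,1]$ gives uniform tail control (linearly in $D_i$, not squared as you wrote).
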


\begin{proof}
It is enough to prove only Formula \eqref{equation:unihps} is an equality. 

Take a path $l$ in Definition \ref{definition:hypara}.
For $s\in [0,1]$, let 
\begin{equation}
\begin{aligned}
\label{equation:fs}
&F(s):=\sum_{\delta\in \subvec{\mathcal{C}}_{g,m}}\sum_{(\delta,\delta_{\alpha^-})\in \subvec{\mathcal{H}}_\alpha(\delta)} \frac{\left|\log B_i^{l(s)}\left(\alpha^-;\alpha^+,\delta(\alpha^-),\delta^+\right) \right|
}{\ell_i^{l(s)}(\alpha)} +
\\&\sum_{\gamma}\sum_{(\gamma,\gamma_{\alpha^-})\in \subvec{\mathcal{H}}_\alpha(\gamma)}\frac{\log B_i^{l(s)}\left(\alpha^-;\alpha^+,\gamma^-,\gamma^+\right)
}{\ell_i^{l(s)}(\alpha)},
\end{aligned}
\end{equation} 
where in the first summation $\delta$ is enumerated with respect to $\ell_i(\delta)$, and in the second summation $\gamma$ goes over the finitely many boundary components of $S_{g,m}$ (oriented such that $S_{g,m}$ is on the left side of $\gamma$) except $\alpha$.

By Corollary \ref{cor:lim2} and Lemma \ref{lem:Sa}, each summand of $F(s)$ is a continuous function on $s\in [0,1]$. By Theorem \ref{thm:boundary}, $F(s)=1$ for $s\in [0,1)$ and $F(1)\leq 1$ by Theorem \ref{theorem:puncturecase}. To prove $F(1)= 1$, we will show that $F(s)$ is uniformly convergent on $s\in[0,1]$.

Since the path $l$ is compact, we have the following bounds in the path $l$:
\begin{enumerate}
\item the limit of $\frac{e^{\ell_i(\alpha)}-1}{\ell_i(\alpha)}$ under $\lim_{\text{hyp$\to$para}}$ is $1$, so $\frac{e^{\ell_i(\alpha)}-1}{\ell_i(\alpha)}$ is bounded above by a constant $C_0>0$;
\item following the compactness argument in Theorem \ref{thm:bounded}, the collection of all the triple ratios for all the ideal triangles is uniformly bounded within a compact positive interval, thus the collection $\{K(\delta,\delta_{\alpha^-})\}_{(\delta,\delta_{\alpha^-})\in \subvec{\mathcal{H}}_\alpha}$ (Equation \eqref{equation:kappa}) is bounded below by a constant $K>0$.
\end{enumerate}
For the first summation of Equation \eqref{equation:fs}:
\begin{align*}
\begin{aligned}
&\sum_{(\delta,\delta_{\alpha^-})\in \subvec{\mathcal{H}}_\alpha}\frac{\left|\log B_i^{l(s)}\left(\alpha^-;\alpha^+,\delta(\alpha^-),\delta^+\right) \right|
}{\ell_i^{l(s)}(\alpha)} 
\\ 
\leq&\frac{e^{\ell_i(\alpha)}-1}{\ell_i(\alpha)}\cdot \sum_{(\delta,\delta_{\alpha^-})\in \subvec{\mathcal{H}}_\alpha}
\left(\frac{B_i(\delta,\delta_{\alpha^-})}{1+ K_i(\delta,\delta_{\alpha^-})\cdot  e^{\ell_i(\delta)}}
 \right)
 \quad\text{Lemma~\ref{lemma:B11X}}
\\ \leq &C_0\cdot \sum_{\delta\in \subvec{\mathcal{C}}_{g,m}}\sum_{(\delta,\delta_{\alpha^-})\in \subvec{\mathcal{H}}_\alpha(\delta)}
\left(\frac{B_i(\delta,\delta_{\alpha^-})}{1+ K_i(\delta,\delta_{\alpha^-})\cdot   e^{\ell_i(\delta)}}
 \right)
 \\ \leq &C_0\cdot \sum_{\delta\in \subvec{\mathcal{C}}_{g,m}}\sum_{(\delta,\delta_{\alpha^-})\in \subvec{\mathcal{H}}_\alpha(\delta)}
\left(\frac{B_i(\delta,\delta_{\alpha^-})}{1+ K\cdot  e^{\ell_i(\delta)}}
 \right)        
  \\ \leq &C_0\cdot \sum_{\delta\in \subvec{\mathcal{C}}_{g,m}} \frac{1}{ K\cdot  e^{\ell_i(\delta)}}       \quad\text{Lemma~\ref{lemma:B1}}
    \\ \leq &C_0\cdot \sum_{t=1}^{+\infty} \frac{ D_i(t,l(s))}{ K\cdot  e^t}.
\end{aligned}
\end{align*} 
By our assumption Equation \eqref{equation:assumption}, we have $D_i(t,l(s)) \leq  c(l(s))\cdot t^{6g-6+2m}$ for $s\in [0,1]$. Since $[0,1]$ is compact, there is $Q>0$ such that $c(l(s))\leq Q$. Thus the rest sum of the above summation is bounded above by 
\[\frac{C_0 Q}{K}\cdot \sum_{t=r+1}^{+\infty} \frac{ t^{6g-6+2m}}{  e^t}\]
which converges to zero when $r$ goes to infinity.

The second summation of Equation \eqref{equation:fs} is finite over $\gamma$ and
\[\sum_{(\gamma,\gamma_{\alpha^-})\in \subvec{\mathcal{H}}_\alpha(\gamma)}\frac{\log B_i^{l(s)}\left(\alpha^-;\alpha^+,\gamma^-,\gamma^+\right)
}{\ell_i^{l(s)}(\alpha)}\leq 1.\]
Hence we have $F(s)$ uniformly convergent on $s\in [0,1]$. Thus $F(1)=1$.

\end{proof}

\newpage

\section{Applications}
\label{section:app}
\subsection{Simple spectral discreteness}\label{sec:spectraldiscreteness}

\begin{defn}[Simple spectra]
Recall $S=S_{g,m}$ where $2g-2+m>0$.
Let $\rho\in \mathrm{Pos}_n^u(S)$ be a $\operatorname{PGL}_n(\mathbb{R})$-positive representation with unipotent boundary monodromy, and let $\xvec{\mathcal{C}}_{g,m}$ denote the collection of oriented simple closed geodesics on $S$ up to homotopy. We define the following spectra:
\begin{enumerate}
\item
the \emph{simple $\ell_i$-spectrum}:
\[
\left\{\ell_i^\rho(\gamma)\mid \gamma\in\xvec{\mathcal{C}}_{g,m}\right\},
\]
\item
the \emph{simple largest-eigenvalue spectrum}:
\[
\left\{\lambda_1(\rho(\gamma))\mid \gamma\in\xvec{\mathcal{C}}_{g,m}\right\},
\]
\item
and the \emph{simple $\ell:=\sum_{i=1}^{n-1} \ell_i$-spectrum}:
\[
\left\{\ell^\rho(\gamma)\mid \gamma\in\xvec{\mathcal{C}}_{g,m}\right\}.
\]
\end{enumerate}
\end{defn}

Our goal in this subsection is to prove that the above simple spectra are discrete for any positive representation $\rho \in \mathrm{Pos}_n^u(S_{g,m})$. Our proof relies on the Theorem~\ref{theorem:puncturecase}. Let $\xvec{\mathcal{H}}_p(\gamma)$ denote the subset of $\xvec{\mathcal{H}}_p$ consisting of all boundary-parallel half-pants with $\gamma$ as its oriented cuff.

\begin{lem}\label{thm:halfpantslbound}
Given a $\operatorname{PGL}_n(\mathbb{R})$-positive representation $\rho\in \mathrm{Pos}_n^u(S)$ with unipotent boundary monodromy equipped with an auxiliary cusped complete hyperbolic surface $\Sigma=(S,h_\rho)$ as in Definition \ref{definition:bin}, let $p$ be a distinguished puncture of $S$. There is a universal constant $b^\rho>0$ such that for every oriented simple closed curve $\gamma\in\xvec{\mathcal{C}}_{g,m}$, there exists an embedded pair of half-pants $(\gamma,\gamma_p)\in\xvec{\mathcal{H}}_\alpha(\gamma)$ such that:
\begin{align*}
B_i(\gamma,\gamma_p) \geq b^\rho.
\end{align*}
\end{lem}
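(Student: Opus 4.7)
The plan is a pigeonhole argument combining a topological upper bound on $|\xvec{\mathcal{H}}_p(\gamma)|$ with a uniform geometric lower bound on the total contribution $\sum_{\mu \in \xvec{\mathcal{H}}_p(\gamma)} B_i(\mu)$. First I will show that $|\xvec{\mathcal{H}}_p(\gamma)| \leq K$ for a topological constant $K = K(S)$. Cutting $S$ along $\gamma$ produces at most two subsurfaces, and the component $C_p(\gamma)$ containing $p$ has Euler characteristic bounded by $|2g-2+m|$; every $\mu \in \xvec{\mathcal{H}}_p(\gamma)$ is half of an embedded pair of pants $(p,\beta,\gamma)$ in $C_p(\gamma)$ for some third cuff $\beta$ (either a boundary component of $S$ in $C_p(\gamma)$ or an interior simple closed curve bounding pants with $p$ and $\gamma$), and the finite number of homotopy classes of such $\beta$ is uniformly bounded.

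Next, for each $\gamma$ I propose to single out a canonical representative $\mu_\gamma = (\gamma,\gamma_p) \in \xvec{\mathcal{H}}_p(\gamma)$, where $\gamma_p$ is the simple bi-infinite geodesic from $p$ spiraling asymptotically onto $\gamma$ from the $p$-side with a preselected orientation, and to bound $B_i(\mu_\gamma) = P_i^{\mu_\gamma}/P_i^p$ from below. Following the computations in the proof of Theorem~\ref{theorem:equsl3}, the numerator $P_i^{\mu_\gamma}$ expands as a geometric series of lozenge contributions $\alpha^{\tilde{p};\,\rho(\gamma)^k\tilde{p},\,\gamma^+}_{n-i,i-c,c}$ over $k \geq 0$, where Proposition~\ref{prop:potentialspectral} implies that consecutive terms differ by the factor $e^{-\ell_i(\gamma)}$. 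Re-summing yields a closed-form expression for $B_i(\mu_\gamma)$ in terms of the triple ratios of $(\tilde{p},\rho(\gamma)\tilde{p},\gamma^+)$, edge functions along $\gamma_p$, and $\ell_i(\gamma)$.

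Uniformity in $\gamma$ then follows from Theorem~\ref{thm:bounded}: the triple ratios of embedded ideal triangles (such as $(\tilde{p},\rho(\gamma)\tilde{p},\gamma^+)$) lie in a fixed compact interval in $\mathbb{R}_{>0}$, and the analogous compactness argument—applied to edges of embedded ideal triangulations—controls the edge functions along $\gamma_p$. Combining these with the closed-form expression should give $B_i(\mu_\gamma) \geq c(\rho) > 0$, and pigeonhole over $|\xvec{\mathcal{H}}_p(\gamma)| \leq K$ then yields $b^\rho = c(\rho)/K$.

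The main obstacle I anticipate is the final uniformity step: although individual gap functions $B_i(\mu)/(1 + e^{\kappa_i+\ell_i})$ may decay exponentially in $\ell_i(\gamma)$, one must separately verify that $B_i(\mu_\gamma)$ itself does not degenerate. This is clear for $S_{1,1}$, where $\sum_{\mu \in \xvec{\mathcal{H}}_p(\gamma)} B_i(\mu) = 1$ and pigeonhole immediately gives $b^\rho = 1/2$; but for general $S_{g,m}$ it requires demonstrating that the leading lozenge $\alpha^{\tilde{p};\,\rho(\gamma)\tilde{p},\,\gamma^+}_{n-i,i,0}$ is comparable, up to bounded multiplicative error depending only on $\rho$, to the full Goncharov--Shen potential $P_i^p$. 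The key input here is the compactness mechanism underlying Section~\ref{sec:xcoords}, exploited on the (topologically finite) combinatorial types of embedded pairs of half-pants with cuff $\gamma$.
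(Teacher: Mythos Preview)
Your proposal has two genuine gaps, one in each half of the plan.

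First, the pigeonhole step rests on the claim that $|\xvec{\mathcal{H}}_p(\gamma)|$ is bounded by a topological constant $K=K(S)$. This is false once $S$ has enough complexity. After cutting along $\gamma$, the component $C_p(\gamma)$ containing $p$ can have positive genus or many boundary components, and then there are \emph{infinitely many} homotopy classes of simple closed curves $\beta$ in $C_p(\gamma)$ such that $(p,\beta,\gamma)$ bound an embedded pair of pants (equivalently, infinitely many homotopy classes of simple arcs from $p$ to $\gamma$). So neither ``finitely many $\beta$'' nor ``$|\xvec{\mathcal{H}}_p(\gamma)|\le K$'' holds. Your observation that the $S_{1,1}$ case works by pigeonhole is correct precisely because this is the unique case where $|\xvec{\mathcal{H}}_p(\gamma)|=2$.

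Second, even for your canonical $\mu_\gamma$, the uniformity argument breaks. You invoke an ``analogous compactness argument'' for the edge functions along $\gamma_p$, but there is no such result: unlike triple ratios (Theorem~\ref{thm:bounded}), edge functions are shear-type coordinates and are \emph{unbounded} over embedded ideal quadrilaterals. So your closed-form expression cannot be bounded below merely by controlling triple ratios. (Also, $P_i^{\mu_\gamma}$ is not literally an infinite geometric series in $k$; the $k\to\infty$ limit in the proof of Theorem~\ref{theorem:equsl3} is a limiting computation along a Dehn-twist family of triangulations, not a series expansion of $P_i^{\mu_\gamma}$ itself.)

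The paper's proof avoids both problems by working geometrically in the auxiliary hyperbolic metric. For each $\gamma$ one takes the point $x_0\in\gamma$ closest to the length-$1$ horocycle $\eta_p$ and the distance-realizing geodesic ray $\sigma$ from $x_0$ into the cusp; this determines a specific half-pants $(\gamma,\gamma_p)$. The pairs $(x_0,v_0)$ so produced all lie in a fixed compact subset $\Xi\subset T^1\Sigma^{\ge 1}$ (shortest-arc property forces $x_0$ into the thick part and makes the ray simple and cusp-bound). The associated marked ideal triangles $[\tilde p,b_0,c_0]_\rho$ therefore range over a compact subset of $\mathit{Tri}(\Sigma)$, on which the continuous positive function $P_i(\tilde p;b,c)/P_i^p$ attains a positive minimum $b^\rho$. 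Since $B_i(\gamma,\gamma_p)\ge P_i(\tilde p;b_0,c_0)/P_i^p$, the bound follows. The key idea you are missing is this compactness of a space of distance-minimizing rays, which replaces any attempt to bound coordinates algebraically.
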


\begin{proof}
Much like the proof of the boundedness of triple ratios (Theorem~\ref{thm:triplebounded}), we rely on a compactness argument. With respect to the hyperbolic metric $\Sigma$, the length $1$ horocycle $\eta_p$ around cusp $p$ separates $\Sigma$ into two connected components: an (open) annular cuspidal neighborhood $C_p\subset \Sigma$ as well as a (closed) homotopy retract $\Sigma^{(p)}:=\Sigma-C_p$. Also let $\Sigma^{\geq1}\subset \Sigma^{(p)}\subset\Sigma$ denote the compact subsurface of $\Sigma$ obtained from truncating every cusp of $\Sigma$ at its length $1$ horocycle.\medskip

Consider the following subset of the unit tangent bundle $T^1\Sigma$:
\begin{align*}
\Xi:=
\left\{
(x,v)\in T^1\Sigma
\;
\begin{array}{|l}
x\text{ is a point lying in }\Sigma^{\geq1}\text{ and the geodesic ray $\sigma_{(x,v)}$}\\
\text{shooting out from $x$ with initial vector $v$ is simple,}\\
\text{approaches the cusp $p$, and the arc $\sigma_{(x,v)}\cap\Sigma^{\geq1}$}\\
\text{realizes the distance between $x$ and $\eta_p$} 
\end{array}
\right\}.
\end{align*}
Let $d\Sigma^{\geq1}$ be the closed surface obtained by doubling $\Sigma^{\geq1}$. Then $T^1{\Sigma^{\geq1}}$ is a closed subset of the compact space $T^1 d\Sigma^{\geq1}$.
We now show that $\Xi$ is a closed subset of the restricted unit tangent bundle $T^1{\Sigma^{\geq1}}$ of $T^1\Sigma$ to $\Sigma^{\geq1}$, and is hence compact. Consider a sequence $\{(x_n,v_n)\in\Xi\}$ which converges to a point $(x_\infty,v_\infty)$. Since $\Sigma^{\geq1}$ is a closed subset of $\Sigma$, the limiting base point $x_\infty$ must lie on $\Sigma^{\geq1}$. Next, to show that the geodesic ray $\sigma_{(x_\infty,v_\infty)}$ approaches the cusp $p$, choose a fundamental domain $F$ for $\Sigma$ containing a lift $\tilde{x}_\infty$ of $x_\infty$ in the interior. The lifts to $F$ of the sequence $\{(x_k,v_k)\}$ for $k$ large enough necessarily all induce rays which shoot into the same lift $\tilde{p}$ of the cusp $p$, and hence $\sigma_{(x_\infty,v_\infty)}$ also shoots into $p$. Hence $\Xi$ is a compact set.\medskip

Since $\sigma_{(x,v)}$ shoots into cusp $p$, the corresponding subset to $\Xi$ in $\mathit{Tri}(\Sigma)$ is a compact subset with every point of the form $[\tilde{p},b,c]_\rho$, where $\tilde{p}$ is a lift of $p$. In particular, this means that the (strictly) positive function $P_i(\tilde{p};b,c)/P_i^p$ in Definition~\ref{defn:ratio} is well-defined and continuous on a compact set and achieves its minimum. We denote this minimum by $b^\rho>0$.\medskip

Given an arbitrary oriented (essential) simple closed curve $\gamma\in\xvec{\mathcal{C}}_{g,m}$, let $\gamma$ denote its geodesic realization on $\Sigma$. Further let $x_0\in\gamma$ be the point on $\gamma$ closest to the horocycle $\eta_p$, let $\sigma$ be one of the geodesic arcs realizing the distance between $x_0$ and $\eta_p$, and let $v_0$ denote the initial vector of $\sigma$. By construction, the geodesic ray $\sigma_{(x_0,v_0)}$ contains $\sigma$. Since $\sigma$ is a distance minimizing arc, it must meet $\eta_p$ perpendicularly and hence $\sigma_{(x_0.v_0)}$ shoot up straight into cusp $p$ after passing $\eta_p$. Moreover, the arc $\sigma$ must also be simple (so as to be distance minimizing), and hence $\sigma_{(x_0,v_0)}$ is the concatenation of $\sigma$ and a simple geodesic ray which lies in $C_p$ (and hence cannot intersect $\sigma$) and is thus simple. Therefore, we see that $(x_0,v_0)\in\Xi$. We denote its corresponding point in $\mathit{Tri}(\Sigma)$ by $[\tilde{p},b_0,c_0]_\rho$. Let $(\bar{\gamma},\bar{\gamma}_p)\in\mathcal{H}_p$ denote the unique embedded pair of half-pants on $S$ containing $\bar{\gamma}\cup\sigma_{(x_0,v_0)}$ (Figure~\ref{fig:ratiobound}).\medskip

\begin{figure}[h]
\includegraphics[scale=1]{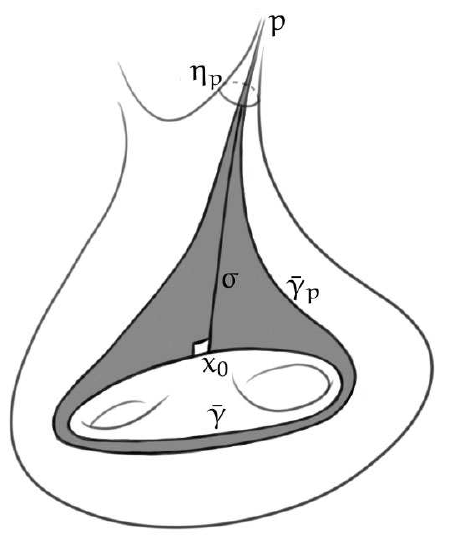}
\caption{The pair of half-pants $(\bar{\gamma},\bar{\gamma}_p)$ is the unique embedded pair of half-pants that contains $\bar{\gamma}$ and $\sigma(x_0,v_0)\supset\sigma$.}
\label{fig:ratiobound}
\end{figure}

We also know that $\sigma$ is perpendicular to $\gamma$, and by possibly replacing $\tilde{p}$ with a different lift of $p$, the point $b_0$ must be of one of the two fixed points of $\rho(\gamma)$. This in turn means that $B_i(\gamma,\gamma_p)\geq P_i(\tilde{p};b,c)/P_i^p\geq b^\rho>0$, thereby demonstrating the desired lower bound.
\end{proof}

\begin{thm}
\label{thm:discretespectra}
Let $\rho\in \mathrm{Pos}_n^u(S)$ be a $\operatorname{PGL}_n(\mathbb{R})$-positive representation with unipotent boundary monodromy. Then the simple $\ell_i$-spectrum for $1\leq i\leq n-1$, the simple largest eigenvalue spectrum and the simple $\ell$-spectrum for $\rho$ are all discrete.
\end{thm}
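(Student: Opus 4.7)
The plan is to argue by contradiction using the unipotent-bordered McShane-type inequality (Theorem~\ref{theorem:puncturecase}) in its half-pants form \eqref{equation:unihps}, combined with the half-pants ratio lower bound of Lemma~\ref{thm:halfpantslbound} and the triple-ratio boundedness of Theorem~\ref{thm:triplebounded}. The core observation is that, under the stated hypotheses, every oriented simple closed curve $\gamma$ contributes at least one summand in \eqref{equation:unihps} whose size is controlled below by an expression depending only on $\ell_i(\gamma)$. Hence, infinitely many distinct simple curves with bounded $i$-th length would produce infinitely many summands bounded below by a fixed positive constant, violating the inequality $\leq 1$.

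More precisely, fix $i\in\{1,\ldots,n-1\}$ and suppose, for contradiction, that the simple $\ell_i$-spectrum accumulates, i.e.\ there exist pairwise distinct $\gamma_1,\gamma_2,\ldots\in\xvec{\mathcal{C}}_{g,m}$ with $\ell_i(\gamma_k)\leq L$ for some finite $L$. First, by Theorem~\ref{thm:triplebounded} the triple ratios over all embedded ideal triangles on $S$ lie in a fixed compact subinterval of $\mathbb{R}_{>0}$; consequently, the quantities $\kappa_i(\delta,\delta_p)$ (Proposition~\ref{prop:ratioP}, Equation~\eqref{equation:kappa}), being logarithms of positive rational functions of triple ratios, are uniformly bounded in absolute value by some $K^\rho>0$. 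Next, by Lemma~\ref{thm:halfpantslbound}, each $\gamma_k$ is the cuff of some boundary-parallel pair of half-pants $(\gamma_k,(\gamma_k)_p)\in\xvec{\mathcal{H}}_p(\gamma_k)$ satisfying $B_i(\gamma_k,(\gamma_k)_p)\geq b^\rho$. Since distinct cuffs produce distinct pairs of half-pants, the pairs $(\gamma_k,(\gamma_k)_p)$ are pairwise distinct elements of $\xvec{\mathcal{H}}_p$, and each contributes a summand in \eqref{equation:unihps} bounded below by
\[
\frac{B_i(\gamma_k,(\gamma_k)_p)}{1+e^{\kappa_i(\gamma_k,(\gamma_k)_p)+\ell_i(\gamma_k)}}\;\geq\;\frac{b^\rho}{1+e^{K^\rho+L}}\;>\;0.
\]
Summing over $k$ produces a divergent series, contradicting $\leq 1$. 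This proves discreteness of the simple $\ell_i$-spectrum.

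The remaining two spectra follow from the $\ell_i$ case by purely algebraic comparisons, using the constraint $\lambda_1(\rho(\gamma))\cdots\lambda_n(\rho(\gamma))=1$ on the $\operatorname{SL}_n$-lift. A uniform upper bound on $\lambda_1(\rho(\gamma))$ forces a uniform lower bound $\lambda_2(\rho(\gamma))\geq\lambda_1(\rho(\gamma))^{-1/(n-1)}$ via the relation $\lambda_2^{n-1}\geq\lambda_2\cdots\lambda_n=1/\lambda_1$, whence $\ell_1(\gamma)=\log(\lambda_1/\lambda_2)$ is also bounded; discreteness of the $\ell_1$-spectrum therefore yields discreteness of the largest-eigenvalue spectrum. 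For $\ell=\sum_{i=1}^{n-1}\ell_i$, each $\ell_i(\gamma)>0$ for non-peripheral $\gamma$ (Theorem~\ref{theorem:loxo}), so $\ell(\gamma)\leq L$ implies $\ell_i(\gamma)\leq L$ for every $i$, and discreteness of the simple $\ell_1$-spectrum (say) immediately gives discreteness of the simple $\ell$-spectrum after absorbing the finitely many peripheral classes.

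The main technical content sits in the first paragraph, and the subtle point there is that the lower bound on $B_i(\gamma,(\gamma)_p)$ in Lemma~\ref{thm:halfpantslbound} must be uniform over \emph{all} simple closed curves $\gamma$, not just over some compact family. That uniformity is exactly what was established by the compactness argument in Lemma~\ref{thm:halfpantslbound} (packaging the geodesic-to-cusp configurations into a closed subset of $T^1\Sigma^{\geq1}$). Once one has this uniform $b^\rho$ together with the uniform $K^\rho$ from Theorem~\ref{thm:triplebounded}, the contradiction with the McShane inequality is immediate. The remaining deductions for $\lambda_1$ and $\ell$ are routine.
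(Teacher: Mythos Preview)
Your proof is correct and follows essentially the same approach as the paper: combine the half-pants McShane inequality \eqref{equation:unihps} with the uniform triple-ratio bound (Theorem~\ref{thm:bounded}) to control $\kappa_i$, and the uniform lower bound $b^\rho$ on $B_i(\gamma,\gamma_p)$ from Lemma~\ref{thm:halfpantslbound}, to conclude that only finitely many $\gamma$ can have bounded $\ell_i$. The only cosmetic difference is that you deduce discreteness of the largest-eigenvalue spectrum from $\ell_1$-discreteness via $\lambda_2\geq\lambda_1^{-1/(n-1)}$, whereas the paper passes through $\ell$-discreteness using $e^{\ell(\gamma)}=\lambda_1/\lambda_n\leq\lambda_1^{\,n}$; both are elementary eigenvalue comparisons.
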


\begin{proof}
We begin by rearranging the inequality Theorem \ref{theorem:puncturecase} to obtain the following expression for $i=1,\cdots,n-1$:

\begin{align*}
\sum_{\gamma\in\subvec{\mathcal{C}}_{g,m}}
\sum_{(\gamma,\gamma_p)\in\subvec{\mathcal{H}}_p(\gamma)}
\frac{B_i(\gamma,\gamma_p)}
{1+e^{\ell_i(\gamma)+\tau(\gamma,\gamma_p)}}
\leq 1.
\end{align*}
Invoking Theorem~\ref{thm:bounded} to assert that there exists some $\tau_{\min}$ such that $\tau(\gamma,\gamma_p)\leq \tau_{\max}$, we obtain:
\begin{align*}
\sum_{\gamma\in\subvec{\mathcal{C}}_{g,m}}
\left(
\frac{1}{1+e^{\ell_i(\gamma)+\tau_{\max}}}
\sum_{(\gamma,\gamma_p)\in\subvec{\mathcal{H}}_p(\gamma)}
B_i(\gamma,\gamma_p)
\right)
\leq 1.
\end{align*}
Further invoking Lemma~\ref{thm:halfpantslbound} to uniformly bound 
\begin{align*}
\sum_{(\gamma,\gamma_p)\in\subvec{\mathcal{H}}_p(\gamma)}
B_i(\gamma,\gamma_p)
\geq \sup_{(\gamma,\gamma_p)\in\subvec{\mathcal{H}}_p(\gamma)}
B_i(\gamma,\gamma_p)
\geq b^\rho.
\end{align*}
Hence:
\begin{align*}
\sum_{\gamma\in\subvec{\mathcal{C}}_{g,m}}
\frac{b^\rho}{1+e^{\ell_i(\gamma)+\tau_{\max}}}
\leq & 1.
\end{align*}
This suffices to ensure the discreteness of the simple $\ell_i$-spectrum.

Then
\[
\ell=\sum_{i=1}^{n-1} \ell_i
\] 
ensures that the $\ell$-spectrum is also discrete.
Furthermore, the fact that
\[
e^{\ell(\gamma)}>\lambda_1(\rho(\gamma))
\]
then ensures that the simple largest-eigenvalue spectrum is also discrete.
\end{proof}

\subsection{The collar lemma}
\label{sec:collarlemma}

As a second application of our McShane identity, we establish the collar lemma for $\rho \in \mathrm{Pos}_3(S_{g,m})$ which corresponds to a certain convex $\mathbb{RP}^2$ structure. We require our McShane identities for $\mathrm{Pos}_3(S_{1,1})$. Note also that we do not need the full force of the McShane identity, and only require the inequality.

\begin{lem}
\label{lemma;collarsimple}
Let $\rho\in \mathrm{Pos}_3^u(S_{1,1})$ be a $\operatorname{PGL}_3(\mathbb{R})$-positive representation with unipotent boundary monodromy (or equivalently cusped strictly convex $\mathbb{RP}^2$ structure on $S_{1,1}$).
We define 
\[T(\beta):=T(\tilde{p},\beta \tilde{p},\beta^+).\]
For distinct (oriented) simple closed geodesics  $\beta,\gamma\in\xvec{\mathcal{C}}_{1,1}$, let
\begin{align*}
u_1 &= T(\beta) \tfrac{\lambda_1(\rho(\beta))}{\lambda_2(\rho(\beta))},\;
u_2 = T(\beta^{-1}) \tfrac{\lambda_1(\rho(\beta^{-1}))}{\lambda_2(\rho(\beta^{-1}))},\\
u_3 &= T(\gamma) \tfrac{\lambda_1(\rho(\gamma))}{\lambda_2(\rho(\gamma))},\;
u_4 = T(\gamma^{-1}) \tfrac{\lambda_1(\rho(\gamma^{-1}))}{\lambda_2(\rho(\gamma^{-1}))}.
\end{align*}
Then, for any configuration of $\{i,j,k,l\}= \{1,2,3,4\}$, we have:
\begin{align*}
\left(\left(u_i u_j\right)^{\frac{1}{2}}-1 \right) \cdot  \left(\left(u_k u_l\right)^{\frac{1}{2}}-1 \right)>4.
\end{align*}
\end{lem}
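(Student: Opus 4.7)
The plan is to deduce the collar inequality by applying the McShane identity of Theorem~\ref{theorem:inequsl3s11} to the four oriented simple closed curves $\beta, \beta^{-1}, \gamma, \gamma^{-1}$ and then combining it with two elementary inequalities.

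First, I would observe that the summand of Theorem~\ref{theorem:inequsl3s11} indexed by $\delta \in \xvec{\mathcal{C}}_{1,1}$ equals $(1 + u(\delta))^{-1}$, where $u(\delta) := e^{\ell_1(\delta) + \tau(\delta)} = T(\delta)\cdot \tfrac{\lambda_1(\rho(\delta))}{\lambda_2(\rho(\delta))}$, so $u_1,u_2,u_3,u_4$ are precisely the values of $u$ at $\beta, \beta^{-1}, \gamma, \gamma^{-1}$. Because $S_{1,1}$ admits infinitely many homotopy classes of oriented simple closed curves (indexed by rational slopes together with $\infty$) and every summand in Theorem~\ref{theorem:inequsl3s11} is strictly positive, I obtain the strict inequality
\[
\tfrac{1}{1+u_1} + \tfrac{1}{1+u_2} + \tfrac{1}{1+u_3} + \tfrac{1}{1+u_4} \;<\; 1.
\]
Applied instead to any two-element subset $\{a,b\} \subset \{1,2,3,4\}$, the same reasoning and a direct rearrangement show that $\tfrac{1}{1+u_a} + \tfrac{1}{1+u_b} < 1$ is equivalent to $u_a u_b > 1$, so $u_a u_b > 1$ for every pair.

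Next I would establish the elementary inequality: whenever $u_a u_b \geq 1$,
\[
\tfrac{1}{1+u_a} + \tfrac{1}{1+u_b} \;\geq\; \tfrac{2}{1+\sqrt{u_a u_b}}.
\]
Setting $c = \sqrt{u_a u_b}$ and $s = u_a + u_b$, the left hand side equals $\tfrac{2+s}{1+s+c^2}$, whose $s$-derivative $\tfrac{c^2-1}{(1+s+c^2)^2}$ is non-negative when $c \geq 1$; AM--GM gives $s \geq 2c$, and evaluating at $s = 2c$ yields $\tfrac{2}{1+c}$.

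For any partition $\{i,j\} \sqcup \{k,l\} = \{1,2,3,4\}$, the pairwise bounds $u_i u_j, u_k u_l > 1$ from the previous paragraph let me apply this elementary inequality to each pair; combined with the four-term McShane bound, this gives
\[
\tfrac{2}{1+\sqrt{u_i u_j}} + \tfrac{2}{1+\sqrt{u_k u_l}} \;<\; 1.
\]
Writing $a = \sqrt{u_i u_j}$, $b = \sqrt{u_k u_l}$ and clearing denominators, this becomes $2(1+a) + 2(1+b) < (1+a)(1+b)$, which simplifies to $(a-1)(b-1) > 4$, as required. The analytic content of the argument is concentrated entirely in the McShane identity; the remaining steps are elementary. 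The main point of caution is that the uniform bound $u_a u_b > 1$ must be available for all six pairs (not merely the natural pairs $\{1,2\}$ and $\{3,4\}$), and this is exactly what the McShane identity applied to two-element subsets delivers.
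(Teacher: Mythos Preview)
Your proof is correct and follows essentially the same approach as the paper: both start from the strict inequality $\sum_{s=1}^4 (1+u_s)^{-1} < 1$ obtained from Theorem~\ref{theorem:inequsl3s11}, and both reduce to the desired product inequality via AM--GM (your convexity lemma $\frac{1}{1+u_a}+\frac{1}{1+u_b}\geq \frac{2}{1+\sqrt{u_au_b}}$ is exactly the AM--GM step, repackaged). The paper's version is marginally more streamlined in that it does the algebra in a single chain---multiplying through by $\prod_s(1+u_s)$, adding $1-u_iu_j-u_ku_l$, then factoring---and so never needs your auxiliary observation that $u_au_b>1$ for each pair; but this is a presentational difference rather than a substantive one.
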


\begin{proof}
By Theorem~\ref{theorem:inequsl3s11}, we have:
\begin{align*}
\sum_{s=1}^4\frac{1}{1+u_s}< 
\sum_{\delta\in{\subvec{\mathcal{C}}_{1,1}}}
\frac{1}{1+T(\delta)\frac{\lambda_1(\rho(\delta))}{\lambda_2(\rho(\delta))}}
\leq 1.
\end{align*}
Multiplying both sides by $\prod_{s=1}^4 (1+u_s)$ and rearranging the resulting terms, we obtain:
\begin{align*}
3+ 2 \sum_{i=s}^4 u_s + \sum_{s<t} u_s u_t< \prod_{s=1}^4 u_s.
\end{align*}
Further adding $(1-u_i u_j- u_k u_l)$ to both sides, we get:
\begin{align*}
(2+u_i+u_j)(2+u_k+u_l)  < (1-u_i u_j)(1- u_k u_l).
\end{align*}
By the algebraic mean-geometric mean inequality, we obtain:
\begin{align*}
(2+2 (u_i u_j)^{\frac{1}{2}})(2+2(u_k u_l)^{\frac{1}{2}})  < (1-u_i u_j)(1- u_k u_l),
\end{align*}
and hence:
\begin{align*}
\left(\left(u_i u_j\right)^{\frac{1}{2}}-1 \right) \cdot  \left(\left(u_k u_l\right)^{\frac{1}{2}}-1 \right)>4.
\end{align*}
\end{proof}

\begin{prop}\label{thm:collarminint}
Let $\rho\in \mathrm{Pos}_3(S_{1,1})$ be a $\operatorname{PGL}_3(\mathbb{R})$-positive representation. The twice of Hilbert lengths $\ell$ of any two distinct simple closed geodesics $\beta$ and $\gamma$ satisfy the following inequality:
\begin{align}
\label{eq:collars10}
(e^{\frac{1}{2}\ell(\beta)}-1)(e^{\frac{1}{2}\ell(\gamma)}-1)
>4.
\end{align}
\end{prop}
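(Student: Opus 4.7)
The plan is to leverage Lemma~\ref{lemma;collarsimple} directly by reorganizing its conclusion with a suitable pairing, and to reduce the quantities $u_1 u_2$ and $u_3 u_4$ to the Hilbert exponentials $e^{\ell(\beta)}$ and $e^{\ell(\gamma)}$ via two elementary identities: the $n=3$ eigenvalue inversion symmetry and the triple ratio symmetry~\eqref{equation:triplesym}.

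First I would record that for $n=3$ the eigenvalues satisfy $\lambda_i(\rho(\delta^{-1})) = \lambda_{4-i}(\rho(\delta))^{-1}$, so
\begin{align*}
\frac{\lambda_1(\rho(\delta^{-1}))}{\lambda_2(\rho(\delta^{-1}))}=\frac{\lambda_2(\rho(\delta))}{\lambda_3(\rho(\delta))}=e^{\ell_2(\delta)},
\end{align*}
and hence, using $\ell(\delta)=\ell_1(\delta)+\ell_2(\delta)$,
\begin{align*}
u_1 u_2 \;=\; T(\beta)\,T(\beta^{-1})\cdot e^{\ell_1(\beta)+\ell_2(\beta)}\;=\; T(\beta)\,T(\beta^{-1})\cdot e^{\ell(\beta)}.
\end{align*}
The surprising triple ratio symmetry established in Equation~\eqref{equation:triplesym} (with $\gamma$ replaced by $\beta$) reads $T(\tilde p,\beta\tilde p,\beta^+)\cdot T(\tilde p,\beta^{-1}\tilde p,\beta^-)=1$, which is precisely $T(\beta)T(\beta^{-1})=1$. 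Therefore $u_1 u_2 = e^{\ell(\beta)}$, and the identical argument yields $u_3 u_4 = e^{\ell(\gamma)}$.

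Applying Lemma~\ref{lemma;collarsimple} with the pairing $(i,j,k,l)=(1,2,3,4)$, which groups the two orientations of $\beta$ together and the two orientations of $\gamma$ together, we obtain immediately
\begin{align*}
\bigl(e^{\ell(\beta)/2}-1\bigr)\bigl(e^{\ell(\gamma)/2}-1\bigr)=\bigl((u_1u_2)^{1/2}-1\bigr)\bigl((u_3u_4)^{1/2}-1\bigr)>4,
\end{align*}
which settles the result whenever $\rho\in\mathrm{Pos}_3^u(S_{1,1})$.

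The only remaining issue is to promote this to arbitrary $\rho\in\mathrm{Pos}_3(S_{1,1})$, i.e. to permit loxodromic boundary monodromy. Here I would mimic the derivation of Lemma~\ref{lemma;collarsimple} using Corollary~\ref{corollary:S11sl3h} in place of Theorem~\ref{theorem:inequsl3s11}: pick the four summands indexed by $\beta^{\pm 1},\gamma^{\pm 1}$ from the loxodromic McShane identity, apply positivity of the remaining terms to extract a strict inequality, and repeat the AM--GM manipulation verbatim. The crucial input is that $T(\beta)T(\beta^{-1})=1$ continues to hold in the loxodromic-bordered setting once we pass to the canonical lift of Definition~\ref{definition:ratioperiod}; this follows from the same eigenvector-based computation underlying Lemma~\ref{proposition:lim} as already extended in Remark~\ref{remark:generallim}. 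The main obstacle is simply the bookkeeping involved in matching the orientations of $\beta^{\pm 1}$ and $\gamma^{\pm 1}$ with the correct attracting and repelling fixed points used in the canonical lift, so that the product of triple ratios collapses to $1$ exactly as in the unipotent case.
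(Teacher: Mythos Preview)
Your unipotent case is correct and identical to the paper's: you pair $u_1$ with $u_2$ and $u_3$ with $u_4$, invoke Equation~\eqref{equation:triplesym} to kill the triple ratio factors, and read off $u_1u_2=e^{\ell(\beta)}$, $u_3u_4=e^{\ell(\gamma)}$ from Lemma~\ref{lemma;collarsimple}.

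The loxodromic case, however, has a real gap. You propose to substitute Corollary~\ref{corollary:S11sl3h} for Theorem~\ref{theorem:inequsl3s11} and then ``repeat the AM--GM manipulation verbatim''. This does not go through: the summands in Corollary~\ref{corollary:S11sl3h} have the shape
\[
\log\!\left(\frac{e^{\ell_1(\alpha)/2}+u_s}{e^{-\ell_1(\alpha)/2}+u_s}\right),
\]
not $\tfrac{1}{1+u_s}$, and the algebra of Lemma~\ref{lemma;collarsimple} (multiply through by $\prod(1+u_s)$, rearrange, apply AM--GM) simply does not apply to that expression. A bridging step is needed: one must first show that each pair of oppositely oriented summands, divided by $\ell_1(\alpha)$, dominates $\tfrac{2}{1+e^{\ell(\delta)/2}}$. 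That step is not formal; it requires exactly the convexity fact
\[
XY>1\ \Longrightarrow\ (1+X^2)^{-1}+(1+Y^2)^{-1}\geq 2(1+XY)^{-1}
\]
applied to $X,Y$ with $XY=e^{\ell(\delta)/2}$, followed by integrating in $\ell_1(\alpha)$ from $0$. The paper supplies precisely this argument, but organized around the half-pants identity of Theorem~\ref{thm:equgeo}: it works with the gap functions indexed by $\mu_i^\delta\in\xvec{\mathcal{H}}_\alpha$ and the ratios $r_1(\mu_i^\delta)$, pairs each $(\mu_i^\delta)$-term with the corresponding $(\mu_i^{\delta^{-1}})$-term (so that the product entering the convexity lemma is $e^{\ell(\delta)/2}>1$), integrates the resulting differential inequality~\eqref{eq:s11holeprelim} to obtain~\eqref{eq:hyptermcompare}, and then sums eight such terms inside the McShane identity to reach $\tfrac{2}{1+e^{\ell(\beta)/2}}+\tfrac{2}{1+e^{\ell(\gamma)/2}}<1$. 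Your sketch identifies neither this intermediate inequality nor the need to pair orientations before invoking convexity (pairing is what guarantees the hypothesis $XY>1$; for a single $u_s=e^{\tau(\delta)+\ell_1(\delta)}$ there is no reason to have $u_s>1$). So your outline for the loxodromic case names the right identity and the right symmetry $T(\beta)T(\beta^{-1})=1$, but is missing the actual analytic core of the argument.
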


\begin{proof}
We first consider the unipotent case. Recall from Equation~\eqref{equation:triplesym} that
\begin{align*}
T(\tilde{p},\delta \tilde{p}, \delta^+) \cdot T(\tilde{p},\delta^{-1} \tilde{p}, \delta^-) =1,
\end{align*}
where $\tilde{p}$ is a lift of the puncture $p$ such that $(\tilde{p},\delta \tilde{p}, \delta^+)$ is a lift of an ideal triangle. This means that the product terms $u_1 u_2$ and $u_3u_4$ satisfy
\[
u_1 u_2=\tfrac{\lambda_1(\rho(\beta))}{\lambda_3(\rho(\beta))}=e^{\ell(\beta)}
\quad\text{ and }\quad
u_3 u_4=\tfrac{\lambda_1(\rho(\gamma))}{\lambda_3(\rho(\gamma))}=e^{\ell(\gamma)},
\] 
and hence we obtain Equation~\eqref{eq:collars10} as desired.\medskip

Similarly for the distinguished oriented boundary component $\alpha$ (such that $S_{1,1}$ is on the left) has $1$st length zero.

We now turn to the case where the distinguished oriented boundary component $\alpha$ has $1$st length non-zero. For any simple closed geodesic $\delta$ on $\Sigma$, let $\mu_1^\delta,\mu_2^\delta\in\xvec{\mathcal{H}}_{\alpha}$ denote two boundary-parallel pairs of half-pants which have $\delta$ as its oriented cuff such that their underlying half-pants are distinct. Recall Definition~\ref{definition:Ri}, we have 
\begin{equation}
\label{equ:R121}
r_1(\mu_1^\delta)+ r_1(\mu_2^{\delta^{-1}})=1,\;\; r_1(\mu_1^\delta)=-r_1(\mu_1^{\delta^{-1}}).
\end{equation}
We consider two gap functions in Theorem~\ref{thm:equgeo} associated to one pair of half-pants:
$\log\left(
\frac{e^{r_1(\mu_i^\delta) \ell_1(\alpha)}+e^{\ell_1(\delta)+\tau(\delta)}}
{1+e^{\ell_1(\delta)+\tau(\delta)}}
\right)$ for $r_1(\mu_i^\delta)$
and 
$\log\left(
\frac{1+e^{\ell_1(\delta^{-1})+\tau(\delta^{-1})}}
{e^{-r_1(\mu_i^\delta) \ell_1(\alpha)}+e^{\ell_1(\delta^{-1})+\tau(\delta^{-1})}}\right)$ for $r_1(\mu_i^{\delta^{-1}})$.

We require the following fact: 
\begin{align*}
XY>1\Rightarrow (1+X^2)^{-1}+(1+Y^2)^{-1}\geq 2(1+XY)^{-1}.
\end{align*}
By taking $X=e^{\frac{-r_1(\mu_i^\delta) \ell_1(\alpha)+\ell_1(\delta)+\tau(\delta)}{2}}$ and $Y=e^{\frac{r_1(\mu_i^\delta) \ell_1(\alpha)+\ell_1(\delta^{-1})+\tau(\delta^{-1})}{2}}$, we obtain:
\begin{align}
\frac{2 r_1(\mu_i^\delta)}
{1+e^{\frac{1}{2}\ell(\delta)}}
\leq \frac{r_1(\mu_i^\delta) e^{r_1(\mu_i^\delta) \ell_1(\alpha)}}
{e^{r_1(\mu_i^\delta) \ell_1(\alpha)}+e^{\ell_1(\delta)+\tau(\delta)}}
+\frac{r_1(\mu_i^\delta) e^{-r_1(\mu_i^\delta) \ell_1(\alpha)}}
{e^{-r_1(\mu_i^\delta) \ell_1(\alpha)}+e^{\ell_1(\delta^{-1})+\tau(\delta^{-1})}}.\label{eq:s11holeprelim}
\end{align}
The above inequality in turn leads to the following comparison: for $\ell_1(\alpha)>0$,
\begin{align}
\frac{2r_1(\mu_i^\delta) \ell_1(\alpha)}
{1+e^{\frac{1}{2}\ell(\delta)}}
\leq \log\left(
\frac{e^{r_1(\mu_i^\delta) \ell_1(\alpha)}+e^{\ell_1(\delta)+\tau(\delta)}}
{1+e^{\ell_1(\delta)+\tau(\delta)}}\right)
+\log\left(
\frac{1+e^{\ell_1(\delta^{-1})+\tau(\delta^{-1})}}
{e^{-r_1(\mu_i^\delta) \ell_1(\alpha)}+e^{\ell_1(\delta^{-1})+\tau(\delta^{-1})}}\right).
\label{eq:s11holeineq}
\end{align}
To see this, note that Equation~\eqref{eq:s11holeineq} is an obvious equality when $\ell_1(\alpha)=0$ and its derivative with respect to $\ell_1(\alpha)$ satisfies Equation~\eqref{eq:s11holeprelim} for $\ell_1(\alpha)>0$. We see that:
\begin{equation}
\label{eq:hyptermcompare}
\begin{aligned}
\frac{2 r_1(\mu_i^\delta)}
{1+e^{\frac{1}{2}\ell(\delta)}}
\leq&
\frac{1}{\ell_1(\alpha)}
\log\left(
\frac{e^{r_1(\mu_i^\delta) \ell_1(\alpha)}+e^{\ell_1(\delta)+\tau(\delta)}}
{1+e^{\ell_1(\delta)+\tau(\delta)}}
\right)\\
&+
\frac{1}{\ell_1(\alpha)}
\log\left(
\frac{1+e^{\ell_1(\delta^{-1})+\tau(\delta^{-1})}}
{e^{-r_1(\mu_i^\delta) \ell_1(\alpha)}+e^{\ell_1(\delta^{-1})+\tau(\delta^{-1})}}\right).
\end{aligned}
\end{equation}
There is one inequality of the same form as Equation~\eqref{eq:hyptermcompare} for each choice of $\delta=\beta,\gamma$ and $i=1$ or $2$. This makes a total of four such inequalities, and hence eight right-hand side terms. Crucially, these eight terms are distinct summands of the McShane identity for $\rho \in \mathrm{Pos}_3(S_{1,1})$ by Theorem~\ref{thm:equgeo}, and hence by Remark \ref{remark:xninq}:
\begin{align*}
\frac{2r_1(\mu_1^\beta)}{1+e^{\frac{1}{2}\ell(\beta)}}
+\frac{2r_1(\mu_2^\beta)}{1+e^{\frac{1}{2}\ell(\beta)}}
+\frac{2r_1(\mu_1^\gamma)}{1+e^{\frac{1}{2}\ell(\gamma)}}
+\frac{2 r_1(\mu_2^\gamma)}{1+e^{\frac{1}{2}\ell(\gamma)}}
< 1.
\end{align*}
By Equation~\eqref{equ:R121}, we then obtain
\begin{align*}
\frac{2}{1+e^{\frac{1}{2}\ell(\beta)}}
+\frac{2}{1+e^{\frac{1}{2}\ell(\gamma)}}
< 1,
\end{align*}
which rearranges to give Equation~\eqref{eq:collars10} as desired.
\end{proof}

\begin{thm}[Collar lemma]
\label{thm:collar}
Let $\rho\in \mathrm{Pos}_3(S_{g,m})$ be a $\operatorname{PGL}_3(\mathbb{R})$-positive representation. Any two intersecting simple closed geodesics $\beta,\gamma$ satisfy the following inequality:
\begin{align}
(e^{\frac{1}{2}\ell(\beta)}-1)(e^{\frac{1}{2}\ell(\gamma)}-1)>4.\label{eq:collars11}
\end{align}
\end{thm}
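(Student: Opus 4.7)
The plan is to reduce to the one-holed-torus case already established in Proposition~\ref{thm:collarminint}. Given intersecting simple closed geodesics $\beta,\gamma$ on $S_{g,m}$, I would exhibit a one-holed-torus substructure (topological or virtual) containing $\beta$ and an appropriate conjugate of $\gamma$, and then apply the $S_{1,1}$ result.

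First I would handle the case $i(\beta,\gamma)=1$. The regular neighborhood $N := N(\beta\cup\gamma)\subset S_{g,m}$ is homeomorphic to $S_{1,1}$ and is an essential incompressible subsurface, so $\pi_1(N) = \langle\beta,\gamma\rangle \leq \pi_1(S_{g,m})$. The restriction $\rho' := \rho|_{\pi_1(N)}$ lies in $\mathrm{Pos}_3(S_{1,1})$: the limit set of $\rho'(\pi_1(N))$ in $\partial\Omega$ identifies with $\partial_\infty\pi_1(N)$, and the restriction of the positive $\rho$-equivariant map $\xi_\rho$ to this limit set is a $\rho'$-equivariant positive map, since positive $d$-tuples of flags restrict to positive subtuples on any cyclically ordered subset. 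Because $\ell(\beta)$ and $\ell(\gamma)$ are conjugacy invariants of $\rho(\beta)$ and $\rho(\gamma)$, they are unchanged by the restriction, so Proposition~\ref{thm:collarminint} applied to $\rho'$ yields Equation~\eqref{eq:collars11} directly.

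For the general case $i(\beta,\gamma) = k \geq 2$, I would pick lifts $\widetilde\beta,\widetilde\gamma$ of $\beta,\gamma$ in the universal cover $\Omega$ that cross transversely at exactly one point. Letting $g\in\pi_1(S_{g,m})$ be such that $g$ carries the preferred lift of the axis of $\gamma$ onto $\widetilde\gamma$, set $\gamma' := g\gamma g^{-1}$ and $\Gamma' := \langle\beta,\gamma'\rangle$. The axes of $\rho(\beta)$ and $\rho(\gamma')$ in $\Omega$ cross exactly once, so $\Gamma'$ is a rank-two free subgroup whose generators are represented by simple closed curves intersecting once on the abstract one-holed torus modelling $\Gamma'$. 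The same limit-set argument exhibits $\rho|_{\Gamma'}\in\mathrm{Pos}_3(S_{1,1})$, and since $\ell(\gamma') = \ell(g\gamma g^{-1}) = \ell(\gamma)$ by conjugation invariance of the Hilbert length, Proposition~\ref{thm:collarminint} applied to $\rho|_{\Gamma'}$ yields the desired inequality for $\beta$ and $\gamma$.

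The main obstacle I anticipate is verifying rigorously that $\rho|_{\Gamma'}$ (or $\rho|_{\pi_1(N)}$) belongs to $\mathrm{Pos}_3(S_{1,1})$ in the sense of Definition~\ref{definition:posrep}. Concretely, one must exhibit a $\rho|_{\Gamma'}$-equivariant positive map from $\partial_\infty\Gamma'$ to the flag variety $\mathcal{B}$; the natural candidate is the restriction of $\xi_\rho$ to the limit set of $\rho(\Gamma')$ in $\partial\Omega$. The technical point is to identify this limit set with $\partial_\infty\Gamma'$ in a way that matches the two cyclic orders (one inherited from $\partial\Omega\cong\partial_\infty\pi_1(S_{g,m})$, the other coming from an auxiliary hyperbolic metric on the abstract one-holed torus modelling $\Gamma'$). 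Once this compatibility of cyclic orders is established, positivity descends from positivity of $\xi_\rho$, since any subtuple of a positive cyclically ordered $d$-tuple of flags is itself positive, and the rest of the reduction is routine.
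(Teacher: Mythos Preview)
Your reduction is correct and takes a genuinely different route from the paper's. The paper never passes to an immersed one-holed torus; for $i(\beta,\gamma)\geq 3$ it instead performs curve surgery, classifying the subarcs of $\gamma$ cut by $\beta$ into ``type-A'' and ``type-B'' according to whether their endpoints leave $\beta$ on the same side or opposite sides, and then concatenating a short subarc of $\gamma$ with a short subarc of $\beta$ to build a new \emph{simple} closed geodesic $\gamma''$ on $S_{g,m}$ itself with $\ell(\gamma'')\leq\ell(\gamma)$ and $i(\beta,\gamma'')\leq 2$. The base cases are $i=1$ (convex hull an embedded $S_{1,1}$, apply Proposition~\ref{thm:collarminint} directly) and $i=2$ with convex hull $S_{0,4}$ (pass to the double cover $S_{1,4}$, where $\beta$ and $\gamma$ lift to once-intersecting curves inside an embedded $S_{1,1}$). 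Your route is shorter and uniform in $i(\beta,\gamma)$; the paper's produces an explicit shorter replacement curve on the original surface and restricts $\rho$ only to embedded subsurfaces or finite covers thereof, where the limit-set identification is more transparent.

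Two brief remarks on your sketch. First, that $\Gamma'=\langle\beta,\gamma'\rangle$ models $S_{1,1}$ with once-intersecting generators deserves a line of justification: since $\beta,\gamma$ are simple on $S_{g,m}$ their lifts to the $\Gamma'$-cover remain simple, the convex core has $\chi=-1$ and cannot be $S_{0,3}$ (the generators meet), and on $S_{1,1}$ two simple closed curves generating $\pi_1$ have algebraic---hence geometric---intersection number~$1$. Second, the positivity-of-restriction obstacle you flag is genuine but not peculiar to your argument; the paper also applies Proposition~\ref{thm:collarminint} to restrictions of $\rho$ without spelling this out. In your setting the required input is that the limit set of the quasiconvex subgroup $\Gamma'$ in $\partial_\infty\pi_1(S_{g,m})$ carries the same cyclic order as the abstract $\partial_\infty\Gamma'$, which follows because the $\Gamma'$-cover of $(S_{g,m},h_\rho)$ has convex core a hyperbolic $S_{1,1}$ whose ideal boundary is exactly that limit set.
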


\begin{proof}
We first note that Proposition~\ref{thm:collarminint}, coupled with the fact that the twice of Hilbert length $\ell(\delta)$ (which we call {\em length} for short in this proof) of a curve $\delta$ is equal to 
\[
\ell(\delta)=\ell_1(\delta)+\ell_2(\delta), 
\]
tells us that Equation~\eqref{eq:collars11} is true if the convex hull of $\beta\cup\gamma$ is a $1$-holed torus. Furthermore, whenever the convex hull of $\beta\cup\gamma$ is a $4$-holed sphere $\Sigma_{0,4}$, then $\Sigma_{0,4}$ is the quotient of a $4$-holed torus $\Sigma_{1,4}$ with respect to the action of an isometric involution (see Figure~\ref{fig:doublecover}):

\begin{figure}[h!]
\includegraphics[scale=1]{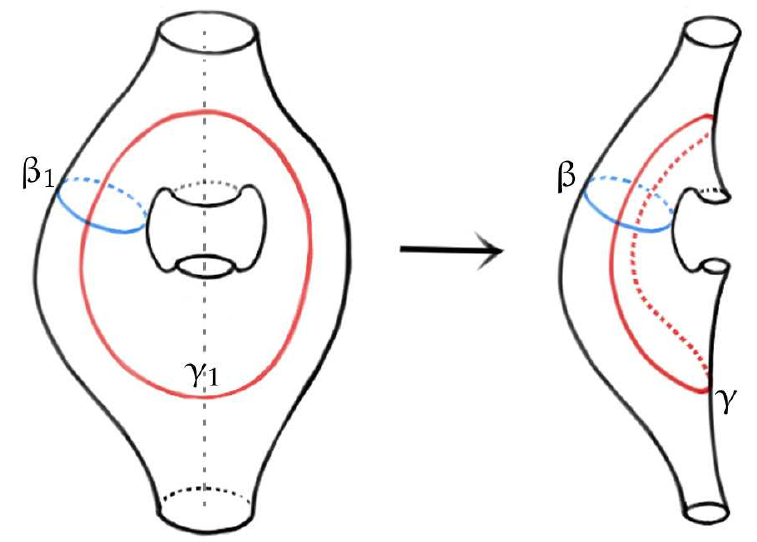}
\caption{The left $4$-holed torus double covers the right $4$-holed sphere, with identification given by $\pi$-rotation about the central vertical axis. The curves $\beta_1,\gamma_1$ respectively cover $\beta,\gamma$ precisely once and the convex hull of $\beta_1\cup\gamma_1$ is a $1$-holed torus.}
\label{fig:doublecover}
\end{figure}

The curve $\beta$ lifts to two simple connected geodesics $\beta_1,\beta_2$ in $\Sigma_{1,4}$, each of length equal to length of $\beta$. Likewise, the curve $\gamma$ also lifts to $\gamma_1$ and $\gamma_2$. The convex hull of $\beta_1\cup\gamma_1$ is a $1$-holed torus, and hence we once again obtain Equation~\eqref{eq:collars11}.\medskip

The above cases cover all possibilities where there are two or fewer (geometric) intersection points between $\beta$ and $\gamma$. We now turn to the case when there are at least three intersections. Let us assume without loss of generality that $\beta$ is shorter than or equal to $\gamma$. We also assume that the intersection points $\beta\cap\gamma$ are generic, our arguments still apply when there are non-generic intersection points with the small caveat that some of the geodesic segments we concatenate may be of length zero.\medskip

Consider the geodesic subarcs $\{\sigma\}$ on $\gamma$ with ends in $\beta\cap\gamma$, but not interior points. Note that this collection of subarcs may be bipartitioned into those whose endpoint tangent directions point to the same side of $\beta$ (left hand side of Figure~\ref{fig:abarcs}) and those whose endpoint directions point to opposite sides (right hand side of Figure~\ref{fig:abarcs}). We refer to the former as a \emph{type-A arc} and the latter as a \emph{type-B arc}.

\begin{figure}[h]
\includegraphics[scale=1]{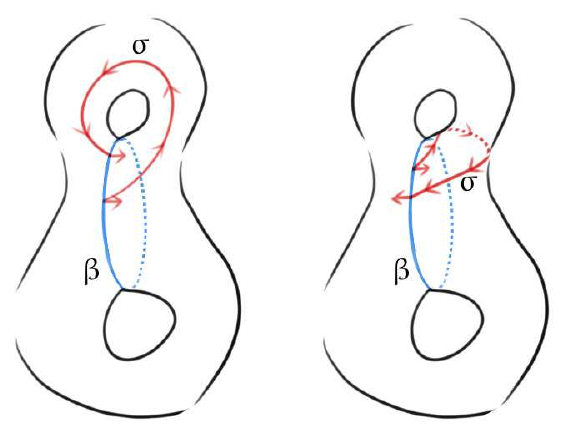}
\caption{A type-A arc (left) versus a type-B arc (right).}
\label{fig:abarcs}
\end{figure}

\textbf{Case 1: $\exists$ type-A arc ${\sigma}$ on ${\gamma}$ of length $\ell(\sigma)\leq\frac{1}{2}\ell(\gamma)$.} Join the two ends of ${\sigma}$ with the shorter of the two subarcs of ${\beta}$ traversing between the endpoints of ${\sigma}$. The resulting concatenated broken geodesic shortens to a unique simple closed geodesic ${\gamma}'$ which intersects ${\beta}$ precisely once. The length of ${\gamma}'$ satisfies:
\[
\ell(\gamma')\leq\tfrac{1}{2}(\ell(\beta)+\ell(\gamma))\leq\ell(\gamma),
\]
and the convex hull of ${\beta}\cup{\gamma}'$ is a $1$-holed torus. Therefore:
\begin{align}
(e^{\frac{1}{2}\ell(\beta)}-1)(e^{\frac{1}{2}\ell(\gamma)}-1)\geq
(e^{\frac{1}{2}\ell(\beta)}-1)(e^{\frac{1}{2}\ell(\gamma')}-1)>4,\label{eq:comparison}
\end{align}
as desired.\medskip

\textbf{Case 2: no type-A arcs on ${\gamma}$.} Let $N$ denote the number of intersection points in ${\beta}\cap{\gamma}$ (non-generic intersection points are counted with multiplicity). The no type-A arcs condition forces $N$ to be even. Hence, there are $N\geq4$ type-B arcs ${\sigma}_1,\ldots,{\sigma}_N$ which concatenate to form ${\gamma}$. Consider the $N$ geodesic arcs of the form ${\sigma}_i*{\sigma}_{i+1}$ (and ${\sigma}_N*{\sigma}_1$) obtained from concatenating consecutive type-B arcs. The total sum of the lengths of these concatenated arcs is $2\ell(\gamma)$, and the pigeonhole principle tells us that at least one has length shorter than $\frac{2\ell(\gamma)}{N}\leq\frac{\ell(\gamma)}{2}$.\medskip

Let ${\sigma}$ denote one such $\frac{\ell(\gamma)}{2}$-short concatenated arc and consider the closed broken geodesic formed by joining the endpoints of ${\sigma}$ with the shorter of the two arcs on ${\beta}$ adjoining the endpoints of ${\sigma}$, and denote its geodesic representative by ${\gamma}'$. The curve ${\gamma}'$ is either simple or may have one self-intersection. In the former case, we have two simple closed geodesics $\beta$ and $\gamma'$ with geometric intersection number equal to $2$ but algebraic intersection number equal to $0$. Hence $\beta\cup\gamma'$ lies on a $4$-holed sphere, and we once again obtain Equation~\eqref{eq:comparison}. In the latter case, the convex hull of ${\gamma}'$ is a pair of pants. and precisely one of the two ways of resolving the intersection point on ${\gamma}'$ produces an essential simple closed geodesic ${\gamma}''$ (see Figure~\ref{fig:collarcase2}). In particular, since Hilbert metric is a distance metric, the triangle inequality ensures that resolving crossings results in shorter rectifiable curves with even shorter geodesic representatives. Thus, we replace ${\gamma}'$ with ${\gamma}''$, and wind up with the former case.\medskip

In either of the two cases as in Figure~\ref{fig:collarcase2}, 
\begin{figure}[h]
\includegraphics[scale=1]{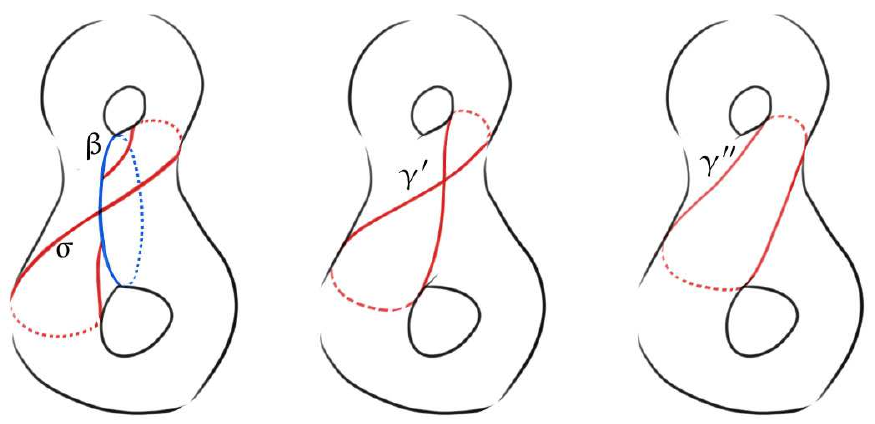}
\caption{An example of the how the arc $\sigma$ (left) is used to produce curves $\gamma'$ (center) and $\gamma''$ (right).}
\label{fig:collarcase2}
\end{figure}

\textbf{Case 3: $\exists$ type-A arc $b$ on $\gamma$ of length $\ell(b)>\frac{1}{2}\ell(\gamma)$.} Our argument here is similar to Case~2. Let $N$ again denote the number of intersection points $\beta\cap\gamma$. By assuming disjointness from Case~1, we may assume without loss of generality that there are $N-1$ consecutive type-B arcs $\sigma_1,\ldots,\sigma_{N-1}$ which, along with $b$, concatenate to form $\gamma$. The sum of the length of the following list of $N$ concatenated arcs
\[
\sigma_1*\sigma_2,\ldots,\sigma_i*\sigma_{i+1},\ldots,\sigma_{N-2}*\sigma_{N-1},\sigma_{N-1}*b,b*\sigma_1
\]
is equal to $2\ell(\gamma)$. By the pigeonhole principle, there must be at least one concatenated arc of the form $\sigma=\sigma_k*\sigma_{k+1}$ of length shorter than 
\[
\frac{2\ell(\gamma)-\ell(\sigma_{N-1}*b)-\ell(b)*\sigma_1)}{N-2}
<\frac{2\ell(\gamma)-2\ell(b)}{N-2}
<\frac{\ell(\gamma)}{N-2}.
\]
If $N>3$, the above inequality ensures that $\ell(\sigma)<\frac{\ell(\gamma)}{2}$. If $N=3$, then $\sigma$  must be $\sigma_1*\sigma_2$, and is the complementary arc to $b$. Hence $\sigma$ is again of length less than $\frac{\ell(\gamma)}{2}$. We may now run the latter half of the argument for Case 3 to obtain equation~\eqref{eq:collars11}.
\end{proof}

\begin{rmk}
Multiply both sides of Equation~\eqref{eq:collars11} by $(4 e^{\frac{\ell(\beta)}{4}} e^{\frac{\ell(\gamma)}{4}})^{-1}$ and we obtain
\begin{align*}
\sinh\left(\tfrac{1}{4}\ell(\beta)\right)
\cdot
\sinh\left(\tfrac{1}{4}\ell(\gamma)\right)> e^{-\frac{\ell(\beta)+\ell(\gamma)}{4}}.
\end{align*}
Our inequality is weaker than the ``sharp" inequality described in \cite[Conjecture 3.8]{lee2017collar}.
\end{rmk}

\subsection{Thurston-type ratio metrics}\label{sec:thurston}

Thurston showed in \cite[Theorem~3.1]{Thu98} that it is impossible for the simple marked length spectrum of one hyperbolic structure on a closed surface $S$ to dominate that of another. This non-domination ensures that Thurston's simple length ratio metric on $\mathit{Teich}(S)$ is positive.\medskip

Non-domination breaks down for bordered hyperbolic surfaces, and it is possible to map from a bordered surface to one where every geodesic is shorter \cite{papadopoulos2010shortening}. The way that Papadopoulous and Th{\'e}ret resolve this issue is to introduce orthogeodesic arcs into the collection of objects that one takes length ratios over. We show using McShane identities that the na\"{i}ve length ratio metric suffices provided that one fixes all boundary lengths.

\begin{thm}\label{thm:nondomborder}
Given marked hyperbolic surfaces $\Sigma_1,\Sigma_2\in\mathit{Teich}_{g,m}(L_1,\ldots,L_m)$ with fixed boundary lengths $L_1,\ldots,L_m\geq 0$ for $\alpha_1,\cdots,\alpha_m$. Then the marked simple geodesic spectrum for $\Sigma_1$ dominates the marked simple geodesic spectrum $\Sigma_2$ if and only if $\Sigma_1=\Sigma_2$.
\end{thm}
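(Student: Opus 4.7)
The strategy is to exploit the Mirzakhani--Tan-Wong-Zhang McShane identity as a rigidity obstruction to strict domination, boundary by boundary. For each boundary component $\bar{\alpha}_i$ of $S_{g,m}$ with $L_i>0$, Equation~\eqref{equation:mirD} asserts
\[
\sum_{(\bar{\beta},\bar{\gamma})\in\mathcal{P}_{\bar{\alpha}_i}}\mathcal{D}\!\left(L_i,\ell^{\Sigma}(\bar{\beta}),\ell^{\Sigma}(\bar{\gamma})\right)=L_i,
\]
where the summation index $\mathcal{P}_{\bar{\alpha}_i}$ depends only on the topology of $S_{g,m}$. A short calculation with $\mathcal{D}$ as in Equation~\eqref{equation:mirid} shows that for $L_i>0$ the summand $\mathcal{D}(L_i,y,z)$ is a strictly decreasing function of $y+z$ (its derivative in $u=e^{(y+z)/2}$ is $(e^{-L_i/2}-e^{L_i/2})/((e^{L_i/2}+u)(e^{-L_i/2}+u))<0$). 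In the all-cusp case $L_1=\cdots=L_m=0$ we instead invoke McShane's original cusped identity, whose summand $1/(1+e^{(y+z)/2})$ is likewise strictly decreasing in $y+z$. The key feature of these identities is that, because boundary lengths are fixed, both sides of the identity produce \emph{the same constant} when evaluated at $\Sigma_1$ and at $\Sigma_2$.

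Assume now that $\Sigma_1$ dominates $\Sigma_2$. Then for every $(\bar{\beta},\bar{\gamma})\in\mathcal{P}_{\bar{\alpha}_i}$ we have $\ell^{\Sigma_1}(\bar{\beta})+\ell^{\Sigma_1}(\bar{\gamma})\geq\ell^{\Sigma_2}(\bar{\beta})+\ell^{\Sigma_2}(\bar{\gamma})$, so by strict monotonicity every summand of the $\Sigma_1$-identity is \emph{at most} the corresponding summand of the $\Sigma_2$-identity. Since the two sums are equal, equality must hold term by term; strict monotonicity then upgrades this to $\ell^{\Sigma_1}(\bar{\beta})+\ell^{\Sigma_1}(\bar{\gamma})=\ell^{\Sigma_2}(\bar{\beta})+\ell^{\Sigma_2}(\bar{\gamma})$, and the domination on each individual cuff forces $\ell^{\Sigma_1}(\bar{\beta})=\ell^{\Sigma_2}(\bar{\beta})$ and $\ell^{\Sigma_1}(\bar{\gamma})=\ell^{\Sigma_2}(\bar{\gamma})$.

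Running this argument with $\bar{\alpha}_i$ allowed to range over all $m$ boundary components, and invoking the topological fact that every non-peripheral simple closed curve $\bar{\delta}$ on $S_{g,m}$ occurs as a cuff of some embedded pair of pants containing some $\bar{\alpha}_i$ (any such $\bar{\delta}$ is automatically disjoint from each boundary, and the subsurface containing both $\bar{\delta}$ and $\bar{\alpha}_i$ has Euler characteristic at most $-1$, hence houses an embedded pair of pants with these two cuffs), we conclude that the marked simple length spectra of $\Sigma_1$ and $\Sigma_2$ agree on every simple closed geodesic. Since the marked simple length spectrum, together with the fixed boundary lengths, separates points in $\mathit{Teich}_{g,m}(L_1,\ldots,L_m)$ via any Fenchel--Nielsen parametrisation (lengths of a pants decomposition plus a collection of auxiliary simple closed curves crossing the pants decomposition enough times to read off each twist), this forces $\Sigma_1=\Sigma_2$.

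The anticipated main obstacle is the monotonicity/rigidity step: both strict monotonicity of the summand and the fact that the indexing set $\mathcal{P}_{\bar{\alpha}_i}$ is purely topological (so summands of $\Sigma_1$ and $\Sigma_2$ can be matched term by term) are essential, and together they do all the work. The surrounding material -- the topological propagation lemma and the classical separation of points by simple length functions -- is routine bookkeeping, and the all-cusp case $L_i=0$ requires only the cosmetic substitution of McShane's cusped identity for Mirzakhani's, the monotonicity argument itself being identical.
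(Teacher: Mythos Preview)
Your approach is essentially identical to the paper's: exploit strict monotonicity of the McShane summands together with the fact that both sides sum to the same fixed constant $L_i$, forcing term-by-term equality and hence equality of all simple lengths. One small omission: the identity \eqref{equation:mirD} you invoke is stated in the paper only for $S_{g,1}$; for $m\geq 2$ the full Mirzakhani--Tan--Wong--Zhang identity carries a second sum over $\mathcal{P}^{\partial}_{\bar\alpha_i}$ (pairs of pants with a second peripheral cuff $\bar\gamma=\bar\alpha_j$) whose summands have the different form $\log\frac{\cosh(\ell(\bar\beta)/2)+\cosh((L_i+L_j)/2)}{\cosh(\ell(\bar\beta)/2)+\cosh((L_i-L_j)/2)}$. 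These terms are likewise strictly decreasing in $\ell(\bar\beta)$ with $L_i,L_j$ fixed, so your monotonicity-and-matching argument extends without change --- this is exactly how the paper handles it --- but as written your identity is incomplete for the general case.
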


\begin{proof}
Assume without loss of generality that the simple length spectrum of $\Sigma_1$ dominates that of $\Sigma_2$. We first consider the case where at least one of the boundaries $\alpha=\alpha_i$ is strictly greater than $0$. Let $\mathcal{P}_{\alpha}$ be the set of the homotopy classes of pairs of pants and $\mathcal{P}_{\alpha}^\partial$ is a subset of $\mathcal{P}_{\alpha}$ which have two borders, say $\alpha$ and $\gamma$, as boundary components of $S_{g,m}$. The summands in the McShane identities for bordered surface \cite{mirz_simp,tan_zhang_cone}:
\begin{equation}
\label{equation:mirmc}
\begin{aligned}
&L_i=\sum_{(\beta,\gamma)\in \mathcal{P}_{\alpha}}   2\log  \frac{e^{\frac{\ell(\alpha)}{2}} + e^{\frac{\ell(\beta)+\ell(\gamma)}{2}}}{e^{\frac{-\ell(\alpha)}{2}} + e^{\frac{\ell(\beta)+\ell(\gamma)}{2}}}  + \sum_{(\beta,\gamma)\in \mathcal{P}_{\alpha}^{\partial}} \log  \frac{\cosh(\frac{\ell(\beta)}{2}) + \cosh(\frac{\ell(\alpha)+\ell(\gamma)}{2})}{\cosh(\frac{\ell(\beta)}{2}) + \cosh(\frac{\ell(\alpha)-\ell(\gamma)}{2})} 
\end{aligned}
\end{equation}
 have summands which are strictly decreasing with respect to increasing the lengths of (interior) simple closed geodesics. Since the simple length spectrum of $\Sigma_1$ dominates that of $\Sigma_2$, this forces each pair of corresponding summands in the McShane identities for $\Sigma_1$ and $\Sigma_2$ to be equal. This forces the length of multicurves $\ell^{\Sigma_1}(\beta)+\ell^{\Sigma_1}(\gamma)$ to be equal to $\ell^{\Sigma_2}(\beta)+\ell^{\Sigma_2}(\gamma)$ for each $(\beta, \gamma)\in \mathcal{P}_\alpha$, and $\ell^{\Sigma_1}(\beta)=\ell^{\Sigma_2}(\beta)$ for each $(\beta, \gamma)\in \mathcal{P}_\alpha^\partial$. For each $(\beta, \gamma)\in \mathcal{P}_\alpha$, domination then tells us that
\[
\ell^{\Sigma_1}(\beta)
=\ell^{\Sigma_2}(\beta)
\text{ and }
\ell^{\Sigma_1}(\gamma)
=\ell^{\Sigma_2}(\gamma).
\]
Therefore, the marked simple length spectra for $\Sigma_1$ and $\Sigma_2$ are equal and $\Sigma_1=\Sigma_2$.\medskip

The remaining case is where every boundary is length $0$ is classically due to Thurston \cite{Thu98}, but can also be demonstrated by applying the same arguments to McShane's identities for cusped surfaces \cite{mcshane_allcusps}.
\end{proof}

The above non-domination result immediately implies the following:

\begin{defn}[Thurston metric for bordered surfaces]
Let $\mathcal{C}_{g,m}$ be the set of simple closed curves up to homotopy on $S_{g,m}$. The non-negative real function $d_{\mathit{Th}}: \mathit{Teich}_{g,m}(L_1,\ldots,L_m)\times \mathit{Teich}_{g,m}(L_1,\ldots,L_m)\rightarrow \mathbb{R}_{\geq0}$ defined by 
\begin{align*}
d_{\mathit{Th}}(\Sigma_1,\Sigma_2)
:=\log \sup_{\bar{\gamma}\in\mathcal{C}_{g,m}}
\frac{\ell^{\Sigma_2}(\bar{\gamma})}{\ell^{\Sigma_1}(\bar{\gamma})},
\end{align*}
is a mapping class group invariant asymmetric Thurston-type length ratio metric on the Teichm\"uller space $\mathit{Teich}_{g,m}(L_1,\ldots,L_m)$ of surfaces with fixed boundary lengths $L_1,\ldots, L_m$.
\end{defn}

We are now interested in the spaces of geometric structures underlying the $\operatorname{PGL}_3(\mathbb{R})$-positive representations. By \cite{Mar10}, the space $\mathrm{Conv}^u(S_{g,m})$ of cusped strictly convex $\mathbb{RP}^2$ structures on $S_{g,m\geq 1}$ is homeomorphic to $\mathrm{Pos}_3^u(S_{g,m})$. 

Tholozan \cite{tholozan2017volume} showed that, for any strictly convex $\mathbb{RP}^2$ structure $\rho$ on a closed surface, it is always possible to find a hyperbolic structure $j$ such that the length spectrum of $j$ is uniformly smaller than that of $\rho$ (which should also works for the cusped strictly convex $\mathbb{RP}^2$ structure on $S_{g,m}$). Thus, the na\"{i}ve length ratio expression for the Thurston metric, when extended to the space $Conv_3^u(S_{g,m})$, results in a function which may be negative. To deal with this, we reverse engineer our McShane identities-based proof for the non-negativity of the length ratio metric (Theorem~\ref{thm:nondomborder}) and propose the following candidate for a metric on $Conv_3^u(S_{1,1})$:

\begin{align}\label{eq:gapmetric}
d_{\mathit{Gap}}(\Sigma_1,\Sigma_2)
:=
\log
\sup_{\gamma\in\subvec{\mathcal{C}}_{1,1}}
\left(
\frac{\log(1+e^{\ell_1^{\Sigma_2}(\gamma)+\tau^{\Sigma_2}(\gamma)})}
{\log(1+e^{\ell_1^{\Sigma_1}(\gamma)+\tau^{\Sigma_1}(\gamma)})}
\right)
\end{align}

To show that this is a well-defined function, we use the following comparison:

\begin{thm}[{\cite[Corollary~5.3]{benoist2001convexes}}, $\ell$ vs. $\ell_i$-length comparison]
\label{thm:simpcomp}
For any cusped strictly convex $\mathbb{RP}^2$ structure $\rho$, there exists $K_{\rho}>1$ such that for every non-trivial non-peripheral closed curve $\gamma$ on $S$, we have:
\begin{align*}
\ell_1(\gamma)\leq\ell(\gamma)\leq K_{\rho}\cdot\ell_1(\gamma).
\end{align*}
\end{thm}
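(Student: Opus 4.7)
The plan is to leverage the boundary regularity of $\partial\Omega$ established in Proposition~\ref{thm:convprojregularity} to control the ratio $\ell(\gamma)/\ell_1(\gamma)$. Recall that for any non-trivial non-peripheral $\gamma \in \pi_1(S_{g,m})$, Theorem~\ref{theorem:loxo} tells us $\rho(\gamma)$ is loxodromic with eigenvalues $\lambda_1 > \lambda_2 > \lambda_3 > 0$. By the definitions of $\ell$ and $\ell_i$, we have the key algebraic identity
\[
\ell(\gamma) = \log\tfrac{\lambda_1}{\lambda_3} = \log\tfrac{\lambda_1}{\lambda_2} + \log\tfrac{\lambda_2}{\lambda_3} = \ell_1(\gamma) + \ell_2(\gamma),
\]
with both summands strictly positive. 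This immediately yields the lower inequality $\ell_1(\gamma) \leq \ell(\gamma)$, so the entire content of the theorem lies in the upper bound.

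For the upper bound, it suffices to establish a uniform constant $C_\rho > 0$ with $\ell_2(\gamma) \leq C_\rho \cdot \ell_1(\gamma)$, as one may then take $K_\rho := 1 + C_\rho$. My proposed approach is to relate these eigenvalue ratios to the local geometry of $\partial\Omega$ near the attracting fixed point $g^+ \in \partial\Omega$ of $g = \rho(\gamma)$. Fix an auxiliary Euclidean metric on the affine patch containing $\Omega$ and choose an eigenbasis $(v_1,v_2,v_3)$ for $g$ so that $g^+ = [v_1]$, the (decorated) flag tangent line at $g^+$ is $T_{g^+}\partial\Omega = [\mathrm{span}(v_1,v_2)]$, and $[v_3]$ lies off $\overline{\Omega}$. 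For any $q \in \partial\Omega$ with nontrivial $v_3$-component, the iterates $g^k \cdot q$ converge to $g^+$ along $\partial\Omega$, with Euclidean distances satisfying
\[
d_E(g^k q,\, g^+) \asymp (\lambda_2/\lambda_1)^k, \qquad d_E(g^k q,\, T_{g^+}\partial\Omega) \asymp (\lambda_3/\lambda_1)^k,
\]
by direct computation in affine coordinates.

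Feeding these asymptotics into the $\beta_\Sigma$-convexity estimate from Proposition~\ref{thm:convprojregularity}, that is $d_E(g^k q, T_{g^+}\partial\Omega) \geq C^{-1} d_E(g^k q, g^+)^{\beta_\Sigma}$, and letting $k \to \infty$ forces the exponential rates to satisfy
\[
\log\tfrac{\lambda_1}{\lambda_3} \leq \beta_\Sigma \cdot \log\tfrac{\lambda_1}{\lambda_2},
\quad\text{equivalently}\quad \ell(\gamma) \leq \beta_\Sigma \cdot \ell_1(\gamma).
\]
Thus the choice $K_\rho := \beta_\Sigma$ works (and $\beta_\Sigma > 1$ since $\ell_2(\gamma) > 0$). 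The main obstacle is establishing the boundary regularity itself; however, this has been already addressed in Proposition~\ref{thm:convprojregularity} via the Benoist--Hulin argument using the Blaschke metric and Gromov-hyperbolicity of cusped convex projective surfaces. Given that input, the remainder of the argument is the essentially linear-algebraic spectral comparison sketched above, which is precisely Benoist's original strategy \cite{benoist2001convexes} adapted verbatim from the closed case to the cusped case.
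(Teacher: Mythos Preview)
Your proposal is correct and follows exactly the route the paper indicates: the paper does not give a self-contained proof but cites Benoist's closed-surface result and remarks that Proposition~\ref{thm:convprojregularity} furnishes the boundary regularity ($\beta_\Sigma$-convexity) needed to run the same argument in the cusped setting. You have simply spelled out Benoist's spectral comparison explicitly, arriving at $K_\rho=\beta_\Sigma$, which is precisely the intended extension.
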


\begin{rmk}
Although \cite[Corollary~5.3]{benoist2001convexes} is stated for closed surfaces, Proposition~\ref{thm:convprojregularity} allows us to extend this result to the cusped strictly convex $\mathbb{RP}^2$ surfaces. 
\end{rmk}

\begin{prop}
The function $d_{\mathit{Gap}}$ is well-defined.
\end{prop}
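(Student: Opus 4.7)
The plan is to show the supremum in \eqref{eq:gapmetric} is finite by (i) uniformly bounding the numerator and denominator away from $0$, (ii) linearly comparing them to the $\ell_1$-length, and (iii) comparing $\ell_1^{\Sigma_1}$ and $\ell_1^{\Sigma_2}$ via a bi-Lipschitz comparison of length spectra.

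First, Theorem~\ref{thm:triplebounded} applied to each $\Sigma_s$ ($s=1,2$) yields a constant $\tau_{\max}^s>0$ with $|\tau^{\Sigma_s}(\gamma)|\le \tau_{\max}^s$ for every oriented simple closed geodesic $\gamma\in\subvec{\mathcal{C}}_{1,1}$; set $\tau_{\max}:=\max(\tau_{\max}^1,\tau_{\max}^2)$. Since $\rho_s$ is a positive representation, every non-peripheral $\rho_s(\gamma)$ is loxodromic by Theorem~\ref{theorem:loxo}, so $\ell_1^{\Sigma_s}(\gamma)>0$, and the discreteness of the simple $\ell_1$-spectrum (Theorem~\ref{thm:discretespectra}) provides a uniform lower bound $\ell_1^{\Sigma_s}(\gamma)\ge \epsilon_s>0$. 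Consequently both the numerator and the denominator of the ratio in \eqref{eq:gapmetric} lie in a fixed interval $[\log(1+e^{-\tau_{\max}}),\infty)\subset(0,\infty)$, so the ratio is well-defined and positive.

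Next I would reduce the boundedness problem to a comparison of simple root lengths. Since $\log(1+e^x)=x+O(e^{-|x|})$ for large $x$ and is bilipschitz to $\max(x,0)+1$ on $\mathbb{R}$, there exist constants $c_1,c_2>0$ depending only on $\tau_{\max}$ and $\min(\epsilon_1,\epsilon_2)$ such that
\begin{equation*}
c_1\bigl(\ell_1^{\Sigma_s}(\gamma)+1\bigr)\le \log\bigl(1+e^{\ell_1^{\Sigma_s}(\gamma)+\tau^{\Sigma_s}(\gamma)}\bigr)\le c_2\bigl(\ell_1^{\Sigma_s}(\gamma)+1\bigr).
\end{equation*}
Hence the ratio is bounded above if and only if $(\ell_1^{\Sigma_2}(\gamma)+1)/(\ell_1^{\Sigma_1}(\gamma)+1)$ is uniformly bounded as $\gamma$ varies over $\subvec{\mathcal{C}}_{1,1}$.

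For the last step I would produce a bi-Lipschitz comparison $\ell^{\Sigma_2}(\gamma)\le K(\Sigma_1,\Sigma_2)\,\ell^{\Sigma_1}(\gamma)$ of the Hilbert length spectra. Since both $\Sigma_1$ and $\Sigma_2$ are finite-area strictly convex $\mathbb{RP}^2$ structures on the same topological surface $S_{1,1}$ with the same combinatorial cusp, the identity map $S_{1,1}\to S_{1,1}$ lifts (after appropriate cusp truncation) to a $\pi_1$-equivariant quasi-isometry between the universal covers $\Omega_1,\Omega_2\subset\mathbb{RP}^2$ endowed with their Hilbert metrics. Translation lengths of loxodromic elements are preserved up to multiplicative constants under quasi-isometric equivalence, so $\ell^{\Sigma_2}(\gamma)\le K\,\ell^{\Sigma_1}(\gamma)$. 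Combining this with the two-sided estimate $\ell_1^{\Sigma_s}(\gamma)\le \ell^{\Sigma_s}(\gamma)\le K_{\Sigma_s}\ell_1^{\Sigma_s}(\gamma)$ of Theorem~\ref{thm:simpcomp} gives $\ell_1^{\Sigma_2}(\gamma)\le K K_{\Sigma_1}\ell_1^{\Sigma_1}(\gamma)$, which together with the uniform lower bound on $\ell_1$ produces the desired bound on the ratio.

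The main obstacle is the bi-Lipschitz comparison in the final step, which requires a proper argument that the holonomy representations $\rho_1,\rho_2$ are quasi-isometrically equivalent in the presence of cusps; I expect this can be handled by combining Marquis' results on cusp geometry together with a thick-part compactness argument, or alternatively by invoking the spectral comparison inherent in the fact that both $\rho_s$ are discrete faithful representations of $\pi_1(S_{1,1})$ with the same combinatorial end structure, so that word-length estimates translate into uniform upper and lower bounds on translation lengths.
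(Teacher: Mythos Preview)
Your overall strategy matches the paper's: reduce to bounding $\sup_\gamma \ell_1^{\Sigma_2}(\gamma)/\ell_1^{\Sigma_1}(\gamma)$ using triple ratio boundedness and spectrum discreteness, then invoke the $\ell$ versus $\ell_1$ comparison of Theorem~\ref{thm:simpcomp} to transfer the problem to the Hilbert length ratio $\ell^{\Sigma_2}/\ell^{\Sigma_1}$. The point of divergence, and the gap, is precisely the step you flag as an obstacle.

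Your proposed bi-Lipschitz comparison via a $\pi_1$-equivariant quasi-isometry of the universal covers is not established in the paper and is delicate in the cusped setting: you would need to control the Hilbert geometry of the cusp regions well enough to produce a global quasi-isometry, and the sketch you give (``Marquis' cusp geometry plus thick-part compactness, or word-length estimates'') is not a proof. The paper sidesteps this entirely. It observes (citing \cite{thurston2016rubber}) that the Hilbert length functions $\ell^{\Sigma_1}$ and $\ell^{\Sigma_2}$ extend continuously and homogeneously to the space of compactly supported measured laminations on $S_{1,1}$. Hence the ratio $\ell^{\Sigma_2}/\ell^{\Sigma_1}$ descends to a continuous function on the space of projective measured laminations, which is compact, and therefore the ratio is bounded above. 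This replaces your dynamical/coarse-geometric argument with a clean compactness argument on $\mathrm{PML}$, and requires no analysis of cusp geometry.
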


\begin{proof}
We need to show that the supremum in \eqref{eq:gapmetric} is bounded. If the supremum is realized by some simple geodesic $\gamma$, then obviously the gap metric is well-defined. If not, then there is a sequence of distinct geodesics $\{\gamma_k\}$ for which the expression in \eqref{eq:gapmetric} tends to the supremum. Then, by the discreteness of the simple length spectrum (Theorem~\ref{thm:discretespectra}) and the uniform boundedness of triple ratios (Theorem~\ref{thm:bounded}), showing that the supremum exists is equivalent the existence of the following supremum:
\begin{align}\label{eq:supexists}
\sup_{\gamma\in\subvec{\mathcal{C}}_{1,1}}
\frac{\ell_1^{\Sigma_2}(\gamma)}
{\ell_1^{\Sigma_1}(\gamma)}
\leq
K_{\Sigma_1}\cdot
\sup_{\gamma\in\subvec{\mathcal{C}}_{1,1}}
\frac{\ell^{\Sigma_2}(\gamma)}
{\ell^{\Sigma_1}(\gamma)},
\end{align}
where the $K_{\Sigma_1}$ in the right hand side is the coefficient in Theorem~\ref{thm:simpcomp}. However, we know from \cite[Theorem~2]{thurston2016rubber} that the twice of Hilbert lengths $\ell^{\Sigma_1}(\cdot)$ and $\ell^{\Sigma_2}(\cdot)$ extend continuously to the space of (compactly supported) measured laminations on $S_{1,1}$. In particular, the homogeneity of these length functions on multicurves means that they must be homogeneous over all of measured lamination space, and hence $\ell^{\Sigma_2}/\ell^{\Sigma_1}$ defines a continuous function on the space of (compactly supported) projective measured laminations. This is a compact codomain, and hence must be bounded above. Therefore, the left-hand side supremum in \eqref{eq:supexists} exists and $d_{\mathit{Gap}}$ is well-defined.
\end{proof}

\begin{thm}[Gap metric for $\mathrm{Conv}^u(S_{1,1})$]\label{thm:genthurstonmetric}
The non-negative function $d_{\mathit{Gap}}$ defines a mapping class group invariant aymmetric metric on $\mathrm{Conv}^u(S_{1,1})$. 
\end{thm}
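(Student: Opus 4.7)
The plan is to verify the four features of a mapping class group invariant asymmetric metric: non-negativity, identity of indiscernibles, the triangle inequality, and mapping class group invariance. Write $L_i(\gamma) := \log(1 + e^{\ell_1^{\Sigma_i}(\gamma) + \tau^{\Sigma_i}(\gamma)})$, so that Theorem~\ref{theorem:inequsl3s11} reads $\sum_{\gamma\in\subvec{\mathcal{C}}_{1,1}} e^{-L_i(\gamma)} = 1$ for $i = 1, 2$, and $d_{\mathit{Gap}}(\Sigma_1, \Sigma_2) = \log\sup_{\gamma} L_2(\gamma)/L_1(\gamma)$. Non-negativity then follows immediately from McShane: if $\sup_\gamma L_2(\gamma)/L_1(\gamma) < 1$, then $L_2 < L_1$ pointwise on $\subvec{\mathcal{C}}_{1,1}$, so termwise comparison gives $\sum_\gamma e^{-L_2(\gamma)} > \sum_\gamma e^{-L_1(\gamma)}$, contradicting $1 = 1$. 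Hence $d_{\mathit{Gap}} \geq 0$.

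For identity of indiscernibles, suppose $d_{\mathit{Gap}}(\Sigma_1, \Sigma_2) = 0$. Then $L_2 \leq L_1$ pointwise on $\subvec{\mathcal{C}}_{1,1}$, and any strict inequality at a single $\gamma$ would again contradict the equal total mass of the two McShane series; hence $L_1 \equiv L_2$, i.e., $\ell_1^{\Sigma_1}(\gamma) + \tau^{\Sigma_1}(\gamma) = \ell_1^{\Sigma_2}(\gamma) + \tau^{\Sigma_2}(\gamma)$ for every $\gamma$. Applying this equality to $\gamma^{-1}$ and using the symmetries $\ell_1(\gamma^{-1}) = \ell_2(\gamma)$ and $\tau(\gamma^{-1}) = -\tau(\gamma)$ furnished by Lemma~\ref{lem:12termcompare} yields the complementary identity $\ell_2^{\Sigma_1}(\gamma) - \tau^{\Sigma_1}(\gamma) = \ell_2^{\Sigma_2}(\gamma) - \tau^{\Sigma_2}(\gamma)$. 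Summing gives $\ell^{\Sigma_1}(\gamma) = \ell^{\Sigma_2}(\gamma)$ for every $\gamma\in\subvec{\mathcal{C}}_{1,1}$, i.e., the marked simple Hilbert length spectra of $\Sigma_1, \Sigma_2$ agree. Invoking marked simple length spectrum rigidity for cusped strictly convex real projective structures on $S_{1,1}$ then forces $\Sigma_1 = \Sigma_2$.

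The triangle inequality is formal: for any three structures $\Sigma_1, \Sigma_2, \Sigma_3$ and every $\gamma\in\subvec{\mathcal{C}}_{1,1}$,
\[
\frac{L_3(\gamma)}{L_1(\gamma)} = \frac{L_3(\gamma)}{L_2(\gamma)} \cdot \frac{L_2(\gamma)}{L_1(\gamma)} \leq \Bigl(\sup_{\gamma'} \tfrac{L_3(\gamma')}{L_2(\gamma')}\Bigr)\Bigl(\sup_{\gamma''} \tfrac{L_2(\gamma'')}{L_1(\gamma'')}\Bigr),
\]
and taking $\sup_\gamma$ on the left and then logarithms delivers $d_{\mathit{Gap}}(\Sigma_1, \Sigma_3) \leq d_{\mathit{Gap}}(\Sigma_1, \Sigma_2) + d_{\mathit{Gap}}(\Sigma_2, \Sigma_3)$. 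Mapping class group invariance is immediate since any $\phi\in\mathrm{MCG}(S_{1,1})$ permutes $\subvec{\mathcal{C}}_{1,1}$ while $\ell_1$ and $\tau$ are conjugation-invariant functions of the holonomy representation. The sole substantive step is the marked simple Hilbert length spectrum rigidity used in identity of indiscernibles: the remaining axioms reduce either to strict monotonicity of $x\mapsto e^{-x}$ applied to the McShane identity, or to standard supremum manipulations. If one prefers to avoid citing a cuspidal analogue of Cooper--Delp/Kim-type marked simple length spectrum rigidity for convex real projective surfaces, one can argue directly that $\mathrm{Conv}^u(S_{1,1})$ is a finite-dimensional real analytic moduli space on which the lengths $\ell(\gamma)$, together with the supplementary quantities $L_1(\gamma)$, for a sufficiently rich finite family of simple closed curves (for instance those associated to generators of $\pi_1(S_{1,1})$) locally separate points, and then globalize via analyticity over a fundamental domain for the mapping class group.
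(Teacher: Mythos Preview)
Your argument for non-negativity, the triangle inequality, and mapping class group invariance matches the paper's, and your first step in the identity-of-indiscernibles direction (using the McShane identity to force $\ell_1^{\Sigma_1}(\gamma)+\tau^{\Sigma_1}(\gamma)=\ell_1^{\Sigma_2}(\gamma)+\tau^{\Sigma_2}(\gamma)$ for every $\gamma$) is exactly what the paper does. The genuinely different move is what comes next. The paper extracts $\ell_1^{\Sigma_1}(\gamma)=\ell_1^{\Sigma_2}(\gamma)$ by an asymptotic Dehn-twist argument: for $\beta$ once-intersecting $\gamma$, one has $\ell_1(\gamma)=\lim_{k\to\infty}\tfrac{1}{k}\ell_1(\beta\gamma^k)$, and since the triple ratios $\tau(\beta\gamma^k)$ are uniformly bounded (Theorem~\ref{thm:bounded}), dividing the displayed identity by $k$ and passing to the limit kills $\tau$ and leaves $\ell_1^{\Sigma_1}(\gamma)=\ell_1^{\Sigma_2}(\gamma)$; hence the $\ell_1$- and $\ell_2$-spectra, and therefore the $\lambda_1$-spectra, coincide. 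The paper then cites \cite{bridgeman2017simple} to conclude $\Sigma_1=\Sigma_2$.

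Your route via Lemma~\ref{lem:12termcompare} is slicker: applying the equality to $\gamma^{-1}$ and summing immediately gives $\ell^{\Sigma_1}(\gamma)=\ell^{\Sigma_2}(\gamma)$ without any limiting procedure or appeal to triple-ratio boundedness. The cost is that you end up with only Hilbert-length equality rather than the finer $\ell_i$-spectrum equality, so the rigidity input you need is a Hilbert-length version of marked simple length rigidity for \emph{cusped} strictly convex $\mathbb{RP}^2$ surfaces. This is the one point where your argument is genuinely incomplete: the Cooper--Delp/Kim results you allude to are stated for closed convex real projective surfaces, and your fallback sketch (local separation by finitely many lengths plus analyticity) would need real work to be a proof. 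By contrast, the paper's Dehn-twist step recovers the full $\lambda_1$-spectrum, which is what \cite{bridgeman2017simple} takes as input. If you want to keep your elegant symmetry trick, you could combine it with the paper's Dehn-twist argument to recover the individual $\ell_i$-spectra as well, and then cite \cite{bridgeman2017simple} as the paper does.
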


\begin{proof}
It is clear that $d_{\mathit{Gap}}$ is mapping class group invariant and satisfies the triangle inequality. The McShane-type identity (Theorem~\ref{theorem:inequsl3s11}) tells us that the gap summands of for $\Sigma_1$ cannot dominate those for $\Sigma_2$, and this gives us the requisite non-negativity.\medskip

All that remains is to show that $d_{\mathit{Gap}}(\Sigma_1,\Sigma_2)=0$ if and only if $\Sigma_1=\Sigma_2$. One way is obvious. For the converse, assume that $d_{\mathit{Gap}}(\Sigma_1,\Sigma_2)=0$, then the McShane identity tells us that the corresponding gap summands must each be equal, and hence
\begin{align*}
\forall\gamma\in\xvec{\mathcal{C}}_{1,1},\quad
\ell_1^{\Sigma_1}(\gamma)+\tau^{\Sigma_1}(\gamma)
=\ell_1^{\Sigma_2}(\gamma)+\tau^{\Sigma_2}(\gamma).
\end{align*}
Consider the sequence of curves $\{\beta\gamma^k\}_{k\in\mathbb{Z}}$ obtained from applying Dehn-twists along $\gamma$ to a $\beta$ which once-intersects $\gamma$. The eigenvalues for the monodromy for two matrices are minimal/maximal when they are simultaneously diagonalizable, and hence we obtain the bounds:
\begin{align*}
k\ell_1(\gamma)+\log\lambda_3(\beta)-\log\lambda_1(\beta)
=k\ell_{1}(\gamma)-\ell(\beta)
&<\ell_{1}(\beta\gamma^k)\text{ and}\\
k\ell_{1}(\gamma)+\log\lambda_1(\beta)-\log\lambda_3(\beta)
=k\ell_{1}(\gamma)+\ell(\beta)
&>\ell_{1}(\beta\gamma^k).
\end{align*}
Hence we see that
\begin{align}\label{eq:genthurstonmetric}
\ell_1(\gamma)=\lim_{k\to\infty}\tfrac{1}{k}\ell_1(\beta\gamma^k),
\end{align}
which in turn implies that:
\begin{align*}
\frac{\ell_1^{\Sigma_2}(\gamma)}{\ell_1^{\Sigma_1}(\gamma)}
=\lim_{k\to\infty}
\frac{\frac{1}{k}\ell_1^{\Sigma_2}(\beta\gamma^k)}{\frac{1}{k}\ell_1^{\Sigma_1}(\beta\gamma^k)}
=\lim_{k\to\infty}
\frac{\frac{1}{k}
(\ell_1^{\Sigma_2}(\beta\gamma^k)+\tau^{\Sigma_2}(\beta\gamma^k))}
{\frac{1}{k}
(\ell_1^{\Sigma_1}(\beta\gamma^k)+\tau^{\Sigma_1}(\beta\gamma^k))}
=1.
\end{align*}
Therefore, the marked simple $\ell_1$ (and $\ell_2$) spectra for $\Sigma_i\in \mathrm{Conv}^u(S_{1,1})$ must be congruent. Which means that the simple marked $\lambda_1$ spectra for $\Sigma_1$ and $\Sigma_2$ must be equal. By \cite{bridgeman2017simple}, this means that $\Sigma_1=\Sigma_2$.
\end{proof}

\begin{prop}\label{thm:thurston11}
The restriction of the metric $d_{\mathit{Gap}}$ to the Fuchsian locus of $\mathrm{Conv}^u(S_{1,1})$ is precisely the Thurston metric $d_{\mathit{Th}}$.
\end{prop}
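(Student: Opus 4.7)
The plan is to reduce the claimed identification on the Fuchsian locus to an elementary convexity inequality together with a limit computation along Dehn-twist orbits, using the fact that on the Fuchsian locus both $\tau$ vanishes and the simple root lengths collapse to hyperbolic length.

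First I would unpack both metrics on the Fuchsian locus. For a $3$-Fuchsian $\rho = \iota \circ \rho_0$ whose underlying $\operatorname{PSL}_2(\mathbb{R})$-holonomy $\rho_0$ has hyperbolic eigenvalues $(\mu,\mu^{-1})$ along a non-peripheral $\gamma$, the composition has eigenvalues $(\mu^2,1,\mu^{-2})$, so $\ell_1(\gamma) = \ell_2(\gamma) = 2\log\mu = \ell^{\Sigma}(\gamma)$, the hyperbolic length on the corresponding $\Sigma \in \mathit{Teich}_{1,1}(0)$. Lemma~\ref{lemma:nfuchrig} forces $\tau(\gamma) = 0$. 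Hence, for Fuchsian $\rho_1,\rho_2$ with associated hyperbolic surfaces $\Sigma_1,\Sigma_2$,
\begin{align*}
d_{\mathit{Gap}}(\rho_1,\rho_2) = \log \sup_{\gamma \in \subvec{\mathcal{C}}_{1,1}} \frac{\log\!\left(1+e^{\ell^{\Sigma_1}(\gamma)}\right)}{\log\!\left(1+e^{\ell^{\Sigma_2}(\gamma)}\right)},
\end{align*}
and I need to compare this with $d_{\mathit{Th}}(\Sigma_1,\Sigma_2)$. Let $R := \sup_\gamma \ell^{\Sigma_1}(\gamma)/\ell^{\Sigma_2}(\gamma)$, which is at least $1$ by Thurston's non-domination.

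For the upper bound $d_{\mathit{Gap}} \leq d_{\mathit{Th}}$, I would invoke the elementary inequality $(1+t)^R \geq 1 + t^R$ valid for $t \geq 0$ and $R \geq 1$: it holds at $t=0$ and its derivative $R\bigl((1+t)^{R-1} - t^{R-1}\bigr)$ is nonnegative. Setting $t = e^{\ell^{\Sigma_2}(\gamma)}$ and taking logarithms yields $\log(1+e^{R\ell^{\Sigma_2}(\gamma)}) \leq R \log(1+e^{\ell^{\Sigma_2}(\gamma)})$. Combining this with monotonicity of $x \mapsto \log(1+e^x)$ and $\ell^{\Sigma_1}(\gamma) \leq R \ell^{\Sigma_2}(\gamma)$ gives the termwise bound $\log(1+e^{\ell^{\Sigma_1}(\gamma)}) \leq R \log(1+e^{\ell^{\Sigma_2}(\gamma)})$, hence $d_{\mathit{Gap}}(\rho_1,\rho_2) \leq d_{\mathit{Th}}(\Sigma_1,\Sigma_2)$.

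For the reverse inequality, I would fix an arbitrary $\gamma_0 \in \subvec{\mathcal{C}}_{1,1}$ and some $\beta \in \subvec{\mathcal{C}}_{1,1}$ intersecting $\gamma_0$ once, and apply the twist sequence $\{\beta\gamma_0^k\}_{k \geq 1}$ of pairwise distinct simple closed curves. The Fuchsian specialization of Equation~\eqref{eq:genthurstonmetric} gives $\tfrac{1}{k}\ell^{\Sigma_i}(\beta\gamma_0^k) \to \ell^{\Sigma_i}(\gamma_0)$ and in particular $\ell^{\Sigma_i}(\beta\gamma_0^k) \to \infty$. Using $\log(1+e^x) = x + O(e^{-x})$ for $x \to \infty$ and dividing numerator and denominator by $k$, I obtain
\begin{align*}
\lim_{k \to \infty} \frac{\log\!\left(1+e^{\ell^{\Sigma_1}(\beta\gamma_0^k)}\right)}{\log\!\left(1+e^{\ell^{\Sigma_2}(\beta\gamma_0^k)}\right)} = \frac{\ell^{\Sigma_1}(\gamma_0)}{\ell^{\Sigma_2}(\gamma_0)}.
\end{align*}
Consequently the gap-metric supremum dominates $\ell^{\Sigma_1}(\gamma_0)/\ell^{\Sigma_2}(\gamma_0)$ for every $\gamma_0$, so $d_{\mathit{Gap}}(\rho_1,\rho_2) \geq d_{\mathit{Th}}(\Sigma_1,\Sigma_2)$, and equality holds. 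The only technical point where care is needed is the direction of the convexity inequality, which requires $R \geq 1$: this is precisely what Thurston's non-domination theorem provides, so the argument degenerates gracefully to $d_{\mathit{Gap}} = d_{\mathit{Th}} = 0$ when $\Sigma_1 = \Sigma_2$. I expect no further obstacle.
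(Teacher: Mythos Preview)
Your proof is correct and follows essentially the same two-step strategy as the paper: an elementary monotonicity/convexity inequality for $d_{\mathit{Gap}}\leq d_{\mathit{Th}}$, and the Dehn-twist limit $\tfrac{1}{k}\ell_1(\beta\gamma^k)\to\ell_1(\gamma)$ for the reverse inequality. The only cosmetic difference is in the elementary step: the paper observes that $x\mapsto\log(1+x)/\log x$ is decreasing to obtain the termwise bound $\log(1+e^a)/\log(1+e^b)<a/b$ directly, whereas you pass through the global supremum $R$ and use $(1+t)^R\geq 1+t^R$; both arguments ultimately hinge on the same convexity of $x\mapsto\log(1+e^x)$ and on non-domination to secure $R\geq 1$.
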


\begin{proof}
We first note that on the Fuchsian locus, triple ratios are all equal to $1$, and the simple root length $\ell_1(\gamma)$ of every geodesic $\gamma$ is equal to $\frac{1}{2}\ell(\gamma)$. Since $f(x)=\log(1+x)/\log(x)$ is a monotonically decreasing for $x>0$, whenever $\ell_1^{\Sigma_2}(\gamma)>\ell_1^{\Sigma_1}(\gamma)$, we have
\begin{align*}
\frac{\log(1+e^{\ell_1^{\Sigma_2}(\gamma)})}
{\log(1+e^{\ell_1^{\Sigma_1}(\gamma)})}
<
\frac{\log(e^{\ell_1^{\Sigma_2}(\gamma)})}
{\log(e^{\ell_1^{\Sigma_1}(\gamma)})}
=
\frac{\ell_1^{\Sigma_2}(\gamma)}{\ell_1^{\Sigma_1}(\gamma)}
=
\frac{\ell^{\Sigma_2}(\gamma)}{\ell^{\Sigma_1}(\gamma)}.
\end{align*}
Therefore $d_{\mathit{Gap}}\leq d_{\mathit{Th}}$. On the other hand, by equation~\eqref{eq:genthurstonmetric},
we have 
\begin{align*}
\lim_{k\to+\infty}
\frac{\log (1+e^{l_1^{\Sigma_2}(\beta\gamma^k)})}{\log (1+e^{l_1^{\Sigma_1}(\beta\gamma^k)})}
=\lim_{k\to+\infty}
\frac{\frac{1}{k}\ell_1^{\Sigma_2}(\beta\gamma^k)}{\frac{1}{k}\ell_1^{\Sigma_1}(\beta\gamma^k)}
=\frac{\ell_1^{\Sigma_2}(\gamma)}{\ell_1^{\Sigma_1}(\gamma)}
\end{align*}
 gives us the converse comparison $d_{\mathit{Gap}}\geq d_{\mathit{Th}}$, hence allowing us to conclude that the two metrics are equal on the Fuchsian locus.
\end{proof}

\subsubsection{Two generalizations to $S_{g,m}$}

We now turn to the space $\mathrm{Conv}^u(S_{g,m})$. We consider two possible generalizations. The first is equal to the Thurston metric on the Fuchsian slice and is conjecturally generalizable for $\mathrm{Pos}_n^u(S_{g,m})$ with $n\geq4$.

\begin{defn}[Pants-gap metric for $\mathrm{Conv}^u(S_{g,m})$]
\label{defn:pantsgapmetric}
For a cusped convex real projective surface $\Sigma\in \mathrm{Conv}^u(S_{g,m})$ with cusps $p_1,\ldots,p_m$, we define the \emph{pants gap function} $\mathit{PGap}^{\Sigma}(\beta,\gamma)$ for a boundary-parallel pair of pants $(\beta,\gamma)$ as the McShane identity summand corresponding to $(\beta,\gamma)$: 
\begin{align*}
PGap^{\Sigma}(\beta,\gamma)
:=
\left(1+e^{\phi_1(\beta,\gamma)}
\cdot e^{\frac{1}{2}\left(\tau(\gamma,\gamma_p)+\ell_1(\gamma)+\tau(\beta,\beta_p)+\ell_1(\beta)\right)}\right)^{-1}.
\end{align*}
We define the \emph{pants gap metric} as:
\begin{align*}
d_{\mathit{PGap}}(\Sigma_1,\Sigma_2)
:=
\log
\sup_{(\beta,\gamma)\in\subvec{\mathcal{P}}}
\frac{\log(\mathit{PGap}^{\Sigma_1}(\beta,\gamma))}
{\log(\mathit{PGap}^{\Sigma_2}(\beta,\gamma))},
\end{align*}
where $(\beta,\gamma)$ varies over the set $\xvec{\mathcal{P}}=\xvec{\mathcal{P}}_{p_1}\cup\ldots\cup\xvec{\mathcal{P}}_{p_m}$ of all boundary-parallel pairs of pants on $S_{g,m}$.
\end{defn}

\begin{defn}[Total gap metric for $\mathrm{Conv}^u(S_{g,m})$]
\label{defn:totalgapmetric}
For a cusped convex $\mathbb{RP}^2$ surface $\Sigma\in \mathrm{Conv}^u(S_{g,m})$ with cusps $p_1,\ldots,p_m$, we define the \emph{total gap function}  as:
\begin{align*}
\mathit{TGap}^{\Sigma}(\gamma):=
\frac{1}{m}\sum_{j=1}^m
\left(
\sum_{(\gamma,\gamma_{p})\in\subvec{\mathcal{H}}_{p_j}(\gamma)}
\frac{B_1(\gamma,\gamma_{p})}{1+e^{\ell_1(\gamma)+\tau(\gamma,\gamma_{p})}}
\right).
\end{align*}
We define the \emph{total gap metric} as:
\begin{align*}
d_{\mathit{TGap}}(\Sigma_1,\Sigma_2)
:=
\log
\sup_{\gamma\in\subvec{\mathcal{C}}_{g,m}}
\frac{\log(\mathit{TGap}^{\Sigma_2}(\gamma))}
{\log(\mathit{TGap}^{\Sigma_1}(\gamma))}.
\end{align*}
\end{defn}

\begin{rmk}
When $(g,m)=(1,1)$, both of these two metrics agree with the gap metric we defined for $1$-cusped convex real projective tori.
\end{rmk}

\begin{rmk}
The proof that the total gap metric is a mapping class group invariant asymmetric metrics on $\mathrm{Conv}^u(S_{g,m})$ is essentially the same as for the $\mathrm{Conv}^u(S_{1,1})$ case with the help of Lemma \ref{lemma:B1} and Lemma~\ref{thm:halfpantslbound}. The well-definedness of Definition \ref{defn:pantsgapmetric} requires Conjecture \ref{conjecture:phii}.
\end{rmk}

\begin{prop}
The restriction of the pants gap metric $d_{\mathit{PGap}}$ to the Fuchsian locus is equal to the classical Thurston metric.
\end{prop}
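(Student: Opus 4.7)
The plan is to adapt the strategy of Proposition~\ref{thm:thurston11} (the $S_{1,1}$ case) to general $S_{g,m}$, unfolding in three steps: simplify $\mathit{PGap}^{\Sigma}$ on the Fuchsian locus, bound $d_{\mathit{PGap}} \leq d_{\mathit{Th}}$ via a monotonicity argument, and prove $d_{\mathit{PGap}} \geq d_{\mathit{Th}}$ by realizing Thurston's length ratios as limits of pants gap ratios under iterated Dehn twists.

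The first step applies Lemma~\ref{lemma:nfuchrig}: on the Fuchsian locus every triple ratio equals $1$ and the two edge functions along any diagonal coincide, which forces $\tau(\beta,\beta_p) = \tau(\gamma,\gamma_p) = 0$, $d_1(\beta,\gamma) = e_1(\beta,\gamma)$, and hence $\phi_1(\beta,\gamma) = 0$. Since $3$-Fuchsian representations are symmetric squares of Fuchsian ones, $\ell_1(\delta) = \tfrac{1}{2}\ell(\delta)$ for every simple closed curve $\delta$, so Definition~\ref{defn:pantsgapmetric} reduces on the Fuchsian locus to
\begin{align*}
\log \mathit{PGap}^{\Sigma}(\beta,\gamma) = -\log\bigl(1 + e^{(\ell^{\Sigma}(\beta) + \ell^{\Sigma}(\gamma))/4}\bigr).
\end{align*}

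Next, to establish $d_{\mathit{PGap}} \leq d_{\mathit{Th}}$, I would reuse the monotonicity of $f(x) = \log(1+x)/\log(x)$ on $(1, \infty)$ (as in the proof of Proposition~\ref{thm:thurston11}) to obtain the inequality $\log(1+e^a)/\log(1+e^b) \leq a/b$ whenever $a \geq b > 0$, applied with $a, b$ equal to $(\ell^{\Sigma_i}(\beta) + \ell^{\Sigma_i}(\gamma))/4$ for $i=1,2$. Combining this with the mediant inequality $(a_1 + a_2)/(b_1+b_2) \leq \max(a_1/b_1, a_2/b_2)$ dominates the pants-length ratio by a single-curve length ratio; taking the supremum over $\xvec{\mathcal{P}}$ yields $d_{\mathit{PGap}} \leq d_{\mathit{Th}}$.

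For the reverse inequality, I note that any essential non-peripheral simple closed curve $\gamma$ on $S_{g,m}$ is a cuff of some boundary-parallel pair of pants $(\beta,\gamma) \in \xvec{\mathcal{P}}_p$, constructible by taking $\beta$ to be the boundary of a regular neighborhood of $\gamma$ together with a simple arc joining $\gamma$ to $p$. Iterated Dehn twists satisfy the standard hyperbolic asymptotic $\ell^{\Sigma}(\mathrm{tw}_\gamma^k(\beta)) = k\, \ell^{\Sigma}(\gamma) + O(1)$ as $k \to \infty$, following from eigenvalue analysis of $\rho(\beta)\rho(\gamma)^k$; consequently the $\mathit{PGap}$ ratio for the pants $(\mathrm{tw}_\gamma^k(\beta),\gamma)$ tends to $\ell^{\Sigma_1}(\gamma)/\ell^{\Sigma_2}(\gamma)$. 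Taking the supremum over $\gamma$ (using continuity of hyperbolic length functions on projective measured lamination space to approximate the Thurston supremum by simple closed curves) then yields $d_{\mathit{PGap}} \geq d_{\mathit{Th}}$. The main obstacle is essentially notational, namely reconciling the orientation-dependent indexing of $\xvec{\mathcal{P}}$ with the orientation-independence of Hilbert lengths on the Fuchsian locus and checking that the constructed pair of pants is embedded rather than merely immersed; both issues are already resolved in the $S_{1,1}$ argument and extend without conceptual difficulty.
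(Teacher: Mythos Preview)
Your proposal is correct and close in spirit to the paper's argument, but the route for the inequality $d_{\mathit{PGap}}\geq d_{\mathit{Th}}$ (and to a lesser extent $\leq$) is organized differently. The paper's proof consists of a single observation: the multicurves $\bar\beta\cup\bar\gamma$ arising from pants $(\bar\beta,\bar\gamma)\in\mathcal{P}_p$ project to a dense subset of projective measured lamination space, whence
\[
d_{\mathit{Th}}(\Sigma_1,\Sigma_2)
=\log\sup_{(\bar\beta,\bar\gamma)\in\mathcal{P}_p}
\frac{\ell^{\Sigma_2}(\bar\beta)+\ell^{\Sigma_2}(\bar\gamma)}{\ell^{\Sigma_1}(\bar\beta)+\ell^{\Sigma_1}(\bar\gamma)}.
\]
With $d_{\mathit{Th}}$ rewritten in this form, both inequalities follow from the $S_{1,1}$ argument of Proposition~\ref{thm:thurston11} applied verbatim with the combined length $\ell(\bar\beta)+\ell(\bar\gamma)$ in place of the single length $\ell(\gamma)$. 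You instead keep $d_{\mathit{Th}}$ as a supremum over single curves and bridge the gap in each direction separately: the mediant inequality for $\leq$, and an explicit pants construction plus Dehn-twist asymptotics for $\geq$. Your approach is more hands-on and avoids the PML density claim; the paper's is shorter and more symmetric once that claim is granted. Both are valid.
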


\begin{proof}
The proof is essentially identical to the proof of Proposition~\ref{thm:thurston11}, provided that one uses the following fact: 
\begin{align*}
d_{\mathit{Th}}(\Sigma_1,\Sigma_2)=
\log
\sup_{(\bar{\beta},\bar{\gamma})\in\mathcal{P}_p}
\frac{\ell^{\Sigma_2}(\bar{\beta},\bar{\gamma})}{\ell^{\Sigma_1}(\bar{\beta},\bar{\gamma})},
\end{align*}
where $\ell^{\Sigma_1}(\bar{\beta},\bar{\gamma}):=\ell^{\Sigma_1}(\bar{\beta})+ \ell^{\Sigma_1}(\bar{\gamma})$,
which comes from the fact that the projection of $\mathcal{P}_p$, regarded as a set of multicurves, in projective measured lamination space is dense.
\end{proof}

\begin{rmk}
It is unclear whether the restriction of the total gap metric $d_{\mathit{TGap}}$ to the Fuchsian locus is the Thurston metric, although it is fairly straight-forward to show that $d_{\mathit{TGap}}\geq d_{\mathit{Th}}$.
\end{rmk}

It is also possible to extend the pants gap metric over $\mathrm{Pos}_3^h(S_{g,m})$. \medskip

\begin{defn}[Pants gap metric for $\mathrm{Pos}_3^h(S_{g,m})$]
Let $\alpha_1,\ldots,\alpha_m$ be the boundary components of $S_{g,m}$. Let $\mathrm{Pos}_3^h(S_{g,m})(\mathbf{L})$ be the space of $\operatorname{PGL}_3(\mathbb{R})$-positive representations with fixed loxodromic boundary monodromy $\mathbf{L}$. Recall the notations: $\xvec{\mathcal{P}}_{\alpha}$ denotes the set of the homotopy classes of boundary-parallel pairs of pants containing $\alpha$, and $\xvec{\mathcal{P}}^{\partial}_{\alpha}$ denotes the set of the homotopy classes of boundary-parallel pairs of pants in $\xvec{\mathcal{P}}_{\alpha}$ which have two borders being boundary components of $S_{g,m}$.\medskip
\begin{itemize}
\item
For any $(\beta,\gamma)\in\xvec{\mathcal{P}}_{\alpha}\setminus\xvec{\mathcal{P}}^{\partial}_{\alpha}$ we set $\mathit{PGap}^{\Sigma}(\beta,\gamma)$ to be $\frac{1}{\ell_1(\alpha)}$ times the $i=1$ McShane identity summand in Theorem \ref{theorem:boundaryith}; 
\item
for any $(\beta,\gamma)\in\mathcal{P}^{\partial}_\alpha$, we set $\mathit{PGap}^{\Sigma}(\beta,\gamma)$ to be $\frac{1}{\ell_1(\alpha)}$ times the $i=1$ summand in Theorem \ref{theorem:boundaryith}.
\end{itemize}
The pants gap metric $d_{\mathit{PGap}}(\Sigma_1,\Sigma_2)$ on $\mathrm{Pos}_3^h(S_{g,m})(\mathbf{L})$ is defined as:
\begin{align*}
\log
\max_{j=1,\ldots,m}\left\{
\sup_{[\beta,\gamma]\in\subvec{\mathcal{P}}_{\alpha_j}\setminus\subvec{\mathcal{P}}_{\alpha_j}}
\frac{\log(\mathit{PGap}^{\Sigma_1}(\beta,\gamma))}
{\log(\mathit{PGap}^{\Sigma_2}(\beta,\gamma))},
\sup_{(\beta,\gamma)\in{\mathcal{P}}^{\partial}_{\alpha_j}}
\frac{\log(\mathit{PGap}^{\Sigma_1}(\beta,\gamma))}
{\log(\mathit{PGap}^{\Sigma_2}(\beta,\gamma))}
\right\}.
\end{align*}
\end{defn}

The proof that this is a well-defined metric is essentially the same as for the cusped case and we again require Conjecture \ref{conjecture:phii}.

\begin{rmk}
We expect Conjecture \ref{conjecture:phii} to be true. Provided that this can be demonstrated, it is possible to generalize the pants gap metric to define asymmetric metrics on the loxodromic-bordered positive representation variety of arbitrary rank. Moreover, the $(n-1)$ different McShane identities we obtain induce a $(n-1)$-dimensional positive ``quadrant'' of such metrics.
\end{rmk}

\clearpage

\section*{acknowledgements}
The authors wish to thank Yves Benoist, Vladimir Fock, Maxim Kontsevich, Fran\c{c}ois Labourie, Bob Penner, Andres Sambarino, Linhui Shen, Nicolas Tholozan, Yunhui Wu, Binbin Xu, Wenyuan Yang and Tengren Zhang for your active interest in our project and for helpful and stimulating discussions. We also wish to thank many, \emph{many} others in the community whose support has sustained us throughout this long project: your words have been truly encouraging. The bulk of this work was conducted at the Yau Mathematical Sciences Center (Tsinghua University), but key insights were also developed thanks to the generous hospitality of the Institut des Hautes \'{E}tudes Scientifique, the Institute for Mathematical Sciences (National University of Singapore) and the University of Melbourne.

\section*{Funding}
The first author wishes to state that this work was supported by the China Postdoctoral Science Foundation [grant numbers 2016M591154, 2017T100058]. The second author wishes to state that this work was supported by the China Postdoctoral Science Foundation 2018T110084 and FNR AFR bilateral grant COALAS 11802479-2. The second author also acknowledges support from U.S. National Science Foundation grants DMS-1107452, 1107263, 1107367 ``RNMS:GEometric structures And Representation varieties'' (the GEAR Network).

\appendix

\section{Fuchsian rigidity}
\label{sec:appendixrigidity}

\begin{prop}[Triple ratio rigidity for $n=3,4$]
For $n=3,4$, a positive representation $\rho\in \mathrm{Pos}_n(S_{g,m})$ is $n$-Fuchsian if and only if $\rho$ satisfies the triple ratio rigidity condition.
\end{prop}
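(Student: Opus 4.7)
The plan is to reduce the claim, via Remark~\ref{rmk:rigidityproof}, to showing that triple ratio rigidity implies edge function rigidity in the cases $n=3,4$. Since $n$-Fuchsian representations satisfy both rigidity conditions (Lemma~\ref{lemma:nfuchrig}) and the simultaneous vanishing of all triple ratio and edge function equations cuts out precisely the $n$-Fuchsian slice of $\mathcal{X}_n(S_{g,m})$, once both conditions are established the positive representation is forced to lie in that slice. The main tool will be the explicit cluster transformation formulas for Fock--Goncharov $\mathcal{X}$-coordinates under a flip of an ideal diagonal \cite[\S10.3]{FG06}.

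First, for $n=3$ I would fix an ideal quadrilateral with anticlockwise vertices $x,y,z,t$ and diagonal $\overline{yt}$, producing the two marked triangles $(x,y,t)$ and $(y,z,t)$. Label the relevant $\mathcal{X}$-coordinates by the two triple ratios $T^{xyt}, T^{yzt}$, the two edge functions $D_1,D_2$ along the oriented diagonal $(y,t)$, and the edge functions on the boundary edges of the quadrilateral. Applying the explicit $n=3$ flip to obtain the new triangulation $\{(x,y,z),(x,z,t)\}$, I would write down the images of each $\mathcal{X}$-coordinate as rational functions in the old ones. Imposing $T^{xyt}=T^{yzt}=1$ as well as $T^{xyz}=T^{xzt}=1$ in the flipped chart produces a small system of rational equations whose only positive solution forces $D_1=D_2$; this gives edge function rigidity along the chosen diagonal. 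Since the diagonal was arbitrary, edge function rigidity follows.

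For $n=4$ the same scheme is used, except that each triangle now carries three triple ratios $T_{1,1,2},T_{1,2,1},T_{2,1,1}$ and each interior edge carries three edge functions $D_1,D_2,D_3$. I would again impose triple ratio rigidity on both sides of a flip and propagate the consequences through the associated sequence of cluster mutations. To keep the calculation manageable, I would decompose the flip into its constituent single-vertex mutations at the interior vertices of the quiver $\Gamma_{\mathcal{T}_4}$ (Definition~\ref{definition:quiver}); at every intermediate step, the assumption $T_{i,j,k}=1$ collapses several summands in the mutation formula, and the resulting equations on the diagonal edge functions can be solved to yield $D_1=D_2=D_3$. The main obstacle here is purely computational: the $n=4$ mutation monomials in the six surrounding triple ratios and three diagonal edge functions are substantially more intricate than in the $n=3$ case, and the verification must be organized carefully (for instance, by tracking the $\mathcal{X}$-coordinate changes lozenge-by-lozenge as in the diagrams accompanying Figure~\ref{Figure:Psl3}) so that the simplifications coming from the triple ratio rigidity hypothesis actually cancel terms in a closed form.

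Finally, having shown that triple ratio rigidity implies edge function rigidity for $n=3,4$, the discussion preceding Lemma~\ref{lemma:nfuchrig} together with Remark~\ref{rmk:rigidityproof} identifies $\rho$ as a $n$-Fuchsian representation, completing the proof. I expect this argument to be essentially algebraic and to transfer directly to the more general contexts mentioned after Proposition~\ref{prop:edge3} (marked points on boundary, universal higher Teichm\"uller space, general coefficient fields), but to become impractical for $n\geq 5$ due to the combinatorial explosion in the mutation formulas; in that range the geometric Frenet-curve approach of Theorem~\ref{thm:highrankrigidity} is the correct tool.
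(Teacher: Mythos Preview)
Your overall strategy is correct and matches the paper's: invoke Remark~\ref{rmk:rigidityproof} to reduce to showing that triple ratio rigidity implies edge function rigidity, then use the cluster transformation formulas for a flip to extract equalities among the edge functions along a diagonal.

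The main difference is in execution, and it is worth pointing out because it affects your assessment of the $n=4$ case. The paper does \emph{not} treat $n=3$ and $n=4$ separately, nor does it write out a system of equations in all the diagonal edge functions. Instead it isolates a \emph{single} triple ratio coordinate adjacent to the diagonal, namely $X_{v^{x,y,z}_{1,1,n-2}}$, and computes its image $X'_{v^{t,y,z}_{1,1,n-2}}$ under the flip via one explicit rational formula valid for all $n$. That formula involves only two of the diagonal edge functions, $X_{v^{x,z}_{1,n-1}}$ and $X_{v^{x,z}_{2,n-2}}$, together with two triple ratios. Setting all triple ratios equal to $1$ (on both sides of the flip) collapses the formula to a single equation whose unique positive solution is $X_{v^{x,z}_{1,n-1}}=X_{v^{x,z}_{2,n-2}}$. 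A symmetric argument at the other end of the diagonal gives $X_{v^{x,z}_{n-1,1}}=X_{v^{x,z}_{n-2,2}}$. For $n=3,4$ there are at most three edge vertices on the diagonal, so these two equalities already force all edge functions along the diagonal to agree. Thus the $n=4$ case is no harder than $n=3$: the same one-line computation handles both. Your plan to decompose the $n=4$ flip into individual mutations and track lozenges would work, but it is unnecessary; the anticipated ``combinatorial explosion'' does not occur because only the outermost edge functions enter the relevant mutation formula.
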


\begin{proof}
We invoke Remark~\ref{rmk:rigidityproof}, and also lift our discussion to the universal cover to avoid dealing with different cases involving topologically distinct triangulations of the surface. Given any ideal triangulation $\mathcal{T}$, consider an ideal edge $\overline{xz}$ common to two ideal triangles $(x,y,z)$ and $(x,z,t)$ in $\widetilde{\mathcal{T}}$ as depicted in Figure~\ref{Figure:triple1}.

\begin{figure}[h!]
\includegraphics[scale=0.5]{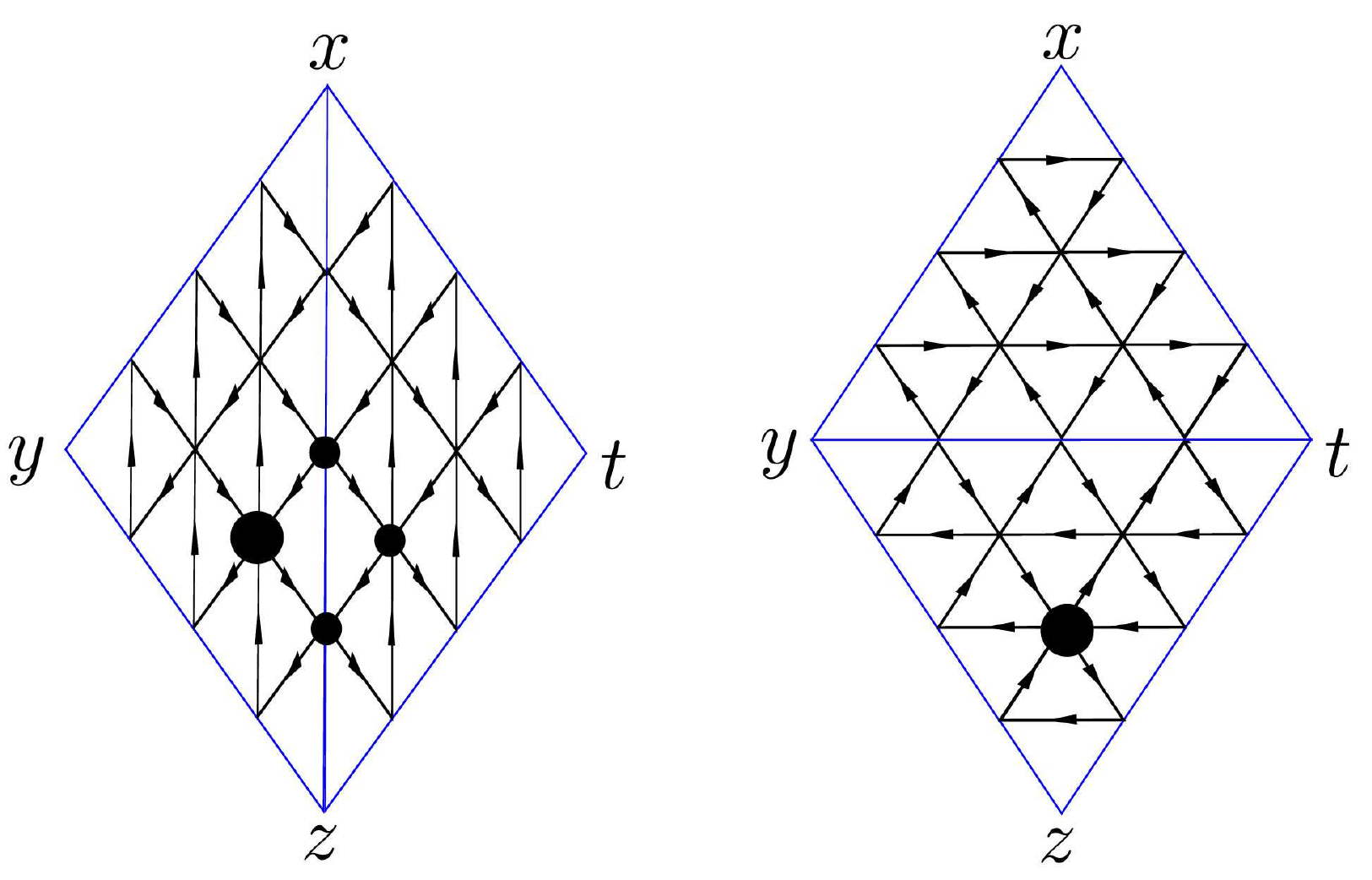}
\caption{flip at $\overline{xz}$}
\label{Figure:triple1}
\end{figure}
We compute $X'_{v_{t,y,z}^{1,1,n-2}}$ after flipping at edge $\overline{xz}$ via the cluster transformation: 
\begin{align*}
\frac{X'_{v^{t,y,z}_{1,1,n-2}}}{X_{v^{x,y,z}_{1,1,n-2}}}= \frac{1+ X_{v^{x,z}_{2,n-2}}+ X_{v^{x,z,t}_{1,n-2,1}} X_{v^{x,z}_{2,n-2}} + X_{v^{x,z}_{1,n-1}} X_{v^{x,z,t}_{1,n-2,1}} X_{v^{x,z}_{2,n-2}}}{1+ X_{v^{x,z}_{1,n-1}}+ X_{v^{x,y,z}_{1,1,n-2}} X_{v^{x,z}_{1,n-1}} + X_{v^{x,z}_{2,n-2}} X_{v^{x,y,z}_{1,1,n-2}} X_{v^{x,z}_{1,n-1}}}.
\end{align*}
By assumption, triple ratios are all equal to $1$, and the equation above tells us that
\begin{align*}
X_{v^{x,z}_{2,n-2}}=X_{v^{x,z}_{1,n-1}}.
\end{align*}
By symmetry, we also have
\begin{align*}
X_{v^{x,z}_{n-2,2}}=X_{v^{x,z}_{n-1,1}}.
\end{align*}
For $n=3,4$ there are at most $3$ coordinates along $\overline{xz}$, and hence must all be equal. Since this applies to any arbitrary edge, we see that $\rho$ is $n$-Fuchsian.
\end{proof}

\begin{prop}[Edge function rigidity for $n=3$]
\label{proposition:bul0}
For $n=3$, a positive representation $\rho\in \mathrm{Pos}_n(S_{g,m})$ is $n$-Fuchsian if and only if $\rho$ satisfies the edge function rigidity condition.
\end{prop}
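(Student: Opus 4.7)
My plan is to mimic the strategy of the proof of Proposition~\ref{prop:triple34} but run it in reverse: by Remark~\ref{rmk:rigidityproof}, it suffices to show that edge function rigidity implies triple ratio rigidity, since the two together cut out precisely the $n$-Fuchsian locus. I would therefore assume that for every interior edge $\overline{e}$ in every ideal triangulation of $S_{g,m}$, the two edge functions $D_1(\overline{e})$ and $D_2(\overline{e})$ coincide, and aim to conclude that $T(\triangle)=1$ for every marked ideal triangle $\triangle$.

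The first step is to set up one local configuration. Fix two adjacent lifted ideal triangles $\overline{xyz}$ and $\overline{xzt}$ sharing the edge $\overline{xz}$ in $\widetilde{\mathcal{T}}$, write $T_1:=T(\xi_\rho(x),\xi_\rho(y),\xi_\rho(z))$ and $T_2:=T(\xi_\rho(x),\xi_\rho(z),\xi_\rho(t))$, and denote the two edge functions along $\overline{xz}$ by $D_1,D_2$. Specializing the cluster mutation formula already used in the proof of Proposition~\ref{prop:triple34} to $n=3$ gives
\begin{equation*}
\frac{T'_{yzt}}{T_{xyz}}=\frac{1+D_2+T_2 D_2+D_1 T_2 D_2}{1+D_1+T_1 D_1+D_2 T_1 D_1},
\end{equation*}
and a symmetric expression for $T'_{xyt}/T_{xzt}$. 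Under the hypothesis $D_1=D_2=:d$, both expressions collapse to rational functions of $(d,T_1,T_2)$, and in particular one obtains the clean identity $T'_{yzt}\cdot T'_{xyt}=T_1\cdot T_2$.

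The second step is to bring in the rigidity condition for the new edge $\overline{yt}$ in the post-flip triangulation. Using the $\mathcal{X}$-coordinate mutation formulas (see \cite[\S10.3]{FG06}) together with the edge function rigidity already imposed on $\overline{xy},\overline{yz},\overline{zt},\overline{tx}$ before and after the flip, write out the equation $D'_1(\overline{yt})=D'_2(\overline{yt})$ as a polynomial relation in the old coordinates $T_1$, $T_2$, $d$ and the four edge-function values along $\overline{xy},\overline{yz},\overline{zt},\overline{tx}$. By iterating this argument at the four surrounding edges (or, equivalently, by applying the flip to each of them and reading off the same type of symmetry condition), I expect to obtain enough polynomial relations among the eight coordinates to force $T_1=T_2=1$. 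Positivity of all coordinates (we are inside the positive $\mathcal{X}$-locus) will be used crucially to rule out spurious negative or complex solutions.

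The third step is a global-to-local reduction: a local configuration of two adjacent triangles does not in general arise in a single fundamental domain (e.g.\ for $S_{1,1}$ some ``opposite sides'' of the pentagon are identified), so the algebraic relations derived above must be interpreted carefully using the $\pi_1(S_{g,m})$-equivariance of $\xi_\rho$. For any given triangle $\triangle$ one can always find a lift together with a neighbor to which the local argument applies; since flips act transitively on ideal triangulations, the conclusion $T=1$ extends to every ideal triangle. The main obstacle I anticipate is Step~2, the algebraic bookkeeping: explicitly writing down the coupled system of flip relations and checking that it has only the solution $T_1=T_2=1$ within the positive orthant is tedious, and one must make sure that the relations from the four surrounding edges are genuinely independent and not tautological consequences of the single relation along $\overline{xz}$. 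A computer algebra verification (e.g.\ computing a Gr\"obner basis of the resulting ideal) would be the natural safety net if the hand computation becomes unwieldy.
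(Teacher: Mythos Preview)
Your overall strategy matches the paper's: invoke Remark~\ref{rmk:rigidityproof} and show that edge function rigidity forces all triple ratios to equal $1$ via a flip computation. However, your Step~2 is both unverified (you only ``expect'' the resulting system to force $T_1=T_2=1$) and unnecessarily complicated. The paper's argument needs a single equation, not a coupled system across five edges.

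The key simplification you are missing is the choice of which edge to test after the flip. You propose to look at the \emph{new diagonal} $\overline{yt}$ and then iterate around the four surrounding edges. The paper instead looks at a \emph{boundary edge} of the quadrilateral, say $\overline{xy}$, which is present both before and after the flip. After flipping $\overline{xz}$, one of the two triangles adjacent to $\overline{xy}$ changes from $\overline{xyz}$ to $\overline{xyt}$, so the two edge functions along $\overline{xy}$ transform. Writing $d:=X_{v^{x,z}_{1,2}}=X_{v^{x,z}_{2,1}}$ and $T_1:=X_{v^{x,y,z}_{1,1,1}}$, and using the hypothesis $X_{v^{x,y}_{1,2}}=X_{v^{x,y}_{2,1}}$, the mutation formulas give
\[
X'_{v^{x,y}_{1,2}}=\frac{X_{v^{x,y}_{1,2}}\,T_1\,d}{1+T_1 d},
\qquad
X'_{v^{x,y}_{2,1}}=\frac{X_{v^{x,y}_{1,2}}\,d}{1+d}.
\]
Edge function rigidity in the flipped triangulation forces $X'_{v^{x,y}_{1,2}}=X'_{v^{x,y}_{2,1}}$, and equating the two right-hand sides immediately yields $T_1=1$. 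No iteration, no system of polynomial relations, no Gr\"obner basis.

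So: keep your reduction via Remark~\ref{rmk:rigidityproof}, drop the product identity $T'_{yzt}T'_{xyt}=T_1T_2$ and the plan to analyze $\overline{yt}$, and replace Step~2 by the single-edge computation above. Your Step~3 (passing from local to global) is fine but essentially trivial once you have $T_1=1$ for an arbitrary triangle.
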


\begin{proof}
We again invoke Remark~\ref{rmk:rigidityproof}, and we again work in the universal cover (see Figure~\ref{Figure:triple1}). By assumption, we have $X_{v_{1,2}^{x,z}}=X_{v_{2,1}^{x,z}}$. After flipping the edge $\overline{xz}$, we obtain
\begin{align*}
X'_{v_{1,2}^{x,y}}
=\frac{X_{v_{1,2}^{x,y}} X_{v_{1,1,1}^{x,y,z}} X_{v_{1,2}^{x,z}} (1+ X_{v_{1,2}^{x,z}})}
{1+ X_{v_{1,2}^{x,z}} + X_{v_{1,2}^{x,z}} X_{v_{1,1,1}^{x,y,z}}
+ X_{v_{1,2}^{x,z}} X_{v_{1,1,1}^{x,y,z}} X_{v_{1,2}^{x,z}}}=
\frac{X_{v_{1,2}^{x,y}} X_{v_{1,1,1}^{x,y,z}} X_{v_{1,2}^{x,z}}}{1+ X_{v_{1,1,1}^{x,y,z}} X_{v_{1,2}^{x,z}}},
\end{align*}
and 
\begin{align*}
X'_{v_{2,1}^{x,y}} 
= \frac{X_{v_{2,1}^{x,y}} X_{v_{2,1}^{x,z}}}{1+X_{v_{2,1}^{x,z}}}
= \frac{X_{v_{1,2}^{x,y}} X_{v_{1,2}^{x,z}}}{1+X_{v_{1,2}^{x,z}}},
\end{align*}
which satisfies $X'_{v_{1,2}^{x,y}}=X'_{v_{2,1}^{x,y}}$ by assumption. Solving for $X_{v_{1,1,1}^{x,y,z}}$ yields $X_{v_{1,1,1}^{x,y,z}}=1$ as desired.
\end{proof}

\section{More on the length spectrum of cusped strictly convex real projective surfaces}
\label{sec:lengthspecproj}

The proof of the Birman-Series theorem implies that the simple length spectrum of a cusped (or a closed) convex real projective surface necessarily has at least polynomial asymptotic growth rate. We now show something stronger, that it is asymptotically of order $N^{6g-6+2m}$.

\begin{prop}
\label{prop:polygrowth}
Given a cusped strictly convex real projective surface $\Sigma\in\mathrm{Conv}^u_3(S_{g,m})$, let $D(N,\Sigma)$ denote the number of (non-peripheral) simple closed geodesics on $\Sigma$ of length less than $N$. Then there exist constants $k_{\Sigma},K_{\Sigma}>0$ such that
\[
k_{\Sigma}N^{6g-6+2m}
\leq 
D(N,\Sigma)
\leq
K_{\Sigma}N^{6g-6+2m},\text{ for all sufficiently large }N.
\]
\end{prop}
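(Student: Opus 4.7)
The plan is to reduce the proposition to Mirzakhani's asymptotic growth theorem for simple closed geodesics on hyperbolic surfaces, by establishing a bi-Lipschitz comparison between the Hilbert length on $\Sigma$ and an auxiliary hyperbolic length on $S_{g,m}$.

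First, I would fix an auxiliary finite-area complete hyperbolic structure $\Sigma_0$ on $S_{g,m}$ and recall from \S\ref{sec:thurston} (using \cite{thurston2016rubber}) that the Hilbert length function $\ell^\Sigma$ extends continuously to the space $\mathcal{ML}(S_{g,m})$ of compactly supported measured laminations as a strictly positive homogeneous function; the analogous (classical) extension holds for $\ell^{\Sigma_0}$. The ratio $\ell^\Sigma/\ell^{\Sigma_0}$ therefore descends to a continuous positive function on the compact space $\mathbb{P}\mathcal{ML}(S_{g,m})$ of projective measured laminations, from which one extracts positive constants $c_1, c_2$ satisfying
\[
c_1\,\ell^{\Sigma_0}(\gamma)\;\leq\;\ell^{\Sigma}(\gamma)\;\leq\; c_2\,\ell^{\Sigma_0}(\gamma)
\]
for every simple closed curve $\gamma$ on $S_{g,m}$. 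This bi-Lipschitz comparison immediately yields the nested counting inequalities
\[
D(N/c_2,\Sigma_0)\;\leq\; D(N,\Sigma)\;\leq\; D(N/c_1,\Sigma_0).
\]

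Next, I would invoke Mirzakhani's polynomial growth theorem for simple closed geodesics on cusped hyperbolic surfaces: there is a positive constant $c_0=c_0(\Sigma_0)$ such that $D(N,\Sigma_0)=c_0\,N^{6g-6+2m}(1+o(1))$ as $N\to\infty$. Substituting into the bi-Lipschitz sandwich and absorbing the factors of $c_1,c_2$ together with the $o(1)$ tails into fresh constants $k_\Sigma,K_\Sigma>0$ yields the two desired bounds for all sufficiently large $N$.

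The main obstacle is the extension of $\ell^{\Sigma}$ to a continuous function on $\mathcal{ML}(S_{g,m})$ in the cusped (unipotent boundary monodromy) convex real projective setting, since this case lies outside the Anosov framework where such extensions are routine. I would handle this by directly invoking Thurston's rubber-band construction \cite{thurston2016rubber}, which already underlies the well-definedness argument for $d_{\mathit{Gap}}$ in \S\ref{sec:thurston}; as a backup, one can bypass the full Hilbert length entirely by replacing $\ell$ with the simple root length $\ell_1$ (using Theorem \ref{thm:simpcomp} to transfer the polynomial bounds, since $\ell_1\leq\ell\leq K_\rho\ell_1$) and then carrying out the bi-Lipschitz comparison using $\ell_1$, whose extension to $\mathcal{ML}(S_{g,m})$ may be obtained directly from its description as a cross-ratio period in \S\ref{sec:highermcshane}.
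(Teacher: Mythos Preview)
Your approach is correct and reaches the same bi-Lipschitz comparison $c_1\ell^{\Sigma_0}(\gamma)\leq\ell^{\Sigma}(\gamma)\leq c_2\ell^{\Sigma_0}(\gamma)$ as the paper, but the route is genuinely different. The paper obtains this comparison by a direct tangent-bundle argument: it invokes Proposition~\ref{prop:compact} to conclude that the closure of the Birman--Series set $\overline{\mathcal{BS}(\Sigma)}$ is compact, then restricts the unit tangent bundle of $\Sigma$ over $\overline{\mathcal{BS}(\Sigma)}$ and compares Finsler and Riemannian norms pointwise on this compact set to extract uniform constants. Your argument instead passes through the continuous extension of $\ell^\Sigma$ to $\mathcal{ML}(S_{g,m})$ and uses compactness of $\mathbb{P}\mathcal{ML}(S_{g,m})$. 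The paper's method is more self-contained (it recycles the geodesic-sparsity machinery already developed in \S\ref{sec:sparsity}) and sidesteps the extension-to-laminations issue you flag as the main obstacle; your method is cleaner once that extension is in hand, and indeed the paper itself invokes \cite{thurston2016rubber} for exactly this extension in \S\ref{sec:thurston}. A minor point: the paper cites McShane--Rivin \cite{MR95} and Rivin \cite{Riv01} for the coarse polynomial bounds on the hyperbolic side rather than Mirzakhani's sharper asymptotic, which is all that is needed here.
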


\begin{proof}
We first endow $S_{g,m}$ with an auxiliary cusped hyperbolic metric $h_0$ which is smoothly compatible with $\Sigma$, and note that by McShane--Rivin \cite{MR95} and Rivin \cite{Riv01}, there exist constants $k_0, K_0>0$ such that \[
k_0N^{6g-6+2m}\leq D(N,(S_{g,m},h_0))\leq K_0N^{6g-6+2m}.
\]
Now, consider the Birman--Series set $\mathcal{BS}(\Sigma)$ of $\Sigma$ and note that its closure $\overline{\mathcal{BS}(\Sigma)}$ is compact thanks to Proposition~\ref{prop:compact}. \footnote{In fact, the Birman-Series set itself is compact because it is the intersection of a filtration of closed sets, however, that is not essential to our current proof.} The restriction of the unit tangent bundle of $\Sigma$ over the $\overline{\mathcal{BS}(\Sigma)}$ is therefore compact, and since this restricted unit tangent bundle is also a subset of the (whole) tangent bundle of $(S_{g,m},h_0)$, there is a uniform constant $c>0$ such that the Riemannian norm of every tangent vector in this restricted bundle is at least $c$.\medskip

Consider an arbitrary simple closed geodesic $\gamma$ on $\Sigma$. Note that $\gamma$ defines a smooth curve on $(S_{g,m},h_0)$, which then shortens to a unique simple closed geodesic $\gamma_0$ on $(S_{g,m},h_0)$. We have the following inequality:
\[
\ell^{\Sigma}(\gamma)\geq c\ell^{h_0}(\gamma)\geq c\ell^{h_0}(\gamma_0),
\]
where $\ell^{h_0}$ denotes the hyperbolic length of a curve on $(S_{g,m},h_0)$. Switching the roles of $\Sigma$ and $(S_{g,m},h_0)$ and using the Finsler norm instead of the Riemannian, we see that there exists a positive constant $C^{-1}>0$ such that
\[
\ell^{h_0}(\gamma_0)\geq C^{-1}\ell^{\Sigma}(\gamma_0)\geq C^{-1}\ell^{\Sigma}(\gamma).
\]
Therefore, we see that for every simple closed geodesic $\gamma$ on $\Sigma$ and $\gamma_0$ on $(S_{g,m},h_0)$ which are homotopic as topological curves on $S_{g,m}$, their lengths satisfy the following comparison:
\[
C\ell^{h_0}(\gamma_0)\geq \ell^{\Sigma}(\gamma)\geq c\ell^{h_0}(\gamma_0).
\]
This suffices to give the desired asymptotic growth rate of the simple length spectrum for $\Sigma$.
\end{proof}

\begin{rmk}
An immediate corollary of Proposition~\ref{prop:polygrowth} and Theorem~\ref{thm:simpcomp} is that the simple $i$-length spectra for cusped convex real projective surfaces also satisfy the same polynomial growth rate, albeit with possibly different constant coefficients.
\end{rmk}

\begin{prop}
All cusped convex real projective surfaces have discrete length spectra.
\end{prop}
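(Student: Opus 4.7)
The strategy is to reduce the discreteness of the Hilbert length spectrum to the classical discreteness of the length spectrum of a cusped hyperbolic surface, generalizing the bi-Lipschitz comparison in the proof of Proposition~\ref{prop:polygrowth} from simple closed geodesics to arbitrary ones. The basic outline mirrors the proof of Theorem~\ref{thm:discretespectra} in spirit: confine the geodesics of bounded length to a compact region, then transfer discreteness from the hyperbolic model.

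First I would establish a length-based \emph{cuspidal confinement}: for every $L > 0$, there is a compact subsurface $K_L \subset \Sigma$ containing every closed Hilbert geodesic of length at most $L$. This is the $L$-graded analogue of Proposition~\ref{prop:compact}. One way is to bound the self-intersection count of a closed geodesic of Hilbert length $L$ by $O(L^2)$ (an isoperimetric-type estimate valid on any Gromov hyperbolic finite-area Finsler surface, and in particular on $\Sigma$ by Proposition~\ref{thm:convprojregularity}), and then to apply Proposition~\ref{prop:compact} with $k = O(L^2)$. A cleaner route is to argue directly, along the lines of the proof of Proposition~\ref{prop:compact}, that a geodesic arc of bounded Hilbert length entering a sufficiently small cuspidal horoball $\tilde{C}_r \subset \Omega$ and exiting is impossible: such an arc lifts to a straight-line segment in $\Omega$ with both endpoints on the horocyclic boundary of $\tilde{C}_r$, and Lemma~\ref{thm:expshrinkball} (exponentially shrinking balls near $\partial \Omega$) together with the $\beta_\Sigma$-convexity of $\partial \Omega$ from Proposition~\ref{thm:convprojregularity} forces the Hilbert length of such a segment to tend to infinity as $r \to 0$, uniformly in the winding count around the cusp.

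Second, fix an auxiliary smoothly compatible cusped hyperbolic metric $h_0$ on $S_{g,m}$, exactly as in Proposition~\ref{prop:polygrowth}. On the compact region $K_L$, the Hilbert Finsler norm and the hyperbolic Riemannian norm are uniformly equivalent, producing a constant $c_L > 0$ such that any closed Hilbert geodesic $\gamma \subset K_L$ of length at most $L$, viewed as a smooth curve on $S$, has hyperbolic length at most $c_L^{-1} L$; shortening to its $h_0$-geodesic representative $\gamma_0$ preserves this bound. The classical discreteness of the hyperbolic length spectrum of $(S_{g,m}, h_0)$ then implies that only finitely many $h_0$-geodesics have length at most $c_L^{-1} L$. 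Since distinct closed Hilbert geodesics represent distinct free homotopy classes, and hence distinct $h_0$-geodesic representatives, only finitely many closed Hilbert geodesics have length $\leq L$, which is precisely the discreteness of the length spectrum.

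The main obstacle is the cuspidal confinement step: quantifying how deep into a cusp a closed Hilbert geodesic of bounded length may penetrate. The Benoist--Hulin regularity of $\partial \Omega$ (Proposition~\ref{thm:convprojregularity}) and the exponential ball-shrinking estimate (Lemma~\ref{thm:expshrinkball}) are the critical inputs that upgrade Proposition~\ref{prop:compact} from a self-intersection-count statement to a length-based one; once this is in hand, the rest of the argument is a direct adaptation of the bi-Lipschitz comparison already used in Proposition~\ref{prop:polygrowth}.
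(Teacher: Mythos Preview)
Your approach is correct and takes a genuinely different route from the paper. The paper argues combinatorially via the $k$-diagram machinery of \S\ref{sec:sparsity}: fixing an ideal triangulation $\mathcal{T}$, it notes that a closed geodesic cut into $N$ segments has at most ${N\choose 2}$ self-intersections, so finiteness of $[J_{{N\choose 2}}(N)]$ forces $N$ to be unbounded along any infinite family of distinct closed geodesics; bounded total length then pushes segments arbitrarily deep into the cusps of the ideal triangles, and since the geodesic must also visit the thick part one obtains a length contradiction. You instead establish cuspidal confinement directly (bounded-length closed geodesics stay in a compact $K_L$) and then transfer discreteness from the auxiliary hyperbolic metric via bi-Lipschitz comparison on $K_L$, exactly paralleling Proposition~\ref{prop:polygrowth}. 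Your route avoids the diagram combinatorics entirely and is transparently reducible to the classical hyperbolic case, at the cost of invoking that case as a black box; the paper's route is economical because the diagram machinery is already in place from the Birman--Series argument. One imprecision in your Route~B: a straight segment with both endpoints on $\partial\tilde C_r$ need not have large Hilbert length (short chords are short). What you actually need is that a closed geodesic cannot live entirely in a cusp annulus, so it must traverse from the thick part into $\tilde C_r$ and back, and it is the Hilbert distance from the thick part to $\tilde C_r$ (which tends to infinity by completeness of the Hilbert metric) that gives confinement---Lemma~\ref{thm:expshrinkball} and $\beta$-convexity are not needed here.
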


\begin{proof}
Consider a cusped convex real projective surface $\Sigma$, and assume that there exists a sequence $\{\gamma_i\}_{i\in\mathbb{N}}$ of distinct closed geodesics on $\Sigma$ with bounded length. Fix an ideal triangulation $\mathcal{T}$ of $\Sigma$, we first observe that for $k\geq {N\choose 2}$, the number of homotopy classes of $k$-diagrams in $[J_k(N)]$ (Definition~\ref{defn:kdiag}) no longer increases, because $N$ geodesic segments can only has at most ${N\choose 2}$ intersections. Therefore, the number of geodesic segments that $\mathcal{T}$ cuts $\gamma_i$ into cannot remain bounded as $i\to\infty$. This in turn means that some of these segments must eventually penetrate arbitrarily far into the cusps of the ideal triangles in $\mathcal{T}$, and hence arbitrarily far into the cusps of $\Sigma$. Since cuspidal annular neighborhoods of $\Sigma$ cannot support non-peripheral geodesics, for $i$ sufficiently large, $\gamma_i$ must traverse from the $\epsilon$-thick part of $\Sigma$ (for some fixed $\epsilon$) arbitrarily far into the cusps. This contradicts the boundedness of the lengths of the $\gamma_i$.
\end{proof}

\begin{rmk}
Again, applying Theorem~\ref{thm:simpcomp} immediately tells us that cusped convex real projective surfaces have discrete $i$-length spectra.
\end{rmk}

% BibTeX users please use one of
%\bibliographystyle{spbasic}      % basic style, author-year citations
%\bibliographystyle{spmpsci}      % mathematics and physical sciences
%\bibliographystyle{spphys}       % APS-like style for physics
\bibliography{Bibliography}   % name your BibTeX data base

\begin{thebibliography}{}
%
% and use \bibitem to create references. Consult the Instructions
% for authors for reference list style.
%
%\bibitem{RefJ}
% Format for Journal Reference
%Author, Article title, Journal, Volume, page numbers (year)
% Format for books
%\bibitem{RefB}
%Author, Book title, page numbers. Publisher, place (year)
% etc

\bibitem[AC15]{adeboye2015area}
Ilesanmi Adeboye and Daryl Cooper, \emph{The area of convex projective surfaces and Fock--Goncharov coordinates}, Journal of Topology and Analysis (2018).

\bibitem[AMS04]{akiyoshi2004refinement}
Hirotaka Akiyoshi, Hideki Miyachi, and Makoto Sakuma, \emph{A refinement of McShane's identity for quasifuchsian punctured torus groups}, Contemporary
  Mathematics \textbf{355} (2004), 21--40.

\bibitem[AMS06]{MR2258748}
---, \emph{Variations of {M}c{S}hane's identity for punctured surface
  groups}, Spaces of {K}leinian groups, London Math. Soc. Lecture Note Ser.,
  vol. 329, Cambridge Univ. Press, Cambridge, 2006, pp.~151--185.

\bibitem[B60]{B60}
Jean--Paul Benz\'ecri, \emph{Sur les vari\'et\'es localement affines et localement projectives}, Bull. Soc. Math. France, \textbf{88} (1960), 229--332. 


\bibitem[Bas93]{basmajian1993orthogonal}
Ara Basmajian, \emph{The orthogonal spectrum of a hyperbolic manifold},
  American Journal of Mathematics \textbf{115} (1993), no.~5, 1139--1159.

\bibitem[BCL20]{bridgeman2017simple}
Martin Bridgeman, Richard Canary, and Fran{\c{c}}ois Labourie, \emph{Simple
  length rigidity for Hitchin representations}, Advances in Mathematics \textbf{360} (2020), 106901.


\bibitem[BCS18]{pressure}
Martin Bridgeman, Richard Canary, and Andr\'{e}s Sambarino, \emph{An introduction to pressure metrics for higher Teichm\"uller spaces}, Ergodic Theory Dynam. Systems \textbf{38} (2018), no.~6,
2001--2035.


\bibitem[BD14]{bonahon2014parameterizing}
Francis Bonahon and Guillaume Dreyer, \emph{Parameterizing Hitchin components},
  Duke Mathematical Journal \textbf{163} (2014), no.~15, 2935--2975.

\bibitem[BD17]{bonahon2017hitchin}
---, \emph{Hitchin characters and geodesic laminations}, Acta Mathematica
  \textbf{218} (2017), no.~2, 201--295.

\bibitem[Ben01]{benoist2001convexes}
Yves Benoist, \emph{Convexes divisibles I}, Algebraic groups and arithmetic. Tata Inst. Fund. Res. Stud. Math. \textbf{17} (2004), 339-374.


\bibitem[Ben03]{benoist2003convexes}
---, \emph{Convexes hyperboliques et fonctions quasisym{\'e}triques},
  Publications Math{\'e}matiques de l'IH{\'E}S \textbf{97} (2003), no.~1,
  181--237.

\bibitem[BH13]{benoist2013cubic}
Yves Benoist and Dominique Hulin, \emph{Cubic differentials and finite volume
  convex projective surfaces}, Geometry \& Topology \textbf{17} (2013), no.~1,
  595--620.

\bibitem[BH14]{benoist2014cubic}
---, \emph{Cubic differentials and hyperbolic convex sets}, Journal of
  Differential Geometry \textbf{98} (2014), no.~1, 1--19.

\bibitem[BS85]{birman_series}
Joan~S. Birman and Caroline Series, \emph{Geodesics with bounded intersection
  number on surfaces are sparsely distributed}, Topology \textbf{24} (1985),
  no.~2, 217--225.


\bibitem[Bon96]{MR1413855}
Francis Bonahon, \emph{Shearing hyperbolic surfaces, bending pleated surfaces
  and {T}hurston's symplectic form}, Ann. Fac. Sci. Toulouse Math. (6)
  \textbf{5} (1996), no.~2, 233--297.

\bibitem[Bow96]{Bow96}
Brian~H Bowditch, \emph{A proof of McShane's identity via Markoff triples}, B. Lond. Math. Soc., \textbf{28} (1996), no. 1, 73--78.

\bibitem[Bow97]{bowditch1997variation}
Brian~H Bowditch, \emph{A variation of McShane's identity for once-punctured
  torus bundles}, Topology \textbf{36} (1997), no.~2, 325--334.

\bibitem[Bow98]{markofftriples}
B.~H. Bowditch, \emph{Markoff triples and quasi-{F}uchsian groups}, Proc.
  London Math. Soc. (3) \textbf{77} (1998), no.~3, 697--736.

\bibitem[Bri11]{orthospectra}
Martin Bridgeman, \emph{Orthospectra of geodesic laminations and dilogarithm
  identities on moduli space}, Geom. Topol. \textbf{15} (2011), no.~2,
  707--733.
  

\bibitem[BK10]{bridgeman2010hyperbolic}
Martin Bridgeman and Jeremy Kahn, \emph{Hyperbolic volume of manifolds with
  geodesic boundary and orthospectra}, Geometric and Functional Analysis
  \textbf{20} (2010), no.~5, 1210--1230.



\bibitem[BK09]{BK09}
Stephen Buckley, and Simon Kokkendorff. \textit{Comparing the Floyd and ideal boundaries of a metric space}, Transactions of the American Mathematical Society \textbf{361} (2009), vol.~2, 715--734.



\bibitem[CG93]{choi1993convex}
Suhyoung Choi and William~M Goldman, \emph{Convex real projective structures on
  closed surfaces are closed}, Proceedings of the American Mathematical Society
  \textbf{118} (1993), no.~2, 657--661.

\bibitem[CY77]{cheng1977regularity}
Shiu-Yuen Cheng and Shing-Tung Yau, \emph{On the regularity of the
  monge-amp{\`e}re equation det $(\partial^2 u/\partial x_i\partial x_j)= f (x,
  u)$}, Communications on Pure and Applied Mathematics \textbf{30} (1977),
  no.~1, 41--68.

\bibitem[FG06]{FG06}
Vladimir Fock and Alexander Goncharov, \emph{Moduli spaces of local systems and
  higher Teichm{\"u}ller theory}, Publications Math{\'e}matiques de l'Institut
  des Hautes {\'E}tudes Scientifiques \textbf{103} (2006), no.~1, 1--211.

\bibitem[FG07]{FG07}
Vladimir~V Fock and Alexander~B Goncharov, \emph{Moduli spaces of convex
  projective structures on surfaces}, Advances in Mathematics \textbf{208}
  (2007), no.~1, 249--273.

\bibitem[Flo80]{Flo80}
William J. Floyd, \textit{Group completions and limit sets of Kleinian groups}. Inventiones mathematicae \textbf{57} (1980), no.~3, 205--218.


\bibitem[FP16]{FP16}
Federica Fanoni and Maria Beatrice Pozzetti, \emph{Basmajian-type inequalities for maximal representations}, to appear in Journal of Differential Geometry, arXiv:1611.00286 (2016).

\bibitem[GK41]{GK41}
Feliks R. Gantmacher and Mark G. Krein, \textit{Oscillation matrices and kernels and small vibrations of mechanical systems}, Revised edition of the 1941 Russian original. 

\bibitem[G90]{G1990convex}
William~M Goldman, \emph{Convex real projective structures on compact surfaces}, Journal of Differential Geometry
  \textbf{31} (1990), no.~3, 791--845.

\bibitem[GH78]{GH78}
Phillip Griffiths and Joseph Harris, \emph{Principles of algebraic geometry}, John Wiley \& Sons, 1978.

\bibitem[GS15]{GS15}
Alexander Goncharov and Linhui Shen, \emph{Geometry of canonical bases and
  mirror symmetry}, Inventiones mathematicae \textbf{202} (2015), no.~2,
  487--633.

\bibitem[Guo13]{guo2013characterizations}
Ren Guo, \emph{Characterizations of hyperbolic geometry among Hilbert
  geometries: a survey}, Handbook of Hilbert Geometry. IRMA Lectures in
  Mathematics and Theoretical Physics \textbf{22} (2013), 147--158.

\bibitem[He19]{He19}
Yan Mary He, \emph{Identities for hyperconvex Anosov representations}, arxiv preprint, arXiv:1909.11595.

\bibitem[Hit92]{Hit92}
Nigel J. Hitchin, \emph{Lie groups and Teichm\"ller space}, Topology \textbf{31}(1992), no.~3, 449-473.

\bibitem[HN17]{HN17}
Yi Huang and Paul Norbury, \textit{Simple geodesics and Markoff quads}, Geometriae Dedicata \textbf{186} (2017), no.~1, 113-148.

\bibitem[HPZ19]{HPZ19}
Yi Huang, Robert C. Penner, and Anton M. Zeitlin, \textit{Super McShane identity}, arXiv preprint, arXiv:1907.09978 (2019).


\bibitem[HSY18]{HSY18}
Hengnan Hu, Ser Peow Tan, and Ying Zhang, \textit{Polynomial automorphisms of $\mathbb {C}^n$ preserving the Markoff--Hurwitz polynomial}. Geometriae Dedicata \textbf{192}(2018), no.~1, 207-243.


\bibitem[Hua14]{huang_thesis}
Yi~Huang, \emph{Moduli spaces of surfaces}, Ph.D. thesis, The University of Melbourne, June 2014.

\bibitem[Hua15]{huangclosed}
---, \emph{A McShane-type identity for closed surfaces}, Nagoya Mathematical Journal \textbf{219} (2015), 65--86.

\bibitem[Hua18]{huang2018mcshane}
---, \emph{McShane-type identities for quasifuchsian representations of nonorientable surfaces}, arXiv preprint, arXiv:1802.03069 (2018).


\bibitem[Kim19]{kim2019}
Inkang Kim, \emph{Primitive stable representations in higher rank semisimple Lie groups}, arXiv preprint arXiv:1504.08056v4 (2019).

\bibitem[KT98]{KT98}
A. Knuston and T. Tao, \emph{The honeycomb model of $\operatorname{GL}_n(\mathbb{C})$ tensor products I: Proof of the saturation conjecture}, J. Amer. Math. Soc. \textbf{12} (1999), 1055--1090.


\bibitem[Lab06]{Lab06}
Fran{\c{c}}ois Labourie, \emph{Anosov flows, surface groups and curves in projective space}, Inventiones mathematicae, \textbf{165} (2006), no.~1, 51--114.


\bibitem[Lab07]{Lab07}
Fran{\c{c}}ois Labourie, \emph{Cross ratios, surface groups, $\operatorname{PSL}(n,\mathbb{R})$ and
  diffeomorphisms of the circle}, Publications math{\'e}matiques \textbf{106}
  (2007), no.~1, 139--213.

\bibitem[LM09]{LM09}
Fran{\c{c}}ois Labourie and Gregory McShane, \emph{Cross ratios and identities
  for higher {T}eichm\"uller-{T}hurston theory}, Duke Math. J. \textbf{149}
  (2009), no.~2, 279--345.  

\bibitem[Lu94]{Lu94}
G. Lusztig, \emph{Total positivity in reductive groups. InLie theory and geometry}, \textbf{123} (1994) Progr. Math., pages 531--568. 


\bibitem[LS13]{lee2013variation}
Donghi Lee and Makoto Sakuma, \emph{A variation of McShane's identity for
  2--bridge links}, Geometry \& Topology \textbf{17} (2013), no.~4, 2061--2101.

\bibitem[LT11]{luo_tan}
Feng Luo and Ser~Peow Tan, \emph{A dilogarithm identity on moduli spaces of curves}, J. Differential. Geom. \textbf{97} (2014), no. 2, 255--274.

\bibitem[LZ17]{lee2017collar}
Gye-Seon Lee and Tengren Zhang, \emph{Collar lemma for Hitchin representations}, Geometry \& Topology \textbf{21} (2017), no. 4, 2243--2280.


\bibitem[Mar10]{Mar10}
Ludovic Marquis, \emph{Espaces des modules des surfaces projectives proprement convexes de volume fini}, Geometry and Topology \textbf{14} (2010), no.~4, 2103--2149.


\bibitem[Mar12]{Mar12}
Ludovic Marquis, \emph{Surface projective convexe de volume fini}, Ann. Inst.
  Fourier (Grenoble) \textbf{62} (2012), no.~1, 325--392.

\bibitem[McS91]{mcshane_thesis}
Greg McShane, \emph{A remarkable identity for lengths of curves}, Ph.D. thesis,
  may 1991.

\bibitem[McS98]{mcshane_allcusps}
---, \emph{Simple geodesics and a series constant over {T}eichm\"uller
  space}, Invent. Math. \textbf{132} (1998), no.~3, 607--632.

\bibitem[MR95]{MR95}
Greg McShane and Igor Rivin, \textit{Geometry of geodesics and a norm on homology}, International Mathematics Research Notices \textbf{2} (1995), 61-69.

\bibitem[Mir07a]{mirz_simp}
Maryam Mirzakhani, \emph{Simple geodesics and {W}eil--{P}etersson volumes of
  moduli spaces of bordered {R}iemann surfaces}, Invent. Math. \textbf{167}
  (2007), no.~1, 179--222.
  
\bibitem[Mir07b]{mirz_witten}
Maryam Mirzakhani, \emph{Weil-Petersson volumes and intersection theory on the moduli space of curves}, 
J. Amer. Math. Soc. \textbf{20}
  (2007), no.~1, 1-23.

\bibitem[Miy05]{miyachi2005limit}
Hideki Miyachi, \emph{The limit sets of quasifuchsian punctured surface groups
  and the Teichm{\"u}ller distances}, Kodai Mathematical Journal \textbf{28}
  (2005), no.~2, 301--309.

\bibitem[Nor08]{MR2399656}
Paul Norbury, \emph{Lengths of geodesics on non-orientable hyperbolic
  surfaces}, Geom. Dedicata \textbf{134} (2008), 153--176.

\bibitem[Pen87]{pennercoords}
R.~C. Penner, \emph{The decorated {T}eichm\"uller space of punctured surfaces},
  Comm. Math. Phys. \textbf{113} (1987), no.~2, 299--339.

\bibitem[PS17]{potrie2017eigenvalues}
Rafael Potrie and Andr{\'e}s Sambarino, \emph{Eigenvalues and entropy of a
  Hitchin representation}, Inventiones mathematicae \textbf{209} (2017), no.~3,
  885--925.

\bibitem[PT10]{papadopoulos2010shortening}
Athanase Papadopoulos and Guillaume Th{\'e}ret, \emph{Shortening all the simple
  closed geodesics on surfaces with boundary}, Proceedings of the American
  Mathematical Society \textbf{138} (2010), no.~5, 1775--1784.

\bibitem[Riv01]{Riv01}
Igor Rivin, \textit{Simple curves on surfaces}, Geometriae Dedicata, \textbf{87} (2001): no.~1-3, 345-360.

\bibitem[Sch33]{Sch33}
Isaac J. Schoenberg, \textit{Convex domains and linear combinations of continuous functions}, Bulletin of the American Mathematical Society, \textbf{39} (1933), no.~4, 273--280.

\bibitem[Sun20a]{Sun20a}
Zhe Sun, \emph{Rank $n$ swapping algebra for $\operatorname{PGL}_n$ Fock--Goncharov $\mathcal{X}$ moduli space}, Math. Ann. (2020). https://doi.org/10.1007/s00208-020-02025-1

\bibitem[Sun20b]{Sun20b}
Zhe Sun, \emph{Volume of the moduli space of unmarked bounded positive convex $\mathbb{RP}^2$ structures}, arxiv preprint, arXiv:2001.01295 (2020).

\bibitem[SZ17]{SZ17}
Zhe Sun and Tengren Zhang, \emph{The Goldman symplectic form on the $\operatorname{PGL}(V)$-Hitchin component}, arxiv preprint, arXiv:1709.03589 (2017).


\bibitem[SWZ20]{sun2017flows}
Zhe Sun, Anna Wienhard, and Tengren Zhang, \emph{Flows on the $\operatorname{PGL}(V)$-Hitchin
  component}, Geom. Funct. Anal. \textbf{30} (2020), 588-692.


\bibitem[Tho17]{tholozan2017volume}
Nicolas Tholozan, \emph{Volume entropy of Hilbert metrics and length spectrum
  of Hitchin representations into $\operatorname{PSL}(3,\mathbb{R})$}, Duke
  Mathematical Journal \textbf{166} (2017), no.~7, 1377--1403.

\bibitem[Thu98]{Thu98}
William~P Thurston, \emph{Minimal stretch maps between hyperbolic surfaces}, arXiv preprint, math/9801039 (1998).

\bibitem[Thu16]{thurston2016rubber}
Dylan~P Thurston, \emph{From rubber bands to rational maps: a research report},
  Research in the Mathematical Sciences \textbf{3} (2016), no.~1, 15.

\bibitem[TWZ06]{tan_zhang_cone}
Ser~Peow Tan, Yan~Loi Wong, and Ying Zhang, \emph{Generalizations of
  {M}c{S}hane's identity to hyperbolic cone-surfaces}, J. Differential Geom.
  \textbf{72} (2006), no.~1, 73--112.

\bibitem[TWZ08]{tan2008mcshane}
---, \emph{McShane's identity for classical Schottky groups}, Pacific
  Journal of Mathematics \textbf{237} (2008), no.~1, 183--200.

\bibitem[VY17]{VY17}
Nicholas Vlamis and Andrew Yarmola, \emph{Basmajian's identity in higher Teichm\"uller--Thurston theory}, Journal of Topology \textbf{10} (2017), no. 3, 744--764.

\bibitem[W18]{W18}
Anna Wienhard, \emph{An invitation to higher Teichm\"uller theory}, Proceedings of the ICM 2018, 2018.


%\bibitem[WZ18]{wienhard2018flows}
%Anna Wienhard, and Tengren Zhang, \emph{Deforming convex real projective structures}, Geometriae Dedicata \textbf{192} (2018), no.~1, 327--360.

%\bibitem[Zha15]{zhang2015degeneration}
%Tengren Zhang, \emph{The degeneration of convex $\mathbb{RP}^2$ structures on surfaces}, Proceedings of the London Mathematical Society \textbf{111} (2015), no.~5, 967--1012.



\end{thebibliography}

% Non-BibTeX users please use

\end{document}
% end of file template.tex